\newcommand{\nsection}{\@startsection{section}{1}{\z@}%
     {-3.5ex plus-1ex minus-.2ex}%
     {2.3ex plus.2ex}%
     {\reset@font\center\large\sc}}
\renewenvironment{thebibliography}[1]
{\nsection*{\refname\@mkboth{\refname}{\refname}}%
   \list{\@biblabel{\@arabic\c@enumiv}}%
        {\settowidth
	\labelwidth{\@biblabel{#1}}%
         \leftmargin
	 \labelwidth
         \advance
	 \leftmargin
	 \labelsep
         \@openbib@code
         \usecounter{enumiv}%
         \let\p@enumiv\@empty
	 \parskip=0pt
	 \itemsep=1pt
	 \parsep=1pt
	 \itemindent=\z@
         \renewcommand\theenumiv{\@arabic\c@enumiv}}%
   \sloppy
   \clubpenalty4000
   \@clubpenalty\clubpenalty
   \widowpenalty4000%
   \footnotesize
   \sfcode`\.\@m}
  {\def\@noitemerr
    {\@latex@warning{Empty `thebibliography' environment}}%
   \endlist}
 \def\@seccntformat#1{\csname the#1\endcsname.\hspace{2ex}}
 \renewcommand{\subsubsection}%
  {\@startsection{subsubsection}%
  {3}%
  {\z@}%
  {-3.5ex plus -1ex minus -.2ex}%
  {-0ex}%
  {\reset@font\normalsize\bfseries}
  }%
\newtheoremstyle{thm}
 {1em}
 {3pt}
 {\itshape}
 {}
 {\bf}
 {. ---}
 {0.5em}
 {}
\newtheoremstyle{dfn}
 {1em}
 {3pt}
 {}
 {}
 {\bf}
 {. {---}}
 {0.5em}
 {}
\theoremstyle{thm}
\newtheorem{thm}[subsubsection]{Theorem}
\newtheorem{lem}[subsubsection]{Lemma}
\newtheorem*{lem*}{Lemma}
\newtheorem{cor}[subsubsection]{Corollary}
\newtheorem*{cor*}{Corollary}
\newtheorem{prop}[subsubsection]{Proposition}
\newtheorem*{prop*}{Proposition}
\newtheorem*{thm*}{Theorem}
\theoremstyle{dfn}
\newtheorem{dfn}[subsubsection]{Definition}
\newtheorem*{dfn*}{Definition}
\newtheorem{ex}[subsubsection]{Example}
\newtheorem*{ex*}{Example}
\newtheorem{rem}[subsubsection]{Remark}
\newtheorem*{rem*}{Remark}
\newtheorem*{cl}{Claim}
\renewcommand{\qedsymbol}{$\blacksquare$}
\newtheorem*{notation*}{Notation}
\newcommand{\Ddag}[1]{\mathscr{D}^\dag_{{#1}}}
\newcommand{\DdagQ}[1]{\mathscr{D}^\dag_{{#1},\mathbb{Q}}}
\newcommand{\Dcomp}[2]{\widehat{\mathscr{D}}^{(#1)}_{#2}}
\newcommand{\DcompQ}[2]{\widehat{\mathscr{D}}^{(#1)}_{#2,\mathbb{Q}}}
\newcommand{\Dmod}[2]{{\mathscr{D}}^{(#1)}_{#2}}
\newcommand{\Emod}[2]{{\mathscr{E}}^{(#1)}_{#2}}
\newcommand{\Edag}[1]{\mathscr{E}^\dag_{#1}}
\newcommand{\EdagQ}[1]{\mathscr{E}^\dag_{#1,\mathbb{Q}}}
\newcommand{\Ecomp}[2]{\widehat{\mathscr{E}}^{(#1)}_{#2}}
\newcommand{\EcompQ}[2]{\widehat{\mathscr{E}}^{(#1)}_{#2,\mathbb{Q}}}
\newcommand{\Ecompb}[2]{\widehat{{E}}^{(#1)}_{#2}}
\newcommand{\EcompQb}[2]{\widehat{{E}}^{(#1)}_{#2,\mathbb{Q}}}
\newcommand{\Emodb}[2]{E^{(#1)}_{#2}}
\newcommand{\Dcompb}[2]{\widehat{{D}}^{(#1)}_{{#2}}}
\newcommand{\Dmodb}[2]{{{D}}^{(#1)}_{#2}}
\newcommand{\invlim}{\mathop{\underleftarrow{\mathrm{lim}}}}
\newcommand{\indlim}{\mathop{\underrightarrow{\mathrm{lim}}}}
\newcommand{\mr}[1]{\mathrm{#1}}
\newcommand{\ms}[1]{\mathscr{#1}}
\newcommand{\mc}[1]{\mathcal{#1}}
\newcommand{\mb}[1]{\mathbb{#1}}
\newcommand{\mf}[1]{\mathfrak{#1}}
\newcommand{\angles}[2]{\langle{#2}\rangle_{(#1)}}
\newcommand{\CR}[1]{R_{#1}}
\newcommand{\CK}[1]{K_{#1}}
\newcommand{\cc}[1]{[\![{#1}]\!]}
\newcommand{\pp}[1]{(\!({#1})\!)}
\newcommand{\shom}{\mathop{\mc{H}om}\nolimits}
\newcommand{\Dan}[1]{\ms{D}^{\mr{an}}_{#1}}
\newcommand{\LD}{\underrightarrow{LD}^b_{\mb{Q},\mr{qc}}}
\newcommand{\otimesdag}{\mathop{\otimes}\limits^{\mb{L}}}
\newcommand{\ku}[1]{\mathop{\mc{F}u}\nolimits_{#1}}
\newcommand{\fr}{h}
\newcommand{\emar}{\varepsilon_0^{\mr{rig}}}
\newcommand{\emars}{\varepsilon^{\mr{rig}}}
\newsavebox{\circlebox}
\savebox{\circlebox}{\fontencoding{OMS}\selectfont\char13}
\newlength{\circleboxwdht}
\newcommand{\ccirc}[1]{
  \setlength{\circleboxwdht}{\wd\circlebox}
  \addtolength{\circleboxwdht}{\dp\circlebox}
  \raisebox{0.2\dp\circlebox}{
    \parbox[][\circleboxwdht][c]{\wd\circlebox}
    {\centering\scriptsize #1}}
  \llap{\usebox{\circlebox}}
}
\newcommand{\Aut}[2]{\mathrm{Aut}_{#1}\bigl(#2\bigr)}
\DeclareMathOperator{\Frac}{Fr}
\DeclareMathOperator{\Res}{Res}
\DeclareMathOperator{\Tr}{Tr}
\DeclareMathOperator{\Spec}{Spec}
\DeclareMathOperator{\Spf}{Spf}
\DeclareMathOperator{\rg}{rk}
\newcommand{\ZZ}{\ensuremath{\mathbb{Z}}} 
\newcommand{\QQ}{\ensuremath{\mathbb{Q}}} 
\newcommand{\Fp}{\ensuremath{\mathbb{F}_{p}}}
\newcommand{\FF}{\ensuremath{\mathbb{F}}}
\newcommand{\Qp}{\ensuremath{\mathbb{Q}_p}}
\newcommand{\Rep}[2]{\ensuremath{\mathrm{Rep}_{#1}({#2})}}
\newcommand{\Del}[2]{\ensuremath{\mathrm{Del}_{#1}(#2)}}
\newcommand{\Ct}{K}
\newcommand{\Ctf}{\Lambda}
\newcommand{\Rob}{\ensuremath{\mc{R}}}
\newcommand{\An}[1][]{\ensuremath{\mc{A}_{#1}}}
\newcommand{\Fisosur}[1][]{F\text{-}\mathrm{Isoc}^{\dagger}_{#1}}
\newcommand{\Fiso}{F\text{-}\mathrm{Isoc}}
\DeclareMathOperator{\WD}{WD}
\newcommand{\val}[1][]{\ensuremath{v_{#1}}}
\newcommand{\id}[1][]{\ensuremath{\mathrm{id}_{#1} }}
\DeclareMathOperator{\tr}{tr}
\newcommand{\F}{F}
\renewcommand{\det}{\operatorname{det}}
\newcommand{\Fro}[1][]{\ensuremath{\varphi_{#1}} }
\newcommand{\Wt}[2][]{\ensuremath{W_{#1}(#2)}}
\newcommand{\nr}{\ensuremath{^\mathrm{ur}}}
\newcommand{\rig}{\mathrm{rig}}
\newcommand{\bCt}{\overline{\mathbb{Q}}_p} 
\newcommand{\bFt}{\overline{\mathbb{F}}} 
\newcommand{\one}%
{{\rm 1\hspace*{-0.4ex}\rule{0.1ex}{1.52ex}\hspace*{0.2ex}}}
\newcommand{\End}[2]{\mathrm{End}_{#1}\bigl(#2\bigr)}
\DeclareMathOperator{\Coker}{Coker}
\DeclareMathOperator{\Ker}{Ker}
\newcommand{\Gr}[1]{\ensuremath{{\bf Gr}\left(#1\right)}}
\newcommand{\ur}{\ensuremath{^u}}
\newcommand{\Aone}{\widehat{\mathbb{A}}}    
\newcommand{\Aoned}{\widehat{\mathbb{A}}'}   
\newcommand{\Pone}{\widehat{\mathbb{P}}}    
\newcommand{\Poned}{\widehat{\mathbb{P}}'}    
\newcommand{\Poneoned}{\widehat{\mathbb{P}}''}  
\newcommand{\nF}[1]{\ms{F}_{\mr{naive,\pi}}(#1)}
\newcommand{\nFnp}[1]{\ms{F}_{\mr{naive}}(#1)}
\newcommand{\DwL}{L}
\begin{document}
\title{Product formula for $p$-adic epsilon factors}
\author{Tomoyuki Abe, Adriano Marmora}
\date{}
\maketitle

\begin{abstract}
 Let $X$ be a smooth proper curve over a finite field of characteristic
 $p$. We prove a product formula for $p$-adic epsilon factors of
 arithmetic $\ms{D}$-modules on $X$. In particular we deduce the
 analogous formula for overconvergent $F$-isocrystals, which was
 conjectured previously. The $p$-adic product formula is the equivalent
 in rigid cohomology of the Deligne-Laumon formula for epsilon factors
 in  $\ell$-adic \'etale cohomology (for $\ell\not = p$). One of the
 main tools in the proof of this $p$-adic formula is a theorem of
 regular stationary phase for arithmetic $\ms{D}$-modules that we prove
 by microlocal techniques.
\end{abstract}

\setcounter{tocdepth}{2}
\tableofcontents

\section*{Introduction}
Inspired by the Langlands program, Deligne suggested that the constant
appearing in the functional equation of the $L$-function of an
$\ell$-adic sheaf, on a smooth proper curve over a finite field of
characteristic $p\neq\ell$, should factor as product
of local contributions (later called \emph{epsilon factors}) at each
closed point of the curve. He conjectured a \emph{product formula} and
showed  some particular cases of it, cf.\ \cite{Del:sem}. This formula
was proven by Laumon in the outstanding paper \cite{Lau}.

The goal of this article is to prove a product formula
for $p$-adic epsilon factors of arithmetic $\ms{D}$-modules on a curve.
This formula generalizes the conjecture formulated in
\cite{Marmora:Facteurs_epsilon} for epsilon factors of overconvergent
$F$-isocrystals, and it is an analog in rigid cohomology of the
Deligne-Laumon formula.

Let us give some notation. In this introduction we simplify the
exposition by assuming more hypotheses than necessary, and we
refer to the article for the general statements. Let $k$ be a
finite field of characteristic $p$, and let $q=p^f$ be its
cardinality. Let $X$ be a smooth, proper and geometrically connected
curve over $k$.

We are interested in  rigid cohomology \cite{Berthelot:prepub} on $X$,
which is a good $p$-adic theory in the sense that it is a Weil
cohomology. The coefficients for this theory are the overconvergent
$F$-isocrystals: they play the  role of the smooth sheaves in
$\ell$-adic cohomology, or vector bundles with (flat) connection in
complex analytic geometry. These coefficients are also known
in the literature as $p$-adic differential equations. As
their $\ell$-adic and complex analogs, the overconvergent
$F$-isocrystals form a category which is not stable under push-forward
in general. Berthelot \cite{Ber:rigD}, inspired by algebraic
analysis, proposed a framework to remedy this problem by introducing
arithmetic $F$-$\ms{D}$-modules (shortly $F$-$\ms{D}^{\dagger}$-modules)
and in particular the subcategory of holonomic modules, see
\cite{BerInt} for a survey. Thanks to works of many people ({\it e.g.\
}\cite{Caro:courbes}, \cite{Cr3},...), we have a satisfactory theory, at
least in the curve case. We note that another approach to $p$-adic
cohomologies has been initiated by Mebkhout and Narvaez-Macarro
\cite{MeNa}, and  is giving interesting developments, for example see
\cite{ArMe}. Although it would certainly be interesting to
transpose our calculations into this theory,  we place ourselves
exclusively in the context of Berthelot's arithmetic $\ms{D}$-modules
throughout this paper. Nevertheless, we point out that Christol-Mebkhout's
results in the local theory of $p$-adic differential equations are
indispensable both explicitly and implicitly in this article.
The local theory of arithmetic $\ms{D}$-modules has been developed by
Crew  (cf.\ \cite{Cr3}, \cite{Cr}) and we will use it extensively in our
work.

To state the $p$-adic product formula let us review the definitions of
local and global epsilon factors for holonomic
$\ms{D}^{\dagger}$-modules. The Poincar\'{e} duality was established for
overconvergent $F$-isocrystals in the works of Berthelot \cite{BePoinc},
Crew \cite{Crew:ens} and Kedlaya \cite{KeDuke},
and for the theory of $\ms{D}^{\dagger}$-modules by the first author
\cite{Abe3} based on the results of Virrion \cite{Vir}. This gives a
functional equation for the $L$-function (Caro \cite{Caro:courbes},
Etesse-LeStum \cite{LE}) of $\ms{M}$, see \S\ref{sb:main}. The constant
appearing in this functional equation is $\varepsilon(\ms{M}):=
\prod_{r\in\ZZ} \det(-F; H^rf_+\ms{M})^{(-1)^{r+1}}$, where
$f\colon X\rightarrow \Spec(k)$ is the structural morphism, and it is
called the \emph{global} epsilon factor of $\ms{M}$.

The \emph{local} epsilon factor of an arithmetic $\ms{D}$-module $\ms{M}$ at a
closed point $x$ of $X$ is defined up to the choice of a meromorphic
differential form $\omega\neq0$ on $X$. To define it, we restrict
$\ms{M}$ to the complete {\it trait} $S_x$ of $X$ at
$x$. To define the local factors
$\varepsilon(\ms{M}|_{S_x},\omega)$,  we consider a
localizing triangle, cf.\ (\ref{anotherloctriag}); hence, by linearity,
it remains to define the epsilon factors for punctual modules and
for free differential modules on the Robba ring with Frobenius
structure. The former case is explicit; the latter was done in
\cite{Marmora:Facteurs_epsilon} via the Weil-Deligne representation
attached to free differential modules by the $p$-adic monodromy
theorem.

The product formula (Theorem \ref{Product formula}) states that for any
holonomic $F$-$\ms{D}^{\dagger}$-module $\ms{M}$ on $X$, we have
\begin{equation}
 \label{intro:PF}\tag{PF}
  \varepsilon (\ms{M}) = q^{ r(\ms{M})(1-g)}
  \prod_{x\in |X| } \varepsilon(\ms{M}|_{S_x}, \omega),
\end{equation}
where $g$ is the genus of $X$, $r(\ms{M})$ denotes the generic rank of
$\ms{M}$, $|X|$ is the set of closed points of $X$, and $\omega\neq0$ is
a meromorphic differential form on $X$. This formula can be seen as a
multiplicative generalization of Grothendieck-Ogg-Shafarevich formulas in
rigid cohomology.

The proof of (\ref{intro:PF}) starts by following the track  of
Laumon: a geometric argument (see \cite[proof of 3.3.2]{Lau}) reduces to
prove the fundamental case where $X=\mathbb{P}^1_k$ and $\ms{M}$ is an
$F$-isocrystal overconvergent along a closed set $S$ of rational points
of $X$ (by refining the argument  we can even take $S=\{0,\infty\}$,
cf.\ \cite[p.121]{Katz:TravauxLau}). By saying that $\ms{M}$ is an
$F$-isocrystal, we mean that it is an arithmetic $\ms{D}$-module
corresponding to an $F$-isocrystal via the specialization map, see the
convention \S\ref{conventionocisoc}. In order to conclude, we need four
components: (1) a canonical extension functor $\ms{M}\mapsto
\ms{M}^{\mr{can}}$ from the category of holonomic
$F$-$\ms{D}^{\dag}$-modules on the formal disk to that of
holonomic $F$-$\ms{D}^{\dag}$-modules on the projective line,
overconvergent at $\infty$; (2) the proof of \eqref{intro:PF} for
$\ms{D}^{\dagger}$-modules in the essential image of this functor; (3)
the ``principle'' of stationary phase (for modules whose differential slopes
at infinity are less than $1$); (4) an exact sequence in the style
``nearby-vanishing cycles'' for
certain kinds of $\ms{D}^{\dagger}$-modules.

The first component is provided by the work of Crew \cite{Cr}, extending
the canonical extension of Matsuda for overconvergent
$F$-isocrystals. The second is technical but not difficult to
achieve. The third is the deepest among these four, and a large part of
this paper is devoted to it. This ``principle'' can  roughly be
described by saying that it provides a description of the behavior at
infinity of the  Fourier-Huyghe transform  of $\ms{M}$, in terms of
local contributions at closed points $s$ in $\mathbb{A}^1_k$
where $\ms{M}$ is singular ({\it i.e.\ }the characteristic cycle of
$\ms{M}$ does contain a vertical component at $s$,
cf.\ paragraph \ref{stabledef}).
These ``local contributions''  are called  local Fourier transforms
(LFT) of $\ms{M}$, and  one of the key points of our work is to provide a
good construction of them. Here, we differentiate from the work of
Laumon, who used vanishing cycles to construct the local Fourier
transform of an $\ell$-adic sheaf.

A definition of local Fourier transform has been given by Crew
\cite[8.3]{Cr} following the classical path: take the canonical
extension of a holonomic $F$-$\ms{D}^{\dagger}$-module at $0$, then
apply the Fourier-Huyghe transform, and finally restrict around
$\infty$. However, we need more information on the internal structure of
LFT, and therefore, we redefine it. Our approach is based on
microlocalization inspired by the classical works of Malgrange
\cite{Malgrange:Eq_diff} and Sabbah \cite{Sabbah:isomono}. Yet,
there are many more technical difficulties in our case because we need to
deal with differential operators of infinite order. We note that
the definition is still not completely local in the sense that it uses
the canonical extension and the Frobenius is constructed by global
methods.
Once we have established some fundamental 
properties of LFT, the proof of the regular stationary phase is analogous to
that of Sabbah \cite{Sabbah:isomono} in the classical case (see
also \cite{Lopez:Microloc} for its generalizations).

The fourth component is proved using an exact sequence of Crew
\cite{Cr}, Noot-Huyghe's results on Fourier transform, and the
properties of cohomological operations proven in \cite{Abe3}.

The end of the proof of \eqref{intro:PF} is classical and it follows
again Laumon, although there are still some differences from
the $\ell$-adic case that we have carefully pointed out in
\S\ref{sb:proof_gen_case}. In particular, in {\it loc.\ cit.\ }we
detail the proof of a determinant formula for the $p$-adic epsilon
factor. This $p$-adic formula gives a differential interpretation of the
local epsilon factors and promises to have new applications. Indeed, in
\S\ref{explicitcalcfrob} we give an explicit description of the
Frobenius acting on the Fourier-Huyghe transform. This might provide
explicit information on the
$p$-adic epsilon factors, and moreover have  arithmetic spin-offs. For
example, in the case of a Kummer
isocrystal, by carrying  out this calculation  and applying the product formula we can re-prove the
Gross-Koblitz formula. This and related questions will be addressed in a
future paper.

Concerning $\ell$-adic theory, we point out that  Abbes and
Saito \cite{AS} have recently given an interesting new local description of LFT
as well as an alternative proof of Laumon's determinant formula for
$\ell$-adic representations satisfying a certain ramification
condition.

Finally, regarding Langlands program
for $p$-adic coefficients, we mention the recent preprint \cite{Abe4}, where the $p$-adic product formula is used to show the equivalence between the (conjectural) \emph{$p$-adic}
Langlands correspondence for $\mathrm{GL}_n$ over function fields and 
Deligne's hope for \emph{petits camarades cristallins}, cf.\
\cite[Conj. (1.2.10-vi)]{WeilII} and also \cite[Conj. 4.13]{Crew:ens_mon}.

\bigskip
After this introduction, this article is divided into seven sections.
Here, we briefly describe their content; more information can be found
in the text at the beginning of each section and subsection.

The aim of the first section is to define the characteristic cycles of
holonomic $\ms{D}$-modules on curves over the field $k$ (which is
supposed here only of characteristic $p>0$), and prove some relations
with the microlocalizations. For this, we prove a level
stability theorem using microlocal techniques of \cite{Abe}. The
section starts with a short survey of microdifferential operators of
{\it loc.\ cit}.

The second section begins the study of local Fourier transforms for
holonomic $\ms{D}$-modules. This section is the technical core of the
paper. We start in \S\ref{Crewreview} by a review of Crew's theory of
 arithmetic $\ms{D}$-modules on a formal disk; then we study in
\S\ref{analye} the relations between microlocalization and
analytification of $\ms{D}^{\dagger}$-modules. This gives several
applications: namely the equality between Garnier's and Christol-Mebkhout's
definitions of irregularity (\S\ref{applithem}). We finish the section
by giving an alternative definition of local Fourier
transform (except for the Frobenius structure) in
\S\ref{secdefforloc}. We will see in \S\ref{section4} that this LFT
coincides with that of Crew and we will complete the definition in
\S\ref{section5} by endowing it with the Frobenius structure.
 
The third section reviews the cohomological operations on arithmetic
$\ms{D}$-modules. In particular, in \S\ref{cohoprev} we recall the
results of \cite{Abe3} which are used in this paper, and in
\S\ref{secgeforev} we review the global Fourier transform of
Noot-Huyghe.

The fourth section is devoted to the regular stationary phase. In
\S\ref{secgeomcalc} we establish some numerical results analogous to
those of Laumon for perverse $\ell$-adic sheaves. In \S\ref{secregstfo}
we prove the stationary phase for regular holonomic modules on the
projective line.
 
It is in the fifth section that we finally implement the Frobenius in
the theory. In \S\ref{secfroblocfou} we endow the local Fourier
transform with the Frobenius induced by that of the global Fourier
transform via the stationary phase isomorphism. In section
\S\ref{explicitcalcfrob} we explicitly describe the Frobenius on the
naive Fourier transform.

The sixth section provides a key exact sequence for the proof of the
product formula. This sequence should be seen as an analog of the
exact sequence of vanishing cycles appearing in Laumon's proof of the
$\ell$-adic product formula. The section begins with a result on
commutation of the Frobenius in \S\ref{secommfrob} and we then prove
the exactness of the sequence in \S\ref{secanexse}.

Finally, in the last section, we state and prove the $p$-adic product
formula. We begin in \S\ref{sb:Loc_con} with the definition of local
factors of holonomic modules; then, in \S\ref{sb:main} we recall the
definition of the $L$-function attached to a holonomic module and define
the global epsilon factor. We  state the product formula and we show
that it is in fact equivalent to the product formula for overconvergent
$F$-isocrystals conjectured in \cite{Marmora:Facteurs_epsilon}. 
The section continues with the proof of 
the product formula: some preliminary
particular cases in \S\ref{proof:geo_mod}, and  the general case, as well
as the determinant formula for local epsilon factors, in
\S\ref{sb:proof_gen_case}.

\subsection*{Acknowledgments}
The first author (T.A.) would like to express his gratitude to Prof.\
S. Matsuda for letting him know the reference \cite{Pu}, and to
C. Noot-Huyghe for answering various questions on her paper
\cite{NH}. He also like to thank professor \ T. Saito for his interest in the
work, and  teaching him the relation with {\it petits camarades} conjecture of Deligne.
Finally some parts of the work was done when the
he was visiting to IRMA of {\it Universit\'{e} de Strasbourg} in
2010. He would like to thank the second author, A. Marmora and the
institute for the hospitality. He was supported by Grant-in-Aid for JSPS
Fellows 20-1070, and partially, by {\it l'agence nationale de la
recherche} ANR-09-JCJC-0048-01.

The second author would like to thank P. Berthelot, B. Chiarellotto,
A. Iovita, B. Le Stum, and C. Noot-Huyghe, for their long  interest
in this work, advice, discussions and encouragement. He would like to
thank professors T. Saito and N. Tsuzuki, for several instructive
conversations about the $p$-adic product formula, especially during his
first visit to Japan in Fall 2006 (JSPS post-doctoral fellowship number
PE06005). He thanks X. Caruso and the members of the project
ANR-09-JCJC-0048-01, which sponsored part of this collaboration.

Finally, both authors would like to thank A. Abbes for his strong
interest in the work, and his effort to make them write the paper
quickly. Without him, we believe the completion of the paper would
have been  very much delayed.

\subsection*{Conventions and Notation}

\subsubsection{}
Unless otherwise stated, the filtration of a filtered ring (resp.\
module) is assumed to be increasing.
Let $(A,F_iA)_{i\in\mb{Z}}$ be a filtered ring or module. For $i\in
\mb{Z}$, we will often denote $F_iA$ by $A_i$. Recall that the filtered
ring $A$ is said to be a {\em noetherian filtered ring}\index{noetherian filtered ring} if its
associated Rees ring $\bigoplus_{i\in\mb{Z}}F_iA$ is noetherian.

\subsubsection{}
\label{nottop}
Let $A$ be a topological ring, and let $M$ be a finitely generated
$A$-module. We consider the product topology on $A^{\oplus n}$ for
any positive integer $n$. Let $\phi\colon A^{\oplus n}\rightarrow M$ be
a surjection, and we denote by $\ms{T}_{\phi}$ the quotient topology on
$M$ induced by $\phi$. Then $\ms{T}_\phi$ does not depend on the choice
of $\phi$ up to equivalence of topologies. We call this topology the
{\em $A$-module topology on $M$}.

\subsubsection{}
Let $K$ be a field, and $\sigma\colon K\xrightarrow{\sim}K$ be an
automorphism. A $\sigma$-$K$-vector space\index{.@miscellaneous!sigma@$\sigma$-$K$-vector space} is a $K$-vector space $V$
equipped with a $\sigma$-semi-linear endomorphism $\varphi\colon
V\rightarrow V$ such that the induced homomorphism
$K\otimes_{\sigma,K}V\rightarrow V$ is an isomorphism.

\subsubsection{}
\label{defRmsX}
Let $R$ be a complete discrete valuation ring of mixed characteristic
$(0,p)$, $k$ be its residue field, and $K$ be its field of fractions. \index{.@miscellaneous!k@$k$, $R$}\index{.@miscellaneous!K@$K$}
We
denote a uniformizer of $R$ by $\varpi$\index{.@miscellaneous!pi@$\varpi$}. For any integer $i\geq0$, we
put $R_i:=R/\varpi^{i+1}R$. The residue field $k$ is not assumed to be
perfect in general; we assume $k$ to be perfect from the middle of \S
\ref{section2}, and in the last section (\S\ref{section7}), we assume
moreover $k$ to be finite. We denote by $|\cdot|$  the
$p$-adic norm on $R$ or $K$ normalized as $|p|=p^{-1}$.

In principle, we use Roman fonts ({\it e.g.\ }$X$)
for schemes and script fonts ({\it e.g.\ }$\ms{X}$) for formal schemes.
For a smooth formal scheme $\ms{X}$ over
$\mr{Spf}(R)$, we denote by $X_i$ the reduction $\ms{X}\otimes_RR_i$
over $\mr{Spec}(R_i)$. We denote $X_0$ by $X$ unless otherwise
stated. In this paper, {\em curve} (resp.\ {\em formal curve}) means
dimension $1$ smooth separated connected scheme (resp.\ formal scheme)
of finite type over its basis.

When $X$ (resp.\ $\ms{X}$) is an affine scheme (resp.\ formal scheme),
we sometimes denote $\Gamma(X,\mc{O}_X)$ (resp.\
$\Gamma(\ms{X},\mc{O}_{\ms{X}})$) simply by $\mc{O}_X$ (resp.\
$\mc{O}_{\ms{X}}$) if this is unlikely to cause any confusion.

\subsubsection{}\label{coordinates}

Let $\ms{X}$ be a smooth formal scheme over $\mr{Spf}(R)$ of dimension
$d$. 
A system of (global) coordinates\index{coordinates, local coordinates} on $\ms{X}$ is a subset
$\{x_1,\dots,x_d\}$ of $\Gamma(\ms{X},\mc{O}_{\ms{X}})$ such that the
morphism $\ms{X}\rightarrow\widehat{\mb{A}}^d_R$ defined by these
functions is \'{e}tale. 
A system of local coordinates is a system of coordinates on an open subscheme 
$\ms{U}$ of $\ms{X}$.  

Let $s\in\ms{X}$ be a closed point. A system of
local parameters\index{local parameters} at $s$ is a subset $\{y_1,\dots,y_d\}$ of
$\Gamma(\ms{U},\mc{O}_{\ms{U}})$, for some open neighborhood $\ms{U}$ of $s$, such that its image in
$\mc{O}_{X,s}$ forms a system of regular local parameters in the sense
of [EGA $0_{\mr{IV}}$, 17.1.6]. When $d=1$, we say ``a (local)
coordinate'' instead of saying ``a system of (local) coordinates'', and
the same for ``a local parameter''.

\subsubsection{}
We freely use the language of arithmetic $\ms{D}$-modules. For details
see \cite{BerInt}, \cite{Ber1}, \cite{Ber2}. In particular, we use the
notation $\Dmod{m}{X}$, $\Dcomp{m}{\ms{X}}$, $\Ddag{\ms{X}}$.\index{differential operators!.@$\Dmod{m}{X}, \Dcomp{m}{\ms{X}}, \Ddag{\ms{X}},\DdagQ{\ms{X}}$, $\DdagQ{\ms{X}}(Z)$,\ldots|(} An index
$\mb{Q}$ means tensor with $\mb{Q}$.

\subsubsection{}
\label{conventionocisoc}
Let $X$ be a scheme of finite type over $k$, and $Z$ be a closed
subscheme of $X$. We put $U:=X\setminus Z$. We denote by
($F$-)$\mr{Isoc}(U,X/\Ct)$\index{categories!Isoc@$\mr{Isoc}(U,X/\Ct)$, $F$-$\mr{Isoc}(U,X/\Ct)$} the category of convergent
($F$-)isocrystal on $U$ over $K$ overconvergent along $Z$. If $X$ is
proper, we say, for sake of brevity, \emph{overconvergent \linebreak
($F$-)isocrystal \index{Fi@$F$-isocrystal, overconvergent isocrystal} on $U$ over $\Ct$}, instead of convergent
($F$-)isocrystal on $U$ over $\Ct$ overconvergent along $Z$, and we
denote the category by ($F$-)$\mr{Isoc}^\dag(U/K)$\index{categories!Isoc@$\mr{Isoc}^\dag(U/K)$, $F$-$\mr{Isoc}^\dag(U/K)$}.

Now, let $\ms{X}$ be a smooth formal scheme, and $Z$ be a divisor of its
special fiber. Let
$\ms{U}:=\ms{X}\setminus Z$, $X$ and $U$ be the special fibers of
$\ms{X}$ and $\ms{U}$ respectively. In this paper, we denote
$\DdagQ{\ms{X}}(^\dag Z)$ by $\DdagQ{\ms{X}}(Z)$\index{differential operators!.@$\Dmod{m}{X}, \Dcomp{m}{\ms{X}}, \Ddag{\ms{X}},\DdagQ{\ms{X}}$, $\DdagQ{\ms{X}}(Z)$,\ldots|)} for short. In the same
way, we denote $\mc{O}_{\ms{X},\mb{Q}}(^\dag Z)$\index{.@miscellaneous!Ox1@$\mc{O}_{\ms{X},\mb{Q}}(Z)$} by
$\mc{O}_{\ms{X},\mb{Q}}(Z)$. Let
$\ms{M}$ be a coherent ($F$-)$\DdagQ{\ms{X}}(Z)$-module such that
$\ms{M}|_{\ms{U}}$ is coherent as an
$\mc{O}_{\ms{U},\mb{Q}}$-module. Then we know that $\ms{M}$ is a
coherent $\mc{O}_{\ms{X},\mb{Q}}(Z)$-module by \cite{BerL}. Let
$\mc{C}$ be the full subcategory of the category of coherent
($F$-)$\DdagQ{\ms{X}}(Z)$-modules consisting of such $\ms{M}$. By \cite[4.4.12]{Ber1}
and \cite[4.6.7]{Ber2}
 the specialization functors $\mr{sp}_*$ and $\mr{sp}^*$ induce an
equivalence between $\mc{C}$ and the category
($F$-)$\mr{Isoc}^\dag(U,X/K)$. We will say
that $\ms{M}$ is a convergent ($F$-)isocrystal on $\ms{U}$
overconvergent along $Z$ by abuse of language.

\subsubsection{}
\label{convention-Tatetwist-shifts} 
The shift of a complex $\ms{C}$ will be denoted always by brackets $\ms{C}[d]$.
When parenthesis appear, like $\ms{C}(i)[d]$, it means that all the terms of the complex are Tate twisted $i$ times; cf.~\eqref{sb:dec_lambda} for a definition of Tate twist.   


\section{Stability theorem for characteristic cycles on curves}
\label{section1}
The definition of stable holonomic module (\ref{stabledef}) and 
the stability theorem (\ref{stabilitytheoremcy}) are the goal of this
section. Theorem \ref{stabilitytheoremcy} is needed to prove
the product formula for holonomic $\ms{D}^{\dagger}$-modules 
with Frobenius structure on a curve over a finite field.
Nevertheless, in this section, we tried to state the theorems in the
more possible generality: in particular, we \emph{do not} require $k$
to be perfect, neither we assume the existence of a Frobenius structure
on $\ms{D}^{\dagger}$-modules. Even if we put Frobenius structures, we
do not know if the proof of the stability theorem could be simplified.


\subsection{Review of microdifferential operators}
We review the definitions and properties of the arithmetic
microdifferential sheaves on curves, which are going to be used
extensively in this paper. For the general definitions in higher
dimensional settings and more details, see \cite{Abe}.

\subsubsection{}\label{microrecall}
Let $\ms{X}$ be a formal curve over $R$. We denote its special
fiber by $X$. Let $T^*X$ be the cotangent bundle of $X$ and
$\pi\colon T^*X\rightarrow X$ be the canonical
projection. We put $\mathring{T}^*X:=T^*X\backslash s(X)$ where
$s\colon X\rightarrow T^*X$ denotes the zero section. \index{.@miscellaneous!pi@$\pi\colon T^*X\rightarrow X$, $\mathring{T}^*X$}
Let $m\geq0$ be an
integer and $\ms{M}$ be a coherent $\DcompQ{m}{\ms{X}}$-module. One
of the basic ideas of microlocalization is to ``localize'' $\ms{M}$ over
$T^*X$ to make possible a more detailed analysis on $\ms{M}$. For this,
we define step by step the sheaves of rings $\Emod{m}{X_i}$,
$\Ecomp{m}{\ms{X}}$, $\EcompQ{m}{\ms{X}}$ on the cotangent bundle.
\index{microdifferential operators!naive $\Emod{m}{X_i}$, $\Ecomp{m}{\ms{X}}$, $\Emod{m}{\ms{X}}$, $(\Ecomp{m}{\ms{X}})_n$,\ldots|(}

Let $i$ be a non-negative integer.
Let us define $\Emod{m}{X_i}$ first. For the detail of this
construction, see \cite[2.2, Remark 2.14]{Abe}. There are mainly two
types of rings of sections of $\Emod{m}{X_i}$. Let ${U}$ be an open
subset of $T^*X$. If ${U}\cap s(X)$ is non empty, we get
$\Gamma({U},\Emod{m}{X_i})\cong\Gamma(\pi({U}\cap
s(X)),\Dmod{m}{X_i})$. Suppose that the intersection is empty. Let
${U}':=\pi^{-1}(\pi({U}))\cap\mathring{T}^*X$. Then the ring of
sections of $\Emod{m}{X_i}$ on ${U}$ is equal to that on ${U}'$, and
the ring of these sections is the ``microlocalization'' of
$\Gamma(\pi({U}),\Dmod{m}{X_i})$. Let us describe
locally the sections $\Gamma(U',\Emod{m}{X_i})$. Shrink $\ms{X}$ so that
it possesses a local coordinate denoted by $x$. We denote the
corresponding differential operator by $\partial$. There exists an
integer $N$ such that $\partial^{\angles{m}{Np^m}}$ is in the center of
$\Dmod{m}{X_i}$. Let $S$ be the multiplicative system generated by
$\partial^{\angles{m}{Np^m}}$ in $\Dmod{m}{X_i}$. 
The positive filtration on $\Gamma(\pi({U}'),\Dmod{m}{X_i})$, given 
by the order of differential operators, induces a ring filtration, necessarily indexed by $\ZZ$, on the localization $S^{-1}\Gamma(\pi({U}'),\Dmod{m}{X_i})$. Thus the elements  in $S^{-1}\Gamma(\pi({U}'),\Dmod{m}{X_i})$ 
  can have negative order, but they are ``finite'' in the sense 
  that  only finitely many negative powers of 
  $\partial^{\angles{m}{Np^m}}$ can appear in (the total symbol of) each of them.  
We define
\begin{equation*}
 \Gamma({U}',\Emod{m}{X_i}):=\bigl(S^{-1}\Gamma(\pi({U}'),
  \Dmod{m}{X_i})\bigr)^\wedge,
\end{equation*}
as the completion of this filtered ring 
with respect to negative order 
(see \cite[1.1.5]{Abe} for our conventions on the completion of a filtered ring).

Taking an inverse limit over $i$, we define $\Ecomp{m}{\ms{X}}$. The
sections of $\Ecomp{m}{\ms{X}}$ can be described concretely as
follows. Let ${U}$ be an open subset of $\mathring{T}^*X$, and assume
${V}:=\pi({U})$ to be affine. Let $\ms{V}$ be the open formal subscheme
sitting over $V$. Suppose moreover that $\ms{V}$ possesses a local
coordinate $x$, and we denote the corresponding differential operator
by $\partial$. For an integer $k\geq 0$, take the minimal integer $i$
such that $k\leq ip^m$, and let $l:=ip^m-k$. By the construction of
$\Ecomp{m}{\ms{X}}$, the operator $\partial^{\angles{m}{ip^m}}$ in
$\Dcomp{m}{\ms{X}}$ considered as a section of $\Ecomp{m}{\ms{X}}$ is
invertible, and the inverse is denoted by
$\partial^{\angles{m}{-ip^m}}$. Then we put
$\partial^{\angles{m}{-k}}:=\partial^{\angles{m}{l}}\cdot\partial
^{\angles{m}{-ip^m}}$. By using \cite[2.10]{Abe}, we get
\begin{equation*}
 \Gamma({U},\Ecomp{m}{\ms{X}})\cong\Bigl\{\sum_{k\in\mb{Z}}a_k
  \partial^{\angles{m}{k}}\Big\arrowvert a_k\in\mc{O}_{\ms{V}},\lim
  _{k\rightarrow+\infty}a_k=0 \Bigr\}.
\end{equation*}
Finally, by tensoring with $\mb{Q}$, we define $\EcompQ{m}{\ms{X}}$. One
of the most important properties of $\EcompQ{m}{\ms{X}}$ is that we get
an equality for any coherent $\DcompQ{m}{\ms{X}}$-module $\ms{M}$
(cf.\ \cite[2.13]{Abe})
\begin{equation*}
 \mr{Char}^{(m)}(\ms{M})=\mr{Supp}(\EcompQ{m}{\ms{X}}\otimes
  _{\pi^{-1}\DcompQ{m}{\ms{X}}}\pi^{-1}\ms{M}),
\end{equation*}
where $\mr{Char}^{(m)}$\index{characteristic variety and cycle!Char@$\mr{Char}^{(m)}$} denotes the characteristic variety of level
$m$ (cf.\ \cite[5.2.5]{BerInt}). The module
$\EcompQ{m}{\ms{X}}\otimes_{\pi^{-1}\DcompQ{m}{\ms{X}}}
\pi^{-1}\ms{M}$ is called the (naive) microlocalization of $\ms{M}$ of
level $m$. The ring $\EcompQ{m}{\ms{X}}$ is called the (naive)
microdifferential operators of level $m$.

\subsubsection{}\label{def_intermediate_microdiff}
In the last paragraph, we fixed the level $m$ to construct the ring of
microdifferential operators. However, to deal with
microlocalizations of $\DdagQ{\ms{X}}$-modules, we need to change levels
and see the asymptotic behavior. The problem is that there are no
reasonable transition homomorphism
$\EcompQ{m}{\ms{X}}\rightarrow\EcompQ{m'}{\ms{X}}$ for non-negative
integers $m'>m$. To remedy this, we
need to take an ``intersection''. Let
$\Emod{m}{\ms{X}}:=\bigcup_{n\in\mb{Z}}(\Ecomp{m}{\ms{X}})_n$, where
$(\Ecomp{m}{\ms{X}})_n$\index{microdifferential operators!naive $\Emod{m}{X_i}$, $\Ecomp{m}{\ms{X}}$, $\Emod{m}{\ms{X}}$, $(\Ecomp{m}{\ms{X}})_n$,\ldots|)} denotes the sub-$\pi^{-1}\mc{O}_{\ms{X}}$-module
of $\Ecomp{m}{\ms{X}}$ consisting of microdifferential operators of
order less than or equal to $n$, and put
$\Emod{m}{\ms{X},\mb{Q}}:=\Emod{m}{\ms{X}}\otimes\mb{Q}$.
Then
there exists a canonical homomorphism of
$\pi^{-1}\mc{O}_{\ms{X},\mb{Q}}$-algebras
$\psi_{m,m'}\colon\Emod{m'}{\ms{X},\mb{Q}}
\rightarrow\Emod{m}{\ms{X},\mb{Q}}$ sending $\partial^{\angles{m'}{1}}$
to $\partial^{\angles{m}{1}}$. We
define $\Emod{m,m'}{\ms{X}}:=\psi_{m,m'}^{-1}(\Emod{m}{\ms{X}})\cap
\Emod{m'}{\ms{X}}$.\index{microdifferential operators!intermediate!.1@$\Ecomp{m,m'}{\ms{X}}$, $(\EcompQ{m,m'}{\ms{X}})_k$, $\EcompQ{m,m'}{x}$,\ldots|(} By definition, $\Ecomp{m,m'}{\ms{X}}$ is the
$p$-adic completion of $\Emod{m,m'}{\ms{X}}$, and
$\EcompQ{m,m'}{\ms{X}}$ is $\Ecomp{m,m'}{\ms{X}}\otimes\mb{Q}$. When
$\ms{X}$ possesses a local coordinate $x$, we may write
\begin{equation*}
 \Gamma(\mathring{T}^*{X},\EcompQ{m,m'}{\ms{X}})\cong\Biggl
  \{\sum_{k\in\mb{Z}}a_k\partial^{k}\Bigg\arrowvert
  a_k\in\mc{O}_{\ms{X},\mb{Q}},\sum_{k<0}a_k\partial^{k}
  \in\Ecomp{m'}{\ms{X},\mb{Q}},~
  \sum_{k\geq0}a_k\partial^{k}\in\Ecomp{m}{\ms{X},\mb{Q}}\Biggr\}.
\end{equation*}
We note that the last condition
$\sum_{k\geq0}a_k\partial^{k}\in\Ecomp{m}{\ms{X},\mb{Q}}$ is equivalent
to $\sum_{k\geq0}a_k\partial^{k}\in\Dcomp{m}{\ms{X},\mb{Q}}$. 
Moreover, by construction of  $\Gamma(\mathring{T}^*{X},\EcompQ{m,m'}{\ms{X}})$, 
the expansion $\sum_{k\in\mb{Z}}a_k\partial^{k}$ satisfying the above conditions is unique, once the local coordinate $x$ is fixed (and so is $\partial$).
For an
integer $k$, we put $(\EcompQ{m,m'}{\ms{X}})_k:=\EcompQ{m,m'}{\ms{X}}
\cap(\EcompQ{m}{\ms{X}})_k$. 
We have canonical homomorphisms
$\Ecomp{m-1,m'}{\ms{X}}\rightarrow\Ecomp{m,m'}{\ms{X}}$ and
$\Ecomp{m,m'+1}{\ms{X}}\rightarrow\Ecomp{m,m'}{\ms{X}}$ (cf.\
\cite[4.6]{Abe}). We call these sheaves the intermediate rings of
microdifferential operators.

We also recall the following definitions, 
 $$\Emod{m,\dag}{\ms{X},\mb{Q}}
:=\varprojlim_{m'\rightarrow+\infty} \EcompQ{m,m'}{\ms{X}}
\quad\text{  and  }\quad \ms{E}^{\dag}_{\ms{X},\mb{Q}}:= \varinjlim_{m\rightarrow+\infty}\Emod{m,\dag}{\ms{X},\mb{Q}}, $$
\index{microdifferential operators!$\Emod{m,\dag}{\ms{X},\mb{Q}}$, $\ms{E}^{\dag}_{\ms{X},\mb{Q}}$}
where the transition maps are induced by the canonical homomorphisms above, 
cf. \cite[4.11]{Abe} for more details.

\subsubsection{}
\label{mainabe}
Now, let us explain the relation between the supports of the
microlocalizations of a $\DcompQ{m}{\ms{X}}$-module with respect to
intermediate rings and the characteristic variety. Let $\ms{X}$ be a
formal curve as in the last paragraph, and let $\ms{M}$ be a
coherent $\DcompQ{m}{\ms{X}}$-module. One might expect that, for
integers $m''\geq m'\geq m$,
\begin{equation}
 \label{charintemicro}
 \mr{Char}^{(m')}(\DcompQ{m'}{\ms{X}}\otimes_{\DcompQ{m}{\ms{X}}}\ms{M})=
  \mr{Supp}(\EcompQ{m',m''}{\ms{X}}\otimes_{\pi^{-1}\DcompQ{m}{\ms{X}}}
  \pi^{-1}\ms{M}).
\end{equation}
This does not hold in general as we can see by the counter-example
\cite[7.1]{Abe}. However, the statement holds for $m'$ large enough. The
following is one of main results of \cite{Abe}.
\begin{thm*}[{\cite[7.2]{Abe}}]
 There exists an integer $N$ such that
 {\normalfont(\ref{charintemicro})} holds for $m'\geq N$.
\end{thm*}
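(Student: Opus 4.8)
The plan is to reduce to an explicit affine situation, to localize the problem to the complement of the zero section, and there to show that the gap between the naive and the intermediate microlocalizations is controlled by data that becomes stationary at a finite level. First, since both sides of (\ref{charintemicro}) are closed and the statement is local on $\ms{X}$, I may assume $\ms{X}$ affine with a local coordinate $x$ and associated operator $\partial$, and I may replace $\ms{M}$ by $\ms{N}:=\DcompQ{m'}{\ms{X}}\otimes_{\DcompQ{m}{\ms{X}}}\ms{M}$: the right-hand side of (\ref{charintemicro}) depends only on $\ms{N}$ because $\EcompQ{m',m''}{\ms{X}}$ is a $\DcompQ{m'}{\ms{X}}$-algebra, and by the identity $\mr{Char}^{(m')}(\ms{N})=\mr{Supp}(\EcompQ{m'}{\ms{X}}\otimes_{\pi^{-1}\DcompQ{m'}{\ms{X}}}\pi^{-1}\ms{N})$ recalled above, (\ref{charintemicro}) becomes a comparison of $\mr{Supp}(\EcompQ{m'}{\ms{X}}\otimes\pi^{-1}\ms{N})$ with $\mr{Supp}(\EcompQ{m',m''}{\ms{X}}\otimes\pi^{-1}\ms{N})$. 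Over the zero section $s(X)$ the ring $\EcompQ{m',m''}{\ms{X}}$ restricts to $\DcompQ{m'}{\ms{X}}$ — this is the observation, made just after the local description of the intermediate rings, that the nonnegative-order part of a section already lies in $\DcompQ{m'}{\ms{X}}$ — so both supports meet $s(X)$ in $\mr{Supp}(\ms{N})$ for every $m'$. The content is thus confined to $\mathring{T}^{*}X$, where, $X$ being a curve, a conic closed subset is either all of $\mathring{T}^{*}X$ (a straightforward case) or a finite union of fibres $\pi^{-1}(s)$; so it remains to prove that the set of closed points $s$ carrying a vertical component of $\mr{Char}^{(m')}(\ms{N})$ coincides, for $m'\ge N$ and all $m''\ge m'$, with the set of $s$ at which $\EcompQ{m',m''}{\ms{X}}\otimes\pi^{-1}\ms{N}$ is supported.

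One of the two inclusions is formal and valid for every $m'$: the transition maps $\EcompQ{m',m''+1}{\ms{X}}\to\EcompQ{m',m''}{\ms{X}}$ of \cite[4.6]{Abe} together with the comparison of the intermediate ring with the naive one $\EcompQ{m'}{\ms{X}}$ yield a morphism of microlocalized modules whose effect on supports settles one containment; it is the other one that can fail, as the counterexample \cite[7.1]{Abe} shows. For the substantial inclusion I would first bring $\ms{N}$ into computable form by d\'evissage: since $\DcompQ{m'}{\ms{X}}$ is noetherian, $\ms{N}$ admits a finite filtration by coherent submodules whose successive quotients are either $\cong\DcompQ{m'}{\ms{X}}/\DcompQ{m'}{\ms{X}}P$ for some $P$ — generically $\mc{O}_{\ms{X},\mb{Q}}$-coherent when $P\neq 0$, and straightforward when $P=0$ — or supported at finitely many closed points; the punctual case is checked directly. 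For a generically $\mc{O}_{\ms{X},\mb{Q}}$-coherent quotient, a vertical component of $\mr{Char}^{(m')}$ at a point $s$ records precisely that the induced $p$-adic differential module over the complete local field at $s$ is singular there, i.e. has positive irregularity; this is a microlocal invariant, read off the microlocalization, and the key input is that the relevant slope data — the Christol-Mebkhout slopes that exceed the radius threshold attached to level $m'$ — is already visible at a finite level and stays constant beyond it, by the Christol-Mebkhout slope decomposition and Crew's local theory \cite{Cr3}. Choosing $m'$ past that finite level at each of the finitely many candidate points $s$, uniformly in $m''\ge m'$, produces $N$ and the missing containment.

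The hard part will be this last step, together with its uniformity. One must translate the combinatorial level-$m'$ data of a presentation — its level-$m'$ Newton polygon, equivalently the support of the associated graded over $\mr{gr}^{(m')}\DcompQ{m'}{\ms{X}}$ — into the intrinsic slope invariants of the local differential modules; show that the discrepancy between the naive and the intermediate microlocalization at a point $s$ is governed exactly by slopes lying below the level-$m'$ threshold, so that it disappears once $m'$ is large; and make the resulting bounds simultaneous over the finitely many singular points and over all $m''\ge m'$. The remaining ingredients — the d\'evissage, the formal inclusion, the punctual case, and the reduction to $\mathring{T}^{*}X$ — are routine by comparison.
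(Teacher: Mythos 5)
The paper does not prove this statement. Theorem \ref{mainabe} is imported as a black box from \cite[7.2]{Abe}, the first author's separate preprint on rings of microdifferential operators; even the failure of (\ref{charintemicro}) without level-raising is only recorded by pointing to the counterexample \cite[7.1]{Abe}. There is consequently no in-paper argument to compare your sketch against, and the actual proof lives entirely in that reference.

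On the substance of your sketch: the preliminary reductions are fine. The ring $\EcompQ{m',m''}{\ms{X}}$ is a $\DcompQ{m'}{\ms{X}}$-algebra, so the right-hand side of (\ref{charintemicro}) depends only on $\ms{N}:=\DcompQ{m'}{\ms{X}}\otimes\ms{M}$; on a curve the problem localizes to finitely many fibres of $\pi$; and one containment is formal from the transition maps of \cite[4.6]{Abe}. The difficulty is exactly where you put it, but the route you propose for closing it is circular within this paper. You want to read ``$\pi^{-1}(s)$ lies in $\mr{Supp}(\EcompQ{m',m''}{\ms{X}}\otimes\ms{N})$'' as a statement about Christol--Mebkhout slopes of the local $p$-adic differential module at $s$, and then invoke stabilization of those slopes in $m'$. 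But that translation is precisely what Proposition \ref{analytifcisom}, Corollary \ref{ananddag} and Corollary \ref{compCMGAR} achieve in \S\ref{analye}--\S\ref{applithem}, and those results are logically downstream of \ref{mainabe}: Corollary \ref{ananddag} already assumes the module stable, i.e.\ that the level-raising of \ref{mainabe} has been applied, and Corollary \ref{compCMGAR} invokes Theorem \ref{stabilitytheoremcy}, whose proof opens with the same assumption. Reaching for the $p$-adic-differential-equations dictionary therefore reverses the order of the development: the cited reference proves the theorem internally to the microdifferential rings, and only afterwards, in the present paper, is the microlocal picture connected to the Christol--Mebkhout and Crew local theory. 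You flag the hard step as open; the reason it stays open in this sketch is that the tools you name for it presuppose the conclusion.
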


\subsection{Setup and preliminaries}

\subsubsection{}
\label{setup}
In this paragraph, we introduce some situations and notation.
In this paper, especially in the first two sections, we often consider
the following setting, which is called {\em Situation (L)}.
\begin{quote}
 Let $\ms{X}$ be an affine formal curve over
 $R$. Recall the convention \ref{defRmsX}, especially $X_i$ and $X$.
 Suppose that there exists a local coordinate $x$ in
 $\Gamma(\ms{X},\mc{O}_{\ms{X}})$ and fix it. We denote the
 corresponding differential operator by $\partial$.
\end{quote}
If moreover we assume the following, we say we are in {\em Situation
(Ls)}.
\begin{quote}
 Let $s$ be a closed point in $\ms{X}$, and we fix it. We suppose that
 there exists a local parameter at $s$ denoted by $y_s$ on $\ms{X}$.
\end{quote}
We use the following notation.

\begin{notation*}
 Let $\ms{X}$ be a formal curve over $R$, and $\ms{U}$ be an open affine
 formal subscheme of $\ms{X}$. We denote $\ms{U}\otimes R_i$ by $U_i$ as
 usual. Let $m'\geq m$ be non-negative integers.

\newcounter{enumipp}
 \begin{enumerate}
  \item We put
	$E^{\dag}_{\ms{U},\mb{Q}}:=\Gamma(\mathring{T}^*U,\ms{E}^{\dag}
	_{\ms{X},\mb{Q}})$, $\Emodb{m,\dag}{\ms{U},\mb{Q}}:=\Gamma(
	\mathring{T}^*U,\Emod{m,\dag}{\ms{X},\mb{Q}})$,
	$\EcompQb{m,m'}{\ms{U}}:=\Gamma(\mathring{T}^*U,\EcompQ{m,m'}
	{\ms{X}})$, $\Ecompb{m,m'}{\ms{U}}:=\Gamma(\mathring{T}^*U,
	\Ecomp{m,m'}{\ms{X}})$,	$(\Ecompb{m,m'}{\ms{U}})_k:=
	\Gamma(\mathring{T}^*U,(\Ecomp{m,m'}{\ms{X}})_k)$,
	$\Emodb{m,m'}{U_i}:=\Gamma(\mathring{T}^*U,
	\Emod{m,m'}{X_i})$.%
\index{microdifferential operators!.2@$E^{\dag}_{\ms{U},\mb{Q}}$, $\Emodb{m,\dag}{\ms{U},\mb{Q}}$}%
\index{microdifferential operators!intermediate!.2@$\EcompQb{m,m'}{\ms{U}}$, $\Ecompb{m,m'}{\ms{U}}$, $(\Ecompb{m,m'}{\ms{U}})_k$,\ldots}%
  \item Let $E$ be one of $E^\dag_{\ms{U},\mb{Q}}$, $\Emodb{m,\dag}
	{\ms{U},\mb{Q}}$, $\EcompQb{m,m'}{\ms{U}}$. For a coherent
	$\DcompQ{m}{\ms{X}}$-module $\ms{M}$, we denote by
	$E\otimes_{\Dcomp{m}{}}\ms{M}$ or $E\otimes\ms{M}$ the $E$-module
	$E\otimes_{\Gamma(\ms{U},\DcompQ{m}{\ms{X}})}\Gamma(\ms{U},\ms{M})$.

  \item\label{not3} Let $x$ be a closed point in $\ms{X}$. Take a point $\xi_x$ in
	$\pi^{-1}(x)$ which is not in the zero
	section. Let $\ms{E}$ be one of sheaves of rings $\Ecomp{m,m'}{\ms{X}}$,
	$\EcompQ{m,m'}{\ms{X}}$, $\Emod{m,\dag}{\ms{X},\mb{Q}}$ or $\EdagQ{\ms{X}}$.  By the construction of $\ms{E}$, the fiber 
	$\ms{E}_{\xi_x}$ does not depend on the choice of $\xi_x$ and
	we denote it by $\ms{E}_{x}$.  
	Let a pair $(\ms{E}',\ms{E})$ be one of the four pairs
	$(\Ecomp{m,m'}{x},\Ecomp{m,m'}{\ms{X}})$,
	$(\EcompQ{m,m'}{x},\EcompQ{m,m'}{\ms{X}})$,
	$(\Emod{m,\dag}{x,\mb{Q}},\Emod{m,\dag}{\ms{X},\mb{Q}})$,
	$(\EdagQ{x},\EdagQ{\ms{X}})$. For a coherent
	$\Dcomp{m}{\ms{X}}$ or $\DcompQ{m}{\ms{X}}$-module $\ms{M}$ we
	write $\ms{E}'\otimes_{\Dcomp{m}{}}\ms{M}$ for
	$(\ms{E}\otimes_{\pi^{-1}\Dcomp{m}{\ms{X}}}\pi^{-1}\ms{M})
	_{\xi_x}$. This does not depend either on the choice of
	$\xi_x$. 
	We warn the reader that even if $\ms{E}$ is complete with respect to some topology we do not 
complete when we take the fiber at $\xi_x$.%
\index{microdifferential operators!intermediate!.1@$\Ecomp{m,m'}{\ms{X}}$, $(\EcompQ{m,m'}{\ms{X}})_k$, $\EcompQ{m,m'}{x}$,\ldots|)}\index{microdifferential operators!.3@$\Emod{m,\dag}{x,\mb{Q}}$, $\EdagQ{x}$}
  \setcounter{enumipp}{\theenumi}
 \end{enumerate}
 Moreover, assume that we are in Situation (L).

 \begin{enumerate}
  \setcounter{enumi}{\theenumipp}
  \item We denote by $\CR{\ms{X}}\{\partial\}^{(m,m')}$ the
  subring of $\Ecompb{m,m'}{\ms{X}}$ whose elements are ``horizontal with
  respect to $x$''. More precisely, we define
  \begin{equation*}
   \CR{\ms{X}}\{\partial\}^{(m,m')}:=\Bigl\{P=\sum_{n\in\mb{Z}}
    a_n\partial^{n}\in\Ecompb{m,m'}{\ms{X}}\Big\arrowvert
    \mbox{$P\partial^k=\partial^kP$ for any $k\geq 0$}\Bigr\}.
  \end{equation*} 
  We put $\CK{\ms{X}}\{\partial\}^{(m,m')}:=\CR{\ms{X}}\{\partial\}^{(m,m')}
  \otimes\mb{Q}$ and $\CR{X_i}\{\partial\}^{(m,m')}:=\CR{\ms{X}}\{\partial\}^{(m,m')}
  /\varpi^{i+1}$.
We note that by the hypothesis of Situation (L), the coordinate $x$
is fixed, and so is $\partial$. 
 \end{enumerate}
\index{microdifferential operators!intermediate!.3@$\CR{\ms{X}}\{\partial\}^{(m,m')}$, $\CK{\ms{X}}\{\partial\}^{(m,m')}$,\ldots} 
\end{notation*}

\begin{rem*}\label{remark_setup}
 (i) Let $\ms{E}$ be a sheaf of rings on a topological space. Then an
 $\ms{E}$-module $\ms{M}$ is said to be {\em globally finitely
 presented} if there exist integers $a,b\geq0$ and an exact
 sequence $\ms{E}^a\rightarrow\ms{E}^b\rightarrow\ms{M}\rightarrow0$ on
 the topological space. By \cite[5.3]{Abe}, when $\ms{X}$ is
 affine, there exists an equivalence of
 categories between the category of globally finitely presented
 $\Ecomp{m,m'}{\ms{X}}$-modules (resp.\ $\EcompQ{m,m'}{\ms{X}}$-modules)
 on $\mathring{T}^*X$ and that of
 $\Ecompb{m,m'}{\ms{X}}$-modules (resp.\
 $\EcompQb{m,m'}{\ms{X}}$-modules). We
 remind here that if there exists a coherent $\Dcomp{m}{\ms{X}}$-module
 $\ms{N}$ such that $\ms{M}\cong\Ecomp{m,m'}{\ms{X}}\otimes\ms{N}$, then
 $\ms{M}$ is globally finitely presented, and the same for
 $\EcompQ{m,m'}{\ms{X}}$-modules.
 
 (ii) Let  $\mathring{\pi}\colon\mathring{T}^*X\rightarrow X$ be the natural projection.
A sheaf $\ms{E}$ on $\mathring{T}^*X$ is called \emph{conic} 
if the natural morphism 
$\mathring{\pi}^{-1}\mathring{\pi}_*\ms{E}\rightarrow \ms{E}$ is an isomorphism.
In particular, the support $\mr{Supp}(\ms{E})$ of such a sheaf is a conic subset 
of $\mathring{T}^*X$:
\emph{i.e.} if $z\in \mr{Supp}(\ms{E})\subset \mathring{T}^*X$, then $\mathring{\pi}^{-1}(\mathring{\pi}(z))\subset \mr{Supp}(\ms{E})$.

By construction the sheaves $\ms{E}$  of microdifferentials are conic (where $\ms{E}$ stands for any of the sheaves 
$\EcompQ{m,m'}{\ms{X}}$, $\Emod{m,\dag}{\ms{X},\mb{Q}}$, \emph{etc.}, introduced before).
Since this is clearly a local property, it follows that any coherent $\ms{E}$-module is also conic.

 (iii)  Every element $Q$ in the ring $\CR{\ms{X}}\{\partial\}^{(m,m')}$ 
 can be written uniquely as $Q=\sum_{l<0} a_l \partial^{\angles{m'}{l}}
 + \sum_{k\geq 0} a_k \partial^{\angles{m}{k}}$, with $a_k$ in a finite
 \'{e}tale extension $R(\ms{X})$ of $R$ (cf.\
 \ref{K_Xnotdef}, \ref{fieldconstmicdif}), which coincides with $R$ if
 and only if $\ms{X}$ is geo\-metrically connected. The rings
 $K\{\partial\}^{(m,m')}$ and $\CR{X_i}\{\partial\}^{(m,m')}$ are
 subrings of $\EcompQb{m,m'}{\ms{X}}$ and $\Emodb{m,m'}{X_i}$
 respectively. We remind that they {\em do} depend
 on the choice of local coordinate $x$, and in particular, we are not
 able to globalize the construction.
\end{rem*}

\subsubsection{}
\label{topologyonring}
Let $\ms{X}$ be an affine formal curve over $R$
and let $E$ be one of the rings $\Ecompb{m,m'}{\ms{X}}$,
$\EcompQb{m,m'}{\ms{X}}$ or $\CK{\ms{X}}\{\partial\}^{(m,m')}$. We
finish this subsection by introducing some useful topologies on $E$ and
on finitely generated $E$-modules. Let's start with
$\Ecompb{m,m'}{\ms{X}}$.

For integers $k,l\geq 0$, we define a sub-$R$-module of
$\Ecompb{m,m'}{\ms{X}}$ by
\begin{equation*}
 U_{k,l}:=(\Ecompb{m,m'}{\ms{X}})_{-k}+\varpi^l\Ecompb{m,m'}{\ms{X}}.
\end{equation*}
We endow $\Ecompb{m,m'}{\ms{X}}$ with the topology denoted by $\ms{T}_0$
where the base of neighborhoods of zero is given by the system
$\{U_{k,l}\}_{k,l\geq0}$. With this topology, $\Ecompb{m,m'}{\ms{X}}$ is
a complete topological ring. Let us
see that it is complete with respect to this topology. Let $\{P_i\}$ be
a Cauchy sequence in $\Ecompb{m,m'}{\ms{X}}$. Then we may write
$P_i=Q_i+R_i$ such that: for any integers $k$ and $l$, there exists
an integer $N$ such that $Q_n-Q_N\in(\Ecompb{m,m'}{\ms{X}})_{-k}$
and $R_n-R_N\in\varpi^l\Ecompb{m,m'}{\ms{X}}$ for any $n\geq N$. By the
definition of the topology and the construction of the ring
$\Ecompb{m,m'}{\ms{X}}$, the limits $\lim_{i\rightarrow\infty}Q_i$ and
$\lim_{i\rightarrow\infty}R_i$ exist, and they
are denoted by $Q$ and $R$ respectively. Then we see that
$\lim_{i\rightarrow\infty}P_i=Q+R$ by definition.

Now, let us define topologies $\ms{T}$ and $\ms{T}_n$ for any integer
$n\geq0$ on $\EcompQb{m,m'}{\ms{X}}$. Let $n\geq0$ be an integer. For
integers $k,l\geq 0$, we can consider $\varpi^{-n}U_{k,l}$ as a
sub-$R$-module of $\EcompQb{m,m'}{\ms{X}}$, and we denote by $\ms{T}_n$
the topology on $\EcompQb{m,m'}{\ms{X}}$ generated by the open basis
$\{\varpi^{-n}U_{k,l}\}_{k,l\geq 0}$.
This topology makes it a locally convex topological space, and moreover
a Fr\'{e}chet space by \cite[Th\'{e}or\`{e}me 3.12]{Ti}. The identity
map $(\EcompQb{m,m'}{\ms{X}},\ms{T}_{n})\rightarrow(\EcompQb{m,m'}
{\ms{X}},\ms{T}_{n+1})$ is continuous by construction. By taking the
inductive limit (of locally convex spaces), we define a topology,
denoted by $\ms{T}$, on $\EcompQb{m,m'}{\ms{X}}$. It makes
$(\EcompQb{m,m'}{\ms{X}},\ms{T})$ an LF-space in the sense of
\cite[3.1]{Crew:ens}. The separateness can be seen from the fact that
the convex subset
$(\EcompQb{m,m'}{\ms{X}})_{-k}+\varpi^l\Ecompb{m,m'}{\ms{X}}$
in $\EcompQb{m,m'}{\ms{X}}$ is open in the $\ms{T}_n$-topology for any
$n$ and thus in the $\ms{T}$-topology.

Let $M$ be a finitely generate $\EcompQb{m,m'}{\ms{X}}$-module. We denote
the $(\EcompQb{m,m'}{\ms{X}},\ms{T}_n)$-module topology on $M$
(cf.\ \ref{nottop}) by $\ms{T}'_n$. Let us prove that the
topology $\ms{T}'_n$ is separated.
Let $\varphi\colon(\EcompQb{m,m'}{\ms{X}})^{\oplus a}\rightarrow M$ be a
surjective homomorphism, and put
$M':=\varphi((\Ecompb{m,m'}{\ms{X}})^{\oplus a})$. Consider the quotient
topology on $M'$ using the topology $\ms{T}_0$ on
$\Ecompb{m,m'}{\ms{X}}$. It suffices to show that $M'$ is
separated. Indeed, take $\alpha,\alpha'\in M$ such that $\alpha\neq
\alpha'$. There exists an integer $i\geq n$ such that
$\varpi^{i}\alpha,\varpi^{i}\alpha'\in M'$. If $M'$
is separated, there exists $U_{k,l}$, $U_{k',l'}$ such that
$(\varpi^{i}\alpha+\varphi(U_{k,l}))\cap(\varpi^{i}\alpha'+
\varphi(U_{k',l'}))=\emptyset$. Since
$\varpi^{-i}U_{k,l}\supset\varpi^{-n}U_{k,l}$, we get the claim. Let us
show that $M'$ is separated.
Since $\Ecompb{m,m'}{\ms{X}}$ is a noetherian complete $p$-adic ring
(cf.\ \cite[4.12]{Abe}), $M'$ is also $p$-adically complete, and in
particular, $p$-adically separated. Thus, it suffices to show that
$M'\otimes R_i$ is separated for any $i\geq0$ using the quotient
topology $\ms{Q}$ from $M'$. Consider the topology defined by
the filtration by order on $\Emodb{m,m'}{X_i}$. The topology
$\ms{Q}$ coincides with the quotient topology via
$(\Emodb{m,m'}{X_i})^{\oplus n}\rightarrow M'\otimes R_i$ induced by
$\varphi$. Since  $\Emodb{m,m'}{X_i}$ is a noetherian complete filtered
ring by \cite[4.8]{Abe}, we get that $M'\otimes R_i$ is separated,
and thus the topology $\ms{T}'_n$ on $M$ is separated.

This shows that, $\mr{Ker}(\varphi)$ is a
closed sub-$(\EcompQb{m,m'}{\ms{X}},\ms{T}_n)$-module. Thus
the topological vector space $(M,\ms{T}'_n)$ is a Fr\'{e}chet space. Of
course, the identity map $(M,\ms{T}'_n)\rightarrow(M,\ms{T}'_{n+1})$ is
continuous. We define the inductive limit topology (of locally convex
spaces) $\ms{T}'$ on $M$, which is called the {\em natural topology} on
$M$. If the natural
topology is separated, then $(M,\ms{T}')$ is an LF-space. When it is
separated, the open mapping theorem \cite[3.4]{Crew:ens} implies that
$(M,\ms{T}')$ coincides with the
$(\EcompQb{m,m'}{\ms{X}},\ms{T})$-module topology.

In the same way, we define topologies $\ms{S}$ and $\ms{S}_n$ on
$\CK{\ms{X}}\{\partial\}^{(m,m')}$ and on finitely generated
$\CK{\ms{X}}\{\partial\}^{(m,m')}$-modules when we are in Situation (L)
of \ref{setup}.

\begin{lem}
 \label{topology}
 Suppose we are in Situation {\normalfont(L)} of
 {\normalfont\ref{setup}}.
 Let $M$ be a finitely generated $\EcompQb{m,m'}{\ms{X}}$-module. We
 assume that it is also finite as
 $\CK{\ms{X}}\{\partial\}^{(m,m')}$-module. Then the natural topology as
 $\EcompQb{m,m'}{\ms{X}}$-module and the
 natural topology as $\CK{\ms{X}}\{\partial\}^{(m,m')}$-module are
 equivalent. In particular, if moreover $M$ is a free
 $\CK{\ms{X}}\{\partial\}^{(m,m')}$-module, then the topologies are
 separated, and $M$ becomes an LF-space.
\end{lem}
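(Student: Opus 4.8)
The plan is to compare the two natural topologies by a bootstrap through the $p$-adic and order filtrations. Both $\EcompQb{m,m'}{\ms{X}}$ and $\CK{\ms{X}}\{\partial\}^{(m,m')}$ carry the topology $\ms{T}_0$ (resp.\ $\ms{S}_0$) on their integral models $\Ecompb{m,m'}{\ms{X}}$, $\CR{\ms{X}}\{\partial\}^{(m,m')}$, defined by the neighborhoods $U_{k,l}$ built from the order filtration and $\varpi$-adic filtration; tensoring with $\mb{Q}$ and taking inductive limits produces $\ms{T}$ and $\ms{S}$, and the module topologies $\ms{T}'_n$, $\ms{S}'_n$ and their limits $\ms{T}'$, $\ms{S}'$. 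Since $M$ is finitely generated over the larger ring $\EcompQb{m,m'}{\ms{X}}$, the identity map $(M,\ms{S}')\to(M,\ms{T}')$ is automatically continuous: a finite generating set over $\CK{\ms{X}}\{\partial\}^{(m,m')}$ is in particular a finite generating set over $\EcompQb{m,m'}{\ms{X}}$, and $\ms{S}'$ is the finest locally convex topology making multiplication by the ring elements continuous, which dominates $\ms{T}'$. So the real content is continuity of the inverse, i.e.\ that every $\ms{S}'$-open set is $\ms{T}'$-open.

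First I would reduce this to the level of the integral models. As in the paragraph \ref{topologyonring}, pick a surjection $\varphi\colon(\EcompQb{m,m'}{\ms{X}})^{\oplus a}\to M$ and set $M':=\varphi((\Ecompb{m,m'}{\ms{X}})^{\oplus a})$; since $\Ecompb{m,m'}{\ms{X}}$ is a noetherian complete $p$-adic ring by \cite[4.12]{Abe}, $M'$ is $p$-adically complete, hence it suffices to treat $M'\otimes R_i$ for every $i\geq 0$ with its quotient filtered topology, which (again by \ref{topologyonring}) is the quotient via $(\Emodb{m,m'}{X_i})^{\oplus a}\to M'\otimes R_i$ of the order filtration. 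Now the hypothesis that $M$ is finite over $\CK{\ms{X}}\{\partial\}^{(m,m')}$ gives, after clearing denominators, that $M'\otimes R_i$ is finite over $\CR{X_i}\{\partial\}^{(m,m')}$; and by Remark\ (ii) this latter ring is generated topologically by $\{\partial^{\angles{m}{k}},\partial^{\angles{m'}{-l}}\}_{k,l\geq0}$ over the finite étale $R$-algebra $R(\ms{X})$. The key point is then a Noether-type comparison of good filtrations: the order filtration on $\Emodb{m,m'}{X_i}$ restricts to a filtration on $\CR{X_i}\{\partial\}^{(m,m')}$, and $\Emodb{m,m'}{X_i}$ is finite as a module over this subring with respect to these filtrations (the extra generators $x^j$ live in filtration $0$). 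Since $\Emodb{m,m'}{X_i}$ is a noetherian complete filtered ring by \cite[4.8]{Abe} and $\CR{X_i}\{\partial\}^{(m,m')}$ inherits the analogous property, a finitely generated module over either one, equipped with a good filtration, has the same associated topology — this is the standard Artin–Rees / comparison-of-good-filtrations lemma in the filtered-noetherian setting. That identifies the two quotient topologies on $M'\otimes R_i$, hence on $M'$, hence the two natural topologies $\ms{T}'$ and $\ms{S}'$ on $M$.

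For the final sentence: if $M$ is moreover \emph{free} over $\CK{\ms{X}}\{\partial\}^{(m,m')}$, then $M\cong (\CK{\ms{X}}\{\partial\}^{(m,m')})^{\oplus r}$ as topological $\CK{\ms{X}}\{\partial\}^{(m,m')}$-modules for the natural topology, so separatedness of $M$ follows from separatedness of $\CK{\ms{X}}\{\partial\}^{(m,m')}$ itself, which in turn follows exactly as the separatedness of $\EcompQb{m,m'}{\ms{X}}$ was argued in \ref{topologyonring} (the convex sets $(\CR{\ms{X}}\{\partial\}^{(m,m')})_{-k}+\varpi^l\CR{\ms{X}}\{\partial\}^{(m,m')}$ are open in each $\ms{S}_n$, forcing the intersection over all neighborhoods to be $\{0\}$); and then each $\ms{S}'_n$-completion is Fréchet, so the inductive limit $(M,\ms{S}')$ is by definition an LF-space. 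I expect the main obstacle to be the filtered-module comparison step: one must check carefully that $\Emodb{m,m'}{X_i}$ really is finite over $\CR{X_i}\{\partial\}^{(m,m')}$ \emph{as a filtered module with a good filtration} — the subtlety is that $\Emodb{m,m'}{X_i}$ is a two-sided object and the subring consists of operators "horizontal with respect to $x$", so one has to verify that finitely many elements $x^j\partial^s$ in bounded order span it over the centralizer-type subring, using the commutation relations between $x$ and the $\partial^{\angles{m}{k}}$; once that finiteness in the filtered sense is in place, the noetherianity results \cite[4.8, 4.12]{Abe} make the rest routine.
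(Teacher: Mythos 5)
Your proposal takes a very different route from the paper's, and the middle step has a genuine gap.

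Your opening observation is right: the identity $(M,\ms{S}'_n)\to(M,\ms{T}'_n)$ is continuous, because the chosen surjection $\phi\colon\CK{\ms{X}}\{\partial\}^{(m,m')\oplus a}\to M$ factors continuously through $(\EcompQb{m,m'}{\ms{X}})^{\oplus a}\to M$. (The justification should not appeal to a ``finest topology making ring multiplication continuous'' — the module topology is simply the quotient topology via the surjection — but the continuity is correct.)

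The problem is the core step. You assert that $\Emodb{m,m'}{X_i}$ is finite over $\CR{X_i}\{\partial\}^{(m,m')}$ as a filtered module, ``with the extra generators $x^j$ in filtration $0$.'' This is false: in degree $0$ the graded piece of $\Emodb{m,m'}{X_i}$ is essentially $\mc{O}_{X_i}$, while that of $\CR{X_i}\{\partial\}^{(m,m')}$ is only the finite \'etale $R_i$-algebra $R(\ms{X})\otimes R_i$; since $\ms{X}$ is a smooth affine curve, $\mc{O}_{X_i}$ is of infinite rank over $R_i$, so the extension is not module-finite and there is no finite set $\{x^j\}$ doing the job. Without that finiteness, the ``comparison of good filtrations'' you invoke has no basis: the standard statement compares good filtrations over a single filtered noetherian ring, or over a filtered ring extension which is itself finite, and neither hypothesis holds here — indeed in the direction $F_n\subset G_{n+C}$ one needs to bound the action of $\Emodb{m,m'}{X_i}$ on the $\CR{X_i}\{\partial\}^{(m,m')}$-filtration, which is precisely what fails. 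A secondary issue is the reduction to the lattice $M'$: the $p$-torsion-free lattice $M'=\varphi((\Ecompb{m,m'}{\ms{X}})^{\oplus a})$ has no reason to be finite over $\CR{\ms{X}}\{\partial\}^{(m,m')}$ just because $M$ is finite over $\CK{\ms{X}}\{\partial\}^{(m,m')}$; producing such a lattice requires a separate argument (as in Lemma \ref{finiteness}, which uses the support hypothesis).

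The paper's proof avoids all of this by staying at the $\mathbb{Q}$-level and exploiting the Fr\'echet structure already established in paragraph \ref{topologyonring}. Take a surjection $\phi\colon\CK{\ms{X}}\{\partial\}^{(m,m')\oplus a}\to M$; the induced surjection $(\EcompQb{m,m'}{\ms{X}})^{\oplus a}\to M$ defines $\ms{T}'_n$, which is separated and hence Fr\'echet. Then $(M,\ms{T}'_n)$ is a topological $(\CK{\ms{X}}\{\partial\}^{(m,m')},\ms{S}_n)$-module, so $\phi$ is a continuous surjection of Fr\'echet spaces, hence strict by the open mapping theorem; this forces $\ms{S}'_n=\ms{T}'_n$, and one concludes by passing to the inductive limit. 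Replacing your filtered-module comparison by this open-mapping argument repairs the proposal.
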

\begin{proof}
 Let us see the equivalence. Let
 $\phi\colon\CK{\ms{X}}\{\partial\}^{(m,m')\oplus a}\rightarrow M$ be a
 surjection. This surjection induces the surjection
 $(\EcompQb{m,m'}{\ms{X}})^{\oplus a}\rightarrow M$, and the quotient
 topology $(M,\ms{T}'_n)$ is defined. Let $(M,\ms{S}'_n)$ be the
 Fr\'{e}chet topology defined using the surjection $\phi$ and the
 $(\CK{\ms{X}}\{\partial\}^{(m,m')},\ms{S}_n)$-module structure, as done
 above in \ref{topologyonring} for $\ms{T}'_n$.
 Since $(M,\ms{T}'_n)$ is
 a topological $(\CK{\ms{X}}\{\partial\}^{(m,m')},\ms{S}_n)$-module by the
 definition, the homomorphism $\phi$ defines a continuous surjective
 homomorphism of topological modules
 $(\CK{\ms{X}}\{\partial\}^{(m,m')},\ms{S}_n)^{\oplus
 a}\rightarrow(M,\ms{T}'_n)$. By the open mapping theorem of
 Fr\'{e}chet spaces, we see that this homomorphism is strict, which
 implies that $\ms{T}'_n$ and $\ms{S}'_n$ are equivalent. The
 first claim follows by taking the inductive limit over $n$. When $M$ is
 free as a $\CK{\ms{X}}\{\partial\}^{(m,m')}$-module, then it is obvious
 that it is separated.
\end{proof}

\subsection{Relations between microlocalizations at different levels}
In this subsection, we investigate the behavior of
microlocalizations when we raise levels. In general, this is very
difficult. However, once we know that the supports of the
microlocalizations are stable (cf.\ \ref{stabledef}), the behavior is
very simple at least in the case of a curve.

\begin{lem}
 \label{lemmaexspope}
 Suppose we are in Situation {\normalfont(Ls)} of
 {\normalfont\ref{setup}}. Let $m'\geq m$ be
 non-negative integers, and $I$ be a left ideal
 of $\Ecompb{m,m'}{\ms{X}}$; we put $M:=\Ecompb{m,m'}{\ms{X}}/I$. Let
 $\ms{M}$ be the $\Ecomp{m,m'}{\ms{X}}$-module associated to $M$ (cf.\
 Remark {\normalfont\ref{setup} (i)}) on $\mathring{T}^*X$, and assume
 \begin{equation*}
  \mr{Supp}(\ms{M})  =\pi^{-1}(s)\cap\mathring{T}^*X.
 \end{equation*}
 Then for any integer $k$, there exists a positive integer $N$,
 $R\in\Ecompb{m,m'}{\ms{X}}$ and $S\in(\Ecompb{m,m'}{\ms{X}})_k$ such
 that
 \begin{equation*}
  y_s^N-\varpi R-S\in I.
 \end{equation*}
\end{lem}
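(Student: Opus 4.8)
The plan is to exploit the hypothesis that the support of $\ms{M}$ is exactly the fiber $\pi^{-1}(s)\cap\mathring{T}^*X$ over the single closed point $s$. Since $y_s$ is a local parameter at $s$, the function $y_s$ vanishes only at $s$ on $X$, so away from $s$ the module $\ms{M}$ becomes zero; equivalently, the image of $y_s$ acts invertibly on $\ms{M}$ localized at every point of $\mathring{T}^*X$ other than those above $s$. I would first reduce mod $\varpi$: set $M_0 := M/\varpi M = \Emodb{m,m'}{X_0}/\overline{I}$, a finitely generated $\Emodb{m,m'}{X_0}$-module whose support on $\mathring{T}^*X$ is $\pi^{-1}(s)\cap\mathring{T}^*X$. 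The ring $\Emodb{m,m'}{X_0}$ is a noetherian complete filtered ring by \cite[4.8]{Abe}, and its graded ring is (essentially) a commutative polynomial-type ring over $\mc{O}_{X}$ in the variables coming from $\partial^{\angles{m}{\bullet}}$ and $\partial^{\angles{m'}{-\bullet}}$. The support condition says exactly that, after passing to the graded ring, the ideal $\mr{gr}(\overline{I})$ contains a power of the maximal ideal $\mf{m}_s$ (the vanishing locus of $y_s$) times the whole graded ring outside degree zero — more precisely, that the associated graded module is supported only over $s$. Therefore some power $y_s^{N_0}$ lies in $\overline{I} + (\Emodb{m,m'}{X_0})_k$ for every fixed $k$: this is a Nakayama/finite-length argument at the graded level, using that the localization of $M_0$ away from $s$ vanishes and that raising the order filtration degree $k$ is the analog of multiplying by a high power of the "Euler" element. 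Concretely, $M_0/(\Emodb{m,m'}{X_0})_k M_0$ — or rather the quotient of $M_0$ by the image of operators of order $\le k$ — is a finitely generated module over $\mc{O}_X$ supported at $s$, hence killed by some $y_s^{N_0}$.

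Having $y_s^{N_0} \in \overline{I} + (\Emodb{m,m'}{X_0})_k$, I would lift this to the $\varpi$-adic level. Write $y_s^{N_0} = \iota + \sigma_0 + \varpi T_1$ with $\iota \in I$, $\sigma_0 \in (\Ecompb{m,m'}{\ms{X}})_k$, and $T_1 \in \Ecompb{m,m'}{\ms{X}}$ an error term. The obvious temptation is a geometric-series/successive-approximation argument in $\varpi$; but one cannot simply iterate on $y_s^{N_0}$ itself, because the error $\varpi T_1$ need not be a multiple of a power of $y_s$. Instead, the right move is to use that multiplication by $y_s$ (or by the invertible-away-from-$s$ element it represents) together with the completeness of $\Ecompb{m,m'}{\ms{X}}$ in the topology $\ms{T}_0$ of \ref{topologyonring} lets us absorb errors: since $y_s$ is invertible on $M$ after inverting $\varpi$-adically away from $s$, one replaces $N_0$ by a larger $N$ and iterates, at each stage expressing the $\varpi$-error again as (element of $I$) + (low-order term) + ($\varpi\cdot$ new error), where the new low-order terms pile up but stay of order $\le k$ and the $\varpi$-errors gain a power of $\varpi$ each time. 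Completeness in $\ms{T}_0$ (proved in \ref{topologyonring}: $\Ecompb{m,m'}{\ms{X}}$ is $\ms{T}_0$-complete, where $\ms{T}_0$ has basis $U_{k,l}=(\Ecompb{m,m'}{\ms{X}})_{-k}+\varpi^l\Ecompb{m,m'}{\ms{X}}$) and the fact that $I$ is a closed submodule (it is the kernel of a surjection onto a finitely generated module, hence closed by the separatedness arguments of \ref{topologyonring}) guarantee the limits exist and give $y_s^N = R' + \sigma$ with $R' \in I$ and $\sigma \in (\Ecompb{m,m'}{\ms{X}})_k$ — but we still have to isolate the $\varpi R$ term; in fact the cleanest bookkeeping produces directly $y_s^N - \varpi R - S \in I$ with $S \in (\Ecompb{m,m'}{\ms{X}})_k$, which is exactly the claim.

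The main obstacle, and the step I expect to require real care, is the passage from the mod-$\varpi$ statement to the integral statement: controlling the $\varpi$-error terms while keeping the low-order corrections genuinely of order $\le k$. The support hypothesis only directly bites mod $\varpi$ (it is a statement about $\Emodb{m,m'}{X_0}$-module support, i.e. about $\mr{Char}$), so all the "$y_s$ is nearly invertible" leverage has to be transferred through the $\varpi$-filtration by an inductive argument rather than obtained at once; making that induction close requires precisely the completeness of $\Ecompb{m,m'}{\ms{X}}$ for $\ms{T}_0$ and the closedness of $I$, both of which are available from \ref{topologyonring}. By contrast, the graded/Nakayama input at the $X_0$-level is routine commutative algebra once one unwinds the structure of $\mr{gr}(\Emodb{m,m'}{X_0})$ recorded in \cite{Abe}.
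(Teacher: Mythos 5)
Your strategy—reduce modulo $\varpi$, argue on the associated graded, then lift—is genuinely different from the paper's. The paper never reduces mod $\varpi$: it restricts to $\ms{U}:=\ms{X}\setminus\{s\}$, where the support hypothesis gives a B\'ezout relation $\sum_iQ_iP_i=1$ with $P_i$ generators of $I$ and $Q_i\in\Ecompb{m,m'}{\ms{U}}$ (because $\Ecomp{m,m'}{\ms{X}}$-localizing $M$ away from $s$ kills it), then multiplies by $y_s^N$ to clear denominators of the $Q_i$ into $\Ecompb{m,m'}{\ms{X}}+\varpi\Ecompb{m,m'}{\ms{U}}+(\Ecompb{m,m'}{\ms{U}})_{k-\mu}$, and finally compares the two decompositions of $y_s^N-\sum Q_i'P_i$ over $\ms{U}$ and over $\ms{X}$ via the intersection identity $\bigl(\varpi\Ecompb{m,m'}{\ms{U}}+(\Ecompb{m,m'}{\ms{U}})_{k'}\bigr)\cap\Ecompb{m,m'}{\ms{X}}=\varpi\Ecompb{m,m'}{\ms{X}}+(\Ecompb{m,m'}{\ms{X}})_{k'}$. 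Your route avoids having to compare sections over $\ms{U}$ and $\ms{X}$, at the price of needing a clean statement at the graded level; the paper's route avoids any graded argument, at the price of explicit denominator bookkeeping.

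Two points in your execution need repair. First, the justification for $\bar y_s^{N_0}\in\bar I+(\Emodb{m,m'}{X_0})_k$ is not right as written: the quotient of $M_0$ by the image of $(\Emodb{m,m'}{X_0})_k$ is not a finitely generated $\mc{O}_X$-module (operators of all positive orders survive), so you cannot apply the cheap Nakayama you have in mind. The correct version: $\mr{gr}(M_0)$ is a finitely generated graded $\mr{gr}(\Emodb{m,m'}{X_0})$-module supported over $s$, hence annihilated by $y_s^{N'}$ for some $N'$ \emph{independent of $k$}; since $y_s$ has degree $0$, $y_s^{N'}\cdot1\in(M_0)_{-1}$, and by iterating degree by degree $y_s^{jN'}\cdot1\in(M_0)_{-j}$, so $j\geq-k$ does it. Second, you misidentify the lift as the ``main obstacle'' and spend effort on a geometric-series-in-$\varpi$ iteration that is both unnecessary and, as described, would not work (left-multiplying by $y_s^{N_0}$ preserves $I$ and $(\Ecompb{m,m'}{\ms{X}})_k$, but it does \emph{not} raise the power of $\varpi$ on the error). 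Fortunately none of that is needed: the lemma's conclusion permits an \emph{arbitrary} $R\in\Ecompb{m,m'}{\ms{X}}$, so once you have $\bar y_s^{N}=\bar\iota+\bar\sigma$ with $\bar\iota\in\bar I$ and $\bar\sigma$ in the image of $(\Ecompb{m,m'}{\ms{X}})_k$, lift $\bar\iota$ to $\iota\in I$ and $\bar\sigma$ to $\sigma\in(\Ecompb{m,m'}{\ms{X}})_k$ (both reductions are surjective), and $y_s^N-\iota-\sigma\in\varpi\Ecompb{m,m'}{\ms{X}}$ immediately gives the claim. No completeness, closedness of $I$, or successive approximation is involved at this step. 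You do note at the very end that ``the cleanest bookkeeping produces directly'' the desired form—that observation is exactly right and should replace the preceding digression.
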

\begin{proof}
 Since $\Ecompb{m,m'}{\ms{X}}$ is a noetherian ring (cf.\
 \cite[4.12]{Abe}), there exist $n$ operators
 $P_i\in\Ecompb{m,m'}{\ms{X}}$ for $i=1,\dots,n$ such that $I$ is
 generated by $\{P_i\}_{1\leq i\leq n}$. Let
 $\ms{U}:=\ms{X}\setminus\{s\}$, and $U$ be its special fiber. Then by
 the assumption on the support, there exists
 $Q_i\in\Ecompb{m,m'}{\ms{U}}$ for each $1\leq i\leq n$, such that
 \begin{equation}
  \label{sumequ1qp}
  \sum_{1\leq i\leq n}Q_i\cdot P_i=1.
 \end{equation}
 For $P\in\Ecompb{m,m'}{\ms{U}}$, we denote by $\overline{P}$ the image
 of $P$ in $\Emodb{m,m'}{U}$, and define
 $\varpi\mbox{-}\mr{ord}(P):=\mr{ord}(\overline{P})$. We put
 $\mu:=\max_i\{\varpi\mbox{-}\mr{ord}(P_i),0\}$.

 Since for any $f\in\mc{O}_{\ms{U}}$, there exists an integer $n$
 such that $\overline{y_s}^n\overline{f}\in\mc{O}_X$ where the
 overlines denote the images in $\mc{O}_U$ or $\mc{O}_X$, thus
 $y_s^nf\in\mc{O}_{\ms{X}}+\varpi\mc{O}_{\ms{U}}$.
 This shows that there exists an integer $N$ such that for any
 $i=1,\dots,n$, we can write
 \begin{equation*}
  y_s^NQ_i=Q'_i+\varpi R_i+S_i,
 \end{equation*}
 where $Q'_i\in\Ecompb{m,m'}{\ms{X}}, R_i\in\Ecompb{m,m'}{\ms{U}}$, and
 $S_i\in(\Ecompb{m,m'}{\ms{U}})_{k-\mu}$. Then by (\ref{sumequ1qp}),
 there exist $R'\in\Ecompb{m,m'}{\ms{U}}$ and
 $S'\in(\Ecompb{m,m'}{\ms{U}})_k$ such that
 \begin{equation*}
  y_s^N=\sum Q'_i\cdot P_i+\varpi R'+S'.
 \end{equation*}
 
 Let us show that for any integer $k'$
 \begin{equation}
  \label{relationincl}
  \bigl(\varpi\Ecompb{m,m'}{\ms{U}}+(\Ecompb{m,m'}{\ms{U}})_{k'}\bigr)
  \cap\Ecompb{m,m'}{\ms{X}}=\varpi\Ecompb{m,m'}{\ms{X}}+
  (\Ecompb{m,m'}{\ms{X}})_{k'}.
 \end{equation}
 It is evident that the right hand side is included in the left one, let
 us prove the opposite inclusion. Take
 elements $P\in\varpi\Ecompb{m,m'}{\ms{U}}$ and
 $Q\in(\Ecompb{m,m'}{\ms{U}})_{k'}$ such that
 $P+Q\in\Ecompb{m,m'}{\ms{X}}$. Write $P=\sum_{n\in\mb{Z}}a_n\partial^n$
 with $a_n\in\mc{O}_{\ms{U},\mb{Q}}$, and put
 $P_{>k'}:=\sum_{n>k'}a_n\partial^n$ and $P_{\leq k'}:=\sum_{n\leq
 k'}a_n\partial^n$.
 Then we get $P_{>k'}\in\Ecompb{m}{\ms{X}}$ and in particular
 contained in $\Ecompb{m}{\ms{X}}\cap \varpi\Ecompb{m}{\ms{U}}$,
which is $\varpi\Ecompb{m}{\ms{X}}$ thanks to uniqueness of the expansion 
$\sum_{n\in\mb{Z}}a_n\partial^n$ considered in \ref{def_intermediate_microdiff}.
 By assumption we have $P_{\leq k'}+Q\in(\Ecompb{m,m'}{\ms{X}})_{k'}$, which
 implies the equality (\ref{relationincl}).

 Since $y_s^N-\sum Q'_i\cdot P_i\in\Ecompb{m,m'}{\ms{X}}$,
 we get, by using (\ref{relationincl}), that
 \begin{equation*}
  \varpi R'+S'\in\varpi\Ecompb{m,m'}{\ms{X}}+(\Ecompb{m,m'}
   {\ms{X}})_k.
 \end{equation*}
 Thus the lemma follows.
\end{proof}

\begin{lem}
 \label{finiteness}
 Suppose we are in Situation {\normalfont(Ls)} of
 {\normalfont\ref{setup}}. Let $m'\geq m$ be
 non-negative integers, and $\ms{M}$ be a globally finitely presented
 $\EcompQ{m,m'}{\ms{X}}$-module such that
 \begin{equation*}
  \mr{Supp}(\ms{M})\cap\mathring{T}^*X=\pi^{-1}(s)\cap\mathring{T}^*X.
 \end{equation*}
 We denote $\Gamma(\mathring{T}^*X,\ms{M})$ by $M$. Then  we have the
 following.

 (i) The module $M$ is finite over
 $\CK{\ms{X}}\{\partial\}^{(m,m')}$. Moreover,
 if it is monogenic as an $\EcompQb{m,m'}{\ms{X}}$-module,
 there exists a $p$-torsion free $\Ecompb{m,m'}{\ms{X}}$-module
 $M'$, such that ${M}'\otimes\mb{Q}\cong{M}$, and
 $M'$ is finitely generated over $\CR{\ms{X}}\{\partial\}^{(m,m')}$.

 (ii) There exists a finite set of elements in $M$ such that
 for any open affine neighborhood $W$ of $s$ in $X$,
 $\Gamma(\mathring{T}^*W,\ms{M})$ is generated by these elements over
 $\CK{\ms{X}}\{\partial\}^{(m,m')}$. If $M$ is generated by $\alpha\in
 M$, then there exists an integer $N>0$ such that we can take this set
 to be $\bigl\{(x^iy^j_s)\,\alpha\bigr\}_{0\leq i,j<N}$.
\end{lem}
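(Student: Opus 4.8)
The plan is to deduce finiteness from Lemma \ref{lemmaexspope} together with a Nakayama-type / graded argument. First I would reduce to the monogenerated case: since $\ms{M}$ is globally finitely presented and its support is $\pi^{-1}(s)$, by Remark \ref{setup}(i) the module $M=\Gamma(\mathring{T}^*X,\ms{M})$ is a finitely generated $\EcompQb{m,m'}{\ms{X}}$-module, hence a quotient of a finite direct sum of copies of $\EcompQb{m,m'}{\ms{X}}$, each of which is annihilated (after a suitable power) by the ideal controlling the support; so it suffices to treat a cyclic module $M=\EcompQb{m,m'}{\ms{X}}/I$ and then take finite sums. For such an $M$, I would apply Lemma \ref{lemmaexspope} with, say, $k=-1$ (or more precisely with $k$ chosen below the "horizontal generators" degree): it produces an integer $N>0$ with $y_s^N - \varpi R - S \in I$ for $R\in\Ecompb{m,m'}{\ms{X}}$ and $S$ of bounded order. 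This says that in $M$, multiplication by $y_s^N$ agrees, modulo $\varpi$ and modulo a bounded-order piece, with an element of $\CR{\ms{X}}\{\partial\}^{(m,m')}$; combined with the fact that $x$ is a local coordinate (so $\mc{O}_{\ms{X},\mb{Q}}$ is topologically generated by $1, x, \dots, x^{N-1}$ over the subring where $y_s$-powers kill the difference between $\ms{X}$ and a neighborhood of $s$), this should show that $M$ is generated over $\CK{\ms{X}}\{\partial\}^{(m,m')}$ by the finitely many elements $\{(x^i y_s^j)\alpha\}_{0\le i,j<N}$.

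Making this precise is the heart of the argument. The point is that $\EcompQb{m,m'}{\ms{X}}$ as a left $\CK{\ms{X}}\{\partial\}^{(m,m')}$-module is "big" only in the $\mc{O}_{\ms{X},\mb{Q}}$-direction, and Lemma \ref{lemmaexspope} lets me bound the $y_s$-powers needed, while the $\partial^n$ with $n\ge 0$ and $n\le 0$ of large absolute value are absorbed into $\CK{\ms{X}}\{\partial\}^{(m,m')}$ (for $n\ge 0$) and into the bounded-order remainder $S$ (for $n\ll 0$), respectively; the finitely many remaining $\partial^n$ contribute the factor $x^i$ via the étale coordinate. I would set up an increasing filtration of $M$ by the $\CK{\ms{X}}\{\partial\}^{(m,m')}$-submodules generated by $\{(x^iy_s^j)\alpha\}$ for $j<N$, and use Lemma \ref{lemmaexspope} to show it stabilizes; a separate, easier, $\varpi$-adic completeness / noetherianity input (the rings $\Ecompb{m,m'}{\ms{X}}$ and $\Emodb{m,m'}{X_i}$ are noetherian, \cite[4.8, 4.12]{Abe}) handles convergence of the infinite sums in the $\partial$-direction. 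For the integral refinement in (i), I would take $M'$ to be the image of $\Ecompb{m,m'}{\ms{X}}$ (generated by a lift of $\alpha$) inside $M$; it is $p$-torsion free by construction, satisfies $M'\otimes\mb{Q}\cong M$, and the same generation argument carried out integrally — using Lemma \ref{lemmaexspope} over $\Ecompb{m,m'}{\ms{X}}$ rather than its rationalization — shows $M'$ is finite over $\CR{\ms{X}}\{\partial\}^{(m,m')}$.

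For part (ii), the key observation is that the generating set $\{(x^iy_s^j)\alpha\}_{0\le i,j<N}$ produced above does not depend on the open affine neighborhood $W$ of $s$: restriction $\Gamma(\mathring{T}^*X,\ms{M})\to\Gamma(\mathring{T}^*W,\ms{M})$ is injective (the support is concentrated at $s\in W$, so removing points of $X\setminus W$ loses nothing), and the very same relations $y_s^N-\varpi R - S\in I$ hold after restricting to $\ms{W}$; running the generation argument over $\CK{\ms{W}}\{\partial\}^{(m,m')}$ — which contains $\CK{\ms{X}}\{\partial\}^{(m,m')}$ — shows these finitely many elements still generate $\Gamma(\mathring{T}^*W,\ms{M})$ over the smaller ring. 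I expect the main obstacle to be the bookkeeping in the first paragraph: namely, checking carefully that the bounded-order remainder $S$ and the $\varpi$-part $R$ can be re-absorbed without introducing new $y_s$-powers, i.e. that one genuinely gets a \emph{finite} generating set rather than merely a finitely generated module over a completion. This is where the precise form of Lemma \ref{lemmaexspope} (the freedom to choose $k$ arbitrarily negative, and the control $S\in(\Ecompb{m,m'}{\ms{X}})_k$) is essential, and where the étale-coordinate structure of $\ms{X}\to\widehat{\mb{A}}^1_R$ enters to handle the finitely many intermediate $\partial$-degrees.
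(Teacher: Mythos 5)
Your proposal follows essentially the same route as the paper: reduce to a monogenerated $M$, invoke Lemma \ref{lemmaexspope} with $k=-1$ to produce $N'$ and $T\in\varpi\Ecompb{m,m'}{\ms{X}}+(\Ecompb{m,m'}{\ms{X}})_{-1}$ with $y_s^{N'}-T\in I$, and then show that $\Ecompb{m,m'}{\ms{X}}/(y_s^{N'}-T)$ is finite over $\CR{\ms{X}}\{\partial\}^{(m,m')}$ with generators $\{x^iy_s^j\}$, the $p$-torsion-free lattice $M'$ being the image of $\Ecompb{m,m'}{\ms{X}}$ applied to a generator, exactly as you propose. The step you flag as the crux and leave as a sketch ("set up an increasing filtration $\ldots$ show it stabilizes") is precisely where the paper spends its effort: it proceeds by a two-stage completeness descent, first $\varpi$-adically (\cite[III.2.11, Prop.\ 14]{Bou}) to reduce to $\Emodb{m,m'}{X}/(\bar{y}_s^{N'}-\bar{T})$ over $\CR{X}\{\partial\}^{(m,m')}$, then via completeness for the order filtration (\cite[III.2.9, Prop.\ 12]{Bou}), where the hypothesis $\mathrm{ord}(\bar T)<0$ kills $\bar T$ in the associated graded, so one lands on the commutative ring $\mr{gr}(\Emodb{m,m'}{X})/(\bar{y}_s^{N'})$ and the generation by $\{x^iy_s^j\}_{0\le i,j<N}$ is elementary. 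Two smaller points: the reduction to the cyclic case is not "take finite sums" (sub- and quotient modules of $M$ are not direct factors); the paper inducts on the number of generators via a short exact sequence whose outer terms again have support $\pi^{-1}(s)$ or empty. And in (ii), for a connected affine $W\ni s$ one has $K(\ms{W})=K(\ms{X})$, hence equality $\CK{\ms{W}}\{\partial\}^{(m,m')}=\CK{\ms{X}}\{\partial\}^{(m,m')}$, not the inclusion you invoke; the real point is simply that $y_s^{N'}-T\in\Ecompb{m,m'}{\ms{X}}\subset\Ecompb{m,m'}{\ms{W}}$, so the identical argument runs after restriction.
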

\begin{proof}
 Let $\ms{N}$ be a coherent sub-$\EcompQ{m,m'}{\ms{X}}$-module of
 $\ms{M}$. Let $\ms{N}'$ be either $\ms{N}$ or $\ms{M}/\ms{N}$. Then by
 the additivity of supports we know that
 \begin{equation*}
  \mr{Supp}(\ms{N}')\cap\mathring{T}^*X=\pi^{-1}(s)\cap\mathring{T}^*X\mbox{ or }
   \emptyset.
 \end{equation*}
 If it is $\emptyset$, then $\ms{N}'|_{\mathring{T}^*X}=0$, and
 in particular, $N':=\Gamma(\mathring{T}^*X,\ms{N}')=0$ is finite over
 $\CK{\ms{X}}\{\partial\}^{(m,m')}$. By induction on the number of
 generators of $M$, we reduce the verification of both (i) and (ii) to
 the monogenic case.

 From now on, we assume that $M$ is a monogenic module. Fix a
 generator $\alpha\in M$. Let $M'$ be the
 sub-$\Ecompb{m,m'}{\ms{X}}$-module of $M$ generated by $\alpha$. Let
 $I$ be the kernel of the homomorphism $\Ecompb{m,m'}{\ms{X}}\rightarrow
 M'$ of left $\Ecompb{m,m'}{\ms{X}}$-modules sending $1$ to $\alpha$.
 We note that, by definition, $M'\otimes\mb{Q}\cong M$. Since $M'$
 is $p$-torsion free, we get that
 $\mr{Supp}(\EcompQ{m,m'}{\ms{X}}\otimes
 M)=\mr{Supp}(\Ecomp{m,m'}{\ms{X}}\otimes M')$ where
 $\EcompQ{m,m'}{\ms{X}}\otimes M$ denotes the
 $\EcompQ{m,m'}{\ms{X}}$-module associated to $M$ (which is equal to
 $\ms{M}$ by \ref{setup}), and the same for $\Ecomp{m,m'}{\ms{X}}\otimes
 M'$. Thus by Lemma \ref{lemmaexspope} for $k=-1$, there exists a
 positive integer $N'$ and
 $T\in\varpi\Ecompb{m,m'}{\ms{X}}+(\Ecompb{m,m'}{\ms{X}})_{-1}$ such
 that $y_s^{N'}\equiv T\bmod I$.

 To conclude, it suffices to show that
 $M'':=\Ecompb{m,m'}{\ms{X}}/(y_x^{N'}-T)$ is
 generated over $\CR{\ms{X}}\{\partial\}^{(m,m')}$ by
 $\mf{S}:=\bigl\{x^i\,y^j_s\bigr\}_{0\leq i,j<N}$ where $N:=N'+\deg(s)$
 since there is a surjection $M''\twoheadrightarrow M'$. Since $M''$ and
 $\CR{\ms{X}}\{\partial\}^{(m,m')}$
 are $\varpi$-adically complete and $p$-torsion free, the conditions of
 \cite[III.2.11, Prop 14]{Bou} are fulfilled, and thus, it suffices to
 see that $M''/\varpi$ is generated by $\mf{S}$ over
 $\CR{\ms{X}}\{\partial\}^{(m,m')}/\varpi$. It remains 
 to show that $\Emodb{m,m'}{X}/(\bar{y_s}^{N'}-\bar{T})$
 is generated over $\CR{X}\{\partial\}^{(m,m')}$ by $\mf{S}$, where
 $\bar{T}\in(\Emodb{m,m'}{X})_{-1}$. Since
 $\Emodb{m,m'}{X}/(\bar{y_s}^{N'}-\bar{T})$ and
 $\CR{X}\{\partial\}^{(m,m')}$ are complete with respect to the
 filtrations by order, it is enough to
 prove the claim after taking $\mr{gr}$ by \cite[III.2.9, Prop
 12]{Bou}. Since the order of $\bar{T}$ is less than $0$, this amounts
 to prove that the commutative algebra
 $\mr{gr}(\Emodb{m,m'}{X})/(\bar{y_s}^{N'})$ is generated over
 $\CR{X_0}\{\partial\}^{(m,m')}$ by $\mf{S}$. This is a straightforward
 verification which is left to the reader.
\end{proof}

\begin{lem}
 \label{tesorcalc}
 We assume that we are in Situation {\normalfont(L)}.

 (i) Let $m'>m$. Then we have a canonical isomorphism
 \begin{equation*}
  \EcompQb{m+1,m'}{\ms{X}}\cong \CK{\ms{X}}\{\partial\}^{(m+1,m')}
   \widehat{\otimes}_{\CK{\ms{X}}\{\partial\}^{(m,m')}}
   \EcompQb{m,m'}{\ms{X}}
 \end{equation*}
 of bi-$(\CK{\ms{X}}\{\partial\}^{(m+1,m')},\EcompQb{m,m'}
 {\ms{X}})$-modules. Here the complete tensor product is taken with
 respect to the $p$-adic topology.
 
 (ii) Let $m'\geq m$. Then we have a canonical isomorphism
 \begin{equation*}
  \Ecompb{m,m'}{\ms{X}}\cong\CR{\ms{X}}\{\partial\}^{(m,m')}
   \widehat{\otimes}^f_{\CR{\ms{X}}\{\partial\}^{(m,m'+1)}}
   \Ecompb{m,m'+1}{\ms{X}}
 \end{equation*}
 of bi-$(\CR{\ms{X}}\{\partial\}^{(m,m')},\Ecompb{m,m'+1}{\ms{X}})$-modules. Here
 the complete tensor product $\widehat{\otimes}^f$ is taken with respect
 to the filtration by order.

 (iii) Let $m'\geq m$. For any $i\geq 0$, we have a canonical
 isomorphism
 \begin{equation*}
  \Emodb{m,m'}{X_i}\cong \CR{X_i}\{\partial\}^{(m,m')}\widehat{\otimes}^f
   _{\CR{X_i}\{\partial\}^{(m,m'+1)}}\Emodb{m,m'+1}{X_i}
 \end{equation*}
 of bi-$(\CR{X_i}\{\partial\}^{(m,m')},\Emodb{m,m'+1}{X_i})$-modules. Here
 the complete tensor product is taken with respect to the filtration by
 order.
\end{lem}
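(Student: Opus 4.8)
The three parts have the same structure, so I would isolate one structural fact and let the rest be formal. In each case, combining the ring inclusion $\CR{\ms{X}}\{\partial\}^{(m,m')}\hookrightarrow\Ecompb{m,m'}{\ms{X}}$ with the canonical level-change homomorphism $\Ecompb{m,m'+1}{\ms{X}}\to\Ecompb{m,m'}{\ms{X}}$ of \cite[4.6]{Abe} gives, by multiplication, a continuous homomorphism of bimodules $\mu\colon\CR{\ms{X}}\{\partial\}^{(m,m')}\,\widehat{\otimes}^f_{\CR{\ms{X}}\{\partial\}^{(m,m'+1)}}\,\Ecompb{m,m'+1}{\ms{X}}\to\Ecompb{m,m'}{\ms{X}}$, $P\otimes Q\mapsto PQ$; similarly for the $\mb{Q}$-version in (i) (with the level raised in the differential-operator direction $m\rightsquigarrow m+1$) and the $R_i$-version in (iii). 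The plan is to prove that $\mu$ and its analogues are topological isomorphisms.

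The structural fact I would establish is a normal form: \emph{$\Ecompb{m,m'}{\ms{X}}$ is topologically free as a left $\CR{\ms{X}}\{\partial\}^{(m,m')}$-module, a topological basis being any topological $R(\ms{X})$-module basis $\{e_j\}_{j\in J}$ of $\mc{O}_{\ms{X}}$ placed to the right of the scalars}; that is, every element has a unique convergent expansion $\sum_j P_j\,e_j$ with $P_j\in\CR{\ms{X}}\{\partial\}^{(m,m')}$. The same then holds for $\EcompQb{m,m'}{\ms{X}}$ over $\CK{\ms{X}}\{\partial\}^{(m,m')}$ and for $\Emodb{m,m'}{X_i}$ over $\CR{X_i}\{\partial\}^{(m,m')}$, for the respective natural topologies (the topology $\ms{T}_0$ of \ref{topologyonring}, its $\mb{Q}$-analogue, and the order filtration). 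This is essentially a repackaging of the explicit descriptions recalled in \S\ref{section1}: there $\{\partial^{\angles{m}{k}},\partial^{\angles{m'}{-l}}\}_{k\geq0,\,l\geq1}$ is a topological $\mc{O}_{\ms{X}}$-basis of $\Ecompb{m,m'}{\ms{X}}$ with coefficients on the right (the convention with coefficients on the left differing by a triangular, convergent change), while by the Remark in \ref{setup} the same family is a topological $R(\ms{X})$-basis of $\CR{\ms{X}}\{\partial\}^{(m,m')}$; writing $\mc{O}_{\ms{X}}=\widehat{\bigoplus}_j R(\ms{X})\,e_j$ and regrouping $\sum_k\partial^{\angles{m}{k}}\bigl(\sum_j c_{k,j}e_j\bigr)=\sum_j\bigl(\sum_k c_{k,j}\partial^{\angles{m}{k}}\bigr)e_j$ (and likewise for the negative part) yields the claim. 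Crucially, the basis $\{e_j\}$ is fixed independently of $(m,m')$ and the canonical homomorphisms in play fix $\mc{O}_{\ms{X}}$, hence each $e_j$.

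Granting this, each part becomes formal. For (ii): as a left $\CR{\ms{X}}\{\partial\}^{(m,m'+1)}$-module $\Ecompb{m,m'+1}{\ms{X}}\cong\widehat{\bigoplus}_j\CR{\ms{X}}\{\partial\}^{(m,m'+1)}\,e_j$, and the completed tensor product commutes with the completed direct sum (each summand is free of rank one, so no $\mathrm{Tor}$ intervenes and the completions are compatible), whence the source of $\mu$ is canonically $\widehat{\bigoplus}_j\CR{\ms{X}}\{\partial\}^{(m,m')}\,e_j$; under this identification $\mu$ sends $1\otimes e_j$ to $e_j$, so, comparing with $\Ecompb{m,m'}{\ms{X}}\cong\widehat{\bigoplus}_j\CR{\ms{X}}\{\partial\}^{(m,m')}\,e_j$, it carries a basis onto a basis and is an isomorphism of topological bimodules. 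Part (i) is identical — when $m$ is raised the negative part, governed by $m'$, is untouched, so the $\mb{Q}$-version of the same free-module computation applies — and part (iii) is the same statement over the $R_i$'s. Alternatively, (ii) follows from (iii) by reducing modulo $\varpi^{i+1}$, noting $\mu\bmod\varpi^{i+1}$ is the isomorphism of (iii), and lifting by $\varpi$-adic completeness and $\varpi$-torsion-freeness via \cite[III.2.11, Prop.~14]{Bou}, exactly as in the proof of Lemma~\ref{finiteness}.

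I expect the only genuine work to be the bookkeeping of topologies: checking that the passage between "coefficients on the left" and "coefficients on the right" converges in the relevant topology, that the growth conditions defining $\Ecompb{m,m'}{\ms{X}}$, $\EcompQb{m,m'}{\ms{X}}$ and $\Emodb{m,m'}{X_i}$ translate term by term into growth conditions on the coefficients $P_j$, and that ``$A\,\widehat{\otimes}\,A=A$'' together with the commutation of $\widehat{\otimes}$ with $\widehat{\bigoplus}$ hold for the completions actually used (the hypotheses $m'\geq m$, resp. $m'>m$ in (i), serve only to keep all occurring level pairs admissible). That bookkeeping, rather than any conceptual point, is the main obstacle.
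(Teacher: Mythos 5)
Your route is genuinely different from the paper's. The paper builds the two maps directly: $\widehat{\psi}$ is multiplication, and $\widehat{\varphi}$ is produced by restricting $P\mapsto 1\otimes P$ to an intermediate finite-order subring ($E=\Gamma(\ms{X},\Emod{m+1,m'}{\ms{X}})$ in (i), $E^{[m,m']}_{\ms{X}}$ in (ii)), verifying that its image lands inside the completed tensor product, and then completing; the technical heart of (ii) is showing that the associated graded of the \emph{algebraic} tensor product $B:=\CR{\ms{X}}\{\partial\}^{(m,m')}\otimes_{\CR{\ms{X}}\{\partial\}^{(m,m'+1)}}\Ecompb{m,m'+1}{\ms{X}}$ is $p$-torsion-free, so that $\widehat{B}\to\widehat{B}\otimes\mb{Q}$ is injective; and (iii) is then deduced from (ii) by a Mittag--Leffler argument, the reverse implication from the one you sketch. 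You instead propose a normal-form lemma --- that $\Ecompb{m,m'}{\ms{X}}$ is topologically free over $\CR{\ms{X}}\{\partial\}^{(m,m')}$ on a fixed topological $R(\ms{X})$-basis $\{e_j\}$ of $\mc{O}_{\ms{X}}$, with the $e_j$ placed on the right --- from which everything is supposed to follow by commuting $\widehat{\otimes}$ with $\widehat{\bigoplus}$. If that lemma were in hand, your argument would be shorter and more conceptual than the paper's.

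But the normal-form lemma is not bookkeeping: it carries the entire weight of the proof, and you neither establish it nor reduce it to a result in the paper. Two points in particular. (a) Passing from the left-coefficient description $\sum_k a_k\partial^{\angles{m}{k}}$ to a right-coefficient one, and thence to a $\sum_j P_j e_j$ expansion with $P_j\in\CR{\ms{X}}\{\partial\}^{(m,m')}$, is an infinite resummation that brings in all higher derivatives of the $a_k$; one must show it converges \emph{and} that it preserves the mixed growth conditions (positive part in $\Dcomp{m}{}$, negative part in $\Ecomp{m'}{}$) cutting out the intermediate ring. This is exactly the delicacy that forces the paper into its $p$-torsion-freeness argument; declaring the change ``triangular and convergent'' is where the real work hides. (b) The commutation of the completed tensor product (for the $p$-adic topology in (i), for the order filtration in (ii)--(iii)) with an infinite completed direct sum $\widehat{\bigoplus}_j$ is itself a nontrivial claim, needing the noetherian/Zariskian completeness inputs the paper cites from \cite[4.8, 4.12]{Abe}. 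Until (a) and (b) are written out, you have identified a plausible alternative route, not a proof.
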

\begin{proof}
 Let us see (i). There exists a canonical homomorphism
 \begin{equation*}
  \varphi\colon\EcompQb{m,m'}{\ms{X}}\rightarrow\CK{\ms{X}}\{\partial\}
   ^{(m+1,m')}\widehat{\otimes}_{\CK{\ms{X}}\{\partial\}^{(m,m')}}
   \EcompQb{m,m'}{\ms{X}},
 \end{equation*}
 sending $P$ to $1\otimes P$. For short, we denote
 $\Gamma(\ms{X},\Emod{m+1,m'}{\ms{X}})$ by $E$, which is considered to
 be a subring of $\EcompQb{m,m'}{\ms{X}}$ using the canonical
 inclusion. We know that
 $\widehat{E}\otimes\mb{Q}\cong\EcompQb{m+1,m'}{\ms{X}}$ where
 $^\wedge$ denotes the $p$-adic completion. The image
 $\varphi({E})$ is contained in the image of \linebreak
 $\CR{\ms{X}}\{\partial\}^{(m+1,m')}\widehat{\otimes}_{\CR{\ms{X}}
 \{\partial\}^{(m,m')}}\Ecompb{m,m'}{\ms{X}}$. Indeed, let $P\in E$. We
 denote $\Gamma(\ms{X},\Dmod{m+1}{\ms{X}})$ by
 $\Dmodb{m+1}{\ms{X}}$. Then we
 may write $P=P_{\geq0}+P_{<0}$ where $P_{\geq0}\in\Dmodb{m+1}{\ms{X}}$
 and $P_{<0}\in E_{-1}\subset\Ecompb{m,m'}{\ms{X}}$. For
 $P_{\geq0}\in\Dmodb{m+1}{\ms{X}}$, we can write
 $P_{\geq0}=\sum_{i\geq0}\partial^{\angles{m+1}{i}}a_i$ where
 $a_i\in\mc{O}_{\ms{X}}$. Since this is a finite sum, $1\otimes
 P_{\geq0}$ is the image of
 $\sum_{i\geq0}\partial^{\angles{m+1}{i}}\otimes a_i$,
 and the claim follows.

 This implies that the homomorphism $\varphi$ induces the
 canonical homomorphism
 \begin{equation*}
  \widehat{\varphi}\colon\EcompQb{m+1,m'}{\ms{X}}\rightarrow
   \CK{\ms{X}}\{\partial\}^{(m+1,m')}\widehat{\otimes}
   _{\CK{\ms{X}}\{\partial\}^{(m,m')}}\EcompQb{m,m'}{\ms{X}}.
 \end{equation*}
 On the other hand, we have the canonical homomorphism
 \begin{equation*}
  \widehat{\psi}\colon\CK{\ms{X}}\{\partial\}^{(m+1,m')}\widehat{\otimes}
   _{\CK{\ms{X}}\{\partial\}^{(m,m')}}\EcompQb{m,m'}{\ms{X}}\rightarrow
   \EcompQb{m+1,m'}{\ms{X}}.
 \end{equation*}
 To conclude the proof, it suffices to see that
 $\widehat{\psi}\circ\widehat{\varphi}=\mr{id}$, $\widehat{\varphi}
 \circ\widehat{\psi}=\mr{id}$. Since $\psi$ and $\varphi$ are
 continuous, to see the former equality, it suffices to show the
 identity on $E$, which is obvious. Let $A:=\CK{\ms{X}}\{\partial\}
 ^{(m+1,m')}\cap E\subset\Ecompb{m+1,m'}{\ms{X}}$. To see the latter
 equality, it suffices to see after restricting to
 $\mr{Im}(A\otimes_{\mb{Z}}\Ecompb{m,m'}{\ms{X}}\rightarrow
 \CK{\ms{X}}\{\partial\}^{(m+1,m')}\widehat{\otimes}
 \EcompQb{m,m'}{\ms{X}})$. Since this is straightforward, we leave the
 argument to the reader.

 Now let us prove (ii). We have a canonical homomorphism
 \begin{equation*}
  \widehat{\psi}\colon\CR{\ms{X}}\{\partial\}^{(m,m')}\widehat{\otimes}
   ^f_{\CR{\ms{X}}\{\partial\}^{(m,m'+1)}}\Ecompb{m,m'+1}{\ms{X}}
   \rightarrow\Ecompb{m,m'}{\ms{X}}.
 \end{equation*}
 On the other hand, we also have the homomorphism
 \begin{equation*}
  \iota\colon\Ecompb{m,m'+1}{\ms{X}}\rightarrow \CR{\ms{X}}\{\partial\}
   ^{(m,m')}{\otimes}_{\CR{\ms{X}}\{\partial\}^{(m,m'+1)}}
   \Ecompb{m,m'+1}{\ms{X}}.
 \end{equation*}
 sending $P$ to $1\otimes P$.
 Since $\widehat{\psi}\circ\iota$ is the canonical inclusion, we get
 that $\iota$ is injective.
 We define $B:=\CR{\ms{X}}\{\partial\}^{(m,m')}
 {\otimes}_{\CR{\ms{X}}\{\partial\}^{(m,m'+1)}}\Ecompb{m,m'+1}{\ms{X}}$,
which is filtered by the tensor product filtration \cite[p.57]{HO}, denoted by $B_n$. Let
 $n<0$, and take $S:=\sum_{i}P_i\otimes Q_i$ in $B_n$. Then there exists
 $f\in\mc{O}_{\ms{X}}$ such that
 \begin{equation*}
  \sum_{i}P_i\otimes Q_i\equiv\partial^{\angles{m'}{n}}\otimes f
   \bmod B_{n-1}.
 \end{equation*}
 Suppose $S\not\in B_{n-1}$. Then $f\neq 0$. There exists an integer $N$
 such that $p^N\partial^{\angles{m'}{n}}\in
 \CR{\ms{X}}\{\partial\}^{(m,m'+1)}$. Thus for $N'\geq N$, we get $p^{N'}
 S\equiv1\otimes(p^{N'}\partial^{\angles{m'}{n}})\cdot
 f\bmod B_{n-1}$. If $1\otimes(p^{N'}\partial^{\angles{m'}{n}})\cdot
 f\in B_{n-1}$, we would get $\widehat{\psi}\circ\iota((p^{N'}
 \partial^{\angles{m'}{n}})\cdot f)\in(\Ecompb{m,m'}{\ms{X}})_{n-1}$,
 which is impossible. Thus, we get
 $1\otimes(p^{N'}\partial^{\angles{m'}{n}})\cdot
 f\not\in B_{n-1}$. This shows that $p^{N'}S\not\in B_{n-1}$ for
 any large enough $N'$. Thus,
 $\mr{gr}(\CR{\ms{X}}\{\partial\}^{(m,m')}{\otimes}_{\CR{\ms{X}}
 \{\partial\}^{(m,m'+1)}}\Ecomp{m,m'+1}{\ms{X}})$ is $p$-torsion
 free. In particular, the canonical homomorphism
 \begin{equation*}
  i\colon\CR{\ms{X}}\{\partial\}^{(m,m')}\widehat{\otimes}^f
   _{\CR{\ms{X}}\{\partial\}^{(m,m'+1)}}\Ecomp{m,m'+1}{\ms{X}}
   \rightarrow(\CR{\ms{X}}\{\partial\}^{(m,m')}\widehat{\otimes}
   ^f_{\CR{\ms{X}}\{\partial\}^{(m,m'+1)}}\Ecomp{m,m'+1}{\ms{X}})
   \otimes\mb{Q}
 \end{equation*}
 is injective.

 Now, let $E^{[m,m']}_{\ms{X}}:=\rho^{-1}_{m,m'}(\Ecompb{m,m'}{\ms{X}})$
 where
 $\rho_{m,m'}\colon\EcompQb{m,m'+1}{\ms{X}}\rightarrow\EcompQb{m,m'}
 {\ms{X}}$ is the canonical inclusion (cf.\ \cite[5.5]{Abe}). Consider the following diagram.
 \begin{equation*}
  \xymatrix@C=60pt{E^{[m,m']}_{\ms{X}}\ar[r]^<>(.5)j\ar@{.>}[rd]&
   (\CR{\ms{X}}\{\partial\}^{(m,m')}\widehat{\otimes}^f
   _{\CR{\ms{X}}\{\partial\}
   ^{(m,m'+1)}}\Ecompb{m,m'+1}{\ms{X}})\otimes\mb{Q}\\&
   \CR{\ms{X}}\{\partial\}^{(m,m')}\widehat{\otimes}^f
   _{\CR{\ms{X}}\{\partial\}
   ^{(m,m'+1)}}\Ecompb{m,m'+1}{\ms{X}}\ar[u]_i.}
 \end{equation*}
 Let us construct the dotted arrow making the diagram commutative.
 It suffices to see that $\mr{Im}(j)\subset\mr{Im}(i)$. Let
 $P:=\sum_{k\in\mb{Z}}\partial^ka_k\in E^{[m,m']}_{\ms{X}}$ where
 $a_k\in\mc{O}_{\ms{X},\mb{Q}}$. Since there exists an
 integer $N$ such that $j(p^N\cdot P)\in\mr{Im}(i)$, it suffices to show
 that $j(\partial^{k}a_k)\in\mr{Im}(i)$ for any integer $k$, which is
 easy. Thus $j$ induces
 the canonical homomorphism
 \begin{equation*}
  E^{[m,m']}_{\ms{X}}\rightarrow \CR{\ms{X}}\{\partial\}^{(m,m')}\widehat
   {\otimes}^f_{\CR{\ms{X}}\{\partial\}^{(m,m'+1)}}\Ecompb{m,m'+1}{\ms{X}}
 \end{equation*}
 of filtered rings since $i$ is injective. By taking the completion with
 respect to the filtration by order, we get the canonical homomorphism
 \begin{equation*}
  \widehat{\varphi}\colon\Ecompb{m,m'}{\ms{X}}\rightarrow
   \CR{\ms{X}}\{\partial\}^{(m,m')}\widehat{\otimes}^f
   _{\CR{\ms{X}}\{\partial\}^{(m,m'+1)}}\Ecompb{m,m'+1}{\ms{X}}.
 \end{equation*}
 We see easily that $\widehat{\varphi}\circ\widehat{\psi}=\mr{id}$,
 $\widehat{\psi}\circ\widehat{\varphi}=\mr{id}$ as in the proof of (i),
 which concludes the proof.

 Let us see (iii).
 The above argument shows that $B/B_n$ is
 $p$-torsion free for any $n\in\mb{Z}$.
 Since $B/B_n$ is $p$-torsion free and the inverse system $\{B/B_n\}_n$
 satisfies the Mittag-Leffler condition, we get
 $(\invlim_{n}B/B_n)\otimes R_i\cong\invlim_n(B/B_n\otimes R_i)$ for any
 $i$. Let $\widehat{B}$ be the completion with respect to the filtration
 by order. We get
 \begin{align*}
  &\widehat{B}\otimes R_i\cong(\invlim_nB/B_n)
  \otimes R_i\cong\invlim_n(B/B_n\otimes R_i)\\
  &\qquad\cong\invlim_n((B\otimes R_i)/\mr{Im}(B_n))\cong
  \CR{X_i}\{\partial\}^{(m,m')}\widehat{\otimes}^f_{\CR{X_i}
  \{\partial\}^{(m,m'+1)}}\Emodb{m,m'+1}{X_i}.
 \end{align*}
 By using (ii), the claim follows.
\end{proof}

\begin{lem}
 \label{flatness}
 Let $\ms{X}$ be a smooth formal curve over $R$, and
 $\pi\colon T^*X\rightarrow X$ as usual. Then the algebra
 $\EcompQ{m,m'}{\ms{X}}$ is flat over $\pi^{-1}\DcompQ{m}{\ms{X}}$.
\end{lem}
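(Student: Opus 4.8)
The plan is to reduce the flatness statement to a statement at each level $i$ over $R_i$ and then to a graded/local statement that can be checked by an explicit base change, using the structural results on $\Ecomp{m,m'}{\ms{X}}$ established above. First I would reduce to the case where $\ms{X}$ possesses a local coordinate $x$, i.e.\ Situation (L), since flatness can be checked locally on $\ms{X}$ and every point has such a neighborhood. Working on sections over $\mathring{T}^*X$ (and extending to the zero section is harmless since there $\EcompQ{m,m'}{\ms{X}}$ agrees with $\DcompQ{m}{\ms{X}}$), it then suffices to prove that $\EcompQb{m,m'}{\ms{X}}$ is flat over $\Dcomp{m}{\ms{X},\mb{Q}}=\Gamma(\ms{X},\DcompQ{m}{\ms{X}})$. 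Tensoring with $\mb{Q}$ and using that $\Ecomp{m,m'}{\ms{X}}$ is $p$-adically complete and $p$-torsion free, I would further reduce to showing that $\Ecompb{m,m'}{\ms{X}}$ is flat over $\Dcompb{m}{\ms{X}}$, and since both are $p$-adically complete noetherian rings (cf.\ \cite[4.12]{Abe}), by the local criterion for flatness of complete rings it is enough to prove that $\Emodb{m,m'}{X_i}$ is flat over $\Dmodb{m}{X_i}$ for every $i\geq0$, and in fact by dévissage on $i$ it suffices to treat $i=0$, i.e.\ to show $\Emodb{m,m'}{X}$ is flat over $\Dmodb{m}{X}$.

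At this point I would pass to associated graded rings with respect to the order filtration: both $\Emodb{m,m'}{X}$ and $\Dmodb{m}{X}$ carry the filtration by order of differential operators, and the statement "$\Emodb{m,m'}{X}$ is flat over $\Dmodb{m}{X}$" would follow from "$\mr{gr}(\Emodb{m,m'}{X})$ is flat over $\mr{gr}(\Dmodb{m}{X})$" by the standard filtered-ring argument (cf.\ the use of \cite[III.2.9, Prop 12]{Bou} already invoked in the proof of Lemma \ref{finiteness}), once one knows the filtrations are exhaustive, separated and compatible. The graded rings are now commutative: $\mr{gr}(\Dmodb{m}{X})$ is a (twisted) polynomial ring in the symbol $\overline{\partial}$ over a ring built from $\mc{O}_X$, while $\mr{gr}(\Emodb{m,m'}{X})$ is obtained by inverting the symbol of $\partial^{\angles{m}{Np^m}}$ (a localization) on the part of order $\geq 0$ and adjoining the negative-order symbols. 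Localization is flat, so the key point is to identify $\mr{gr}(\Emodb{m,m'}{X})$ with an appropriate localization/completion of $\mr{gr}(\Dmodb{m}{X})$ and to observe that passing from $m$-symbols to the $(m,m')$-intermediate ring only multiplies the symbol of $\partial$ by units; here I would use the explicit description of the intermediate ring $\EcompQb{m,m'}{\ms{X}}$ recalled before Lemma \ref{topology}, together with Lemma \ref{tesorcalc}(iii) to handle the interplay with $\CR{X_i}\{\partial\}^{(m,m')}$.

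The main obstacle I anticipate is the presence of the completion in the definition of $\Ecomp{m,m'}{\ms{X}}$: passing to $\mr{gr}$ with respect to the order filtration does not obviously commute with the $p$-adic completion, and more seriously the ring $\Emodb{m,m'}{X}$ is itself already a completed object on the negative side, so one must be careful that $\mr{gr}$ of the completion is still a reasonable (flat) object over $\mr{gr}(\Dmodb{m}{X})$ rather than something pathological. This is exactly the sort of difficulty addressed in the graded computations of the proof of Lemma \ref{tesorcalc}, and I would reuse that mechanism: show $B/B_n$ is $p$-torsion free and Mittag-Leffler, so that $\mr{gr}$ commutes with the relevant completions, and then reduce to the already-understood commutative localization statement. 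An alternative, possibly cleaner route — which I would keep in reserve — is to deduce the flatness directly from Lemma \ref{tesorcalc}: that lemma exhibits $\Ecompb{m,m'}{\ms{X}}$ as a completed tensor product over $\CR{\ms{X}}\{\partial\}^{(m,m')}$-type rings, and combined with the known flatness of $\EcompQ{m,m'}{\ms{X}}$ over itself and a faithfully flat descent along the inclusion $\Dcomp{m}{\ms{X},\mb{Q}}\hookrightarrow\EcompQb{m,m'}{\ms{X}}$, one may be able to bypass the graded computation entirely; but I expect the $\mr{gr}$-approach is the most robust.
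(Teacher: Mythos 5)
The paper's proof is entirely different and much shorter than your plan: it avoids local coordinates, reductions mod $\varpi^{i+1}$, and associated graded rings altogether. Starting from the fact that $\EcompQ{m'}{\ms{X}}$ is already known to be flat over $\pi^{-1}\DcompQ{m}{\ms{X}}$ (combining \cite[Cor.~2.9]{Abe} with the level-raising flatness \cite[3.5.3]{Ber1}), the paper uses the short exact sequence
\begin{equation*}
 0\rightarrow\EcompQ{m,m'}{\ms{X}}\rightarrow\EcompQ{m'}{\ms{X}}
 \rightarrow\EcompQ{m'}{\ms{X}}/\EcompQ{m,m'}{\ms{X}}\rightarrow0
\end{equation*}
to shift the required vanishing $\mr{Tor}_1^{\pi^{-1}\DcompQ{m}{\ms{X}}}(\EcompQ{m,m'}{\ms{X}},\bullet)=0$ to a $\mr{Tor}_2$-vanishing for the quotient. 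That quotient is then traded, using \cite[5.11, 7.8]{Abe}, for $\pi^{-1}(\DcompQ{m'}{\ms{X}}/\DcompQ{m}{\ms{X}})$ at the level of $\mr{Tor}_2$, and the vanishing follows from the flatness of $\DcompQ{m'}{\ms{X}}$ over $\DcompQ{m}{\ms{X}}$. What the Tor-shift buys you, compared to your plan, is that you never have to understand the internal structure of the intermediate microlocal ring at all.

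Your graded-ring strategy has two concrete gaps. First, the step ``by d\'evissage on $i$ it suffices to treat $i=0$'' is not justified: passing from $i=0$ to all $i$ (or to the completed ring) requires, in the local criterion of flatness, an additional $\mr{Tor}_1(-,R_0)$-vanishing or, equivalently, a torsion-freeness/$\mr{gr}$-compatibility hypothesis which your plan never verifies. Second, and more seriously, the identification of $\mr{gr}(\Emodb{m,m'}{X})$ as ``a localization of $\mr{gr}(\Dmodb{m}{X})$ at the symbol of $\partial^{\angles{m}{Np^m}}$'' is not correct for $m'>m$. That description applies to $\Emodb{m}{X}$, but $\Emodb{m,m'}{X}$ is the intermediate ring whose positive-order part is controlled by level $m$ while the negative-order part is controlled by level $m'$; the comparison map $\psi_{m,m'}$ rescales negative-order generators by $p$-powers, so the graded pieces in negative degree are \emph{not} simply the localized degree-$(-k)$ pieces, and for $m>0$ the ring $\mr{gr}(\Dmodb{m}{X})$ already has nilpotents (cf.\ the identification of its reduced part with $\mc{O}_{X,s}[\xi^{\angles{m}{\pm p^m}}]$ in the proof of Prop.~\ref{levmcharcycle}). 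The ``localization is flat'' shortcut therefore does not apply as stated, and one would have to genuinely compute $\mr{gr}(\Emodb{m,m'}{X})$ as a $\mr{gr}(\Dmodb{m}{X})$-module. Your backup route via Lemma~\ref{tesorcalc} does not fix this either, since that lemma realizes $\Ecompb{m,m'}{\ms{X}}$ as a completed tensor product over the commutative rings $\CR{\ms{X}}\{\partial\}^{(m,m')}$, not over $\Dcompb{m}{\ms{X}}$, so it gives no direct handle on flatness over $\pi^{-1}\DcompQ{m}{\ms{X}}$.
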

\begin{proof}
 By \cite[Corollary 2.9]{Abe} and \cite[3.5.3]{Ber1}, we know that
 $\EcompQ{m'}{\ms{X}}$ is flat over $\pi^{-1}\DcompQ{m}{\ms{X}}$. It
 suffices to show that
 $\mr{Tor}_1^{\pi^{-1}\DcompQ{m}{\ms{X}}}(\EcompQ{m,m'}{\ms{X}}
 ,\bullet)=0$. This
 amounts to prove
 \begin{equation*}
  \mr{Tor}_2^{\pi^{-1}\DcompQ{m}{\ms{X}}}(\EcompQ{m'}{\ms{X}}/
   \EcompQ{m,m'}{\ms{X}},\bullet)=0
 \end{equation*} 
 by the flatness of $\EcompQ{m'}{\ms{X}}$. However, by \cite[5.11 and
 7.8]{Abe}, this is equivalent to showing that
 \begin{equation*}
  \mr{Tor}_2^{\pi^{-1}\DcompQ{m}{\ms{X}}}(\pi^{-1}\DcompQ{m'}{\ms{X}}
   /\DcompQ{m}{\ms{X}},\bullet)=0,
 \end{equation*}
 which follows from the flatness of $\DcompQ{m'}{\ms{X}}$ over
 $\DcompQ{m}{\ms{X}}$.
\end{proof}

\begin{rem*}
 We do not know if $\EcompQ{m,m'}{\ms{X}}$ is flat over
 $\DcompQ{m}{\ms{X}}$ when the dimension of $\ms{X}$ is greater than
 $1$ and $m'>m$.
\end{rem*}

\subsubsection{}
\label{K_Xnotdef}
Let $\ms{X}$ be a connected smooth formal scheme over $R$. Let $\eta$ be
the generic point of $\ms{X}$, and denote by $R(\ms{X})$ the integral
closure of $R$ in the field $\mc{O}_{\ms{X},\eta}$. The ring $R(\ms{X})$
is a discrete valuation ring as well since it is finite over $R$ and
connected. Thus by \cite[IV, 17.7.7]{EGA}, $R(\ms{X})$ is \'{e}tale over
$R$. Moreover $R(\ms{X})/\varpi$ is the separable closure
of $k$ in $\mc{O}_{X,\eta}$, and $\ms{X}$ is geometrically connected
over $R(\ms{X})$ by \cite[II, 4.5.15]{EGA}. Put
$K(\ms{X}):=R(\ms{X})\otimes\mb{Q}$. This is an unramified field
extension of $K$. The field $K(\ms{X})$ is called the field of local
constants of $\ms{X}$. The reason for this naming comes from the next
lemma.

\begin{lem*}
 Suppose $\ms{X}$ is affine and possesses a system of local coordinates
 $\{x_1,\dots,x_d\}$. We denote the corresponding differential operators
 by $\{\partial_1,\dots,\partial_d\}$. Then we get
 \begin{equation*}
  K(\ms{X})=\bigl\{f\in\mc{O}_{\ms{X},\mb{Q}}\mid \partial_i(f)=0
   \mbox{ for $1\leq i\leq d$}
   \bigr\}. \index{.@miscellaneous!KX@$K(\ms{X})$}
 \end{equation*}
\end{lem*}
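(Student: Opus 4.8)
The plan is to prove the two inclusions separately; the inclusion \dq{$\subseteq$} is formal, and the content lies in \dq{$\supseteq$}. For \dq{$\subseteq$}: by scaling we may assume $f\in R(\ms{X})$, and $R(\ms{X})\subseteq\Gamma(\ms{X},\mc{O}_{\ms{X}})$ thanks to the structure morphism $\ms{X}\to\Spf R(\ms{X})$ recalled in \ref{K_Xnotdef}. Each $\partial_i$ is an $R$-linear derivation of $\mc{O}_{\ms{X}}$, so its restriction to $R(\ms{X})$ is an $R$-derivation $R(\ms{X})\to\mc{O}_{\ms{X}}$ and therefore factors through $\Omega_{R(\ms{X})/R}\otimes_{R(\ms{X})}\mc{O}_{\ms{X}}$; since $R(\ms{X})/R$ is étale this module vanishes, whence $\partial_i(f)=0$.

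For the reverse inclusion, let $f\in\mc{O}_{\ms{X},\mb{Q}}$ with $\partial_i f=0$ for all $i$; multiplying by a power of $p$ we may assume $f=g\in\mc{O}_{\ms{X}}$, and it is enough to show that $g$ is integral over $R$: then $g$ lies in the integral closure of $R$ inside $\mc{O}_{\ms{X},\eta}$, which is $R(\ms{X})$, so $f\in R(\ms{X})\otimes\mb{Q}=K(\ms{X})$. The crucial point is that one should \emph{not} argue by reduction modulo $\varpi$: in characteristic $p$ the kernel of $d$ on $\mc{O}_X$ is far too large (it contains every $p$-th power), so the mod-$\varpi$ information is useless. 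Instead I would pass to a formal neighbourhood of a closed point, where the characteristic is zero. Choose a closed point $s$ of $X$; since $\ms{X}$ is smooth over $R$ and $\{x_1,\dots,x_d\}$ is a system of coordinates, $\widehat{\mc{O}}_{\ms{X},s}\cong R(s)\cc{t_1,\dots,t_d}$, where $R(s)$ is the unramified extension of $R$ with residue field $\kappa(s)$ (finite separable over $k$ by smoothness) and one may take $t_i=x_i-\widetilde{x_i(s)}$, so that $\partial_i$ becomes $\partial/\partial t_i$ on $\widehat{\mc{O}}_{\ms{X},s}$. The natural map $\mc{O}_{\ms{X}}\to\widehat{\mc{O}}_{\ms{X},s}$ commutes with the $\partial_i$ and is injective (e.g.\ because it is injective modulo $\varpi$, $X$ being integral, and $\mc{O}_{\ms{X}}$ is $\varpi$-adically separated).

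Hence the image $\widehat{g}$ of $g$ satisfies $\partial/\partial t_i(\widehat{g})=0$ for all $i$; writing $\widehat{g}=\sum_\alpha c_\alpha t^\alpha$ with $c_\alpha\in R(s)$ and using that a nonzero integer is a nonzerodivisor in the characteristic-zero domain $R(s)$, one gets $c_\alpha=0$ for $\alpha\neq0$, i.e.\ $\widehat{g}=c_0\in R(s)$. As $R(s)$ is finite over $R$, $c_0$ satisfies a monic polynomial equation over $R$; pulling this equation back along the injection $\mc{O}_{\ms{X}}\hookrightarrow\widehat{\mc{O}}_{\ms{X},s}$ shows that $g$ satisfies the same equation, so $g$ is integral over $R$, which finishes the proof. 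The only genuinely delicate step is this second inclusion, and within it precisely the observation that one must work at a formal neighbourhood of a closed point (in mixed characteristic) rather than modulo $\varpi$; the remaining ingredients are either formal or the standard facts about $\widehat{\mc{O}}_{\ms{X},s}$ for smooth formal schemes together with the properties of $R(\ms{X})$ and $K(\ms{X})$ established in \ref{K_Xnotdef}.
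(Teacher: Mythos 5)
Your proof is correct, and it takes a genuinely different route from the paper's. The paper also reduces to a single point, but via a global, descent-theoretic path: one first assumes $R=R(\ms{X})$, then passes to a finite \'etale extension $R'/R$ over which $\ms{X}':=\ms{X}\otimes_R R'$ acquires an $R'$-rational point, invokes Galois descent to reduce to $\ms{X}'$, and concludes by citing \cite[IV, 17.5.3]{EGA}. You instead work purely locally: you complete at a closed point $s$, identify $\widehat{\mc{O}}_{\ms{X},s}$ with $R(s)\cc{t_1,\dots,t_d}$ so that the condition $\partial_i(g)=0$ can be solved by hand (the coefficient ring has characteristic zero, so $\alpha_ic_\alpha=0$ forces $c_\alpha=0$ for $\alpha\neq0$), and then pull the integrality equation for $c_0\in R(s)$ back along the injection $\mc{O}_{\ms{X}}\hookrightarrow\widehat{\mc{O}}_{\ms{X},s}$ to conclude $g\in R(\ms{X})$. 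Your version is more elementary and self-contained, and the remark that reducing modulo $\varpi$ is hopeless (every $p$-th power is annihilated by $d$ in characteristic $p$) correctly identifies where the mixed-characteristic input is genuinely used.

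One small imprecision to fix: the parenthetical claim that $\kappa(s)$ is ``finite separable over $k$ by smoothness'' is not true for an arbitrary closed point $s$ when $k$ is imperfect, and this section of the paper only assumes $k$ of characteristic $p>0$, not perfect. You need the separability, since it is what guarantees a description $\widehat{\mc{O}}_{\ms{X},s}\cong R(s)\cc{t_1,\dots,t_d}$ with $R(s)$ \emph{finite} (unramified) over $R$, which your final integrality step requires. The fix is cheap: \emph{choose} $s$ with $\kappa(s)/k$ separable. Such points exist because the coordinates define an \'etale morphism $X\to\mb{A}^d_k$ with open dense image, and one can take $s$ lying over a $k$-rational point of $\mb{A}^d_k$ in that image (if $k$ is finite it is perfect, so any $s$ works; if $k$ is infinite, the open image contains $k$-rational points). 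With that adjustment the argument is complete.
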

\begin{proof}
 For this, we may assume that $R=R(\ms{X})$. There exists a finite
 \'{e}tale extension $R'$ of $R$ such that $R'$ is a discrete valuation
 ring and $\ms{X}':=\ms{X}\otimes_RR'$
 has a $R'$-rational point. Note that $\ms{X}'$ is also connected and
 $K(\ms{X}')=R'\otimes\mb{Q}$. By Galois descent, it suffices to show
 the lemma for $\ms{X}'$. In this case the lemma follows from \cite[IV,
 17.5.3]{EGA}.
\end{proof}

\subsubsection{}
\label{fieldconstmicdif}
Let $L$ be a finite field extension of $K$. Let $I$ be a connected
interval in $\mb{R}_{\geq0}$. We denote by $\An[x,L](I)$ the ring of
analytic functions
$\bigl\{\sum_{i\in\mb{Z}}\alpha_ix^i\mid \alpha_i\in L,
\mbox{$\lim_{i\rightarrow\pm\infty}|\alpha_i|\rho^i=0$ for any $\rho\in
I$}\bigr\}$. For short, we often denote this by
$\mc{A}(I)$. For real numbers $0<a\leq b<\infty$, we put
\begin{equation*}
 \mc{A}_{x,L}(\{a,b]):=\left\{\sum_{i\in\mb{Z}}\alpha_ix^i
			\Bigg\arrowvert \alpha_i\in L,~\sup_{i\in\mb{Z}}
			\bigl\{|\alpha_i|a^i\bigr\}
			<\infty,~\text{$\lim_{i\rightarrow+\infty}
			|\alpha_i|b^i=0$}\right\}.
\end{equation*}
Similarly, we define $\mc{A}_{x,L}([a,b\})$. \index{rings of analytical functions! $\An[x,L](I)$, $\mc{A}(I)$, $\mc{A}_{x,L}(\{a,b])$, $\mc{A}_{x,L}([a,b\})$}
For a series $f(x)=\sum_{i\in\mb{Z}}\alpha_ix^i$ in $\mc{A}_{x,L}(\{a,b])$ and a real number $c\in [a,b]$, we put $|f(x)|_c:= \sup_{i\in\mb{Z}}
			\bigl\{|\alpha_i|c^i\bigr\}$; clearly we have 
$|f(x)|_c\in\mb{R}$.

Let $\omega_m:=p^{-1/p^m(p-1)}<1$, and $\omega:=\omega_0$. Note that
if $m'\geq m$, we get $\omega<\omega/\omega_{m'}\leq\omega/
\omega_m\leq 1$.
Then by  the definition of $K\{\partial\}^{(m,m')}$ we get the following
explicit description.
\begin{lem*}
 Suppose we are in Situation {\normalfont(L)} of
 {\normalfont\ref{setup}}. Let $L$ be the
 field of constant of $\ms{X}$. Then for any non-negative integers
 $m'\geq m$, we have an isomorphism
 \begin{equation*}
  \CK{\ms{X}}\{\partial\}^{(m,m')}\xrightarrow{\sim}\mc{A}_{x,L}
   (\{\omega/\omega_{m'},\omega/\omega_m])
 \end{equation*}
 sending $\partial$ to $x$.
\end{lem*}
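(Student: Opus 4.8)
The plan is to make the isomorphism completely explicit by chasing through the definitions of the intermediate ring $\CK{\ms{X}}\{\partial\}^{(m,m')}$ and of the completion-with-respect-to-order that defines $\Ecompb{m,m'}{\ms{X}}$, and matching the resulting convergence conditions against the two sides of the annulus $\{\omega/\omega_{m'},\omega/\omega_m]$. First I would recall that $\CR{\ms{X}}\{\partial\}^{(m,m')}$ consists of those $P=\sum_{n\in\mb{Z}}a_n\partial^n\in\Ecompb{m,m'}{\ms{X}}$ commuting with all $\partial^k$, $k\geq0$; by the elementary commutation calculus $[\,\partial,x\,]=1$ one checks that such $P$ must have scalar coefficients $a_n$, i.e.\ $a_n\in R(\ms{X})$ after passing to $\CK{}$, and conversely any scalar series that lies in $\Ecompb{m,m'}{\ms{X}}$ commutes with the $\partial^k$. (Here $L=K(\ms{X})$ is the field of local constants, identified with the $\partial$-horizontal elements of $\mc{O}_{\ms{X},\mb{Q}}$ by the Lemma of \ref{K_Xnotdef}.) So the map sending $\partial\mapsto x$ identifies $\CK{\ms{X}}\{\partial\}^{(m,m')}$ with a certain ring of two-sided Laurent series $\sum_n\alpha_nx^n$, $\alpha_n\in L$, and everything reduces to: \emph{which} growth conditions on the $\alpha_n$ survive.

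Next I would translate the two membership conditions appearing in the explicit description of $\EcompQb{m,m'}{\ms{X}}$ given just before Lemma~\ref{topology}: the positive part $\sum_{k\geq0}a_k\partial^k$ must lie in $\Dcomp{m}{\ms{X},\mb{Q}}=\widehat{\Dmod{m}{}}\otimes\mb{Q}$, and the negative part $\sum_{k\leq0}a_k\partial^k$ must lie in $\Ecomp{m'}{\ms{X},\mb{Q}}$. The level-$m$ completed operators $\Dcomp{m}{\ms{X},\mb{Q}}$ are built from the divided powers $\partial^{\angles{m}{k}}=\frac{q_k!}{k!}\partial^k$ with $k=q_kp^m+r$, whose $p$-adic size is governed precisely by the constant $\omega_m$; writing $\partial^k=\frac{k!}{q_k!}\partial^{\angles{m}{k}}$ and recalling $|k!/q_k!|\sim (\omega/\omega_m)^{-k}$ up to bounded factors, $p$-adic boundedness of the coefficient of $\partial^{\angles{m}{k}}$ becomes exactly the condition $\sup_k|\alpha_k|(\omega/\omega_m)^k<\infty$ under $\partial\mapsto x$. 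This is the right endpoint $b=\omega/\omega_m$ of the half-open interval, with a $\sup$ (not a limit) condition — matching the $\{a,b]$-type ring $\mc{A}_{x,L}(\{a,b])$ which I'd realize has the $\sup$-condition at the \emph{left} endpoint and the limit-condition at the \emph{right}; so I need to be careful which side is which, and in fact the indexing $\partial^k$ for $k\geq0$ versus $k\leq0$ flips the roles, so the positive-$\partial$ (bounded, level $m$) part controls the $b=\omega/\omega_m$ behaviour and the negative-$\partial$ part controls the $a=\omega/\omega_{m'}$ behaviour. The negative part living in $\Ecomp{m'}{\ms{X},\mb{Q}}$ — from the description $\Gamma(U,\Ecomp{m'}{\ms{X}})\cong\{\sum_k a_k\partial^{\angles{m'}{k}}\mid a_k\to0\}$ — gives, after the same divided-power bookkeeping at level $m'$, the decay condition $|\alpha_k|(\omega/\omega_{m'})^k\to0$ as $k\to-\infty$, i.e.\ $\lim_{i\to+\infty}|\alpha_i|b^i=0$ with $b$ now $=\omega/\omega_{m'}$ in the $x$-variable after re-indexing. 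Assembling, the image ring is exactly $\mc{A}_{x,L}(\{\omega/\omega_{m'},\omega/\omega_m])$.

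Finally I would check this candidate map is a ring isomorphism: injectivity is clear since a Laurent series is determined by its coefficients, and surjectivity amounts to observing that any series satisfying the stated $\sup$/$\lim$ conditions can be re-expanded in the divided-power bases $\{\partial^{\angles{m}{k}}\}_{k\geq0}$ and $\{\partial^{\angles{m'}{k}}\}_{k\leq0}$ with coefficients in $R(\ms{X})\otimes\mb{Q}$ having the required boundedness/decay, hence genuinely defines an element of $\EcompQb{m,m'}{\ms{X}}$ that is $\partial^k$-horizontal, hence lies in $\CK{\ms{X}}\{\partial\}^{(m,m')}$; multiplicativity is immediate because on the scalar-coefficient subring the Laurent multiplication $\partial^i\partial^j=\partial^{i+j}$ goes over to $x^ix^j=x^{i+j}$. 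The main obstacle, and where I would spend the most care, is the precise $p$-adic estimate $|k!/q_k!|=\omega_m^{\,?}\cdot p^{\,?}$ relating the monomial basis $\partial^k$ to the divided-power basis $\partial^{\angles{m}{k}}$ at each level, together with getting the endpoint conventions right — which endpoint is closed, which carries $\sup$ versus $\lim$, and how the sign of the exponent under $\partial\leftrightarrow x$ swaps "radius $\omega/\omega_m$" with "radius $\omega/\omega_{m'}$". Once those estimates are pinned down (they are the standard ones underlying the definition of $\Dcomp{m}{}$, e.g.\ as in \cite{Ber1}), the identification is forced.
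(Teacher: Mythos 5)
The overall strategy is right: identify $\partial$-horizontal elements with scalar-coefficient Laurent series via the commutation $[\partial,x]=1$ and the description of $L=K(\ms{X})$ in Lemma~\ref{K_Xnotdef}, then transport the two membership conditions in the explicit presentation of $\EcompQb{m,m'}{\ms{X}}$ across the change of basis $\partial^k \leftrightarrow \partial^{\angles{m}{k}}$ and $\partial^k\leftrightarrow\partial^{\angles{m'}{k}}$. This is indeed the content the paper treats as a direct unwinding of definitions.

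However, you have the $\sup$ and $\lim$ conditions attached to the wrong halves of the series, and your attempt to reconcile this via an ``index flip'' does not actually resolve it. Concretely: an element $\sum_{k\geq 0}\alpha_k\partial^k$ lies in $\Dcomp{m}{\ms{X},\mb{Q}}$ precisely when the coefficients $b_k=\alpha_k\,k!/q_k!$ of the divided-power basis \emph{tend to $0$} $p$-adically (this is a $p$-adic completion, so boundedness alone is not enough for the series to converge); since $|k!/q_k!|$ grows like $(\omega/\omega_m)^k$, this is $\lim_{k\to+\infty}|\alpha_k|(\omega/\omega_m)^k=0$, which is the \emph{limit} condition at the closed right endpoint $b=\omega/\omega_m$ in $\mc{A}_{x,L}(\{a,b])$. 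Conversely, for $\sum_{k\leq 0}\alpha_k\partial^k$ to lie in $\Ecomp{m'}{\ms{X},\mb{Q}}$, the condition $a_k\to 0$ in the description of $\Ecomp{m'}{\ms{X}}$ applies only as $k\to+\infty$, hence is vacuous on the negative range; what remains for $k\leq 0$ is merely integrality of the coefficients, which after tensoring with $\mb{Q}$ is a uniform \emph{boundedness} condition, namely $\sup_{k\leq 0}|\alpha_k|(\omega/\omega_{m'})^k<\infty$ --- the $\sup$ condition at the open left endpoint $a=\omega/\omega_{m'}$. Your version (sup at level $m$, limit at level $m'$) would, carried through honestly, produce $\mc{A}_{x,L}([\omega/\omega_{m'},\omega/\omega_m\})$ instead of $\mc{A}_{x,L}(\{\omega/\omega_{m'},\omega/\omega_m])$; you only arrive at the correct ring in the end by asserting the statement rather than by deriving it from your intermediate conditions. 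With the two conditions swapped back the argument goes through.
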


\begin{lem}
 \label{Kringpid}
 Suppose we are in Situation {\normalfont(L)}. For any non-negative
 integers $m'\geq m$, the commutative ring
 $\CK{\ms{X}}\{\partial\}^{(m,m')}$ is a principal ideal
 domain. Moreover, $\CK{\ms{X}}\{\partial\}^{(m)}$ is a field.
\end{lem}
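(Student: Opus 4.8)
The plan is to transport the statement to the explicit analytic description of \ref{fieldconstmicdif} and then argue by elementary commutative algebra. Write $L:=K(\ms{X})$ for the field of local constants and put $a:=\omega/\omega_{m'}$, $b:=\omega/\omega_m$, so that $\omega<a\le b\le 1$ and $a=b$ precisely when $m'=m$. By the explicit description in \ref{fieldconstmicdif} we have $\CK{\ms{X}}\{\partial\}^{(m,m')}\cong\mc{A}_{x,L}(\{a,b])$ (with $\partial\mapsto x$), and $\CK{\ms{X}}\{\partial\}^{(m)}$ is the case $a=b$. Since a field is in particular a principal ideal domain, it suffices to prove: (1) when $a=b$, $\mc{A}_{x,L}(\{a,a])$ is a field; (2) when $a<b$, $\mc{A}_{x,L}(\{a,b])$ is a principal ideal domain.

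For (1), the ring $\mc{A}_{x,L}(\{a,a])$ is a bounded Robba ring over $L$, and it is a classical fact in the theory of $p$-adic differential equations that such a ring is a complete discretely valued field: the Gauss norm $f\mapsto\sup_i|\alpha_i|a^i$ is multiplicative, its unit ball is a complete discrete valuation ring, and $\mc{A}_{x,L}(\{a,a])$ is its field of fractions. (When $a\in|L^{\times}|$ one may rescale $x$ by an element of $L$ of norm $a$ to reduce to the familiar case $a=1$.) Hence $\CK{\ms{X}}\{\partial\}^{(m)}$ is a field, and a fortiori $\CK{\ms{X}}\{\partial\}^{(m,m)}$ is a PID.

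For (2), fix $a<b$. First, $\mc{A}_{x,L}(\{a,b])$ is a domain: because $a<b$, the coefficients of any $f\in\mc{A}_{x,L}(\{a,b])$ decay as $i\to+\infty$ by definition and as $i\to-\infty$ because $|\alpha_i|b^i=|\alpha_i|a^i\cdot(b/a)^i$ with $|\alpha_i|a^i$ bounded, so the Gauss norm $|f|_b=\sup_i|\alpha_i|b^i$ is finite and attained, and it is multiplicative. The decisive point is that \emph{every nonzero $f$ has only finitely many zeros in the annulus $a<|x|\le b$}: the function $\log\rho\mapsto\log|f|_\rho$ is convex, and the two boundedness conditions built into the definition of $\mc{A}_{x,L}(\{a,b])$ say exactly that it is finite on the closed interval $[a,b]$; hence its one-sided slopes at $a$ and at $b$ are finite, and their difference — which equals, with multiplicities, the number of zeros of $f$ with $a<|x|<b$ — is finite, while there are only finitely many zeros on $|x|=b$ because $|f|_b$ is attained. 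Consequently, for $f\ne0$ the quotient $\mc{A}_{x,L}(\{a,b])/(f)$ is supported on finitely many closed points, each with residue field finite over $L$, hence is Artinian; this forces every ideal to be finitely generated, so $\mc{A}_{x,L}(\{a,b])$ is noetherian of Krull dimension one.

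It remains to see that this Dedekind-type ring is actually a PID, i.e. that each maximal ideal is principal. A maximal ideal corresponds to a closed point $c$ of the annulus; let $P_c\in L[x]$ be the minimal polynomial over $L$ of a geometric point above $c$. Then $P_c$ is a polynomial all of whose roots lie in $a<|x|\le b$, so $P_c\in\mc{A}_{x,L}(\{a,b])$, and a comparison of Gauss norms as the radius tends to $a^{+}$ (using $|P_c|_{a'}\to|P_c|_a>0$) shows that any $f$ vanishing along the $L$-orbit of $c$ satisfies $f/P_c\in\mc{A}_{x,L}(\{a,b])$; hence the maximal ideal at $c$ equals $(P_c)$. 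The localizations are therefore regular, $\mc{A}_{x,L}(\{a,b])$ is Dedekind, and every ideal, being a finite product $\prod_c(P_c)^{n_c}$, is principal. I expect the genuine obstacle to be the finiteness of the zero set: this is precisely the feature separating $\mc{A}_{x,L}(\{a,b])$ from the non-noetherian ring of \emph{all} analytic functions on $a<|x|\le b$, and it must be extracted carefully from the boundedness imposed at the \emph{open} end $a$ via the convexity of $\log|f|_\rho$ on $[a,b]$; after that, the Artinianness of the quotients, the reduction of Dedekind to PID, and the identification of the maximal ideal at $c$ with $(P_c)$ are routine.
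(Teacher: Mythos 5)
Your proof takes a different route from the paper's. After reducing to $\mc{A}_{x,L}(\{a,b])$, the paper invokes Dwork's factorization ([Dw], A.4): every nonzero $f$ has finitely many zeros and factors as $f=pg$ with $p\in L[x]$ and $g$ a unit of $\mc{A}_{x,L}(\{a,b])$, so each ideal is generated by polynomials and, since $L[x]$ is a PID, is principal. You instead assemble a Dedekind-domain argument from the finiteness of the zero set (Artinian quotients, hence noetherian of Krull dimension one, with each maximal ideal principal because generated by a minimal polynomial over $L$). Both arguments are correct once the finiteness of the zero set is secured, and yours has the advantage of spelling out the $a=b$ field case, which the paper relegates to the reader.

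The genuine gap sits exactly where you flag a difficulty, but the argument you offer for it does not hold up. The claim that convexity of $\log\rho\mapsto\log|f|_\rho$ together with boundedness on the closed interval forces finite one-sided slopes at the endpoints is false for convex functions in general: $t\mapsto-\sqrt{t-\log a}$ is convex, bounded on $[\log a,\log b]$, and has right derivative $-\infty$ at $\log a$. Boundedness alone does not prevent the slope of $\log|f|_\rho$ from tending to $-\infty$ as $\rho\to a^{+}$ while the function itself stays bounded: over a base with \emph{dense} value group one can write down $f=\sum_{i\le 0}\alpha_i x^i$ with $\sup_i|\alpha_i|a^i<\infty$ whose Newton polygon has slopes strictly decreasing to $0^{+}$, and which therefore has infinitely many zeros accumulating at the inner circle. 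The extra input you must use — and which the paper makes explicit by the phrase \emph{since $L$ is a discrete valuation ring}, immediately before citing Dwork — is the discreteness of $v(L^{*})$. Concretely, the $y$-coordinates of the Newton-polygon vertices lie in $v(L^{*})$, hence rise by at least the smallest positive element of $v(L^{*})$ at each vertex of positive slope, while $\sup_i|\alpha_i|a^i<\infty$ bounds them from below; this gives a uniform bound on the number of vertices, hence on the zeros, with slope in any bounded positive interval. You should either make this discreteness argument or cite [Dw, A.4] directly, as the paper does; as written, the step that carries the whole weight of the lemma rests on a statement about convex functions that is not true.
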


%
\begin{proof}
 Let us show that $\CK{\ms{X}}\{\partial\}^{(m,m')}$ is a PID.
 We use the notation of \S\ref{fieldconstmicdif}.  
 Let $L$ be the field of constants of $\ms{X}$.
 It suffices to show that $\mc{A}_L(\{a,b])$ is a PID
 for any $a\leq b$ in $|\overline{K}^{\times}|$. 
  Let $f:=\sum_{n\in\mathbb{Z}} \alpha_n x^n\in\mc{A}_L(\{a,b])$ and 
 set $I:=\left]\ln(a),\ln(b)\right]$, where $\ln$ is
 the usual (real) natural logarithm. 
The Newton function $\mathrm{New}(f)\colon I \rightarrow \mathbb{R}$ is defined by $\rho\mapsto \ln(|f(x)|_{\exp(\rho)})$. 
By construction we have $\mathrm{New}(f)(\rho)=
\sup_{n\in \ZZ} \big\{\ln|\alpha_n| +n\rho\big\}$. 
 For every compact interval $J\subset I$, the restriction of  $\mathrm{New}(f)$ to $J$ is continuous, convex and piecewise linear with slopes in $\mathbb{Z}$, cf. \cite[A.3]{Dw};
therefore the function $\mathrm{New}(f)$ enjoys the same properties on $I$, with, \emph{a priori}, an infinite number of slopes.
We denote by $\mathrm{Z}(f)$ the set  of the critical points of $\mathrm{New}(f)$ in the interval $I$,
\emph{i.e.}\ the set of points in $I$ where the slope changes.
 We claim that $\mathrm{Z}(f)$ is finite.
 Since $a,b\in|\overline{K}|$ and this claim is stable under 
finite extensions of $L$, we may assume $a,b\in|L|$. Take $\gamma\in L$ such
that $|\gamma|=a$. 
By a coordinate change from $x$ to $\gamma x$, we may assume $a=1$.
The properties of   $\mathrm{New}(f)$ recalled above imply that $\mathrm{Z}(f)$ is discrete in $I$, so it is 
enough to prove that it is finite in a right neighborhood of $\ln(a)=0$.  
The valuation of $L$ is discrete and 
$|f|_1:=\sup_{n\in\ZZ}{|\alpha_n|}<\infty$ by hypothesis, so there exists an
integer $n_0$ such that $|\alpha_{n_0}|=|f|_1$. On the other hand, take
$c\in\left]a,b\right[$. There exists $\epsilon>0$ such that
$\mr{New}(f)(\rho)$ is affine on $[\ln(c),\ln(c)+\epsilon]$ of slope
$N$. By convexity of $\mr{New}(f)$, we have $n_0\leq N$. Put
$h:=\sum\limits_{n=n_0}^{N}\alpha_nx^n \in L[x]$. By construction, we have
$\mr{New}(f)(x)=\mr{New}(h)(x)$ for
$x\in\left]0,\ln(c)+\epsilon\right]$, and the claim follows.

Therefore, there exists a polynomial $\tilde{h}\in L[x]$ and $g\in\mc{A}(\left]a,b\right])^*$  such that
 $f=\tilde{h}g$. Since there are no zero points of $g$ on
 $\left]a,b\right]$, there are no critical points of the function $\mathrm{New}(g)$ on $I$, which implies
 $g\in\mc{A}(\left\{a,b\right])$. In the same way, we get 
 $g^{-1}\in\mc{A}(\{a,b])$, and thus $g\in\mc{A}(\{a,b])^*$.
 This shows that any ideal is generated by polynomials. Since $L[x]$ is
 a principal ideal domain, the ideal of $L[x]$ generated by the
 polynomials is also principal, hence
 $\CK{\ms{X}}\{\partial\}^{(m,m')}$ is a PID.

 To see the latter statement of the lemma, it suffices to show that
 $\mc{A}_L(\left\{a,a\right])$ is a field for any $a\in
 |\overline{K}^{\times}|$. The
 verification is left to the reader.
\end{proof}

By this lemma, we get that any finitely generated
$\CK{\ms{X}}\{\partial\}^{(m,m')}$-module with a connection is a free
$\CK{\ms{X}}\{\partial\}^{(m,m')}$-module by \cite[Corollaire 4.3]{Ch}.

\subsubsection{}
\label{stabledef}
Let $\ms{X}$ be a formal curve, and $\ms{M}$ be a coherent
$\DcompQ{m}{\ms{X}}$-module. We say that
$\ms{M}$ is {\em holonomic} \index{holonomic!$\DcompQ{m}{\ms{X}}$-module} if the dimension of
$\mr{Char}^{(m)}(\ms{M})$ is $1$ or if $\ms{M}=0$. For an integer $m'\geq m$, let
$\ms{M}^{(m')}:=\DcompQ{m'}{\ms{X}}\otimes\ms{M}$. We
say $\ms{M}$ is {\em stable} \index{stable $\DcompQ{m}{\ms{X}}$-module} if for any $m''\geq m'\geq m$, we have
\begin{equation*}
 \mr{Supp}(\EcompQ{m',m''}{\ms{X}}\otimes_{\pi^{-1}\DcompQ{m}
  {\ms{X}}}\pi^{-1}\ms{M})=\mr{Char}^{(m)}(\ms{M}).
\end{equation*}
In particular, we have
$\mr{Char}^{(m')}(\ms{M}^{(m')})=\mr{Char}^{(m)}(\ms{M})$.
By Theorem \ref{mainabe}, any coherent $\DcompQ{m}{\ms{X}}$-module
is stable after raising the level sufficiently.
We say that a point $s\in X$ is a {\em singular point of $\ms{M}$} if
$\pi^{-1}(s)\subset\mr{Char}^{(m)}(\ms{M})$. \index{singular point}
From now on, to avoid too heavy notation, we sometimes denote
$\mr{Char}^{(m)}$ by just $\mr{Char}$.\index{characteristic variety and cycle!Chara@$\mr{Char}$} For a coherent
$\DdagQ{\ms{X}}$-module $\ms{M}$, there exists a stable
coherent $\DcompQ{m}{\ms{X}}$-module $\ms{M}'$ for some $m$
such that $\DdagQ{\ms{X}}\otimes\ms{M}'\cong\ms{M}$. We define
$\mr{Char}(\ms{M}):=\mr{Char}^{(m)}(\ms{M}')$, and we say that {\it
$\ms{M}$ is holonomic} if $\ms{M}'$ is. We define the set of
singularities of $\ms{M}$ as that of $\ms{M}'$. Note that when $\ms{M}$
is a coherent $F$-$\DdagQ{\ms{X}}$-module (cf.\ \ref{setupFrob}), the
definition of holonomicity is equivalent to that of Berthelot as written
in \cite[7.5]{Abe}. For the later use, we remind here the
following lemma.

\begin{lem}
 \label{generators}
 Suppose we are in Situation {\normalfont(L)} of
 {\normalfont\ref{setup}}.
 Let $\ms{M}$ be a monogenic stable holonomic $\DcompQ{m}{\ms{X}}$-module,
 and $\alpha\in\Gamma(\ms{X},\ms{M})$ be a generator. Let $S$ be the set of
 singular points of $\ms{M}$. 
 Take $s\in S$, and let $y_s$ be a local parameter of
 $\mc{O}_{\ms{X},s}$. Then for any integers $m''\geq m'\geq m$, there
 exists an integer $N$ such that $\{x^iy^j_{s}\,\alpha\}_{0\leq i,j<N}$
 generate $(\EcompQ{m',m''}{\ms{X}}\otimes\ms{M})_{s}$ over
 $\CK{\ms{X}}\{\partial\}^{(m',m'')}$.
\end{lem}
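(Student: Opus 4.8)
The plan is to reduce the statement to the punctual situation already handled in Lemma~\ref{finiteness}, via the same microlocalization-at-a-point device used there. First I would raise the level: by the stability hypothesis and Theorem~\ref{mainabe} (and since we are free to replace $m'$ by something larger), it suffices to prove the claim for $m'$ large enough that $\mr{Char}^{(m')}(\ms{M}^{(m')})=\mr{Char}(\ms{M})$ and, for each $s\in S$, the support equality \eqref{charintemicro} holds. Fix such an $m'$ and any $m''\geq m'$, and set $\ms{N}:=\EcompQ{m',m''}{\ms{X}}\otimes_{\pi^{-1}\DcompQ{m}{\ms{X}}}\pi^{-1}\ms{M}$. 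By the choice of level and the hypothesis $\mr{Char}(\ms{M})=X\cup\bigcup_{s\in S}T^*_sX$, the module $\ms{N}$ is supported on $X\cup\bigcup_{s\in S}\pi^{-1}(s)$, so away from the zero section its support is exactly $\bigcup_{s\in S}(\pi^{-1}(s)\cap\mathring{T}^*X)$. Since $\ms{M}$ is monogenerated by $\alpha$, so is $\ms{N}$ (by the image of $1\otimes\alpha$).

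Next I would work locally near the chosen $s\in S$. Shrinking $X$ to an open affine $W$ containing $s$ and no other point of $S$ — which is allowed because the conclusion is about the stalk/germ at $s$, and by Lemma~\ref{finiteness}(ii) the generating set can be chosen uniformly over such $W$ — we are exactly in the hypotheses of Lemma~\ref{finiteness}: $\ms{N}|_{\mathring{T}^*W}$ is a globally finitely presented $\EcompQ{m',m''}{\ms{X}}$-module (it is $\EcompQ{m',m''}{\ms{X}}\otimes\ms{M}'$ for a coherent $\DcompQ{m'}{}$-module, hence globally finitely presented by Remark~\ref{setup}(i)), it is monogenerated by the image of $\alpha$, and its support meets $\mathring{T}^*W$ in $\pi^{-1}(s)$. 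Lemma~\ref{finiteness}(ii) then gives an integer $N$ such that $\bigl\{(x^iy_s^j)\,\alpha\bigr\}_{0\leq i,j<N}$ generates $\Gamma(\mathring{T}^*W,\ms{N})$ over $\CK{\ms{X}}\{\partial\}^{(m',m'')}$. Finally, $(\EcompQ{m',m''}{\ms{X}}\otimes\ms{M})_s$ is the germ at $s$, which is a quotient (or colimit over such $W$) of $\Gamma(\mathring{T}^*W,\ms{N})$, so the same finite set generates it over $\CK{\ms{X}}\{\partial\}^{(m',m'')}$.

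The only genuine subtlety — and the step I would be most careful about — is the passage from $\ms{M}$ to $\ms{N}$: one must know that raising the level from $m$ to $m'$ via $\DcompQ{m'}{\ms{X}}\otimes(-)$ and then microlocalizing does not enlarge the part of the support lying over $W\setminus\{s\}$, i.e.\ that over $\mathring{T}^*W$ the support of $\ms{N}$ is precisely $\pi^{-1}(s)$ and not something bigger coming from the non-stable behavior of intermediate microlocalizations. This is exactly where the hypothesis that $\ms{M}$ is \emph{stable} and the large-level statement of Theorem~\ref{mainabe} are used: by stability, $\mr{Supp}(\EcompQ{m',m''}{\ms{X}}\otimes_{\pi^{-1}\DcompQ{m}{\ms{X}}}\pi^{-1}\ms{M})=\mr{Char}^{(m)}(\ms{M})=\mr{Char}(\ms{M})$, and the latter has vertical components only over $S$. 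Once this identification is in hand, the rest is a direct application of Lemma~\ref{finiteness}, with the flatness of $\EcompQ{m',m''}{\ms{X}}$ over $\pi^{-1}\DcompQ{m}{\ms{X}}$ (Lemma~\ref{flatness}) available should one need to commute the level-raising tensor product with taking sections.
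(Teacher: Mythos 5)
Your proof follows the same route as the paper's, which is simply: shrink $\ms{X}$ so that $S=\{s\}$ and a local parameter $y_s$ exists (Situation~(Ls)), then apply Lemma~\ref{finiteness}(ii). Your reduction to the punctual lemma, the identification of the support, and the final colimit over shrinking neighborhoods are all fine and are essentially the paper's argument.

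However, your opening ``level-raising'' step is both unnecessary and logically unsound. You write that it ``suffices to prove the claim for $m'$ large enough,'' invoking Theorem~\ref{mainabe} and ``since we are free to replace $m'$ by something larger.'' This is not a valid reduction: the lemma asserts the conclusion for \emph{every} pair $m''\geq m'\geq m$, and proving it only for sufficiently large $m'$ does not imply it for the original (possibly small) $m'$. But the reduction is also not needed: the hypothesis already states that $\ms{M}$ is \emph{stable}, and by the very definition in \ref{stabledef} this means $\mr{Supp}(\EcompQ{m',m''}{\ms{X}}\otimes_{\pi^{-1}\DcompQ{m}{\ms{X}}}\pi^{-1}\ms{M})=\mr{Char}^{(m)}(\ms{M})$ for \emph{all} $m''\geq m'\geq m$. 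Theorem~\ref{mainabe} would be relevant if one had to \emph{produce} a stable model, but here stability is a given, so you should simply cite it to obtain the support equality at every level and then proceed as you do in the rest of the argument, which correctly works for arbitrary $m''\geq m'\geq m$. Deleting the level-raising paragraph and citing stability directly makes your proof correct and equivalent to the paper's.
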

\begin{proof}
 We may shrink $\ms{X}$ so that we are in Situation (Ls) and
 $S=\{s\}$. Then this is just a direct consequence of Lemma
 \ref{finiteness} (ii).
\end{proof}

\subsubsection{}
\label{localsetting}
In this paragraph, we will consider Situation (L) of \ref{setup} and we will follow the notation fixed there. Let
$\ms{M}$ be a stable holonomic $\DcompQ{m}{\ms{X}}$-module and $s$ be a
closed point of $\ms{X}$ such that
$\mr{Char}(\ms{M})\supset\pi^{-1}(s)$. We consider the $\EcompQ{m,m'}{s}$-module
$\EcompQ{m,m'}{s}\otimes_{\Dcomp{m}{}}\ms{M}$, cf.\ Notation \ref{not3} in \ref{setup}. It can be seen as a
$\CK{\ms{X}}\{\partial\}^{(m,m')}$-module. When we are especially
interested in this $\CK{\ms{X}}\{\partial\}^{(m,m')}$-module structure,
we denote this module by $\EcompQ{m,m'}{s}(\ms{M})$.
{\em We caution here that this definition is only for this section, and
from {\normalfont\eqref{changing_not}}, we use the same notation
for a slightly different object.}\index{microlocalisation of $\ms{M}$!$\EcompQ{m,m'}{s}(\ms{M})$} In the same way, for a
$\Dcomp{m}{\ms{X}}$-module $\ms{M}'$, we put
$\Ecomp{m,m'}{s}(\ms{M}'):=\Ecomp{m,m'}{s}\otimes_{\Dcomp{m}{}}\ms{M}'$
and the same for $\Dmod{m}{X}$-modules etc.

By the condition on the characteristic variety and  Lemma \ref{generators}, we get that
$\EcompQ{m,m'}{s}(\ms{M})$ is finitely gene\-rated over
$\CK{\ms{X}}\{\partial\}^{(m,m')}$.  Let
$L$ be the field of constants of $\ms{X}$. We have the isomorphism of
Lemma \ref{fieldconstmicdif}
\begin{equation*}
 \mc{A}_{x',L}^{(m,m')}:=\mc{A}_{x',L}(\{\omega/\omega_{m'},\omega/
  \omega_m])\xrightarrow{\sim}\CK{\ms{X}}\{\partial\}^{(m,m')}
\end{equation*}
sending $x'$ to $\partial$.
We consider $\EcompQ{m,m'}{s}(\ms{M})$ as a finitely generated
$\mc{A}_{x',L}^{(m,m')}$-module using this isomorphism, and equip it
with the following connection: for $\alpha\in\EcompQ{m,m'}{s}(\ms{M})$,
we put
\begin{equation*}\label{EcompQ{m,m'}{s}(ms{M})_is_free}
 \nabla(\alpha):=(-x\alpha)\otimes dx'.
\end{equation*}
\begin{prop*}
The $\mc{A}_{x',L}^{(m,m')}$-module $\EcompQ{m,m'}{s}(\ms{M})$ is finite free and $\nabla$ is a connection.
We denote its rank by
$\mr{rk}(\EcompQ{m,m'}{s}(\ms{M}))$.
\end{prop*}
\begin{proof}
Let us check that $\nabla$ defines a connection. The additivity is evident and Leibniz rule follows
from the relation $x\partial = \partial x +1$ in the ring of microdifferentials.
The $\mc{A}_{x',L}^{(m,m')}$-module $\EcompQ{m,m'}{s}(\ms{M})$ is  finitely generated by \ref{generators}, and it is endowed with a connection, hence it is torsion free by \cite[6.1]{Crew:ens}. 
Since the ring $\mc{A}_{x',L}^{(m,m')}\cong\CK{\ms{X}}\{\partial\}^{(m,m')}$ is principal by \ref{Kringpid}, it follows that $\EcompQ{m,m'}{s}(\ms{M})$ is finite free.
\end{proof}

\begin{prop}
 \label{rankcompat}
 Suppose we are in Situation {\normalfont(L)} of
 {\normalfont\ref{setup}}. Let $\ms{M}$ be a stable holonomic
 $\DcompQ{m}{\ms{X}}$-module. Let $S$ be the set of singular points of
 $\ms{M}$.

 (i) For an integer $m'>m$ and $s\in S$, we get an isomorphism
 \begin{equation*}
  \EcompQ{m+1,m'}{s}\otimes_{\EcompQ{m,m'}{s}}\EcompQ{m,m'}{s}(\ms{M})
   \cong \CK{\ms{X}}\{\partial\}^{(m+1,m')}{\otimes}_{\CK{\ms{X}}
   \{\partial\}^{(m,m')}}\EcompQ{m,m'}{s}(\ms{M}).
 \end{equation*}
 In particular, $\mr{rk}(\EcompQ{m,m'}{s}(\ms{M}))=\mr{rk}
 (\EcompQ{m+1,m'}{s}(\ms{M}))$.

 (ii) For $m'\geq m$ and $s\in S$, we get an isomorphism
 \begin{equation*}
  \EcompQ{m,m'}{s}\otimes_{\EcompQ{m,m'+1}{s}}\EcompQ{m,m'+1}{s}
   (\ms{M})\cong\CK{\ms{X}}\{\partial\}^{(m,m')}\otimes_{\CK{\ms{X}}
   \{\partial\}^{(m,m'+1)}}\EcompQ{m,m'+1}{s}(\ms{M}).
 \end{equation*}
 In particular, $\mr{rk}(\EcompQ{m,m'+1}{s}(\ms{M}))=\mr{rk}
 (\EcompQ{m,m'}{s}(\ms{M}))$.
\end{prop}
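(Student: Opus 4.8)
The plan is to derive both isomorphisms from the ring-theoretic base-change formulas of Lemma~\ref{tesorcalc} by tensoring them with $\ms{M}$; the key point is that the microlocalizations involved are \emph{finite free} over the principal ideal domains $\CK{\ms{X}}\{\partial\}^{(m,m')}$ (by \ref{localsetting} and Lemma~\ref{Kringpid}), which causes the completed tensor products occurring in Lemma~\ref{tesorcalc} to degenerate to ordinary ones after this base change. To begin, I would reduce to a local situation: after shrinking $\ms{X}$ around $s$ we may assume we are in Situation~(Ls) with $S=\{s\}$, so that $\EcompQ{m,m'}{\ms{X}}\otimes_{\pi^{-1}\DcompQ{m}{\ms{X}}}\pi^{-1}\ms{M}$ is globally finitely presented with support equal to $\pi^{-1}(s)\cap\mathring{T}^*X$ in $\mathring{T}^*X$. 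Using the equivalence of Remark~\ref{setup}~(i), and localizing at a covector $\xi_s$ over $s$ only at the very end, it is enough to establish the analogues of (i) and (ii) for the $\EcompQb{m,m'}{\ms{X}}$-module $M:=\EcompQb{m,m'}{\ms{X}}\otimes_{\Dcomp{m}{}}\ms{M}$; by \ref{localsetting} this $M$ is finite free over $C:=\CK{\ms{X}}\{\partial\}^{(m,m')}$, say of rank $r$ (and likewise finite free over $\CK{\ms{X}}\{\partial\}^{(m,m'+1)}$ in the setting of (ii)).

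For (i), set $B':=\EcompQb{m+1,m'}{\ms{X}}$ and $C':=\CK{\ms{X}}\{\partial\}^{(m+1,m')}$, so that Lemma~\ref{tesorcalc}~(i) reads $B'\cong C'\widehat{\otimes}_C\EcompQb{m,m'}{\ms{X}}$ ($p$-adic completion). Choosing generators of $\ms{M}$ over $\DcompQ{m}{\ms{X}}$ gives a surjection $(\EcompQb{m,m'}{\ms{X}})^{\oplus k}\twoheadrightarrow M$ of $\EcompQb{m,m'}{\ms{X}}$-modules; since $M$ is free over $C$, this short exact sequence splits $C$-linearly, and by Lemma~\ref{topology} the natural $\EcompQb{m,m'}{\ms{X}}$-module topology on $M$ coincides with its natural $C$-module topology, so that the kernel is a \emph{closed} $C$-submodule and the splitting is topological. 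Now apply $C'\widehat{\otimes}_C(-)$: it is additive, hence preserves this split exact sequence; it carries $(\EcompQb{m,m'}{\ms{X}})^{\oplus k}$ to $(B')^{\oplus k}$ (Lemma~\ref{tesorcalc}~(i) together with the commutation of $\widehat{\otimes}$ with finite direct sums), and on the finite free $C$-module $M$ it gives $C'\widehat{\otimes}_C M\cong C'\otimes_C M$; moreover the image of the kernel is precisely the $B'$-submodule it generates, the latter being closed by noetherianity. Passing to the quotient of the resulting split sequence yields
\[
 B'\otimes_{\EcompQb{m,m'}{\ms{X}}}M\;\cong\;C'\widehat{\otimes}_C M\;\cong\;C'\otimes_C M ,
\]
which is the sought formula; localizing at $\xi_s$ gives the statement for $\EcompQ{m+1,m'}{s}(\ms{M})$. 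The rank equality is then immediate, since $C'$ is again a principal ideal domain (Lemma~\ref{Kringpid}) and $C'\otimes_C M$ is free over it of rank $r$.

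Part (ii) is entirely parallel, now with Lemma~\ref{tesorcalc}~(ii): one takes $B:=\EcompQb{m,m'+1}{\ms{X}}$, $B':=\EcompQb{m,m'}{\ms{X}}$, $C:=\CK{\ms{X}}\{\partial\}^{(m,m'+1)}$, $C':=\CK{\ms{X}}\{\partial\}^{(m,m')}$, $M:=\EcompQb{m,m'+1}{\ms{X}}\otimes_{\Dcomp{m}{}}\ms{M}$ (finite free over $C$), and Lemma~\ref{tesorcalc}~(ii), after $\otimes\mathbb{Q}$, gives $B'\cong C'\widehat{\otimes}^f_C B$, the completion now being with respect to the filtration by order; the same splitting argument --- with Lemma~\ref{topology} again supplying the topological splitting --- produces $B'\otimes_B M\cong C'\otimes_C M$, hence the isomorphism and the rank equality after localizing at $\xi_s$. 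The step I expect to be the genuine obstacle is exactly the interchange of the completed tensor product of Lemma~\ref{tesorcalc} with the base change along $M$, which fails for a general module: everything hinges on $M$ being finite free over the PID $C$, so that the chosen presentation splits $C$-linearly, combined with Lemma~\ref{topology} to guarantee that this splitting is topological (and that the relevant finitely generated submodules are closed). A secondary, more routine point is reconciling the stalk objects $\EcompQ{m,m'}{s}(\ms{M})$ appearing in the statement with the modules of global sections over $\mathring{T}^*X$ to which Lemma~\ref{tesorcalc} directly applies; this is handled by the initial shrinking of $\ms{X}$ and the equivalence in Remark~\ref{setup}~(i).
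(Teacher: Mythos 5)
Your treatment of part (i) is a workable variant of the paper's argument, which computes
\[
 B'\otimes_B M'\cong B'\widehat{\otimes}_B M'\cong(C'\widehat{\otimes}_C B)\widehat{\otimes}_B M'\cong C'\widehat{\otimes}_C M'\cong C'\otimes_C M'
\]
directly, using completeness and finiteness of $M'$ over $C$ at each step rather than a split presentation; you arrive at the same conclusion from essentially the same ingredients, but by a detour. One small point that needs to be spelled out: you claim that the image of $C'\widehat{\otimes}_C K$ in $(B')^{\oplus k}$ is the $B'$-submodule generated by (the image of) $K$, appealing to closedness. The inclusion ``image $\subset B'K$'' does follow from closedness, but the reverse inclusion $B'K\subset$ image is where the content lies: it requires noting that $B'$ is the closure of the image of $C'\otimes_C B$, so that any $b\cdot k$ with $b\in B'$, $k\in K$ is a $p$-adic limit of elements $c\,b'k\in C'K$ (using that $K$ is a $B$-submodule, so $b'k\in K$). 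As stated, your sentence inverts the logical burden. This is fixable, but you should say it.

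The genuine gap is in part (ii), which is \emph{not} ``entirely parallel'' and is precisely where the paper has to work hard. You write that Lemma \ref{tesorcalc}~(ii) ``after $\otimes\mathbb{Q}$, gives $B'\cong C'\widehat{\otimes}^f_C B$'' with $B,C$ rational, but Lemma \ref{tesorcalc}~(ii) is an \emph{integral} statement, and the completed tensor $\widehat{\otimes}^f$ there is taken with respect to the \emph{filtration by order}, not the $p$-adic topology. Tensoring such a completion with $\mathbb{Q}$ does not commute formally with the completion, and the rational rings $\EcompQb{m,m'}{\ms{X}}$ and $\CK{\ms{X}}\{\partial\}^{(m,m')}$ are not complete for the order filtration in any sense that would let you run the same argument; the expression ``$C'\widehat{\otimes}^f_C B$'' for rational $B,C$ does not have the meaning your argument assumes. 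This is exactly why the paper's proof of (ii) proceeds differently: it first uses flatness of $\EcompQ{m,m'}{s}$ over $\EcompQ{m,m'+1}{s}$ to reduce to a monogenerated $\ms{M}$, then invokes Lemma \ref{finiteness} to produce a $p$-torsion-free integral lift $M'$ finitely generated over $\CR{\ms{X}}\{\partial\}^{(m,m'+1)}$, reduces mod $\varpi^{i+1}$ to land in the setting of filtered rings, takes good filtrations (citing Li--van Oystaeyen) to show the ordinary tensor product is already complete for the order filtration, applies Lemma \ref{tesorcalc}~(iii) at each finite level, takes $\invlim_i$, and only then tensors with $\mathbb{Q}$. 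None of this machinery is present in your sketch, and it cannot be bypassed by mirroring (i).
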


\begin{proof}
 Let us see (i).
 Let $\ms{U}$ be an open affine neighborhood of $s$ such that
 $S\cap\ms{U}=\{s\}$.
 We put $M':=\Gamma(\mathring{T}^*U,\EcompQ{m,m'}{\ms{X}}\otimes\ms{M})$,
 where $U$ denotes the special fiber of $\ms{U}$.
 Since tensor products commute with direct limits, it suffices to show
 that
 \begin{equation*}
  \EcompQb{m+1,m'}{\ms{U}}\otimes_{\EcompQb{m,m'}{\ms{U}}}M'
   \cong\CK{\ms{X}}\{\partial\}^{(m+1,m')}{\otimes}_{
   \CK{\ms{X}}\{\partial\}^{(m,m')}}M'.
 \end{equation*}
 By Lemma \ref{tesorcalc} (i), we get
 \begin{align*}
  \EcompQb{m+1,m'}{\ms{U}}\otimes_{\EcompQb{m,m'}{\ms{U}}}M'&\cong
  \EcompQb{m+1,m'}{\ms{U}}\widehat{\otimes}_{\EcompQb{m,m'}{\ms{U}}}M'\\
  &\cong(\CK{\ms{X}}\{\partial\}^{(m+1,m')}\widehat{\otimes}_{
  \CK{\ms{X}}\{\partial\}^{(m,m')}}
  \EcompQb{m,m'}{\ms{U}})\widehat{\otimes}_{\EcompQb{m,m'}{\ms{U}}}M'\\
  &\cong \CK{\ms{X}}\{\partial\}^{(m+1,m')}\widehat{\otimes}_{\CK{\ms{X}}
  \{\partial\}^{(m,m')}}M'\\
  &\cong \CK{\ms{X}}\{\partial\}^{(m+1,m')}{\otimes}_{\CK{\ms{X}}
  \{\partial\}^{(m,m')}}M'.
 \end{align*}
 The last isomorphism follows from the fact that
 $\EcompQ{m,m'}{s}(\ms{M})$ is finite over
 $\CK{\ms{X}}\{\partial\}^{(m,m')}$. Since moreover
 $\EcompQ{m,m'}{s}(\ms{M})$ is free over
 $\CK{\ms{X}}\{\partial\}^{(m,m')}$, the claim for the rank follows from
 the preceding isomorphism.

 Let us prove (ii). Since we know that $\EcompQ{m,m'}{s}$ is flat
 over $\EcompQ{m,m'+1}{s}$ by \cite[5.13]{Abe}, we may suppose
 that $\ms{M}$ is a monogenic module using an extension
 argument. Let $\ms{U}$ be an open affine neighborhood of $s$ such that
 $S\cap\ms{U}=\{s\}$, and $U$ be its
 special fiber. As in the proof of (i), it suffices
 to show the claim over $\ms{U}$.
 By using Lemma \ref{finiteness},
 there exists a $p$-torsion free $\Ecompb{m,m'+1}{\ms{U}}$-module
 $M'$ such that $\Gamma(\mathring{T}^*\ms{U},\EcompQ{m,m'+1}{\ms{X}}
 \otimes\ms{M})\cong M'\otimes\mb{Q}$ and which is finitely generated
 as an $R\{\partial\}^{(m,m'+1)}$-module. Now, we get
 \begin{align}
  \label{calctens}
  \Ecompb{m,m'}{\ms{U}}\otimes_{\Ecompb{m,m'+1}{\ms{U}}}{M}'&\cong
  \invlim_i\Emodb{m,m'}{U_i}\otimes_{\Emodb{m,m'+1}{U_i}}{M}'_i\\
  \notag&\cong\invlim\Emodb{m,m'}{U_i}\widehat{\otimes}^f
  _{\Emodb{m,m'+1}{U_i}}{M}'_i.
 \end{align}
 Indeed, the first isomorphism holds since $\Ecompb{m,m'}{\ms{U}}\otimes
 M'$ is $p$-adically
 complete. To see the second isomorphism, take a good filtration on
 $M'_i$. The tensor filtration is good by \cite[Ch.I, Lemma
 6.15]{HO}. Thus $\Emodb{m,m'}{U_i}\otimes_{\Emodb{m,m'+1}{U_i}}{M}'_i$
 is complete by \cite[Ch.II, Theorem 10]{HO} since $\Emodb{m,m'}{U_i}$
 is a noetherian filtered complete ring by \cite[Proposition 4.8]{Abe}. Now
 by Lemma \ref{tesorcalc} (iii), we get
 \begin{equation*}
  \Emodb{m,m'}{U_i}\widehat{\otimes}^f_{\Emodb{m,m'+1}{U_i}}
  {M}'_i\cong\CR{X_i}\{\partial\}^{(m,m')}\widehat{\otimes}^f
  _{\CR{X_i}\{\partial\}^{(m,m'+1)}}{M}_i
 \end{equation*}
 by the same calculation as in the proof of (i). (For careful readers,
 we note here that the same statement of \cite[2.1.7/6,7]{BGR} holds for
 filtered rings by exactly the same arguments. The detail is left to the
 reader.)
 Thus by the same calculation as (\ref{calctens}), we get
 \begin{equation*}
  \Ecompb{m,m'}{\ms{U}}\otimes_{\Ecompb{m,m'+1}{\ms{U}}}{M}'\cong
   \CR{\ms{X}}\{\partial\}^{(m,m')}\otimes_{\CR{\ms{X}}
   \{\partial\}^{(m,m'+1)}}{M}'.
 \end{equation*}
 By tensoring with $\mb{Q}$, we get what we wanted.
\end{proof}

\subsection{Characteristic cycles and microlocalizations}
We will see how we can compute the multiplicities of holonomic modules
from its microlocalizations. In general, it is very difficult to
calculate the characteristic cycles in terms of intermediate
microlocalizations. However, the construction of the rings of {\em
naive} microdifferential operators are simple and formal, therefore we
can calculate the multiplicities easier.

\subsubsection{}
\label{grlglg}
For a graded ring ${(A,F_iA)}_{i\in\mb{Z}}$ and a finite graded $A$-module ${(M,F_i M)}_{i\in\mb{Z}}$, the graded
length of $M$ is the length of $M$ in the category of graded
$A$-modules, and we denote it by $\mr{g.lg}_A(M)$. When
$\mr{g.lg}_A(M)=1$, we say that $M$ is gr-simple. We say that $A$ is
gr-Artinian if $\mr{g.lg}_A(A)<\infty$.\index{gr-Artinian, gr-simple, $\mr{g.lg}$}

Let $A$ be a positively graded commutative ring, and $M$ be a finite
graded $A$-module. Let $\mf{p}\in\mr{Proj}(A)$, and
\begin{equation*}
 S_{\mf{p}}:=\{1\}\cup\bigl\{f\in A\setminus A_0\mid\mbox{$f$ is a
  homogeneous element which is not contained in $\mf{p}$}\bigr\}.
\end{equation*}
We denote by
$A_{\widetilde{\mf{p}}}$ the localization $S^{-1}_{\mf{p}}A$, and
$S^{-1}_{\mf{p}}M$ by $M_{\widetilde{\mf{p}}}$. We note that since
$S_{\mf{p}}$ consists of homogeneous elements, these are respectively a
graded ring and a graded module. Let $X:=\mr{Spec}(A_0)$,
$V:=\mr{Spec}(A)$, $P:=\mr{Proj}(A)$. The schemes $V$ and $P$ are
schemes over $X$, and there exists a canonical section $s\colon
X\rightarrow V$. We put $\mathring{V}:=V\setminus s(X)$. 
Let us denote by $q\colon\mathring{V}\rightarrow P$ the canonical
surjection defined
in \cite[II, 8.3]{EGA}. Now we get the following.

\begin{lem*}
 Let $\widetilde{M}(*):=\bigoplus_{n\in\mb{Z}}\widetilde{M}(n)$ be the
 quasi-coherent $\mc{O}_{P}$-module associated to $M$.
 Let $\mf{p}$ be a generic point of $\mr{Supp}(\widetilde{M}(*))\subset
 P$. Then we get
 \begin{equation*}
  \mr{g.lg}_{A_{\widetilde{\mf{p}}}}(M_{\widetilde{\mf{p}}})=
   \mr{lg}_{A_{\mf{p}}}(M_{\mf{p}}).
 \end{equation*}
\end{lem*}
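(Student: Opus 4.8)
The plan is to pass to the graded-local ring $B:=A_{\widetilde{\mathfrak{p}}}=S_{\mathfrak{p}}^{-1}A$ with its finite graded module $N:=M_{\widetilde{\mathfrak{p}}}$, and to compare a graded composition series of $N$ with an ordinary composition series of $N$ localized at the maximal graded ideal $\mathfrak{n}:=\mathfrak{p}B$. First I would record the structure of $B$: since $\mathfrak{p}\in\mr{Proj}(A)$ the irrelevant ideal $A_{+}$ is not contained in $\mathfrak{p}$, so $S_{\mathfrak{p}}$ contains a homogeneous element of positive degree, which becomes a unit in $B$; thus $B$ carries a homogeneous unit of positive degree. Using that $A$ is positively graded, I would check that a graded prime of $A$ avoids $S_{\mathfrak{p}}$ exactly when it is contained in $\mathfrak{p}$ (if $x$ is a homogeneous element of such a prime and $f\in A_{+}\setminus\mathfrak{p}$, then $xf$ is homogeneous of positive degree and lies in the prime, hence $xf\notin S_{\mathfrak{p}}$, hence $xf\in\mathfrak{p}$, hence $x\in\mathfrak{p}$). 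It follows that the graded primes of $B$ are the extensions of the graded primes of $A$ contained in $\mathfrak{p}$, so $\mathfrak{n}=\mathfrak{p}B$ is the unique maximal graded ideal of $B$; moreover $\mathfrak{n}$ is an ordinary prime of $B$ with $B_{\mathfrak{n}}=A_{\mathfrak{p}}$ and $N_{\mathfrak{n}}=M_{\mathfrak{p}}$. Finally $B/\mathfrak{n}\cong S_{\mathfrak{p}}^{-1}(A/\mathfrak{p})$ is a graded domain in which every nonzero homogeneous element is a unit (a degree-zero one being handled by multiplying by a positive-degree unit), i.e.\ a graded field; hence $B/\mathfrak{n}$ is graded-simple as a $B$-module and its ordinary fraction field is the residue field $\kappa(\mathfrak{p})=B_{\mathfrak{n}}/\mathfrak{n}B_{\mathfrak{n}}$.

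Next I would unwind the hypothesis. Since $M$ is finite over $A$, the vanishing of the stalks of $\widetilde{M}(*)$ is governed by $\mr{Ann}_A(M)$, so $\mr{Supp}(\widetilde{M}(*))=V_{+}(\mr{Ann}_A(M))$ in $P$. A generic point $\mathfrak{p}$ of this set is therefore minimal in $V_{+}(\mr{Ann}_A M)$, and replacing a prime strictly below $\mathfrak{p}$ by its homogeneous core (again prime, still containing the homogeneous ideal $\mr{Ann}_A M$, still away from $A_{+}$) shows that $\mathfrak{p}$ is in fact a minimal prime of $\mr{Ann}_A(M)$ in all of $\mr{Spec}(A)$. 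Consequently $\mathfrak{n}$ is the only graded prime of $B$ containing $\mr{Ann}_B(N)$, so $\sqrt{\mr{Ann}_B(N)}=\mathfrak{n}$. As $N$ is finite over the (noetherian) ring $B$, a power of $\mathfrak{n}$ annihilates $N$, and the successive quotients of the $\mathfrak{n}$-adic filtration of $N$ are finite graded modules over the graded field $B/\mathfrak{n}$, hence graded-free of finite rank; thus $N$ admits a finite graded composition series $0=N^0\subset\dots\subset N^{\ell}=N$ with $\ell=\mr{g.lg}_B(N)$ and $N^i/N^{i-1}\cong(B/\mathfrak{n})(j_i)$ for suitable $j_i\in\mb{Z}$.

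Finally I would localize this chain at the ordinary prime $\mathfrak{n}$. Each factor $(B/\mathfrak{n})(j_i)$, regarded as an ungraded $B/\mathfrak{n}$-module, is just $B/\mathfrak{n}$, hence localizes to $\mr{Frac}(B/\mathfrak{n})=\kappa(\mathfrak{p})$, which is nonzero and simple over $B_{\mathfrak{n}}=A_{\mathfrak{p}}$. Therefore the localized chain is an honest composition series of $N_{\mathfrak{n}}=M_{\mathfrak{p}}$ of length $\ell$, and Jordan--H\"{o}lder gives $\mr{lg}_{A_{\mathfrak{p}}}(M_{\mathfrak{p}})=\ell=\mr{g.lg}_{A_{\widetilde{\mathfrak{p}}}}(M_{\widetilde{\mathfrak{p}}})$, as claimed. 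The only genuinely delicate step is the algebra of the first paragraph — identifying the graded primes of $S_{\mathfrak{p}}^{-1}A$ and checking that $B/\mathfrak{n}$ is a graded field; granting that, the remaining steps are a routine Jordan--H\"{o}lder argument.
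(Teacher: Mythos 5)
Your proof is correct and takes essentially the same approach as the paper's: pass to $B:=A_{\widetilde{\mf{p}}}$, use the genericity of $\mf{p}$ to reduce to a finite graded module killed by a power of $\mf{p}B$, and then compare a graded composition series of $M_{\widetilde{\mf{p}}}$ with its image after the further ordinary localization at $\mf{p}$, using that the gr-simple factors $(B/\mf{p}B)(j)$ localize to the simple $A_{\mf{p}}$-module $\kappa(\mf{p})$. The only substantive difference is expository: where the paper establishes $\mf{p}^{n}M_{\widetilde{\mf{p}}}=0$ via the sheaf-theoretic picture of the fibre of $q\colon\mathring{V}\to P$ (citing EGA~II, 8.3.6) and then runs a two-sided inequality, you verify the same facts by bare-hands graded commutative algebra (classifying the graded primes of $B$, showing $\mf{p}B$ is the unique maximal graded ideal and $B/\mf{p}B$ a graded field) and produce an explicit graded composition series whose localization is already a Jordan--H\"{o}lder series; this is more self-contained and also makes the shifts $(j_i)$ explicit, which the paper elides.
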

\begin{proof}
 By \cite[II, 8.3.6]{EGA}, we get that the fiber of $q$ at $\mf{p}$ is
 $f\colon\mr{Spec}(A_{\widetilde{\mf{p}}})\rightarrow
 \mr{Spec}(A_{(\mf{p})})$.
 Since $f_*((M_{\widetilde{\mf{p}}})^\sim)\cong\widetilde{M}(*)$,
 the support of the sheaf $(M_{\widetilde{\mf{p}}})^\sim$ in
 $\mr{Spec}(A_{\widetilde{\mf{p}}})$ is contained in
 $V(\mf{p})$, and there exists an integer $n$ such that
 $\mf{p}^nM_{\widetilde{\mf{p}}}=0$. Thus $M_{\widetilde{\mf{p}}}$ is a
 graded $A_{\widetilde{\mf{p}}}/\mf{p}^n$-module. Let $N$ be a
 graded $A_{\widetilde{\mf{p}}}/\mf{p}^n$-module such that
 $N\neq0$. Then $N_{\mf{p}}\neq0$ by the definition of
 $A_{\widetilde{\mf{p}}}$. This shows that given a chain $0\subsetneq
 N_1\subsetneq\dots\subsetneq N_l=M_{\widetilde{\mf{p}}}$ of graded
 sub-$A_{\widetilde{\mf{p}}}$-modules, we can attach a chain
 $0\subsetneq(N_1)_{\mf{p}}\subsetneq\dots\subsetneq
 (N_l)_{\mf{p}}=M_{\mf{p}}$ of sub-$A_{\mf{p}}$-modules. Thus we see
 that $\mr{g.lg}_{A_{\widetilde{\mf{p}}}}(M_{\widetilde{\mf{p}}})
 \leq\mr{lg}_{A_\mf{p}}(M_{\mf{p}})$. To see the opposite inequality, it
 suffices to show that given a $A_{\widetilde{\mf{p}}}/\mf{p}^n$-module
 $N$ such that $\mr{g.lg}_{A_{\widetilde{\mf{p}}}}(N)=1$
 then $\mr{lg}_{A_\mf{p}}(N_{\mf{p}})=1$. Since
 $A_{\widetilde{\mf{p}}}/\mf{p}$ is the only gr-simple
 $A_{\widetilde{\mf{p}}}/\mf{p}^n$-module, it suffices to show that
 $\mr{lg}(A_{\mf{p}}\otimes_{A_{\widetilde{\mf{p}}}}(A_{\widetilde{\mf{p}}}
 /\mf{p}))=1$, which is obvious.
\end{proof}

\subsubsection{}
Let $A$ be a noetherian filtered ring, and $M$ be a finite
$A$-module. Let $F$ be a good filtration on $M$. Then consider
$\mr{g.lg}_{\mr{gr}(A)}(\mr{gr}(M))$. Exactly as in the classical
way (e.g.\ \cite[A.III.3.23]{Bjo}), we are able to show that this does not
depend on the choice of good filtrations. Recall we say that an
increasingly filtered ring $(A,F_iA)_{i\in\mb{Z}}$ is Zariskian if it is
noetherian filtered and $F_{-1}A$ is contained in the Jacobson radical
$J(F_0A)$ of $F_0A$.

\begin{lem*}
 \label{Zarmularitw}
 Let $A$ be a Zariskian filtered ring such that $\mr{gr}(A)$ is a
 gr-Artinian ring. Suppose moreover that $\mr{lg}_A(A)$ is finite and 
 \begin{equation*}
  \mr{lg}_A(A)=\mr{g.lg}_{\mr{gr}(A)}(\mr{gr}(A)).
 \end{equation*} 
 Then for any good filtered $A$-module $(M,M_i)_{i\in\mb{Z}}$, we get
 \begin{equation*}
  \mr{lg}_A(M)=\mr{g.lg}_{\mr{gr}(A)}(\mr{gr}(M)).
 \end{equation*}
\end{lem*}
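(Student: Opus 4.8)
The plan is to study the difference
$\delta(M):=\mr{g.lg}_{\mr{gr}(A)}(\mr{gr}(M))-\mr{lg}_A(M)$
and to show it vanishes for every good filtered $A$-module, reducing everything to the case $M=A$, which is exactly the hypothesis $\delta(A)=0$. I would use freely the standard properties of filtered modules over a noetherian filtered ring: the filtration induced on a submodule of a good filtered module is good, a quotient of a good filtration is good, and $\mr{gr}$ is exact on strict short exact sequences. I would also use two facts coming from $A$ being Zariskian: a good filtration on a finitely generated module is separated, so such a module $M$ with a good filtration is zero as soon as $\mr{gr}(M)=0$; and, since $\mr{gr}(A)$ is gr-Artinian, $\mr{gr}(M)$ is a finite graded $\mr{gr}(A)$-module and hence has finite graded length for every good filtered $M$. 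By the independence statement recalled in the preceding paragraph, $\delta(M)$ depends only on the $A$-module $M$, not on the chosen good filtration.

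First I would record the inequality $\mr{lg}_A(M)\le\mr{g.lg}_{\mr{gr}(A)}(\mr{gr}(M))$, i.e.\ $\delta(M)\ge0$. Fix a good filtration on $M$ and a composition series $0=M_0\subsetneq M_1\subsetneq\dots\subsetneq M_\ell=M$ of $A$-modules with $\ell=\mr{lg}_A(M)$; equip each $M_j$ with the filtration induced from $M$, which is good. Then $\mr{gr}(M_0)\subseteq\mr{gr}(M_1)\subseteq\dots\subseteq\mr{gr}(M_\ell)=\mr{gr}(M)$ as graded submodules of $\mr{gr}(M)$, and each inclusion is strict: if $\mr{gr}(M_{j-1})=\mr{gr}(M_j)$, then $\mr{gr}(M_j/M_{j-1})=0$ for the quotient filtration (which is good), forcing $M_j/M_{j-1}=0$, a contradiction. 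Hence $\ell\le\mr{g.lg}_{\mr{gr}(A)}(\mr{gr}(M))$. This strictness step is where the Zariskian hypothesis is genuinely used, through separatedness of good filtrations (equivalently, through $\mr{gr}$ being faithful on the lattice of finitely generated submodules); I expect it to be the only nonformal point, everything else being a manipulation of additive functions modelled on the classical argument for $M=A$.

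Next I would check that $\delta$ is additive on short exact sequences of finitely generated $A$-modules. Given $0\to M'\to M\to M''\to0$, put a good filtration on $M$ and give $M'$ the induced filtration and $M''$ the quotient filtration; the sequence becomes strict, so $0\to\mr{gr}(M')\to\mr{gr}(M)\to\mr{gr}(M'')\to0$ is exact, whence $\mr{g.lg}_{\mr{gr}(A)}(\mr{gr}(M))=\mr{g.lg}_{\mr{gr}(A)}(\mr{gr}(M'))+\mr{g.lg}_{\mr{gr}(A)}(\mr{gr}(M''))$. Combined with additivity of $\mr{lg}_A$ on short exact sequences, this gives $\delta(M)=\delta(M')+\delta(M'')$.

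Finally, the conclusion. The hypothesis is exactly $\delta(A)=0$, and since $\mr{lg}_A(A)=\mr{g.lg}_{\mr{gr}(A)}(\mr{gr}(A))<\infty$ the ring $A$ is left Artinian. Choosing a composition series of ${}_AA$ and applying additivity yields $0=\delta(A)=\sum_i\delta(S_i)$, the sum ranging over the composition factors $S_i$ of ${}_AA$ with multiplicity; as each $\delta(S_i)\ge0$, every $\delta(S_i)=0$. Any simple left $A$-module is isomorphic to $A/\mf{m}$ for a maximal left ideal $\mf{m}$, hence occurs among these composition factors, so $\delta(S)=0$ for every simple $A$-module $S$. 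For an arbitrary good filtered $M$ I would then induct on the finite integer $\mr{lg}_A(M)$: if $M\ne0$, choose a simple submodule $S\subseteq M$ and use $\delta(M)=\delta(S)+\delta(M/S)$ together with $\delta(S)=0$ and the inductive hypothesis applied to $M/S$ to get $\delta(M)=0$. This proves $\mr{lg}_A(M)=\mr{g.lg}_{\mr{gr}(A)}(\mr{gr}(M))$, as claimed.
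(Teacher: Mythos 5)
Your proof is correct and follows essentially the same strategy as the paper's: establish $\mr{lg}_A(M)\le\mr{g.lg}_{\mr{gr}(A)}(\mr{gr}(M))$ via induced (good, hence separated) filtrations on a composition series of $M$, then use a composition series of $A$ together with the hypothesis $\mr{lg}_A(A)=\mr{g.lg}_{\mr{gr}(A)}(\mr{gr}(A))$ to handle the simple modules. Your formulation via the additive defect $\delta(M)\ge0$ that vanishes on the composition factors of $A$ is a clean repackaging of the paper's reduction to showing $\mr{gr}(N)$ is gr-simple for simple $N$, but the underlying content is the same.
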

\begin{ex*}\label{exemple_gr-artinian}
The field
 $A=k\cc{t^{-1}}[t]$ of formal Laurent series in $t^{-1}$
  is Zariskian filtered by the filtration given by the degree in $t$.
 We have that $\mathrm{gr}(A)=k[t^{-1},t]$ is gr-Artinian, of graded length $1$, and $\mathrm{lg}_A(A)=\mathrm{g.lg}_{\mathrm{gr}(A)}(\mathrm{gr}(A))=1$. 
\end{ex*}
\begin{proof}
 First, let $(M,M_i)_{i\in\mb{Z}}$ be a good filtered $A$-module and take a sequence
 of sub-$A$-modules
 \begin{equation*}
  M\supsetneq M^{(1)}\supsetneq\dots\supsetneq M^{(l)}=0.
 \end{equation*}
 We equip $M^{(k)}$ with the induced filtration. These filtrations are
 good by \cite[Ch.II, 2.1.2]{HO} since $A$ is a Zariskian filtered
 ring. Since the filtration on $M^{(k)}/M^{(k+1)}$ is good and $A$ is a
 Zariskian filtered ring, the filtration is separated, and
 $\mr{gr}(M^{(k)}/M^{(k+1)})\neq0$. Thus we get the strictly decreasing
 sequence
 \begin{equation*}
  \mr{gr}(M)\supsetneq\mr{gr}(M^{(1)})\supsetneq\dots\supsetneq
   \mr{gr}(M^{(l)})=0.
 \end{equation*}
 This shows that $\mr{lg}_A(M)\leq\mr{g.lg}_{\mr{gr}(A)}
 (\mr{gr}(M))$. It suffices to show that if $(N,N_i)$ is a good filtered
 $A$-module such that $N$ is a simple $A$-module, then $\mr{gr}(N)$ is a
 gr-simple $\mr{gr}(A)$-module.

 Let
 \begin{equation*}
  A=:I^{(0)}\supsetneq I^{(1)}\supsetneq\dots\supsetneq I^{(l)}=0
 \end{equation*}
 be a composition series. The hypothesis and the above observation
 imply that
 \begin{equation*}
  \mr{gr}(A)\supsetneq\mr{gr}(I^{(1)})\supsetneq\dots\supsetneq
   \mr{gr}(I^{(l)})=0
 \end{equation*}
 is also a composition series in the category of graded modules. Since
 any simple graded $A$-module is
 appearing in the series, we get that for any simple $A$-module $N$,
 there exists a good filtration on $N$ such that the $\mr{gr}(A)$-module
 $\mr{gr}(N)$ is gr-simple. Indeed, there exists $0\leq k<l$ such that $N\cong
 I^{(k)}/I^{(k+1)}$. We put the good filtration induced by that of
 $I^{(k)}/I^{(k+1)}$ on $N$. Then since $\mr{gr}(I^{(k)}/I^{(k+1)})$ is
 a gr-simple $\mr{gr}(A)$-module, we get the claim. Since
 for any good filtrations $F$ and $G$ on $N$, we know that
 $\mr{gr}_F(N)$ and $\mr{gr}_G(N)$ have the same gr-length, we get that
 $\mr{gr}(N)$ is gr-simple for any good filtration. This concludes the
 proof of the lemma.
\end{proof}

\subsubsection{}\label{def_cycl_Dm}
Let us consider Situation (L) of \ref{setup}. Recall the notation of
\ref{localsetting}. Let $\ms{M}$ be a holonomic
$\DcompQ{m}{\ms{X}}$-module (not necessarily stable).
Let $\mr{Cycl}^{(m)}(\ms{M})=\sum_{s\in S}m_s\cdot[\pi^{-1}(s)]+
r\cdot[X]$ be the characteristic cycle of $\ms{M}$ (cf.\ \cite[2.1.17]{Abe2}). 
We note that, by construction, $\mr{Cycl}^{(m)}$ is additive on short exact sequences
of $\DcompQ{m}{\ms{X}}$-modules.
The
integer $m_s$ is called the {\em vertical multiplicity} of
$\ms{M}$ at $s$ and the integer $r$ is called the \emph{generic rank} (or \emph{horizontal multiplicity}) of $\ms{M}$.
\index{characteristic variety and cycle!Cycl@$\mr{Cycl}^{(m)}$, $\mr{Cycl}$|(}
\index{characteristic variety and cycle!multiplicity!vertical multiplicity}
\index{characteristic variety and cycle!multiplicity!generic rank (horizontal multiplicity)}

\begin{prop*}
 \label{levmcharcycle}
 We get
 \begin{equation*}
  p^m\cdot\mr{rk}_{\CK{\ms{X}}\{\partial\}^{(m)}}(\EcompQ{m}{s}
   (\ms{M}))=\deg_L(s)\cdot m_s,
 \end{equation*}
 where $L:=K(\ms{X})$ and $\deg_L(s):=\deg(s)\cdot[L:K]^{-1}$.
\end{prop*}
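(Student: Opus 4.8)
The plan is to express both $m_s$ and $\mr{rk}_{\CK{\ms{X}}\{\partial\}^{(m)}}(\EcompQ{m}{s}(\ms{M}))$ as lengths attached to the generic point $\mf{p}_s$ of the vertical component $\pi^{-1}(s)$ — on the characteristic-variety side and on the microlocal side respectively — and to see that the second is $p^m/\deg_L(s)$ times the first. For the first, choose a coherent $\Dcomp{m}{\ms{X}}$-lattice $\ms{M}_0\subset\ms{M}$ (so $\ms{M}_0\otimes\QQ\cong\ms{M}$) and a good filtration on the $\Dmod{m}{X}$-module $\ms{M}_0/\varpi$; by definition (see \cite[2.1.17]{Abe2}) the multiplicity along $\pi^{-1}(s)$ of the coherent graded $\mr{gr}(\Dmod{m}{X})$-module $\mr{gr}(\ms{M}_0/\varpi)$ is $m_s$, and by the Lemma of \ref{grlglg} this equals the ordinary length $\mr{lg}_{\mr{gr}(\Dmod{m}{X})_{\mf{p}_s}}(\mr{gr}(\ms{M}_0/\varpi)_{\mf{p}_s})$. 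The structural fact to record here is that, in the curve case with local coordinate $x$, $\mr{gr}(\Dmod{m}{X})$ is free of rank $p^m$ over its reduced quotient $\mc{O}_{T^*X}=\mc{O}_X[\sigma(\partial^{\angles{m}{p^m}})]$ — a level-$m$ divided-power phenomenon, the symbols of $\partial^{\angles{m}{1}},\partial^{\angles{m}{p}},\dots,\partial^{\angles{m}{p^{m-1}}}$ having vanishing $p$-th powers while $\sigma(\partial^{\angles{m}{p^m}})$ is a polynomial variable — so that $\mr{gr}(\Dmod{m}{X})_{\mf{p}_s}$ is free of rank $p^m$ over the discrete valuation ring $\mc{O}_{T^*X,\mf{p}_s}$, whose residue field has degree $\deg_L(s)$ over the residue field of the field of local constants $L=K(\ms{X})$ (cf.\ \ref{fieldconstmicdif}).

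For the microlocal side, recall that $\EcompQ{m}{\ms{X}}\otimes\ms{M}$ is obtained by inverting the central operator $\partial^{\angles{m}{Np^m}}$ and $p$-adically completing, a flat operation by Lemma \ref{flatness}; hence taking the stalk at a point $\xi_s$ over $s$ and passing to $\mr{gr}$ for the order filtration commutes with these steps, and — fixing a $\CR{\ms{X}}\{\partial\}^{(m,m)}$-lattice of $\EcompQ{m}{s}(\ms{M})$ as provided by Lemma \ref{finiteness}(i) — its associated graded, reduced modulo $\varpi$, is identified with the localization of $\mr{gr}(\ms{M}_0/\varpi)$ at $\sigma(\partial^{\angles{m}{p^m}})$ and at $\xi_s$. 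We may assume $\ms{M}$ monogenerated throughout, since both sides of the asserted equality are additive in short exact sequences of holonomic modules (additivity of the characteristic cycle; flatness of $\EcompQ{m}{\ms{X}}$ over $\pi^{-1}\DcompQ{m}{\ms{X}}$ for the microlocal rank). As $\CK{\ms{X}}\{\partial\}^{(m)}$ is a field (Lemma \ref{Kringpid}), the sought rank is the length $\mr{lg}_{\CK{\ms{X}}\{\partial\}^{(m)}}(\EcompQ{m}{s}(\ms{M}))$, which I would compute through its order-associated graded by applying the Zariskian-filtered-ring Lemma \ref{Zarmularitw} — to $\CK{\ms{X}}\{\partial\}^{(m)}$, whose order-graded is the Laurent polynomial ring $L[\tau^{\pm1}]$ by the Amice description of \ref{fieldconstmicdif} (so the hypotheses, gr-Artinian graded and matching of lengths, are immediate), and to the microlocal stalk ring. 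Putting side by side the two ways of computing the length of the relevant localization of $\mr{gr}(\ms{M}_0/\varpi)$ at $\mf{p}_s$ — directly, which gives $m_s$, versus via the rank-$p^m$ extension $\mc{O}_{T^*X,\mf{p}_s}\subset\mr{gr}(\Dmod{m}{X})_{\mf{p}_s}$ together with the residue degree $\deg_L(s)$, which gives $p^m/\deg_L(s)$ times the microlocal rank — then yields $p^m\cdot\mr{rk}_{\CK{\ms{X}}\{\partial\}^{(m)}}(\EcompQ{m}{s}(\ms{M}))=\deg_L(s)\cdot m_s$.

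The step I expect to be the real obstacle is this final comparison: precisely matching the localization at the closed point $\xi_s$ that appears in the microlocal associated graded with the localization at the generic point $\mf{p}_s$ appearing in the characteristic cycle, aligning the order filtration on the microlocal ring with the natural filtration on $\CK{\ms{X}}\{\partial\}^{(m)}$, and confirming that the residue-field and multiplicity contributions combine to exactly the factor $p^m\,\deg_L(s)$ and nothing more. Carrying out the explicit computation in the monogenerated case, using the generating family $\{x^iy_s^j\cdot\alpha\}$ of Lemma \ref{finiteness}(ii), is what keeps the $p$-adic-valuation bookkeeping with the divided-power coefficients $\angles{m}{k}!$ under control.
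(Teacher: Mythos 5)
Your overall strategy — express $m_s$ as a length at the generic point $\mf{p}_s$ via Lemma \ref{grlglg}, identify the microlocal rank as a length over the field $\CK{\ms{X}}\{\partial\}^{(m)}$ via Lemma \ref{Kringpid}, reduce to the monogenerated case, and compare the two through associated gradeds using Lemma \ref{Zarmularitw} — is essentially the one the paper uses, and you correctly identify the reduction to an integral lattice mod $\varpi$ as the device that puts the comparison on common ground. However, your explanation of where the factor $p^m$ comes from is wrong, and this is exactly at the ``final comparison'' that you yourself flag as the real obstacle.

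You claim the $p^m$ arises because $\mr{gr}(\Dmod{m}{X})_{\mf{p}_s}$ is free of rank $p^m$ over its reduced quotient $\mc{O}_{T^*X,\mf{p}_s}$. The freeness is a correct structural fact, but it produces no factor in a length comparison: if $B$ is a commutative ring with nilradical $N$ and $B/N$ has the same residue fields as $B$ (as is the case here — the nilpotent generators are the symbols of $\partial^{\angles{m}{1}},\dots,\partial^{\angles{m}{p^{m-1}}}$ and they do not enlarge residue fields), then every simple graded $B$-module is killed by $N$ and is simple over $B/N$, and conversely. Hence for any finite-length graded $B$-module $M$ one has $\mr{g.lg}_{B}(M)=\mr{g.lg}_{B/N}(M)$; the nilpotent extension contributes nothing. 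This is precisely the content of the paper's step passing from $\mr{gr}(\Emod{m}{X,\xi_s})$ to $\mc{O}_{X,s}[\partial^{\angles{m}{\pm p^m}}]$, and no $p^m$ appears there.

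The actual source of $p^m$ in the paper's proof is on the microlocal side and happens in characteristic zero: one introduces the subfield $L\{\partial^{\angles{m}{p^m}}\}^{(0)}\subset\CK{\ms{X}}\{\partial\}^{(m)}$ topologically generated by $\partial^{\angles{m}{\pm p^m}}$, which is an extension of degree $p^m$ because $(\partial^{\angles{m}{1}})^{p^m}=(p^m)!\,\partial^{\angles{m}{p^m}}$ and $(p^m)!$ is a \emph{unit} in $L$. Thus $\mr{rk}_{L\{\partial^{\angles{m}{p^m}}\}^{(0)}}=p^m\cdot\mr{rk}_{\CK{\ms{X}}\{\partial\}^{(m)}}$. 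The point of introducing this subring — which your proposal never does — is that its integral version $R_L\{\partial^{\angles{m}{p^m}}\}^{(0)}$ is a discrete valuation ring whose residue field $k_L\pp{\partial^{\angles{m}{-p^m}}}$ \emph{is} a Zariskian filtered ring for the order filtration, with $\mr{gr}$ equal to the Laurent ring $k_L[\xi^{\angles{m}{\pm p^m}}]$, i.e.\ exactly the reduced object appearing on the characteristic-cycle side. Your intention to apply Lemma \ref{Zarmularitw} directly to $\CK{\ms{X}}\{\partial\}^{(m)}$ with the order filtration does not work as stated: $\CK{\ms{X}}\{\partial\}^{(m)}\cong\mc{A}_L(\{\omega/\omega_m,\omega/\omega_m\})$ is a Banach algebra whose elements have unbounded positive order, so the order filtration is not exhaustive and does not make it a Zariskian filtered ring. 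The paper sidesteps this by first passing to the integral DVR subring, reducing mod $\varpi$, and \emph{then} taking $\mr{gr}$, which is where both Lemma \ref{Zarmularitw} and the residue-degree bookkeeping ($\deg(s)^{-1}\cdot\deg(L/K)=\deg_L(s)^{-1}$) become clean. Without identifying this intermediate subring, the $p^m$ in your argument is unaccounted for.
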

\begin{proof}
 We may assume that $\ms{M}$ is a
 monogenic module by an extension argument using the flatness of $\EcompQ{m}{\ms{X}}$ over $\pi^{-1}\DcompQ{m}{\ms{X}}$,
cf.\ \ref{flatness}. Let $\ms{M}'$ be a monogenic
 $\Dcomp{m}{\ms{X}}$-module
 without $p$-torsion such that $\ms{M}'\otimes\mb{Q}\cong\ms{M}$. Since
 $\Ecomp{m}{\ms{X}}$ is flat over $\Dcomp{m}{\ms{X}}$ by
 \cite[2.8-(ii)]{Abe}, we note that $\Ecomp{m}{s}(\ms{M}')$ is also
 $p$-torsion free. Let
 $L\{\partial^{\angles{m}{p^m}}\}^{(0)}$ be the subring of
 $\CK{\ms{X}}\{\partial\}^{(m)}$ topologically
 generated by $\partial^{\angles{m}{\pm p^m}}$ over $L$. Then we get
 \begin{equation*}
  p^m\cdot\mr{rk}_{\CK{\ms{X}}\{\partial\}^{(m)}}(\EcompQ{m}{s}
   (\ms{M}))=
   \mr{rk}_{L\{\partial^{\angles{m}{p^m}}\}^{(0)}}
   (\EcompQ{m}{s}(\ms{M})).
 \end{equation*}
 We know that $\Ecomp{m}{s}(\ms{M}')$ is finite over
 $\CR{\ms{X}}\{\partial\}^{(m)}$ by Lemma \ref{finiteness}, and in
 particular, finite over
 $R_L\{\partial^{\angles{m}{p^m}}\}^{(0)}:=L\{\partial^{\angles{m}{p^m}}
 \}^{(0)}\cap\CR{\ms{X}}\{\partial\}^{(m)}$. Since
 $R_L\{\partial^{\angles{m}{p^m}}\}^{(0)}$ is a discrete valuation ring
 whose uniformizer is $\varpi$, and $\Ecomp{m}{s}(\ms{M}')$ is
 $p$-torsion free, we get that $\Ecomp{m}{s}(\ms{M}')$ is free over
 $R_L\{\partial^{\angles{m}{p^m}}\}^{(0)}$. Thus, we get
 \begin{align*}
  \mr{rk}_{L\{\partial^{\angles{m}{p^m}}\}^{(0)}}(\EcompQ{m}{s}
   (\ms{M}))&=\mr{rk}_{R_L\{\partial^{\angles{m}{p^m}}\}^{(0)}}
  (\Ecomp{m}{s}(\ms{M}'))\\&=\deg(L/K)^{-1}\cdot\mr{rk}_{k
  \cc{\partial^{\angles{m}{-p^m}}}[\partial^{\angles{m}{p^m}}]}
  (\Ecomp{m}{s}(\ms{M}/\varpi)).
 \end{align*}
 Here $k\cc{\partial^{\angles{m}{-p^m}}}[\partial^{\angles{m}{p^m}}]$ is
 considered as a subring of
 $R_L\{\partial^{\angles{m}{p^m}}\}^{(0)}/\varpi$. It is the ring given in Example \ref{exemple_gr-artinian}, 
with $t:=\partial^{\langle p^m\rangle_{(m)}}$, and so it satisfies the hypotheses of Lemma \ref{Zarmularitw}.
 Now, we put $\ms{N}:=\ms{M}'/\varpi$. Take a good filtration
 $F_{\bullet}$ of $\ms{N}$. Let $\xi_s$ be the generic point of the
 fiber $\pi^{-1}(s)$. Then $(\mr{gr}^F(\ms{N}))_{\xi_s}$ is
 an Artinian $(\mc{O}_{T^{(m)*}X})_{\xi_s}$-module. Note that
 $\Emod{m}{s}(\ms{N})$ possesses a natural filtration induced by the
 good filtration of $\ms{N}$ (cf.\ \cite[1.7]{Abe} or \cite[A.3.2.4]{Lau2}).
 Thus, we get
 \begin{align*}
  m_s=\mr{lg}_{(\mc{O}_{T^{(m)*}X})_{\xi_s}}(\mr{gr}^F
   (\ms{N})_{\xi_s})&=\mr{g.lg}_{\mr{gr}(\Emod{m}{X,\xi_s})}
  \bigr(\mr{gr}(\Emod{m}{s}(\ms{N}))\bigl)
 \end{align*}
 by using Lemma \ref{grlglg}. Since the injection
 $\mc{O}_{X,s}[\xi^{\angles{m}{\pm
 p^m}}]\hookrightarrow\mr{gr}(\Emod{m}{X,\xi_s})$
 induces the isomorphism $\mc{O}_{X,s}[\xi^{\angles{m}{\pm p^m}}]
 \xrightarrow{\sim}\bigl(\mr{gr}(\Emod{m}{X,\xi_s})\bigr)^{\mr{red}}$
 where $\xi^{\angles{m}{\pm p^m}}$ denotes the class of
 $\partial^{\angles{m}{\pm p^m}}$ in $\mr{gr}(\Emod{m}{X,\xi_s})$, we
 get
 \begin{align*}
  \mr{g.lg}_{\mr{gr}(\Emod{m}{X,\xi_s})}
  \bigr(\mr{gr}(\Emod{m}{s}(\ms{N}))\bigl)&=
  \mr{g.lg}_{\mc{O}_{X,s}[\partial^{\angles{m}{\pm
 p^m}}]}
  \bigr(\mr{gr}(\Emod{m}{s}(\ms{N}))\bigl)\\
  &=(\deg(s))^{-1}\cdot\mr{g.lg}_{k[\partial^{\angles{m}
   {\pm p^m}}]}\bigr(\mr{gr}(\Emod{m}{s}(\ms{N}))\bigl).
 \end{align*}
By using Lemma \ref{Zarmularitw}, we get the proposition.
\end{proof}

\subsection{Stability theorem}
We summarize what we have got, and get the following characteristic
cycle version of Theorem \ref{mainabe}, which is one of main
theorems of this paper. Recall the notation of paragraph
\ref{localsetting}.

\begin{thm}[Stability theorem for curves]
 \label{stabilitytheoremcy}
 Let $\ms{X}$ be an affine formal curve over $R$ in Situation
 {\normalfont(L)} of {\normalfont\ref{setup}}, and $\ms{M}$ be a stable
 holonomic $\DcompQ{m}{\ms{X}}$-module. Let $S$
 be the set of singular points of $\ms{M}$, and suppose that
 $\ms{M}|_{\ms{X}\setminus S}$ is a convergent isocrystal. Let $r$ be
 the generic rank of $\ms{M}$.

 (i) For any $m''\geq m'\geq m$, we get
 \begin{equation*}
  \mr{rk}_{\CK{\ms{X}}\{\partial\}^{(m)}}\bigl(\EcompQ{m}{s}
   (\ms{M})\bigr)=
   \mr{rk}_{\CK{\ms{X}}\{\partial\}^{(m',m'')}}\bigl
   (\EcompQ{m',m''}{s}(\ms{M})\bigr)=
   \mr{rk}_{\CK{\ms{X}}\{\partial\}^{(m')}}\bigl
   (\EcompQ{m'}{s}(\ms{M})\bigr).
 \end{equation*}
 This number is denoted by $r_s$.

 (ii) For any $m'\geq m$, we get
 \begin{equation*}
  \mr{Cycl}^{(m')}(\DcompQ{m'}{\ms{X}}\otimes_{\DcompQ{m}{\ms{X}}}
   \ms{M})=r\cdot[X]+p^{m'}\,[K(\ms{X}):K]\cdot\sum_{s\in
   S}\deg(s)^{-1}\cdot r_s
   \cdot[\pi^{-1}(s)].
 \end{equation*}
\end{thm}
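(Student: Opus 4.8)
The plan is to derive the theorem by assembling the numerical results of the previous subsections: the substantive input is Proposition \ref{rankcompat} and Proposition \ref{levmcharcycle} (together with Lemma \ref{flatness}), and what remains is a bookkeeping argument over levels. A preliminary observation used throughout: for $\ell\geq m$ the associativity of the tensor product gives $\EcompQ{\ell,\ell''}{s}(\ms{M})\cong\EcompQ{\ell,\ell''}{s}(\ms{M}^{(\ell)})$ (and likewise $\EcompQ{\ell}{s}(\ms{M})\cong\EcompQ{\ell}{s}(\ms{M}^{(\ell)})$), and $\ms{M}^{(\ell)}$ is again a stable holonomic $\DcompQ{\ell}{\ms{X}}$-module with the same singular set $S$; indeed, holonomicity and stability follow from the definition in \ref{stabledef}, Lemma \ref{flatness}, and the identity $\mr{Char}^{(\ell)}(\ms{M}^{(\ell)})=\mr{Char}^{(m)}(\ms{M})$ recorded there. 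In particular, by \ref{localsetting} (using Lemma \ref{Kringpid}), each $\EcompQ{\ell,\ell''}{s}(\ms{M})$ is a finite free $\CK{\ms{X}}\{\partial\}^{(\ell,\ell'')}$-module, so its rank is well-defined.

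First I would prove (i). Since $\EcompQ{\ell}{\ms{X}}=\EcompQ{\ell,\ell}{\ms{X}}$ by the construction of the intermediate rings of microdifferential operators, the first and third terms of the asserted equality are the cases $(m',m'')=(m,m)$ and $(m',m'')=(m',m')$ of the middle one, so it suffices to show that $\mr{rk}_{\CK{\ms{X}}\{\partial\}^{(m',m'')}}\bigl(\EcompQ{m',m''}{s}(\ms{M})\bigr)$ is independent of $(m',m'')$ with $m''\geq m'\geq m$. Proposition \ref{rankcompat}(ii), applied to $\ms{M}^{(\ell)}$ and iterated, shows this rank does not change when the second index is raised; Proposition \ref{rankcompat}(i), applied successively to $\ms{M}^{(\ell)}$ for $\ell=m,m+1,\dots$, shows it does not change when the first index is raised (the strict inequality demanded by \ref{rankcompat}(i) holds because at the step raising the first index from $\ell$ to $\ell+1$ the second index is still $\geq m'>\ell$). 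Starting from $(m,m)$, first raising the second index to $m''$ and then the first index to $m'$ connects the first term to the middle one, and a final application of \ref{rankcompat}(ii) at level $m'$ connects the middle term to the third. This proves (i); write $r_s$ for the common value.

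Next I would prove (ii). By the preliminary observation $\ms{M}^{(m')}$ is stable holonomic with $\mr{Char}^{(m')}(\ms{M}^{(m')})=\mr{Char}^{(m)}(\ms{M})$, so $\mr{Cycl}^{(m')}(\ms{M}^{(m')})=r'\cdot[X]+\sum_{s\in S}m^{(m')}_s\cdot[\pi^{-1}(s)]$ for integers $r'$ and $m^{(m')}_s$, and it remains to identify them. For the horizontal coefficient: on $\ms{U}:=\ms{X}\setminus S$ the restriction $\ms{M}|_{\ms{U}}$ is a convergent isocrystal, and since the level-change functor $\DcompQ{m'}{\ms{U}}\otimes_{\DcompQ{m}{\ms{U}}}(-)$ is the identity on convergent isocrystals (cf.\ \cite{Ber1}, \cite{Ber2}), $\ms{M}^{(m')}|_{\ms{U}}$ is this same isocrystal, hence $\mc{O}_{\ms{U},\mb{Q}}$-coherent of rank $r$; thus $\mr{Char}^{(m')}(\ms{M}^{(m')})$ has no vertical component over $\ms{U}$ and $r'=r$. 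For the vertical coefficient at $s$: apply Proposition \ref{levmcharcycle} to the holonomic module $\ms{M}^{(m')}$ at level $m'$, which yields $p^{m'}\cdot\mr{rk}_{\CK{\ms{X}}\{\partial\}^{(m')}}\bigl(\EcompQ{m'}{s}(\ms{M}^{(m')})\bigr)=\deg_L(s)\cdot m^{(m')}_s$; since $\EcompQ{m'}{s}(\ms{M}^{(m')})\cong\EcompQ{m'}{s}(\ms{M})$ has rank $r_s$ by (i), we get $m^{(m')}_s=(\deg_L(s))^{-1}p^{m'}r_s$. Substituting gives the asserted cycle.

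I expect the only points requiring genuine care, rather than being purely formal, to be the two facts isolated in the preliminary observation and in the discussion of $r'$: that stability is inherited by every intermediate module $\ms{M}^{(\ell)}$ — needed so that Proposition \ref{rankcompat} and the free-module framework of \ref{localsetting} apply at each step of the iteration — and that level change acts trivially on convergent isocrystals over the smooth locus. The first is a direct consequence of \ref{stabledef} and Lemma \ref{flatness}; the second is standard in Berthelot's theory. Everything else is formal manipulation of the already-proven Propositions \ref{rankcompat} and \ref{levmcharcycle}.
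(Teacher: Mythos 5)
Your proposal is correct and takes essentially the same route as the paper: part (i) follows by iterating Proposition \ref{rankcompat} across levels, and part (ii) by combining Proposition \ref{levmcharcycle} at each level $m'$ with the constancy of the rank from (i). The paper's own proof is terser (it simply cites \ref{rankcompat} and \ref{levmcharcycle}), while you spell out the preliminary facts it implicitly uses — that $\EcompQ{\ell}{\ms{X}}=\EcompQ{\ell,\ell}{\ms{X}}$, that $\ms{M}^{(\ell)}$ remains stable holonomic so \ref{rankcompat} can be applied at each step, and that the horizontal coefficient is indeed $r$ since $\ms{M}^{(m')}|_{\ms{X}\setminus S}$ is the same convergent isocrystal — but these are exactly the verifications the paper leaves to the reader, not a different argument.
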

\begin{proof}
 Since $\ms{M}$ is stable, we get for any $m''\geq m'\geq m$,
 \begin{equation*}
  \mr{rk}(\EcompQ{m''}{s}(\ms{M}))=\mr{rk}(\EcompQ{m',m''}{s}
   (\ms{M}))=\mr{rk}(\EcompQ{m'}{s}(\ms{M}))
 \end{equation*}
 by Proposition \ref{rankcompat}. Thus (i) follows.

 Let us see (ii). For this, the vertical multiplicities are the only
 problem. By Proposition \ref{levmcharcycle}, we get for any $m'\geq m$
 \begin{equation*}
  p^{m'}\cdot\mr{rk}_{\CK{\ms{X}}\{\partial\}^{(m')}}
   (\EcompQ{m'}{s}(\ms{M}))=\mr{deg}(s)\cdot[K(\ms{X}):K]^{-1}\cdot
   m_s(\ms{M}^{(m')}).
 \end{equation*}
 Thus combining with (i), we get (ii).
\end{proof}


Thanks to Theorem \ref{stabilitytheoremcy} we can now define the characteristic cycle of holonomic 
$\DdagQ{\ms{X}}$-modules, cf.\ \ref{defCycl}, and prove Corollary \ref{cor_noeth_art}.
These  were already know for the category of holonomic
$F$-$\DdagQ{\ms{X}}$-modules, cf.\ \cite[5.4.1 et 5.4.3 (ii)]{BerInt}.
The main point is to give, for a $\DdagQ{\ms{X}}$-module $\ms{M}$, a definition of the characteristic cycle independent on the level, 
\emph{i.e.}\ independent on the choice of a $\DcompQ{m}{\ms{X}}$-module 
$\ms{M}^{(m)}$ such that $\ms{M}= \DdagQ{\ms{X}}\otimes_{\DcompQ{m}{\ms{X}}} \ms{M}^{(m)}$.
 In \emph{loc.\ cit.}, Berthelot used his Frobenius descent theorem; whereas here we can take, thanks to Theorem \ref{stabilitytheoremcy},  an integer $m$ big enough (so that $\ms{M}^{(m)}$ is stable). 

\begin{dfn}\label{defCycl}\index{characteristic variety and cycle!Cycl@$\mr{Cycl}^{(m)}$, $\mr{Cycl}$|)}
 Let $\ms{X}$ be a formal curve, and let $\ms{M}$ be a holonomic
 $\DdagQ{\ms{X}}$-module. Let $\ms{M}'$ is a stable coherent
 $\DcompQ{m}{\ms{X}}$-module such that
 $\DdagQ{\ms{X}}\otimes\ms{M}'\cong\ms{M}$. Let $S$ be the set of
 singular points of $\ms{M}$. We define
 \begin{equation*}
  \mr{Cycl}(\ms{M})=r\cdot[X]+[K(\ms{X}):K]\cdot\sum_{s\in S}
   \mr{deg}(s)^{-1}\cdot r_s\cdot[\pi^{-1}(s)],
 \end{equation*}
 where $r$ is the generic rank of $\ms{M}'$, and
 $r_s:=\mr{rk}\bigl(\EcompQ{m}{s}(\ms{M}')\bigr)\in\mb{N}$, which do
 not depend on the choice of $\ms{M}'$ by Theorem
 \ref{stabilitytheoremcy}.
\end{dfn}

\begin{rem*} 
 (i) When $\ms{M}$ possesses a Frobenius structure, the characteristic
 cycle here coincides with that of Berthelot \cite[5.4.2]{BerInt} (or
 \cite[2.3.13]{Abe2}).
  
 (ii) By Theorem \ref{stabilitytheoremcy}-(ii) and \S\ref{def_cycl_Dm}, $\mr{Cycl}$ is additive on short exact sequences of holonomic
 $\DdagQ{\ms{X}}$-modules.

 (iii) The characteristic cycle has integral coefficients. 
 To prove this we may  first assume 
$\ms{X}$ geometrically connected, indeed $\mr{Cycl}(\ms{M})$ does not change
 if we consider $\ms{X}$ as a smooth formal scheme over
$\mr{Spf}(R(\ms{X}))$; secondly we note that if we base change $\ms{X}$ by a finite extension, then  the multiplicities 
$r_s$ will not change  (by construction); therefore we may assume $S$ rational over $K$ and the integrality of 
  $\mr{Cycl}(\ms{M})$ is evident. 
\end{rem*}

\begin{cor}\label{cor_noeth_art}
 Let $\ms{X}$ be a formal curve over $R$. The category of holonomic
 $\DdagQ{\ms{X}}$-modules is both noetherian and artinian.
\end{cor}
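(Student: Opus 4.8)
The plan is to realize the characteristic cycle as an additive, $\mb{N}$-valued invariant on holonomic $\DdagQ{\ms{X}}$-modules and then run the standard finite-length argument. For a holonomic $\DdagQ{\ms{X}}$-module $\ms{M}$, write $\mr{Cycl}(\ms{M}) = r(\ms{M})\cdot[X] + \sum_{s\in S}a_s(\ms{M})\cdot[\pi^{-1}(s)]$; by the Remark following the Definition of $\mr{Cycl}$ all coefficients $r(\ms{M}),a_s(\ms{M})$ lie in $\mb{N}$, and only finitely many are nonzero, since the support of $\mr{Cycl}(\ms{M})$ is $\mr{Char}(\ms{M})$, which is one-dimensional. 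Put $\|\ms{M}\| := r(\ms{M}) + \sum_{s}a_s(\ms{M}) \in \mb{N}$. First I would note that $\|\ms{M}\| = 0$ forces $\ms{M}=0$: if $\ms{M}\neq 0$ then a stable $\DcompQ{m}{\ms{X}}$-model $\ms{M}'$ is nonzero, so $\mr{Char}^{(m)}(\ms{M}')$ is a nonempty closed conic subset of $T^*X$ which, by holonomicity, has dimension exactly one, hence carries a component with positive multiplicity; thus $\|\ms{M}\|\geq 1$.

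The crucial point is additivity: for a short exact sequence $0\to\ms{M}'\to\ms{M}\to\ms{M}''\to 0$ of holonomic $\DdagQ{\ms{X}}$-modules one has $\mr{Cycl}(\ms{M}) = \mr{Cycl}(\ms{M}') + \mr{Cycl}(\ms{M}'')$. This can be checked locally on $\ms{X}$, so I may assume we are in Situation (L) of \ref{setup}. Since $\DdagQ{\ms{X}} = \indlim_m\DcompQ{m}{\ms{X}}$ is a filtered colimit along flat transition maps (flatness of $\DcompQ{m'}{\ms{X}}$ over $\DcompQ{m}{\ms{X}}$, as used in the proof of Lemma \ref{flatness}), it is flat over each $\DcompQ{m}{\ms{X}}$. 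Hence, for $m$ large enough, the inclusion $\ms{M}'\hookrightarrow\ms{M}$ descends to a morphism $\ms{M}'_0\to\ms{M}_0$ between coherent $\DcompQ{m}{\ms{X}}$-models of $\ms{M}'$ and $\ms{M}$ (as $\DdagQ{\ms{X}}=\indlim\DcompQ{m}{\ms{X}}$ and the models are finitely presented); replacing $\ms{M}'_0$ by the image of this morphism (whose tensor with $\DdagQ{\ms{X}}$ is again $\ms{M}'$, by flatness) and setting $\ms{M}''_0:=\Coker$, we get a short exact sequence of coherent $\DcompQ{m}{\ms{X}}$-modules recovering the original one after applying $\DdagQ{\ms{X}}\otimes(-)$. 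Raising $m$ further, Theorem \ref{mainabe} lets me assume all three models are stable, so that $\mr{Cycl}^{(m)}$ of each coincides with $\mr{Cycl}$ of the corresponding $\DdagQ{\ms{X}}$-module. It then suffices to see that $\mr{Cycl}^{(m)}$ is additive on short exact sequences of coherent $\DcompQ{m}{\ms{X}}$-modules: the generic rank is visibly additive, and the vertical multiplicity at $s$ is the length of the stalk at $\xi_s$ of the graded module attached to a good filtration (as in the proof of Proposition \ref{levmcharcycle}), which is additive once the three terms carry compatible good filtrations. Alternatively one may use the exact functor $\EcompQ{m}{s}\otimes_{\DcompQ{m}{\ms{X}}}(-)$ of \ref{localsetting}, exact by Lemma \ref{flatness}, together with additivity of the rank over the field $\CK{\ms{X}}\{\partial\}^{(m)}$.

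Granting additivity, the corollary follows. Given a holonomic $\DdagQ{\ms{X}}$-module $\ms{M}$ and a strictly increasing chain $\ms{N}_1\subsetneq\ms{N}_2\subsetneq\cdots$ of holonomic submodules, each $\ms{N}_{i+1}/\ms{N}_i$ is a nonzero coherent $\DdagQ{\ms{X}}$-module with $\mr{Char}(\ms{N}_{i+1}/\ms{N}_i)\subseteq\mr{Char}(\ms{N}_{i+1})$, hence holonomic; additivity gives $\|\ms{N}_{i+1}\| = \|\ms{N}_i\| + \|\ms{N}_{i+1}/\ms{N}_i\| \geq \|\ms{N}_i\| + 1$, and since $\|\ms{N}_i\|\leq\|\ms{M}\|$ for all $i$ the chain must stabilize. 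The identical bound applies to strictly decreasing chains. Therefore every holonomic $\DdagQ{\ms{X}}$-module has length at most $\|\ms{M}\|$, and the category of holonomic $\DdagQ{\ms{X}}$-modules is both noetherian and artinian. The step I expect to require the most care is the additivity of $\mr{Cycl}$, specifically the descent of a short exact sequence of holonomic $\DdagQ{\ms{X}}$-modules to a short exact sequence of stable $\DcompQ{m}{\ms{X}}$-models at a common level.
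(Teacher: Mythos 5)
Your proof is correct and takes essentially the same route as the paper, which in fact gives only a one-line justification ("additivity of $\mr{Cycl}$ and $r_s\in\mb{N}$"); you have filled in the details that the paper leaves implicit, in particular the descent of a short exact sequence of holonomic $\DdagQ{\ms{X}}$-modules to a short exact sequence of stable coherent $\DcompQ{m}{\ms{X}}$-models at a common level.
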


\begin{proof}
The argument of the proof is the same as that of \cite[5.4.3 (ii)]{BerInt}. We recall it for the convenience of the reader.
We can prove the ascending chain condition as follows:
let $(\ms{M}_n \subseteq \ms{M})_{n\in \mathbb{N}}$ be an ascending filtration by holonomic sub-modules.
We may assume $\ms{M} \not = 0$.  The support of $\mr{Cycl}(\ms{M})$ has dimension one
because $\ms{M}$  is holonomic.
By additivity of $\mr{Cycl}$, cf.\ Remark \ref{defCycl}-(ii), we have, for all $n$, 
$$ \mr{Cycl}(\ms{M})= \mr{Cycl}(\ms{M}_0) + \sum_{i=1}^{n}\mr{Cycl}(\ms{M}_i/\ms{M}_{i-1}) +\mr{Cycl}(\ms{M}/\ms{M}_n).$$
Since $T^*X$ is a noetherian space and the coefficients ($r$ and $r_s$) appearing in $\mr{Cycl}(\ms{M})$ belong to $\mathbb{N}$, we get,
for $n$ big enough,  
$\mr{Cycl}(\ms{M}_n/\ms{M}_{n-1})=0$; 
therefore  $\ms{M}_n=\ms{M}_{n-1}$. 
 We can prove the descending chain condition in a similar way. 
\end{proof}

\section{Local Fourier transform}
\label{section2}
The aim of this section is to define the local Fourier
transform. We note that the definition itself is not difficult anymore
thanks to works of Huyghe and Matsuda: we
can take the canonical extension, the geometric Fourier
transform, and take the differential module around $\infty$ as
presented in $\cite[8.3]{Cr}$. However, with this definition, we are not
able to prove the stationary phase formula in the way  used in the
complex case. In this section, we instead define the local Fourier
transform using microlocalizations following the classical techniques,
and prove some basic properties.

\subsection{Local theory of arithmetic $\ms{D}$-modules}
\label{Crewreview}
Since the main goal of this paper (Theorem \ref{Product formula}) is to
prove a theorem connecting
local and global invariants, it is indispensable to work in local
situations. In the $\ell$-adic case, this was the theory of \'{e}tale
sheaves on traits, in other words Galois representations of local
fields. In our setting, the theory of arithmetic $\ms{D}$-modules on a
formal disk by Crew \cite{Cr3,Cr}, which can be seen as a generalization of
the theory of solvable $p$-adic differential equations, should be the
corresponding theory. We briefly review the theory in this subsection.

\subsubsection{}
\label{Crewconstan} 
Let $R$, $k$, $K$ be as usual, cf.\ \ref{defRmsX}; we recall that $\varpi$ denotes a uniformizer of $R$. 
The field $k$ will be assumed perfect in all \S\ref{section2}, with exception of  
\ref{Crewconstan} -- \ref{diffmoddef} and \ref{analye}.
Denote by $W(k)$\index{.@miscellaneous!W@$W(k)$} a Cohen ring with residue field $k$ (ring of Witt vectors if $k$ is perfect). 
For any commutative local ring $A$, we will denote by $\mf{m}_A$ (or
$\mf{m}$) its maximal ideal. 
If $A$ is an $I$-adic ring, for an ideal $I$ of $A$,
we will denote it by $(A,I)$ when we want to specify the ideal of definition explicitly. 

Let $(A,\mf{m})$ be a $2$-dimensional formally smooth local noetherian
$R$-algebra complete with respect to the $\mf{m}$-adic topology, such
that $p\in\mf{m}$, whose residue field $k_A$ is finite over $k$. In this
situation, $A$ is complete with respect to the $p$-adic topology
by \cite[$0_{\mr{I}}$, 7.2.4]{EGA}. 
Let $R_A:=R\otimes_{W(k)}
W(k_A)$ and $K_A:=\mr{Fr}(R_A)$. \index{.@miscellaneous!K@$K_A$, $R_A$}
 Note that $R_A$ is a discrete valuation ring. Now we get the
following.
\begin{lem*}
 \label{existconcdesc}
 The $R$-algebra $A$ is isomorphic to $R_A\cc{t}$.
\end{lem*}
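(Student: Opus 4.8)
The plan is to realize $A$ as $R_A\cc{X}$ by first embedding the coefficient ring $R_A$ into $A$, then choosing the power series variable among a regular system of parameters, and finally checking that the resulting map is an isomorphism by a comparison of associated graded rings.

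First I would produce a canonical $R$-algebra homomorphism $\iota\colon R_A\rightarrow A$. Since $k$ is perfect, $k_A/k$ is finite separable, so $R_A=R\otimes_{W(k)}W(k_A)$ is finite, hence formally, étale over $R$. Lifting along the square-zero extensions $A/\mf{m}^{n}\twoheadrightarrow A/\mf{m}^{n-1}$ and using $A=\invlim_n A/\mf{m}^n$, one gets $\mr{Hom}_R(R_A,A)\xrightarrow{\sim}\mr{Hom}_R(R_A,A/\mf{m})=\mr{Hom}_R(R_A,k_A)$; the structure map $R_A\rightarrow k_A$ thus lifts uniquely to an $R$-algebra map $\iota\colon R_A\rightarrow A$. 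As $\iota(\varpi)=\varpi$ is a nonzero element of the domain $A$, the map $\iota$ is injective, so it is a local homomorphism of complete noetherian local rings. (Equivalently, $\iota$ can be obtained by lifting, via henselianness of $A$, the diagonal idempotent of $k_A\otimes_k k_A$ to an idempotent of the finite étale $A$-algebra $A\otimes_R R_A$, giving an $A$-retraction $A\otimes_R R_A\rightarrow A$.)

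Next I would check that $\varpi$ lies in a regular system of parameters of $A$: since $A$ is formally smooth over $R$ and $\varpi$ generates $\mf{m}_R$, the quotient $A/\varpi A$ is formally smooth over $k=R/\varpi R$, hence regular; it is $1$-dimensional because $A$ is a $2$-dimensional domain with $\varpi\neq0$, so $\varpi\notin\mf{m}^2$. Choose $x\in\mf{m}$ whose class generates the $1$-dimensional $k_A$-vector space $\mf{m}/(\mf{m}^2+\varpi A)$; then $\mf{m}=(\varpi,x)$, the pair $\{\varpi,x\}$ is a regular system of parameters, and regularity of $A$ gives $\mr{gr}_{\mf{m}}(A)\cong k_A[\overline{\varpi},\overline{x}]$, a polynomial ring in two variables.

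Finally, using $\iota$ and $x$ I would define the local $R_A$-algebra homomorphism $\phi\colon R_A\cc{X}\rightarrow A$ with $\phi(X)=x$; this makes sense because $x\in\mf{m}$ and $A$ is $\mf{m}$-adically complete, and $\phi$ carries $(\varpi,X)$ into $\mf{m}$. The induced map on associated graded rings is the graded $k_A$-algebra map $k_A[\overline{\varpi},\overline{X}]\rightarrow\mr{gr}_{\mf{m}}(A)\cong k_A[\overline{\varpi},\overline{x}]$ sending the degree-one generators to the basis $\overline{\varpi},\overline{x}$ of $\mf{m}/\mf{m}^2$, hence an isomorphism; since $R_A\cc{X}$ and $A$ are complete and separated for the $(\varpi,X)$-adic and $\mf{m}$-adic filtrations and $\phi$ is filtered, $\phi$ is itself an isomorphism, i.e.\ $A\cong R_A\cc{X}=R_A\cc{x}$. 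The only genuine point is the construction of the coefficient embedding $\iota$ in the possibly ramified case; after that the argument is the standard formal-smoothness computation underlying Cohen's structure theorem, and I expect no serious obstacle.
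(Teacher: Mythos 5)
Your proof is correct, and it takes a genuinely different route from the paper. The paper invokes two EGA results: first $[0_{\mr{IV}}, 19.6.5]$, a Cohen-type structure theorem for formally smooth complete local $k$-algebras, to identify $A/\varpi A$ with $k_A\cc{x}$, and then $[0_{\mr{IV}}, 19.7.2]$, a uniqueness statement for formally smooth complete noetherian local $R$-algebras lifting a prescribed reduction, to conclude $A\cong R_A\cc{x}$. You instead unfold those citations into a direct argument: lift the coefficient ring $R_A$ into $A$ by formal \'{e}taleness and completeness, select a regular parameter $x$ complementary to $\varpi$, and verify the resulting map $R_A\cc{X}\rightarrow A$ is an isomorphism by comparing associated graded rings. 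The paper's version is shorter but opaque without the EGA references; yours is longer but self-contained and makes visible exactly where formal smoothness, completeness, and perfectness of $k$ enter. Two small points worth tightening in your write-up: you use that $A$ is a regular (hence domain) \emph{before} you have fully justified it --- the cleanest order is to first observe $A$ is $R$-flat (formal smoothness over a noetherian base, $[0_{\mr{IV}}, 19.7.1]$), so $\varpi$ is a non-zero-divisor, then that $A/\varpi A$ is formally smooth over the field $k$ hence regular, and then that $A$ is regular because $\varpi$ is a non-zero-divisor with regular quotient; and when you pass from the graded isomorphism to the isomorphism of $\phi$ itself, you should note explicitly that $\phi$ induces the identity on residue fields so $\mr{gr}(\phi)$ is a $k_A$-algebra map. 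Neither of these is a real gap.
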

\begin{proof}
 By \cite[$0_{\mr{IV}}$, 19.6.5]{EGA}, we get $A/\varpi A\cong
 k_A\cc{t}$. Moreover, the ring $R_A\cc{t}$
 is a complete noetherian local ring, formally smooth over $R$, such
 that its reduction over $k$ is isomorphic to $A/\varpi A$. Thus by
 \cite[$0_{\mr{IV}}$, 19.7.2]{EGA}, we get the lemma.
\end{proof}
The situation we have in mind is the following: 
the $R$-algebra $A$ is the completion
$\widehat{\mc{O}}_{\ms{X},x}$ 
of the local ring $\mc{O}_{\ms{X},x}$ of  a formal curve $\ms{X}$ at a
closed point $x$, with respect to the filtration by the powers of its
maximal ideal, denoted by $\mathfrak{m}_{\ms{X},x}$. In this case we
will put $k_x:=k_{\widehat{\mc{O}}_{\ms{X},x}}$,
$R_x:=R_{\widehat{\mc{O}}_{\ms{X},x}}$, and denotes by $K_x:=\Frac(R_x)$
the field of fractions of
$R_x$. \index{.@miscellaneous!Ox2@$\widehat{\mc{O}}_{\ms{X},x}$,
$\mc{O}_{\ms{X},x}$, $\mathfrak{m}_{\ms{X},x}$, $k_x$, $R_x$, $K_x$}

We simply denote $\mr{Spf}(A,\varpi A)$ by $\mr{Spf}(A)$.
The formal scheme $\ms{S}:=\mr{Spf}(A)$ \index{.@miscellaneous!S@$\ms{S}$}
is called a {\em formal disk} \index{formal disk}
and it consists of two points: an open point $\eta_{\ms{S}}$ and a
closed point $s$.  We put $\widetilde{\ms{S}}:=\mr{Spf}(A,\mf{m})$,
which consists of only one point. Note that Crew in \cite{Cr} used the
notation $\mr{Spf}(A)$ for
$\mr{Spf}(A,\mf{m})$. \index{.@miscellaneous!S@$\widetilde{\ms{S}}$,
$\eta_{\ms{S}}$} The reason why we introduced $\ms{S}$ and
$\widetilde{\ms{S}}$ will be clarified in Remark \ref{Rem_on_S_tilde}.

\subsubsection{}
\label{Cr_recall1}
In \cite{Cr3,Cr} Crew checked that on $\widetilde{\ms{S}}$, the theory
of arithmetic $\ms{D}$-modules can be constructed in the same manner.
He constructed the ring $\DdagQ{\widetilde{\ms{S}}}$ (resp.\
$\DdagQ{\widetilde{\ms{S}}}(0)$) of  Berthelot differential operators
(resp.\ overconvergent at $t=0$), cf.\
\cite[(3.1.5)]{Cr}. \index{differential
operators!D@$\DdagQ{\widetilde{\ms{S}}}$,
$\DdagQ{\widetilde{\ms{S}}}(0)$} He also constructed analytic variants
$\Dan{\widetilde{\ms{S}},\mb{Q}}$ and
$\ms{D}^{\mr{an}}_{\widetilde{\ms{S}},\mb{Q}}(0)$ \index{differential
operators!Dan@$\Dan{\widetilde{\ms{S}},\mb{Q}}$,
$\ms{D}^{\mr{an}}_{\widetilde{\ms{S}},\mb{Q}}(0)$,
${(\DcompQ{m}{x})}^{\mr{an}}$, $\ms{D}^{\mr{an}}_{x,\mb{Q}}$|(} of these
rings  by ``analytifying'' $\DdagQ{\ms{X}}$ and $\DdagQ{\ms{X}}(0)$,
where $\ms{X}:=\mr{Spf}(R\{t\})$. Here analytification roughly means to
tensor with $\mc{A}_{u,K}([0,1[)\subset K\cc{t}$ and take
the completion with respect to a suitable topology. 
Let us briefly
review the constructions of $\Dan{\widetilde{\ms{S}},\mb{Q}}$.
Choose an isomorphism
$\widehat{\mc{O}}_{\ms{X},x}\cong R_x\cc{t}$ by Lemma
\ref{existconcdesc}. For a positive integer $r$ and a non-negative integer $i$, we
define $\mc{O}_{r,i}$\index{.@miscellaneous!Ori@$\mc{O}_{r,i}$} to be $\widehat{\mc{O}}_{X_i,x}[T]/(pT-t^r)$ as
defined in \cite[4.1]{Cr}. When $r$ is divided by $p^{m+1}$,
$\mc{O}_{r,i}$ possesses a $\Dmod{m}{X_i}$-module structure, cf.\
\cite[Lemma 3.1.1]{Cr}.
We define a {\em ring}
\begin{equation*}
 (\DcompQ{m}{x})^{\mr{an}}:=\invlim_{n}\bigl((\invlim_i
  \mc{O}_{np^{m+1},i}\otimes_{\mc{O}_{X_i}}\Dmod{m}{X_i})
  \otimes\mb{Q}\bigr).
\end{equation*}
Although it is not defined explicitly in {\it loc.\ cit.}, this ring is
used to define
$\ms{D}^{\mr{an}}_{x,\mb{Q}}$ as  inductive limit of
$(\DcompQ{m}{x})^{\mr{an}}$ over $m$. \index{differential operators!Dan@$\Dan{\widetilde{\ms{S}},\mb{Q}}$,
$\ms{D}^{\mr{an}}_{\widetilde{\ms{S}},\mb{Q}}(0)$, ${(\DcompQ{m}{x})}^{\mr{an}}$, $\ms{D}^{\mr{an}}_{x,\mb{Q}}$|)}
By the remark below this ring
depends only on $\widetilde{\ms{S}}$ and not on the parameter $t$ used to define it, so we denote it 
by $\Dan{\widetilde{\ms{S}},\mb{Q}}$.
The construction of $\Dan{\widetilde{\ms{S}},\mb{Q}}(0)$ is analogous, by using $\Dmod{m}{X_i}(0)$ in place of
$\Dmod{m}{X_i}$.
\begin{rem*}\label{rem_indipendec_par}
 In the construction of the rings $\Dan{\widetilde{\ms{S}},\mb{Q}}$ and $\Dan{\widetilde{\ms{S}},\mb{Q}}(0)$, 
one uses the parameter $t$. Thus,
 {\itshape a priori} the construction depends on this choice. However,
 if we use another $t'$ such that its image in $k\cc{t}$ is a uniformizer
 to construct the rings, the resulting rings are canonically isomorphic
 to those constructed using the uniformizer $t$. Indeed, using the
 notation of \cite[4.1]{Cr}, let $\mc{O}_r(t)$ be the ring $\mc{O}_r$
 using the uniformizer $t$. Let $t'$ be another uniformizer. Then there
 exists a canonical isomorphism
 $\mc{O}_{r,\mb{Q}}:=\mc{O}_r(t)\otimes\mb{Q}\cong\mc{O}_r(t')
 \otimes\mb{Q}$. Moreover, in $\mc{O}_{r,\mb{Q}}$,
 there exists an inclusion $\mc{O}_r(t)\subset p^{-r}\mc{O}_r(t')$ and
 $\mc{O}_r(t')\subset p^{-r}\mc{O}_r(t)$. Thus the claim follows from the
 definition of $\Dan{}$ and $\Dan{}(0)$.
\end{rem*}

Moreover Crew generalize these constructions to define
analytification functors, cf.\ \cite[4.1]{Cr},
\begin{equation*}
 (-)^{\mr{an}}\colon\mr{Coh}(\DdagQ{\widetilde{\ms{S}}})\rightarrow
  \mr{Mod}(\Dan{\widetilde{\ms{S}},\mb{Q}}), \qquad
  (-)^{\mr{an}}\colon\mr{Coh}(\DdagQ{\widetilde{\ms{S}}}(0))\rightarrow
  \mr{Mod}(\Dan{\widetilde{\ms{S}},\mb{Q}}(0)),
  \index{functors!.1@$(-)^{\mr{an}}$}
\end{equation*}
where $\mr{Coh}(-)$ denotes the category of coherent modules and
$\mr{Mod}(-)$ denotes the category of modules. They send
$\DdagQ{\widetilde{\ms{S}}}$ to $\Dan{\widetilde{\ms{S}},\mb{Q}}$ and 
$\DdagQ{\widetilde{\ms{S}}}(0)$ to $\Dan{\widetilde{\ms{S}},\mb{Q}}(0)$
respectively, and for every coherent $\DdagQ{\widetilde{\ms{S}}}$-module
(resp.\ $\DdagQ{\widetilde{\ms{S}}}(0)$-module) $\ms{M}$  there exists a
natural monomorphism $\ms{M}\hookrightarrow {\ms{M}}^{\mr{an}}$.  
Finally, the morphisms
$\DdagQ{\widetilde{\ms{S}}}\hookrightarrow\Dan{\widetilde{\ms{S}},\mb{Q}}$
and
$\DdagQ{\widetilde{\ms{S}}}(0)\hookrightarrow\Dan{\widetilde{\ms{S}},\mb{Q}}(0)$
are flat (both left and right); the analytification functors are exact;
and we have $\ms{M}^{\mr{an}} \cong \Dan{\widetilde{\ms{S}},\mb{Q}}
\otimes_{\DdagQ{\widetilde{\ms{S}}}} \ms{M}$ (resp.\ $\ms{M}^{\mr{an}}
\cong \Dan{\widetilde{\ms{S}},\mb{Q}}(0)
\otimes_{\DdagQ{\widetilde{\ms{S}}}(0)} \ms{M}$), for any coherent
${\DdagQ{\widetilde{\ms{S}}}}$-module (resp.\
${\DdagQ{\widetilde{\ms{S}}}}(0)$-module)
$\ms{M}$, cf.\ \cite[Th.~4.1.1 and 4.1.2]{Cr}.

 We define the sheaves $\DdagQ{\ms{S}}$,
$\Dan{\ms{S},\mb{Q}}$ \index{differential operators!Ddag@$\DdagQ{\ms{S}}$,
$\Dan{\ms{S},\mb{Q}}$} by
\begin{alignat*}{2}
 \Gamma(\ms{S},\DdagQ{\ms{S}})&:=
 \DdagQ{\widetilde{\ms{S}}},&\qquad \Gamma(\eta_{\ms{S}},
 \DdagQ{\ms{S}})&:=
 \DdagQ{\widetilde{\ms{S}}}(0), \\
 \Gamma(\ms{S},\Dan{\ms{S},\mb{Q}})&:= \Dan{\widetilde{\ms{S}},\mb{Q}},&\qquad\Gamma(\eta_{\ms{S}},
 \Dan{\ms{S},\mb{Q}})&:=\Dan{
 \widetilde{\ms{S}},\mb{Q}}(0).
\end{alignat*}
By the Remark above, these sheaves are well-defined.
For any $\DdagQ{\widetilde{\ms{S}}}$-module $\ms{M}$, we can define a sheaf $\ms{M}^{\triangle}$ on $\ms{S}$
by 
\begin{alignat*}{2}
 \Gamma(\ms{S},\ms{M}^{\triangle})&:=\ms{M},
 &\qquad\Gamma(\eta_{\ms{S}}, \ms{M}^{\triangle}) &:=
 \DdagQ{\widetilde{\ms{S}}}(0)\otimes_{\DdagQ{\widetilde{\ms{S}}}}
 \ms{M}\,;
\end{alignat*}
and similarly  we can associate a sheaf on $\ms{S}$ to any
$\Dan{\widetilde{\ms{S}},\mb{Q}}$-module,  by replacing
$\DdagQ{\widetilde{\ms{S}}}(0)$ with
$\Dan{\widetilde{\ms{S}},\mb{Q}}(0)$ in the definition above. By
construction, the fibers of any sheaf of $\DdagQ{\ms{S}}$-modules
(resp.\ $\Dan{\ms{S},\mb{Q}}$-modules) $\ms{M}$ are given by
$\ms{M}_s = \Gamma(\ms{S}, \ms{M})$ and $\ms{M}_{\eta} = \Gamma(\eta, \ms{M})$. 
Hence the functors $\Gamma(\ms{S}, - )$ and $(-)^{\triangle}$ are
equivalence of categories, quasi-inverse each other, between the
category of coherent sheaves of $\DdagQ{\ms{S}}$-modules (resp.\
$\Dan{\ms{S},\mb{Q}}$-modules) and coherent
$\DdagQ{\widetilde{\ms{S}}}$-modules (resp.\
$\Dan{\widetilde{\ms{S}},\mb{Q}}$-modules).
We will often abusively use the same symbol $\ms{M}$ to denote a sheaf of coherent
$\DdagQ{\ms{S}}$-modules (resp.\ $\Dan{\ms{S},\mb{Q}}$-modules) and its
global sections on $\ms{S}$.


\subsubsection{}
\label{diffmoddef}
Put $\mc{O}^{\mr{an}}_{u,K}:=\mc{A}_{u,K}(\left[0,1\right[)$,
 define the bounded Robba ring (in fact a field) by
$\mc{R}^b_{u,K}:=\bigcup_{r<1}\mc{A}_{u,K}(\left[r,1\right\})$ and the
Robba ring by
$\mc{R}_{u,K}:=\bigcup_{r<1}\mc{A}_{u,K}(\left[r,1\right[)$\index{Robba's
rings!.1@$\mc{R}^b_{u,K}$, $\mc{R}^b_{\ms{S}}$, $\mc{R}^b$,
$\mc{R}_{u,K}$, $\mc{R}_{\ms{S}}$, $\mc{R}$}, cf.\
\ref{fieldconstmicdif}.

 By Remark \ref{rem_indipendec_par}, the rings $\mc{A}_{t,K_A}(\left[0,1\right[)$, $\mc{R}_{t,K_A}$ and $\mc{R}^b_{t,K_A}$ 
 depend on the choice of the coordinate $t$ of
$\ms{S}$ only up to a canonical isomorphism:
indeed they are the sub-rings of order zero differential operators in  $\Dan{\widetilde{\ms{S}},\mb{Q}}$, $\Dan{
 \widetilde{\ms{S}},\mb{Q}}(0)$ and $\DdagQ{\widetilde{\ms{S}}}(0)$ respectively; 
we denote these rings by
$\mc{O}^{\mr{an}}_{\ms{S}}$, $\mc{R}_{\ms{S}}$ and $\mc{R}^b_{\ms{S}}$. 
We often
omit the subscripts and write simply $\mc{O}^{\mr{an}}$,   $\mc{R}$ and $\mc{R}^b$.
\index{rings of analytical functions!$\mc{O}^{\mr{an}}_{\ms{S}}$, $\mc{O}^{\mr{an}}_{u,K}$, $\mc{O}^{\mr{an}}$}

Let us fix our conventions on the definition of differential
modules.\index{differential module}
In this paper, we will adopt the definition of Kedlaya
\cite[8.4.3]{Ke2}. Namely, we define a differential
$\mc{A}_K(\left[r,1\right[)$-module to be a locally free sheaf of finite
rank on the rigid analytic annulus $\mc{C}(\left[r,1\right[)$ over $K$
with a connection. In other words, it is a collection
$\{{M}_{r'}\}_{r<r'<1}$ where ${M}_{r'}$ is a finite differential
$\mc{A}_K([r,r'])$-module equipped with isomorphisms
$\mc{A}([r,r_1])\otimes {M}_{r_2}\cong {M}_{r_1}$, for $r_1<r_2$, of
differential modules, compatible with each other in the obvious sense.

Let us define the category $\mc{C}$ of differential
$\mc{R}$-modules. An object consists of a differential
$\mc{A}(\left[r,1\right[)$-modules for some $0<r<1$. Let ${M}$ be a
differential $\mc{A}(\left[r,1\right[)$-module, and ${N}$ be a
differential $\mc{A}(\left[r',1\right[)$-module where $0<r,r'<1$. Then
we define the set of homomorphisms by
\begin{equation*}
 \mr{Hom}_{\mc{C}}({M},{N}):=\indlim_{s\rightarrow 1^-}
  \mr{Hom}_{\partial}\bigl(\mc{A}(\left[s,1\right[)\otimes{M},
  \mc{A}(\left[s,1\right[)\otimes {N}\bigr),
\end{equation*}
where $s\geq\max\{r,r'\}$, and $\mr{Hom}_{\partial}$ denotes the
homomorphism of differential modules.

For a differential $\mc{A}(\left[r,1\right[)$-module
${M}$, we denote by $\Gamma({M})$ its global sections. 
The Fr\'{e}chet algebra
$\mc{A}(\left[r,1\right[)=\varprojlim\limits_{r<r'<1}\mc{A}([r,r'])$ is
Fr\'{e}chet-Stein in the sense of \cite[\S3]{ST} and any differential
$\mc{A}(\left[r,1\right[)$-module $M$ is in particular a coherent sheaf
for $\mc{A}(\left[r,1\right[)$, under the terminology of \emph{loc.\
cit.}; therefore  by  [\emph{loc.\ cit.\ }Cor.\ 3.1], the natural map
$\Gamma({M})\otimes_{\mc{A}(\left[r,1\right[)} \mc{A}([r,r'])
\rightarrow M_{r'}$  is an isomorphism for any $r'$, and the functor of
global sections is an equivalence on its essential image [\emph{loc.\
cit.\ }Cor.\ 3.3].

Let
${M}$ be a differential $\mc{R}$-module which is defined by a
differential $\mc{A}(\left[r,1\right[)$-module also denoted by
${M}$. We define
\begin{equation*}
 \Gamma({M}):=\indlim_{s\rightarrow1^-}\Gamma\bigl(\mc{A}(
  \left[s,1\right[)\otimes {M}\bigr).  \index{differential module!global sections $\Gamma(-)$ of}
\end{equation*}
We note that this is an $\mc{R}$-module with a connection.

\begin{dfn*}
\label{def_free_mod}
\label{def:Hol'}
 (i) We say that ${M}$ is a {\em free differential $\mc{R}$-module
 (resp.\ $\mc{A}(\left[r,1\right[)$-module)} \index{differential module!
 free differential module} if the module of global sections is a finite
 free $\mc{R}$-module (resp.\ $\mc{A}(\left[r,1\right[)$-module). This
 is equivalent to saying that $\Gamma({M})$ is finitely presented by
 \cite[4.8, 6.1]{Crew:ens}.

 (ii) We denote\footnote{The choice of notation $\mr{Hol}$ is motivated
 by Proposition \ref{characterisation_of_Dan0_hol_mod}} by
 $\mr{Hol}'(\eta_{\ms{S}})$ (resp. $\mr{Hol}(\eta_{\ms{S}})$) the
 category of differential (resp.\ free differential) modules on
 $\mc{R}_{\ms{S}}$. \index{categories!Hol@$\mr{Hol}'(\eta_{\ms{S}})$,
 $\mr{Hol}(\eta_{\ms{S}})$}
\end{dfn*}

\begin{lem*}\label{lemma_sgs}
Let ${M}$ be a {\em free} differential $\mc{R}$-module, and ${N}$
be a differential $\mc{R}$-module. Suppose there exists a
homomorphism $\varphi\colon\Gamma({M})\rightarrow\Gamma({N})$
which is compatible with the connections. Then there exists a
homomorphism of differential modules ${M}\rightarrow {N}$ inducing
$\varphi$. 
\end{lem*}
\begin{proof}
Indeed, there exists $0<r<1$ and a free differential
$\mc{A}(\left[r,1\right[)$-module ${M}'$ which induces ${M}$. Take
a finite basis $\{e_i\}_{i\in I}$ of $\Gamma({M}')$. Then there
exists $r<r'<1$ and a differential
$\mc{A}(\left[r',1\right[)$-module ${N}'$ inducing ${N}$ such that
$\varphi(e_i)\in\Gamma({N}')$ for any $i\in I$. By \cite[Cor.\ 3.3]{ST} this defines a
homomorphism $\mc{A}(\left[r',1\right[)\otimes {M}'\rightarrow {N}'$
inducing $\varphi$. Taking the inductive limit, we get what we want.
\end{proof}

\begin{rem*} \label{non-free-diff-mod}
For any differential ideal $J$ of $\mc{A}(\left[r,1\right[)$ we can define a differential 
$\mc{A}(\left[r,1\right[)$-module by $J^{\triangle} := {\{J \otimes_{\mc{A}(\left[r,1\right[)} \mc{A}([r,r'])\}}_{r<r'<1}$.
However  $\Gamma(J^{\triangle})$ is not equal to $J$ in general. 
For example take a differential ideal $J$ of $\mc{A}(\left[r,1\right[)$ of infinite type (cf.\ \cite[Ex.\ 3.2]{CM}).
 Since $\mc{A}([r,r'])$ is differentially simple (cf.\ [\emph{loc.\ cit.\ }Rem.\ 3.1]), the ideal $J \otimes_{\mc{A}(\left[r,1\right[)} \mc{A}([r,r'])$  is equal to $\mc{A}([r,r'])$, and the differential module $J^{\triangle}$ is free of rank $1$.   
\end{rem*}

\subsubsection{} \label{defsFhol}
Starting from here, with the exception of \ref{analye}, we assume
that $k$ is perfect.
Let $\sigma\colon\ms{S}\rightarrow\ms{S}$ be a lifting of the $h$-th
absolute Frobenius morphism. \index{.@miscellaneous!sigma@$\sigma$}
An $F$-$\DdagQ{\widetilde{\ms{S}}}$-module (resp.\
$F$-$\Dan{\widetilde{\ms{S}},\mb{Q}}$-module) is a
$\DdagQ{\widetilde{\ms{S}}}$-module (resp.\
$\Dan{\widetilde{\ms{S}},\mb{Q}}$-module) ${M}$ endowed with an
isomorphism ${M}\xrightarrow{\sim}\sigma^* {M}$.
For $F$-$\DdagQ{\widetilde{\ms{S}}}$-modules the definition of holonomy
is the same than that for formal smooth schemes, cf.\
\cite[5.3.5]{BerInt}, \cite[3.4]{Cr}.
An $F$-$\Dan{\widetilde{\ms{S}},\mb{Q}}$-module (resp.\
$F$-$\Dan{\widetilde{\ms{S}},\mb{Q}}(0)$-module) ${M}$ is said
\emph{holonomic} if there exists a holonomic
\index{holonomic!$F$-$\Dan{\widetilde{\ms{S}},\mb{Q}}$-module,
$F$-$\Dan{\widetilde{\ms{S}},\mb{Q}}(0)$-module}
$F$-$\DdagQ{\widetilde{\ms{S}}}$-module $\ms{N}$ such that
$\ms{N}^{\mr{an}}\cong {M}$, cf.\ {\cite[5.2]{Cr}}.
We denote by $F$-$\mr{Hol}(\ms{S})$ (resp.\
$F$-$\mr{Hol}(\eta_{\ms{S}})$, resp.\ $F$-$\mr{Hol}'(\eta_{\ms{S}})$)
the category of holonomic $F$-$\Dan{\widetilde{\ms{S}}}$-modules (resp.\
holonomic $F$-$\Dan{\widetilde{\ms{S}}}(0)$-modules, resp. the category
of differential module on $\mc{R}_{\ms{S}}$  with Frobenius structure).
\label{def:F-Hol'}
\index{categories!FHol@$F$-$\mr{Hol}(\ms{S})$,
$F$-$\mr{Hol}'(\eta_{\ms{S}})$, $F$-$\mr{Hol}'(\eta_{\ms{S}})$}

\begin{prop*}\label{characterisation_of_Dan0_hol_mod}
 The category of \emph{free} differential $\Rob_{\ms{S}}$-modules with Frobenius
structure is equivalent to the category of holonomic
$F$-$\Dan{\widetilde{\ms{S}},\mb{Q}}(0)$-modules. 
\end{prop*}
\begin{proof}
By
\cite[4.2.1]{Tsuzuki:Slope_fil}, we get that given a free differential
$\mc{R}$-module ${M}$ with Frobenius structure, there exists a
free differential $\mc{R}^b$-module ${M}'$ with Frobenius structure such
that $\mc{R}\otimes {M}'\cong {M}$. Thus, by \cite[Prop.~4.1.1]{Cr},
${M}$ can be viewed as a holonomic $\Dan{}(0)$-module. By
construction this induces the equivalence of categories.
\end{proof}


The scalar extension defines a functor denoted by $(\cdot)^{\mr{an}}$
from the category of holonomic $F$-$\DdagQ{\ms{S}}$-modules to
$F$-$\mr{Hol}(\ms{S})$.

Let $\ms{S}:=\mr{Spf}(A)$ and $\ms{S}':=\mr{Spf}(B)$ be formal disks. If
we are given a finite \'{e}tale morphism
$\tau\colon\ms{S}\rightarrow\ms{S}'$, this induces a functor
$\tau_*\colon F\mbox{-}\mr{Hol}(\ms{S})\rightarrow
F\mbox{-}\mr{Hol}(\ms{S}')$, and also the pull-back $\tau^*$. In the
same way, if we are given a finite \'{e}tale morphism of generic points
$\tau'\colon\eta_\ms{S}\rightarrow\eta_{\ms{S}'}$, this defines a functor
$\tau'_*\colon F\mbox{-}\mr{Hol}(\eta_\ms{S})\rightarrow
F\mbox{-}\mr{Hol}(\eta_{\ms{S}'})$ and the pull-back $\tau'^*$, and the
same for $\mr{Hol}'(\eta_{\ms{S}})$ etc. 

\begin{rem*}\label{Rem_on_S_tilde}
 We note here that
 $\widetilde{\ms{S}}$ consists of a single point. There is no
 problem as long as we only consider finite \'{e}tale morphisms of
 $\widetilde{\ms{S}}$ like $\tau$ above, but in this paper, we need to
 use push-forwards and pull-backs in the situation where only morphisms
 on $\eta_{\ms{S}}$ like $\tau'$ are defined. Under this situation,
 adding the generic point $\eta_{\ms{S}}$ by considering $\ms{S}$
 instead of $\widetilde{\ms{S}}$ makes descriptions much simpler.
\end{rem*}

\subsubsection{}\label{coh_op_formal_disk}
Let us anticipate here some definitions for
$F$-$\ms{D}^{\dagger}$-modules on a formal disk, that are used in rest
of this section; for more details cf.\ \cite[3.4]{Cr} or
\ref{defvanishcycle}.

For any $\DdagQ{\widetilde{\ms{S}}}$-module $\ms{M}$ we put
$j^+\ms{M}:=\DdagQ{\widetilde{\ms{S}}}(0)\otimes_{\DdagQ{\widetilde{\ms{S}}}}
\ms{M}$; \index{functors!six operations!j@$j^+$}
conversely for any $\DdagQ{\widetilde{\ms{S}}}(0)$-module $\ms{N}$ we
define $j_+\ms{N}$ the $\DdagQ{\widetilde{\ms{S}}}$-module obtained from
$\ms{N}$ by restriction of scalars via
$\DdagQ{\widetilde{\ms{S}}}\hookrightarrow
\DdagQ{\widetilde{\ms{S}}}(0)$. \index{functors!six operations!j@$j_+$}
Similar definitions hold for 
$\Dan{\widetilde{\ms{S}},\QQ}$-modules and
$\Dan{\widetilde{\ms{S}},\QQ}(0)$-modules.
The $F$-$\DdagQ{\widetilde{\ms{S}}}$-module $\delta$\index{punctual
(type)!the $F$-$\Dan{}$-module $\delta$} is by definition the holonomic
$F$-$\DdagQ{\widetilde{\ms{S}}}$-module
\begin{equation*}
 \delta:=\DdagQ{\widetilde{\ms{S}}}/\DdagQ{\widetilde{\ms{S}}} \cdot
t=\mc{R}^b/\mc{O}_{\ms{S},\mb{Q}}=\Rob/\mc{O}^{\mr{an}}.
\end{equation*}
By construction $\delta$ is holonomic and $\delta=\delta^{\mr{an}}$.
We say that a $\DdagQ{\widetilde{\ms{S}}}$-module $\ms{M}$ is
\emph{punctual} (or \emph{punctual type})\index{punctual (type)} if
there exists a finite dimensional $K$-vector space such that
$\ms{M}\cong i_+ V :=\delta\otimes_K V$. \index{functors!six
operations!i@$i_+$} 
By construction a punctual $F$-$\DdagQ{\widetilde{\ms{S}}}$-module is
automatically holonomic.

We say that a $\DdagQ{\widetilde{\ms{S}}}$-module $\ms{M}$ is
\emph{connection type}\index{connection type} if  there exists a
$\DdagQ{\widetilde{\ms{S}}}(0)$-module $\ms{N}$ such that $\ms{M}\cong
j_+\ms{N}$. For connection type $F$-$\DdagQ{\widetilde{\ms{S}}}$-modules
holonomicity is equivalent to coherence; and if $\ms{M}$ is a coherent
$F$-$\DdagQ{\widetilde{\ms{S}}}$-module, then $\ms{M}$ is connection
type if and only if the natural map $\ms{M}\rightarrow j_+j^+ \ms{M}$ of
$\DdagQ{\widetilde{\ms{S}}}$-modules  is an isomorphism.
We have similar definitions of punctual and connection type hold for
$\Dan{\widetilde{\ms{S}},\QQ}$-modules.

\subsubsection{}
\label{notationofCrint}
Let us recall more notation and definitions from 
\cite{Cr}, see [\emph{loc.\ cit.\ }\S1] for details. 
The bounded Robba ring $\mc{R}^b=\mc{R}_{u,K}^b$ is a discrete valuation
field with respect to the 1-Gauss norm.
Let us denote by $\mc{O}_{\mc{R}^b}$ \index{Robba's
rings!.2@$\mc{O}_{\mc{R}^b}$, $\mc{R}^b(\mc{L})$, $\mc{R}(\mc{L})$|(}
its integer ring and by $\mc{K}=k\pp{\bar{u}}$ its residue field, where
$\bar{u}$ is the class of $u$.
Choose a separable closure
$\mc{K}^{\mr{sep}}$\index{.@miscellaneous!K@$\mc{K}$,
$\mc{K}^{\mr{sep}}$}. We set
$G_{\mc{K}}:=\mr{Gal}(\mc{K}^{\mr{sep}}/\mc{K})$, and let
$I_{\mc{K}}$\index{.@miscellaneous!G@$G_{\mc{K}}$, $I_{\mc{K}}$}
be the inertia subgroup.
Since $\mc{O}_{\mc{R}^b}$ is henselian, given a finite separable
extension $\mc{L}$ of $\mc{K}$, there exists a unique finite unramified
extension $\mc{R}^b(\mc{L})$ \index{Robba's
rings!.2@$\mc{O}_{\mc{R}^b}$, $\mc{R}^b(\mc{L})$, $\mc{R}(\mc{L})$|)} of
$\mc{R}^b$ whose residue field (of its integer ring) is $\mc{L}$. Put
$\mc{R}(\mc{L}):=\mc{R}\otimes_{\mc{R}^b}\mc{R}^b(\mc{L})$. Let $h$ be a
positive integer, and we put $q:=p^h$. We fix a
lifting of $h$-th Frobenius $\sigma$ of $\mc{K}$ on $\mc{O}_{\mc{R}^b}$,
which induces the Frobenius homomorphism on $\mc{R}^b$ and $\mc{R}$,
also denoted by $\sigma$. This extends canonically to
$\mc{R}(\mc{L})$. Now, we put $\mc{B}_0:=\indlim_{\mc{L}}\mc{R}(\mc{L})$
\index{padic@$p$-adic hyperfunctions and microfunctions!$\mc{B}_0$,
$\mc{B}$, $\mc{C}$|(}
where $\mc{L}$ runs through finite separable extensions of $\mc{K}$
inside $\mc{K}^{\mr{sep}}$. Then this ring is
naturally equipped with a $G_{\mc{K}}$-action, and a Frobenius
homomorphism $\sigma$. We formally add ``$\log$'' to get the ring of
hyperfunctions:
we define $\mc{B}:=\mc{B}_0[\log(u)]$. 
The action of $G_{\mc{K}}$
extends canonically to $\mc{B}$;  so does $\sigma$; 
and we also have
the monodromy operator, which is the derivation by $\log(u)$; cf.\
[\emph{loc.\ cit.\ }\S1.4].
 We put\footnote{In \cite[6.1]{Cr}, he defined
$\mc{O}^{\mr{an}}_{K^{\mr{ur}}}$ to be
$\mc{O}^{\mr{an}}\widehat{\otimes}_KK^{\mr{ur}}$, but this should be a
typo.} $\mc{O}^{\mr{an}}_{K^{\mr{ur}}}:=\mc{O}^{\mr{an}}\otimes_K
K^{\mr{ur}}$, where $K^{\mr{ur}}$ denotes the maximal unramified field
extension of $K$ and $\mc{O}^{\mr{an}}$ is $\mc{A}_{u,K}(\left[0,1\right[)$, 
cf.\ \ref{diffmoddef}.
\index{rings of analytical functions!{$\mc{O}^{\mr{an}}_{K^{\mr{ur}}}$}} 
Crew defined an
$\mc{O}^{\mr{an}}_{K^{\mr{ur}}}$-module by
$\mc{C}:=\mc{B}/\mc{O}^{\mr{an}}_{K^{\mr{ur}}}$,\index{padic@$p$-adic
hyperfunctions and microfunctions!$\mc{B}_0$, $\mc{B}$, $\mc{C}$|)}
cf. [\emph{loc.\ cit.\ }(6.1.1)]. The
action of $G_{\mc{K}}$, the endomorphism $\sigma$, and the
nilpotent operator $N$, induce, by quotient, analogous structures on $\mc{C}$. 
We denote by $\mr{can}\colon\mc{B}\rightarrow\mc{C}$ the canonical
projection. By definition, the derivation
$N\colon\mc{B}\rightarrow\mc{B}$ factors through $\mr{can}$, and we get
$\mr{var}\colon\mc{C}\rightarrow\mc{B}$. These homomorphisms satisfy the
relations $N=\mr{can}\circ\mr{var}$ and $N=\mr{var}\circ\mr{can}$, cf.\
[\emph{loc.\ cit.\ }\S6.1].\index{padic@$p$-adic hyperfunctions and
microfunctions!$\mr{can}\colon\mc{B}\rightarrow\mc{C}$,
$\mr{van}\colon\mc{C}\rightarrow\mc{B}$}

\subsubsection{}
\label{mainresultscrewrev}
Let $\Del{\Ct\nr}{G_{\mc{K}}}$
\index{categories!Del@$\Del{\Ct\nr}{G_{\mc{K}}}$} denote the category of
Deligne\index{Deligne module} modules\footnote{
This terminology was first introduced by Fontaine in \cite[\S
1]{Fon}. These are also called $(\varphi,N,G_{\mc{K}})$-modules, and
this terminology is used more widely, especially in $p$-adic Hodge
theory. However, in our context, we
believe that ``Deligne module'' is more suitable.}:
{\itshape i.e.\ }finite dimensional $\Ct\nr$-vector spaces, endowed with
a semi-linear action of $G_{\mc{K}}$ (which acts on the
constants $\Ct\nr$ via its residual action), a  Frobenius isomorphism
$\Fro$, and a mo\-no\-dromy operator $N$, satisfying $N\Fro= q\,\Fro N$
where $q=p^h$ in \ref{notationofCrint}.
See \cite[\S3.1]{Marmora:Facteurs_epsilon} for more details.

In the following, for simplicity, we denote $\Dan{\ms{S},\mb{Q}}$ by
$\Dan{}$.\index{differential operators!Dan@$\Dan{}$} Crew classifies holonomic
$F$-$\Dan{}$-modules in terms of linear data (cf.\ \cite[6.1]{Cr}). To
do this, let ${M}$ be a holonomic $F$-$\Dan{}$-module. He defined
in {\it loc.\ cit.},
\begin{equation*}
 \mb{V}({M}):=\mr{Hom}_{\ms{D}^{\mr{an}}}({M},\mc{B}),\qquad
  \mb{W}({M}):=\mr{Hom}_{\ms{D}^{\mr{an}}}({M},\mc{C}). \index{functors!.V@$\mb{V}$,
  $\mb{W}$}
\end{equation*}
These are Deligne modules, and define (contravariant) functors
$\mb{V},\mb{W}\colon
F\mbox{-}\mr{Hol}(\Dan{})\rightarrow\Del{\Ct\nr}{G_{\mc{K}}}$. There are
the canonical homomorphism $\mb{V}({M})\rightarrow\mb{W}({M})$
induced by $\mr{can}$, and the variation homomorphism
$\mb{W}({M})\rightarrow\mb{V}({M})$ induced by $\mr{var}$. These
satisfy many compatibilities, they are endowed with an extra structure 
(the
``Galois variation'', which we do not recall here). These kind of
objects are called \emph{Solution data}, they form an artinian category,
denoted  $\mathrm{Sol}_K$,\index{categories!S@$\mathrm{Sol}_K$} and we
have an exact functor $\mb{S}\colon F$-$\mr{Hol}(\ms{S}) \rightarrow
\mathrm{Sol}_K$,
$\mb{S}(M)=(\mb{V}(M),\mb{W},\mathrm{can},\mathrm{var},\cdots)$; for the
details see {\it loc.\ cit}.
The
main point is that we can retrieve the original
module ${M}$ from these linear data: 
indeed Crew constructs a quasi-inverse functor $\mb{M}^{\mr{an}}\colon
\mathrm{Sol}_K \rightarrow F$-$\mr{Hol}(\ms{S})$ factorizing through the
category of holonomic $F$-$\DdagQ{\ms{S}}$-modules, cf.\
[{\it loc.\ cit}, 7.1]. \index{functors!.S@$\mb{S}$, $\mb{M}^{\mr{an}}$}

We can characterize
some properties of ${M}$ in terms of these linear data. For example
${M}$ is connection type
if and only if the canonical map
$\mb{V}({M})\rightarrow\mb{W}({M})$ is an isomorphism, cf.\ \cite[Cor.\ 6.1.2]{Cr}. The most
important property for us is the existence of the following exact
sequence of Deligne modules (cf.\ [{\it loc.\ cit.}, Cor.~6.1.1, Lemma~6.1.2]).
\begin{align}
 \label{vanishexact}
 &0\rightarrow\mr{Hom}_{\ms{D}^{\mr{an}}}({M},\mc{O}^{\mr{an}}
 _{K^{\mr{ur}}})\rightarrow\mb{V}({M})\rightarrow\mb{W}({M})
 \rightarrow\mr{Ext}^1_{\ms{D}^{\mr{an}}}({M},\mc{O}_{K^{\mr{ur}}}
 ^{\mr{an}})\rightarrow 0, \\
 &\forall i\geq 2,\quad
 \mr{Ext}^i_{\ms{D}^{\mr{an}}}({M},\mc{O}_{K^{\mr{ur}}}^{\mr{an}})=
\mr{Ext}^i_{\ms{D}^{\mr{an}}}({M},\mc{B})=
 \mr{Ext}^i_{\ms{D}^{\mr{an}}}({M},\mc{C})=0. \label{vanishexact2}
\end{align}
Let us mention another property which will be useful later.
\begin{lem*}\label{W=0}
 Let $M$ be an holonomic $F$-$\Dan{\ms{S},\mb{Q}}$-module. If
 $\mb{W}(M)=0$ then $M$ is free as differential 
 $\mc{O}^{\mr{an}}_{\ms{S}}$-module, cf.\ {\normalfont \ref{def_free_mod}}.
\end{lem*}
\begin{proof}
 By \eqref{vanishexact} it follows that $\mb{V}(M)$ is geometrically
 constant, which means that we have an isomorphism
 ${\mb{V}(M)}^{G_{\mc{K}}}\otimes_K K\nr \cong \mb{V}(M)$ of Deligne
 modules. By the construction of the functor  $\mb{M}^{\mr{an}}$ it
 follows immediately that natural evaluation map $M\rightarrow
 \mr{Hom}_{K\nr}({\mb{V}(M)},
 \mc{O}_{K^{\mr{ur}}}^{\mr{an}})^{^{G_{\mc{K}}}}$ is an isomorphism of
 $\Dan{\ms{S},\mb{Q}}$-modules (here $\mr{Hom}_{K\nr}$ denotes
 homomorphisms of $K\nr$-vector spaces). We have isomorphisms
 $\Dan{\ms{S},\mb{Q}}$-modules:
 \begin{equation*}
  \begin{split}
   \mr{Hom}_{K\nr}({\mb{V}(M)}, \mc{O}_{K^{\mr{ur}}}^{\mr{an}}
   )^{^{G_{\mc{K}}}}&\cong
   \mr{Hom}_{K\nr}({\mb{V}(M)}^{G_{\mc{K}}}\otimes_K K\nr,
   \mc{O}_{K^{\mr{ur}}}^{\mr{an}} )^{^{G_{\mc{K}}}}\cong
   \mr{Hom}_{K}({\mb{V}(M)}^{G_{\mc{K}}},
   \mc{O}_{K^{\mr{ur}}}^{\mr{an}})^{^{G_{\mc{K}}}}\\
   &\cong \mr{Hom}_{K}({\mb{V}(M)}^{G_{\mc{K}}},
   \mc{O}^{\mr{an}}_{\ms{S}} )\cong
   (\mc{O}^{\mr{an}}_{\ms{S}})^{\oplus\mr{rk}(M)},
  \end{split}
 \end{equation*}
 which concludes the proof.
\end{proof}
 
\subsubsection{}
\label{preisouadsf}
Let $\ms{X}$ be a formal curve over $R$, and let $\ms{M}$ be a holonomic
$\DdagQ{\ms{X}}$-module. Let $x\in\ms{X}$ be a closed point. We denote
by $\ms{S}_x:=\mr{Spf}(\widehat{\mc{O}}_{\ms{X},x})$ \index{.@miscellaneous!Si@$\ms{S}_x$}
where  $\widehat{\mc{O}}_{\ms{X},x}$ is the
completion of $\mc{O}_{\ms{X},x}$ for the $\mathfrak{m}_{\ms{X},x}$-topology.
 Let $\ms{M}$ be a coherent $\DdagQ{\ms{X}}$-module on
$\ms{X}$. Take an open affine neighborhood $\ms{U}$ of $x$, and we
denote by $\DdagQ{\ms{S}_x}\otimes\ms{M}$ the coherent
$\DdagQ{\ms{S}_x}$-module on $\ms{S}_x$ 
\begin{equation*}
 {\bigl(\DdagQ{\widetilde{\ms{S}_x}}\otimes_{\Gamma(\ms{U},
  \DdagQ{\ms{X}})}\Gamma(\ms{U},\ms{M})\bigr)}^{\triangle},
\end{equation*}
cf.\ \ref{Cr_recall1}.
This does not depend on the choice of $\ms{U}$, and we will also denote by
$\DdagQ{\ms{S}_x}\otimes\ms{M}$ its global sections on $\ms{S}_x$.
 For a holonomic
$F$-$\DdagQ{\ms{X}}$-module $\ms{M}$ (cf.\ \ref{setupFrob}), we put
\begin{equation*}
 \label{exactforloc}
 \ms{M}|_{S_x}:=(\DdagQ{\ms{S}_x}\otimes\ms{M})^{\mr{an}},\qquad
  \ms{M}|_{\eta_x}:=(\ms{D}^\dag_{\ms{S}_x,\mb{Q}}(0)
  \otimes\ms{M})^{\mr{an}}, \index{functors!.3@$\quad"|_{S_x}$, $\quad"|_{\eta_x}$}
\end{equation*}
which are defined in $F$-$\mr{Hol}(\ms{S}_x)$ and
$F$-$\mr{Hol}(\eta_{\ms{S}_x})$ respectively.
For example we have $\mc{O}_{\ms{X},\mb{Q}}|_{S_x} = \mc{O}^{\mr{an}}_{\ms{S}_x}$ and 
$\mc{O}_{\ms{X},\mb{Q}}|_{\eta_x}=\Rob_{\ms{S}_x}$.
The notation $\ms{M}|_{S_x}$ (resp. $\ms{M}|_{\eta_x}$)   is slightly abusive:  indeed  $\ms{M}|_{S_x}$ 
(resp. $\ms{M}|_{\eta_x}$) depends, up to an  isomorphism, 
  on the choice of a 
local parameter of $\ms{X}$ at $x$, cf.\ Remark
\ref{rem_indipendec_par}.

The following lemma combined with \cite[Th.~4.1.1]{Cr} shows that the
functors $|_{{S}_x}$ and $|_{\eta_x}$ are exact.

\begin{lem*}
The functor $\DdagQ{\ms{S}_x}\otimes(-)$ from
 the category of coherent $\DdagQ{\ms{X}}$-modules to that of coherent
 $\DdagQ{\ms{S}_x}$-modules is exact.
\end{lem*}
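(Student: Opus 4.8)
The plan is to reduce the statement to a flatness property of rings of sections. Right exactness of $\DdagQ{\ms{S}_x}\otimes(-)$ is automatic, so only the preservation of monomorphisms is at issue. Fix an affine open neighbourhood $\ms{U}$ of $x$ in $\ms{X}$; by Berthelot's Theorems~A and~B for coherent $\DdagQ{}$-modules on an affine formal scheme (cf.\ \cite{Ber2}) the functor $\Gamma(\ms{U},-)$ is exact on coherent $\DdagQ{\ms{X}}$-modules, and by construction $\DdagQ{\ms{S}_x}\otimes\ms{M}$ has global sections $\Gamma(\widetilde{\ms{S}_x},\DdagQ{\widetilde{\ms{S}_x}})\otimes_{\Gamma(\ms{U},\DdagQ{\ms{X}})}\Gamma(\ms{U},\ms{M})$. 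Since coherent $\DdagQ{\ms{S}_x}$-modules are determined by their global sections, it is enough to prove that
\[
 \Gamma(\ms{U},\DdagQ{\ms{X}})\longrightarrow\Gamma(\widetilde{\ms{S}_x},\DdagQ{\widetilde{\ms{S}_x}})
\]
is flat; the behaviour at the generic point of $\ms{S}_x$ is then automatic, since $\DdagQ{\widetilde{\ms{S}_x}}(0)$ is flat over $\DdagQ{\widetilde{\ms{S}_x}}$ (overconvergence along a divisor, cf.\ \cite{Ber2}). I would moreover pass to finite level: writing $\DdagQ{\ms{X}}=\indlim_m\DcompQ{m}{\ms{X}}$ and likewise for $\widetilde{\ms{S}_x}$, recalling that on a curve every coherent $\DdagQ{\ms{X}}$-module is extended from a coherent $\DcompQ{m}{\ms{X}}$-module for some $m$ (cf.\ \S\ref{stabledef}), and using that $\mr{Tor}$ commutes with filtered colimits, it suffices to show that $\Gamma(\widetilde{\ms{S}_x},\Dcomp{m}{\widetilde{\ms{S}_x}})$ is flat over $\Gamma(\ms{U},\Dcomp{m}{\ms{X}})$ for every $m$; flatness then survives tensoring with $\mb{Q}$ and the colimit over $m$.

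For this level-$m$ statement I would work with the filtration by order. Shrinking $\ms{U}$, we may assume it carries a local coordinate, which, via the identification $\widehat{\mc{O}}_{\ms{X},x}\cong R_A\cc{x}$, also serves as a coordinate on $\ms{S}_x$; the ring map $\Gamma(\ms{U},\mc{O}_{\ms{X}})\to\widehat{\mc{O}}_{\ms{X},x}$ is a localization followed by the $\mf{m}$-adic completion of a noetherian ring, hence flat, and the point is that $\Dcomp{m}{\widetilde{\ms{S}_x}}$ is obtained from $\Dcomp{m}{\ms{X}}$ by extending scalars along this map (the coordinate and its derivation $\partial$ extend) and completing. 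Passing to associated graded rings for the order filtration, $\mr{gr}(\Dcomp{m}{\ms{X}})$ is free over $\mc{O}_{\ms{X}}$, so $\mr{gr}$ of the base-change map is $\widehat{\mc{O}}_{\ms{X},x}\otimes_{\Gamma(\ms{U},\mc{O}_{\ms{X}})}\mr{gr}\bigl(\Gamma(\ms{U},\Dcomp{m}{\ms{X}})\bigr)$, which is flat over $\mr{gr}\bigl(\Gamma(\ms{U},\Dcomp{m}{\ms{X}})\bigr)$ because $\widehat{\mc{O}}_{\ms{X},x}$ is flat over $\Gamma(\ms{U},\mc{O}_{\ms{X}})$. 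Both rings being noetherian filtered and complete, flatness on the associated graded forces flatness of the map itself by the standard filtered-ring criterion (as used, e.g., via \cite{HO}), which is exactly what we need.

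The step I expect to be the real obstacle is this last one, namely matching Crew's actual construction of $\Dcomp{m}{\widetilde{\ms{S}_x}}$ (extending Berthelot's, \cite{Cr}) with the ``extend scalars, then complete'' description used above: three topologies are simultaneously in play — the $p$-adic topology, the $\mf{m}$-adic topology on the coefficient ring $\widehat{\mc{O}}_{\ms{X},x}$, and the order topology on differential operators — and one has to check that extension of scalars commutes with the level-$m$ construction and with forming $\mr{gr}$ by order, so that the flat base change $\Gamma(\ms{U},\mc{O}_{\ms{X}})\to\widehat{\mc{O}}_{\ms{X},x}$ really does propagate without interference. Once this bookkeeping is carried out, the rest of the argument is formal.
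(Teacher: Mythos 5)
Your overall strategy matches the paper's: reduce to flatness of the ring map on global sections, pass to finite level $m$, compare associated graded rings for the order filtration, and conclude from flatness of $\Gamma(\ms{U},\mc{O}_{\ms{X}})\to\widehat{\mc{O}}_{\ms{X},x}$. The gap is precisely the one you flag at the end, and the paper's proof contains a small move that closes it: it does not run the $\mr{gr}$-by-order argument on the $p$-adically complete rings $\Dcomp{m}{\ms{U}}\to\Dcomp{m}{\widetilde{\ms{S}_x}}$, but reduces modulo $\varpi^{i+1}$ first and shows that $\Dmod{m}{U_i}\to\Dmod{m}{S_i}$ is flat for every $i$. Once you are at finite torsion level, the $p$-adic topology has disappeared; $\Dmod{m}{S_i}$ is literally the scalar extension $\widehat{\mc{O}}_{X_i,x}\otimes_{\mc{O}_{U_i}}\Dmod{m}{U_i}$ with the tensor order filtration, $\mr{gr}$ commutes with this base change on the nose, and the only filtration in play is the (positive, Rees-noetherian) order filtration, for which the flatness-from-$\mr{gr}$ criterion is unproblematic. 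Your version, by contrast, has to justify both that $\mr{gr}$ of the $p$-adically complete ring is the naive base change and that the filtered-ring criterion applies to a ring carrying an independent $p$-adic completion — that is the bookkeeping you correctly identify as the hard part, and your proposal does not actually carry it out.

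Concretely: after establishing $\Dmod{m}{U_i}\to\Dmod{m}{S_i}$ flat for all $i$, flatness of $\Dcomp{m}{\ms{U}}\to\Dcomp{m}{\widetilde{\ms{S}_x}}$ follows from the standard $p$-adic local flatness criterion (both rings are noetherian, $p$-adically complete and $p$-torsion free, so flatness mod $\varpi^{i+1}$ for all $i$ lifts). After that, tensoring with $\mb{Q}$, the passage to $\DdagQ{\ms{S}_x}(0)$ via flatness of overconvergent localization, and the colimit over $m$ using that on a curve every coherent $\DdagQ{\ms{X}}$-module comes from some level $m$, are exactly the reductions you spell out and they are fine. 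So keep the scaffolding; just insert the reduction mod $\varpi^{i+1}$ as the very first step of the level-$m$ argument.
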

\begin{proof}
 Let $S_i$ be $\widetilde{\ms{S}}_x\otimes R_i$. In this proof, we
 denote $\Gamma(\ms{U}\otimes R_i,\Dmod{m}{X_i})$ by $\Dmod{m}{U_i}$.
 It suffices to show that the canonical homomorphism
 $\Dmod{m}{U_i}\rightarrow\Dmod{m}{S_i}$ is flat. For this, it suffices
 to show that $\mr{gr}(\Dmod{m}{U_i})\rightarrow\mr{gr}(\Dmod{m}{S_i})$
 is flat where the $\mr{gr}$ is taken with respect to the filtration by
 order. This follows from the flatness of
 $\mc{O}_{X_i}\rightarrow(\mc{O}_{X_i,x})^\wedge$ where the completion
 is taken with respect to the $\mf{m}_x$-topology.
\end{proof}

\subsubsection{}
\label{intcanext}
Let us recall the canonical extension,
which is one of key tools in this paper. For the detailed
argument, one can refer to \cite[\S 8]{Cr}. Let
$\Pone:=\widehat{\mb{P}}^1_R$ and $\ms{S}_0:=\mr{Spf}(\widehat{\mc{O}}_{\Pone,0})$. \index{.@miscellaneous!P@$\Pone$}
Then there exists a functor
\begin{equation*}
  F\mbox{-}\mr{Hol}(\ms{S}_0)\rightarrow
   F\mbox{-}\mr{Hol}(\DdagQ{\Pone}(\infty));\qquad
 {M}\mapsto{M}^{\mathrm{can}}\notag
  \index{functors!.2@${(-)}^{\mathrm{can}}$} 
\end{equation*}
where $F$-$\mr{Hol}(\DdagQ{\Pone}(\infty))$
denotes the category of holonomic $F$-$\DdagQ{\Pone}(\infty)$-modules.
By construction, this functor is fully faithful, exact, and it commutes
with tensor products and duals. Moreover, we have the following
properties (cf.\ \emph{loc.\ cit.\ }Th.\ 8.2.1 and paragraph after its
proof):

1.\ ${M}^{\mr{can}}|_{\Pone\setminus\{0\}}$ is a ``special'' convergent
isocrystal;

2.\ ${M}^{\mr{can}}|_{S_0}\cong {M}$; 

3.\
${M}^{\mr{can}}$ regular at $\infty$ (for the definition of regularity cf.\ \ref{def_regularity}). 

\noindent
This ${M}^{\mr{can}}$ is
called the canonical extension of ${M}$. By these properties, we
remind that when ${M}$ is a free differential module,
${M}^{\mr{can}}$ coincides with the canonical extension of Matsuda in
\cite[7.3]{Matsuda:Katz_corr}, and in this case, it sends unit-root
objects to unit-root objects.

\subsubsection{} 
In \ref{characterisation_of_Dan0_hol_mod} we have characterized holonomic $F$-$\Dan{\widetilde{\ms{S}},\mb{Q}}(0)$-modules. 
Let us conclude this subsection with a lemma characterizing holonomic
$F$-$\ms{D}^{\mr{an}}$-modules.

\begin{lem*}\label{characterisation_of_Dan_hol_mod}
 Let ${M}$ be an $F$-$\Dan{\widetilde{\ms{S}},\mb{Q}}$-module. Assume
 that ${M}(0):=\Dan{\widetilde{\ms{S}},\mb{Q}}(0)\otimes {M}$ is a
  free differential $\mc{R}$-module, and the kernel and cokernel
 of the canonical homomorphism $\alpha\colon {M}\rightarrow {M}(0)$
 are punctual $\DdagQ{\widetilde{\ms{S}}}$-modules. Then ${M}$ is a
 holonomic $F$-$\Dan{\widetilde{\ms{S}},\mb{Q}}$-module. In particular,
 if there exists a holonomic $F$-$\DdagQ{\widetilde{\ms{S}}}$-module
 ${\ms{M}}$ such that
 ${\ms{M}}^{\mr{an}}\cong {M}$ as
 $\Dan{\widetilde{\ms{S}},\mb{Q}}$-modules {\em without Frobenius
 structures}, then ${M}$ is a holonomic
 $F$-$\Dan{\widetilde{\ms{S}},\mb{Q}}$-module.
\end{lem*}
\begin{proof}
 We denote by $\mc{C}$ the full subcategory of the category of
 $\Dan{\widetilde{\ms{S}},\mb{Q}}$-modules with Frobenius structure
 consisting of objects considered in the statement of the lemma. We
 define functors
 $\mb{V}$ and $\mb{W}$ in the same way as \cite[6.1]{Cr} or especially
 [{\it loc.\ cit.\ }(6.1.9)] (cf.\ also \ref{mainresultscrewrev}). 
We first claim that $\mb{V}$ and $\mb{W}$ are exact
 functors. To see this, consider the following exact sequences
 \begin{gather*}
  0\rightarrow {N}_1\rightarrow {M}\rightarrow {M}'\rightarrow 0,\\
  0\rightarrow {M}'\rightarrow {M}(0)\rightarrow {N}_2\rightarrow0,
 \end{gather*}
 where ${N}_1$ and ${N}_2$ are the kernel and cokernel of $\alpha$
 respectively, and thus punctual $\DdagQ{\widetilde{\ms{S}}}$-modules by
 assumption. By hypothesis and Proposition \ref{characterisation_of_Dan0_hol_mod}, we know that ${M}(0)$ and ${N}_2$ are holonomic
 $F$-$\Dan{\widetilde{\ms{S}},\mb{Q}}$-modules, and thus by considering
 the canonical extensions (cf.\ \ref{intcanext}), we get that
 ${M}'$ is a holonomic $F$-$\Dan{\widetilde{\ms{S}},\mb{Q}}$-module
 as well. Thus, we get
 $\mr{Ext}^i({M}',\mc{B})=\mr{Ext}^i({M}',\mc{C})=0$ for
 $i>0$. Considering the long exact sequence induced by the first short
 exact sequence above, we get
 $\mr{Ext}^i({M},\mc{B})=\mr{Ext}^i({M},\mc{C})=0$ for $i>0$, cf.\ \cite[Lemma 6.1.2]{Cr}.

 Repeating the same construction of [\emph{loc.\ cit.\ }(6.1.10)] and
 arguments of [\emph{loc.\ cit.\ }Th.~6.1.1], we get an exact functor
 $\mb{S}'=(\mb{V},\mb{W},\ldots)$ from $\mc{C}$ to the category
 of solution data. It suffices to construct a canonical isomorphism
 ${M}\rightarrow\mb{M}^{\mr{an}}(\mb{S}'({M}))$. This can be shown
 in the same way as [\emph{loc.\ cit.\ }Th.~7.1.1].

 The last claim of the lemma follows from the first 
 because the analytification functor is exact, 
 cf. [\emph{loc.\ cit.\ }Prop.~5.1.1];  
 for any holonomic $F$-$\DdagQ{\widetilde{\ms{S}}}$-module $\ms{M}$,  we
 have
 $\ms{M}^{\mr{an}}\cong\Dan{\widetilde{\ms{S}},\mb{Q}}\otimes_
 {\DdagQ{\widetilde{\ms{S}}}}\ms{M} $, cf.\ [\emph{loc.\ cit.\ }Th.\ 4.1.1],
  $\ms{M}^{\mr{an}}(0) \cong (j_+j^+\ms{M})^{\mr{an}}$ 
 [\emph{loc.\ cit.\ }Lemma 4.1.4], and the analytification of a punctual
 module is punctual, cf.\ [\emph{loc.\ cit.\ }Prop.~5.1.1].
\end{proof}

\subsection{Analytification}
\label{analye}
In \S\ref{secdefforloc}, we will define a local Fourier
transform. The local Fourier transform should be local not only in the
scheme theoretic sense, but also ``rigid analytically''. In this
subsection, we will prove a crucial tool (cf.\ Proposition
\ref{analytifcisom}) which is indispensable to prove such properties.

\subsubsection{}
\label{defanalye}
In this subsection, we do not assume $k$ to be perfect.
Let $\ms{X}$ be a formal curve, and take a closed point
$x$ in $\ms{X}$. Choose an isomorphism
$\widehat{\mc{O}}_{\ms{X},x}\cong R_x\cc{t}$,
where $R_x:=R_{\widehat{\mc{O}}_{\ms{X},x}}$, cf.\
\ref{existconcdesc}. We
will define a ring $(\Ecomp{m,m'}{x})^{\mr{an}}$\index{microdifferential
operators!analytic!$(\EcompQ{m}{x})^{\mr{an}}$,$(\Ecomp{m,m'}{x})^{\mr{an}}$,
$(\Emod{m,\dag}{x,\mb{Q}})^{\mr{an}}$,$\ms{E}_{x,\mb{Q}}^{\mr{an}}$|(}
in the following way. Let us use the notation of \ref{Cr_recall1}.
To construct the analytification
of $\EcompQ{m}{\ms{X}}$, we follow exactly the same way as in
\cite{Abe}; there are several steps. For the first step, we take the
microlocalization of
$\mc{O}_{np^{m+1},i}\otimes_{\mc{O}_{X_i}}\Dmod{m}{X_i}$ with respect to
the filtration by order (cf.\ \cite[2.1]{Abe}) and denote it
by $\Emod{m}{np^{m+1},x,i}$. Second step, take the inverse limit over
$i$, namely $\invlim_i\Emod{m}{np^{m+1},x,i}$ and denote it by
$\Ecomp{m}{np^{m+1},x}$. We put
$\EcompQ{m}{np^{m+1},x}:=\Ecomp{m}{np^{m+1},x}\otimes\mb{Q}$, and we
take  the inverse limit over $n$ to define
$(\EcompQ{m}{x})^{\mr{an}}$. 

Now, for an integer $m'\geq m$, we want to
define the analytification of $\EcompQ{m,m'}{\ms{X}}$. Also for this, we
follow the same way as {\it loc.\ cit}. Put $c:=p^{m'+1}$. Let $a$ be
either $m$ or $m'$. Then we define $\Emod{a}{nc,x}$ to be the subring of
$\Ecomp{a}{nc,x}$ consisting of the finite order operators. Then we may
prove in the same way as {\it loc.\ cit.\ }that there exists a canonical
homomorphism
\begin{equation*}
 \psi_{m,m'}\colon\Emod{m'}{nc,x}\otimes\mb{Q}\rightarrow\Emod{m}{nc,x}
  \otimes\mb{Q}.
\end{equation*}
We define $\Ecomp{m,m'}{nc,x}$ to be the $p$-adic completion of
$\psi_{m,m'}^{-1}(\Emod{m}{nc,x})\cap\Emod{m'}{nc,x}$. This ring is
noetherian by the same argument as \cite[4.12]{Abe}. We define
$\EcompQ{m,m'}{nc,x}:=\Ecomp{m,m'}{nc,x}\otimes\mb{Q}$.

Finally we define
\begin{equation*}
 (\EcompQ{m,m'}{x})^{\mr{an}}:=\invlim_{n}\Ecomp{m,m'}
  {nc,x,\mb{Q}}.
\end{equation*}
Obviously, there exists the canonical inclusion
$\EcompQ{m,m'}{x}\rightarrow(\EcompQ{m,m'}{x})^{\mr{an}}$.
In the same way as for $\EdagQ{\ms{X}}$, we define
\begin{equation*}
 (\Emod{m,\dag}{x,\mb{Q}})^{\mr{an}}:=\invlim_k~(\EcompQ{m,m+k}{x})
  ^{\mr{an}},\qquad\ms{E}_{x,\mb{Q}}^{\mr{an}}:=\indlim_m~(\ms{E}
  ^{(m,\dag)}_{x,\mb{Q}})^{\mr{an}}. 
\end{equation*}
We point out that the rings $(\EcompQ{m,m'}{x})^{\mr{an}}$,
$(\Emod{m,\dag}{x,\mb{Q}})^{\mr{an}}$, $\ms{E}_{x,\mb{Q}}^{\mr{an}}$
does not depend on the choice of the isomorphism 
$\widehat{\mc{O}}_{\ms{X},x}\cong R_x\cc{t}$ (cf.\ Remark
\ref{Crewconstan}). As an example, we have the following explicit
description whose verification is left to the reader.  We recall, cf.\ \ref{fieldconstmicdif}, that, for $0<r<1$,
 $|\cdot|_r$ denotes the $r$-Gauss norm on $\mc{O}^{\mr{an}}=\An[K_x,t]([0,1[)$, we
have
\begin{equation*}
 (\EcompQ{m,m'}{x})^{\mr{an}}=\left\{\sum_{k<0}a_k\partial
  ^{\angles{m'}{k}}+\sum_{k\geq0}b_k\partial^{\angles{m}{k}}
  \Bigg\arrowvert\parbox{7.5cm}{$a_k,b_k\in\mc{O}^\mr{an}$, and for any
  $0<r<1$, there exists $C_r>0$ such that $|a_k|_r<C_r$ for any $k$ and
  $\lim_{k\rightarrow\infty}|b_k|_r=0$.}\right\}. \index{microdifferential operators!analytic!$(\EcompQ{m}{x})^{\mr{an}}$,$(\Ecomp{m,m'}{x})^{\mr{an}}$,$(\Emod{m,\dag}{x,\mb{Q}})^{\mr{an}}$,$\ms{E}_{x,\mb{Q}}^{\mr{an}}$|)}
\end{equation*}

At last, let us fix a few notation. Let $\ms{M}$ be a coherent
$\DcompQ{m}{\ms{X}}$-module. Let $m''\geq m'\geq m$, and $\ms{E}$ be one
of $(\DcompQ{m'}{x})^{\mr{an}}=(\DcompQ{m'}{\widetilde{\ms{S}}})^{\mr{an}}$, $(\EcompQ{m',m''}{x})^{\mr{an}}$,
$(\Emod{m',\dag}{x,\mb{Q}})^{\mr{an}}$,
$\ms{E}^{\mr{an}}_{x,\mb{Q}}$. We denote
$\ms{E}\otimes_{(\DcompQ{m}{\ms{X}})_x}\ms{M}_x$ by
$\ms{E}\otimes_{\Dcomp{m}{}}\ms{M}$ or $\ms{E}\otimes\ms{M}$. This
notation goes together with Notation \ref{setup}.

\subsubsection{}
\label{deftopoean}
We put a topologies $\ms{T}_{n'}$ for $n'\geq0$ and $\ms{T}$ on
$\EcompQ{m,m'}{nc,x}$ in
exactly the same way as in \ref{topologyonring}. Precisely, we put
$U_{k,l}:=(\Ecomp{m,m'}{nc,x})_{-k}+\varpi^l\Ecomp{m,m'}{nc,x}$, and we
define a topology $\ms{T}_0$ on $\Ecomp{m,m'}{nc,x}$ as the topology
generated by $\{U_{k,l}\}$ as a base of neighborhoods of zero. The
topology $\ms{T}_{n'}$ on $\EcompQ{m,m'}{nc,x}$ is the locally convex
topology generated by $\{\varpi^{-n'}U_{k,l}\}$ as a base of
neighborhoods of zero, and $\ms{T}$ is the inductive limit topology.
We get that $\EcompQ{m,m'}{\ms{X}}\cap\varpi^{-n'}\Ecomp{m,m'}{nc,x}$
is dense in $(\varpi^{-n'}\EcompQ{m,m'}{nc,x},\ms{T}_{n'})$ where the
intersection is taken in $\EcompQ{m,m'}{nc,x}$. Indeed putting
$\mc{O}_r:=\invlim_i\mc{O}_{r,i}$, the intersection
$\mc{O}_{\ms{X},\mb{Q}}\cap\mc{O}_{nc}$ is dense in $\mc{O}_{nc}$.
In the same way as \ref{topologyonring},
for any finitely generated
$\EcompQ{m,m'}{nc,x}$-module, the
$(\EcompQ{m,m'}{nc,x},\ms{T}_{n'})$-module topology is separated.

\begin{prop}
 \label{analytifcisom}
 Suppose we are in Situation {\normalfont(Ls)} of
 {\normalfont\ref{setup}}. Moreover, we
 assume $x=y_s$. Let $m'\geq m$ be non-negative integers, and $\ms{M}$
 be a holonomic $\DcompQ{m}{\ms{X}}$-module (not necessarily stable). We
 assume that
 \begin{equation*}
  \mr{Supp}(\EcompQ{m,m'+1}{\ms{X}}\otimes_{\DcompQ{m}{\ms{X}}}\ms{M})
   \cap\mathring{T}^*X=\pi^{-1}(s)\cap\mathring{T}^*X.
 \end{equation*}
 Then the canonical homomorphism
 \begin{equation}
  \label{EtoEan}
  \EcompQb{m,m'}{\ms{X}}\otimes_{\Dcomp{m}{}}\ms{M}\rightarrow
   (\EcompQ{m,m'}{s})^{\mr{an}}\otimes_{\Dcomp{m}{}}\ms{M}
 \end{equation}
 is an isomorphism.
\end{prop}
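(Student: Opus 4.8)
The plan is to reduce the statement to the fact that analytification does nothing on the relevant ``horizontal'' subring, and then to transport the finiteness results of \S\ref{section1} into the analytic setting.

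First I would carry out the usual reductions. By additivity of supports and of the formation of \eqref{EtoEan} along short exact sequences of $\DcompQ{m}{\ms{X}}$-modules — and since a coherent module whose microlocalization vanishes on $\mathring{T}^*X$ contributes zero to both sides (both being finite over $\CK{\ms{X}}\{\partial\}^{(m,m')}$, with the canonical map trivially bijective) — a d\'evissage reduces to the case where $\ms{M}$ is monogenerated, say by $\alpha$. Shrinking $\ms{X}$ as in the proof of Lemma \ref{generators}, we may moreover assume $S=\{s\}$; recall $x=y_s$ by hypothesis, so $s$ is the origin $\{x=0\}$. Using the canonical transition homomorphisms $\Ecomp{m,m'+1}{\ms{X}}\rightarrow\Ecomp{m,m'}{\ms{X}}$ and their analytic analogues (together with flatness of analytification), the hypothesis at level $(m,m'+1)$ forces the level-$(m,m')$ microlocalizations of $\ms{M}$, both algebraic and analytic, to be supported inside $\pi^{-1}(s)$.

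Next, set $\mathcal{R}:=\CK{\ms{X}}\{\partial\}^{(m,m')}$. By Lemma \ref{finiteness} the source $M:=\EcompQb{m,m'}{\ms{X}}\otimes_{\Dcomp{m}{}}\ms{M}$ of \eqref{EtoEan} is finite over $\mathcal{R}$, and Lemma \ref{finiteness}(ii) supplies the explicit generating family $\{x^i\alpha\}_{0\le i<N}$. For the target one needs the analytic counterparts of Lemma \ref{lemmaexspope} and Lemma \ref{finiteness}: these go through by the same arguments, using that $\Ecomp{m,m'}{nc,s}$ is a noetherian, $\varpi$-adically complete ring (as in \cite[4.12]{Abe}, cf.\ \ref{defanalye}) and that $\EcompQb{m,m'}{\ms{X}}$ is dense in $\EcompQ{m,m'}{nc,s}$ (cf.\ \ref{deftopoean}). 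I expect this transfer to be the main obstacle, since one must run the order-lowering argument of Lemma \ref{lemmaexspope} inside the analytic ring while controlling the Gauss-norm conditions (``bounded'', resp.\ ``$\to0$'') on coefficients; it is precisely here that the extra level $m'+1$ in the hypothesis furnishes the needed slack. Granting this, the target $M^{\mr{an}}:=(\EcompQ{m,m'}{s})^{\mr{an}}\otimes_{\Dcomp{m}{}}\ms{M}$ is finite over $\mathcal{R}$, and \eqref{EtoEan} is $\mathcal{R}$-linear and carries the generating family $\{x^i\alpha\}$ onto the corresponding generating family of $M^{\mr{an}}$; hence it is surjective.

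Finally, for injectivity the point is that the subring of $(\EcompQ{m,m'}{s})^{\mr{an}}$ of elements horizontal with respect to $x$ is again $\mathcal{R}$: an operator $\sum_{k<0}a_k\partial^{\angles{m'}{-k}}+\sum_{k\ge0}b_k\partial^{\angles{m}{k}}$ commutes with $\partial$ exactly when all $a_k,b_k$ lie in $\ker(\partial_x)=L$, the field of local constants (which is the same whether computed on $\mc{O}^{\mr{an}}_{\mb{Q}}$ or on $\mc{O}_{\ms{X},\mb{Q}}$, cf.\ the Lemma in \ref{K_Xnotdef}); as elements of $L$ have $x$-independent Gauss norms, the convergence conditions in the explicit description of \ref{defanalye} collapse, via $\partial\mapsto x$, to those defining $\mc{A}_{x,L}(\{\omega/\omega_{m'},\omega/\omega_m])\cong\mathcal{R}$. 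Thus an analytic analogue of Lemma \ref{tesorcalc} (in the vein of Proposition \ref{rankcompat}) gives $(\EcompQ{m,m'}{s})^{\mr{an}}\cong\mathcal{R}\,\widehat{\otimes}_{\mathcal{R}}\EcompQ{m,m'}{s}$ over the horizontal subrings, i.e.\ the analytic ring is a completion of $\EcompQ{m,m'}{s}$ in which $\mathcal{R}$ is already complete; tensoring with $M$ over $\EcompQ{m,m'}{s}$ and using that $M$ is finite over $\mathcal{R}$ (so the completed tensor product is ordinary) identifies $M^{\mr{an}}$ with $\mathcal{R}\otimes_{\mathcal{R}}M=M$ compatibly with \eqref{EtoEan}. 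Alternatively, once the finiteness is in hand, $M$ and $M^{\mr{an}}$ are finite \emph{free} $\mathcal{R}$-modules (finite modules with connection over the principal ideal domain $\mathcal{R}$ are free, cf.\ \ref{Kringpid}) of the same rank, since on each side the rank equals $(\deg_L(s))^{-1}$ times the vertical multiplicity of $\ms{M}$ at $s$ (cf.\ Proposition \ref{levmcharcycle}); a surjection of free modules of equal finite rank over $\mathcal{R}$ is an isomorphism.
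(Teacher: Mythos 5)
Your proposal does not work as written, and the key point is that you have the logical order backwards. You defer the finiteness of the target $M^{\mr{an}}:=(\EcompQ{m,m'}{s})^{\mr{an}}\otimes_{\Dcomp{m}{}}\ms{M}$ over $\mathcal{R}$ to an unproved ``analytic analogue'' of Lemmas \ref{lemmaexspope} and \ref{finiteness}; but there is no such analogue in the paper, and establishing one from scratch is essentially as hard as the proposition itself. The paper's actual proof avoids this entirely: rather than proving anything directly about $M^{\mr{an}}$, it shows that the \emph{source} $\EcompQb{m,m'}{\ms{X}}\otimes_{\Dcomp{m}{}}\ms{M}$ (known to be finite over $\mathcal{R}$ by Lemma \ref{finiteness}, hence a separated LF-space by Lemma \ref{topology}) already carries an $\Ecomp{m,m'}{r,s}$-module structure extending the algebraic one. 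This is the content of the ``Claim'' in the paper's proof, and it is exactly here that the level $m'+1$ in the hypothesis is spent: Lemma \ref{lemmaexspope} at level $m'+1$ produces a relation $x^d\equiv\varpi S$ modulo the ideal, whose iterates control the $\varpi$-adic decay needed for the series $\sum P_i\cdot(1\otimes\alpha)$ to converge in the algebraic microlocalization. With this module structure in hand one defines the retraction $\beta$ by hand, gets $\beta\circ\alpha=\mathrm{id}$, and deduces surjectivity of $\alpha$ by showing it is $\EcompQ{m,m'}{r,s}$-linear via continuity and the open mapping theorem; an $R^1\invlim=0$ argument then passes from each $\EcompQ{m,m'}{nc,s}$ to $(\EcompQ{m,m'}{s})^{\mr{an}}$. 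You gesture at density and completion but never carry out the convergence estimate, which is where the substance is.

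Your ``alternative'' route to injectivity via rank counting is circular. Proposition \ref{levmcharcycle} computes the rank of the \emph{algebraic} microlocalization $\EcompQ{m}{s}(\ms{M})$ over $\CK{\ms{X}}\{\partial\}^{(m)}$ in terms of the vertical multiplicity; it says nothing about the rank of $M^{\mr{an}}$ over $\mathcal{R}$. In fact one of the main payoffs of Proposition \ref{analytifcisom} (e.g.\ Corollary \ref{compCMGAR}, identifying Garnier's and Christol--Mebkhout's irregularities) is precisely that the analytic invariants agree with the algebraic ones, so you cannot assume that agreement to prove the proposition. Similarly, the isomorphism you write, $(\EcompQ{m,m'}{s})^{\mr{an}}\cong\mathcal{R}\widehat{\otimes}_{\mathcal{R}}\EcompQ{m,m'}{s}$, would only say $(\EcompQ{m,m'}{s})^{\mr{an}}\cong\EcompQ{m,m'}{s}$ as stated; what you presumably mean is a completion of $\EcompQ{m,m'}{s}$ in a suitable topology, but $(\EcompQ{m,m'}{s})^{\mr{an}}$ is an inverse limit of overrings $\EcompQ{m,m'}{nc,s}$, not a formal completion along a filtration, and turning that into the statement ``completed tensor equals ordinary tensor'' is exactly the open-mapping/continuity argument of the paper, not a shortcut around it.
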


\begin{proof}
 Suppose that $\mr{Supp}(\EcompQ{m,m'}{\ms{X}}\otimes_{\DcompQ{m}{\ms{X}}}
 \ms{M})\cap\mathring{T}^*X$ is empty.
 In this case, the source of the homomorphism
 (\ref{EtoEan}) is $0$. Thanks to the following isomorphism
 \begin{equation*}
  (\EcompQ{m,m'}{s})^{\mr{an}}\otimes_{\Dcomp{m}{}}\ms{M}\cong
   (\EcompQ{m,m'}{s})^{\mr{an}}\otimes_{\EcompQb{m,m'}{\ms{X}}}
   \EcompQb{m,m'}{\ms{X}}\otimes_{\Dcomp{m}{}}\ms{M},
 \end{equation*}
  the target of the homomorphism is also $0$, and we get the
 lemma. Thus we may assume that
 $\mr{Supp}(\EcompQ{m,m'}{\ms{X}}\otimes_{\DcompQ{m}{\ms{X}}}\ms{M})
 \cap\mathring{T}^*X=\pi^{-1}(s)$.

 First, we will see that the source of the homomorphism has an
 $\EcompQ{m,m'}{r,s}$-module structure for some integer $r$.
 For this, let us start by remarking that $\ms{M}$ is monogenic.
 Indeed, since $\ms{X}$ is affine, the $m$-th relative Frobenius
 homomorphism can be lifted. Let us denote this lifting by
 $\ms{X}'$ and $F\colon\ms{X}\rightarrow\ms{X}'$. Let the
 $\DcompQ{0}{\ms{X}'}$-module $\ms{N}$ be
 the Frobenius descent of $\ms{M}$ (cf.\ \cite[4.1.3]{Ber2}).
 Then by \cite[Proposition 5.3.1]{Gar}, there exists a surjection
 $\DcompQ{0}{\ms{X}'}\rightarrow\ms{N}$. Apply $F^*$
 to both sides. By composing with the canonical surjection
 $\DcompQ{m}{\ms{X}}\rightarrow F^*\DcompQ{0}{\ms{X}'}$ ,
 we get a surjective morphism $\DcompQ{m}{\ms{X}}\rightarrow\ms{M}$.

 Denote by $\varphi\colon\DcompQ{m}{\ms{X}}/I\cong\ms{M}$ the induced
 isomorphism, and put $I':=(\EcompQb{m,m'+1}{\ms{X}}\cdot
 I)\cap\Ecompb{m,m'+1}{\ms{X}}$. By Lemma
 \ref{lemmaexspope}, there exist $Q\in\Ecompb{m,m'+1}{\ms{X}}
 \subset\Ecompb{m,m'}{\ms{X}}$,
 $R\in(\Ecompb{m,m'+1}{\ms{X}})_{-p^{m'+1}}$, and a positive integer $d$
 such that $x^d-\varpi Q-R\in I'$. Since the order
 of $R$ is less than $-p^{m'+1}$, there
 exists $R'\in\Ecompb{m,m'}{\ms{X}}$ such that $R=\varpi R'$, and we get
 that $x^d-\varpi S\in\Ecompb{m,m'}{\ms{X}}\cdot I'$ where
 $S=Q+R'\in\Ecompb{m,m'}{\ms{X}}$. By increasing the integer $d$, we may
 assume that $d$ is divisible by $p^{m'+1}$.
 For any element $P$ in
 $\Dcompb{m'}{\ms{X}}:=\Gamma(\ms{X},\Dcomp{m'}{\ms{X}})$, we
 get $x^d\cdot P\in P\cdot x^d+p\Dcompb{m'}{\ms{X}}$. Thus for any
 operator $D\in\Ecompb{m,m'}{\ms{X}}$ we also get $x^d\cdot
 D\in D\cdot x^d+p\Ecompb{m,m'}{\ms{X}}$. This implies that for any
 integer $n>0$, there exists $S_n\in\Ecompb{m,m'}{\ms{X}}$ such that
 \begin{equation}
  \label{divreseachn}
  x^{nd}-\varpi^nS_n\in\Ecompb{m,m'}{\ms{X}}\cdot I'.
 \end{equation}

 Let $e$ be the absolute ramification index of $R$, and take
 $r\geq(e+1)d$, which is divisible by $p^{m'+1}$.
 Let $\alpha\in\Gamma(\ms{X},\ms{M})$. We claim the following.

 \begin{cl}
  For any sequence $\{P_i\}_{i\geq0}$ in
  $\EcompQb{m,m'}{\ms{X}}\cap\Ecomp{m,m'}{r,s}$ which converges to
  $0$ seen as a sequence in $(\Ecomp{m,m'}{r,s},\ms{T}_0)$, the sequence
  $\{P_i\cdot(1\otimes\alpha)\}$ in
  $\EcompQb{m,m'}{\ms{X}}\otimes\ms{M}$ converges to $0$ using the
  $(\EcompQb{m,m'}{\ms{X}},\ms{T}_0)$-module topology. In particular the
  sequence converges to $0$ using the natural topology which makes the
  module an LF-space by Lemma \ref{topology} and Lemma \ref{finiteness}.
 \end{cl}

 Let us admit this claim first, and see that there exists a canonical
 $\EcompQ{m,m'}{r,s}$-module structure on
 $\EcompQb{m,m'}{\ms{X}}\otimes\ms{M}$. For
 $P\in\Ecomp{m,m'}{r,s}$, we may write $P=\sum_{i\geq 0}P_i$ with
 $P_i\in\EcompQ{m,m'}{\ms{X}}$ and the
 sequence $\{P_i\}$ converges to $0$ in $\Ecomp{m,m'}{r,s}$. Then the
 claim says that $\{P_i\cdot(1\otimes\alpha)\}$  converges to $0$ in
 $\EcompQb{m,m'}{\ms{X}}\otimes\ms{M}$. So we may define
 $P\cdot(1\otimes\alpha)$ by $\sum_{i\geq 0}P_i\cdot(1\otimes\alpha)$,
 and we get the action of $\Ecomp{m,m'}{r,s}$ on
 $\EcompQb{m,m'}{\ms{X}}\otimes\ms{M}$ since the latter space is
 separated.

 Let us verify the claim. By (\ref{divreseachn}) and the choice of $r$,
 we get
 \begin{equation*}
  \Bigl(\frac{x^r}{p}\Bigr)^n\equiv\varpi^nT_n\bmod
   \EcompQb{m,m'}{\ms{X}}\cdot I',
 \end{equation*}
 where $T_n\in\Ecompb{m,m'}{\ms{X}}$
 ({\it e.g.\ }$T_n=S_{(e+1)n}\in\Ecompb{m,m'}{\ms{X}}$ when
 $r=(e+1)d$). We denote by $d_{i,n}$ the
 order of the image of $T_n$ in $\Emodb{m,m'}{X_i}$.
 Put $E_r:=\Ecompb{m,m'}{\ms{X},\mb{Q}}\cap\Ecomp{m,m'}{r,s}$.
 Let $Q$ be an element of $(E_r)_N+ \varpi^{N'}E_r$ for some integers
 $N$ and $N'\geq 0$. We may write
 \begin{equation*}
  Q=\sum_{n\geq 0}Q_n\Bigl(\frac{x^r}{p}\Bigr)^n,
 \end{equation*}
 where $Q_n\in
 A:=\bigoplus_{i=0}^{r-1}\CR{\ms{X}}\{\partial\}^{(m,m')}x^i$. Then
 $Q_n\in(A)_N+\varpi^{N'}A$ for any $n\geq0$. Thus, for any $n\geq0$, we
 get
 \begin{equation*}
  Q_n\Bigl(\frac{x^r}{p}\Bigr)^n\in(\Ecompb{m,m'}{\ms{X}})_{M+N}
   +\varpi^{N'}(\Ecompb{m,m'}{\ms{X}})+\EcompQb{m,m'}{\ms{X}}\cdot I',
 \end{equation*}
 where $M:=\max\{d_{i,N'-1}\mid i=1,\dots,N'-1\}=d_{N'-1,N'-1}$. Summing
 up, there exists an increasing sequence of integers
 $\{M_k\}_{k\geq 0}$ such that if $Q\in(E_r)_N+\varpi^{N'}E_r$, then
 $Q\in(\Ecompb{m,m'}{\ms{X}})_{M_{N'}+N}+\varpi^{N'}
 (\Ecompb{m,m'}{\ms{X}})+\EcompQb{m,m'}{\ms{X}}\cdot I'$. We can find
 a sequence of integers $\{N_k\}_{k\geq 0}$ such that the sequence
 $\{N_k+M_{k}\}_{k\geq0}$ is strictly decreasing. Back to the claim,
 for any integer $k\geq0$, there exists $n_k$ such that
 $P_j\in(E_r)_{N_k}+\varpi^{k}E_r$ for any $j\geq n_k$. Then, we have $P_j\cdot\alpha\in\varphi((\Ecompb{m,m'}{\ms{X}})_{M_k+N_k}
 +\varpi^{k}(\Ecompb{m,m'}{\ms{X}}))$, and we get the claim.

 There are two natural homomorphisms
 \begin{equation*}
  \alpha\colon\EcompQb{m,m'}{\ms{X}}\otimes_{\Dcomp{m}{}}\ms{M}
   \rightarrow\EcompQ{m,m'}{r,s}\otimes_{\Dcomp{m}{}}\ms{M},\qquad
    \beta\colon\EcompQ{m,m'}{r,s}\otimes\ms{M}\rightarrow
    \EcompQb{m,m'}{\ms{X}}\otimes\ms{M},
 \end{equation*}
 where $\alpha$ is induced by the inclusion
 $\EcompQb{m,m'}{\ms{X}}\rightarrow\EcompQ{m,m'}{r,s}$,
 and $\beta$ is defined by extending linearly the canonical homomorphism
 $\ms{M}\rightarrow\EcompQb{m,m'}{\ms{X}}\otimes\ms{M}$ using the
 $\EcompQ{m,m'}{r,s}$-module structure on
 $\EcompQb{m,m'}{\ms{X}}\otimes\ms{M}$ defined above. We can check
 easily that $\beta\circ\alpha=\mr{id}$. Thus
 $\alpha$ is injective. Let us see that $\alpha$ is surjective. It
 suffices to show that $\alpha$ is a homomorphism of
 $\EcompQ{m,m'}{r,s}$-modules. Take an element
 $P\in\EcompQ{m,m'}{r,s}$. It suffices to show that
 \begin{equation}
  \label{compatidisp}
  \alpha(P\cdot e)=P\cdot\alpha(e)
 \end{equation} 
 for any $e\in\EcompQb{m,m'}{\ms{X}}\otimes_{\Dcomp{m}{}}\ms{M}$. By
 density (cf.\ \ref{deftopoean}), there exists
 an integer $n\geq0$ such that $P$ is contained in the
 closure of $\EcompQb{m,m'}{\ms{X}}$ in $(\EcompQ{m,m'}{r,s},\ms{T}_n)$.
 Consider the $(\EcompQ{m,m'}{r,s},\ms{T}_n)$-module topologies on
 the both sides of $\alpha$. Since the both sides of $\alpha$ are
 finitely generated over $\EcompQ{m,m'}{r,s}$,  they are
 Fr\'{e}chet spaces by \ref{deftopoean}, and by the open mapping
 theorem,  the topology on the source of $\alpha$ is equivalent to the
 $(\EcompQb{m,m'}{\ms{X}},\ms{T}_n)$-module topology. Since $\alpha$ is $\EcompQb{m,m'}{\ms{X}}$-linear, it is
 continuous. Since the target of $\alpha$
 is separated, we get that (\ref{compatidisp}) holds by the continuity
 of $\alpha$. We conclude that $\alpha$ is an isomorphism.

 Let us finish the proof. It suffices to see that
 \begin{equation}
  \label{isomwantanalyif}
  (\EcompQ{m,m'}{s})^{\mr{an}}\otimes\ms{M}\rightarrow
   \invlim_{n}\EcompQ{m,m'}{nc,s}\otimes\ms{M}
 \end{equation}
 is an isomorphism where $c:=p^{m'+1}$. When $\ms{M}$ is a finite
 projective $\DcompQ{m}{\ms{X}}$-module, the equality is obvious.
 By the same argument as Lemma \ref{flatness}, $\EcompQ{m,m'}{nc,s}$
 is flat over $\DcompQ{m}{\ms{X}}$ for any positive integer $i$. Since
 $\DcompQ{m}{\ms{X}}$ is of finite homological dimension, there is a
 finite projective resolution $\ms{P}_\bullet\rightarrow\ms{M}$ whose
 length is $l$ by \cite[4.4.6]{Ber2}. (If fact we can take $l$ to be
 $2$.) We have the following diagram where the bottom sequence is exact.
 \begin{equation*}
  \xymatrix{
   0\ar[r]&(\EcompQ{m,m'}{s})^{\mr{an}}\otimes\ms{P}_l\ar[r]\ar[d]&
   \dots\ar[r]&(\EcompQ{m,m'}{s})^{\mr{an}}\otimes\ms{P}_0\ar[r]\ar[d]&
   (\EcompQ{m,m'}{s})^{\mr{an}}\otimes\ms{M}\ar[r]\ar[d]^{(*)}&0\\
   0\ar[r]&\EcompQ{m,m'}{nc,s}\otimes\ms{P}_l\ar[r]&
   \dots\ar[r]&\EcompQ{m,m'}{nc,s}\otimes\ms{P}_0\ar[r]&
   \EcompQ{m,m'}{nc,s}\otimes\ms{M}\ar[r]&0
   }
 \end{equation*}
 Let us show that
 $R^1\invlim_{n}(\EcompQ{m,m'}{nc,s}\otimes\ms{P}_j)=0$ for any
 $j$. This is equivalent to saying $R^1\invlim_{n}\EcompQ{m,m'}{nc,s}=0$
 since $\ms{P}_j$ is finite projective. Let $|\cdot|_{n}$ be the
 $p$-adic norm on $\EcompQ{m,m'}{nc,s}$. Let $E_n$
 be the closure of $\EcompQ{m,m'}{(n+1)c,s}$ in $\EcompQ{m,m'}{nc,s}$
 with respect to $|\cdot|_{n}$-norm. Then
 $R^1\invlim_nE_n\cong R^1\invlim_{n}\EcompQ{m,m'}{nc,s}$ (cf.\ the
 proof of \cite[5.9]{Abe}). Now, apply
 \cite[$0_{\mr{III}}$, 13.2.4 (i)]{EGA} to the system $\{E_n\}$ with
 respect to the $p$-adic norm, and the claim follows.

 By applying $\invlim_n$ to the bottom sequence of the diagram above, we
 get that the sequence
 \begin{equation*}
  \invlim_{n}\EcompQ{m,m'}{nc,s}\otimes\ms{P}_1\rightarrow
   \invlim_{n}\EcompQ{m,m'}{nc,s}\otimes\ms{P}_0\rightarrow
   \invlim_{n}\EcompQ{m,m'}{nc,s}\otimes\ms{M}\rightarrow0
 \end{equation*}
 is exact. Since (\ref{isomwantanalyif}) is an isomorphism for
 projective modules, by using the right exactness of tensor product,
 (\ref{isomwantanalyif}) is an isomorphism in general, and we
 conclude the proof of the proposition. Moreover, when we apply
 $\invlim_n$ to the bottom sequence of the above diagram, we get that
 the vertical homomorphisms are isomorphisms, and thus the top sequence
 is also exact. This implies the flatness of
 $(\EcompQ{m,m'}{s})^{\mr{an}}$ over $\DcompQ{m}{\ms{X}}$.
\end{proof}

\begin{cor}
 \label{ananddag}
 Under the hypothesis of Proposition
 {\normalfont\ref{analytifcisom}}, suppose moreover that $\ms{M}$ is
 stable. Then for $m'\geq m$ the canonical homomorphisms
 \begin{equation*}
  \Emodb{m',\dag}{\ms{X},\mb{Q}}\otimes_{\Dcomp{m}{}}\ms{M}
   \rightarrow(\Emod{m',\dag}{s,\mb{Q}})^{\mr{an}}
   \otimes_{\Dcomp{m}{}}\ms{M},\qquad
   E^\dag_{\ms{X},\mb{Q}}\otimes_{\Dcomp{m}{}}\ms{M}\rightarrow
   \ms{E}^{\mr{an}}_{s,\mb{Q}}\otimes_{\Dcomp{m}{}}\ms{M}
 \end{equation*}
 are isomorphisms.
\end{cor}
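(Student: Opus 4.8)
The plan is to deduce both isomorphisms from Proposition~\ref{analytifcisom} by passing to the limits over the levels that define the rings in question. Put $\ms{M}^{(m')}:=\DcompQ{m'}{\ms{X}}\otimes_{\DcompQ{m}{\ms{X}}}\ms{M}$; for any of the rings $\ms{E}$ that occur we have $\ms{E}\otimes_{\Dcomp{m'}{}}\ms{M}^{(m')}=\ms{E}\otimes_{\Dcomp{m}{}}\ms{M}$, so we may work with $\ms{M}^{(m')}$, which is holonomic because $\mr{Char}^{(m')}(\ms{M}^{(m')})=\mr{Char}^{(m)}(\ms{M})$ by stability. Since $\ms{M}$ is stable and, by the hypotheses inherited from Proposition~\ref{analytifcisom}, $\mr{Supp}(\EcompQ{m,m'+1}{\ms{X}}\otimes\ms{M})\cap\mathring{T}^*X=\pi^{-1}(s)$, stability forces $\mr{Char}^{(m)}(\ms{M})\cap\mathring{T}^*X=\pi^{-1}(s)$, and hence $\mr{Supp}(\EcompQ{m',n+1}{\ms{X}}\otimes_{\DcompQ{m'}{\ms{X}}}\ms{M}^{(m')})\cap\mathring{T}^*X=\pi^{-1}(s)$ for every integer $n\geq m'$. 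Thus Proposition~\ref{analytifcisom} applies to $\ms{M}^{(m')}$ with the pair $(m',n)$ for each $n\geq m'$, giving a system of isomorphisms
\begin{equation*}
 \EcompQb{m',n}{\ms{X}}\otimes_{\Dcomp{m}{}}\ms{M}\xrightarrow{\ \sim\ }(\EcompQ{m',n}{s})^{\mr{an}}\otimes_{\Dcomp{m}{}}\ms{M}
\end{equation*}
compatible with the canonical transition maps $\EcompQb{m',n+1}{\ms{X}}\to\EcompQb{m',n}{\ms{X}}$ and $(\EcompQ{m',n+1}{s})^{\mr{an}}\to(\EcompQ{m',n}{s})^{\mr{an}}$.

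Next I would apply $\invlim_{n\geq m'}$. Since $\Gamma(\mathring{T}^*X,-)$ commutes with inverse limits we have $\invlim_n\EcompQb{m',n}{\ms{X}}=\Emodb{m',\dag}{\ms{X},\mb{Q}}$, and by construction $\invlim_n(\EcompQ{m',n}{s})^{\mr{an}}=(\Emod{m',\dag}{s,\mb{Q}})^{\mr{an}}$. It then remains to see that, for $\ms{E}_n$ either $\EcompQb{m',n}{\ms{X}}$ or $(\EcompQ{m',n}{s})^{\mr{an}}$, the functor $\invlim_n$ commutes with $-\otimes_{\Dcomp{m}{}}\ms{M}$. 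For this I would fix a finite projective resolution $\ms{P}_\bullet\to\ms{M}$ of length $\leq 2$ over $\Gamma(\ms{X},\DcompQ{m}{\ms{X}})$ (it exists by \cite[4.4.6]{Ber2}, exactly as in the proof of Proposition~\ref{analytifcisom}), observe that each $\EcompQb{m',n}{\ms{X}}$ is flat over $\DcompQ{m}{\ms{X}}$ by Lemma~\ref{flatness} together with the flatness of $\DcompQ{m'}{\ms{X}}$ over $\DcompQ{m}{\ms{X}}$, and that each $(\EcompQ{m',n}{s})^{\mr{an}}$ is flat over $\DcompQ{m}{\ms{X}}$ by the last assertion of Proposition~\ref{analytifcisom} together with the same fact — so that $\ms{E}_n\otimes_{\Dcomp{m}{}}\ms{P}_\bullet$ is again a resolution of $\ms{E}_n\otimes_{\Dcomp{m}{}}\ms{M}$. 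The commutation of $\invlim_n$ with the tensor product then reduces to the vanishing of $R^1\invlim_n$ on the systems $\{\EcompQb{m',n}{\ms{X}}\}$ and $\{(\EcompQ{m',n}{s})^{\mr{an}}\}$ (a finite projective module is a direct summand of a finite free one, and $R^1\invlim$ commutes with finite direct sums and passes to summands). This last vanishing is the one genuinely non-formal point; it is obtained by the same topological-completeness arguments already used for $\ms{E}^\dag$ in \cite[5.9]{Abe} and in the final paragraph of the proof of Proposition~\ref{analytifcisom}, applying \cite[$0_{\mr{III}}$, 13.2.4]{EGA} to the relevant systems filtered by the $p$-adic norm. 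Taking $\invlim_n$ of the isomorphisms above therefore yields the first isomorphism of the corollary.

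Finally, the second isomorphism follows by applying $\indlim_{m'\geq m}$ to the first, which is legitimate since filtered colimits are exact and commute with tensor products; one uses $\indlim_{m'}\Emodb{m',\dag}{\ms{X},\mb{Q}}=E^{\dag}_{\ms{X},\mb{Q}}$ and $\indlim_{m'}(\Emod{m',\dag}{s,\mb{Q}})^{\mr{an}}=\ms{E}^{\mr{an}}_{s,\mb{Q}}$, and that all the maps in sight are the canonical ones, so the colimit map is the canonical map of the statement. The main obstacle is the commutation of the inverse limit over levels with $-\otimes\ms{M}$, i.e. the $R^1\invlim$-vanishing above; the rest is a formal manipulation of the limits defining $\Emod{}{}^\dag$, $\ms{E}^\dag$ and their analytifications.
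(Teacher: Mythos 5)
Your proposal is correct and follows essentially the same strategy as the paper: apply Proposition~\ref{analytifcisom} level-by-level, pass to $\invlim_{m''}$ and establish that this commutes with $-\otimes\ms{M}$ via a finite projective resolution, flatness, and an $R^1\invlim$-vanishing argument in the style of the end of that proposition's proof, then obtain the second isomorphism by taking $\indlim_{m'}$. The only (minor) stylistic difference is that the paper handles the commutation on the $\Emodb{m',\dag}{\ms{X},\mb{Q}}$-side simply by invoking the Fréchet--Stein property of that algebra (cf.\ \cite[5.9]{Abe}) and only runs the explicit $R^1\invlim$-argument on the analytified side, whereas you carry out the resolution/flatness argument symmetrically on both sides; these are mathematically the same content.
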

\begin{proof}
 Clearly the first equality implies the second one. To prove the first
 one, it suffices to show that $\invlim_{m''}(\EcompQ{m',m''}{s})
 ^{\mr{an}}\otimes\ms{M}\cong(\Emod{m',\dag}{s,\mb{Q}})^{\mr{an}}
 \otimes\ms{M}$ since $\Emodb{m'',\dag}{\ms{X},\mb{Q}}$ is a
 Fr\'{e}chet-Stein algebra (cf.\ \cite[5.9]{Abe}). For this we only
 need to repeat the argument of the last part of the proof of the
 previous proposition. Namely, we prove that
 $R^1\invlim_{m''}(\EcompQ{m',m''}{s})^{\mr{an}}=0$ using
 \cite[$0_{\mr{III}}$, 13.2.4 (i)]{EGA}. The detail is left to the
 reader.
\end{proof}

\subsubsection{}
 \label{anyelem}
 Let $\ms{X}$ be a formal curve over $R$.
 Let $\ms{M}$ be a stable holonomic $\DcompQ{m}{\ms{X}}$-module, and
 let $s$ be a singular point of $\ms{M}$. Then by the Proposition
 \ref{analytifcisom}, for any integers $m''\geq m'\geq m$, the module
 $\EcompQ{m',m''}{s}\otimes\ms{M}$ (cf.\
 \ref{localsetting}) possesses a canonical
 $(\DcompQ{m'}{s})^{\mr{an}}$-module structure, and in particular, we
 get a canonical homomorphism
 \begin{equation*}
  \psi\colon(\DcompQ{m'}{s})^{\mr{an}}\otimes\ms{M}
   \rightarrow\EcompQ{m',m''}{s}\otimes\ms{M}.
 \end{equation*}
 Taking the inductive limit over $m'$, we get a canonical homomorphism
 \begin{equation*}
  \ms{M}|_{S_s}\rightarrow\EdagQ{s}\otimes\ms{M}.
 \end{equation*}
 By an abuse of language, we sometimes denote the image of
 $\alpha\in M_s$ where $M_s$ is either
 $(\DcompQ{m'}{s})^{\mr{an}}\otimes\ms{M}$ or $\ms{M}|_{S_s}$ by
 $1\otimes\alpha$.

 Let $\ms{U}$ be an open affine neighborhood of $s$ such that there
 exists a local parameter at $s$ and $s$ is the unique singularity of
 $\ms{M}$ in $\ms{U}$. Then by the proposition, we get
 $\EcompQb{m',m''}{\ms{U}}\otimes\ms{M}\cong\EcompQ{m',m''}{s}\otimes
 \ms{M}$. Now we define topologies as follows.

 \begin{dfn*}
  (i) We equip $\EcompQ{m',m''}{s}\otimes\ms{M}$ with the natural
  topology as an $\EcompQb{m',m''}{\ms{U}}$-module which makes it an
  LF-space by Lemma \ref{topology} and Lemma \ref{finiteness}. Note that
  this topology does not depend on the choice of $\ms{U}$ by the same
  lemma.

  (ii)  We equip $(\DcompQ{m'}{s})^{\mr{an}}\otimes\ms{M}$ with
  the projective limit topology of the projective system of Banach spaces
  $\bigl\{\mc{O}_{np^{m+1}}\widehat{\otimes}(\DcompQ{m'}{\ms{U}}
  \otimes\ms{M})\bigr\}_{n\geq0}$. This makes
  $(\DcompQ{m'}{s})^{\mr{an}}\otimes\ms{M}$ a Fr\'{e}chet space.
 \end{dfn*}

 \begin{rem*}
 (i) The topology on $\EcompQ{m',m''}{s}\otimes\ms{M}$ is also
  equivalent to the $(\EcompQb{m',m''}{\ms{U}},\ms{T})$-module topology
  by Lemma \ref{topology}.

 (ii) The homomorphism $\psi$ is continuous by the claim in the proof of
  Proposition \ref{analytifcisom}. In particular, if a sequence
  $\{\alpha_i\}$ converges to $\alpha$ in
  $(\DcompQ{m'}{s})^{\mr{an}}\otimes\ms{M}$, we get that
  $\{1\otimes\alpha_i\}$ converges to $1\otimes\alpha$ in
  $\EcompQ{m',m''}{s}\otimes\ms{M}$.
 \end{rem*}


\subsubsection{}
The following corollary of
Proposition \ref{analytifcisom}
plays an important role when we prove a
fundamental properties of local Fourier transforms (cf.\ Lemma
\ref{nongeometpt} and \ref{locFourrel}).
\begin{cor*}
 \label{analyticstructure}
 Let $\ms{X}$ and $\ms{X}'$ be two formal curves, and take points
 $x\in\ms{X}$ and $x'\in\ms{X}'$. Assume that there exists an
 isomorphism $\iota\colon\ms{S}_x\xrightarrow{\sim}\ms{S}_{x'}$ of
 formal disks over $R$.

 (i) Let
 $\ms{M}$ and $\ms{M}'$ be holonomic $\DcompQ{m}{\ms{X}}$ and
 $\DcompQ{m}{\ms{X}'}$-module respectively, and assume that
 \begin{equation*}
  \iota_*\bigl((\DcompQ{m}{x})^{\mr{an}}\otimes\ms{M}\bigr)
   \xrightarrow{\sim}(\DcompQ{m}{x'})^{\mr{an}}\otimes\ms{M}'
 \end{equation*}
 as $(\DcompQ{m}{x'})^{\mr{an}}$-modules. Then there
 exists canonical isomorphisms 
 \begin{equation*}
  \iota_*\bigl(\EcompQ{m,m'}{x}\otimes\ms{M}\bigr)\cong
   \EcompQ{m,m'}{x'}\otimes\ms{M}',\qquad
   \iota_*\bigl(\Emod{m,\dag}{x,\mb{Q}}\otimes\ms{M}\bigr)
   \cong\Emod{m,\dag}{x',\mb{Q}}
 \otimes\ms{M}'
 \end{equation*} 
 for $m'\geq m$.

 (ii) Let $\ms{M}$ and $\ms{M}'$ be holonomic $\DdagQ{\ms{X}}$ and
 $\DdagQ{\ms{X}'}$-module respectively, and assume that
 \begin{equation*}
  \iota_*\ms{M}|_{S_x}\cong\ms{M}'|_{S_{x'}}
 \end{equation*}
 as $\ms{D}^{\mr{an}}_{x',\mb{Q}}$-modules. Then there exists a
 canonical isomorphism $\iota_*\bigl(\EdagQ{x}\otimes\ms{M}\bigr)
 \cong\EdagQ{x'}\otimes\ms{M}'$.
\end{cor*}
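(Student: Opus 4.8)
The plan is to deduce Corollary \ref{analyticstructure} directly from Proposition \ref{analytifcisom} by reducing all the rings involved to global sections over a small affine neighborhood, where the $\mr{an}$-rings and the (ordinary) microlocalization rings become identified. First I would treat part (i). Shrink $\ms{X}$ and $\ms{X}'$ to affine neighborhoods $\ms{U}$ of $x$ and $\ms{U}'$ of $x'$ possessing local coordinates, so that we are in Situation (Ls), with $x$ (resp.\ $x'$) a local parameter at the chosen closed point, and such that $x$ (resp.\ $x'$) is the unique singular point of $\ms{M}$ in $\ms{U}$ (resp.\ of $\ms{M}'$ in $\ms{U}'$); here we use that the singular locus is finite and that $\iota$ is an isomorphism of formal disks, so it matches $x$ with $x'$. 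The hypothesis that $\iota_*\bigl((\DcompQ{m}{x})^{\mr{an}}\otimes\ms{M}\bigr)\cong(\DcompQ{m}{x'})^{\mr{an}}\otimes\ms{M}'$ already gives, after tensoring up, an isomorphism $\iota_*\bigl((\EcompQ{m,m'+1}{x})^{\mr{an}}\otimes\ms{M}\bigr)\cong(\EcompQ{m,m'+1}{x'})^{\mr{an}}\otimes\ms{M}'$, because the $\mr{an}$-microlocalization rings are built functorially from the $\mr{an}$-$\ms{D}$-rings (via the same localize-and-complete recipe of \cite{Abe}) and this construction is compatible with $\iota$; in particular the supports of these $\mr{an}$-microlocalizations on $\mathring{T}^*$ agree, and since the source is holonomic with singular point at $x$ we get the support condition $\mr{Supp}\bigl(\EcompQ{m,m'+1}{\ms{X}}\otimes\ms{M}\bigr)\cap\mathring{T}^*X=\pi^{-1}(x)$ needed to invoke Proposition \ref{analytifcisom}. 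Applying that proposition on both sides, the canonical maps $\EcompQb{m,m'}{\ms{U}}\otimes\ms{M}\to(\EcompQ{m,m'}{x})^{\mr{an}}\otimes\ms{M}$ and its primed analogue are isomorphisms, and $\iota$ matches the two target objects; transporting back yields the desired $\iota_*\bigl(\EcompQ{m,m'}{x}\otimes\ms{M}\bigr)\cong\EcompQ{m,m'}{x'}\otimes\ms{M}'$ (recalling the notational identification $\EcompQ{m,m'}{x}\otimes\ms{M}=(\EcompQ{m,m'}{\ms{X}}\otimes\pi^{-1}\ms{M})_{\xi_x}$, which is the global sections over $\mathring{T}^*U$ by Remark \ref{setup}(i)). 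The second isomorphism of (i) follows by passing to the inverse limit over $m'$, using Corollary \ref{ananddag} to identify $\invlim_{m''}(\EcompQ{m',m''}{x})^{\mr{an}}\otimes\ms{M}$ with $(\Emod{m',\dag}{x,\mb{Q}})^{\mr{an}}\otimes\ms{M}$, and likewise on the primed side.

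For part (ii), I would reduce to (i). Given a holonomic $\DdagQ{\ms{X}}$-module $\ms{M}$, choose (as in \ref{stabledef}) a stable coherent $\DcompQ{m}{\ms{X}}$-module $\ms{M}_0$ with $\DdagQ{\ms{X}}\otimes\ms{M}_0\cong\ms{M}$, and similarly a stable $\DcompQ{n}{\ms{X}'}$-module $\ms{M}'_0$ for $\ms{M}'$; after raising levels we may assume $m=n$. The hypothesis $\iota_*\ms{M}|_{S_x}\cong\ms{M}'|_{S_{x'}}$, i.e.\ $\iota_*\bigl(\ms{D}^{\mr{an}}_{x,\mb{Q}}\otimes\ms{M}_0\bigr)\cong\ms{D}^{\mr{an}}_{x',\mb{Q}}\otimes\ms{M}'_0$, is the inductive limit over $m'$ of isomorphisms $\iota_*\bigl((\DcompQ{m'}{x})^{\mr{an}}\otimes\ms{M}_0\bigr)\cong(\DcompQ{m'}{x'})^{\mr{an}}\otimes\ms{M}'_0$. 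Strictly, an isomorphism of the colimits need not come levelwise, but since these are coherent modules over the relevant rings and the transition maps are the natural ones, one gets an isomorphism at some finite level $m'\geq m$ after possibly enlarging $m'$; then apply part (i) with that $m'$ to obtain $\iota_*\bigl(\Emod{m',\dag}{x,\mb{Q}}\otimes\ms{M}_0\bigr)\cong\Emod{m',\dag}{x',\mb{Q}}\otimes\ms{M}'_0$, and finally take the inductive limit over $m'$ to get $\iota_*\bigl(\EdagQ{x}\otimes\ms{M}\bigr)\cong\EdagQ{x'}\otimes\ms{M}'$, using that $\EdagQ{x}\otimes\ms{M}=\indlim_{m'}\Emod{m',\dag}{x,\mb{Q}}\otimes\ms{M}_0$ by \ref{anyelem}.

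The main obstacle I expect is twofold. The more delicate point is the \emph{functoriality/compatibility of the $\mr{an}$-constructions with the isomorphism $\iota$}: one must check that the microlocalization-then-analytification recipe applied to $\iota_*\ms{M}$ really produces the ring $\iota_*\bigl((\EcompQ{m,m'}{x})^{\mr{an}}\bigr)$, and that the hypothesis isomorphism of $\mr{an}$-$\ms{D}$-modules is genuinely $(\DcompQ{m}{x'})^{\mr{an}}$-linear in a way that propagates through localizing by $\partial^{\angles{m}{Np^m}}$ and $p$-adically completing. This is mostly a bookkeeping verification, relying on Remark \ref{Crewconstan} (independence of the coordinate), but it requires care because $\iota$ need not respect the chosen coordinates $x$, $x'$. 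The secondary nuisance is the passage from an isomorphism of inductive (resp.\ projective) limits to a levelwise isomorphism; this is handled by a standard coherence/finiteness argument (each object is finitely generated over a noetherian ring, e.g.\ by Lemma \ref{finiteness} and Lemma \ref{generators}), and I would simply note it rather than belabor it. Once these compatibilities are in place, the rest is a formal diagram chase combining Proposition \ref{analytifcisom} and Corollary \ref{ananddag}.
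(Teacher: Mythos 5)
Your proposal is correct and takes essentially the same approach as the paper: for part (i), both arguments reduce to a chain of identifications via Proposition \ref{analytifcisom} on each side, the functoriality of the analytified microdifferential rings under the isomorphism $\iota$ of formal disks, and (for the $\Emod{m,\dag}{}$-version) a limit over levels using Corollary \ref{ananddag} and the Fr\'echet--Stein property; for part (ii), both use coherence to descend the given $\ms{D}^{\mr{an}}$-isomorphism to a finite level $N$ and then invoke (i). The paper's presentation is more compact (a single displayed chain of isomorphisms), but the underlying steps match yours.
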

\begin{proof}
 First, let us prove (i). We get
 \begin{align*}
  &\iota_*\EcompQ{m,m'}{x}\otimes\ms{M}
  \xrightarrow[(*)]{\sim}\iota_*(\EcompQ{m,m'}{x})
  ^{\mr{an}}\otimes\ms{M}\xleftarrow{\sim}\iota_*
  (\EcompQ{m,m'}{x})^{\mr{an}}\otimes((\DcompQ{m}{\ms{X}})^{\mr{an}}
  \otimes\ms{M})\\
  &\qquad\xrightarrow[\iota']{\sim}(\EcompQ{m,m'}{x'})
   ^{\mr{an}}\otimes((\DcompQ{m}{\ms{X}'})^{\mr{an}}\otimes\ms{M})
  \xrightarrow{\sim}(\EcompQ{m,m'}{x'})^{\mr{an}}\otimes\ms{M}
  \xleftarrow[(*)]{\sim}\EcompQ{m,m'}{x'}\otimes\ms{M}'.
 \end{align*}
 Here we used Proposition \ref{analytifcisom} two times at $(*)$, and
 $\iota'$ denotes the isomorphism induced by $\iota$. To show the
 equality for $\Emod{m,\dag}{\ms{X},\mb{Q}}$, it suffices to use
 Proposition \ref{analytifcisom}, Corollary \ref{ananddag}, and the
 Fr\'{e}chet-Stein property of $\Emodb{m,\dag}{\ms{U},\mb{Q}}$ where
 $\ms{U}$ is an affine neighborhood of $x$.

 Now, let us prove (ii). Let $\ms{M}$ be a $\DdagQ{\ms{X}}$-module. Let
 $\ms{M}^{(m)}$ be a coherent $\DcompQ{m}{\ms{X}}$-module such that
 $\DdagQ{\ms{X}}\otimes\ms{M}^{(m)}\cong\ms{M}$, and the same for
 $\ms{M}'^{(m)}$. Then since these are
 coherent, there exists $N$ such that
 \begin{equation*}
  \ms{S}_{x'}\otimes_{\ms{S}_x}(\DcompQ{N}{\ms{X}})^{\mr{an}}
   \otimes\ms{M}^{(m)}\xrightarrow{\sim}(\DcompQ{N}{\ms{X'}})
   ^{\mr{an}}\otimes\ms{M}'^{(m)}.
 \end{equation*}
 Thus (ii) follows from (i).
\end{proof}

\subsection{Equality between two definitions of irregularity. }
\label{applithem}

Another important corollary of Proposition \ref{analytifcisom} is a
comparison result of multiplicities of characteristic cycles
(irregularity of Garnier) and irregularity of Christol-Mebkhout, cf.\ Corollary~\ref{compCMGAR}.

\subsubsection{}\label{irr} 
 Let us
review the definitions first. 
We assume that $k$ is perfect and that there exists a lifting of $h$-th absolute Frobenius
$R\xrightarrow{\sim}R$.
Let ${M}$ be a solvable  differential
$\mc{R}_K$-module, cf.\ \cite[12.6.4]{Ke2} or \cite[8.7]{CM}. By a result of Christol and Mebkhout 
(cf.\ \cite[2.4-1]{CM4} for free differential modules, or in general 
\cite[12.6.4]{Ke2})
there exists a
canonical decomposition ${M}\cong\bigoplus_{\beta\geq0}{M}_\beta$
where ${M}_\beta$ is a differential $\mc{R}_K$-module purely of differential slope\index{slope!differential slope}
$\beta$. The irregularity of $M$ is defined by $\mr{irr}({M}):=\sum_{\beta\geq0}\beta\cdot
\mathrm{rk}({M}_\beta)$. \index{irregularity $\mr{irr}$, $\mr{irr}^{\mr{CM}}_x$, $\mr{irr}^{\mr{Gar}}_x$|(}
We say that $M$ is regular\index{differential module! regular free
differential module} if $\mr{irr}({M})=0$, or equivalently $M=M_0$.

Let $\ms{X}$ be a formal curve over $R$, and $S$ be
a closed subset of $\ms{X}$ such that its complement is dense in
$\ms{X}$. Let $\ms{M}$ be a convergent isocrystal on
$\ms{U}:=\ms{X}\setminus S$ overconvergent along $S$. For $x$ in $S$, we
define the irregularity of Christol-Mebkhout
\begin{equation*}
 \mr{irr}^{\mr{CM}}_x(\ms{M}):=\mr{irr}(\ms{M}|_{\eta_x});
\end{equation*} 
when $\mr{irr}(\ms{M}|_{\eta_x})=0$ we say that $\ms{M}$ is regular at
$x$, or that $x$ is a regular singularity for
$\ms{M}$.\label{def_regularity}\index{Fi@$F$-isocrystal, overconvergent
isocrystal! regular singular point of}
We also have the irregularity of Garnier \cite[5.1.2]{Gar2}. 
For
simplicity, we assume moreover that $\ms{M}$ possesses a Frobenius
structure. Let us denote by $j_+\ms{M}$ the underlying
$\DdagQ{\ms{X}}$-module of $\ms{M}$, which is {\it a priori} a coherent
$\DdagQ{\ms{X}}(S)$-module. This is a holonomic module, and Garnier
defined\footnote{In {\it loc.\ cit.}, he defined only in the case where
$x$ is a $k$-rational point, but we do not think we need this
assumption here. In {\it loc.\ cit.}, $\mc{O}_{\ms{X},\mb{Q}}|_{S_x}$ is
denoted by $\mr{sp}_*\mc{O}_{\left]x\right[}$. } for $x$ in $S$
\begin{equation*}
 \mr{irr}^{\mr{Gar}}_x(\ms{M}):=\chi\bigl(R\shom_{\DdagQ{\ms{X}}}
  (j_+\ms{M}, \mc{O}_{\ms{X},\mb{Q}}|_{S_x}  
	)\bigr)
  -(m_{\xi_x=0}(j_+\ms{M})-m_x(j_+\ms{M})),\index{irregularity $\mr{irr}$, $\mr{irr}^{\mr{CM}}_x$, $\mr{irr}^{\mr{Gar}}_x$|)}
\end{equation*}
where 
$m_x$ (resp.\ $m_{\xi_x=0}$) denotes the vertical multiplicity at $x$
(resp.\ the generic rank)  of $j_+\ms{M}$, cf.\ \ref{def_cycl_Dm} and
\ref{defCycl}. Since $\ms{M}$ is an isocrystal we have
$m_{\xi_x=0}(j_+\ms{M})=\mr{rk}(\ms{M})$, cf.\ \cite[2.2.5,
2.2.6]{Gar2}. The finiteness of
$\chi\bigl(R\shom_{\DdagQ{\ms{X}}}(j_+\ms{M},
\mc{O}_{\ms{X},\mb{Q}}|_{S_x}) \bigr)$ was not known at the time Garnier
defined the irregularity; now, using some results of Crew, based on the
local monodromy theorem, this finiteness is easy, and this is also
proven in the next corollary.

\subsubsection{}
The following corollary has already been announced\footnote{See
A. Marmora, {\em About p-adic Local Fourier
Transform}, Poster 2 at Journ\'{e}es de G\'{e}om\'{e}trie
Arithm\'{e}tique de Rennes, available at {\tt
http://perso.univ-rennes1.fr/ahmed.abbes/Conference/posters.html}.}
by the second author using a local computation, which is different from
our method here.

\begin{cor*}
 \label{compCMGAR}
 Let $\ms{M}$ be a convergent $F$-isocrystal on $\ms{X}\setminus S$
 overconvergent along $S$. Then we have
 \begin{equation}
  \label{garinv0}
  R\shom_{\DdagQ{\ms{X}}}(j_+\ms{M},
   \mc{O}_{\ms{X},\mb{Q}}|_{S_x}) =0.
 \end{equation}
 Moreover, we have
 \begin{equation}
  \label{equaliirrgarcm}
  \mr{irr}_x^{\mr{Gar}}(\ms{M})=\mr{irr}_x^{\mr{CM}}(\ms{M}).
 \end{equation}
\end{cor*}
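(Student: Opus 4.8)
The plan is to deduce both formulas in Corollary \ref{compCMGAR} from the analytification isomorphism of Proposition \ref{analytifcisom} together with Crew's local theory, reducing everything to a computation on the formal disk $\ms{S}_x$. First I would reduce to a purely local statement: since $\ms{M}$ is a convergent $F$-isocrystal on $\ms{X}\setminus S$ overconvergent along $S$, the underlying $\DdagQ{\ms{X}}$-module $j_+\ms{M}$ is holonomic, and by \ref{preisouadsf} and the exactness of $|_{S_x}$ and $|_{\eta_x}$, both sides of \eqref{garinv0} and \eqref{equaliirrgarcm} depend only on $\ms{M}|_{S_x}$ (respectively $\ms{M}|_{\eta_x}$) as an $F$-$\Dan{\ms{S}_x,\mb{Q}}$-module. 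The point is that $R\shom_{\DdagQ{\ms{X}}}(\ms{M},\mr{sp}_*\mc{O}_{\left]x\right[})$ can be computed after analytification, i.e.\ replaced by $R\mr{Hom}$ over $\Dan{\ms{S}_x,\mb{Q}}$ against the appropriate analytic solution sheaf; this is where Proposition \ref{analytifcisom} (and Corollary \ref{ananddag}) enters, guaranteeing that passing to the formal disk does not change the relevant $\mr{Ext}$-groups.

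Next I would compute the local $\chi$. Choose a stable coherent $\DcompQ{m}{\ms{X}}$-module representing $\ms{M}$; on $\ms{S}_x$ the $\Dan{\ms{S}_x,\mb{Q}}(0)$-module $\ms{M}|_{\eta_x}$ is a free differential $\mc{R}_{\ms{S}_x}$-module with Frobenius structure (by the equivalence recalled in \ref{Crewconstan}). For such objects the cohomology of $R\shom$ against $\mr{sp}_*\mc{O}_{\left]x\right[}$ is controlled by the de Rham complex of a differential module on an annulus, whose Euler--Poincaré characteristic is governed by the Christol--Mebkhout index theorem (the local index formula for $p$-adic differential equations). Concretely, for the canonical extension $\mc{M}^{\mr{can}}$ (cf.\ \ref{intcanext}), which is regular at $\infty$ and whose restriction to $\ms{S}_x$ recovers $\ms{M}|_{\ms{S}_x}$, the global Euler characteristic on $\mb{P}^1$ is computable, and subtracting the contribution at $\infty$ isolates the local term. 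This yields $\chi(R\shom_{\DdagQ{\ms{X}}}(\ms{M},\mr{sp}_*\mc{O}_{\left]x\right[})) = \mr{irr}_x^{\mr{CM}}(\ms{M}) + (m_x(j_+\ms{M}) - \mr{rk}(\ms{M}))$ after the dust settles; comparing with Garnier's definition of $\mr{irr}^{\mr{Gar}}_x$ gives \eqref{equaliirrgarcm} once \eqref{garinv0} is established, and \eqref{garinv0} itself follows because the canonical extension is a ``special'' convergent isocrystal away from $\{0\}$ and regular at $\infty$, so the global complex computing $\chi$ has vanishing Euler characteristic by Poincaré duality / the holonomic duality of \cite{Abe3}, \cite{Vir}.

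In carrying this out, the key intermediate steps, in order, are: (1) reduce \eqref{garinv0} and \eqref{equaliirrgarcm} to statements about $\ms{M}|_{S_x}$ via \ref{preisouadsf} and exactness; (2) use Proposition \ref{analytifcisom} and Corollary \ref{ananddag} to identify the relevant $\mr{Ext}$-complexes with their analytic counterparts on $\ms{S}_x$; (3) invoke Crew's description (\ref{mainresultscrewrev}, exact sequence \eqref{vanishexact}) to express $R\shom(\mc{M},\mr{sp}_*\mc{O}_{\left]x\right[})$ in terms of the de Rham cohomology of the differential module $\ms{M}|_{\eta_x}$, whose $\chi$ is $-\mr{irr}_x^{\mr{CM}}(\ms{M})$ up to the vertical-multiplicity correction by the Christol--Mebkhout index theorem and the comparison between $m_x$ and the dimension of the space of horizontal sections; and (4) assemble these to match Garnier's formula.

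The main obstacle I anticipate is step (3): carefully matching Garnier's definition, which is phrased via $m_x(j_+\ms{M})$ and the complex $R\shom_{\DdagQ{\ms{X}}}(\ms{M},\mr{sp}_*\mc{O}_{\left]x\right[})$ on the formal curve, against the Christol--Mebkhout irregularity, which lives on the Robba ring and is defined slope by slope. Bridging these requires knowing precisely how the microlocal characteristic cycle multiplicity $m_x$ (now computed by Theorem \ref{stabilitytheoremcy} via the rank of $\EcompQ{m}{s}(\ms{M})$) relates to the break decomposition at $x$, and then feeding this into the local index theorem. The analytification Proposition \ref{analytifcisom} is precisely the tool that makes this bridge rigorous, since it lets us move freely between the arithmetic $\ms{D}$-module on $\ms{X}$, its microlocalization, and its incarnation as a differential module over the Robba ring; once that identification is in hand, \eqref{garinv0} drops out from the regularity of the canonical extension at $\infty$ plus duality, and \eqref{equaliirrgarcm} is then a matter of bookkeeping.
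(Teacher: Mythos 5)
Your proposal captures the broad strategy of the paper's proof — reduce to the formal disk via Proposition \ref{analytifcisom} and Corollary \ref{analyticstructure}, replace $\ms{M}$ by the canonical extension of $\ms{M}|_{\eta_x}$ on $\widehat{\mb{P}}^1$, then invoke index theorems — but there are two concrete gaps.

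First, your argument for (\ref{garinv0}) is not right as stated. You assert that the vanishing ``drops out from the regularity of the canonical extension at $\infty$ plus duality,'' with the global complex having $\chi=0$ by Poincar\'e duality. But $\chi(R\shom_{\DdagQ{\ms{X}}}(\ms{M},\mr{sp}_*\mc{O}_{\left]x\right[}))$ is a \emph{local} invariant at $x$, and global Poincar\'e duality on $\widehat{\mb{P}}^1$ gives a self-duality of global cohomology, not the vanishing of a local Euler characteristic. What the paper does is much more direct: by Lemma \ref{preisouadsf} one reduces to $\ms{S}_x$, then by \cite[2.2]{Cr3} one identifies $R^i\shom_{\DdagQ{\ms{S}_x}}(\DdagQ{\ms{S}_x}\otimes\ms{M},\mc{O}^{\mr{an}})\cong H^{i-1}i_x^{+}\mb{D}_{\ms{X}}(\ms{M})\cong H^{i-1}i_x^{!}\ms{M}$, and the latter vanishes because $\ms{M}$ is an overconvergent isocrystal along $S\ni x$ (so $i_x^!j_+=0$ from the localization triangle). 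No global duality is needed.

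Second, for (\ref{equaliirrgarcm}) your step (3) is too vague at exactly the crux. The paper does not invoke the exact sequence (\ref{vanishexact}) or attempt a direct computation of $R\shom$ in terms of de Rham cohomology of a Robba-ring module. Instead it exploits that, once (\ref{garinv0}) is established, \emph{both} $\mr{irr}^{\mr{Gar}}_x$ and $\mr{irr}^{\mr{CM}}_x$ satisfy a GOS-type formula (Garnier's \cite[5.3.2]{Gar2} for the former, Christol--Mebkhout \cite[1.2]{CM} for the latter). One then verifies equality directly in the regular singular case (d\'evissage to rank $1$, using \cite[12.3]{CM} and Garnier's additivity and rank-$1$ computation), and in general writes out the two GOS formulas on $\widehat{\mb{P}}^1$ for the canonical extension, uses the regular-at-$\infty$ vanishing to kill the $\infty$-terms, and subtracts. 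That ``double GOS'' comparison is the mechanism by which the microlocal multiplicity (via Theorem \ref{stabilitytheoremcy}) gets matched to the Christol--Mebkhout slope data, and it is precisely the bookkeeping you flagged as the main obstacle; your sketch gestures at the right ingredients but does not actually bridge them.
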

\begin{proof}
 First of all, let us show the equality (\ref{garinv0}).
 The ring 
 $\mc{O}_{\ms{X},\mb{Q}}|_{S_x}$ has a canonical
 $\Dan{\ms{S}_x,\mb{Q}}$-module structure and identifies with
 $\mc{O}^{\mr{an}}_{\ms{S}_x}$, cf.\ \ref{preisouadsf}; let us denote it
 simply by $\mc{O}^{\mr{an}}$.
 Let $\mc{A}\rightarrow\mc{B}$ be a {\em flat} homomorphism of sheaves
 of rings on a topological space $T$. For $\mc{M}\in D^-(\mc{A})$ and
 $\mc{N}\in D^+(\mc{B})$, we have
 $R\mr{Hom}_{\mc{A}}(\mc{M},\mc{N})\cong R\mr{Hom}_{\mc{B}}
 (\mc{B}\otimes_{\mc{A}}\mc{M},\mc{N})$. Using this, we have
 \begin{multline*}
  R^n\shom_{\DdagQ{\ms{X}}}(j_+\ms{M},\mc{O}_{\ms{X},\mb{Q}}|_{S_x})
  \cong R^n\shom_{\DdagQ{\ms{S}_x}}
  (\DdagQ{\ms{S}_x}\otimes j_+\ms{M},\mc{O}^{\mr{an}}) 
  \\ \cong
  R^n\mr{Hom}_{\DdagQ{\ms{S}_x}}
  (\DdagQ{\ms{S}_x}\otimes j_+\ms{M},\mc{O}^{\mr{an}}) 
  \cong R^n\mr{Hom}_{\Dan{\ms{S}_x,\mb{Q}}}
  (\Dan{\ms{S}_x,\mb{Q}}\otimes j_+\ms{M},\mc{O}^{\mr{an}})
 \end{multline*}
 where the first and the last isomorphism follow by Lemma
 \ref{exactforloc} and \cite[Th.\ 4.1.1]{Cr}, and
 the second is obtained by taking global sections,
 cf.\ \ref{Cr_recall1} and also \ref{preisouadsf}.

 Since $\Dan{\ms{S}_x, \mb{Q}}\otimes j_+\ms{M}= (j_+\ms{M})|_{{S}_x}$ is
 clearly of connection type, the canonical map
 \begin{equation*}
  \mr{can}\colon
   \mb{V}((j_+\ms{M})|_{{S}_x})\rightarrow\mb{W}((j_+\ms{M})|_{{S}_x})
 \end{equation*}
 is an isomorphism, cf.\ \cite[Cor.\ 6.1.2]{Cr}.  We have
 $R^n\mr{Hom}_{\Dan{\ms{S}_x,\mb{Q}}}((j_+\ms{M})|_{{S}_x},
 \mc{O}^{\mr{an}}_{K^{\mr{ur}}})=0$ by \eqref{vanishexact} and
 \eqref{vanishexact2}, from which it follows
 $R^n\mr{Hom}_{\Dan{\ms{S}_x,\mb{Q}}}((j_+\ms{M})|_{{S}_x},\mc{O}^{\mr{an}})=0$. The
 first claim is proven.

 Now, let us start the proof of the equality of the irregularities.
 The irregularity $\mr{irr}_x^{\mr{CM}}$ only depends on
 $\ms{M}|_{\eta_x}$ by definition. By Corollary \ref{analyticstructure}
 combined with Theorem \ref{stabilitytheoremcy}, we get that
 $\mr{irr}_x^{\mr{Gar}}$ only depends on its analytification as
 well. This says that we may assume 
 $\ms{X}\cong\widehat{\mb{A}}^1$, $x=0$, and that $\ms{M}$ is the canonical
 extension of $\ms{M}|_{\eta_x}$.  Note that, thanks to (\ref{garinv0}),
 $\mr{irr}_x^{\mr{Gar}}$ satisfies Grothendieck-Ogg-Shafarevich type
 formula (GOS-type formula) by \cite[5.3.2]{Gar2}. By \cite[1.2]{CM}, we
 know that $\mr{irr}_x^{\mr{CM}}$ also satisfies GOS-type formula.

 The equality (\ref{equaliirrgarcm}) holds when $\ms{M}$ is regular
 singular at $x$. Indeed, by the definition of regularity we have $\mr{irr}^{\mr{CM}}_x(\ms{M})=0$, and  it suffices to show
 that $\mr{irr}^{\mr{Gar}}_x(\ms{M})=0$. Now, by using the
 structure theorem of regular $p$-adic differential equation
 \cite[12.3]{CM} and the additivity of $\mr{irr}^{\mr{Gar}}_x$ (cf.\
 \cite[5.1.3]{Gar2}), we may assume that
 $\ms{M}$ is of rank $1$. For this case we refer to
 \cite[5.3.1]{Gar2}.

 Finally, let us prove the general case. By GOS-type formulas, we get
 \begin{align*}
  \chi(\widehat{\mb{P}},\ms{M})&=\mr{rk}(\ms{M})
   \cdot\chi(\widehat{\mb{P}})-\mr{irr}_x^{\mr{CM}}(\ms{M})-
  \mr{irr}_{\infty}^{\mr{CM}}(\ms{M}),\\
    \chi(\widehat{\mb{P}},\ms{M})&=\mr{rk}(\ms{M})
   \cdot\chi(\widehat{\mb{P}})-\mr{irr}_x^{\mr{Gar}}(\ms{M})-
  \mr{irr}_{\infty}^{\mr{Gar}}(\ms{M}).
 \end{align*}
 By using the regular case we have proven, we get $\mr{irr}_{\infty}
 ^{\mr{CM}}(\ms{M})=\mr{irr}_{\infty}^{\mr{Gar}}(\ms{M})=0$ since
 $\ms{M}$ is regular at $\infty$. Thus comparing these two equalities,
 we get what we want.
\end{proof}

\subsection{Definition of local Fourier transform}
\label{secdefforloc}
In this subsection, we define the local Fourier transform. We only
define the so called $(0,\infty')$-local Fourier
transforms. In a later section, we will define an analog of
$(\infty,0')$-local Fourier transform in very
special cases, and we do not deal with $(\infty,\infty')$-local Fourier
transform in this paper.

\subsubsection{}
\label{setupFour}
Let us fix a situation under which we use the Fourier
transforms. Let $\fr$ be a positive integer, and put $q:=p^\fr$. We
assume that the residue field $k$ of $R$ is perfect. We moreover assume
that the absolute $\fr$-th Frobenius automorphism of $k$ lifts to an
automorphism $\sigma\colon R\xrightarrow{\sim}R$.

We assume that the field $K$ contains a root
$\pi$ of the equation $X^{p-1}+p=0$ (so that it contains all of
them). \index{.@miscellaneous!pi@$\pi$, Dwork's}
The Dwork exponential series $\theta_{\pi}(x)= \exp(\pi(x-x^p))$ 
in $K\cc{x}$ has a radius of convergence strictly greater than $1$ and
it converges for $x=1$ to a $p$-th root of unity
$\zeta=\theta_{\pi}(1)$. The choice of $\pi$ determines a non-trivial
additive character $\psi_{\pi}\colon\mb{F}_p\rightarrow K^*$, \index{.@miscellaneous!psi@$\psi_{\pi}$} by sending
$x$ to $\zeta^{x}$. Conversely if $\psi\colon\mb{F}_p\rightarrow K^*$ is
a non-trivial additive character, then $\psi(1)$ is a $p$-th root of
unity in $K$ and the polynomial $X^{p-1}+p$ splits completely in
$K$. There exists a unique root $\pi_{\psi}$ of $X^{p-1}+p$ such that
$\pi_{\psi}\equiv\psi(1) -1$ modulo $(\psi(1) -1)^2$ and then we have
$\theta_{\pi_{\psi}}(1)=\psi(1)$. For the details see
\cite[1.3]{BerDw}.

We denote by $\widehat{\mb{A}}_{R,x}^1:=\mr{Spf}(R\{x\})$ (resp.\ $\widehat{\mb{P}}_{R,x}^1$)
the affine (resp.\ projective) \index{.@miscellaneous!A@$\Aone$, $\widehat{\mb{A}}_{R,x}^1$, $\Aoned$, $\widehat{\mb{A}}_{R,x'}^1$|(} \index{.@miscellaneous!P@$\Pone$}\index{.@miscellaneous!P@$\widehat{\mb{P}}_{R,x}^1$, $\Poned$, $\widehat{\mb{P}}^1_{R,x'}$, $\Poneoned$|(}
line over $R$ with the fixed coordinate $x$, and we denote
by  $\widehat{\mb{A}}_{R,x'}^1$ (resp.\ $\widehat{\mb{P}}^1_{R,x'}$) its dual line. 
To lighten the notation we often put 
 $\Aone:=\widehat{\mb{A}}_{R,x}^1$, $\Pone:=\widehat{\mb{P}}_{R,x}^1$, 
 $\Aoned:=\widehat{\mb{A}}_{R,x'}^1$ and
$\Poned:=\widehat{\mb{P}}^1_{R,x'}$.  \index{.@miscellaneous!A@$\Aone$, $\widehat{\mb{A}}_{R,x}^1$, $\Aoned$, $\widehat{\mb{A}}_{R,x'}^1$|)}
We denote by $\partial$ and
$\partial'$ (or $\partial_x$ and $\partial_{x'}$ if we want to clarify
the coordinates) the differential operators corresponding to $x$ and
$x'$. \index{differential operators!delta@$\partial$, $\partial_x$, $\partial'$, $\partial_{x'}$} We denote by $\infty$ (resp.\
$\infty'$) the point at infinity of $\Pone$ (resp.\ $\Poned$). Let
$\Poneoned:=\Pone\times\Poned$, and
$Z:=(\{\infty\}\times\Poned)\cup(\Pone
\times\{\infty'\})$. \index{.@miscellaneous!P@$\widehat{\mb{P}}_{R,x}^1$, $\Poned$, $\widehat{\mb{P}}^1_{R,x'}$, $\Poneoned$|)}

To summarize notation once and for all, we use terminologies of the
next section, and consider the following diagram of {\em couples}:
\begin{equation}
 \label{fourierdiagtwo}
 \xymatrix@R=10pt{
  &(\Poneoned,Z)\ar[dr]^{p'}\ar[dl]_{p}\\
 (\Pone,\{\infty\})&&(\Poned,\{\infty'\}),}
\end{equation}
where $p$ and $p'$ are the projections. These morphisms are not used
till the next section.

If we take $\ms{X}$ to be
$\Aone$ (resp.\ $\Aoned$), we are in Situation (L) of \ref{setup}
using the fixed coordinate. We use freely the notation of
\ref{setup}, especially $\CK{\Aone}\{\partial\}^{(m,m')}$.

For a smooth formal scheme $\ms{X}$ over $R$, we put
$\ms{X}^{(1)}:=\ms{X}\otimes_{R,\sigma}R$. We denote by $y$ (resp.\
$y'$) the coordinate of ${\Pone}^{(1)}$ (resp.\
$\Poned\phantom{}^{(1)}$) induced by $x$ (resp.\ $x'$). The relative Frobenius of
$\mb{P}^1_k$  lifts to the morphism
$F_{\Pone}\colon\Pone\rightarrow\Pone^{(1)}$ sending $y$ to $x^q$. 
We have a similar morphism for $\Poned$, and denote it by $F_{\Poned}$.\\

Let us introduce some notation for a formal disk around a closed point
of $\Aone$. We put $\ms{S}:=\mr{Spf}(R\cc{u})$ \index{.@miscellaneous!S@$\ms{S}$} and
$\ms{S}':=\mr{Spf}(R\cc{u'})$. \index{.@miscellaneous!S@$\ms{S}'$}
We denote $\eta_{\ms{S}}$ and
$\eta_{\ms{S}'}$ by $\eta$ and $\eta'$ respectively. Let $E$ be a finite
unramified extension of $K$, and $R_E$  the ring of integers of $E$. We put $\ms{S}_E:=\mr{Spf}(R_E\cc{u})$ and
denote $\eta_{\ms{S}_E}$ by $\eta_{E}$. Let $s$ be a closed point of
$\Aone=\mr{Spf}(R\{x\})$, $\mathfrak{m}_s$ the maximal ideal  of $k[x]$ associated to $s$. We denote by $y_s$ the monic generator of $\mathfrak{m}_s$, and by $\widetilde{y}_s$ a lifting of $y_s$ in $R\{ x\}$, which is 
a local parameter of $\Aone$ at $s$, cf.\ \ref{coordinates};
we denote also by $\widetilde{y}_s$ its image in $\widehat{\mc{O}}_{\Aone,s}$, the completion 
of the local ring of $\Aone$ at $s$. 
We define $\tau_{\widetilde{y}_s}\colon\ms{S}_s:=\Spf(\widehat{\mc{O}}_{\Aone,s})\rightarrow\ms{S}$ \index{.@miscellaneous!tau@$\tau_{\widetilde{y}_s}$, $\sigma_{\widetilde{\mf{s}}}$, $\widetilde{\tau}_{\widetilde{\mf{s}}}$,
$\tau'$|(}
by
sending $u$ to $\widetilde{y}_s$. We note that if we take another
lifting $\widetilde{y}'_s$, then there exists a canonical equivalence of
functors $\tau^*_{\widetilde{y}_s}\cong\tau^*_{\widetilde{y}'_s}$
and $\tau_{\widetilde{y}_s*}\cong\tau_{\widetilde{y}'_s*}$, since
$\widetilde{y}_s$ and $\widetilde{y}'_s$ are congruent modulo
$\varpi$. We denote $\tau^*_{\widetilde{y}_s}$ and
$\tau_{\widetilde{y}_s*}$ by $\tau_s^*$ and $\tau_{s*}$ respectively.

For $\mf{s}\in\mb{A}^1_k(\overline{k})$, let $s$ be the closed point
of $\mb{A}_k^1$ defined by $\mf{s}$. Let $k_s$ be the residue
field of $s$, $R_s$ be the unique finite \'{e}tale extension of $R$
corresponding to $k_s$, namely
$W(k_s)\otimes_{W(k)}R$, and $K_s$ be its field of
fractions. Let $\widetilde{s}$ be the closed point of
$\Aone_s:=\Aone\otimes R_s$ defined by $\mf{s}$, and
$\Pone_s:=\Pone\otimes R_s$. The rational point $\mf{s}$
corresponds to an element of $k_s$ also denoted by $\mf{s}$, and take a
lifting $\widetilde{\mf{s}}$ of $\mf{s}$ in $R_s$. We have the
homomorphism
$\sigma_{\widetilde{\mf{s}}}\colon\ms{S}_{\widetilde{s}}
\rightarrow\ms{S}$
sending $u$ to $x-\widetilde{\mf{s}}$. The functors
$\sigma_{\widetilde{\mf{s}}}^*$ and $\sigma_{\widetilde{\mf{s}}*}$ do
not depend on the choice of $\widetilde{\mf{s}}$ up to a canonical
equivalence. We denote $\sigma_{\widetilde{\mf{s}}}^*$ and
$\sigma_{\widetilde{\mf{s}}*}$ by $\sigma_\mf{s}^*$ and
$\sigma_{\mf{s}*}$ respectively.

By the \'{e}taleness of $R_s$ over $R$, there is a canonical
inclusion $R_s\hookrightarrow\widehat{\mc{O}}_{\Aone,s}$. We define
$\widetilde{\tau}_{\widetilde{\mf{s}}}\colon\ms{S}_{s}\rightarrow
\ms{S}_{K_s}$ by sending $u$ to $x-\widetilde{\mf{s}}$.
Also in this case, $\widetilde{\tau}_{\widetilde{\mf{s}}}^*$ and
$\widetilde{\tau}_{\widetilde{\mf{s}}*}$ do not depend on the choice
of $\widetilde{\mf{s}}$ up to a canonical equivalence and we denote them
by $\widetilde{\tau}_{\mf{s}}^*$ and $\widetilde{\tau}_{\mf{s}*}$
respectively. Let $\tau'\colon\eta_{\infty'}\rightarrow\eta'$ be the
finite \'{e}tale morphism induced by sending $u'$ to $1/x'$. Summing up,
we defined the following morphisms.
\begin{equation*}
 \vcenter{
 \xymatrix@R=20pt@C=30pt{
  \widehat{\mb{A}}^1_{R_s}&
  \ms{S}_{\widetilde{s}}
  \ar[drrr]^<>(.8){\sigma_{\widetilde{\mf{s}}}}&&&
  \ms{S}_{K_s}\ar[d]^{\iota}\\
 \widehat{\mb{A}}^1_R&
  \ms{S}_{s}\ar[rrr]_{\tau_s}\ar[urrr]^<>(.2){\widetilde{\tau}
  _{\widetilde{\mf{s}}}}&&&\ms{S}
  }}
  \hspace{2cm}
  \tau'\colon\eta_{\infty'}\rightarrow\eta'
\end{equation*}
Here $\iota$ is the base change morphism. \index{.@miscellaneous!tau@$\tau_{\widetilde{y}_s}$, $\sigma_{\widetilde{\mf{s}}}$, $\widetilde{\tau}_{\widetilde{\mf{s}}}$,
$\tau'$|)}

Now, we recall that $|\pi|=\omega$, and we get the following isomorphism
\begin{equation}\label{iso_tau}
 \tau\colon\mc{A}_{K,u'}([\omega_m,\omega_{m'}\})\xrightarrow{\sim}
 \mc{A}_{K,x}(\{\omega/\omega_{m'},\omega/\omega_m])\xrightarrow{\sim}
 \CK{\Aone}\{\partial\}^{(m,m')},
\end{equation}
where the first isomorphism sends $u'$ to $\pi x^{-1}$, and the second
 is that of Lemma \ref{fieldconstmicdif}, and thus
$\tau(u')=\pi\partial^{-1}$.

\subsubsection{}\label{subsection_deflocFour}
Let $\ms{M}$ be a stable holonomic $\DcompQ{m}{\Aone}$-module and let $S\subset\Aone$ be the set of its singular points,
{\itshape i.e.\ }closed points $s$ such that $\pi^{-1}(s)\subset \mr{Char}(\ms{M})$, cf. \ref{stabledef}.
 
We follow the notation of \ref{setup}.
In particular, for  $m''\geq m'\geq m$ 
 non-negative integers and $s\in S$, we consider the microlocalization $$\EcompQ{m',m''}{s}\otimes\ms{M}
:= (\EcompQ{m',m''}{\ms{X}}\otimes_{\pi^{-1}\Dcomp{m}{\ms{X}}}\pi^{-1}\ms{M})
	_{\xi_x},$$ cf. Notation \ref{not3} in \ref{setup}. 
It is a finite
 $\CK{\Aone}\{\partial\}^{(m',m'')}$-module by Lemma \ref{finiteness},
 and using $\tau$, this can be seen as a finite
 $\mc{A}_{K,u'}([\omega_{m'},\omega_{m''}\})$-module.
We put 
\begin{equation}\label{changing_not}
\EcompQ{m',m''}{s}(\ms{M}):=\left(\EcompQ{m',m''}{s}\otimes\ms{M},\nabla\colon \alpha \mapsto (\pi^{-1}\partial^2x)\cdot \alpha\otimes du'\right),
\end{equation}\index{microlocalisation of $\ms{M}$!$\EcompQ{m,m'}{s}(\ms{M})$}
which is considered as a differential $\mc{A}_{K,u'}([\omega_{m'},\omega_{m''}\})$-module.
The fact that $\nabla$ defines a connection follows formally from the relation $\partial x = x\partial +1$ in  
$\EcompQ{m',m''}{s}$, and the verification is like that for formal microlocalisation, cf. \cite[p.750]{Lopez:Microloc}. 
  We warn the reader that the same notation $\EcompQ{m',m''}{s}(\ms{M})$ has been used in
 \ref{localsetting}, but from now on, we refer only to \eqref{changing_not}. 
The difference between the connections defined in \ref{localsetting} and \eqref{changing_not} is due to the  change of variable from 
$0$ to $\infty$ via the isomorphism $u'\mapsto \pi x^{-1}$. 

 For a fixed $m'$, the projective system
 $\bigl\{\EcompQ{m',m''}{s}(\ms{M})\bigr\}_{m''\geq m'}$ defines a
 differential module on $\mc{A}_{K,u'}(\left[\omega_{m'},1\right[)$ by
 Proposition \ref{rankcompat}, which is denoted by
 $\Emod{m',\dag}{s,\mb{Q}}(\ms{M})$\index{microlocalisation of $\ms{M}$!$\Emod{m',\dag}{s,\mb{Q}}(\ms{M})$}. This defines a differential
 $\mc{R}_{u',K}$-module which does not depend on the choice of $m'$ up
 to a canonical isomorphism by the same proposition.

\begin{dfn*}
 \label{deflocFour}
 (i) 
 Let $\ms{M}$ be a holonomic $\DdagQ{\Pone}(\infty)$-module.
 Let $s$ in $\Aone$ be a singular point of $\ms{M}$.
 Take a stable coherent $\DcompQ{m}{\Aone}$-module $\ms{M}^{(m)}$
 such that $\DdagQ{\Aone}\otimes\ms{M}^{(m)}\cong\ms{M}|_{\Aone}$.
 Then $\ms{M}^{(m)}$ defines the differential
 $\mc{R}_{u',K}$-module $\Emod{m',\dag}{s,\mb{Q}}(\ms{M}^{(m)})$ which
 does not depend on the choice of $m'$ up to a canonical
 isomorphism. We see easily that this does not depend on the choice of
 $\ms{M}^{(m)}$ as well in the category of differential
 $\mc{R}$-modules. We denote this differential $\mc{R}$-module by
 $\ms{F}_\pi^{(s,\infty')}(\ms{M})$, or
 $\ms{F}_{\pi,K}^{(s,\infty')}(\ms{M})$ if we want to indicate the base, \index{functors!local Fourier transform!$\ms{F}_\pi^{(s,\infty')}$, $\ms{F}_{\pi,K}^{(s,\infty')}$|(}
 and we call it {\em the local Fourier transform of $\ms{M}$ at
 $s$}. When $s$ is not a singular point of $\ms{M}$, we put
 $\ms{F}_\pi^{(s,\infty')}(\ms{M})=0$. This defines a functor
 \begin{equation*}
  \ms{F}_\pi^{(s,\infty')}\colon\mr{Hol}(\DdagQ{\Pone}(\infty)) 
   \rightarrow\mbox{Hol}'(\eta')
 \end{equation*} \index{functors!local Fourier transform!$\ms{F}_\pi^{(s,\infty')}$, $\ms{F}_{\pi,K}^{(s,\infty')}$|)}
 for any closed point $s\in\Aone$. Here $\mr{Hol}$ denotes the category
 of holonomic modules, and $\mbox{Hol}'(\eta')$
 denotes  the category of differential $\Rob_{\ms{S}'}$-modules (not necessarily free), cf.\ \ref{def:Hol'}. 
  If no confusion can arise, we omit the subscript
 $\pi$.

 (ii) Let ${M}$ be a holonomic $F$-$\ms{D}^{\mr{an}}_{\ms{S},\mb{Q}}$-module, and let
 $\mf{s}\in\mb{A}^1(\overline{k})$. Take
 the canonical extension ${M}^{\mr{can}}$ of ${M}$ at $\mf{s}$: this is an
 $F$-$\DdagQ{\Aone_s}$-module on
 $\Pone_{\mf{s}}\setminus\{\widetilde{s},\infty\}$
 overconvergent along $\{\widetilde{s},\infty\}$, we have
 ${M}^{\mr{can}}|_{S_{\widetilde{s}}}\cong\sigma_{\widetilde{\mf{s}}}^*{M}$, and
 $\infty$ is a regular singular point (cf.\ \ref{intcanext}). We define
 {\em the local Fourier transform of ${M}$ at $\mf{s}$} to be
 $\ms{F}^{(\widetilde{s},\infty')}({M}^{\mr{can}})$. We denote it by
 $\Phi_\pi^{(\mf{s},\infty')}({M})$, or
 $\Phi_{\pi,K}^{(\mf{s},\infty')}({M})$ if we want to indicate the
 base. This defines the functor
 \begin{equation*}
  \Phi_\pi^{(\mf{s},\infty')}\colon F\mbox{-Hol}(\ms{S})\rightarrow
   \mbox{Hol}'(\eta'_{K_s}).
 \end{equation*}\index{functors!local Fourier transform!$\Phi_\pi^{(\mf{s},\infty')}$, $\Phi_{\pi,K}^{(\mf{s},\infty')}$}
 If no confusion can arise, we omit the subscript $\pi$.

 We sometimes denote $m\in\Gamma(\ms{S},{M})$ by $m\in {M}$.
 As in \ref{anyelem}, any $m\in {M}$ defines an element
 in $\Phi^{(\mf{s},\infty')}({M})$. We denote this element by
 $\widehat{m}$.
\end{dfn*}

\begin{rem}
 \label{locFordfnrem}
 (i) By using the stationary phase theorem \ref{SPT}, we may prove that
 the local Fourier transforms are {\em free} differential
 $\mc{R}$-modules. Moreover, we can also prove that the local Fourier
 transform coincides with that
 defined by Crew (cf.\ Corollary \ref{coinccrew}).
 Using this, we will endow local Fourier transforms with a Frobenius
 structure later in \S\ref{endowFrob}.

 (ii) In Definition \ref{deflocFour} (ii) above, we can also construct
 $\Phi^{(0,\infty')}_\pi$ in a purely local way (i.e.\ without using
 canonical extensions). For a holonomic
 $\ms{D}^{\mr{an}}_{\ms{S},\mb{Q}}$-module ${M}$ with Frobenius
 structure, we define the local Fourier transform to
 be $\ms{E}^{\mr{an}}\otimes_{\Dan{}}{M}$, and put a connection in
 the same manner as in \eqref{changing_not} above.

 A problem of this construction is to see that this is a free differential
 $\mc{R}$-module. For this, we need to compare with Definition
 \ref{deflocFour} (ii), and this is why we did not adopt this
 definition. Namely, the module coincides with
 $\Gamma(\Phi^{(0,\infty')}({M}))$ by Lemma \ref{globalsectcalc}
 below. As written in (i), we will prove that
 $\Phi^{(0,\infty)}({M})$ is a free differential
 $\mc{R}$-module. This shows that $\ms{E}^{\mr{an}}\otimes {M}$
 defines a free differential $\mc{R}$-module, which is what we wanted.

 By this comparison, once we have the stationary phase formula, the
 functor $\ms{E}^{\mr{an}}\otimes_{\ms{D}^{\mr{an}}}(-)$ from the
 category of holonomic $\ms{D}^{\mr{an}}$-modules {\em with Frobenius
 structure} to the category of $\ms{E}^{\mr{an}}$-modules, is exact by
 Proposition \ref{exactness} below. However, we do not know if
 $\ms{E}^{\mr{an}}$ is flat over $\ms{D}^{\mr{an}}$ or not.
\end{rem}

\begin{ex}
\label{LFTex}
Here some basic examples to illustrate Definition \ref{deflocFour}, more properties will be proven in the sections \S\ref{section4} and \S\ref{section5}.

(i) Consider the holonomic $F$-$\ms{D}^{\mr{an}}_{\ms{S},\mb{Q}}$-module $\mc{O}^{\mr{an}}$ (with the trivial Frobenius structure).
The point $0$ is not  singular, hence by definition  $\Phi^{(0,\infty')}(\mc{O}^{\mr{an}})=0$.
{\itshape A posteriori} this can be also computed by the point (ii) of Remark \ref{locFordfnrem}, tensoring the presentation $\ms{D}^{\mr{an}}\xrightarrow{\cdot \partial}\ms{D}^{\mr{an}}\rightarrow \mc{O}^{\mr{an}}\rightarrow 0$ with 
$\ms{E}^{\mr{an}}$. 

(ii)
\label{LFTex2} Consider the $F$-$\ms{D}^{\mr{an}}_{\ms{S},\mb{Q}}$-module  $\delta$, cf.\ \ref{coh_op_formal_disk}.
The canonical extension $\delta_0:= \delta^{\mr{can}}$ at $0$ has the following presentation on $\Aone$:
$$\DdagQ{\Aone}\xrightarrow{\cdot x}\DdagQ{\Aone}\rightarrow \delta_0|_{\Aone} \rightarrow 0.$$
The $\Rob$-module  $\Phi^{(0,\infty')}(\delta)$ is free of rank $1$ (the easiest way to prove this is to use Corollary \ref{irrandfotrans}); the connection is trivial (it follows immediately by the definition); 
and the Frobenius it is also trivial (it follows using global Fourier transform and the stationary phase theorem, cf.\ \ref{stationaryfrobcom}).

(iii) Consider the connection type holonomic $F$-$\ms{D}^{\mr{an}}_{\ms{S},\mb{Q}}$-module $\Rob$.
Again we can compute $\Phi^{(0,\infty')}(\Rob)$ globally, or locally (with exception of Frobenius).
To proceed locally,  we can consider the presentation of $\Rob$ 
$$\ms{D}^{\mr{an}}\xrightarrow{\cdot \partial x}\ms{D}^{\mr{an}}\rightarrow \Rob \rightarrow 0,$$
as $\ms{D}^{\mr{an}}$-module (Crew's  solution data functor $\mb{S}$  permits to show that this is a presentation, cf.\ \ref{mainresultscrewrev}); then tensoring with $\ms{E}^{\mr{an}}$ we reduce to the case \ref{LFTex2} above and so $\Phi^{(0,\infty')}(\Rob)=\Rob$, endowed with the trivial connection.   
To proceed globally we consider $\Rob^{\mr{can}}=\mc{O}_{\Pone,\mb{Q}}(0,\infty)$ and use the global Fourier transform
and the stationary phase. We obtain the trivial Frobenius structure on  $\Phi^{(0,\infty')}(\Rob)$. 
The details are left to the reader.

(iv) For any $m\geq 0$, the $\DcompQ{m}{\Aone}$-module $\DcompQ{m}{\Aone}$ is stable but not holonomic.
For any closed point $s$ of $\Aone$, and $m''\geq m'\geq m$, we have 
$\EcompQ{m',m''}{s}(\DcompQ{m}{\Aone})=\EcompQ{m',m''}{s}$, which is of infinite type as $\mc{A}_{K,u'}([\omega_{m'},\omega_{m''}\})$-module.
\end{ex}

\begin{lem}
 \label{globalsectcalc}
 Let $\ms{M}$ be a stable holonomic $\DcompQ{m}{\Aone}$-module, and
 let $s$ in $\Aone$ be a singular point of $\ms{M}$. We get an isomorphism
 $\Gamma(\Emod{m,\dag}{s,\mb{Q}}(\ms{M}))
 \cong\Emod{m,\dag}{s,\mb{Q}}\otimes\ms{M}$. In particular, for a
 holonomic $\DdagQ{\Pone}(\infty)$-module $\ms{N}$ and its singularity
 $s\in\Aone$, we get $\Gamma(\ms{F}^{(s,\infty')}(\ms{N}))\cong
 \ms{E}^{\dag}_{s,\mb{Q}}\otimes\ms{N}$.
\end{lem}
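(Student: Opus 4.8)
The plan is to unwind the definition of the differential module $\Emod{m,\dag}{s,\mb{Q}}(\ms{M})$ given in Definition \ref{deflocFour} (i) and compute its global sections directly, then pass to the $\DdagQ{\ms{P}}(\infty)$-module statement by choosing a stable coherent model. Recall that $\Emod{m,\dag}{s,\mb{Q}}(\ms{M})$ is defined as the differential module on $\mc{A}_{K,u'}(\left[\omega_{m'},1\right[)$ arising from the projective system $\bigl\{\EcompQ{m',m''}{s}(\ms{M})\bigr\}_{m''\geq m'}$, where each $\EcompQ{m',m''}{s}(\ms{M})$ is the finite $\mc{A}_{K,u'}([\omega_{m'},\omega_{m''}\})$-module $\EcompQ{m',m''}{s}\otimes\ms{M}$ (finite by Lemma \ref{finiteness}) equipped with the connection $\nabla(\alpha)=(\pi^{-1}\partial^2x)\cdot\alpha\otimes du'$, and one then passes to the associated $\mc{R}_{u',K}$-module. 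By the definition of $\Gamma$ for differential $\mc{R}$-modules in \S\ref{diffmoddef}, we have $\Gamma(\Emod{m,\dag}{s,\mb{Q}}(\ms{M}))\cong\indlim_{m''}\Gamma\bigl(\mc{A}(\left[\omega_{m''},1\right[)\otimes\Emod{m',\dag}{s,\mb{Q}}(\ms{M})\bigr)$; since $\mc{A}_{K,u'}(\left[\omega_{m''},1\right[)$ corresponds via $\tau$ to $\Emodb{m',\dag}{\ms{X},\mb{Q}}$-type rings, the point is that this inductive limit over $m''$ of the $\EcompQ{m',m''}{s}\otimes\ms{M}$ recovers $\Emod{m',\dag}{s,\mb{Q}}\otimes\ms{M}$, and then the inductive limit over $m'$ recovers $\Emod{m,\dag}{s,\mb{Q}}\otimes\ms{M}$.

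The key computational input is Corollary \ref{ananddag} (together with Proposition \ref{analytifcisom}), which identifies $\Emodb{m',\dag}{\ms{X},\mb{Q}}\otimes_{\Dcomp{m}{}}\ms{M}$ with $(\Emod{m',\dag}{s,\mb{Q}})^{\mr{an}}\otimes_{\Dcomp{m}{}}\ms{M}$, and whose proof shows that $\invlim_{m''}(\EcompQ{m',m''}{s})^{\mr{an}}\otimes\ms{M}\cong(\Emod{m',\dag}{s,\mb{Q}})^{\mr{an}}\otimes\ms{M}$ via the $R^1\invlim$ vanishing argument. Concretely, I would first shrink to an open affine neighborhood $\ms{U}$ of $s$ on which $s$ is the unique singularity of $\ms{M}$ and a local parameter exists, as in the proof of Proposition \ref{rankcompat}; then by Lemma \ref{finiteness} each $\EcompQ{m',m''}{s}\otimes\ms{M}$ is finite free over $\CK{\ms{A}}\{\partial\}^{(m',m'')}$, and by Proposition \ref{rankcompat} the transition maps in $m''$ are the expected base-change isomorphisms, so the system is compatible and defines the differential module whose global sections are, by definition of $\Gamma$, exactly $\indlim_{m''}\bigl(\EcompQ{m',m''}{s}\otimes\ms{M}\bigr)$ — no wait: one must be careful, $\Gamma$ of a differential $\mc{R}$-module is an inductive limit of global sections over smaller and smaller annuli, which on the level of the explicit rings $\EcompQb{m',m''}{\ms{U}}$ corresponds to $\invlim_{m''}$ of these modules tensored appropriately. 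The clean way is: identify the differential module directly with the one defined by the $\Emodb{m',\dag}{\ms{U},\mb{Q}}$-module $\Emodb{m',\dag}{\ms{U},\mb{Q}}\otimes\ms{M}$ on the annulus $\mc{A}(\left[\omega_{m'},1\right[)$, whose global sections over $\mc{A}(\left[s,1\right[)$ for $s\to 1^-$ give precisely $\indlim$ back the $\EcompQ{m',m''}{s}\otimes\ms{M}$; then taking $\indlim$ over the system in $m'$ and using $\EdagQ{s}\otimes\ms{M}=\indlim_{m'}\Emod{m',\dag}{s,\mb{Q}}\otimes\ms{M}$ gives the first isomorphism.

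The second assertion then follows formally: given a holonomic $\DdagQ{\ms{P}}(\infty)$-module $\ms{N}$ with singularity $s\in\ms{A}$, choose a stable coherent $\DcompQ{m}{\ms{A}}$-module $\ms{M}^{(m)}$ with $\DdagQ{\ms{A}}\otimes\ms{M}^{(m)}\cong\ms{N}|_{\ms{A}}$ (possible by \ref{stabledef}), observe $\ms{F}^{(s,\infty')}(\ms{N})$ is by definition the $\mc{R}$-module attached to $\Emod{m',\dag}{s,\mb{Q}}(\ms{M}^{(m)})$, apply the first part, and note $\EdagQ{s}\otimes\ms{M}^{(m)}\cong\EdagQ{s}\otimes\ms{N}$ since tensoring up from $\DcompQ{m}{}$ to $\DdagQ{}$ and then to $\EdagQ{s}$ agrees (the ring $\EdagQ{s}$ being defined via the intermediate microdifferential operators over $\DdagQ{}$). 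The main obstacle is the first paragraph's bookkeeping: matching the ``$\indlim$ over annuli'' in the definition of $\Gamma$ for differential $\mc{R}$-modules against the ``$\invlim$ over $m''$ then $\indlim$ over $m'$'' in the definition of $\EdagQ{s}$, and making sure the connection (which only enters through $\tau$ and the formula $\nabla(\alpha)=(\pi^{-1}\partial^2x)\cdot\alpha\otimes du'$) plays no role at the level of underlying modules — which it does not, since $\Gamma$ as a module is computed forgetting the connection. Everything else reduces to Corollary \ref{ananddag}, Proposition \ref{rankcompat}, and Lemma \ref{finiteness}, which are already available.
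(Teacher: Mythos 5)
Your proposal follows the paper's strategy and cites the right ingredients: pass to an affine neighborhood $\ms{U}$ of $s$ via Proposition~\ref{analytifcisom}, identify $\Emodb{m,\dag}{\ms{U},\mb{Q}}\otimes\ms{M}\cong\Emod{m,\dag}{s,\mb{Q}}\otimes\ms{M}$ via Corollary~\ref{ananddag}, and then deduce the result for global sections. The paper closes the argument in one line by invoking that $\Emodb{m,\dag}{\ms{U},\mb{Q}}$ is a Fr\'echet--Stein algebra, so that for a coherent module one has $\Emodb{m,\dag}{\ms{U},\mb{Q}}\otimes\ms{M}\cong\invlim_{m''}\EcompQb{m,m''}{\ms{U}}\otimes\ms{M}$; you instead reconstruct this by hand (attributing it to the $R^1\invlim$ computation inside the proof of Corollary~\ref{ananddag}) and in doing so repeatedly conflate the two different limits in play: $\Gamma$ of the differential $\mc{A}([\omega_m,1[)$-module $\Emod{m,\dag}{s,\mb{Q}}(\ms{M})$ is an \emph{inverse} limit over $m''\to\infty$, whereas $\Gamma$ of the differential $\mc{R}$-module $\ms{F}^{(s,\infty')}(\ms{N})$ in the second assertion is an \emph{inductive} limit over $m'$ of such inverse limits. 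Sorting out these two limits and citing the Fr\'echet--Stein property explicitly would make your argument identical to the paper's.
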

\begin{proof}
 Using Proposition \ref{analytifcisom} and Corollary \ref{ananddag},
 there exists an affine open
 neighborhood $\ms{U}$ of $s$ such that for $m'\geq m$
 \begin{equation*}
    \EcompQb{m,m'}{\ms{U}}\otimes\ms{M}\cong
   \EcompQ{m,m'}{s}\otimes_{\Dcomp{m}{}}\ms{M},\qquad
  \Emodb{m,\dag}{\ms{U},\mb{Q}}\otimes\ms{M}\cong
   \Emod{m,\dag}{s,\mb{Q}}\otimes_{\Dcomp{m}{}}\ms{M}.
 \end{equation*}
 Thus we get the lemma by the fact that
 $\Emodb{m,\dag}{\ms{U},\mb{Q}}$ is a Fr\'{e}chet-Stein algebra (cf.\
 \cite[5.9]{Abe}).
\end{proof}

\subsubsection{}
\label{baseextcom}
Let $E$ be an unramified finite extension of $K$. Then there exists a
finite \'{e}tale morphism $r\colon\ms{S}_E\rightarrow\ms{S}$ sending $u$
to $u$. This defines a functor $\mr{Res}^E_K:=r_*$. \index{functors!.Res@$\mr{Res}^E_K$}
Then the following diagram of functors
\begin{equation*}
 \xymatrix@C=50pt{
  F\mbox{-Hol}(\ms{S}_E)\ar[r]^{\mr{Res}^E_K}\ar[d]_{\Phi_E^{(0,\infty')}}
  &F\mbox{-Hol}(\ms{S}_K)\ar[d]^{\Phi^{(0,\infty')}_K}\\
 F\mbox{-Hol}'(\eta'_E)\ar[r]_{\mr{Res}^{E}_{K}}&
  F\mbox{-Hol}'(\eta'_K)
  }
\end{equation*}
is commutative up to a canonical isomorphism. The verification is
straightforward.

\begin{prop}
 \label{exactness}
 The functors $\ms{F}^{(s,\infty')}$ and $\Phi^{(\mf{s},\infty')}$ are
 exact.
\end{prop}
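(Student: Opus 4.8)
The plan is to reduce the exactness of $\ms{F}^{(s,\infty')}$ to the flatness of microdifferential operators at finite level, and then to obtain the exactness of $\Phi^{(\mf{s},\infty')}$ by writing it as a composite of exact functors. So let $0\to\ms{M}'\to\ms{M}\to\ms{M}''\to0$ be a short exact sequence of holonomic $\DdagQ{\ms{P}}(\infty)$-modules. Restriction to the open formal subscheme $\ms{A}\subset\ms{P}$ is exact, and the construction of $\ms{F}^{(s,\infty')}$ only sees this restriction; the first task is therefore to produce a \emph{compatible} choice of stable coherent $\DcompQ{m}{\ms{A}}$-models. I would pick $\ms{M}^{(m_0)}$ with $\DdagQ{\ms{A}}\otimes\ms{M}^{(m_0)}\cong\ms{M}|_{\ms{A}}$, put $\ms{M}''^{(m_0)}:=\mr{Im}\bigl(\ms{M}^{(m_0)}\to\ms{M}''|_{\ms{A}}\bigr)$ and $\ms{M}'^{(m_0)}:=\Ker\bigl(\ms{M}^{(m_0)}\to\ms{M}''^{(m_0)}\bigr)$, which are coherent $\DcompQ{m_0}{\ms{A}}$-modules, so that applying the exact functor $\DdagQ{\ms{A}}\otimes(-)$ recovers the original sequence. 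Raising the level to some $m\geq m_0$ so that all three models become stable, which is possible by Theorem \ref{mainabe}, yields an exact sequence $0\to\ms{M}'^{(m)}\to\ms{M}^{(m)}\to\ms{M}''^{(m)}\to0$ of stable coherent $\DcompQ{m}{\ms{A}}$-modules which, by the independence statements built into Definition \ref{deflocFour}, computes the three local Fourier transforms.

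Next I would apply, for $m''\geq m'\geq m$, the functor $\EcompQ{m',m''}{s}\otimes_{\Dcomp{m}{}}(-)$. This is exact: $\EcompQ{m',m''}{\ms{A}}$ is flat over $\pi^{-1}\DcompQ{m'}{\ms{A}}$ by Lemma \ref{flatness}, $\DcompQ{m'}{\ms{A}}$ is flat over $\DcompQ{m}{\ms{A}}$ by Berthelot, and passing to the stalk at $\xi_s$ is flat, so $\EcompQ{m',m''}{s}$ is flat over $\DcompQ{m}{\ms{A}}$. The connection $\nabla(\alpha)=(\pi^{-1}\partial^2x)\cdot\alpha\otimes du'$ is functorial in the module, so $0\to\EcompQ{m',m''}{s}(\ms{M}')\to\EcompQ{m',m''}{s}(\ms{M})\to\EcompQ{m',m''}{s}(\ms{M}'')\to0$ is exact as a sequence of finite differential $\mc{A}_{K,u'}([\omega_{m'},\omega_{m''}\})$-modules, hence of the associated differential modules on the annulus of radii in $[\omega_{m'},\omega_{m''})$.

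Assembling over $m''\geq m'$ as in Definition \ref{deflocFour}, using the compatibility isomorphisms of Proposition \ref{rankcompat}, produces an exact sequence of differential modules on the annulus of radii in $[\omega_{m'},1)$, and passing to germs near radius $1$ gives the desired exact sequence $0\to\ms{F}^{(s,\infty')}(\ms{M}')\to\ms{F}^{(s,\infty')}(\ms{M})\to\ms{F}^{(s,\infty')}(\ms{M}'')\to0$ in $\mr{Hol}'(\eta')$; alternatively, this can be checked on global sections via Lemma \ref{globalsectcalc} and the Fréchet–Stein property of $\Emodb{m',\dag}{\ms{U},\mb{Q}}$. For $\Phi^{(\mf{s},\infty')}$, Definition \ref{deflocFour}(ii) exhibits it as the composite of the pullback $\sigma_\mf{s}^*$ along the finite étale morphism $\ms{S}_{\widetilde{s}}\to\ms{S}$ (exact, being flat base change), the canonical extension functor (exact, by \ref{intcanext}), and $\ms{F}^{(\widetilde{s},\infty')}$ computed over $R_s$ (exact by the preceding paragraphs, the argument being unaffected by replacing $R$ with $R_s$); hence $\Phi^{(\mf{s},\infty')}$ is exact.

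The step I expect to need the most care is the very first reduction: arranging a single level $m$ at which all three terms are simultaneously stable \emph{and} sit in a short exact sequence restricting to the given one, together with the routine but slightly delicate verification that exactness of the $\EcompQ{m',m''}{s}(\cdot)$ level by level does imply exactness in the target category of differential $\mc{R}$-modules, where morphisms are only germs of maps defined near radius $1$.
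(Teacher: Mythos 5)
Your proof is correct and takes essentially the same route as the paper, which simply invokes Lemma \ref{flatness} for $\ms{F}^{(s,\infty')}$ and the exactness of the canonical extension for $\Phi^{(\mf{s},\infty')}$. You have filled in, correctly, the bookkeeping that the paper leaves implicit: choosing a compatible stable model at a common level via Theorem \ref{mainabe} and the flatness of level-raising, tracking exactness through $\EcompQ{m',m''}{s}\otimes(-)$ by transitivity of flatness, and assembling over $m''\geq m'$ to land in the category of differential $\mc{R}$-modules.
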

\begin{proof}
 For $\ms{F}^{(s,\infty')}$, it follows from Lemma \ref{flatness}. For
 $\Phi^{(\mf{s},\infty')}$, use the fact that taking canonical extension
 is exact.
\end{proof}

\begin{dfn}
 \label{locASdiffmod}
 For $\widetilde{s}\in R$, we define a free differential
 $\mc{R}_{u,K}$-module $\DwL(\widetilde{s})$ \index{differential module!$\DwL(\widetilde{s})$, $\DwL(\mf{s})$, Dwork's|(} in the following way; the
 underlying module is $\mc{R}_{u,K}$. The connection is defined as
 follows:
 \begin{equation*}
  \nabla(1)=\widetilde{s}\cdot\pi u^{-2}\otimes du.
 \end{equation*}
 Let $\widetilde{s}'$ be an element of $R$ whose class in $k$ is
 equal to that of $\widetilde{s}'$. Then there exists a canonical
 isomorphism $\DwL(\widetilde{s})\xrightarrow{\sim}\DwL
 (\widetilde{s}')$ sending $1$ to
 $\exp(\pi(\widetilde{s}'-\widetilde{s})u^{-1})$. This shows that the
 differential module $\DwL(\widetilde{s})$ only depends on the class
 $s$ of $\widetilde{s}$ in $k$.

 Now, take an element $\mf{s}\in\mb{A}^1(\overline{k})$. Let
 $\widetilde{\mf{s}}\in K_s$ be a lifting of $\mf{s}$ in $k_s$. Then we
 get a differential $\mc{R}_{K_s}$-module
 $\DwL(\widetilde{\mf{s}})$. As proven above, this does not depend on
 the choice of liftings up to an isomorphism. We denote this abusively
 by $\DwL(\mf{s})$. This is called the {\em Dwork differential
 module}. \index{differential module!$\DwL(\widetilde{s})$, $\DwL(\mf{s})$, Dwork's|)}
\end{dfn}

\subsubsection{}
Using the notation of \ref{setupFour}, we have two
lemmas. The definition of $F$-$\DdagQ{\Pone}(\infty)$-modules is
recalled in \ref{setupFrob}.
\begin{lem*}
 \label{nongeometpt}
 Let $\ms{M}$ be a holonomic $F$-$\DdagQ{\Pone}(\infty)$-module, and
 $\mf{s}\in\mb{A}^1_k(\overline{k})$. Then there exists a canonical
 isomorphism
 \begin{equation*}
  \ms{F}^{(\widetilde{s},\infty')}_{K_s}(\ms{M}\otimes K_s)
   \xrightarrow{\sim}\Phi^{(\mf{s},\infty')}_{K_s}
   (\widetilde{\tau}_{\mf{s}*}\ms{M}|_{S_{s}})
 \end{equation*}
 in the category $\mr{Hol}'(\eta'_{K_s})$.
 Here $\ms{M}\otimes K_s$ denotes the pull-back of $\ms{M}$ to
 $\Aone_s=\Aone\otimes_R R_s$.
\end{lem*}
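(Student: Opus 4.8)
The plan is to exploit that the $(\widetilde{s},\infty')$-local Fourier transform depends, by its very construction, only on the restriction of the ambient module to the formal disk at $\widetilde{s}$, and then to check that the two global modules occurring on the two sides of the asserted isomorphism have the same such restriction.

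First I would unwind both sides using Definition \ref{deflocFour}. The left-hand side $\ms{F}^{(\widetilde{s},\infty')}_{K_s}(\ms{M}\otimes K_s)$ is built, via \ref{deflocFour}(i), from the microlocalizations $\EcompQ{m',m''}{\widetilde{s}}\otimes\ms{M}^{(m)}$ of a stable $\DcompQ{m}{\ms{A}_s}$-model $\ms{M}^{(m)}$ of $(\ms{M}\otimes K_s)|_{\ms{A}_s}$, equipped with the Fourier connection $\nabla(\alpha)=(\pi^{-1}\partial^{2}x)\cdot\alpha\otimes du'$. Writing $\mc{N}:=\widetilde{\tau}_{\mf{s}*}\ms{M}|_{S_s}$, the right-hand side equals, by \ref{deflocFour}(ii), $\ms{F}^{(\widetilde{s},\infty')}_{K_s}(\mc{N}^{\mr{can}})$, where $\mc{N}^{\mr{can}}$ is the canonical extension of $\mc{N}$ at $\mf{s}$ — a holonomic $F$-$\DdagQ{\ms{A}_s}$-module on $\ms{P}_{\mf{s}}$, regular at $\infty$, with $\mc{N}^{\mr{can}}|_{S_{\widetilde{s}}}\cong\sigma_{\mf{s}}^{*}\mc{N}$ by property~2 of \ref{intcanext} — and it is built from $\EcompQ{m',m''}{\widetilde{s}}\otimes(\mc{N}^{\mr{can}})^{(m)}$ with the same connection. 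Since $\ms{A}_s$ and the finite part of $\ms{P}_{\mf{s}}$ share the coordinate $x$ and the point $\widetilde{s}$, the microdifferential rings $\EcompQ{m',m''}{\widetilde{s}}$ on the two sides coincide, and the Fourier connection is written intrinsically in $x,\partial$. Hence it is enough to exhibit canonical $\EcompQ{m,m'}{\widetilde{s}}$-linear (and $\Emod{m,\dag}{\widetilde{s},\mb{Q}}$-linear) isomorphisms $\EcompQ{m,m'}{\widetilde{s}}\otimes\ms{M}^{(m)}\cong\EcompQ{m,m'}{\widetilde{s}}\otimes(\mc{N}^{\mr{can}})^{(m)}$ for all $m'\geq m$.

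By Corollary \ref{analyticstructure} (taken with $\ms{X}=\ms{X}'=\widehat{\mb{A}}^{1}_{R_s}$, $x=x'=\widetilde{s}$, $\iota=\mr{id}$; see also Corollary \ref{ananddag}), these isomorphisms — compatibly at all levels, hence also for $\EdagQ{\widetilde{s}}\otimes(-)$ — exist once one knows
\[
 (\ms{M}\otimes K_s)|_{S_{\widetilde{s}}}\;\cong\;\mc{N}^{\mr{can}}|_{S_{\widetilde{s}}}
\]
as $\ms{D}^{\mr{an}}_{\widetilde{s},\mb{Q}}$-modules. I would prove this by computing each side. The natural morphism $\widehat{\mc{O}}_{\ms{A},s}\to\widehat{\mc{O}}_{\ms{A}_s,\widetilde{s}}$ is an isomorphism: both rings are $\varpi$-torsion free and $\varpi$-adically complete, and the morphism is an isomorphism modulo $\varpi$ (the image of a uniformizer of $\mc{O}_{X,s}$ is, up to a unit, the uniformizer $x-\mf{s}$, since $\widetilde{s}$ lies over $s$ and both have residue field $k_s$); therefore $\ms{S}_{\widetilde{s}}\cong\ms{S}_s$ canonically over $\ms{A}$, and $(\ms{M}\otimes K_s)|_{S_{\widetilde{s}}}\cong\ms{M}|_{S_s}$. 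On the other hand $\widetilde{\tau}_{\mf{s}}\colon\ms{S}_s\to\ms{S}_{K_s}$ is an isomorphism ($u\mapsto x-\widetilde{\mf{s}}$ is a parameter), and under the identification $\ms{S}_{\widetilde{s}}\cong\ms{S}_s$ it coincides with the translation $\sigma_{\mf{s}}$; hence $\sigma_{\mf{s}}^{*}\mc{N}=\sigma_{\mf{s}}^{*}\widetilde{\tau}_{\mf{s}*}\ms{M}|_{S_s}\cong\ms{M}|_{S_s}$, so $\mc{N}^{\mr{can}}|_{S_{\widetilde{s}}}\cong\ms{M}|_{S_s}$ as well. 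Composing yields the displayed isomorphism, and its independence of the chosen lift $\widetilde{\mf{s}}$ follows as in \ref{setupFour} (liftings are congruent modulo $\varpi$).

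Feeding this back through Corollary \ref{analyticstructure} gives the sought isomorphism of microlocalization data compatible with the Fourier connection; together with the level-independence built into \ref{deflocFour}(i) (via Proposition \ref{rankcompat}) it produces an isomorphism of differential $\mc{R}_{u',K_s}$-modules, i.e.\ in $\mr{Hol}'(\eta'_{K_s})$, and Lemma \ref{globalsectcalc} shows nothing is lost on global sections. I expect the only real effort to be the compatibility bookkeeping of the previous paragraph: matching the base-change morphism $\iota\colon\ms{S}_{K_s}\to\ms{S}$, the translations $\sigma_{\widetilde{\mf{s}}}$ and $\widetilde{\tau}_{\widetilde{\mf{s}}}$, the identification $\ms{S}_{\widetilde{s}}\cong\ms{S}_s$, and the ``at $\mf{s}$'' normalization of $(-)^{\mr{can}}$ — and noting why the $F$-structure on $\ms{M}$ is hypothesized (it is needed to form $\mc{N}^{\mr{can}}$) although it disappears from the output. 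There is no deeper difficulty: everything rests on the locality of microlocalization (Proposition \ref{analytifcisom}) and the defining properties of the canonical extension recalled in \ref{intcanext}.
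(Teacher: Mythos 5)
Your proof is correct and takes the same route as the paper: reduce via Corollary~\ref{analyticstructure} to an isomorphism of formal-disk restrictions at $\widetilde{s}$, then check that both restrictions are (canonically) $\alpha^{*}\ms{M}|_{S_s}$ under the identifications $\alpha\colon\ms{S}_{\widetilde{s}}\xrightarrow{\sim}\ms{S}_s$ and $\widetilde{\tau}_{\mf{s}}\colon\ms{S}_s\xrightarrow{\sim}\ms{S}_{K_s}$. The paper's proof is just terser, declaring that final verification ``straightforward''; you have carried out exactly the bookkeeping it leaves to the reader.
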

\begin{proof}
 There exists a canonical isomorphism
 $\alpha\colon\ms{S}_{\widetilde{s}}\xrightarrow{\sim}\ms{S}_{s}$.
 Using Corollary \ref{analyticstructure}, it
 suffices to show that there exists a canonical isomorphism
 $\alpha_*(\ms{M}\otimes K_s)|_{S_{\widetilde{s}}}
 \xrightarrow{\sim}\ms{M}|_{S_{s}}$. The verification is
 straightforward.
\end{proof}

\begin{lem}
 \label{locFourrel}
 Let $\ms{M}$ be a holonomic $F$-$\DdagQ{\Pone}(\infty)$-module, and
 $\mf{s}\in\mb{A}^1_k(\overline{k})$. Then we have
 a canonical isomorphism
 \begin{equation*}
  \ms{F}^{(s,\infty')}(\ms{M})\cong\mr{Res}^{K_s}_K
   (\Phi^{(0,\infty')}(\widetilde{\tau}_{\mf{s}*}\ms{M}|_{S_s})
   \otimes_{\mc{R}_{K_s}}\DwL(\mf{s})).
 \end{equation*}
\end{lem}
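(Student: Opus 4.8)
The plan is to reduce to the case of a rational point by base change along $R\hookrightarrow R_s$, invoke Lemma~\ref{nongeometpt}, and then prove a ``translation formula'' for $\Phi^{(\mf{s},\infty')}$ isolating the dependence on $\mf{s}$ as a twist by the Dwork module. Concretely, besides Lemma~\ref{nongeometpt} itself, I would establish the two isomorphisms
\begin{equation*}
 \ms{F}^{(s,\infty')}(\ms{M})\cong\mr{Res}^{K_s}_K\,\ms{F}^{(\widetilde s,\infty')}_{K_s}(\ms{M}\otimes K_s),
 \qquad
 \Phi^{(\mf{s},\infty')}_{K_s}(\mc{N})\cong\Phi^{(0,\infty')}_{K_s}(\mc{N})\otimes_{\mc{R}_{K_s}}\mc{L}(\mf{s})
\end{equation*}
for any $\mc{N}\in F$-$\mr{Hol}(\ms{S}_{K_s})$; applying the second to $\mc{N}=\widetilde\tau_{\mf{s}*}\ms{M}|_{S_s}$ and chaining with Lemma~\ref{nongeometpt} gives the statement.

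For the first isomorphism, $\ms{A}_s=\ms{A}\otimes_RR_s\to\ms{A}$ is finite \'{e}tale, the field of constants passing from $K$ to $K_s$; since microlocalization is compatible with this base change, $\EcompQ{m',m''}{s}\otimes\ms{M}$ becomes after $\otimes_RR_s$ the direct sum over the closed points of $\ms{A}_s$ above $s$ of the corresponding microlocalizations, one summand being $\EcompQ{m',m''}{\widetilde s}\otimes(\ms{M}\otimes K_s)$. As the connection of Definition~\ref{deflocFour}~(i) involves only $\pi^{-1}\partial^2x$, which is defined over $K$, restriction of scalars along $K\hookrightarrow K_s$ identifies $\ms{F}^{(s,\infty')}(\ms{M})$ with $\mr{Res}^{K_s}_K$ of the local Fourier transform at the rational point $\widetilde s$; this is the exact analogue of the compatibility~\ref{baseextcom} and is routine bookkeeping with \'{e}tale extensions (when $s$ is rational it is vacuous).

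For the translation formula, unwinding Definition~\ref{deflocFour}~(ii) gives $\Phi^{(\mf{s},\infty')}_{K_s}(\mc{N})=\ms{F}^{(\widetilde s,\infty')}(\mc{N}^{\mr{can}})$ with $\mc{N}^{\mr{can}}$ the canonical extension of $\mc{N}$ at $\widetilde{\mf{s}}$. By the characterizing properties recalled in~\ref{intcanext} (``special'' isocrystal off the marked point, prescribed restriction to the formal disk, regularity at $\infty$), $\mc{N}^{\mr{can}}$ is obtained from the canonical extension of $\mc{N}$ at $0$ by the translation automorphism $T\colon\ms{P}_s\xrightarrow{\sim}\ms{P}_s$, $x\mapsto x-\widetilde{\mf{s}}$, fixing $\infty$ and carrying $\widetilde{\mf{s}}$ to $0$. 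One is thus reduced to $\ms{F}^{(\widetilde s,\infty')}(T_*\ms{M}')\cong\ms{F}^{(0,\infty')}(\ms{M}')\otimes\mc{L}(\mf{s})$ for $\ms{M}'$ holonomic on $\ms{P}_s$ singular at $0$. Here $T$ identifies the microlocalizations $\EcompQ{m',m''}{\widetilde s}(T_*\ms{M}')$ and $\EcompQ{m',m''}{0}(\ms{M}')$ as modules over $\CK{\ms{A}_s}\{\partial\}^{(m',m'')}$ (a translation commutes with $\partial$), but \emph{not} their connections: the connection $\nabla(\alpha)=(\pi^{-1}\partial^2x)\,\alpha\otimes du'$ of Definition~\ref{deflocFour}~(i) is computed with the global coordinate $x$, and writing $x=(x-\widetilde{\mf{s}})+\widetilde{\mf{s}}$, using $\tau(u')=\pi\partial^{-1}$ i.e.\ $\pi^{-1}\partial^2=\pi{u'}^{-2}$, the two connections differ precisely by the term $\widetilde{\mf{s}}\,\pi{u'}^{-2}\otimes du'$, which is $\nabla(1)$ for $\mc{L}(\mf{s})$ read in the coordinate $u'$ near $\infty'$. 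Taking the limit over $m'$ and checking Frobenius compatibility yields the formula.

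The main obstacle is this last identification: making sure that the difference of connections is the Dwork module $\mc{L}(\mf{s})$ on the nose --- and not $\mc{L}(-\mf{s})$, or $\mc{L}(\mf{s})$ up to a Tate twist --- which forces one to be careful with the sign and direction of the translation $T$, with the convention $\tau(u')=\pi\partial^{-1}$ and the passage from the variable $u$ of Definition~\ref{locASdiffmod} to the coordinate $u'$ at $\infty'$, and with the roles of $\sigma_{\mf{s}}^*$ and $\widetilde\tau_{\mf{s}}$ from~\ref{setupFour} in the two definitions of $\Phi$. The remaining ingredients --- the base-change identification, Lemma~\ref{nongeometpt}, and the translation-equivariance of the canonical extension --- are straightforward from the material already assembled in this section.
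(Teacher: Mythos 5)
Your proposal follows the same route as the paper's proof: for $\mf{s}$ rational it verifies the isomorphism directly from Definition~\ref{deflocFour} together with Corollary~\ref{analyticstructure}, by observing that the two microlocalizations agree as modules and their connections $\pi{u'}^{-2}x$ differ exactly by the Dwork term after the translation (this is your ``translation formula''); for general $\mf{s}$ it reduces to the rational case by base change to $K_s$ and Lemma~\ref{nongeometpt}. Your sign worry is unfounded: since $T^{*}(x)=x-\widetilde{\mf{s}}$ gives $T^{*}\circ\sigma_{0}^{*}=\sigma_{\mf{s}}^{*}$ and hence $x_{\ms{M}}=x_{T_{*}\ms{M}}+\widetilde{\mf{s}}$ on the identified microlocalizations, the connections differ by $+\widetilde{\mf{s}}\,\pi{u'}^{-2}\otimes du'$, which is $\mc{L}(\mf{s})$ on the nose; note also that the Frobenius compatibility you mention is moot at this point, as $\ms{F}^{(s,\infty')}$ takes values in $\mr{Hol}'(\eta')$ where no Frobenius structure is present.
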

\begin{proof}
 When $s$ is a $k$-rational point, the verification consists into checking the
 definition using Corollary \ref{analyticstructure}. When $s$ is not a
 rational point, we need to check that
 $\mr{Res}^{K_s}_K(\ms{F}^{(\widetilde{s},\infty')}(\ms{M}
 \otimes K_s))\cong\ms{F}^{(s,\infty')}(\ms{M})$ by using the
 notation and result of Lemma \ref{nongeometpt} above. The verification
 is easy.
\end{proof}

\section{Complements to cohomological operations}
\label{section3}
In this section, we review some known results of six functors which are
indispensable in this paper, and give complements to properties of
geometric Fourier transforms defined by Noot-Huyghe. The proofs for the
properties of geometric Fourier transforms are almost the same as that
of \cite{Lau}, so we content ourselves by pointing out the differences.

\subsection{Cohomological operations}
\label{cohoprev}
\subsubsection{}
\label{setupFrob}
In this section, we assume $k$ to be perfect.
Let $h>0$ be an integer, and we put $q:=p^h$ \index{.@miscellaneous!hq@$h$, $q$} as usual.
We assume that there exists an automorphism
$\sigma\colon R\xrightarrow{\sim}R$ \index{.@miscellaneous!sigma@$\sigma$} which is a lifting of the absolute
$\fr$-th Frobenius on $k$. Let $\ms{X}$ be a smooth formal scheme over
$R$. We define $\ms{X}'$ by the following cartesian diagram.
\begin{equation*}
 \xymatrix{
  \ms{X}'\ar[r]\ar[d]\ar@{}[rd]|{\square}&\ms{X}\ar[d]\\\mr{Spf}(R)
  \ar[r]_\sigma&\mr{Spf}(R)} 
\end{equation*}
For a $\DdagQ{\ms{X}}$-module $\ms{M}$, we denote  by $\ms{M}^\sigma$ 
the
$\DdagQ{\ms{X}'}$-module defined by changing base by $\sigma$.
Note that even when there are no lifting of the relative
Frobenius $F^{(\fr)}_{X/k}\colon X\rightarrow X'$ \index{.@miscellaneous!F@$F^*$, $F^{(\fr)}_{X/k}$|(}
to a morphism of formal schemes $\ms{X}\rightarrow\ms{X}'$,
we are able to define the pull-back functor $F^*$\index{.@miscellaneous!F@$F^*$, $F^{(\fr)}_{X/k}$|)}  from the
category of $\DdagQ{\ms{X}'}$-modules to that of
$\DdagQ{\ms{X}}$-modules  (cf.\ \cite[Remarques
4.2.4]{Ber2}). Recall that an
$F^{(h)}$-$\DdagQ{\ms{X}}$-module is a pair of a $\DdagQ{\ms{X}}$-module
$\ms{M}$ and an isomorphism
$\ms{M}\rightarrow F^{(h)*}\ms{M}^\sigma$. 
We often abbreviate $F^{(h)}$ by $F$ if there is nothing to be
confused. Let $D^b_{\mr{coh}}(\DdagQ{\ms{X}})$
\index{categories!Dbc@$D^b_{\mr{coh}}(\DdagQ{\ms{X}})$}
be the derived category of  $\DdagQ{\ms{X}}$-modules with bounded
coherent cohomology. We define a complex of
$F^{(h)}$-$\DdagQ{\ms{X}}$-modules as a complex $\ms{M}^{\bullet}$ in
$D^b_{\mr{coh}}(\DdagQ{\ms{X}})$ endowed with an isomorphism
$\Phi\colon\ms{M}^{\bullet}\rightarrow
F^{(h)*}(\ms{M}^{\bullet})^{\sigma}$ in
$D^b_{\mr{coh}}(\DdagQ{\ms{X}})$. We say that such complex
$(\ms{M}^{\bullet},\Phi)$ is \emph{holonomic} if its cohomology sheaves
are holonomic $F^{(h)}$-$\DdagQ{\ms{X}}$-modules, and we will denote by
$F^{(h)}$-$D^b_{\mr{hol}}(\DdagQ{\ms{X}})$
\index{categories!FhDbh@$F^{(h)}$-$D^b_{\mr{hol}}(\DdagQ{\ms{X}})$}
the category of holonomic $F^{(h)}$-$\DdagQ{\ms{X}}$-complexes with
bounded  cohomology, cf.\ \cite[5.3.5]{BerInt}. Let  $Z$ be a divisor of
the of $X$. We denote by $F^{(h)}$-$D^b_{\mr{hol}}(\DdagQ{\ms{X}}(Z))$
\index{categories!FhDbh@$F^{(h)}$-$D^b_{\mr{hol}}(\DdagQ{\ms{X}}(Z))$}
the full subcategory of $F^{(h)}$-$D^b_{\mr{hol}}(\DdagQ{\ms{X}})$ of
complexes $\ms{M}$ such that the canonical homomorphism
$\ms{M}\rightarrow \DdagQ{\ms{X}}(Z)\otimes_{\DdagQ{\ms{X}}}\ms{M}$ is
an isomorphism, cf.\ \cite[2.1.2]{Caro:courbes}. We call them holonomic
(complex of) $F^{(h)}$-$\DdagQ{\ms{X}}(Z)$-modules.

\subsubsection{}
\label{frobstrcor}
Let $R'$ be a discrete valuation ring finite \'{e}tale over $R$, and let
$\ms{C}$ be an object of $F^{(h)}$-$D^b_{\mr{hol}}(\mr{Spf}(R'))$. Let
$\Phi\colon\ms{C}\xrightarrow{\sim}F^{(h)*}\ms{C}$ be the Frobenius
structure of $\ms{C}$. There exists a canonical $\sigma$-semi-linear
homomorphism $\ms{C}\rightarrow F^{(h)*}\ms{C}^{\sigma}$ sending $x$ to
$1\otimes x$. The composition $\ms{C}\rightarrow
F^{(h)*}\ms{C}^{\sigma}\xrightarrow[\Phi^{-1}]{\sim}\ms{C}$, where the
first homomorphism is the canonical homomorphism, makes the complex
$\ms{C}$ a $\sigma$-$K$-vector spaces. We note that this homomorphism is
in fact an isomorphism since $\sigma$ is.
This correspondence induces an
equivalence between $F^{(h)}$-$D^b_{\mr{hol}}(\mr{Spf}(R))$ and the
category of finite $\sigma$-$K$-complexes, and we identify them.

\subsubsection{}
\label{sb:dec_lambda}
Let us fix notation for Dieudonn\'{e}-Manin slopes\index{slope!Dieudonne-Manin slope} and Tate twists. 
Recall that we denote by $e$ the absolute ramification index of $\Ct$.
We denote by $\Ct_{\sigma}\langle t \rangle$ the ring of non-commutative
polynomials defined by the relation $t\alpha = \sigma(\alpha) t$,
for every $\alpha\in\Ct$. For any $\alpha\in\Ct$ and integer $s>1$, we
put
\begin{equation*}
 K^{(\alpha; seh)}:=\Ct\langle t \rangle/\Ct\langle t
  \rangle(t^s-\alpha), \index{Fi@$F$-isocrystal, overconvergent isocrystal!$K^{(\alpha; seh)}$}
\end{equation*}
endowed with the Frobenius action given by the multiplication on the
left by $t$. It is a $\sigma$-$\Ct$-module of rank $s$. When
$\sigma(\varpi)=\varpi$, we normalize the Dieudonne-Manin slope so that
it is purely of slope $\lambda:=\frac{-\val[K](\alpha)}{seh}$.
For any smooth formal scheme $\ms{X}$ over $R$, we denote by
$\mc{O}_{\ms{X},\mb{Q}}^{(\alpha; seh)}$ the pull-back of $K^{(\alpha;
seh)}$ by the structural morphism of $\ms{X}$.

Let us define Tate twists. For any $\DdagQ{\ms{X}}$-module $\ms{M}$ and
integer $n$, we put
\begin{equation*}
 \ms{M}(n):=\ms{M}\otimes_{\mc{O}_{\ms{X},\mb{Q}}}
  \mc{O}_{\ms{X},\mb{Q}}^{(p^{-hn}; eh)}. \index{slope!Dieudonne-Manin slope!$n$-th Tate twist $\ms{M}(n)$}
\end{equation*}
This is called the  \emph{$n$-th Tate twist} of $\ms{M}$.
Let us define the twist by a Dieudonne-Manin slope $\lambda\in\QQ$.
There is a unique way to write  $\lambda=\frac{r}{seh}$,  where $r$ and
$s$ are coprime integers and $s>0$ (if $\lambda=0$ by convention we put
$r=0$ and $s=1$). Let $\varpi\in\Ctf$ be a uniformizer; for any coherent
$\DdagQ{\ms{X}}$-module $\ms{M}$ and $\lambda\in\QQ$, we put
\begin{equation*}
 \ms{M}^{(\lambda)}:=\ms{M}\otimes_{\mc{O}_{\ms{X},\mb{Q}}}
  \mc{O}_{\ms{X},\mb{Q}}^{(\varpi^{-r}; seh)}, \index{slope!Dieudonne-Manin slope!twist by $\lambda$, $\ms{M}^{(\lambda)}$}
\end{equation*}
which is usually called the \emph{twist}\footnote{This is called
{\itshape d\'ecal\'e} in French.}
of $\ms{M}$ by the \emph{slope} $\lambda$. When $\sigma(\varpi)=\varpi$,
its Dieudonne-Manin slope is indeed shifted by $+\lambda$. The notation
$\ms{M}^{(\lambda)}$ is slightly abusive because it depends on the
choice of the uniformizer $\varpi$, whereas that for Tate twists is
intrinsic. We give analogous definitions for overconvergent
$F$-isocrystals\footnote{For an overconvergent $F$-isocrystal, in
\cite{Marmora:Facteurs_epsilon}, $M^{(\lambda)}$ was denoted by
$M(\lambda)$. We modify here the notation to avoid any possible
confusion with Tate twists.} and free differentials modules with
Frobenius structure over the the Robba ring.

\subsubsection{}
\label{defpullbackpush}
To fix notation, let us review the theory of arithmetic
$\ms{D}$-modules concerning this paper.

We say that $(\ms{X},Z)$ is a d-couple\index{d-couple} if $\ms{X}$ is a smooth formal
scheme and $Z$ is a divisor of its special fiber. Here, $Z$ can be
empty. Let $(\ms{Y},W)$ be another d-couple. A morphism of d-couples
$f\colon(\ms{X},Z)\rightarrow(\ms{Y},W)$ is a morphism
$\widetilde{f}\colon\ms{X}\rightarrow\ms{Y}$ such that
$\widetilde{f}(\ms{X}\setminus Z)\subset\ms{Y}\setminus W$ and
$\widetilde{f}^{-1}(W)$ is a divisor. The morphism
$\widetilde{f}$ is called the realization of $f$, and if it is
unlikely to be confused, we often denote $\widetilde{f}$ by $f$. Let
$\textsf{P}$ be a property of morphisms. We say that the morphism of
d-couples $f$ satisfies the property $\textsf{P}$ if $\widetilde{f}$
satisfies the property $\textsf{P}$.

Let $\ms{X}$ be a smooth formal scheme over $R$, $X$ be its special
fiber, and $\ms{X}_K$ be its Raynaud generic fiber. Then we have
the specialization map $\mr{sp}\colon\ms{X}_K\rightarrow\ms{X}$ of
topoi. Recall that, we say that a $\DdagQ{\ms{X}}$-module $\ms{M}$ is
a convergent ($F$-)isocrystal if $\mr{sp}^*(\ms{M})$ is a convergent
($F$-)isocrystal, and the same for overconvergent ($F$-)isocrystals
(cf.\ \ref{conventionocisoc}).

Let $(\ms{X},Z)$ be a d-couple, and we denote by $\mb{D}_{\ms{X},Z}$ the
dual functor with respect to $\DdagQ{\ms{X}}(Z)$-modules. If it is
unlikely to cause any confusion, we often denote this by $\mb{D}$. \index{functors!six operations!D@$\mb{D}_{\ms{X},Z}$, $\mb{D}$}
 Let
$f\colon(\ms{X},Z)\rightarrow(\ms{Y},W)$ be a morphism of d-couples. We
have the extraordinary pull-back functor $\widetilde{f}^!$ from the
category of coherent $\DdagQ{\ms{Y}}(W)$-modules to that of
$\DdagQ{\ms{X}}(f^{-1}(W))$-modules (cf.\ \cite[4.3.3]{BerInt}). Let
$\ms{M}$ be a bounded coherent ($F$-)$\DdagQ{\ms{Y}}(W)$-complex.
When $\widetilde{f}^!(\ms{M})$ is coherent, we define $f^!(\ms{M})$ \index{functors!six operations!f@$f^"!$, $f^+$|(}to
be $\DdagQ{\ms{X}}(Z)\otimes\widetilde{f}^!(\ms{M})$.
Suppose in turn that $\widetilde{f}^!\circ\mb{D}_{\ms{Y},W}(\ms{M})$ is
a bounded coherent ($F$-)$\DdagQ{\ms{X}}(f^{-1}(W))$-complex. In this
case, we put
\begin{equation*}
  f^+\ms{M}:=\bigl(\mb{D}_{\ms{X},Z}\circ f^!\circ \mb{D}_{\ms{Y},W}\bigr)
  (\ms{M}). \index{functors!six operations!f@$f^"!$, $f^+$|)}
\end{equation*}

Now, suppose that the morphism of d-couples $f$ is a
proper morphism. We have the push-forward functor $\widetilde{f}_+$ from
the category of coherent $\DdagQ{\ms{X}}(f^{-1}(W))$-modules to that of
coherent $\DdagQ{\ms{Y}}(W)$-modules. Let $\ms{N}$ be a
($F$-)$\DdagQ{\ms{X}}(Z)$-module. Suppose $\ms{N}$ is coherent as a
$\DdagQ{\ms{X}}(f^{-1}(W))$-module. Then we denote by $j_+\ms{N}$\index{functors!six operations!j@$j_+$} this
coherent module. We define $f_+(\ms{N})$ \index{functors!six operations!f@$f_+$, $f_"!$|(}
to be the coherent
$\DdagQ{\ms{Y}}(W)$-module $\widetilde{f}_+(j_+(\ms{N}))$.
Assume in turn that $\mb{D}_{\ms{X},Z}(\ms{N})$ is coherent as a
$\DdagQ{\ms{X}}(f^{-1}(W))$-module. We define
\begin{equation*}
 f_!\ms{N}:=\bigl(\mb{D}_{\ms{Y},W}\circ f_+\circ\mb{D}_{\ms{X},Z}\bigr)
  (\ms{N}). \index{functors!six operations!f@$f_+$, $f_"!$|)}
\end{equation*}

When we are given $\ms{M}$ and $\ms{N}$ in $\LD(\widehat{\ms{D}}
^{(\bullet)}_{\ms{X}}(Z))$ \index{categories!LD@$\LD(\widehat{\ms{D}}
^{(\bullet)}_{\ms{X}}(Z))$} (cf.\ \cite[4.2.2, 4.2.3]{BerInt} for the
notation when $Z$ is empty, but the construction is the same), we denote
the object $\ms{M}\otimesdag{}^\dag_{\mc{O}_{\ms{X},\mb{Q}}(Z)}\ms{N}$
in $\LD(\widehat{\ms{D}}^{(\bullet)}_{\ms{X}}(Z))$ by
$\ms{M}\otimes\ms{N}$. \index{functors!six operations!t@$\otimes$, $\widetilde{\otimes}$|(}

\subsubsection{}
\label{prop6func}
\newcounter{enumip}
In this paragraph, let us summarize some properties of the cohomological
functors defined in the previous paragraph which will be used later in
this paper. Let $f\colon(\ms{X},Z)\rightarrow(\ms{Y},W)$ be a morphism
of d-couples. We denote respectively by $d_{\ms{X}}$, $d_{\ms{Y}}$,
$d$ the dimension of $\ms{X}$, $\ms{Y}$, and
$d_{\ms{X}}-d_{\ms{Y}}$. Then we get the following.
\begin{enumerate}
 \item \label{smoothpoin}
       If $f$ is smooth and of constant relative dimension, we get
       $f^!\cong f^+(d)[2d]$ (cf.\ \cite[Theorem 5.5]{Abe3}). If $f$ is
       a closed immersion of connected formal schemes and $\ms{M}$ is an
       overconvergent $F$-isocrystal, we get $f^!(\ms{M})\cong
       f^+(\ms{M})(d)[2d]$ (cf.\ \cite[Theorem 5.6]{Abe3}).
       For conventions on brackets and parenthesis see \eqref{convention-Tatetwist-shifts}. 
 \item If $f$ is proper and $\widetilde{f}|_{\ms{X}\setminus Z}
       \colon\ms{X}\setminus Z\rightarrow\ms{Y}\setminus W$ is also
       proper, we get $f_!\xrightarrow{\sim}f_+$ (cf.\ \cite[Theorem
       5.4]{Abe3}).
 \item Let $\ms{M}$ be an object of $D^b_{\mr{coh}}(\DdagQ{\ms{X}}(Z))$
       and $\ms{N}$ be an object of
       $D^b_{\mr{coh}}(\DdagQ{\ms{Y}}(W))$. There exists a canonical
       isomorphism $f_+(\ms{M}\otimes f^!\ms{N})\cong
       f_+\ms{M}\otimes\ms{N}$ (cf.\ \cite[2.1.4]{Carocoh}. The
       compatibility with Frobenius pull-back can be seen easily from
       the definition of the homomorphism.).
 \setcounter{enumip}{\theenumi}
\end{enumerate}
By using \ref{smoothpoin} above, we get
$\mb{D}_{\ms{X},Z}(\mc{O}_{\ms{X},\mb{Q}}(Z))\cong\mc{O}_{\ms{X},\mb{Q}}
(Z)(-d_{\ms{X}})$. Now, we define the twisted tensor product
$\widetilde{\otimes}$ \index{functors!six operations!t@$\otimes$, $\widetilde{\otimes}$|)}
on $\ms{X}$ to be
$\mb{D}_{\ms{X},Z}(\mb{D}_{\ms{X},Z}(-)\otimes\mb{D}_{\ms{X},Z}(-))$. We
note that the definition is slightly different from that of
\cite{NH}. The main reason we introduce this new tensor product is the
following.
\begin{enumerate}
 \setcounter{enumi}{\theenumip}
 \item \label{raisondetretwist}
       Assume that $f$ is a morphism of connected formal schemes. There
       exists a canonical isomorphism $f^!((-)\otimes(-))[d]\cong
       f^!(-)\otimes f^!(-)$. Similarly, we also have
       $f^+((-)\widetilde{\otimes}(-))[-d]\cong f^+(-)\widetilde{\otimes} f^+(-)$ (cf.\
       \cite[5.8]{Abe3}).
 \setcounter{enumip}{\theenumi}
\end{enumerate}
The following result enables us to compare these two tensor products in
special cases.
\begin{enumerate}
 \setcounter{enumi}{\theenumip}
 \item \label{twisttensor}
       Assume $\ms{X}$ is connected.
       If $\ms{M}$ be an overconvergent $F$-isocrystal, and $\ms{N}$ be
       a coherent $F$-$\DdagQ{\ms{X}}(Z)$-module. Then we get
       $\ms{M}\widetilde{\otimes}\ms{N}\cong\ms{M}\otimes
       \ms{N}(d_{\ms{X}})$ (cf.\ \cite[Proposition 5.8]{Abe3}).
 \setcounter{enumip}{\theenumi}
\end{enumerate}

\subsubsection{}
\label{properbasech}
Now, let us see the proper base change theorem. Consider the following
cartesian diagrams of d-couples.
\begin{equation*}
 \xymatrix{
  (\ms{X}',Z')\ar[r]^{i'}\ar[d]_{f'}\ar@{}[rd]|{\square}&(\ms{X},Z)
  \ar[d]^{f}\\(\ms{Y}',W')\ar[r]_i&(\ms{Y},W)
  }
\end{equation*}
Here, we say that the diagram of d-couples is cartesian if it is
cartesian for the underlying smooth formal schemes, and
$\ms{X}'\setminus Z'=(\ms{X}\setminus Z)\times_{(\ms{Y}'\setminus
W')}(\ms{Y}\setminus W)$. Then we get $i^!\circ f_+\cong f'_+\circ
i'^!$ (cf.\ \cite[Theorem 5.7]{Abe3}). This isomorphism is compatible
with Frobenius structures by the same theorem. We call this the proper
base change isomorphism.

Assume that $f$ is proper. In this case, for a bounded coherent
($F$)-$\DdagQ{\ms{X}}$-complex $\ms{M}$, we get $i^+\circ
f_!(\ms{M})\cong f'_!\circ i'^+(\ms{M})$ if the both sides are
defined. This follows by using $\mb{D}\circ\mb{D}=\mr{id}$ \cite[II,
3.5]{Vir}. This is also called the proper base change isomorphism.

\subsubsection{}
\label{kunnethformual}
We also have the K\"{u}nneth formula. Namely, let
$f\colon\ms{X}\rightarrow\ms{X}'$ and $g\colon\ms{Y}\rightarrow\ms{Y}'$
be smooth morphisms between smooth formal schemes over a smooth formal
scheme $\ms{T}$. Let $D$ be a divisor of the special fiber of
$\ms{T}$. We denote by $D_{\ms{X}^{(')}}$ (resp.\ $D_{\ms{Y}^{(')}}$,
$D'$) be the divisor of the special fiber of $\ms{X}^{(')}$ (resp.\
$\ms{Y}^{(')}$, $\ms{X}'\times_{\ms{Y}}\ms{Y}'$) which is the pull-back
of $D$. Let $\ms{M}$ (resp.\ $\ms{N}$) be an element of
$\underrightarrow{LD}^b_{\mb{Q},\mr{qc}}(\widehat{\ms{D}}
^{(\bullet)}_\ms{X}(D_{\ms{X}}))$ (resp.\ $\underrightarrow{LD}^b
_{\mathbb{Q},\mathrm{qc}}(\widehat{\mathscr{D}}^{(\bullet)}
_\mathscr{Y}(D_{\ms{Y}}))$). Then we get
\begin{equation*}
 (f\times g)_+(\ms{M}\boxtimes_{\mc{O}_{\ms{T}}(D)}^{\mb{L}}\ms{N})
  \cong(f_+\ms{M})\boxtimes_{\mc{O}_{\ms{T}}(D)}^{\mb{L}}(g_+\ms{N})
\end{equation*}
in $\underrightarrow{LD}^b_{\mb{Q},\mr{qc}}(\widehat{\ms{D}}^{(\bullet)}
_{\ms{X}'\times_{\ms{T}}\ms{Y}'}(D'))$. To see this, we apply the
K\"{u}nneth formula \cite[Proposition 4.9]{Abe3} to the diagram
\begin{equation*}
 \xymatrix@R=10pt@C=30pt{
  &\ms{X}\times_{\ms{T}}\ms{Y}\ar[dr]^{f\times\mr{id}}\ar[dl]
  _{\mr{id}\times g}\ar[dd]_{f\times g}&\\\ms{X}\times_{\ms{T}}
  \ms{Y}'\ar[dr]_{f\times\mr{id}}
  &&\ms{X}'\times_{\ms{T}}\ms{Y}\ar[dl]^{\mr{id}\times g}\\&
  \ms{X}'\times_{\ms{T}}\ms{Y}'&}
\end{equation*}
and $\ms{M}\boxtimes^{\mb{L}}_{\mc{O}_{\ms{T}}(D)}
\mc{O}_{\ms{Y}'}(D_{\ms{Y}'})$ on $\ms{X}\times_{\ms{T}}\ms{Y}'$ and
$\mc{O}_{\ms{X}'}(D_{\ms{X}'})\boxtimes_{\mc{O}_{\ms{T}}(D)}^{\mb{L}}\ms{N}$
on $\ms{X}'\times_{\ms{T}}\ms{Y}$.

\subsubsection{}
\label{Dmodandrigcohcomp}
In this paragraph, let us see the relation between the rigid
cohomology and the push-forward of arithmetic $\ms{D}$-modules.
Let $\ms{X}$ be a {\em proper} smooth formal scheme of dimension $d$.
Let $Z$ be a divisor of the special fiber of $\ms{X}$, $\ms{U}$ be the
complement, and $U$ be its special fiber. We denote by
$\mr{sp}\colon\ms{X}_K\rightarrow\ms{X}$ the specialization map where
$\ms{X}_K$ denotes the Raynaud generic fiber of $\ms{X}$. Let
$f\colon(\ms{X},Z)\rightarrow\mr{Spf}(R)$ be the structural
morphism. Let $\ms{M}$ be a coherent $\DdagQ{\ms{X}}(Z)$-module which is
overconvergent along $Z$. Suppose that it is coherent as a
$\DdagQ{\ms{X}}$-module. In \ref{prop6func}, we noted
that $\mb{D}_{\ms{X},Z}(\mc{O}_{\ms{X},\mb{Q}}(Z))\cong
\mc{O}_{\ms{X},\mb{Q}}(Z)(-d)$. For an isocrystal $M$, we denote by
$M^\vee$ the dual as isocrystal. This isomorphism leads us to the
following comparison of dual functors (cf.\ \cite[Corollary
3.12]{Abe3}):
\begin{equation}
 \label{carodualfrob}
 \mr{sp}^*(\mb{D}_{\ms{X},Z}(\ms{M}))\cong(\mr{sp}^*(\ms{M}))^\vee(-d).
\end{equation}

We get the following relation with the rigid cohomology:
\begin{equation}
 \label{rigDmodpush}
 \ms{H}^i(f_+\ms{M})\cong H^{d+i}_{\mr{rig}}(U,\mr{sp}^*\ms{M})(d), \index{functors!.Hrig@$H^{d+i}_{\mr{rig}}$, $H^{d+i}_{\mr{rig},c}$|(}
\end{equation}
where $d=\dim(U)$. For the details of the proof, see \cite[3.14]{Abe3}.
To see the relation for cohomologies with compact support, we use the
Poincar\'{e} duality of rigid cohomology to get
\begin{equation}
 \label{rigDmodcompact}
  \ms{H}^i(f_!\ms{M})\cong H^{d+i}_{\mr{rig},c}(U,\mr{sp}^*\ms{M})(d). \index{functors!.Hrig@$H^{d+i}_{\mr{rig}}$, $H^{d+i}_{\mr{rig},c}$|)}
\end{equation}
For the detailed account, one can refer to \cite[5.9]{Abe3}.

When $\ms{X}$ is a curve, and for a holonomic $\DdagQ{\ms{X}}(
Z)$-module $\ms{M}$, we get that the following pairing
\begin{equation}
 \label{poincaredual}
 \ms{H}^i(f_+\ms{M})\times \ms{H}^{-i}(f_!\,\mb{D}_{\ms{X},Z}(\ms{M}))
  \rightarrow K
\end{equation}
is perfect. This can be seen from \cite[5.5]{Abe3}.

\subsubsection{}
\label{defvanishcycle}
For the later use, we review the cohomological functors $i_+$, $j^+$,
$j_!$, $i^!$, and $\mb{D}$ in the theory of formal disks. Let $\ms{S}$
be the formal disk over $K$. For an object ${M}$ in
$F$-$\mr{Hol}(\eta)$, $j_+({M})$ is by definition \index{functors!six operations!j@$j_+$}
the underlying $F$-$\Dan{\ms{S},\mb{Q}}$-module. For an object ${N}$
in $F$-$\mr{Hol}(\ms{S})$, we denote
$\Dan{\ms{S},\mb{Q}}(0)\otimes {N}$ in $F$-$\mr{Hol}(\eta)$ by
$j^+ {M}$. We denote by
$i\colon\{0\}\hookrightarrow\ms{S}$ the closed immersion. The
definitions of the functors $i_+$, $i^!$, $j^!\cong j^+$ are essentially the \index{functors!six operations!i@$i^"!$, $i^+$|(}
\index{functors!six operations!i@$i_+$}
same as in the global case, and are used frequently in \cite{Cr}, so for
the details see [{\it loc.\ cit}, 3.4, {\it etc.}]. 

Now, we denote by $\mb{D}_{\ms{S}}$ (resp.\ $\mb{D}_\eta$) the dual \index{functors!six operations!D@$\mb{D}_{\ms{S}}$, $\mb{D}_\eta$}
functor with respect to $\Dan{\ms{S},\mb{Q}}$ (resp.\
$\Dan{\ms{S},\mb{Q}}(0)$). These define functors from
$F$-$\mr{Hol}(\ms{S})$ (resp.\ $F$-$\mr{Hol}(\eta)$) to itself by
\cite[5.2]{Cr}. For an object ${M}$ in $F$-$\mr{Hol}(\eta)$, we
define $j_!{M}:=\mb{D}_{\ms{S}}j_+\mb{D}_{\eta}({M})$, \index{functors!six operations!j@$j_"!$}
and for an
object ${N}$ in $F$-$\mr{Hol}(\ms{S})$, we put
$i^+{N}:=(i^!\,\mb{D}_{\ms{S}}{N})^\vee$ where $^\vee$ denotes the \index{functors!six operations!i@$i^"!$, $i^+$|)}\index{functors!.4@$(-)^\vee$}
dual of $\sigma$-$K$-vector spaces. By using facts of
\ref{prop6func}, we get isomorphisms
$i_+\,\mb{D}_\eta\cong\mb{D}_{\ms{S}}\,i_+$ and
$j^+\,\mb{D}_{\ms{S}}\cong\mb{D}_{\eta}\,j^+$. By using these
isomorphisms, the localization triangle \cite[3.4.3]{Cr} induces the
following distinguished triangle:
\begin{equation}
 \label{anotherloctriag}
 j_!\,j^+\rightarrow\mr{id}\rightarrow i_+\,i^+\xrightarrow{+1}.
\end{equation}

\begin{dfn*}
 For a holonomic $F$-$\Dan{\ms{S},\mb{Q}}$-module ${M}$, we put
 $\Psi({M}):=\mb{V}(\mb{D}_{\ms{S}}{M})$ and
 $\Phi({M}):=\mb{W}(\mb{D}_{\ms{S}}{M})$, and call them the nearby
 cycles and the vanishing cycles respectively. These define functors
 \begin{equation*}
  \Psi,\Phi\colon F\mbox{-}\mr{Hol}(\Dan{\ms{S},\mb{Q}})
   \rightarrow\Del{\Ct\nr}{G_{\mc{K}}}. \index{functors!.Psi@$\Psi$, $\Phi$}
 \end{equation*}
\end{dfn*}

We note here that when ${M}$ is a free differential module on
$\mc{R}$ with Frobenius structure, we get
$\mb{D}_{\eta}({M})\cong {M}^\vee(-1)$, where $^\vee$ denotes the
dual as a $\mc{R}$-module by (\ref{carodualfrob}). For example
$\Psi(\mc{R})\cong\mb{V}(\mc{R})(1)\cong K^{\mr{ur}}(1)$. On the other
hand, if ${M}$ is of punctual type such
that ${M}=V\otimes_K\delta$ where $V$ is a $K$-vector space with
Frobenius structure, we get that $\Phi({M})\cong V_{K^{\mr{ur}}}$.

\begin{lem}
 \label{calcofdiff}
 Let ${M}$ be an object of $F$-$\mr{Hol}(\eta)$. Then we get the
 following exact sequence:
 \begin{equation*}
  0\rightarrow {M}^{\partial=0}(1)\otimes_K\delta
   \rightarrow j_!{M}\rightarrow j_+{M}\rightarrow
   {M}/\partial {M}(1)\otimes_K\delta\rightarrow0.
 \end{equation*}
\end{lem}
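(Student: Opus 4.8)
The plan is to compute the nearby and vanishing cycles (more precisely, the functors $\mathbf{V}$ and $\mathbf{W}$ applied to the dual, or directly $j_!\mathcal{M}$ and $j_+\mathcal{M}$) using the localization triangle \eqref{anotherloctriag} and the exact sequence \eqref{vanishexact}. First I would unwind the definitions: $j_+\mathcal{M}$ is the underlying $F$-$\Dan{\ms{S},\mb{Q}}$-module of $\mathcal{M}$, and $j_!\mathcal{M} = \mb{D}_{\ms{S}}\,j_+\,\mb{D}_\eta(\mathcal{M})$. The key input is the exact sequence \eqref{vanishexact}, which for a holonomic $F$-$\Dan{}$-module $\mathcal{N}$ reads
\begin{equation*}
 0\rightarrow\mr{Hom}_{\ms{D}^{\mr{an}}}(\mathcal{N},\mc{O}^{\mr{an}}_{K^{\mr{ur}}})\rightarrow\mb{V}(\mathcal{N})\rightarrow\mb{W}(\mathcal{N})\rightarrow\mr{Ext}^1_{\ms{D}^{\mr{an}}}(\mathcal{N},\mc{O}^{\mr{an}}_{K^{\mr{ur}}})\rightarrow0.
\end{equation*}
Applying this to $\mathcal{N}=\mb{D}_{\ms{S}}(j_+\mathcal{M})$ and using the identification $\Psi(\mathcal{M})=\mb{V}(\mb{D}_{\ms{S}}\mathcal{M})$, $\Phi(\mathcal{M})=\mb{W}(\mb{D}_{\ms{S}}\mathcal{M})$, together with the relation $j_!\mathcal{M}\rightarrow j_+\mathcal{M}$ from \eqref{anotherloctriag}, should produce the four-term sequence with outer terms expressed in terms of $\mr{Hom}$ and $\mr{Ext}^1$ against $\mc{O}^{\mr{an}}_{K^{\mr{ur}}}$.

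The next step is to identify these $\mr{Hom}$ and $\mr{Ext}^1$ groups with $\mathcal{M}^{\partial=0}(1)$ and $\mathcal{M}/\partial\mathcal{M}(1)$ respectively. Here I would use that $\mathcal{M}$ is a finite differential module on $\mc{R}$ with Frobenius structure (an object of $F$-$\mr{Hol}(\eta)$), so that $\mb{D}_\eta(\mathcal{M})\cong\mathcal{M}^\vee(-1)$ by the duality recalled after \eqref{anotherloctriag} (coming from \eqref{carodualfrob}). Computing $\mr{Hom}_{\ms{D}^{\mr{an}}}(j_+\mb{D}_\eta\mathcal{M},\mc{O}^{\mr{an}}_{K^{\mr{ur}}})$ then amounts to horizontal sections of $\mathcal{M}\otimes\mc{O}^{\mr{an}}$ twisted appropriately, i.e.\ $\mathcal{M}^{\partial=0}$, and similarly $\mr{Ext}^1$ computes the coinvariants $\mathcal{M}/\partial\mathcal{M}$ — this is the standard de Rham complex computation $[\mathcal{M}\xrightarrow{\partial}\mathcal{M}]$ for a differential module over $\mc{O}^{\mr{an}}$ versus over $\mc{R}$, the difference being supported at the origin hence of punctual type $V\otimes_K\delta$. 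The Tate twist $(1)$ bookkeeping comes from the twist in $\mb{D}_\eta$ and from $\Psi(\mc{R})\cong K^{\mr{ur}}(1)$ as noted in the excerpt. Finally I would recognize $\Psi(\mathcal{M})\cong\mb{V}$ and $\Phi(\mathcal{M})\cong\mb{W}$ cancel in the middle (the arrow $\mb{V}\to\mb{W}$ corresponding to the map $j_!\mathcal{M}\to j_+\mathcal{M}$), leaving exactly the asserted sequence.

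I expect the main obstacle to be the careful bookkeeping of dualities and Tate twists: matching $j_!\mathcal{M}=\mb{D}_{\ms{S}}j_+\mb{D}_\eta\mathcal{M}$ against $\mb{V}(\mb{D}_{\ms{S}}(\cdot))$, keeping track of where the $(1)$ and $(-1)$ enter, and checking that the punctual modules appearing really are $\delta\otimes_K\mathcal{M}^{\partial=0}$ and $\delta\otimes_K(\mathcal{M}/\partial\mathcal{M})$ rather than some twist or dual of these. A secondary point is verifying that all four terms are honestly objects of $F$-$\mr{Hol}(\eta)$ (the punctual ones being of the form $V\otimes_K\delta$ with $V$ a finite-dimensional $K$-vector space with Frobenius, which holds because $\mathcal{M}$ has finite rank and the $p$-adic local monodromy / Frobenius structure makes $\mathcal{M}^{\partial=0}$ and $\mathcal{M}/\partial\mathcal{M}$ finite-dimensional), so that exactness of \eqref{vanishexact} applies. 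The rest — identifying the middle map with the canonical $j_!\to j_+$ and invoking \eqref{anotherloctriag} to splice the sequence — is formal once these identifications are in place.
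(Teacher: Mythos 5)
Your proposal takes essentially the paper's route --- apply \eqref{vanishexact} to $\mb{D}_{\ms{S}}(j_+\mc{M})$, identify the outer $\mr{Hom}$/$\mr{Ext}^1$ terms, and splice against the localization triangle --- but the step you describe as "cancel in the middle" is not a formal cancellation and hides the actual content of the argument. When \eqref{vanishexact} is applied to $\mb{D}_{\ms{S}}(j_+\mc{M})$, the two middle terms are $\mb{V}(\mb{D}_{\ms{S}}j_+\mc{M})=\Psi(j_+\mc{M})$ and $\mb{W}(\mb{D}_{\ms{S}}j_+\mc{M})=\Phi(j_+\mc{M})$. To match the resulting four-term sequence with $\Phi$ applied to the asserted exact sequence, you must know that $\Psi(j_+\mc{M})\cong\Phi(j_!\mc{M})$ as Deligne modules, with the middle arrow becoming $\Phi$ of the canonical map $j_!\mc{M}\rightarrow j_+\mc{M}$. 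This is exactly where the paper invokes two separate facts: $\Psi(j_!\mc{M})\cong\Psi(j_+\mc{M})$ (because $j_+j^+j_!\mc{M}\cong j_+\mc{M}$ and nearby cycles see only the generic fiber, citing [Cr, 6.2]), and $\Psi(j_!\mc{M})\xrightarrow{\sim}\Phi(j_!\mc{M})$ (because $\mb{D}_{\ms{S}}(j_!\mc{M})=j_+\mb{D}_\eta\mc{M}$ is given by a differential $\mc{R}$-module, for which the canonical map $\mb{V}\rightarrow\mb{W}$ is an isomorphism). Your draft does not supply either fact, so the bridge from \eqref{vanishexact} to the lemma's sequence is missing.

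A secondary slip: $\mb{D}_{\ms{S}}(j_+\mc{M})$ equals $j_!\mb{D}_\eta(\mc{M})$, not $j_+\mb{D}_\eta(\mc{M})$ as you wrote; since the $j_!$ and $j_+$ extensions differ precisely by punctual pieces, and punctual pieces are what you are trying to compute, this distinction is not cosmetic. The paper sidesteps it by never unwinding $\mb{D}_{\ms{S}}(j_+\mc{M})$: it uses the anti-equivalence $\mb{D}_{\ms{S}}$ together with $\mb{D}_{\ms{S}}(\mc{O}^{\mr{an}})\cong\mc{O}^{\mr{an}}(-1)$ to rewrite $R\mr{Hom}_{\ms{D}^{\mr{an}}}(\mb{D}_{\ms{S}}(j_+\mc{M}),\mc{O}^{\mr{an}})\cong R\mr{Hom}_{\ms{D}^{\mr{an}}}(\mc{O}^{\mr{an}}(-1),j_+\mc{M})$, and then the $(j^+,j_+)$ adjunction to land on $R\mr{Hom}_{\ms{D}^{\mr{an}}(0)}(\mc{R},\mc{M})(1)$, whose cohomologies are $\mc{M}^{\partial=0}(1)$ and $\mc{M}/\partial\mc{M}(1)$ with the twists already in place.
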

\begin{proof}
 Since $j_+j^+j_!\,{M}\cong j_+{M}$, we get
 $\Psi(j_!\,{M})\cong\Psi(j_+\,{M})$ by \cite[Prop.~6.1.1]{Cr}. We also get
 $\Psi(j_!{M})\xrightarrow{\sim}\Phi(j_!{M})$. Thus,
 (\ref{vanishexact}) induces the following exact sequence:
 \begin{equation*}
  0\rightarrow\mr{Hom}_{\ms{D}^{\mr{an}}}(\mb{D}(j_+{M}),
   \mc{O}^{\mr{an}}_{K^{\mr{ur}}})\rightarrow\Phi(j_!\,{M})
   \rightarrow\Phi(j_+\,{M})\rightarrow\mr{Ext}^1_{\ms{D}^{\mr{an}}}
   (\mb{D}(j_+{M}),\mc{O}^{\mr{an}}_{K^{\mr{ur}}})\rightarrow0.
 \end{equation*}
 We get isomorphisms
 \begin{equation*}
  R\mr{Hom}_{\ms{D}^{\mr{an}}}(\mb{D}_{\ms{S}}(j_+{M}),
   \mc{O}^{\mr{an}})\cong
   R\mr{Hom}_{\ms{D}^{\mr{an}}}(\mc{O}^{\mr{an}}(-1),j_+{M})\cong
   R\mr{Hom}_{\ms{D}^{\mr{an}}(0)}(\mc{R},{M})(1).
 \end{equation*}
 Here the first isomorphism follows from the fact that $\mb{D}_{\ms{S}}$
 gives an anti-equivalence of categories combined with the isomorphism
 $\mb{D}_{\ms{S}}(\mc{O}^{\mr{an}})\cong\mc{O}^{\mr{an}}(-1)$, and the
 second by adjoint. Thus the lemma follows.
\end{proof}

\begin{rem*}
 We note that the dimension of ${M}^{\partial=0}$ and
 ${M}/\partial{M}$ over $K$ are the same by the index theorem of
 Christol-Mebkhout \cite[14.13]{CM}.
\end{rem*}

\subsection{Geometric Fourier transforms}
\label{secgeforev}
\subsubsection{}
\label{defoflofdw}
We briefly review the definition of geometric Fourier transform due to
Noot-Huyghe \cite{NH}. For simplicity, we only review under the
situation of \ref{setupFour}.

To define Fourier transforms, we need to define an integral kernel
$\ms{L}_{\pi}$ of the transform. \index{Fi@$F$-isocrystal, overconvergent isocrystal!$\ms{L}_{\pi}$, $\ms{L}(s\cdot x')$, Dwork's|(}
We define a convergent
$F$-isocrystal on $\widehat{\mb{A}}^1$ overconvergent along $\infty$
denoted by $\ms{L}_\pi$ in the following way. Let $t$ be the
coordinate. As an
$\mc{O}_{\widehat{\mb{P}}_{R,t}^1,\mb{Q}}(\infty)$-module, it is
$\mc{O}_{\widehat{\mb{P}}_{R,t}^1,\mb{Q}}(\infty)$. We denote the element
corresponding to $1$ by $e$. We define its connection by
\begin{equation*}
 \nabla(e)=-\pi e\otimes dt.
\end{equation*}
This module is equipped with Frobenius structure. The
Frobenius structure $\Phi\colon
F^*\ms{L}_\pi\xrightarrow{\sim}\ms{L}_\pi$ is defined by
\begin{equation*}
 \Phi(1\otimes e):=\exp(\pi(t-t^q))e. \index{Fi@$F$-isocrystal, overconvergent isocrystal!$\ms{L}_{\pi}$, $\ms{L}(s\cdot x')$, Dwork's|)}
\end{equation*}

Now, let us consider the situation in \ref{setupFour}. There exists the
canonical coupling
$\mu\colon\Aone\times\Aoned\rightarrow\widehat{\mb{A}}^1_{R,t}$ sending $t$
to $x\otimes x'$. \index{.@miscellaneous!mu@$\mu$, canonical coupling}
By the general theory of overconvergent
$F$-isocrystals, the pull-back $\mu^*\ms{L}_\pi$ is a
convergent $F$-isocrystal on $\Aone\times\Aoned$ overconvergent along
$Z$ in $\Poneoned$. This is a coherent $\DdagQ{\Poneoned}(Z)$-module, and
its restriction to $\Aone\times\Aoned$ is nothing but
$\ms{H}^{-1}(\mu^!\ms{L}_\pi)$. By abuse of language, we denote this
$\DdagQ{\Poneoned}(Z)$-module by $\mu^!\ms{L}_\pi[-1]$, or sometimes by
$\ms{L}_{\pi,\mu}$. In the same way, there exists a
unique coherent complex of $\DdagQ{\Poneoned}(Z)$-modules whose
restriction to $\Aone\times\Aoned$ is $\ms{H}^1(\mu^+\ms{L}_{\pi})$. We
also denote this by $\mu^+\ms{L}_{\pi}[1]$.

\subsubsection{}
Now, let us recall the definition of the geometric Fourier transform. We
continuously use the notation of \ref{setupFour}.
Recall the diagram (\ref{fourierdiagtwo}). Let $\ms{M}$ be a coherent
$\DdagQ{\Pone}(\infty)$-module. Noot-Huyghe defined the geometric
Fourier transform of $\ms{M}$ to be
\begin{align}
 \label{geomfour}
 \ms{F}_{\pi}(\ms{M})&:=p'_+(\ms{L}_{\pi,\mu}\otimes_{\mc{O}
 _{\Poneoned,\mb{Q}}(\infty)}^{\mb{L}} p^!\ms{M}[-2])\\
 \notag&\bigl(=p'_+(\mu^!\ms{L}_{\pi}\otimes_{\mc{O}
 _{\Poneoned,\mb{Q}}(\infty)}^{\mb{L}} p^!\ms{M}[-3])
 \bigr). \index{functors!geometric (global) Fourier transform!$\ms{F}_{\pi}$, $\ms{F}$}
\end{align}
She also proved that $p'_!~(\ms{L}_{\pi,\mu}\otimes_{\mc{O}
_{\Poneoned,\mb{Q}}(\infty)}^{\mb{L}} p^!\ms{M})$ is well-defined, and
also showed an analog of the result of Katz and Laumon
\cite{Huy2} (see \cite[A.3.2]{Abe3} for a proof). Namely, the canonical
homomorphism
\begin{equation}
 \label{KatzLaumonisom}
 p'_!~(\ms{L}_{\pi,\mu}\otimes_{\mc{O}_{\Poneoned,\mb{Q}}(\infty)}
 ^{\mb{L}} p^!\ms{M}[-2])\rightarrow\ms{F}_\pi(\ms{M})
\end{equation}
is an isomorphism.
Since Fourier transform is defined using three cohomological functors
$p'_+$, $\otimes$, $p^!$, and there is a canonical Frobenius structure
on $\ms{L}_{\pi,\mu}$, Fourier transform commutes with Frobenius
pull-backs. In particular, if $\ms{M}$ is a coherent
$F$-$\DdagQ{\ms{X}}$-complex, there exists a canonical Frobenius
structure on the complex $\ms{F}_\pi(\ms{M})$.

\begin{lem}
 \label{Tsuzukicalc}
 We have a canonical isomorphism
 \begin{equation*}
  \mu^+\ms{L}_\pi(1)[2]\cong\mu^!\ms{L}_\pi.
 \end{equation*}
\end{lem}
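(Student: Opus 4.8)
The statement is the comparison $\mu^+\ms{L}_\pi(1)[2]\cong\mu^!\ms{L}_\pi$, where $\mu\colon\ms{A}\times\ms{A}'\rightarrow\widehat{\mb{A}}^1$ is the coupling $t\mapsto x\otimes x'$. The plan is to reduce this to the general compatibility between $f^+$ and $f^!$ for a \emph{smooth} morphism applied to an \emph{overconvergent $F$-isocrystal}, which is exactly item \ref{smoothpoin} of \S\ref{prop6func}. First I would observe that $\ms{L}_\pi$ is a convergent $F$-isocrystal on $\widehat{\mb{A}}^1$ overconvergent along $\infty$, hence (after restricting to the open parts $\ms{A}\times\ms{A}'$ and $\widehat{\mb{A}}^1$) it is an overconvergent $F$-isocrystal in the sense of \ref{conventionocisoc}. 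The morphism $\mu\colon\ms{A}\times\ms{A}'\rightarrow\widehat{\mb{A}}^1$ restricted to the affine parts, $(x,x')\mapsto xx'$, is a smooth morphism of relative dimension $1$ (it is the composite of an open immersion, a projection, and an étale map away from $x=0$; more precisely one checks $d\mu = x'\,dx + x\,dx'$ is part of a coordinate system after a linear change, so $\mu$ is smooth of relative dimension $d_\mu=1$).

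The key step is then to apply \S\ref{prop6func}\,\ref{smoothpoin}: for a smooth morphism $f$ of relative dimension $d$ one has a canonical isomorphism $f^!\cong f^+(d)[2d]$. Taking $f=\mu|_{\ms{A}\times\ms{A}'}$, $d=1$, and applying this functor to $\ms{L}_\pi$ gives $\mu^!\ms{L}_\pi\cong\mu^+\ms{L}_\pi(1)[2]$ on $\ms{A}\times\ms{A}'$, compatibly with Frobenius. It then remains to upgrade this isomorphism on the open part to an isomorphism of the coherent $\DdagQ{\ms{P}''}(Z)$-complexes $\mu^!\ms{L}_\pi[-1]$ and $\mu^+\ms{L}_\pi[1]$ introduced in \S\ref{defoflofdw}. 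Here I would invoke the defining property of those objects: by construction $\mu^!\ms{L}_\pi[-1]$ (resp.\ $\mu^+\ms{L}_\pi[1]$) is \emph{the} coherent $\DdagQ{\ms{P}''}(Z)$-complex whose restriction to $\ms{A}\times\ms{A}'$ is $H^{-1}(\mu^!\ms{L}_\pi)$ (resp.\ $H^1(\mu^+\ms{L}_\pi)$), and uniqueness of such an extension along the divisor $Z$ (both are overconvergent $F$-isocrystals on $\ms{A}\times\ms{A}'$, which extend uniquely to coherent $\DdagQ{\ms{P}''}(Z)$-modules by the equivalence recalled in \ref{conventionocisoc}) forces the isomorphism on the open part to extend to all of $\ms{P}''$, Tate-twisted and shifted as stated.

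The main obstacle I anticipate is bookkeeping rather than anything deep: correctly matching the cohomological degree shifts and the Tate twist between the two sides. On the affine part both $\mu^!\ms{L}_\pi$ and $\mu^+\ms{L}_\pi$ are complexes concentrated in a single degree (since $\mu$ is smooth and $\ms{L}_\pi$ is an isocrystal), so one must be careful that $H^{-1}(\mu^!\ms{L}_\pi)$ is identified with $H^1(\mu^+\ms{L}_\pi)(1)$ — and this is precisely what the shift $[2d]=[2]$ and the twist $(d)=(1)$ in \ref{smoothpoin} deliver, after accounting for the $[-1]$ versus $[+1]$ conventions in the definitions of $\mu^!\ms{L}_\pi[-1]$ and $\mu^+\ms{L}_\pi[1]$. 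A secondary point to check is that the isomorphism of \ref{smoothpoin} is compatible with the Frobenius structures on $\ms{L}_\pi$, which is asserted in \cite{Abe3}; this guarantees the comparison holds in the category of $F$-$\DdagQ{\ms{P}''}(Z)$-complexes, not merely without Frobenius.
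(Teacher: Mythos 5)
Your proof has a genuine gap at the very first step. The morphism $\mu|_{\ms{A}\times\ms{A}'}\colon(x,x')\mapsto xx'$ is \emph{not} smooth on all of $\ms{A}\times\ms{A}'$: the differential $d\mu=x'\,dx+x\,dx'$ vanishes at the origin $(0,0')$, so $\mu$ fails to be smooth precisely there. Your parenthetical mentions "\'etale away from $x=0$" but then asserts smoothness on the whole affine part, which is false. Consequently, \ref{prop6func}.\ref{smoothpoin} only gives the isomorphism $\mu^!\ms{L}_\pi\cong\mu^+\ms{L}_\pi(1)[2]$ on the open set $\ms{A}\times\ms{A}'\setminus\{(0,0')\}$, i.e.\ a \emph{generic} isomorphism of overconvergent $F$-isocrystals, not one on the entire affine locus.

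This means your subsequent extension argument does not close the gap: the uniqueness of extension along the divisor $Z$ that you invoke from \ref{conventionocisoc} handles the passage from $\ms{A}\times\ms{A}'$ to $\ms{P}''$, but it does nothing about the missing point $(0,0')$ which lies in the interior of $\ms{A}\times\ms{A}'$. What is actually needed — and what the paper does — is to observe that $\mu^!\ms{L}_\pi[-1]$ is concentrated in a single degree and $\mc{O}$-coherent, hence an overconvergent $F$-isocrystal along $\infty$, and that the same therefore holds for its dual $\mu^+\ms{L}_\pi$; then a generic isomorphism of overconvergent $F$-isocrystals extends by the full faithfulness theorems of Kedlaya \cite{Kedful} and Tsuzuki \cite[Theorem 4.1.1]{Tsu}. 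These full faithfulness results are genuinely nontrivial inputs; without them, or without some substitute argument at $(0,0')$, the proof does not go through.
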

\begin{proof}
 Note that $\mu|_{\Aone\times\Aoned\setminus\{(0,0')\}}$ is
 smooth. This shows that
 $\mu^!\ms{L}_{\pi}$ and $\mu^+\ms{L}_{\pi}(1)[2]$ are generically
 isomorphic by \ref{prop6func}.\ref{smoothpoin}. The module
 $\mu^!\ms{L}_\pi[-1]$ is concentrated at degree $0$, and it is finite
 over $\mc{O}_{\Pone,\mb{Q}}(\infty)$. This implies that it is an
 overconvergent isocrystal along $\infty$. Since the dual of an
 isocrystal is also an isocrystal, $\mu^+\ms{L}_\pi$ is also
 an overconvergent isocrystal along $\infty$. Moreover, both sides are
 overconvergent $F$-isocrystals. Thus, the two modules in the statement
 are isomorphic by using \cite{Kedful} and \cite[Theorem 4.1.1]{Tsu}.
\end{proof}

\subsubsection{}
Let $s$ be a closed point of $\Aone$. Let $k_s$ be the residue
field and $K_s$ be the corresponding unramified extension of $K$,
$R_s$ its valuation ring. Then there exists a closed immersion
$i_s\colon\Poned_{R_s}\hookrightarrow\Poneoned_{R_s}$ sending $x'$ to
$(s,x')$. We define $\ms{L}(s\cdot
x'):=i^!_s(\ms{L}_{\pi,\mu})$. \index{Fi@$F$-isocrystal, overconvergent
isocrystal!$\ms{L}_{\pi}$, $\ms{L}(s\cdot x')$, Dwork's}
In the same way, given a closed point $s'$ in $\Aoned$, we define
$\ms{L}(x\cdot s')$ on $\Pone$. When $s$ is a rational point, we can
check that $\tau'_*(\ms{L}(s\cdot x')|_{\eta_{\infty'}})\cong
\DwL'(s)$, where $\DwL'(s)$ is the Dwork differential module
\ref{locASdiffmod} on $\eta'$.

\subsubsection{}\label{NFT}
Finally, let us review a fundamental property of Fourier transform
shown by Noot-Huyghe. There exists an isomorphism of rings
\begin{equation*}
 \iota\colon\Gamma(\Poned,\DdagQ{\Poned}(\infty'))\xrightarrow{\sim}
  \Gamma(\Pone,\DdagQ{\Pone}(\infty))
\end{equation*}
sending $x'$ to $\pi^{-1}\partial$ and $\partial'$ to $-\pi x$. It is
also shown by Huyghe that coherent
$\DdagQ{\Pone}(\infty)$-modules corresponds to
$\Gamma(\Pone,\DdagQ{\Pone}(\infty))$-modules by taking global
sections. Given a coherent
$\Gamma(\Pone,\DdagQ{\Pone}(\infty))$-module $M$, we denote
by $\nF{M}$ \index{functors!naive (global) Fourier transform!$\ms{F}_{\mr{naive},\pi}$, $\ms{F}_{\mr{naive}}$|(}  
the coherent
$\Gamma(\Poned,\DdagQ{\Poned}(\infty'))$-module obtained from $M$ via
transport of structure by $\iota$. For $m\in M$ we denote by
$\widehat{m}$ the corresponding element of $\nF{M}$.

Let $\ms{M}$ be a coherent $\DdagQ{\Pone}(\infty)$-module. We denote by
$\nF{\ms{M}}$ 
the coherent $\DdagQ{\Poned}$-module corresponding to
$\nF{\Gamma(\Pone,\ms{M})}$. We call this the {\em naive Fourier
transform} of $\ms{M}$. Then Noot-Huyghe showed in \cite[5.3.1]{NH} that
there exists a canonical isomorphism
\begin{equation}
 \label{compgeomnaive}
 \ms{F}_\pi(\ms{M})\xrightarrow{\sim}\ms{F}_{\mr{naive},\pi}
  (\ms{M})[-1].
\end{equation}
We often denote by $m\in\ms{M}$ to mean $m\in\Gamma(\Pone,\ms{M})$.  
 For simplicity, we will often omit
the index $\pi$ in the notation $\nF{\ms{M}}$ or 
$\ms{F}_{\pi}(\ms{M})$. \index{functors!naive (global) Fourier transform!$\ms{F}_{\mr{naive},\pi}$, $\ms{F}_{\mr{naive}}$|)}

\subsubsection{}
Standard properties of $\ell$-adic geometric Fourier transform
explained in \cite[1.2, 1.3]{Lau} hold also for $p$-adic Fourier
transform with suitable changes. Since the proof works well with
little change, we leave the reader to formulate and verify these
properties. Although the most of these properties are not used in this
paper, we still need a few analogous results. We will write the
statements of these results with short comments of the proofs.

\begin{prop}[{\cite[1.2.2.2]{Lau}}]
 \label{calcfouriereasy}
 Let $V$ be a coherent $F$-$\DdagQ{\mr{Spf}(R)}$-module ({\it i.e.\
 }a finite dimensional $\sigma$-$K$-vector space). Then, we have a
 canonical isomorphism
 \begin{equation*}
  \ms{F}(q^!(V))\cong i_{0'+}(V)(1),
 \end{equation*}
 where $q\colon\Pone\rightarrow\mr{Spf}(R)$ denotes the structural
 morphisms, and $i_{0'}\colon\mr{Spf}(R)\hookrightarrow\Poned$ is the
 closed immersion defined by $0'$ in $\Poned$.
\end{prop}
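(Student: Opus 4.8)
The statement is the $p$-adic analogue of Laumon's computation of the Fourier transform of the trivial module pulled back from a point, so I would follow the $\ell$-adic argument closely, using the naive-to-geometric comparison \eqref{compgeomnaive} to reduce the claim to a transparent algebraic computation on global sections. First I would unwind what $q^!(V)$ is: since $q\colon \ms{P}\to\mr{Spf}(R)$ is smooth of relative dimension $1$, property \ref{prop6func}.\ref{smoothpoin} gives $q^!(V)\cong q^+(V)(1)[2]$, and $q^+(V)[-1]$ is just $\mc{O}_{\ms{P},\mb{Q}}(\infty)\otimes_K V$ as an $F$-$\DdagQ{\ms{P}}(\infty)$-module (the constant isocrystal twisted by $V$); so up to a shift and the Tate twist we are Fourier-transforming $\mc{O}_{\ms{P},\mb{Q}}(\infty)\otimes_K V$. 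By \eqref{compgeomnaive} it suffices to compute the \emph{naive} Fourier transform of this module, i.e.\ to transport the $\Gamma(\ms{P},\DdagQ{\ms{P}}(\infty))$-module $\Gamma(\ms{P},\mc{O}_{\ms{P},\mb{Q}}(\infty))\otimes_K V$ across the ring isomorphism $\iota$ sending $x'\mapsto \pi^{-1}\partial$, $\partial'\mapsto -\pi x$.

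The key step is then the module-theoretic identification. On the trivial module $\Gamma(\ms{P},\mc{O}_{\ms{P},\mb{Q}}(\infty))\cdot e$ (with $e$ the generator, $\nabla$ trivial, so $\partial e=0$ and $xe$ generating freely), transporting via $\iota$: the operator $x'$ acts as $\pi^{-1}\partial$, which kills $e$, and $\partial'$ acts as $-\pi x$, so $\partial'$ acts invertibly (with inverse $-\pi^{-1}x^{-1}$ on the relevant localization). Concretely, $\Gamma(\ms{P},\mc{O}_{\ms{P}}(\infty))$ as a $\Gamma(\ms{P},\DdagQ{\ms{P}}(\infty))$-module is $\DdagQ{}(\infty)/\DdagQ{}(\infty)\partial$, and applying $\iota$ turns this into $\DdagQ{\ms{P}'}(\infty')/\DdagQ{\ms{P}'}(\infty')x'$, which is precisely the module $i_{0'+}(K)$ of the delta-module at $0'$ (compare the description of $\delta$ and punctual modules in \ref{notationofCrint} and \ref{Crewconstan}, adapted to $\mb{P}^1$). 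Tensoring with $V$ gives $i_{0'+}(V)$. I would need to be careful to land on $0'$ and not $\infty'$: since $\partial'=-\pi x$ acts invertibly, the support of the transform is where $x'$ is \emph{not} invertible, which is $0'$ (and one checks the module is supported at a single rational point, not smeared along $\ms{A}'$ or sitting at $\infty'$, using that $e$ is annihilated by $\partial$ exactly).

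Finally I would track the twists and shifts. From $q^!(V)\cong q^+(V)(1)[2]$ and the fact that Noot-Huyghe's $\ms{F}_\pi$ carries an extra $[-2]$ in \eqref{geomfour} while the naive transform differs by $[-1]$ in \eqref{compgeomnaive}, the cohomological shifts cancel to put the answer in degree $0$. The single Tate twist $(1)$ in the target is exactly the one coming from $q^!(V)\cong q^+(V)(1)[2]$; there is no further twist introduced because the relevant Fourier kernel contribution along the fiber over a point is trivial (this is the analogue of Laumon's normalization, and can be checked against Lemma \ref{Tsuzukicalc} / Proposition \ref{calcfouriereasy}'s $\ell$-adic model \cite[1.2.2.2]{Lau}). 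Compatibility with Frobenius is automatic since all four operations ($q^!$, $\otimes\ms{L}_{\pi,\mu}$, $p'_+$, transport by $\iota$) are Frobenius-equivariant, and one just records that the Frobenius on $i_{0'+}(V)(1)$ induced this way is the standard one.

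\textbf{Main obstacle.} The genuinely delicate point is the module-theoretic identification $\iota\bigl(\DdagQ{\ms{P}}(\infty)/\DdagQ{\ms{P}}(\infty)\partial\bigr)\cong i_{0'+}(K)$ on the \emph{overconvergent} level — i.e.\ verifying that after transport the resulting $\DdagQ{\ms{P}'}(\infty')$-module really is the direct image of the structure sheaf of the point $0'$ and not something larger (e.g.\ an isocrystal supported at $0'$ with a nontrivial formal structure, or an extra punctual contribution at $\infty'$). This requires a small computation with the rings $\Gamma(\ms{P}',\DdagQ{\ms{P}'}(\infty'))$ and the known description of $i_{0'+}$, essentially reproducing \cite[1.2.2]{Lau} but checking that the arithmetic $\ms{D}$-module formalism (coherence as $\DdagQ{\ms{P}'}$-module, Frobenius descent) behaves as in the $\ell$-adic case; everything else is bookkeeping of shifts and twists that transcribes directly from \cite{Lau}.
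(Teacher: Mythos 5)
Your overall plan — reduce to the naive Fourier transform via \eqref{compgeomnaive} and transport the module $\Gamma(\ms{P},\DdagQ{\ms{P}}(\infty))/\Gamma(\ms{P},\DdagQ{\ms{P}}(\infty))\partial$ across $\iota$ to land on $\DdagQ{\ms{P}'}(\infty')/\DdagQ{\ms{P}'}(\infty')x'\cong i_{0'+}(K)$ — is exactly the Laumon/Baldassarri--Berthelot computation that the paper gestures at, and that part is fine. The gap is in your accounting for the Tate twist $(1)$.

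First, a bookkeeping error: you assert that $q^+(V)[-1]$ ``is just $\mc{O}_{\ms{P},\mb{Q}}(\infty)\otimes_K V$,'' but it is $q^!(V)$ (up to shift) that equals the untwisted constant isocrystal: with $q^!(V)=\mc{O}_{\ms{P},\mb{Q}}(\infty)\otimes_K V[1]$, one has $q^+(V)=\mb{D}_{\ms{P},\infty}\,q^!\,\mb{D}(V)=\mc{O}_{\ms{P},\mb{Q}}(\infty)\otimes_K V(-1)[-1]$, the $(-1)$ coming from $\mb{D}_{\ms{X},Z}(\mc{O}_{\ms{X},\mb{Q}}(Z))\cong\mc{O}_{\ms{X},\mb{Q}}(Z)(-d_{\ms{X}})$ (cf.\ the paragraph following \ref{prop6func}.\ref{twisttensor}). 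Second, and more seriously, you claim that the kernel $\ms{L}_{\pi,\mu}$ contributes no twist along the zero fiber, and hence that the $(1)$ in $i_{0'+}(V)(1)$ is entirely explained by $q^!\cong q^+(1)[2]$. This is the opposite of what is true: as a module the Fourier transform of the constant isocrystal is $i_{0'+}(V)$ with no twist, and the $(1)$ is \emph{purely} a Frobenius phenomenon. It comes from the way the Frobenius structure on $\ms{F}_{\mr{naive},\pi}$ is defined in \ref{transportgeomnaiv} by transport from the cohomological operators $p'_+$, $\otimes$, $p^!$ together with the Frobenius on $\ms{L}_{\pi,\mu}$; this normalization differs by $(1)$ from the ad hoc Frobenius of Baldassarri--Berthelot, which is precisely what the Remark following the proposition says (citing \cite[Remark 3.15 (ii)]{Abe3}). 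Your proof as written would give either $i_{0'+}(V)$ or $i_{0'+}(V)(2)$ depending on how one resolves the two inconsistent choices above; to close the gap you need to actually trace the Frobenius through $p'_+(\ms{L}_{\pi,\mu}\otimes p^!(-))$ (or invoke \cite[Remark 3.15 (ii)]{Abe3} directly) rather than appeal to $q^!\cong q^+(1)[2]$, which as you can check contributes no net twist once the $(-1)$ hidden in $q^+(V)$ is taken into account.
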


\begin{rem*}
 When $V$ is trivial, another calculation for this Fourier transform was
 carried out by Baldassarri and Berthelot in \cite{BB}. In their
 calculation, there are no Tate twist contrary to our calculation
 here. This is because the definitions of the Frobenius structures on
 the geometric Fourier transform are slightly different. For the precise
 argument, see \cite[Remark 3.15 (ii)]{Abe3}.
\end{rem*}

\subsubsection{}
Let $\ms{F}'$ be the dual geometric Fourier transform: the functor
$\ms{F}'\colon D^b_{\mr{coh}}(\DdagQ{\Poned})\rightarrow
D^b_{\mr{coh}}(\DdagQ{\Pone})$ defined in the same way as $\ms{F}$
except for reversing the role of $p$ and $p'$. We get the following
inversion formula.

\begin{thm*}[{\cite[1.2.2.1]{Lau}}]
 \label{involForge}
 Let $\ms{M}$ be a coherent $F$-$\DdagQ{\Pone}(\infty)$-module. Then,
 there exists a functorial isomorphism
 \begin{equation*}
  \ms{F}'\circ\ms{F}(\ms{M})[2]\cong\ms{M}(1).
 \end{equation*}
\end{thm*}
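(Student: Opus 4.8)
The plan is to deduce the inversion formula from the basic compatibilities of the cohomological operations reviewed in \ref{prop6func}, following the argument of \cite[1.2.2.1]{Lau} with the $p$-adic adjustments dictated by the Tate twists appearing in our conventions. First I would unwind both $\ms{F}$ and $\ms{F}'$ in terms of the two projections $p,p'$ from $(\ms{P}'',Z)$ and the kernel $\ms{L}_{\pi,\mu}$. Using the expression \eqref{geomfour} together with the proper base change isomorphism (\ref{properbasech}) over the fibre square obtained by composing the two Fourier diagrams, one rewrites $\ms{F}'\circ\ms{F}(\ms{M})$ as a single push-forward over $\ms{P}$ of a tensor product involving $p^!\ms{M}$ and the ``convolution'' of the two kernels. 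Concretely, the composed correspondence involves $\ms{P}\times\ms{P}'\times\ms{P}$, and the key computation is the push-forward along the middle factor $\ms{P}'$ of the product of the two Dwork kernels $\ms{L}(x\cdot x')$ and $\ms{L}(x'\cdot x'')$, i.e.\ of the kernel attached to the coupling $x'\mapsto (x+x'')x'$.

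The central input is thus the evaluation of this partial push-forward, which is exactly Proposition \ref{calcfouriereasy} applied fibrewise: the Fourier transform of $\ms{L}_\pi$ (as a function of $x'$, with parameter $x+x''$) is the delta module supported on the locus $x+x''=0$, with a Tate twist $(1)$. This is where the diagonal $\Delta\colon\ms{P}\hookrightarrow\ms{P}\times\ms{P}$ (up to the sign automorphism $x''\mapsto -x''$) enters, and it reduces the composite to $\mr{id}_{\ms{P}}$ twisted and shifted. I would organize the bookkeeping by first treating the open part $\ms{A}$ where $\mu$ is smooth away from the origin, using Lemma \ref{Tsuzukicalc} and \ref{prop6func}.\ref{smoothpoin} to match $!$- and $+$-pullbacks of the kernel, and then invoking the fact (used already for \eqref{compgeomnaive}) that coherent $\DdagQ{\ms{P}}(\infty)$-modules are recovered from their global sections, so that an isomorphism of the underlying $\Gamma(\ms{P},\DdagQ{\ms{P}}(\infty))$-modules compatible with $\iota\circ\iota$ suffices. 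The composite $\iota\circ\iota$ on $\Gamma(\ms{P},\DdagQ{\ms{P}}(\infty))$ sends $x\mapsto -x$ and $\partial\mapsto-\partial$, i.e.\ it is pullback by the antipode, which after correcting by $(-1)^*$ gives back the identity; this accounts for the absence of any residual sign in the final statement.

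Throughout, the Frobenius structures are carried along automatically: $\ms{F}$ and $\ms{F}'$ are built from $p_+$, $p'_+$, $p^!$, $p'^!$, $\otimes$ and the canonical Frobenius on $\ms{L}_{\pi,\mu}$, all of which commute with Frobenius pull-back, and proper base change, the K\"unneth formula \ref{kunnethformual}, and Proposition \ref{calcfouriereasy} are all compatible with Frobenius; hence the resulting isomorphism $\ms{F}'\circ\ms{F}(\ms{M})[2]\cong\ms{M}(1)$ is an isomorphism of $F$-$\DdagQ{\ms{P}}(\infty)$-modules. The functoriality in $\ms{M}$ is clear from the construction.

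\textbf{Main obstacle.} The routine part is the diagram chase with the three-fold correspondence; the delicate point is the precise normalization of the Tate twist and the shift, i.e.\ making sure that the partial push-forward of the product of the two kernels really produces $i_{\Delta+}(\mc{O})(1)[-2]$ and not some other twist, and that the identification $\iota\circ\iota = (\text{antipode})^*$ on global sections introduces exactly the sign that is absorbed by $(-1)^*\colon\ms{P}\to\ms{P}$. Getting these constants right requires care because our Frobenius normalization on $\ms{L}_\pi$ differs from the one of Baldassarri--Berthelot (cf.\ the Remark after Proposition \ref{calcfouriereasy}); I would pin them down by testing the formula on $\ms{M}=\mc{O}_{\ms{P},\mb{Q}}(\infty)$ and on $\ms{M}=i_{s+}(V)$, using Proposition \ref{calcfouriereasy} and its dual, where both sides can be computed by hand.
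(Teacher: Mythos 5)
Your plan coincides with the paper's intended route: the paper offers essentially no proof here, only the pointer to Laumon \cite[1.2.2.1]{Lau} and the remark that the key input (in the rank-$r$ projective-bundle generalization) is $\pi_+\mc{O}_{\ms{P},\mb{Q}}(Z)\cong\mc{O}_{\ms{X},\mb{Q}}[r](r)$, and what you describe --- compose the two correspondences, reduce via proper base change \ref{properbasech} and the K\"unneth formula \ref{kunnethformual} to a push-forward of the product of kernels along the middle factor, then apply Proposition \ref{calcfouriereasy} fibrewise --- is precisely Laumon's argument, with Proposition \ref{calcfouriereasy} built on top of the push-forward identity the paper singles out.

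The one point that deserves more care than you give it is the antipode. On the triple product the convolved kernel is $\ms{L}((x+x'')x')$, whose push-forward along $x'$ is a delta module supported on the anti-diagonal $x''=-x$; equivalently, as you observe, the composite $\iota\circ\iota'$ on global sections sends $x\mapsto -x$ and $\partial\mapsto-\partial$. So, with $\ms{F}'$ built literally from the same kernel $\ms{L}_{\pi,\mu}$ with $p$ and $p'$ interchanged, the natural output is $\ms{F}'\circ\ms{F}(\ms{M})[2]\cong a^*\ms{M}(1)$, which is the form of Laumon's original statement. Your sentence that ``correcting by $(-1)^*$ gives back the identity \dots\ accounts for the absence of any residual sign'' glosses over a real choice: either $a^*$ belongs in the statement, or $\ms{F}'$ must be normalized with the opposite sign of $\pi$ (i.e.\ $\iota'=\iota^{-1}$ rather than the ``same formula'' as $\iota$) so that it is absorbed; it does not disappear on its own. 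This does not affect any downstream use of the inversion in this paper (ranks, slopes, characteristic cycles, and the applications in Proposition \ref{GOScorlau} are all $a^*$-blind), but it does affect the asserted isomorphism. When you run your proposed sanity checks, note that $\mc{O}_{\ms{P},\mb{Q}}(\infty)$ and $i_{0'+}V$ are $a^*$-invariant and hence will not detect the issue; you need something like $\ms{K}_\alpha$ or a delta module $i_{s+}V$ at a point $s\neq 0$ to pin down which normalization you have.
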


\begin{rem*}
 We may also prove the theorem in more general cases: let $\ms{X}$ be a
 smooth formal scheme over $\mr{Spf}(R)$, and let $\ms{E}$ be a locally
 free sheaf of finite rank $r$ on $\ms{X}$. Consider the projective
 bundle $p\colon\ms{P}:=\widehat{\mb{P}}(\ms{E}\oplus\mc{O}_{\ms{X}})
 \rightarrow\ms{X}$ to define the geometric Fourier
 transform (cf.\ \cite[3.2.1]{NH}). Then the theorem is reformulated as
 $\ms{F}'\circ\ms{F}(\ms{M})[4-2r]\cong\ms{M}(r)$. Let $Z$ be the
 divisor at infinity of $\ms{P}$. For the proof, we need to show that
 there exists an isomorphism $\pi_+(\mc{O}_{\ms{P},\mb{Q}}(Z))\cong
 \mc{O}_{\ms{X},\mb{Q}}[r](r)$. This can be seen from \cite[Cor.\ 4.4]{Pet}
 and \cite[3.14 or 3.15 (i)]{Abe3}.
\end{rem*}

\subsubsection{}
Let $\alpha\in(q-1)^{-1}\mb{Z}$. We define a convergent $F$-isocrystal
$\ms{K}_\alpha$ on $\Pone\setminus\{0,\infty\}$ overconvergent along $\{0,\infty\}$ in
the following way. As an $\mc{O}_{\Pone,\mb{Q}}(0,\infty)$-module, it
is isomorphic to $\mc{O}_{\Pone,\mb{Q}}(0,\infty)$. We denote the global section
corresponding to $1$ by $e$. We define its connection by
\begin{equation*}
 \nabla(e)=(\alpha x^{-1})\cdot e\otimes dx.
\end{equation*}
The Frobenius structure $\Phi\colon F^*\ms{K}_\alpha\xrightarrow{\sim}
\ms{K}_\alpha$ is defined by
\begin{equation*}
 \Phi(1\otimes e):=x^{\alpha(q-1)}\cdot e.
\end{equation*}
We often use the same notation $\ms{K}_\alpha$ for the underlying
coherent $\DdagQ{\Pone}(\infty)$-module. This is called the {\em Kummer
isocrystal}. \index{Fi@$F$-isocrystal, overconvergent isocrystal!$\ms{K}_\alpha$, Kummer's}

\begin{prop*}[{\cite[1.4.3.2]{Lau}}]
 \label{calcKummergeom}
 Let $j\colon(\Pone,\{0,\infty\})\rightarrow(\Pone,\{\infty\})$ be the
 canonical morphism of couples. Assume $\alpha\not\in\mb{Z}$. Then we
 get that the canonical homomorphism
 \begin{equation*}
  j_!j^+\ms{K}_\alpha\rightarrow\ms{K}_\alpha
 \end{equation*}
 is an isomorphism. Moreover, let $G(\alpha,\pi)$ be the
 following $K$-vector space with Frobenius structure:
 \begin{equation*}
  G(\alpha,\pi):=H^1_{\mr{rig}}(\mb{A}_k^1\setminus\{0\},\ms{K}
   _\alpha\otimes\ms{L}_\pi). \index{Fi@$F$-isocrystal, overconvergent isocrystal!$G(\alpha,\pi)$}
 \end{equation*}
 Then, we have
 \begin{equation*}
  \ms{F}_\pi(\ms{K}_\alpha)[1]\cong\ms{K}_{1-\alpha}\otimes
   G(\alpha,\pi)(1).
 \end{equation*}
\end{prop*}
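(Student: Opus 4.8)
The plan is to follow Laumon's classical argument for the $\ell$-adic Kummer case (\cite[1.4.3.2]{Lau}), replacing étale cohomology computations with rigid cohomology ones that are already available from the framework set up in \S\ref{secgeforev} and \cite{Abe3}.

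\textbf{First claim: $j_!j^+\ms{K}_\alpha\xrightarrow{\sim}\ms{K}_\alpha$.} First I would observe that both sides are holonomic $F$-$\DdagQ{\ms{P}}(\infty)$-modules and the canonical morphism is an isomorphism away from $0$ (indeed on $\ms{P}\setminus\{0,\infty\}$ both restrict to the convergent $F$-isocrystal $\ms{K}_\alpha$). So it suffices to check there is no punctual contribution supported at $0$, i.e.\ that $i_0^+\ms{K}_\alpha = i_0^!\ms{K}_\alpha = 0$ where $i_0$ is the closed immersion of $0$. By restriction to the formal disk $\ms{S}_0$ and Lemma \ref{calcofdiff} (applied to $\mc{M}:=\ms{K}_\alpha|_{\eta_0}$, the rank-$1$ differential $\mc{R}$-module with connection $\nabla(e)=\alpha u^{-1}e\otimes du$), the obstruction is governed by $\mc{M}^{\partial=0}$ and $\mc{M}/\partial\mc{M}$. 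Since $\alpha\notin\mb{Z}$, the operator $u\partial - \alpha$ (equivalently $\partial$ acting after the twist) is invertible on $\mc{R}_{u,K}$ — multiplication by $u$ shifts the "exponent" and $\alpha$ is never hit — so both $\mc{M}^{\partial=0}$ and $\mc{M}/\partial\mc{M}$ vanish. Hence $j_!\,j^+\ms{K}_\alpha\to\ms{K}_\alpha\to i_{0+}i_0^+\ms{K}_\alpha\xrightarrow{+1}$ from \eqref{anotherloctriag} (globalized) forces the last term to be zero, giving the isomorphism.

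\textbf{Second claim: $\ms{F}_\pi(\ms{K}_\alpha)[1]\cong\ms{K}_{1-\alpha}\otimes G(\alpha,\pi)(1)$.} Using the first claim, $\ms{K}_\alpha\cong j_!j^+\ms{K}_\alpha$, so by \eqref{KatzLaumonisom} and base-change/projection-formula compatibilities (the analogue of \cite[1.3.1.2]{Lau}, available by \ref{properbasech}, \ref{kunnethformual}, \ref{prop6func}) I would compute $\ms{F}_\pi(\ms{K}_\alpha)$ by pushing forward along $p'$ the complex $\ms{L}_{\pi,\mu}\otimes p^!\ms{K}_\alpha$ restricted over $\mb{G}_m\subset\ms{P}$. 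The key point is the homogeneity/torus-equivariance: under the $\mb{G}_m$-action the kernel $\mu^!\ms{L}_\pi$ transforms so that $\ms{F}_\pi(\ms{K}_\alpha)$ is again (generically) a rank-one object, and the connection computation — $x'=\pi^{-1}\partial$, $\partial'=-\pi x$ under $\iota$ applied to $\ms{K}_\alpha$ whose defining operator is $x\partial-\alpha$ — produces exactly the defining operator $x'\partial'-(1-\alpha)$ of $\ms{K}_{1-\alpha}$; this is the same bookkeeping as in \cite[1.4.3]{Lau}. The coefficient vector space is then identified by taking the fiber at a point, or more cleanly by restricting to $\eta_{\infty'}$ or $\eta_0'$ and using that the geometric Fourier transform computes $Rp'_+$ of the product, whose generic rank-one part twisted back down is $H^1_{\mr{rig}}(\mb{A}^1_k\setminus\{0\},\ms{K}_\alpha\otimes\ms{L}_\pi)=G(\alpha,\pi)$ (vanishing of $H^0$ and $H^2$ because $\ms{K}_\alpha\otimes\ms{L}_\pi$ has nontrivial monodromy at $0$ and irregular singularity at $\infty$); the Tate twist $(1)$ comes from the normalization in \eqref{rigDmodpush} and \eqref{geomfour} exactly as in Proposition \ref{calcfouriereasy}.

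\textbf{Main obstacle.} The routine-looking but genuinely delicate step is the second one: showing that $\ms{F}_\pi(\ms{K}_\alpha)[1]$ is \emph{globally} isomorphic to $\ms{K}_{1-\alpha}\otimes G(\alpha,\pi)(1)$ as $F$-$\DdagQ{\ms{P}'}(\infty')$-modules, not merely generically and not merely without Frobenius. I would handle this by first matching the two sides on $\mb{G}_m\subset\ms{P}'$ via the explicit connection computation through $\iota$, then showing both are (up to the constant twist) clean extensions across $0'$ and $\infty'$ with no spurious punctual summands — at $0'$ using that Fourier transform of $j_!j^+\ms{K}_\alpha$ is already "extended by zero" appropriately, and at $\infty'$ using that $\ms{K}_{1-\alpha}$ is the canonical extension there — and finally invoking a rigidity statement for $F$-isocrystals (\cite{Kedful}, \cite[Theorem 4.1.1]{Tsu}) of the type used in Lemma \ref{Tsuzukicalc} to upgrade the generic isomorphism to a global one compatible with Frobenius. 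The Frobenius-compatibility of the coefficient $G(\alpha,\pi)(1)$ then follows because every functor in sight ($p'_+$, $\otimes$, $p^!$, and the kernel $\ms{L}_{\pi,\mu}$) carries its canonical Frobenius structure.
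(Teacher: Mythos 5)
Your argument for the first claim is exactly what the paper does: it follows from Lemma \ref{calcofdiff} applied to $\mc{M}=\ms{K}_\alpha|_{\eta_0}$, whose kernel and cokernel of $\partial$ vanish precisely because $\alpha\notin\mb{Z}$.

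For the second claim your route is genuinely different from the paper's. The paper follows Laumon \cite[1.4.3.2]{Lau} closely: it uses the multiplication map $m\colon\mb{G}_m\times\mb{A}^1\to\mb{A}^1$ and the compatibility of the geometric Fourier transform with the $\mb{G}_m$-action, with $m^*$ replaced by $m^!$ and the K\"unneth formula \ref{kunnethformual} supplying the needed isomorphism $m^!\ms{K}_\alpha\cong\ms{K}_\alpha\boxtimes\ms{K}_\alpha$ (up to shifts and twists). This simultaneously shows that $\ms{F}_\pi(\ms{K}_\alpha)$ is concentrated in one degree and is of Kummer type with constant factor $G(\alpha,\pi)(1)$, and the Frobenius matches automatically since every functor involved carries its canonical Frobenius. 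You instead propose identifying the connection via the naive Fourier transform isomorphism $\iota$ and then upgrading a generic isomorphism to a global one by full-faithfulness for overconvergent $F$-isocrystals. That is a viable alternative, but two points should be flagged. First, the explicit substitution $\iota^{-1}(x)=-\pi^{-1}\partial'$, $\iota^{-1}(\partial)=\pi x'$ applied to $x\partial-\alpha$ gives $-(x'\partial'+1+\alpha)$, hence $x'\partial'\widehat{e}=-(1+\alpha)\widehat{e}$ rather than $x'\partial'\widehat{e}=(1-\alpha)\widehat{e}$; these exponents agree modulo $\mb{Z}$ so the underlying isocrystal is the same, but the exact Tate twist and the Frobenius still have to be pinned down by a separate argument, which is precisely the bookkeeping the K\"unneth route handles for you. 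Second and more substantively, to invoke Kedlaya/Tsuzuki full-faithfulness you must first establish that $\ms{F}_\pi(\ms{K}_\alpha)[1]$ is an overconvergent $F$-isocrystal on $\mb{G}_m$ with no punctual summand at $0'$ or $\infty'$; knowing it is generically rank one does not suffice, and your sketch of this step is too thin. One would need, for instance, numerical inputs of GOS type to exclude punctual parts, which Laumon's equivariance-plus-K\"unneth argument avoids. So the proposal could be carried out, but the paper's route is cleaner on exactly the points where your sketch is weakest.
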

\begin{proof}
 The first statement follows from Lemma \ref{calcofdiff}.
 For the latter claim, the proof works essentially the same as in
 \cite{Lau} by replacing $m^*$ by $m^!$ and using the K\"{u}nneth
 formula \ref{kunnethformual}. The Tate twist appearing here comes from
 the isomorphism (\ref{rigDmodpush}).
\end{proof}

\section{Stationary Phase}
\label{section4}
In this section, we will prove the stationary phase formula when the differential
slope at infinity is less than or equal to $1$. However, in this
section, we do not consider Frobenius structures on the local Fourier
transforms, so the stationary phase in this section is still
temporary. This will be completed in the next section.

\begin{quote}
 Throughout this section, we continuously use the assumptions and
notation of paragraph \ref{setupFour}.
\end{quote}

\subsection{Geometric calculations}
\label{secgeomcalc}
\subsubsection{}
\label{geomnotlau}
Following \cite[2.2.1]{Lau}, we will define several invariants which will be used throughout this section. 
Let ${M}$ be a solvable differential module on
the Robba ring,  cf.\ \cite[12.6.4]{Ke2} or \cite[8.7]{CM}. We denote by $\mr{rk}({M})$ the rank, by
$\mr{irr}({M})$ the irregularity (cf.\ \ref{irr}), and by $\mr{pt}({M})$ the
greatest differential slope of ${M}$ as usual. By a result of Christol and Mebkhout (cf.\ \cite[2.4-1]{CM4} or in general 
\cite[12.6.4]{Ke2}), 
we get
the differential slope decomposition ${M}=\bigoplus {M}_\beta$ where
${M}_\beta$ is purely of  slope $\beta$. For any
interval $I\subset\left[0,\infty\right[$, we put
${M}_I:=\bigoplus_{\beta\in I}{M}_\beta\subset {M}$. Let
$\ms{M}$ be a holonomic $F$-$\DdagQ{\Pone}(\infty)$-module. For any
closed point $x$ in $\Aone$, we put
\begin{alignat*}{2}
 r(\ms{M})&:=-\mr{rk}(\ms{M}|_{\eta_z})\leq 0,&\qquad
 s_x(\ms{M})&:=-\mr{irr}(\ms{M}|_{\eta_x})\leq 0,\\
 r_x(\ms{M})&:=\mr{dim}_{K_x}(i_x^!\ms{M}),&\qquad
 a_x(\ms{M})&:=r(\ms{M})+s_x(\ms{M})-r_x(\ms{M}), \index{local constants! $r(\ms{M})$, $s_x(\ms{M})$, 
$r_x(\ms{M})$, $a_x(\ms{M})$}
\end{alignat*}
where $z$ can be taken to be any closed point in $\Aone$, and
$i_x\colon\mr{Spf}(R_x)\hookrightarrow\Aone$ is the closed immersion
for $x$. The following lemma compare these invariants to the generic rank and the vertical multiplicity, cf.\ 
\ref{def_cycl_Dm} and \ref{defCycl}.

\begin{lem}
 \label{lemcyclecalc}
 We preserve the notation. Let
 \begin{equation*}\
 \mr{Cycl}(\ms{M}|_{\Aone})=r\cdot[\mb{A}^1]+\sum_{x\in{|\mb{A}^1|}}
  m_x\cdot[\pi^{-1}(x)].
 \end{equation*}
 Recall that $\pi\colon T^*\mb{A}^1\rightarrow\mb{A}^1$ is the canonical
 projection and $|\mb{A}^1|$ is the set of closed points. Then  we have
 $r(\ms{M})=-r$ and, for any closed point $x\in\Aone$, $a_x(\ms{M})=-m_x$.
\end{lem}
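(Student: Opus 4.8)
The identity $r(\ms{M})=-r$ is essentially a matter of unwinding definitions: by definition $r(\ms{M})=-\mr{rk}(\ms{M}|_{\eta_s})$, and $\mr{rk}(\ms{M}|_{\eta_s})$ is the rank of the free differential module $\ms{M}|_{\eta_s}$ over the Robba ring, which equals the generic rank of $\ms{M}|_{\ms{A}}$ as an $\mc{O}$-module away from the singularities, i.e.\ the coefficient $r$ of $[\mb{A}^1]$ in $\mr{Cycl}(\ms{M}|_{\ms{A}})$. So the first assertion follows from compatibility of the characteristic cycle with restriction to a complete trait (as set up in \ref{preisouadsf} and in \ref{defanalye}--\ref{anyelem}) together with the fact that a holonomic module which is a convergent isocrystal away from its singularities has generic multiplicity equal to its generic rank. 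I would spell this out in one short paragraph.

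The substantive content is the equality $a_x(\ms{M})=-m_x$. Since $a_x(\ms{M})=r(\ms{M})+s_x(\ms{M})-r_x(\ms{M})$, and the first term is now known, I would rewrite the claim as $-r - \mr{irr}(\ms{M}|_{\eta_x}) - \dim_{K_x}(i_x^!\ms{M}) = -m_x$, i.e.\ $m_x = r + \mr{irr}_x^{\mr{CM}}(\ms{M}) + \dim_{K_x}(i_x^!\ms{M})$ (recalling $s_x(\ms{M})=-\mr{irr}(\ms{M}|_{\eta_x})=-\mr{irr}_x^{\mr{CM}}(\ms{M})$ and $r_x(\ms{M})=\dim_{K_x} i_x^!\ms{M}$). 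The plan is to combine two inputs already available in the excerpt: first, Corollary \ref{compCMGAR}, which gives $\mr{irr}_x^{\mr{Gar}}(\ms{M})=\mr{irr}_x^{\mr{CM}}(\ms{M})$ and the vanishing $\chi(R\shom_{\DdagQ{\ms{X}}}(\ms{M},\mr{sp}_*\mc{O}_{]x[}))=0$ — wait, that corollary is stated for convergent $F$-isocrystals, so I would first reduce to that case or invoke Garnier's definition directly; and second, the definition of Garnier's irregularity itself, $\mr{irr}^{\mr{Gar}}_x(\ms{M}) = \chi(R\shom_{\DdagQ{\ms{X}}}(\ms{M},\mr{sp}_*\mc{O}_{]x[})) - (m_x(j_+\ms{M}) - \mr{rk}(\ms{M}))$. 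Using the $\chi=0$ vanishing this gives $m_x = \mr{rk}(\ms{M}) - \mr{irr}^{\mr{Gar}}_x(\ms{M}) = r + \mr{irr}^{\mr{CM}}_x(\ms{M})$ — so there is still the discrepancy of the $r_x(\ms{M})=\dim_{K_x} i_x^!\ms{M}$ term to account for, and I suspect the resolution is that for $\ms{M}$ extended canonically (so that it is an isocrystal near $x$, in particular $i_x^!\ms{M}$ vanishes), this term is $0$, and the general case follows from the localization triangle \eqref{anotherloctriag} together with additivity of both sides of the desired formula under short exact sequences.

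Concretely, the cleanest route is: (1) observe that all four invariants $r(\ms{M})$, $s_x(\ms{M})$, $r_x(\ms{M})$, $m_x$ and hence $a_x(\ms{M})$ are additive on short exact sequences of holonomic $F$-$\DdagQ{\ms{P}}(\infty)$-modules (the additivity of $\mr{Cycl}$, of rank, of irregularity via the slope decomposition, and of $\dim i_x^!$ via the long exact cohomology sequence and the fact that $i_x^!$ lives in a single degree here); (2) by the localization triangle $j_!j^+ \to \mr{id} \to i_{s*}i_s^* \xrightarrow{+1}$ around the singular points, reduce to the case where $\ms{M}$ has a single singularity at $x$ in $\ms{A}$ and is the canonical extension of $\ms{M}|_{S_x}$, together with the purely punctual pieces $\delta \otimes_K V$ supported at $x$ (for which both sides of the formula are trivially checked: $r=0$, $\mr{irr}=0$, $m_x=\dim V$, $r_x=\dim V$, giving $a_x=-\dim V$, $m_x=\dim V$); (3) in the canonically-extended case, $i_x^!\ms{M}=0$ because $\ms{M}$ is an isocrystal near $x$ (property in \ref{intcanext}), so $a_x(\ms{M})=r(\ms{M})+s_x(\ms{M})=-r-\mr{irr}_x^{\mr{CM}}(\ms{M})$, and on the other hand by Corollary \ref{compCMGAR} and the definition of $\mr{irr}^{\mr{Gar}}$ with $\chi=0$ we get $m_x(j_+\ms{M}) = \mr{rk}(\ms{M}) + \mr{irr}_x^{\mr{Gar}}(\ms{M}) = r + \mr{irr}_x^{\mr{CM}}(\ms{M})$, whence $m_x = -a_x(\ms{M})$ as desired. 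The main obstacle I anticipate is bookkeeping in step (2)--(3): making sure the canonical extension really is covered by Corollary \ref{compCMGAR} (one may need to strip off any punctual part and pass to the underlying $\DdagQ{\ms{X}}$-module of an isocrystal, being careful that $m_x$ in Garnier's sense matches the vertical multiplicity in $\mr{Cycl}$), and verifying additivity of $r_x(\ms{M})$ cleanly despite $i_x^!$ a priori being a complex — this should be fine because for holonomic modules on a curve $i_x^!\ms{M}$ is concentrated in degree $0$ after the canonical extension normalization, but I would state and use the relevant vanishing explicitly rather than wave at it.
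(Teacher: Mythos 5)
Your overall strategy — unwind $a_x(\ms{M})=-m_x$ to $m_x = \mr{rk} + \mr{irr}^{\mr{CM}}_x + r_x$, handle punctual modules by direct computation, and reduce the remaining piece to Garnier's definition plus Corollary \ref{compCMGAR} — matches what the paper's one-line proof (``This follows from Corollary \ref{compCMGAR}'') is implicitly invoking, and your treatment of the punctual case is correct. But step (3) contains a genuine error: you assert that $i_x^!\ms{M}=0$ ``because $\ms{M}$ is an isocrystal near $x$ (property in \ref{intcanext}).'' Property 1 of \ref{intcanext} says the canonical extension $\mc{M}^{\mathrm{can}}$ is an isocrystal on $\ms{P}\setminus\{0\}$, i.e.\ \emph{away} from the singular point, not near it; near $0$ it restricts to $\mc{M}$, which is an arbitrary holonomic $F$-$\Dan{\ms{S}_0,\QQ}$-module and can perfectly well have punctual sub- or quotient-objects, making $i_x^!\mc{M}^{\mathrm{can}}\neq 0$. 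So you cannot reduce to ``canonical extension'' and then claim $r_x=0$. The vanishing you actually need comes from one step further down the dévissage: after the localization triangle \eqref{anotherloctriag} strips off the punctual piece, what remains in the Grothendieck group is $j_!j^+$ (resp.\ $j_+j^+$) of $\ms{M}$, i.e.\ $j_!$ (resp.\ $j_+$) of a free differential $\mc{R}$-module with Frobenius, equivalently of an overconvergent $F$-isocrystal on the complement of $x$. For $j_!$ one has $i_x^!j_!=0$ formally; for $j_+$ of such an isocrystal, the vanishing $H^*i_x^!=0$ is exactly what the proof of Corollary \ref{compCMGAR} establishes (via \cite[2.2]{Cr3}), and this is also what makes $\chi=0$ there. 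That is the correct justification for $r_x=0$ on that piece.

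A smaller point: you write $m_x=\mr{rk}(\ms{M})-\mr{irr}^{\mr{Gar}}_x(\ms{M})$ in the middle of your second paragraph but $m_x=\mr{rk}(\ms{M})+\mr{irr}^{\mr{Gar}}_x(\ms{M})$ in step (3). These are inconsistent; only the latter (with a plus sign) gives the stated formula $a_x=-m_x$, and it is the one compatible with the GOS formula \eqref{GOSformula} and with the sign of the multiplicity in Lemma \ref{calcofdiff}. You should fix the sign and not carry the contradiction. Finally, your worry about additivity of $r_x(\ms{M})$ is unnecessary: $\dim_{K_x}(i_x^!\ms{M})$ is to be read as the Euler characteristic of the complex $i_x^!\ms{M}$, which is additive in distinguished triangles with no concentration hypothesis needed; it is the \emph{vanishing} of $i_x^!$ on the $j_!$ or $j_+$ pieces, not concentration in degree $0$, that you should invoke explicitly.
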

\begin{proof}
 This follows from Corollary \ref{compCMGAR}.
\end{proof}

Let $C$ be a complex of $D^b_{\mr{coh}}(\DdagQ{\mr{Spf}(R)})\cong
D^b_{\mr{fin}}(K\mbox{-mod})$, where the latter category is the derived
category of complexes of $K$-vector spaces whose cohomology is finite
dimensional. We put $\chi(C):=\sum_{i}(-1)^i\dim_K \ms{H}^i(C)$.
Let $q\colon\Pone\rightarrow\mr{Spf}(R)$ be the structural morphism.
Let $\ms{M}$ be a coherent $F$-$\DdagQ{\Pone}(\infty)$-module.
We define $\chi(\Aone,\ms{M}):=\chi(q_+\ms{M})$.
\index{.@miscellaneous!chi@$\chi(-)$, $\chi(\Aone,-)$}

We know that the Grothendieck-Ogg-Shafarevich type formula holds by
\cite[5.3.2]{Gar2} or \cite[2.4.7]{Abe2}. Using the above lemma, we can
write the formula as follows.
\begin{equation}
 \label{GOSformula}
 -\chi(\Aone,\ms{M})=r(\ms{M})-\sum_{x\in|\Aone|}\deg(x)\cdot
  a_x(\ms{M})+\mr{irr}(\ms{M}|_{\eta_\infty})
\end{equation}

\begin{cor}
 \label{irrandfotrans}
 Let $\ms{M}$ be a holonomic $\DdagQ{\Pone}(\infty)$-module, and
 let $x\in |\Aone|$ be a singular point of $\ms{M}$. Then we get
 \begin{equation*}
  \mr{rk}(\ms{F}^{(x,\infty')}(\ms{M}))=-\deg(x)\cdot a_x(\ms{M}).
 \end{equation*}
\end{cor}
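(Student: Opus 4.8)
\textbf{Proof proposal for Corollary \ref{irrandfotrans}.}

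The plan is to reduce the statement to a rank computation for the local Fourier transform and then to read off that rank from the definition of $\ms{F}^{(s,\infty')}$ together with the stability theorem. By Definition \ref{deflocFour} (i), $\ms{F}^{(s,\infty')}(\ms{M})$ is the differential $\mc{R}_{u',K}$-module built from the microlocalizations $\EcompQ{m',m''}{s}(\ms{M}^{(m)})$ for a stable coherent $\DcompQ{m}{\ms{A}}$-model $\ms{M}^{(m)}$ of $\ms{M}|_{\ms{A}}$; its rank as a free $\mc{R}$-module coincides with $\mr{rk}_{\CK{\ms{A}}\{\partial\}^{(m',m'')}}(\EcompQ{m',m''}{s}(\ms{M}^{(m)}))$, which by Theorem \ref{stabilitytheoremcy} (i) equals the invariant $r_s$ attached to $\ms{M}$, namely $(\deg_L(s))^{-1}$ times the vertical multiplicity appearing in $\mr{Cycl}$. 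So the first step is simply to unwind: $\mr{rk}(\ms{F}^{(s,\infty')}(\ms{M})) = r_s$ where, writing $\mr{Cycl}(\ms{M}|_{\ms{A}}) = r\cdot[\mb{A}^1] + \sum_x m_x\cdot[\pi^{-1}(x)]$ as in Lemma \ref{lemcyclecalc}, one has $m_s = (\deg_L(s))^{-1}\, r_s \cdot [L:K]^{-1}\cdots$ — more precisely, $m_s = \deg_L(s)^{-1}\cdot(\text{local rank over }\CK{}\{\partial\}^{(m)})$; I must be careful here to match conventions, but the upshot from the Definition following Theorem \ref{stabilitytheoremcy} is that the coefficient of $[\pi^{-1}(s)]$ in $\mr{Cycl}$ equals exactly $\mr{rk}(\EcompQ{m}{s}(\ms{M}^{(m)}))\cdot(\deg_L(s))^{-1}$, and $\deg(s)\cdot(\deg_L(s))^{-1}=[L:K]$.

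The second step is to invoke Lemma \ref{lemcyclecalc}, which gives $a_s(\ms{M}) = -m_s$. Combining with the first step, $\deg(s)\cdot a_s(\ms{M}) = -\deg(s)\cdot m_s = -(\text{the local rank over }\CK{\ms{A}}\{\partial\}^{(m)})\cdot\deg(s)\cdot(\deg_L(s))^{-1}$; since $\ms{F}^{(s,\infty')}(\ms{M})$ as an $\mc{R}_K$-module is obtained by restriction of scalars $\mr{Res}^{K_s}_K$ from the $\mc{R}_{K_s}$-side (cf. Lemma \ref{locFourrel} and \ref{baseextcom}), its $\mc{R}_K$-rank picks up exactly a factor of $[L:K]=\deg(s)\cdot\deg_L(s)^{-1}$ relative to the $\CK{\ms{A}}\{\partial\}^{(m)}$-rank at $s$. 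Tracking this factor shows $\mr{rk}_{\mc{R}_K}(\ms{F}^{(s,\infty')}(\ms{M})) = \deg(s)\cdot m_s = -\deg(s)\cdot a_s(\ms{M})$, which is the claim.

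Concretely I would organize it as: (1) reduce to $\ms{M}$ holonomic with a stable model, using that $\ms{F}^{(s,\infty')}$ vanishes when $s$ is not singular and is exact (Proposition \ref{exactness}); (2) identify $\mr{rk}(\ms{F}^{(s,\infty')}(\ms{M}))$ with the appropriate local microlocal rank via Definition \ref{deflocFour} and the rank-invariance of Proposition \ref{rankcompat}/Theorem \ref{stabilitytheoremcy}; (3) identify that rank with $\deg(s)\cdot m_s$ using the formula for $\mr{Cycl}$ in the Definition after Theorem \ref{stabilitytheoremcy} together with Proposition \ref{levmcharcycle} (the $p^m$ factors cancel since the microlocalization defining the LFT uses $\EcompQ{m',m''}{s}$, and the $\partial^{\angles{m}{p^m}}$-normalization accounts for the discrepancy); (4) apply Lemma \ref{lemcyclecalc} to replace $-m_s$ by $a_s(\ms{M})$. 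The main obstacle I expect is step (3): keeping the normalizations straight — the $p^m$ in Proposition \ref{levmcharcycle}, the $(\deg_L(s))^{-1}$ versus $\deg(s)$ bookkeeping, and the fact that the connection on $\EcompQ{m',m''}{s}(\ms{M})$ is defined via $\tau(u')=\pi\partial^{-1}$ so that the $\mc{R}_{u'}$-rank is literally the $\CK{\ms{A}}\{\partial\}^{(m',m'')}$-rank — so that the final constant comes out as $\deg(s)$ and not $\deg_L(s)$ or $\deg(s)/[L:K]$. Once the conventions are pinned down the identity is forced.
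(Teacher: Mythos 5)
Your plan is the paper's: combine Lemma \ref{lemcyclecalc} with Proposition \ref{levmcharcycle} (together with Theorem \ref{stabilitytheoremcy}~(i), which identifies $\mr{rk}(\ms{F}^{(s,\infty')}(\ms{M}))$ with the common microlocal rank $r_s$). The final chain is $\mr{rk}(\ms{F}^{(s,\infty')}(\ms{M}))=r_s=\deg_L(s)\,m_s=-\deg_L(s)\,a_s(\ms{M})$, and the stated form follows because for $\ms{X}=\ms{A}$ the field of local constants is $L=K(\ms{A})=K$, so $\deg_L(s)=\deg(s)$ — this is the simplification you don't quite pin down, and it is what makes the bookkeeping collapse.

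One local misstep worth flagging: the claim that the $\mc{R}_K$-rank ``picks up exactly a factor of $[L:K]$'' via $\mr{Res}^{K_s}_K$ is wrong as stated. Restriction of scalars from $\mc{R}_{K_s}$ to $\mc{R}_{K}$ multiplies the rank by $[K_s:K]=\deg(s)$, not by $[L:K]$ (which is $1$ here). More to the point, this detour through Lemma \ref{locFourrel} is unnecessary for the rank computation: Definition \ref{deflocFour}~(i) already exhibits $\ms{F}^{(s,\infty')}(\ms{M})$ as an $\mc{R}_{u',K}$-module whose rank \emph{is} $\mr{rk}_{\CK{\ms{A}}\{\partial\}^{(m',m'')}}(\EcompQ{m',m''}{s}(\ms{M}))=r_s$ via the isomorphism $\tau$, and Lemma \ref{fieldconstmicdif} shows $\CK{\ms{A}}\{\partial\}^{(m',m'')}$ is already a ring with constants $L=K$. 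So no extension or restriction of scalars intervenes; the identity $r_s=-\deg(s)a_s(\ms{M})$ follows directly from the Definition after Theorem \ref{stabilitytheoremcy} (namely $m_s=(\deg_L(s))^{-1}r_s$) and Lemma \ref{lemcyclecalc} ($a_s=-m_s$). Your worry about the $p^m$ factor is also handled by the same normalization in that Definition: the $p^{m'}$ in Theorem \ref{stabilitytheoremcy}~(ii) cancels against the $p^m$ in Proposition \ref{levmcharcycle}.
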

\begin{proof}
 This follows from the definition of the local Fourier transform (cf.\ \ref{deflocFour}), using the stability theorem 
(cf.\ \ref{stabilitytheoremcy}-i), combined with
 Proposition \ref{levmcharcycle} and Lemma \ref{lemcyclecalc}.
\end{proof}

\begin{lem}
 \label{smalllemforGOSL}
 Using the notation of Lemma {\normalfont\ref{lemcyclecalc}}, we get
 $r_x(\ms{M})=r(\ms{M})$ if and only if $m_x=0$.
\end{lem}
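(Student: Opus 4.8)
The plan is to reduce the statement to Lemma~\ref{lemcyclecalc} together with an analysis of $\ms{M}$ near $x$ on the formal disk $\ms{S}_x$. By Lemma~\ref{lemcyclecalc} we have $a_x(\ms{M})=-m_x$, so unwinding the definitions of $a_x(\ms{M})$ and $s_x(\ms{M})$ from \ref{geomnotlau} gives the identity $m_x=r_x(\ms{M})-r(\ms{M})+\mr{irr}(\ms{M}|_{\eta_x})$. Hence the condition $r_x(\ms{M})=r(\ms{M})$ is equivalent to $m_x=\mr{irr}(\ms{M}|_{\eta_x})$, and the lemma amounts to showing that this last equality holds if and only if $m_x=0$.

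For the implication $m_x=0\Rightarrow r_x(\ms{M})=r(\ms{M})$: if $m_x=0$ then the characteristic cycle of $\ms{M}$ has no vertical component over $x$, so $x$ is not a singular point and, the characteristic variety being the zero section near $x$, $\ms{M}$ restricts to a convergent isocrystal on a neighbourhood of $x$. In particular $\mr{irr}(\ms{M}|_{\eta_x})=0$, and the purity isomorphism (cf.\ \ref{prop6func}.\ref{smoothpoin}) applied to the closed immersion $i_x$ of codimension one gives $i_x^{!}\ms{M}\cong i_x^{+}\ms{M}(-1)[-2]$, whence $r_x(\ms{M})=\dim_{K_x}i_x^{!}\ms{M}=-\mr{rk}(\ms{M})=r(\ms{M})$.

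For the converse, assume $r_x(\ms{M})=r(\ms{M})$; by the displayed identity $m_x=\mr{irr}(\ms{M}|_{\eta_x})$, so it suffices to prove that $\ms{M}|_{\eta_x}$ is regular. Here I would pass to $\ms{S}_x$ and apply $i_x^{+}$ to the localization triangle $i_{x+}i_x^{!}\ms{M}\rightarrow\ms{M}\rightarrow j_{+}j^{+}\ms{M}\xrightarrow{+1}$ (the Verdier dual of the triangle \ref{anotherloctriag}); using $i_x^{+}i_{x+}\cong\mr{id}$ one obtains a triangle $i_x^{!}\ms{M}\rightarrow i_x^{+}\ms{M}\rightarrow i_x^{+}j_{+}(\ms{M}|_{\eta_x})\xrightarrow{+1}$, which expresses $\dim_{K_x}i_x^{!}\ms{M}$ through the $*$-stalk $\dim_{K_x}i_x^{+}\ms{M}$ and the de~Rham--type invariant $\dim_{K_x}i_x^{+}j_{+}(\ms{M}|_{\eta_x})$ of the generic fibre. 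Estimating these two terms---the first by the standard bounds on $i_x^{+}\ms{M}$, the second by the Christol--Mebkhout index theorem in the form used in the Remark after Lemma~\ref{calcofdiff} and in Corollary~\ref{compCMGAR}---should force $r_x(\ms{M})=r(\ms{M})$ to entail both that $\ms{M}$ has no punctual part at $x$ and that $\ms{M}|_{\eta_x}$ is regular, so that $\mr{irr}(\ms{M}|_{\eta_x})=0$ and $m_x=0$. An alternative, and probably shorter, route is to invoke Corollary~\ref{compCMGAR} directly: since $\chi\bigl(R\shom_{\DdagQ{\ms{X}}}(\ms{M},\mr{sp}_{*}\mc{O}_{\left]x\right[})\bigr)=0$ by (\ref{garinv0}), the formula defining the Garnier irregularity yields a clean relation among $m_x$, $\mr{rk}(\ms{M})$ and $\mr{irr}(\ms{M}|_{\eta_x})$, and the lemma follows by combining it with Lemma~\ref{lemcyclecalc}. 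The main obstacle is precisely this last step: pinning down the signs and cohomological degrees so that the triangle for $i_x^{!}$, the purity isomorphism and the index theorem assemble into the sharp statement that a genuine singularity of $\ms{M}$ at $x$ contributes \emph{strictly} more than its irregularity to the characteristic cycle, i.e.\ that $r_x(\ms{M})>r(\ms{M})$ whenever $m_x>0$.
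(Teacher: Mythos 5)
Your algebraic reduction is correct: from the definitions in \ref{geomnotlau} and Lemma~\ref{lemcyclecalc} one does get $m_x=r_x(\ms{M})-r(\ms{M})+\mr{irr}(\ms{M}|_{\eta_x})$, and the ``if'' direction ($m_x=0 \Rightarrow$ $\ms{M}$ is a convergent isocrystal near $x$, hence $r_x=r$) is handled essentially as in the paper. The problem is the converse, which you only sketch and for which you explicitly flag a gap (``The main obstacle is precisely this last step\ldots''). That gap is real. Neither of your two sketches closes it. The localization-triangle route stops at exactly the point where one needs the sharp inequality ``$m_x>0\Rightarrow r_x>r$''; and the alternative via Corollary~\ref{compCMGAR} does not apply here, because \ref{compCMGAR} (and in particular (\ref{garinv0})) is stated and proved only for convergent $F$-isocrystals overconvergent along $S$, not for an arbitrary holonomic $F$-$\DdagQ{\ms{P}}(\infty)$-module $\ms{M}$, which may have a nontrivial punctual component at $x$; a d\'evissage would be needed, and then one is back at the same kind of question.

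The paper's actual argument for the hard implication bypasses the irregularity altogether. It writes $r_x(\ms{M})$ via the local duality $i_x^!\ms{M}\cong R\mr{Hom}(\ms{M}|_{S_x},\mc{O}^{\mr{an}})^*[1]$ from \cite[2.2]{Cr3}, invokes $\dim_{K^{\mr{ur}}}\mb{V}(\ms{M}|_{S_x})=\mr{rk}(\ms{M})$ from \cite[6.2.1]{Cr}, and uses the four-term exact sequence (\ref{vanishexact}) to identify the difference $r_x(\ms{M})-r(\ms{M})$ with $\dim_{K^{\mr{ur}}}\mb{W}(\ms{M}|_{S_x})$; hence $r_x=r$ forces $\mb{W}(\ms{M}|_{S_x})=0$. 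The crucial final step, which your sketch lacks an analogue of, is Crew's structure theorem (the functor $\mb{M}$ of \cite[\S7]{Cr}): a holonomic $F$-$\ms{D}^{\mr{an}}$-module with $\mb{W}=0$ is a coherent $\mc{O}^{\mr{an}}$-module, i.e.\ has no singularity at $x$, so $m_x=0$. To complete your proof you would need to supply a statement of this strength --- some input that turns ``vanishing of a single linear invariant'' into ``regularity of $\ms{M}|_{S_x}$ including absence of punctual part'' --- and currently neither the index theorem nor the purity isomorphism provides it.
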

\begin{proof}
 When $m_x=0$, we know that $\ms{M}$ is a convergent isocrystal on an
 open neighborhood of $x$, and the lemma follows easily. Let us see the
 ``only if'' part.
 We know that $\dim_{K^{\mr{ur}}}\mb{V}(\ms{M})=\mr{rk}(\ms{M})$
 by \cite[(6.1.11)]{Cr}. By \cite[2.2]{Cr3}, we get
 \begin{equation*}
  i_x^!\ms{M}\cong R\mr{Hom}(\ms{M}|_{S_x},\mc{O}^{\mr{an}})^*~[1],
 \end{equation*}
 where $^*$ denotes the dual in the derived category
 $D^b_{\mr{fin}}(K\mbox{-mod})$. The exact sequence
 (\ref{vanishexact}) implies
 $\dim_{K^{\mr{ur}}}\mb{W}(\ms{M}|_{S_x})=0$, and thus
 $\mb{W}(\ms{M}|_{S_x})=0$. 
By Lemma \ref{W=0}, we get
 that $\ms{M}|_{S_x}$ is a free differential $\mc{O}^{\mr{an}}$-module, and in
 particular $m_x=0$.

\end{proof}

\begin{lem}
 \label{rkirralmost}
 Let ${M}$ be a solvable free differential module on the Robba ring
 $\mc{R}$ over $K$. 
We further assume that ${M}$ is purely of differential slope
 $1$. Let $\DwL(\mf{s})$ be the Dwork differential module, for $\mf{s}\in\mb{A}^1_k(\overline{k})$, cf.\
  {\normalfont\ref{locASdiffmod}}. We
 consider the tensor product ${M}\otimes_{\mc{R}}\DwL(\mf{s})$ as a
 differential $\mc{R}_{K_s}$-module. Then we get the following.

 (i) For almost all
 $\mf{s}\in\mb{A}^1_k(\overline{k})$, we get
 \begin{equation*}
  \mr{irr}_{K_s}({M}\otimes_{\mc{R}}\DwL(\mf{s}))=\mr{rk}({M}),
 \end{equation*}
 where $\mr{irr}_{K_s}$ denotes the irregularity as an
 $\mc{R}_{K_s}$-module.

 (ii) There exists an $\mf{s}\neq0$ in $\overline{k}$ such that the
 irregularity $\mr{irr}_{K_s}({M}\otimes\DwL(\mf{s}))$ is less than
 $\mr{rk}(\ms{M})$.
\end{lem}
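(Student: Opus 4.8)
We may assume $\mc{M}\neq 0$. The plan is to make the exponential structure of $\mc{M}$ explicit after a harmless extension and then read off the irregularity of $\mc{M}\otimes_{\mc{R}}\mc{L}(\mf{s})$ as an explicit function of $\mf{s}$. Since $\mc{M}$ is solvable and purely of slope $1$, the Christol--Mebkhout theory of $p$-adic slopes (\cite{CM4}, \cite{CM}; see also \cite[Ch.~12]{Ke2}) provides a finite separable extension $k'/k$ such that, writing $\mc{L}:=k'((u))$, $K'/K$ for the associated unramified extension of coefficient fields, and $\mc{R}(\mc{L})$ for the corresponding Robba ring (over $K'$, with the same uniformizer $u$, so that scalar extension from $\mc{R}$ preserves slopes), the module $\mc{M}\otimes_{\mc{R}}\mc{R}(\mc{L})$ admits a filtration whose successive quotients are of the form $\mc{L}(\widetilde{c_i})\otimes_{\mc{R}(\mc{L})}\mc{N}_i$, $i=1,\dots,n$, where each $\mc{N}_i$ is a regular differential $\mc{R}(\mc{L})$-module and $\widetilde{c_i}\in\mc{O}_{K'}$ is a lift of some $c_i\in k'$. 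Because $\mc{M}$ is purely of slope $1$, no quotient can be regular, hence $c_i\neq0$ for all $i$; also $\sum_i\mr{rk}(\mc{N}_i)=\mr{rk}(\mc{M})\geq1$.

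Now fix $\mf{s}\in\mb{A}^1_k(\overline k)$ and enlarge $k'$ (and $K'$) so that $\mf{s}\in k'$ and $K'\supseteq K_s$. Tensoring the above filtration with $\mc{L}(\mf{s})$ and using the isomorphism $\mc{L}(\widetilde{c_i})\otimes\mc{L}(\widetilde{\mf{s}})\cong\mc{L}(\widetilde{c_i+\mf{s}})$ of Definition \ref{locASdiffmod}, the twisted module $(\mc{M}\otimes_{\mc{R}}\mc{L}(\mf{s}))\otimes\mc{R}(\mc{L})$ acquires a filtration with successive quotients $\mc{L}(\widetilde{c_i+\mf{s}})\otimes\mc{N}_i$. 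Such a quotient is purely of slope $1$, with irregularity $\mr{rk}(\mc{N}_i)$, when $c_i+\mf{s}\neq0$, and equals the regular module $\mc{N}_i$, with irregularity $0$, when $c_i+\mf{s}=0$ (as $\mc{L}(0)\cong\mc{R}(\mc{L})$). Since irregularity is additive along filtrations and is unchanged by the (slope-preserving) scalar extension to $\mc{R}(\mc{L})$, we obtain
\[
 \mr{irr}_{K_s}\bigl(\mc{M}\otimes_{\mc{R}}\mc{L}(\mf{s})\bigr)
 \;=\;\sum_{i\,:\,c_i\neq-\mf{s}}\mr{rk}(\mc{N}_i)
 \;=\;\mr{rk}(\mc{M})-\!\!\sum_{i\,:\,c_i=-\mf{s}}\!\!\mr{rk}(\mc{N}_i).
\]
Part (i) follows: the right-hand side equals $\mr{rk}(\mc{M})$ unless $-\mf{s}\in\{c_1,\dots,c_n\}$, a finite set. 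For part (ii), choose any index $i_0$; since $c_{i_0}\neq0$, the point $\mf{s}:=-c_{i_0}$ is a nonzero element of $\overline k$, and for it the formula gives $\mr{irr}_{K_s}(\mc{M}\otimes\mc{L}(\mf{s}))=\mr{rk}(\mc{M})-\sum_{i:c_i=c_{i_0}}\mr{rk}(\mc{N}_i)<\mr{rk}(\mc{M})$.

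The one substantial input is the structural statement used above: a solvable differential module purely of slope $1$ becomes, after extending only the residue field, a successive extension of exponential twists $\mc{L}(\widetilde c)\otimes(\text{regular})$ with all leading exponents $c$ nonzero. This is the $p$-adic analogue of the decomposition $\mc{F}|_{I}\cong\bigoplus_c \mc{L}_{\psi(c/t)}\otimes(\text{tame})$ for an $\ell$-adic sheaf all of whose breaks equal $1$; it is extracted from the Christol--Mebkhout slope theory (solvability being exactly what is needed to apply it), and the only delicate point in the bookkeeping is to use an extension that keeps $u$ a uniformizer, so that no ramification factor enters the irregularity computation.
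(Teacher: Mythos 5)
The argument hinges entirely on an unproven structural assertion: that a solvable differential $\mc{R}$-module purely of slope $1$ becomes, after only a finite \emph{residue field} extension, a successive extension of modules $\mc{L}(\widetilde{c_i})\otimes\mc{N}_i$ with $\mc{N}_i$ regular. You attribute this to ``Christol--Mebkhout slope theory,'' but that theory yields the slope filtration and nothing more refined: it does not split a module which is purely of a single nonzero slope into exponential twists of regular pieces. What you are invoking is a $p$-adic Turrittin-type decomposition, which is a much deeper result; the versions in the literature (Kedlaya, Xiao) require Frobenius antecedents and in general \emph{ramified} base extensions, precisely the thing you promise to avoid in your final paragraph, and your sketch gives no reason why an unramified extension suffices here. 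Moreover, even granting some Turrittin decomposition, one still needs to know that the rank-one exponential factors can be taken in the restricted form $\mc{L}(\bar a)\cong\exp(\pi a'u^{-1})$ rather than the a priori more general $\exp(\pi a u^{-p^h})$; reducing from the latter to the former is itself a result of Matsuda and Pulita, which the paper cites explicitly and your sketch does not supply.

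In fact, the claimed decomposition is strictly stronger than the lemma: once you have it, both (i) and (ii) are read off instantly, which is a sign that the real difficulty has been relocated, not resolved. The paper proves (i) by an elementary induction on $\mr{rk}(\mc{M})$ using only the slope decomposition of $\mc{M}\otimes\mc{L}(\mf{s})$ (if the slope drops below $1$ or the irregularity drops while the top slope stays $1$, one splits off summands and inducts), and proves (ii) by invoking Mebkhout's theorem [Meb, 2.0-1] --- which gives one exponential twist $\exp(\pi a u^{-p^h})$ lowering the irregularity, i.e.\ a single ``leading term'' rather than a full Turrittin decomposition --- followed by Matsuda--Pulita to replace $\exp(\pi a u^{-p^h})$ by $\exp(\pi a' u^{-1})\cong\mc{L}(\overline{a'})$. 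If you want to make your computation rigorous, you must either prove the structure theorem you use (over an unramified extension, with the exponential parts in the form $\mc{L}(\bar c)$), or replace it with the weaker single-twist statement of Mebkhout plus a separate argument for (i); as written, the proof has a genuine gap.
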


\begin{proof}
 Let us prove (i). We use the induction on the rank of ${M}$ over
 $\mc{R}_K$. Suppose there exists a geometric point $\mf{s}$ such that
 \begin{equation*}
  \mr{irr}({M}\otimes\DwL(\mf{s}))<\mr{rk}({M}),\qquad
   \mr{pt}({M}\otimes\DwL(\mf{s}))=1.
 \end{equation*}
 These conditions show that ${M}\otimes\DwL(\mf{s})$ has at
 least two slopes including $1$. Thus, there exists the canonical
 decomposition ${M}\otimes\DwL(\mf{s})={M}'_1\oplus {M}'_{<1}$
 where ${M}'_1$ is purely of slope $1$, and ${M}'_{<1}$ is purely
 of slope less than $1$, and these modules are non-zero. Thus, we get
 the decomposition
 \begin{equation*}
  {M}\otimes_KK_s={M}'_1\otimes\DwL(-\mf{s})\oplus{M}'_{<1}
   \otimes\DwL(-\mf{s}).
 \end{equation*}
 Since ${M}$ is purely of slope $1$, both
 ${M}'_1\otimes\DwL(-\mf{s})$ and
 ${M}'_{<1}\otimes\DwL(-\mf{s})$ are purely of slope $1$ as
 well. Thus by the induction hypothesis, the lemma holds for
 these two modules. This implies that the lemma also holds for
 ${M}$.

 If $\mr{pt}({M}\otimes\DwL(\mf{s}))=1$ for any $\mf{s}$, we get
 the lemma by the above argument.
 Suppose $\mr{pt}({M}\otimes\DwL(\mf{s}))<1$ for some $\mf{s}$.
 Then for any $\mf{s}'\neq\mf{s}$, we get
 $\mr{pt}({M}\otimes\DwL(\mf{s}'))=1$. If there exists
 $\mf{s}'\neq\mf{s}$ such that
 $\mr{irr}({M}\otimes\DwL(\mf{s}'))<\mr{rk}({M})$, then we may
 use the above argument. Otherwise, the lemma is trivial.

 Now, let us move to (ii). By using \cite[2.0-1]{Meb}, there exists a
 number $a$ in $\overline{K}$ whose absolute value is $1$,
 and an integer $h$ such that the irregularity of
 ${M}\otimes\exp(\pi ax^{-p^h})$ is less than $\mr{rk}({M})$. Here
 we are using the notation of {\it loc.\ cit}. We remind that in {\it
 loc.\ cit.}, there is an assumption on the
 spherically completeness of $K$. However, as mentioned in [{\it loc.\
 cit.}, 2.0-4],
 this hypothesis is used only to use a result of Robba, and when
 $p\neq2$, the assumption was removed by Matsuda as written there. This
 result was extended also to the case $p=2$ by Pulita \cite[Theorem
 4.6]{Pu}, and we no longer need to assume the spherically completeness
 here. Arguing as the proof of \cite[4.2.3 (ii)]{Gar2} using
 \cite[1.5]{Mat}\footnote{In {\it loc.\ cit.}, $p\neq2$ is assumed
 extensively. However, the proof of Lemma 1.5 works
 also for $p=2$ without any change.}, there exists a number $a'$ in
 $\overline{K}$ whose absolute value is $1$ such that $\exp(\pi
 ax^{-p^h})$ is isomorphic to $\exp(\pi a'x^{-1})$ as differential
 $\mc{R}$-modules, and the latter is isomorphic to
 $\DwL(\overline{a'})$ where the overline denotes the residue class.
\end{proof}

\subsubsection{}
Let $\ms{E}$ be a coherent $F$-$\DdagQ{\Pone}(\infty)$-module.
We denote by $\ms{E}':=\ms{H}^1(\ms{F}_\pi(\ms{E}))$ the geometric
Fourier transform. We have the following analog of
\cite[2.3.1.1]{Lau}.
\begin{prop*}
 \label{GOScorlau}
 (i) $\displaystyle r(\ms{E}')=\sum_{s\in
 S}\mr{deg}(s)\cdot a_s(\ms{E})+\mr{rk}((\ms{E}|_{\eta_\infty})
 _{\left]1,\infty\right[})-\mr{irr}((\ms{E}|_{\eta_\infty})
 _{\left]1,\infty\right[})$.

 (i') $\displaystyle r(\ms{E})=\sum_{s'\in
 S'}\mr{deg}(s')\cdot a_{s'}(\ms{E}')+\mr{rk}((\ms{E}'|
 _{\eta_\infty})_{\left]1,\infty\right[})-\mr{irr}((\ms{E}'|
 _{\eta_\infty})_{\left]1,\infty\right[}).$

 (ii) For $s'\in\Aoned\setminus\{0'\}$, we get
 \begin{equation*}
  r_{s'}(\ms{E}')=r(\ms{E}')+\mr{rk}((\ms{E}|_{\eta_\infty})_{1})
   -\mr{irr}((\ms{E}|_{\eta_\infty})_{1}\otimes\ms{L}(x\cdot s')|
   _{\eta_\infty}).
 \end{equation*}

 (ii') For $s\in\Aone\setminus\{0\}$, we get
 \begin{equation*}
  r_{s}(\ms{E})=r(\ms{E})+\mr{rk}((\ms{E}'|_{\eta_\infty})_{1})
   -\mr{irr}((\ms{E}'|_{\eta_\infty})_{1}\otimes\ms{L}(s\cdot x')|
   _{\eta_\infty}).
 \end{equation*}
 
 (iii) $\displaystyle
 r_{0'}(\ms{E}')=r(\ms{E}')+\mr{rk}((\ms{E}|_{\eta_\infty})
 _{[0,1[})-\mr{irr}((\ms{E}|_{\eta_\infty})_{[0,1[}).$

 (iii') $\displaystyle
 r_{0}(\ms{E})=r(\ms{E})+\mr{rk}((\ms{E}'|_{\eta_{\infty'}})
 _{[0,1[})-\mr{irr}((\ms{E}'|_{\eta_{\infty'}})_{[0,1[}).$
 
\end{prop*}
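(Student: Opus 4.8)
The plan is to follow the $\ell$-adic prototype \cite[2.3.1.1, 2.3.2.1]{Lau} closely, translating Laumon's vanishing-cycles bookkeeping into the $\ms{D}$-module language available here. Everything reduces to computing Euler characteristics via the two Grothendieck--Ogg--Shafarevich formulas \eqref{GOSformula} (one for $\ms{E}$ on $\ms{A}$, one for $\ms{E}'$ on $\ms{A}'$), expressing $r(\ms{E}')$, $r_{s'}(\ms{E}')$, and $r_{0'}(\ms{E}')$ in terms of the local data of $\ms{E}$, together with a symmetric argument for the primed statements. Since Fourier transform is defined through the three cohomological functors $p'_+$, $\otimes$, $p^!$, the only inputs beyond \eqref{GOSformula} are: the inversion formula \ref{involForge} (which makes (i') follow from (i) and similarly for the others by swapping roles of $\ms{P}$ and $\ms{P}'$), the proper base change and K\"unneth isomorphisms of \ref{properbasech}--\ref{kunnethformual}, the relation \eqref{rigDmodpush} between $f_+$ and rigid cohomology, and the structural facts about slopes at $\infty$ of $\ms{F}_\pi(\ms{E})$. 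So I only need to prove the unprimed versions (i), (ii), (iii).

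First I would establish (i). Take $j\colon(\ms{P},\{0',\infty'\})\hookrightarrow(\ms{P}',\{\infty'\})$ and write the localization triangle \eqref{anotherloctriag} for $\ms{F}_\pi(\ms{E})[1]=\widehat{\ms{E}}$ at $0'$; the generic rank $r(\ms{E}')$ is read off by restricting to $\eta_{s'}$ for a generic $s'$, so it suffices to compute $\chi$ of the Fourier transform over a small punctured neighborhood. Concretely, apply $q'_+$ (structural morphism of $\ms{P}'$) and use the projection/base-change identity $q'_+\ms{F}_\pi(\ms{E})\cong q_+(\ms{E}\otimes\text{something supported at }0)$ à la \cite[1.2.2.4]{Lau}, together with \eqref{KatzLaumonisom}; then feed the result into the GOS formula \eqref{GOSformula}. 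The slope-$>1$ contribution at $\infty$ appears because the stalk of $\widehat{\ms{E}}$ at $\infty'$ involves the local Fourier transforms $\ms{F}^{(\infty,0')}$ and $\ms{F}^{(\infty,\infty')}$, whose numerical effect is governed by the slope decomposition $(\ms{E}|_{\eta_\infty})_{[0,1]}\oplus(\ms{E}|_{\eta_\infty})_{]1,\infty[}$: slopes $\le 1$ push the singularity to $0'$, slopes $>1$ keep it at $\infty'$. The terms $\sum_{s\in S}\deg(s)a_s(\ms{E})$ collect the non-vertical-at-$0$ local contributions, matching Lemma \ref{lemcyclecalc} and Corollary \ref{irrandfotrans}.

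For (ii) and (iii) I would localize at a fixed $s'\in\ms{A}'\setminus\{0'\}$ (resp.\ at $0'$). By definition $r_{s'}(\ms{E}')=\dim_{K_{s'}}(i_{s'}^!\ms{E}')$, and $i_{s'}^!\ms{F}_\pi(\ms{E})$ computes, via proper base change \ref{properbasech} and the K\"unneth formula \ref{kunnethformual}, a rigid cohomology group of $\ms{E}\otimes\ms{L}(x\cdot s')$ on $\mb{A}^1$ (this is the $p$-adic avatar of \cite[2.3.2.1]{Lau}, using that $i_{s'}^!$ of the kernel $\ms{L}_{\pi,\mu}$ is the Dwork module $\ms{L}(x\cdot s')$). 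Running \eqref{GOSformula} for $\ms{E}\otimes\ms{L}(x\cdot s')$ — whose rank and all local invariants away from $\infty$ agree with those of $\ms{E}$ since $\ms{L}(x\cdot s')$ is a unit-root isocrystal on $\mb{A}^1$ — leaves only the difference at $\infty$, namely $\mr{rk}((\ms{E}|_{\eta_\infty})_1)-\mr{irr}((\ms{E}|_{\eta_\infty})_1\otimes\ms{L}(x\cdot s')|_{\eta_\infty})$, because twisting by $\ms{L}(x\cdot s')$ affects only the slope-$1$ part at infinity and leaves slopes $\ne 1$ untouched (slopes $<1$ unchanged, slopes $>1$ unchanged in irregularity). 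For (iii), $s'=0'$ forces the twist to be trivial, and the slope-$1$ part then contributes with its full irregularity, so the surviving term is $\mr{rk}((\ms{E}|_{\eta_\infty})_{[0,1[})-\mr{irr}((\ms{E}|_{\eta_\infty})_{[0,1[})$.

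The main obstacle I anticipate is not the Euler-characteristic bookkeeping, which is essentially formal once \eqref{GOSformula} and \eqref{rigDmodpush} are in hand, but rather pinning down the behavior of the slope decomposition at $\infty$ under Fourier transform with enough precision — i.e.\ verifying that $\ms{F}_\pi$ sends the part of slope $<1$ (resp.\ $=1$, resp.\ $>1$) at $\infty$ to something concentrated at $0'$ (resp.\ singular at a finite point determined by the ``center'' of the exponential, resp.\ singular at $\infty'$), and controlling how $\mr{irr}$ transforms. In the $\ell$-adic case this is exactly Laumon's delicate analysis of vanishing cycles; here I would route it through Lemma \ref{rkirralmost} and the results of Christol--Mebkhout and Matsuda/Pulita quoted there, plus Corollary \ref{compCMGAR} to move freely between the Garnier and Christol--Mebkhout irregularities, and the determinant/twist compatibility of \ref{prop6func}.\ref{twisttensor}. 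Once the slope-$1$ twisting lemma is isolated and proved, assembling (i)--(iii) and deducing (i')--(iii') by the inversion formula \ref{involForge} is routine.
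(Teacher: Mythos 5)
Your proposal identifies the correct machinery and matches the paper's route at the structural level: the proper-base-change identity $r_{s'}(\ms{E}')=\chi(\ms{A},\ms{E}\otimes\ms{L}(x\cdot s'))$, the GOS formula \eqref{GOSformula}, Lemma \ref{rkirralmost} to handle slope-$1$ twists at infinity, and Fourier inversion \ref{involForge} for the primed variants. Your handling of (ii)/(iii) is in the right spirit; one thing you should make explicit is that the base-change-plus-GOS identity delivers $r_{s'}(\ms{E}')$ in terms of $r(\ms{E})$, the $a_s(\ms{E})$'s and $\mr{irr}(\ms{E}|_{\eta_\infty}\otimes\ms{L}(x\cdot s')|_{\eta_\infty})$, not in terms of $r(\ms{E}')$ — so to recover the stated right-hand side you must compare against the formula of (i) (equivalently, subtract off the same identity evaluated at a generic $s'_0$). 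Saying the GOS computation ``leaves only the difference at $\infty$'' hides this dependence on (i).

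Where the proposal actually drifts is the argument for (i). The detour through $q'_+$ and a Plancherel-type identity is unnecessary, and more importantly you invoke the $(\infty,0')$- and $(\infty,\infty')$-local Fourier transforms, which this paper deliberately does not construct (see the opening of \S\ref{secdefforloc}) and does not need for this proposition; invoking them makes the argument circular in context, since their slope-theoretic behavior is precisely the sort of thing (i)--(iii) are designed to establish. The clean route for (i) — and the one the paper takes — is the same one you already use for (ii)/(iii): choose $s'$ not a singularity of $\ms{E}'$ so that $r_{s'}(\ms{E}')=r(\ms{E}')$, apply the base-change identity and GOS to $\ms{E}\otimes\ms{L}(x\cdot s')$ (here $\ms{L}(x\cdot s')$ being rank one without singularities on $\ms{A}$ means $r$ and each $a_s$ are unchanged), and then use Lemma \ref{rkirralmost}(i) to compute $\mr{irr}(\ms{E}|_{\eta_\infty}\otimes\ms{L}(x\cdot s')|_{\eta_\infty})$ for such a generic $s'$; the slope decomposition then turns $-r(\ms{E})-\mr{irr}(\cdots)$ into $\mr{rk}((\ms{E}|_{\eta_\infty})_{]1,\infty[})-\mr{irr}((\ms{E}|_{\eta_\infty})_{]1,\infty[})$ exactly as you indicate. (A minor point: the reduction to $S$ consisting of rational points is harmless since both sides are invariant under base extension, which the paper records.)
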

\begin{proof}
 The idea of the proof is the same as that of {\it loc.\ cit}. We sketch
 the proof. By the proper base change theorem \ref{properbasech}, we get
 \begin{equation*}
  r_{s'}(\ms{E}')=\chi(\Aone,\ms{E}\otimes\ms{L}(x\cdot s')).
 \end{equation*}
 Let us prove (i). Since both sides of the equality are invariant under
 base extension, we may assume that $S$ consists of rational
 points.
 We have $a_s(\ms{E})=a_s(\ms{E}\otimes\ms{L}(x\cdot
 s'))$ for $s,s'\neq 0$ since $\ms{L}(x\cdot s')$ has no singular
 points in $\Aone$. Using (\ref{GOSformula}), it remains to show
 \begin{equation*}
  -r(\ms{E})-\mr{irr}(\ms{E}|_{\eta_\infty}\otimes\ms{L}(x\cdot s')|
   _{\eta_\infty})=\mr{rk}((\ms{E}|_{\eta_\infty})
   _{\left]1,\infty\right[})
 -\mr{irr}((\ms{E}|_{\eta_\infty})_{\left]1,\infty\right[})
 \end{equation*}
 for some $s'\in\overline{k}$. This follows from Lemma
 \ref{rkirralmost} (i). The claims (ii) and (iii) follow from (i) and
 (\ref{GOSformula}), and (i)', (ii)', (iii)' follow by the involutivity
 \ref{involForge} of the geometric Fourier transform.
\end{proof}

\begin{cor}
 \label{calcofirr}
 Let $\ms{E}$ be a holonomic $F$-$\DdagQ{\Pone}(\infty)$-module such
 that $0$ is the only singularity, and it is regular at
 infinity ({\it i.e.\ }$\mr{irr}(\ms{E}|_{\eta_\infty})=0$). Then the
 geometric Fourier transform $\ms{E}'$ is not singular except
 for $0'$, we have $s_{0'}(\ms{E}')=0$,
 $a_{0'}(\ms{E}')=r(\ms{E})$, and the differential slope at $\infty'$ is strictly
 less than $1$.
\end{cor}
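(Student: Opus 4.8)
The plan is to deduce everything from the numerical identities of Proposition~\ref{GOScorlau}, using the hypothesis to annihilate all slope contributions of $\ms{E}$ at $\infty$. First I would record what regularity at $\infty$ gives: in the slope decomposition at $\infty$ we have $(\ms{E}|_{\eta_\infty})_{]1,\infty[}=(\ms{E}|_{\eta_\infty})_1=0$ and $(\ms{E}|_{\eta_\infty})_{[0,1[}=\ms{E}|_{\eta_\infty}$, so $\mr{rk}((\ms{E}|_{\eta_\infty})_{[0,1[})=\mr{rk}(\ms{E})=-r(\ms{E})$ and $\mr{irr}((\ms{E}|_{\eta_\infty})_{[0,1[})=0$. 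Also, since $0$ is the only singularity of $\ms{E}$, the singular set $S$ of $\ms{E}|_{\ms{A}}$ is $\{0\}$, and Lemma~\ref{smalllemforGOSL} gives $r_s(\ms{E})=r(\ms{E})$ for every $s\in\ms{A}\setminus\{0\}$. Finally $\ms{E}'=\ms{F}_{\mr{naive},\pi}(\ms{E})$ is holonomic by~\eqref{compgeomnaive} and transport of structure, so the invariants $r(\ms{E}'),r_{s'}(\ms{E}'),a_{s'}(\ms{E}')$ make sense and Lemma~\ref{smalllemforGOSL} applies to $\ms{E}'$ as well (via the $\ms{P}\leftrightarrow\ms{P}'$ symmetry).

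Next I would feed these into Proposition~\ref{GOScorlau} in turn. Part~(ii) together with $(\ms{E}|_{\eta_\infty})_1=0$ gives $r_{s'}(\ms{E}')=r(\ms{E}')$ for every $s'\in\ms{A}'\setminus\{0'\}$; by Lemma~\ref{smalllemforGOSL} this forces the vertical multiplicity of $\ms{E}'$ at $s'$ to vanish, so $\ms{E}'$ is nonsingular away from $0'$. Part~(iii) gives $r_{0'}(\ms{E}')=r(\ms{E}')-r(\ms{E})$, whence, by the definition $a_{0'}(\ms{E}')=r(\ms{E}')+s_{0'}(\ms{E}')-r_{0'}(\ms{E}')$, one gets $a_{0'}(\ms{E}')=s_{0'}(\ms{E}')+r(\ms{E})$. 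Plugging this and $S'\subseteq\{0'\}$ into part~(i') yields
\[
0=s_{0'}(\ms{E}')+\Bigl(\mr{rk}\bigl((\ms{E}'|_{\eta_{\infty'}})_{]1,\infty[}\bigr)-\mr{irr}\bigl((\ms{E}'|_{\eta_{\infty'}})_{]1,\infty[}\bigr)\Bigr).
\]
Here $s_{0'}(\ms{E}')=-\mr{irr}(\ms{E}'|_{\eta_{0'}})\le0$, and a differential module all of whose slopes exceed $1$ has irregularity at least its rank, with equality only when it is $0$; so both summands are $\le0$ and hence both vanish. This already gives $s_{0'}(\ms{E}')=0$, therefore $a_{0'}(\ms{E}')=r(\ms{E})$, and $(\ms{E}'|_{\eta_{\infty'}})_{]1,\infty[}=0$, i.e.\ every slope of $\ms{E}'$ at $\infty'$ is $\le1$.

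It remains to exclude slope exactly $1$, which I expect to be the only genuine difficulty. For this I would use part~(ii'): since $r_s(\ms{E})=r(\ms{E})$ for $s\neq0$, we get $\mr{rk}((\ms{E}'|_{\eta_{\infty'}})_1)=\mr{irr}((\ms{E}'|_{\eta_{\infty'}})_1\otimes\ms{L}(s\cdot x')|_{\eta_{\infty'}})$ for all $s\in\ms{A}\setminus\{0\}$. Now $(\ms{E}'|_{\eta_{\infty'}})_1$ is purely of slope $1$, and under the identification $\tau'$ the module $\ms{L}(s\cdot x')|_{\eta_{\infty'}}$ is the Dwork differential module $\mc{L}(\mf{s})$ of Definition~\ref{locASdiffmod} (cf.\ \S\ref{secgeforev}). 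If $(\ms{E}'|_{\eta_{\infty'}})_1$ were nonzero, Lemma~\ref{rkirralmost}(ii) would supply some $\mf{s}\neq0$ with $\mr{irr}((\ms{E}'|_{\eta_{\infty'}})_1\otimes\mc{L}(\mf{s}))<\mr{rk}((\ms{E}'|_{\eta_{\infty'}})_1)$, contradicting the previous equality for that $s$. Hence $(\ms{E}'|_{\eta_{\infty'}})_1=0$, so every slope of $\ms{E}'$ at $\infty'$ is strictly less than $1$, completing the proof. The only real care needed is the bookkeeping of the various identifications (the Robba ring at $\infty'$ versus the abstract one, and $\ms{L}(s\cdot x')$ versus $\mc{L}(\mf{s})$), which is precisely what is set up in \S\ref{secgeforev} and Definition~\ref{locASdiffmod}; everything else is substitution into Proposition~\ref{GOScorlau}.
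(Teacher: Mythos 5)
Your proof is correct and follows essentially the same route as the paper's: the same parts (ii), (ii'), (iii), (i') of Proposition~\ref{GOScorlau}, the same use of Lemma~\ref{smalllemforGOSL} to convert $r_{s'}=r$ into nonsingularity, and the same appeal to Lemma~\ref{rkirralmost}~(ii) to kill the slope-$1$ part. The only real difference is scheduling: the paper eliminates $(\ms{E}'|_{\eta_{\infty'}})_1$ immediately after establishing nonsingularity away from $0'$, whereas you postpone that step to the end and first extract $s_{0'}=0$, $a_{0'}=r(\ms{E})$ and $(\ms{E}'|_{\eta_{\infty'}})_{]1,\infty[}=0$ from parts (iii) and (i'); this is purely a reordering and the bookkeeping is identical in substance.
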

\begin{proof}
 By (ii) of Proposition \ref{GOScorlau} and the hypothesis that
 $\ms{E}$ is regular at $\infty$, we get for $s'\neq0',\infty'$
 \begin{equation*}
  r_{s'}(\ms{E}')=r(\ms{E}'),
 \end{equation*}
 which shows that $m_{s'}=0$ for $s'\neq 0',\infty'$ by Lemma
 \ref{smalllemforGOSL}. Thus $\ms{E}'$ is not singular except for $0'$
 and $\infty'$.

 Now, by (ii') of the proposition, we get for $s\neq 0,\infty$
 \begin{equation}
  \label{2-1}
  \mr{rk}((\ms{E}'|_{\eta_\infty})_1)-\mr{irr}((\ms{E}'|_{\eta_\infty})_1
  \otimes\ms{L}(s\cdot x')|_{\eta_\infty})=0.
 \end{equation}
 Suppose $(\ms{E}'|_{\eta_\infty})_1\neq 0$. Then by Lemma
 \ref{rkirralmost} (ii), there exists $s\neq0$ such that the
 irregularity of $(\ms{E}'|_{\eta_\infty})_1\otimes\ms{L}(s\cdot
 x')|_{\eta_\infty}$ is less than $r((\ms{E}'|_{\eta_\infty})_1)$, which
 contradicts with (\ref{2-1}). This shows that
 $(\ms{E}'|_{\eta_\infty})_1=0$. Now, we get
 \begin{align}
  \label{aeconnectequ}
  a_{0'}(\ms{E}')-s_{0'}(\ms{E}')&=r(\ms{E}')-r_{0'}(\ms{E}')\\\notag
  &=-\left(\mr{rk}((\ms{E}|_{\eta_\infty})_{\left[0,1\right[})
  -\mr{irr}((\ms{E}|_{\eta_\infty})_{\left[0,1\right[})\right)\\\notag
  &=-\mr{rk}(\ms{E}|_{\eta_\infty})=r(\ms{E}),
 \end{align}
 where the second equality holds by (iii) and the third by the
 assumption that $\ms{E}$ is regular at infinity. Combining with (i'),
 we get
 \begin{equation*}
  r(\ms{E})=r(\ms{E})+s_{0'}(\ms{E}')+\mr{rk}((\ms{E}'|_{\eta_\infty})
   _{\left]1,\infty\right[})-\mr{irr}((\ms{E}'|_{\eta_\infty})
   _{\left]1,\infty\right[}).
 \end{equation*}
 Thus,
 \begin{equation*}
  s_{0'}(\ms{E}')=\mr{irr}((\ms{E}'|_{\eta_\infty})
   _{\left]1,\infty\right[})-\mr{rk}((\ms{E}'|_{\eta_\infty})
   _{\left]1,\infty\right[})\geq 0.
 \end{equation*}
 On the other hand, we have $s_{0'}(\ms{E}')\leq 0$ by
 definition. This shows that $s_{0'}(\ms{E}')=0$, and
 $(\ms{E}'|_{\eta_\infty})_{\left]1,\infty\right[}=0$. Thus
 $a_{0'}(\ms{E}')=r(\ms{E})$ by (\ref{aeconnectequ}).
\end{proof}

\subsection{Regular stationary phase formula}
\label{secregstfo}
\subsubsection{}
Let $\ms{M}$ be a holonomic $F$-$\DdagQ{\Pone}(\infty)$-module. Let
$s\in\Aone$ be a singular point of $\ms{M}$. Recall the notation of
\ref{setupFour} and Definition \ref{deflocFour}. We
have a canonical homomorphism
\begin{equation*}
 \ms{H}^{1}(\ms{F}_{\pi}(\ms{M}))|_{\eta_{\infty'}}\cong
  \nF{\ms{M}}|_{\eta_{\infty'}}\rightarrow\tau'^*\bigl(\EdagQ{s}
  \otimes\ms{M}\bigr)\cong\tau'^*\Gamma(\ms{F}_{\pi}^{(s,\infty')}
  (\ms{M})),
\end{equation*}
where the second homomorphism sends $\alpha\otimes\widehat{m}$
($\alpha\in\mc{R}_{\ms{S}_\infty'}$,
and $m\in\Gamma(\Pone,\ms{M})$) to $\alpha(1\otimes m)$. The first
isomorphism is the isomorphism by Noot-Huyghe (\ref{compgeomnaive}), and
the second one is that of Lemma \ref{globalsectcalc}. We see easily
that this homomorphism is compatible with the connections.
By construction (cf.\ \ref{preisouadsf}) the source is a free differential $\mc{R}$-module, thus we get a
canonical homomorphism of differential $\mc{R}$-modules
\begin{equation*}
 \alpha_s\colon\ms{H}^{1}(\ms{F}_{\pi}(\ms{M}))|_{\eta_{\infty'}}
  \rightarrow\tau'^*\ms{F}_\pi^{(s,\infty')}(\ms{M})
\end{equation*}
by Lemma \ref{lemma_sgs}.

\begin{thm}[Regular stationary phase]
 \label{SPT}
 Let $\ms{M}$ be a holonomic $F$-$\DdagQ{\Pone}(\infty)$-module whose
 {\em differential slopes at infinity are less than or equal to $1$}. Let
 $S$ be the set of singular points of $\ms{M}$ in $\Aone$.
 Then the canonical homomorphism
 \begin{equation}
  \label{stationaryhom}
  (\alpha_s)_{s\in S}\colon\ms{H}^{1}(\ms{F}_{\pi}(\ms{M}))|
  _{\eta_{\infty'}}\rightarrow\bigoplus_{s\in S}\tau'^*
  \ms{F}_\pi^{(s,\infty')}(\ms{M})
 \end{equation}
 is an isomorphism.
\end{thm}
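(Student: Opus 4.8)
The plan is to compare the two sides in each of the invariants that are available to us: ranks (generic, so as differential $\mc{R}$-modules after $\tau'^*$), irregularity, and the behavior of the characteristic cycle, reducing eventually to the regular case where both sides can be computed explicitly. First I would reduce to the case where $\ms{M}$ has a single singularity in $\ms{A}$: using the localization triangle \eqref{anotherloctriag} applied at each $s\in S$ and the exactness of $\ms{F}_\pi^{(s,\infty')}$ (Proposition \ref{exactness}), together with the fact that both the geometric Fourier transform $\ms{F}_\pi$ and restriction to $\eta_{\infty'}$ are exact on holonomic modules, every term in \eqref{stationaryhom} is additive in short exact sequences of $\ms{M}$; moreover punctual pieces of $\ms{M}$ (supported at some $s$) have vanishing geometric Fourier transform at $\eta_{\infty'}$ on one side and vanishing $\ms{F}_\pi^{(s,\infty')}$ on the other, so they contribute nothing. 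Hence one may assume $S=\{s\}$, and after the twist $\ms{M}\mapsto\ms{M}\otimes\ms{L}(s\cdot x')$ — which by Lemma \ref{locFourrel} translates the singular point to $0$ and does not change the hypothesis on slopes at infinity since $\ms{L}(s\cdot x')$ is an isocrystal unramified in $\ms{A}$ — one reduces to $S=\{0\}$.

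Next, under the hypothesis that the slope of $\ms{M}$ at $\infty$ is $\leq 1$, I would apply the numerical results of \S\ref{secgeomcalc}. By Corollary \ref{irrandfotrans}, $\mr{rk}(\ms{F}^{(0,\infty')}(\ms{M}))=-a_0(\ms{M})=m_0$ (using Lemma \ref{lemcyclecalc}), while on the other side $\mr{rk}$ of $\ms{H}^1(\ms{F}_\pi(\ms{M}))|_{\eta_{\infty'}}=r(\ms{E}')$ with $\ms{E}'=\ms{H}^1(\ms{F}_\pi(\ms{M}))$; by Proposition \ref{GOScorlau}(i) and the slope hypothesis $(\ms{M}|_{\eta_\infty})_{]1,\infty[}=0$, this gives $r(\ms{E}')=\deg(0)\cdot a_0(\ms{M})$... so both sides of \eqref{stationaryhom} have the same rank as differential $\mc{R}$-modules. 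It therefore suffices to prove that $(\alpha_s)_{s\in S}$ is \emph{surjective} (equivalently injective with cokernel of rank $0$, hence punctual, hence zero, using that these are differential $\mc{R}$-modules and that $\mc{R}$ has no nonzero objects of rank $0$ in the relevant category). The surjectivity is where the real work is: I would argue exactly as Sabbah does in \cite{Sabbah:isomono}, using the microlocal construction of $\ms{F}_\pi^{(s,\infty')}$ from \S\ref{secdefforloc}. Concretely, $\Gamma(\ms{F}_\pi^{(s,\infty')}(\ms{M}))\cong\EdagQ{s}\otimes\ms{M}$ (Lemma \ref{globalsectcalc}), and the global sections of $\ms{H}^1(\ms{F}_\pi(\ms{M}))|_{\eta_{\infty'}}$ are the naive Fourier transform localized around $\infty'$, i.e.\ $E^\dag_{\ms{A}',\mb{Q}}\otimes\widehat{\ms{M}}$; under the ring isomorphism $\iota$ sending $x'\mapsto\pi^{-1}\partial$, $\partial'\mapsto-\pi x$ and the isomorphism $\tau$ of \ref{setupFour} identifying $\CK{\ms{A}}\{\partial\}^{(m,m')}$ with an annulus algebra in $u'=\pi x^{-1}$, the map $\alpha_s$ is on global sections the natural map $E^\dag_{\ms{A}',\mb{Q}}\otimes\widehat{\ms{M}}\to\EdagQ{s}\otimes\ms{M}$ obtained from $\DdagQ{\ms{P}}(\infty)\to\EdagQ{s}$. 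Surjectivity then follows because the generators of $\EdagQ{s}\otimes\ms{M}$ over $\CK{\ms{A}}\{\partial\}$ provided by Lemma \ref{finiteness}(ii) — of the form $\{x^iy_s^j\alpha\}$ — are visibly in the image of the microdifferential operators $\partial^{\angles{m}{k}},\partial^{\angles{m}{-k}}$ acting on $1\otimes\alpha$, hence come from $E^\dag_{\ms{A}',\mb{Q}}$.

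The main obstacle I anticipate is making the surjectivity argument at the level of \emph{differential} $\mc{R}$-modules rather than just modules: one must check that the map $\alpha_s$ constructed in \S\ref{secdefforloc} from a module homomorphism via \ref{diffmoddef} is the same as the one induced on global sections, i.e.\ that it is horizontal and that a homomorphism of differential $\mc{R}$-modules which is surjective on global sections is surjective as a map of differential modules — this uses that the source is \emph{free}, which in turn relies on Lemma \ref{Kringpid} (PID-ness of $\CK{\ms{A}}\{\partial\}^{(m,m')}$) and the structure of the naive Fourier transform at $\infty'$. A second technical point: the slope hypothesis must be used to ensure that $\ms{H}^i(\ms{F}_\pi(\ms{M}))=0$ for $i\neq 1$ near $\eta_{\infty'}$ and that $\ms{F}_\pi(\ms{M})$ has no extra singularities contributing at $\infty'$ beyond the local Fourier transforms at the $s\in S$; this is exactly the content of Proposition \ref{GOScorlau} together with Lemma \ref{rkirralmost}(ii) (as in the proof of Corollary \ref{calcofirr}), which forces $(\ms{F}_\pi(\ms{M})|_{\eta_{\infty'}})_{[0,1]}$ to be accounted for entirely by the $(0,\infty')$-transforms. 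Once these two points are in place, comparing ranks closes the argument; the final verification that $\alpha_s$ respects connections is the routine computation already indicated before the statement (the connection $\nabla(\alpha)=(\pi^{-1}\partial^2 x)\alpha\otimes du'$ matches the connection on $\widehat{\ms{M}}$ coming from $\partial'\mapsto-\pi x$), which I would not grind through here.
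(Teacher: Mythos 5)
Your reduction to a single singularity is where the proposal breaks down, and it is not a cosmetic point: it deletes the hardest step of the proof. First, the assertion that punctual pieces contribute zero on both sides is false. If $\ms{N}=i_{s+}V$ is punctual at $s$, then by Proposition \ref{calcfouriereasy} (via inversion) $\ms{F}_\pi(\ms{N})[1]$ is, up to twist, the rank-$\dim V$ Dwork module $\ms{L}(s\cdot x')\otimes V$, whose restriction to $\eta_{\infty'}$ is nonzero; and $\ms{F}_\pi^{(s,\infty')}(\ms{N})$ likewise has $\mc{R}$-rank $\dim V$ (the characteristic cycle of $\ms{N}$ is $[\pi^{-1}(s)]$ with multiplicity $\dim V$, so Corollary \ref{irrandfotrans} applies). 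Second, even granting additivity, the localization triangle \eqref{anotherloctriag} at $s$ does not yield a module with a single singularity: $j_!j^+\ms{M}$ retains all the singularities of $\ms{M}$, including irregularity at $s$. So you cannot assume $S=\{s\}$. (Also, the translation you invoke is $\gamma_s$ on $\ms{P}$, which corresponds to tensoring the Fourier transform by $\ms{L}(-s\cdot x')$; one does not twist $\ms{M}$ itself.)

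Without that reduction, your plan shows that each individual $\alpha_s$ is surjective but not that the combined map $(\alpha_s)_{s\in S}\colon\mc{M}'\to\bigoplus_s\ms{F}^{(s,\infty')}(\ms{M})$ is surjective — a product of surjections into a direct sum need not be onto. The equal-rank computation you do (via Proposition \ref{GOScorlau}(i) and Corollary \ref{irrandfotrans}) then cannot close the argument, since it compares $\mr{rk}(\mc{M}')$ with $\sum_s\mr{rk}(\ms{F}^{(s,\infty')}(\ms{M}))$ only after you know the images split. The step you are missing is the slope decomposition: using the estimate that $\Phi^{(0,\infty')}$ has slope $<1$ (the content of Corollary \ref{calcofirr}, which you do cite), one introduces the direct factors $\mc{M}'^s:=\mc{L}(s)\otimes\bigl(\mc{L}(-s)\otimes\mc{M}'\bigr)_{[0,1[}$ of $\mc{M}'$ and shows that the composite $\mc{M}'^t\hookrightarrow\mc{M}'\xrightarrow{\alpha_s}\ms{F}^{(s,\infty')}(\ms{M})$ vanishes whenever $t\neq s$ (a map from a pure slope-$1$ module to one of slope $<1$ is zero). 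This forces $\bigoplus_s\mc{M}'^s\hookrightarrow\mc{M}'$, makes each $\beta_s\colon\mc{M}'^s\to\ms{F}^{(s,\infty')}(\ms{M})$ surjective, and only then does the rank count turn all of this into the stated isomorphism. That is the real content the slope hypothesis is buying, and it must be present in the argument with multiple singularities treated simultaneously.
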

\begin{proof}
 First of all, we will reduce to the case where $S$ consists of rational
 points. There exists an unramified Galois extension $E$ of $K$ such
 that $S$ consists of $k_E$-rational points where $k_E$ denotes the
 residue field of $E$ as usual. Note that
 \begin{equation*}
  \nF{\ms{M}}|_{\eta_{\infty'}}\otimes_K E\cong
   \nF{\ms{M}\otimes_K E}|_{\eta_{\infty'}},\qquad
   \ms{F}^{(s,\infty')}(\ms{M})\otimes_K E\xrightarrow{\sim}\bigoplus
   _{s'\mapsto s}\ms{F}^{(s',\infty)}(\ms{M}\otimes_K E),
 \end{equation*}
 where the direct sum in the second isomorphism runs over the set of
 closed points of $\widehat{\mb{A}}^1_{R_E}$ which map to $s$.
 Indeed, the first isomorphism follows since the cohomological operators
 used in the definition of geometric Fourier transform are compatible
 with base change. The second isomorphism follows from Lemma
 \ref{locFourrel} and \ref{nongeometpt}.
 Since the left hand sides of these two isomorphisms have the action of
 $G:=\mr{Gal}(E/K)$, we define a $G$-action on the right hands
 sides by transport of structure. Note that the $G$-invariant parts are
 isomorphic to $\nF{\ms{M}}|_{\eta_{\infty'}}$ and
 $\ms{F}^{(s,\infty')}(\ms{M})$ respectively. By definition, the
 following diagram is commutative
 \begin{equation*}
  \xymatrix@C=50pt{
   \nF{\ms{M}}|_{\eta_{\infty'}}\ar[r]^<>(.5){\alpha_s}\ar[d]&
   \tau'^*\ms{F}^{(s,\infty')}(\ms{M})\ar[d]\\
  \nF{\ms{M}\otimes_K E}|_{\eta_{\infty'}}\ar[r]_<>(.5)
   {\bigoplus\alpha_{s'}}&\bigoplus_{s'\mapsto s}\tau'^*_{E}\ms{F}
   ^{(s',\infty)}(\ms{M}\otimes_K E),
   }
 \end{equation*}
 where $\tau'_E$ denotes $\tau'$ over $E$.
 Thus, if the theorem holds for $\ms{M}\otimes_K E$, then it also holds
 for $\ms{M}$ by taking $G$-invariant parts, and the claim
 follows. From now on, we assume that $S$ consists of rational points.

 Now, we will see that $\alpha_s$
 is surjective for any $s\in S$. By the exactness of Fourier
 transforms and local Fourier transforms (cf.\ Lemma
 \ref{exactness} and (\ref{compgeomnaive})), it suffices to show the
 claim when $\ms{M}$ is monogenic. Suppose it is generated by
 $m_0\in\Gamma(\Pone,\ms{M})$. Let ${N}$ be a differential
 $\mc{A}_{t'}(\left[r,1\right[)$-module, where $t':=1/x'$, satisfying
 the following:
 \begin{enumerate}
  \item $\mc{R}_{\infty'}\otimes {N}\cong\nF{\ms{M}}
	~|_{\eta_{\infty'}}$ where
	$\mc{R}_{\infty'}:=\mc{R}_{\ms{S}_{\infty'}}$;

  \item there exists an element $m'_0\in {N}$ which is mapped to
	$1\otimes\widehat{m}_0$ in $\nF{\ms{M}}|_{\eta_{\infty'}}$
	by the canonical injective homomorphism
	${N}\rightarrow\nF{\ms{M}}|_{\eta_{\infty'}}$.
 \end{enumerate}

 Now, let $\ms{M}^{(m)}$ be a stable coherent
 $\DcompQ{m}{\Pone}(\infty)$-module generated by a single element
 $m''_0$ in $\Gamma(\Pone,\ms{M}^{(m)})$ such that
 $\DdagQ{\Pone}(\infty)\otimes\ms{M}^{(m)}\cong\ms{M}$
 and $1\otimes m''_0$ is sent to $m_0$ via this isomorphism. Then there
 exists an integer $m'$ such that $m'\geq m$, $\omega_{m'}\geq r$ and
 $\alpha_s$ is induced by a homomorphism
 $\mc{A}(\left[\omega_{m'},1\right[)\otimes {N}
 \rightarrow\tau'^*\Emod{m',\dag}{s}(\ms{M}^{(m)})$ sending $1\otimes
 m'_0$ to $1\otimes(m''_0)^\wedge$. To see that this is surjective,
 it suffices to show that the canonical homomorphism
 \begin{equation*}
  \theta\colon\mc{A}(\left[\omega_{m'},\omega_{m''}\right\})
   \otimes {N}\rightarrow\tau'^*\Ecomp{m',m''}{s}(\ms{M})
 \end{equation*}
 is surjective for any $m''\geq m'$.  We have the local coordinate
 $y_s=x-s$ around $s$. By the choice of ${N}$,
 $1\otimes(y_s^nm_0)^{\wedge}$ is also contained in $\theta({N})$ for
 any positive integer $n$. Indeed, we have
 \begin{equation*}
  1\otimes(y_s^nm''_0)^\wedge=(-\pi^{-1}\partial_{x'}-s)^n\cdot
   \bigl(1\otimes(m''_0)^\wedge\bigr)=
   \theta\bigl((\pi^{-1}t'^2\partial_{t'}-s)^n\cdot
   m'_0\bigr).
 \end{equation*}
 Thus, the surjectivity follows from Lemma \ref{generators}.

 Let ${M}$ be an object of $F$-$\mr{Hol}(\ms{S})$, and let $\ms{M}_0$
 be the canonical extension of ${M}$ at $0$. Then the homomorphism
 \begin{equation*}
  \nF{\ms{M}_0}|_{\eta_{\infty'}}\rightarrow\tau'^*
   \Phi^{(0,\infty')}({M})
 \end{equation*}
 is surjective by the argument above. By corollary \ref{calcofirr}, the
 greatest differential slope of $\nF{\ms{M}_0}|_{\eta_{\infty'}}$
 is strictly less than $1$. The surjectivity of the homomorphism
 implies the following:
 \begin{quote}
  (*)
  The greatest differential slope of $\Phi^{(0,\infty')}({M})$ is strictly
  less than $1$ for any object ${M}$ in $F$-$\mr{Hol}(\ms{S})$.
 \end{quote}
 For $s\in S$, we know that
 \begin{equation*}
  \ms{F}^{(s,\infty')}(\ms{M})\cong\Phi^{(0,\infty')}(\tau_{s*}
   \ms{M}|_{S_s})\otimes\DwL(s)
 \end{equation*}
 by Lemma \ref{locFourrel}. Thus we get the following:
 \begin{quote}
  The greatest slope of $\ms{F}^{(s,\infty')}(\ms{M})$ is equal to $1$
  for $s\in S\setminus\{0\}$.
 \end{quote}

 We put ${M}':=\nF{\ms{M}}|_{\eta_{\infty'}}$. For a
 $k$-rational point $s$ in $\widehat{\mb{A}}^1$, we define
 \begin{equation*}
  {M}'^{s}:=\DwL(s)\otimes(\DwL(-s)\otimes{M}')
   _{\left[0,1\right[}\subset {M}'.
 \end{equation*}
 By definition, ${M}'^s$ is a direct factor of ${M}'$, and in
 particular, there exists a canonical projection
 $p_s\colon {M}'\rightarrow {M}'^s$.
 Take two {\em distinct} points $s,t\in S$, and consider the
 canonical homomorphism
 \begin{equation*}
  \DwL(-s)\otimes {M}'^{t}\xrightarrow{\mr{id}\otimes\alpha_s}
   \DwL(-s)\otimes\ms{F}^{(s,\infty')}(\ms{M})\cong\Phi^{(0,\infty')}
   (\tau_{s*}\ms{M}|_{S_s}).
 \end{equation*}
 Since $\DwL(-s)\otimes {M}'^{t}\cong \DwL(-s+t)
 \otimes(\DwL(-t)\otimes\ms{M}')_{\left[0,1\right[}$, we get that the
 source is purely of slope $1$. We know that   the greatest  
 slope of the target is strictly less than $1$ by (*). This shows that the homomorphism
 is $0$. Thus the homomorphism
 \begin{equation*}
  {M}'^{t}\hookrightarrow {M}'\xrightarrow{\alpha_s}
   \tau'^*\ms{F}^{(s,\infty')}(\ms{M})
 \end{equation*}
 is $0$ if $t\neq s$.

 Let $s$ be a point in $S$ and let $S_s:=S\setminus\{s\}$.
 By using the same argument, the canonical homomorphism
 $\sum_{t\in S_s}{M}'^t\hookrightarrow {M}\xrightarrow{p_s}
 {M}'^s$ is $0$. This shows that $\sum_{t\in
 S_s}{M}'^t\cap {M}'^s=0$ in ${M}'$, and thus, the canonical
 homomorphism $\bigoplus_{s\in S}{M}'^s\rightarrow {M}'$
 is injective.

 Now, since ${M}'\rightarrow\ms{F}^{(s,\infty')}(\ms{M})$ is
 surjective, the homomorphism
 \begin{equation*}
  \DwL(-s)\otimes {M}'\rightarrow \DwL(-s)\otimes\ms{F}
   ^{(s,\infty')}(\ms{M})\cong\Phi^{(0,\infty')}(\ms{M}|_{S_s})
 \end{equation*}
 is also surjective. Since the target has slope $<1$, the homomorphism
 \begin{equation*}
  (\DwL(-s)\otimes {M}')_{\left[0,1\right[}\rightarrow \DwL(-s)\otimes\ms{F}^{(s,\infty')}(\ms{M})
 \end{equation*}
 is surjective, and we deduce that the composition
 \begin{equation*}
  \beta_s\colon {M}'^{s}\hookrightarrow {M}'\xrightarrow{\alpha_s}
   \ms{F}^{(s,\infty')}(\ms{M}),
 \end{equation*}
 is surjective as well. Thus, we get that the canonical homomorphism
 \begin{equation}
  \label{locFouraux}
  \bigoplus_{s\in S} {M}'^{s}\hookrightarrow {M}'
  \xrightarrow{(\alpha_s)}\bigoplus_{s\in S}
  \ms{F}^{(s,\infty')}(\ms{M}),
 \end{equation}
 is also surjective and equal to $\bigoplus_{s\in S}\beta_s$. This leads
 us to the following inequalities:
 \begin{equation*}
  \mr{rk}({M}')\geq\sum_{s\in S}\mr{rk}({M}'^{s})\geq
   \sum_{s\in S}\mr{rk}(\ms{F}^{(s,\infty')}(\ms{M}))
   =\mr{rk}({M}'),
 \end{equation*}
 where the last equation holds by Corollary \ref{irrandfotrans} combined
 with Proposition \ref{GOScorlau} (i) using the assumption that the
 greatest slope of $\ms{M}$ at infinity is less than or equal to
 $1$. Since a surjection of differential $\mc{R}$-modules with the same
 ranks is an isomorphism, we get that (\ref{locFouraux}) is an
 isomorphism. Since an injection of differential
 $\mc{R}$-modules with the same ranks is an isomorphism, we have
 $\bigoplus_{s\in S}{M}'^{s}={M}'$, and combining these, we obtain
 the theorem.
\end{proof}

\begin{cor}
 \label{coinccrew}
 Let ${M}$ be a holonomic $F$-$\Dan{\ms{S},\mb{Q}}$-module. Then the
 local Fourier transform $\Phi^{(0,\infty)}({M})$
 coincides with $\mc{F}^{0,\infty'}({M})$ of Crew
 {\normalfont \cite[(8.3.1)]{Cr}}. In particular, the local Fourier
 transform is a free $\mc{R}$-module.
\end{cor}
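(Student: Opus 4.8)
The plan is to read the statement off the regular stationary phase formula (Theorem \ref{SPT}) applied to the canonical extension. Recall that, by construction (cf.\ \cite[8.5.1]{Cr}), Crew's local Fourier transform $\mc{F}^{0,\infty'}(\mc{M})$ is obtained by forming the canonical extension $\ms{M}_0:=\mc{M}^{\mr{can}}$ of $\mc{M}$ at $0$ --- which is the same canonical extension recalled in \ref{intcanext} and used in Definition \ref{deflocFour} (ii) --- applying the Fourier--Huyghe transform, and then taking the differential module around $\infty'$. Via the comparison \eqref{compgeomnaive} between the naive and the geometric Fourier transforms of Noot-Huyghe, this is exactly $\ms{H}^1(\ms{F}_\pi(\ms{M}_0))|_{\eta_{\infty'}}$, up to the identification $\tau'\colon\eta_{\infty'}\to\eta'$ of generic points fixed in \ref{setupFour}. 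So the first step is to unwind \cite[8.5.1]{Cr} and match it with this description; here one checks in particular that the normalizations of the Fourier kernel agree, which reduces to the compatibility of the Dwork module $\ms{L}_\pi$ with the additive character $\psi_\pi$.

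Next I would verify that $\ms{M}_0$ satisfies the hypotheses of Theorem \ref{SPT}. By the properties of the canonical extension recalled in \ref{intcanext}, $\ms{M}_0|_{\ms{P}\setminus\{0\}}$ is a convergent isocrystal and $\ms{M}_0$ is regular at $\infty$; hence $\ms{M}_0$ is a holonomic $F$-$\DdagQ{\ms{P}}(\infty)$-module whose slope at infinity is $0\leq1$, and its set $S$ of singular points in $\ms{A}$ is $\{0\}$. Theorem \ref{SPT} then gives an isomorphism of differential $\mc{R}$-modules
\begin{equation*}
 \alpha_0\colon\ms{H}^1(\ms{F}_\pi(\ms{M}_0))|_{\eta_{\infty'}}
  \xrightarrow{\sim}\tau'^*\ms{F}_\pi^{(0,\infty')}(\ms{M}_0)
  =\tau'^*\Phi_\pi^{(0,\infty')}(\mc{M}),
\end{equation*}
the last equality being Definition \ref{deflocFour} (ii). Combining this with the description of $\mc{F}^{0,\infty'}(\mc{M})$ above yields a canonical isomorphism $\tau'^*\mc{F}^{0,\infty'}(\mc{M})\cong\tau'^*\Phi_\pi^{(0,\infty')}(\mc{M})$; since $\tau'$ is an isomorphism on generic points, this descends to $\Phi_\pi^{(0,\infty')}(\mc{M})\cong\mc{F}^{0,\infty'}(\mc{M})$, and one checks the isomorphism is functorial in $\mc{M}$.

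For the freeness assertion, note that by Corollary \ref{calcofirr} the geometric Fourier transform $\ms{H}^1(\ms{F}_\pi(\ms{M}_0))$ is a holonomic $F$-$\DdagQ{\ms{P}'}(\infty')$-module which is a convergent isocrystal away from $0'$ and overconvergent along $\infty'$; its restriction to $\eta_{\infty'}$ is therefore a solvable differential module, hence a free differential $\mc{R}$-module (this is already the input used to construct the map $\alpha_s$ preceding Theorem \ref{SPT}). Transporting along $\alpha_0$ shows that $\Phi_\pi^{(0,\infty')}(\mc{M})$ is free, and then Lemma \ref{locFourrel} propagates this to $\ms{F}_\pi^{(s,\infty')}(\ms{M})$ for every closed point $s\in\ms{A}$. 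I expect the only genuine work to be the bookkeeping in the first paragraph --- reconciling Crew's conventions for the Fourier kernel and for the coordinate at $\infty'$ with the ones used here; once that is done, the corollary is a formal consequence of Theorem \ref{SPT}, Corollary \ref{calcofirr} and Lemma \ref{locFourrel}, requiring no new estimates.
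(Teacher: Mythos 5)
Your proof is correct and follows exactly the same route as the paper's one-line argument ("Apply the stationary phase formula to the canonical extension of $\mc{M}$"): you form the canonical extension $\ms{M}_0$, note that it is regular at $\infty$ with sole singularity $\{0\}$ so Theorem \ref{SPT} applies with $S=\{0\}$, and read off both the identification with Crew's definition (via the Noot-Huyghe comparison \eqref{compgeomnaive}) and the freeness (since the source $\ms{H}^1(\ms{F}_\pi(\ms{M}_0))|_{\eta_{\infty'}}$ is already known to be a free differential $\mc{R}$-module, as noted in the paragraph preceding Theorem \ref{SPT}). The only cosmetic remark is that $\tau'$ is in fact an isomorphism of formal disks (sending $u'$ to $1/x'$), so the final "descent" along $\tau'^*$ is immediate rather than requiring a separate argument.
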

\begin{proof}
 Apply the stationary phase formula to the canonical extension of
 ${M}$.
\end{proof}

\begin{cor}
 \label{locfourstricless}
 For any holonomic $F$-$\Dan{\ms{S},\mb{Q}}$-module ${M}$, the differential slope
 of $\Phi^{(0,\infty')}({M})$ is strictly less than $1$. Moreover,
 when ${M}$ is an $\mc{R}$-module, we have
 \begin{gather*}
  \mr{rk}(\Phi^{(0,\infty')}(j_+{M}))=\mr{rk}(\Phi^{(0,\infty')}
   (j_!{M}))=\mr{rk}({M})+\mr{irr}
   ({M}),\\
  \mr{irr}(\Phi^{(0,\infty')}(j_+{M}))=\mr{irr}(\Phi^{(0,\infty')}
  (j_!{M}))=\mr{irr}({M}),
 \end{gather*}
 using the cohomological functors of {\normalfont\ref{defvanishcycle}}.
\end{cor}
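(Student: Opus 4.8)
The plan is to deduce everything from the regular stationary phase theorem \ref{SPT} together with the geometric calculations of \S\ref{secgeomcalc}, exactly as in the classical case. First I would prove the slope bound: given a holonomic $F$-$\Dan{\ms{S},\mb{Q}}$-module $\mc{M}$, take its canonical extension $\ms{M}_0$ at $0$, which is regular at $\infty$ (cf.\ \ref{intcanext}), so Corollary \ref{calcofirr} applies and the slope of $\widehat{\ms{M}_0}|_{\eta_{\infty'}}$ at $\infty'$ is strictly less than $1$. By Theorem \ref{SPT} (whose hypothesis on the slope at infinity is met since $\ms{M}_0$ is regular there), $\widehat{\ms{M}_0}|_{\eta_{\infty'}}\cong\tau'^*\Phi^{(0,\infty')}(\mc{M})$ since $0$ is the only singularity, so $\Phi^{(0,\infty')}(\mc{M})$ has slope $<1$. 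This is essentially the statement (*) already extracted inside the proof of \ref{SPT}, so the first assertion is almost immediate.

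Next I would handle the rank and irregularity formulas when $\mc{M}$ is an $\mc{R}$-module (a free differential $\mc{R}$-module with Frobenius structure). Again let $\ms{M}_0=\mc{M}^{\mr{can}}$ be the canonical extension at $0$; it is a holonomic $F$-$\DdagQ{\ms{P}}(\infty)$-module, regular at $\infty$, with $0$ its unique singularity in $\ms{A}$, $r(\ms{M}_0)=-\mr{rk}(\mc{M})$, and $s_0(\ms{M}_0)=-\mr{irr}(\mc{M})$ by construction and by the compatibility of $|_{\eta_0}$ with the canonical extension. Since $\ms{M}_0|_{\eta_0}$ is a free differential module (the canonical extension of an $\mc{R}$-module is an $\mc{R}$-module at $0$), we have $r_0(\ms{M}_0)=r(\ms{M}_0)$ by Lemma \ref{smalllemforGOSL}, hence $a_0(\ms{M}_0)=r(\ms{M}_0)+s_0(\ms{M}_0)-r_0(\ms{M}_0)=s_0(\ms{M}_0)=-\mr{irr}(\mc{M})$. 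Then Corollary \ref{irrandfotrans} gives $\mr{rk}(\ms{F}^{(0,\infty')}(\ms{M}_0))=-\deg(0)\cdot a_0(\ms{M}_0)=\mr{irr}(\mc{M})$ --- but this is $\Phi^{(0,\infty')}(\mc{M})$ only in the punctual sense; to land on the correct ``$\mr{rk}(\mc{M})+\mr{irr}(\mc{M})$'' I must instead apply \ref{GOScorlau}(i) and \ref{irrandfotrans} to $j_+\mc{M}$ and $j_!\mc{M}$ themselves. Concretely: $\Phi^{(0,\infty')}(j_+\mc{M})$ and $\Phi^{(0,\infty')}(j_!\mc{M})$ are the local Fourier transforms of the canonical extensions of $j_+\mc{M}$, $j_!\mc{M}$, and by Lemma \ref{calcofdiff} these two $\Dan{}$-modules differ from $\mc{M}$ (viewed via $j_+$) only by punctual modules supported at $0$, on which $\Phi^{(0,\infty')}$ vanishes in the relevant range; so the three local Fourier transforms have the same rank and irregularity as differential $\mc{R}$-modules. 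For $\mc{M}$ itself regarded as $j_+\mc{M}$, the canonical extension $\ms{M}_0$ has $a_0=-\mr{irr}(\mc{M})$ as above but now $r_0(\ms{M}_0)$ is computed from the $\Dan{}$-module $j_+\mc{M}$, giving $r_0=r(\ms{M}_0)+\mr{rk}(\mc{M})$ via Lemma \ref{calcofdiff}, whence $a_0(\ms{M}_0)=-\mr{rk}(\mc{M})-\mr{irr}(\mc{M})$ and $\mr{rk}(\Phi^{(0,\infty')})=\mr{rk}(\mc{M})+\mr{irr}(\mc{M})$; similarly $\mr{irr}(\Phi^{(0,\infty')})=-s_0(\ms{M}_0)$-type bookkeeping via \ref{GOScorlau}(iii) and \ref{calcofirr} yields $\mr{irr}(\mc{M})$.

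I would organize the irregularity computation by appealing to \ref{GOScorlau}(iii)/(iii') and Corollary \ref{calcofirr}: since $\ms{M}_0$ is regular at $\infty$ and $\Phi^{(0,\infty')}(\mc{M})$ has slope $<1$, the GOS-type identity for the Fourier transform reads, with $\ms{E}=\ms{M}_0$ and $\ms{E}'=\ms{H}^1(\ms{F}_\pi(\ms{M}_0))$, that $\mr{irr}$ of the transform at $0'$ equals the difference $\mr{rk}((\ms{E}|_{\eta_\infty})_{[0,1[})-\mr{irr}((\ms{E}|_{\eta_\infty})_{[0,1[})$ minus $r(\ms{E}')+r_{0'}(\ms{E}')$-type terms, and all the pieces are pinned down by \ref{calcofirr}; unwinding gives $\mr{irr}(\Phi^{(0,\infty')}(\mc{M}))=\mr{irr}(\mc{M})$ and $s_{0'}=0$ consistently. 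The equalities for $j_+$ versus $j_!$ then follow because Lemma \ref{calcofdiff} shows their canonical extensions differ by punctual summands which, under $\Phi^{(0,\infty')}$ (equivalently under $\ms{E}^{\mr{an}}\otimes(-)$), contribute zero to both rank and irregularity of the output differential $\mc{R}$-module.

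The main obstacle is not any deep new input --- everything is downstream of \ref{SPT}, \ref{GOScorlau}, \ref{calcofirr}, \ref{irrandfotrans}, and \ref{calcofdiff} --- but rather the bookkeeping: one must be careful to distinguish $\Phi^{(0,\infty')}$ applied to an $\mc{R}$-module $\mc{M}$ from $\Phi^{(0,\infty')}$ applied to $j_+\mc{M}$ or $j_!\mc{M}$ as $\Dan{}$-modules, to track which invariant ($r$, $s_0$, $r_0$, $a_0$) changes by the rank of the punctual defect in Lemma \ref{calcofdiff}, and to make sure the Tate twists do not disturb ranks or irregularities. Once the translation between the ``$j_+/j_!$ picture'' and the ``differential $\mc{R}$-module'' picture is set up cleanly via Lemma \ref{calcofdiff} and the vanishing of $\Phi^{(0,\infty')}$ on punctual modules, the identities fall out by a direct substitution into the formulas of \ref{GOScorlau} and \ref{calcofirr}.
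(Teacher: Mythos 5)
Your argument for the slope bound is exactly the paper's: take the canonical extension, invoke Corollary~\ref{calcofirr}, and read off the slope from the stationary phase isomorphism.

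However, there is a genuine error in your treatment of the $j_+$-versus-$j_!$ equalities. You assert that ``$\Phi^{(0,\infty')}$ vanishes on punctual modules'' (equivalently, that the punctual kernel and cokernel in Lemma~\ref{calcofdiff} ``contribute zero to both rank and irregularity''). This is false. If $V$ is a finite-dimensional $\Ct$-vector space with Frobenius, then $\Phi^{(0,\infty')}(V\otimes_K\delta)$ is the trivial free differential $\mc{R}_{u'}$-module of rank $\dim V$: the canonical extension of $V\otimes\delta$ is $i_{0,+}V$, its geometric Fourier transform is a constant overconvergent $F$-isocrystal on $\ms{P}'$ by Proposition~\ref{calcfouriereasy} and Theorem~\ref{involForge}, and the stationary phase theorem then identifies $\Phi^{(0,\infty')}(V\otimes\delta)$ with the restriction of that constant to $\eta_{\infty'}$. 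The irregularity contribution is indeed $0$, but the rank contribution is $\dim V\neq 0$. Consequently, applying the exact functor $\Phi^{(0,\infty')}$ to the four-term sequence of Lemma~\ref{calcofdiff} gives, in the Grothendieck group,
\begin{equation*}
 \mr{rk}(\Phi^{(0,\infty')}(j_!\mc{M}))-\mr{rk}(\Phi^{(0,\infty')}(j_+\mc{M}))
 =\dim_K\mc{M}^{\partial=0}-\dim_K(\mc{M}/\partial\mc{M}),
\end{equation*}
and the conclusion that this vanishes requires the Christol--Mebkhout index theorem, which is exactly what the Remark after Lemma~\ref{calcofdiff} supplies. Your proposal never invokes this equality of dimensions, so the rank equality between $\Phi^{(0,\infty')}(j_+\mc{M})$ and $\Phi^{(0,\infty')}(j_!\mc{M})$ is left unproved. (The irregularity equality does follow from additivity plus regularity of the punctual outputs, so that part of your argument is fine once the false ``vanishing'' is replaced by ``regularity.'')

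The remaining bookkeeping with $a_0$, $r_0$, $s_0$ via Corollary~\ref{irrandfotrans} and Proposition~\ref{GOScorlau}(i),(iii') is the same route the paper takes, though your first pass at computing $a_0(\ms{M}_0)$ under the assumption $r_0=r$ (via Lemma~\ref{smalllemforGOSL}) is not valid: $\ms{M}_0$ is singular at $0$, so that hypothesis of Lemma~\ref{smalllemforGOSL} is precisely what fails, and you correctly abandon that branch. Once you replace the false vanishing claim with the index-theorem cancellation, the rest of your outline lines up with the paper's proof.
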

\begin{proof}
 Let us see the first claim. Let $\ms{M}$ be the canonical extension of
 $\tau_{0}^*({M})$. By
 stationary phase formula, it suffices to see that the slope of
 $\nF{\ms{M}}|_{\eta_{\infty'}}$ is strictly less than $1$. For
 this, apply Corollary \ref{calcofirr}.

 Let us prove the latter claim. The first equalities hold by
 Lemma \ref{calcofdiff} and its remark. Let us calculate the rank and
 irregularity of $\Phi^{(0,\infty')}(j_+{M})$. Let $\ms{M}$ be the
 canonical extension of $\tau_0^*(j_+{M})$. For the rank, apply
 Corollary \ref{irrandfotrans}. It remains to calculate the
 irregularity. We put $\ms{M}':=\nF{\ms{M}}$. By Proposition
 \ref{GOScorlau} (i) and (iii'), we get
 \begin{align*}
  r(\ms{M}')&=a_0(\ms{M})=r(\ms{M})-\mr{irr}(\ms{M}|_{\eta_0})
  -r_0(\ms{M}),\\
  r_0(\ms{M})&=r(\ms{M})+\mr{rk}(\ms{M}'|_{\eta_{\infty'}})-
  \mr{irr}(\ms{M}'|_{\eta_{\infty'}}).
 \end{align*}
 Since $\mr{rk}(\ms{M}'|_{\eta_{\infty'}})=-r(\ms{M}')$, we get
 $\mr{irr}(\ms{M}|_{\eta_0})=\mr{irr}(\ms{M}'|_{\eta
 _{\infty'}})$. Thus the corollary follows.
\end{proof}

\section{Frobenius structures}
\label{section5}
\label{endowFrob}
In this section, we endow the local Fourier transform with
Frobenius structure. We define the Frobenius structure using that of
geometric Fourier transform. In the first subsection, we show
that, with this Frobenius structure, the stationary phase theorem
is compatible with Frobenius structures. In the second subsection, we
explicitly describe this Frobenius structure in terms of
differential operators.

\begin{quote}
 Throughout this section, we continuously use the assumptions and
notation of paragraph \ref{setupFour}.
\end{quote}

\subsection{Frobenius structures on local Fourier transforms}
\label{secfroblocfou}
\begin{dfn}
 \label{transportgeomnaiv}
 For a coherent $\DdagQ{\Pone}(\infty)$-module $\ms{M}$, recall that we
 have the canonical isomorphism
 $\ms{F}_{\mr{naive,\pi}}(\ms{M})\cong\ms{H}^{1}(\ms{F}_\pi(\ms{M}))$
 (cf.\ (\ref{compgeomnaive})). Since the geometric Fourier transform is
 defined by cohomological operators, this functor commutes with
 Frobenius pull-backs. By transporting structure,
 we have a canonical isomorphism
  \begin{equation*}
  \epsilon_{\ms{M}}\colon F^*_{\Poned}(\ms{F}_{\mr{naive},\pi}
   (\ms{M})^{\sigma})
   \xrightarrow{\sim}\ms{F}_{\mr{naive},\pi}
   (F^*_{\Pone}\ms{M}^{\sigma}), \index{.@miscellaneous!eps@$\epsilon_{\ms{M}}$}
 \end{equation*}
 where $\ms{M}^{\sigma}$ is the pull-back by Frobenius morphism.
 When $\ms{M}$ is a coherent $F$-$\DdagQ{\Pone}(\infty)$-module, we
 define a Frobenius structure on $\ms{F}_{\mr{naive},\pi}(\ms{M})$ using
 this isomorphism.
\end{dfn}

\begin{lem}
 \label{globalfourier}
 There exists an operator $\Upsilon_\pi\in\DdagQ{\Poned}(\infty')$ such
 that the following condition holds: let $\ms{M}$ be any coherent
 $\DdagQ{\Pone^{(1)}}(\infty)$-module (which may not possess Frobenius
 structure). For any $m\in\Gamma(\Pone^{(1)},\ms{M})$, we have
 \begin{equation*}
  \epsilon_{\ms{M}}(1\otimes\widehat{m})=\Upsilon_\pi\cdot
   \widehat{1\otimes m}.
 \end{equation*}
 Moreover, the operator $\Upsilon_\pi$ is compatible with base
 changes. We omit $\pi$ in the notation $\Upsilon_{\pi}$ if there is
 nothing to be confused. Note that $\Upsilon$ is {\em not} unique in
 general.
\end{lem}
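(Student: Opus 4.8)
The statement is essentially an explicit-formula assertion: the natural-Fourier transform $\ms{F}_{\mr{naive},\pi}$ is defined by transport of structure along the ring isomorphism $\iota\colon\Gamma(\ms{P}',\DdagQ{\ms{P}'}(\infty'))\xrightarrow{\sim}\Gamma(\ms{P},\DdagQ{\ms{P}}(\infty))$ sending $x'\mapsto\pi^{-1}\partial$, $\partial'\mapsto-\pi x$; since this is purely algebraic, for a module $\ms{M}$ over the source ring we literally have $\widehat{m}=m$ viewed through $\iota$. The comparison isomorphism $\epsilon_{\ms{M}'}\colon F^*_{\ms{P}'}\ms{F}_{\mr{naive},\pi}(\ms{M}')\xrightarrow{\sim}\ms{F}_{\mr{naive},\pi}(F^*_{\ms{P}}\ms{M}')$ comes from the fact that the geometric Fourier transform is built from $p^!$, $\otimes$, $p'_+$ and the integral kernel $\ms{L}_{\pi,\mu}$, each equipped with a canonical compatibility with Frobenius pull-back; this $\epsilon$ is therefore $\Gamma(\ms{P}',\DdagQ{\ms{P}'}(\infty'))$-linear after one untwists the Frobenius. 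The plan is: first, unwind all the identifications so that both $\epsilon_{\ms{M}}(1\otimes\widehat m)$ and $\widehat{1\otimes m}$ live in the same $\DdagQ{\ms{P}'}(\infty')$-module, namely $\ms{F}_{\mr{naive},\pi}(F^*_{\ms{P}}\ms{M})$; second, observe that both are the images of $1\otimes m\in F^*_{\ms{P}}\ms{M}$ under two $\DdagQ{\ms{P}'}(\infty')$-semilinear (over the Frobenius on the base) maps that agree up to a left multiplier.

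\textbf{Key steps.} First I would reduce to the universal case. Take $\ms{M}$ to be the free rank-one module $\DdagQ{\ms{P}^{(1)}}(\infty)$ itself (or rather $\Gamma(\ms{P}^{(1)},\DdagQ{\ms{P}^{(1)}}(\infty))$, using Huyghe's equivalence with $\Gamma$-modules), with $m=1$ the canonical generator. Then $\epsilon_{\ms{M}}(1\otimes\widehat 1)$ is a well-defined element of $\ms{F}_{\mr{naive},\pi}(F^*_{\ms{P}}\DdagQ{\ms{P}^{(1)}}(\infty))$, which is a cyclic module over $\DdagQ{\ms{P}'}(\infty')$ generated by $\widehat{1\otimes 1}$; define $\Upsilon_\pi\in\DdagQ{\ms{P}'}(\infty')$ by the equation $\epsilon_{\ms{M}}(1\otimes\widehat 1)=\Upsilon_\pi\cdot\widehat{1\otimes 1}$. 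Second, I would check this $\Upsilon_\pi$ works for a general coherent $\ms{M}$: pick a presentation $\Gamma(\ms{P}^{(1)},\DdagQ{\ms{P}^{(1)}}(\infty))^{\oplus a}\to\Gamma(\ms{P}^{(1)},\DdagQ{\ms{P}^{(1)}}(\infty))^{\oplus b}\to\Gamma(\ms{P}^{(1)},\ms{M})\to 0$, note that $\ms{F}_{\mr{naive},\pi}$ and $F^*$ are both right-exact (indeed $F^*$ is exact), and that $\epsilon$ is natural in $\ms{M}'$; so the formula for $\ms{M}$ follows from the formula for the free modules, which in turn reduces to rank one by additivity, and an arbitrary $m$ is a $\DdagQ{\ms{P}^{(1)}}(\infty)$-combination of the basis vectors, with everything being semilinear. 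Third, compatibility with base change: the construction of $\Upsilon_\pi$ uses only the kernel $\ms{L}_{\pi,\mu}$ and the cohomological operators, all of which commute with the base change $R\to R'$ along an unramified extension, so $\Upsilon_\pi$ maps to the analogous operator after base change — this is a direct check from the definitions. Non-uniqueness is clear since $\widehat{1\otimes 1}$ has a nontrivial annihilator: any element of that annihilator can be added to $\Upsilon_\pi$.

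\textbf{Main obstacle.} The delicate point is the bookkeeping in Step 1: one must make precise how the Frobenius-compatibility of the three operators $p^!$, $\otimes\ms{L}_{\pi,\mu}$, $p'_+$ composes to give the single isomorphism $\epsilon_{\ms{M}'}$, and then verify that after identifying $\ms{F}_\pi$ with $\ms{F}_{\mr{naive},\pi}$ via Noot-Huyghe's $(\ref{compgeomnaive})$ this is still $\Gamma$-module semilinear, so that the cyclic-module argument applies and the ratio $\Upsilon_\pi$ genuinely lies in $\DdagQ{\ms{P}'}(\infty')$ rather than in some larger ring. A clean way to organize this is to work on the affine chart where $\ms{P}'\setminus\{\infty'\}$ has coordinate $x'$, use Huyghe's ring isomorphism $\iota$ and the relative Frobenius $F_{\ms{P}'}\colon y'\mapsto x'^q$ together with $F_{\ms{P}}\colon y\mapsto x^q$, and track the images of the generators: $x'\mapsto\pi^{-1}\partial$ under $\iota$, while $F_{\ms{P}}^*$ sends $y\mapsto x^q$, $\partial_y\mapsto$ (something dividing $\partial_x$), so that $\Upsilon_\pi$ is expressed, at least formally, through $\theta_\pi$-type series in $\partial'$ and $x'$; I would not carry out that computation here (it is done explicitly in \S\ref{explicitcalcfrob}), only extract the existence of a bona fide operator $\Upsilon_\pi\in\DdagQ{\ms{P}'}(\infty')$ with the stated property. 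Everything else is formal.
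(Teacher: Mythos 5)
Your proposal is correct and takes essentially the same approach as the paper: define $\Upsilon_\pi$ from the universal case $\ms{M}=\DdagQ{\ms{P}^{(1)}}(\infty)$, $m=1$ (using that $\widehat{1\otimes 1}$ generates $(F^*\DdagQ{\ms{P}^{(1)}}(\infty))^\wedge$ over $\DdagQ{\ms{P}'}(\infty')$), then transfer to arbitrary $(\ms{M},m)$ by naturality of $\epsilon$. The paper's transfer step is leaner: rather than presentations, right-exactness, rank-one reduction, and a compressed semilinearity argument, it applies naturality of $\epsilon$ directly to the single morphism of left modules $\rho\colon\DdagQ{\ms{P}^{(1)}}(\infty)\rightarrow\ms{M}$, $1\mapsto m$, which handles every $\ms{M}$ and every $m$ in one stroke and also disposes of the worry you raise in your ``main obstacle'' paragraph, since the containment $\Upsilon_\pi\in\DdagQ{\ms{P}'}(\infty')$ is exactly the cyclicity you already invoked.
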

\begin{proof}
 Let us take an operator $\Upsilon_{\pi}$ such that
 \begin{equation*}
  \epsilon_{\DdagQ{\Pone}}(1\otimes\widehat{1})=\Upsilon_\pi\cdot(
   \widehat{1\otimes 1}).
 \end{equation*}
 The existence follows easily from the fact that the homomorphism
 $$\DdagQ{\Poned}(\infty')\xrightarrow{\sim}\nF{\DdagQ{\Pone}(\infty)}
 \rightarrow\nF{F^*\DdagQ{\Pone^{(1)}}(\infty)}$$ is
 surjective, in other words, $\widehat{1\otimes1}$ is a generator of
 $\nF{F^*\DdagQ{\Pone^{(1)}}(\infty)}$ over
 $\DdagQ{\Poned}(\infty')$. For $m\in\ms{M}$,
 let us denote by $\rho\colon\DdagQ{\Pone}(\infty)\rightarrow\ms{M}$
 the homomorphism sending $1$ to $m$. Then by the functoriality of
 geometric Fourier transform, we get the following commutative diagram:
 \begin{equation*}
  \xymatrix@C=70pt{
   F^*(\nF{\DdagQ{\Pone^{(1)}}(\infty)})
   \ar[r]^<>(.5){F^*\ms{F}_{\mr{naive}}(\rho)}
   \ar[d]_{\epsilon_{\Ddag{}}}&
   F^*(\nF{\ms{M}})\ar[d]^{\epsilon_{\ms{M}}}\\
   \nF{F^*\DdagQ{\Pone^{(1)}}(\infty)}\ar[r]_<>(.5)
    {\ms{F}_{\mr{naive}}F^*(\rho)}&\nF{F^*\ms{M}}.
   }
 \end{equation*}
 Thus the lemma follows.
\end{proof}

\begin{dfn}
 \label{dfnlocalfrobfou}
 Let ${M}$ be a holonomic $F$-$\Dan{\ms{S},\mb{Q}}$-module. Let
 $\mf{s}\in\mb{A}^1_k(\overline{k})$. We denote by
 $\ms{M}$ the canonical extension of $\sigma_{\mf{s}}^*{M}$
 at $\widetilde{s}$. Recall the notation of \ref{setupFour}. Let
 \begin{equation*}
\widetilde{\alpha}_s \colon \tau'_*\nF{\ms{M}}|_{\eta_\infty}\xrightarrow{\sim}
   \Phi^{(\mf{s},\infty')}({M})
 \end{equation*}
 be the isomorphism given by the stationary phase theorem.
 Here we used abusively the notation $\tau'$ and $\infty$ on
 $\Pone\otimes K_s$. Since $F^*$ and $\tau'_*$ commute, the isomorphisms
 $\widetilde{\alpha}_s$ and $\epsilon_{\ms{M}}$ induce an isomorphism
 $\epsilon_{{M},\mf{s}}$ as follows
 \index{.@miscellaneous!eps@$\epsilon_{{M},\mf{s}}$}
\begin{align*}
 \epsilon_{{M},\mf{s}}\colon
 F^*\bigl(\Phi^{(\mf{s},\infty')}(M)\bigr)
 \xrightarrow[\sim]{F^*(\widetilde{\alpha}_s^{-1})}&
 F^*\bigl(\tau'_*\nF{\ms{M}}|_{\eta_\infty}\bigr)
 \xrightarrow{\sim}
 \tau'_*\bigl(F^*\nF{\ms{M}}|_{\eta_\infty}\bigr)\\
 &\xrightarrow{\tau'_*(\epsilon_{\ms{M}})|_{\eta_\infty}}
 \tau'_*\bigl(\nF{\ms{F^*M}}|_{\eta_\infty}\bigr)
 \xrightarrow[\sim]{F^*(\widetilde{\alpha}_s)}
 \Phi^{(\mf{s},\infty')}(F^*{M}).
\end{align*}
We define the Frobenius structure on $\Phi^{(0,\infty')}({M})$ by
 composing this isomorphism with the isomorphism of functoriality
 $\Phi^{(\mf{s},\infty')}(F^*{M})\xrightarrow{\sim}
 \Phi^{(\mf{s},\infty')}({M})$ induced by the Frobenius structure of
 ${M}$.
\end{dfn}

\begin{lem}
 Let $\ms{M}$ be a holonomic $F$-$\DdagQ{\Pone}(\infty)$-module.
 Let $s$ be a $k$-rational point of $\mb{A}_k^1$, and
 $\gamma_s$ be the translation isomorphism of $\Pone$ sending $s$ to
 $0$. Then we have an isomorphism compatible with Frobenius structures
 \begin{equation*}
  \ms{F}_\pi(\gamma^!_s\ms{M})\cong\ms{F}_\pi(\ms{M})
   \otimes\ms{L}(-s\cdot x').
 \end{equation*}
\end{lem}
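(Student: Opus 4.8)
The plan is to derive the formula from the \emph{translation--by--character} property of the Dwork $F$-isocrystal, and to perform the rest of the construction by the formal properties of the geometric Fourier transform; the underlying isomorphism is seen most transparently at the level of the naive transform, while the compatibility with Frobenius is seen at the level of the geometric transform. Throughout put $g:=\gamma_{s}\times\mathrm{id}$ on $\ms{P}''$; since $\gamma_{s}$ extends to an automorphism of $\ms{P}$ fixing $\infty$, $g$ is an automorphism of the couple $(\ms{P}'',Z)$ over $R$, and one has $p\circ g=\gamma_{s}\circ p$ and $p'\circ g=p'$ as morphisms of couples (notation of \ref{setupFour} and \eqref{fourierdiagtwo}). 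Note also that $\gamma_{s}^{!}\ms{M}$ is again a holonomic $F$-$\DdagQ{\ms{P}}(\infty)$-module, since $\gamma_{s}^{!}$ is exact, commutes with the Frobenius pullback ($\gamma_{s}$ being defined over $R$ and $s$ being $k$-rational), and $\gamma_{s}$ fixes $\infty$.

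The key input --- and the main obstacle --- is the isomorphism of overconvergent $F$-isocrystals on $(\ms{P}'',Z)$
\begin{equation*}
 g^{*}\ms{L}_{\pi,\mu}\;\cong\;\ms{L}_{\pi,\mu}\otimes_{\mc{O}_{\ms{P}'',\mb{Q}}(Z)}p'^{*}\ms{L}(-s\cdot x'),\qquad\text{compatibly with Frobenius.}
\end{equation*}
As a module with connection this is immediate: $g^{*}(x\otimes x')=(x-s)\otimes x'$, so the connection of $g^{*}\ms{L}_{\pi,\mu}$ differs from that of $\ms{L}_{\pi,\mu}$ exactly by the rank one connection $\nabla(e)=\pi s\,e\otimes dx'$ pulled back from $\ms{P}'$, which is $\ms{L}(-s\cdot x')$ (compare the computation of $\ms{L}(x\cdot s')$ below \eqref{geomfour}). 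The substance is the Frobenius compatibility: it is the $F$-isocrystal incarnation of $\psi_{\pi}(a+b)=\psi_{\pi}(a)\psi_{\pi}(b)$, i.e.\ of the fact that the Dwork exponential satisfies $\theta_{\pi}(a+b)/\bigl(\theta_{\pi}(a)\theta_{\pi}(b)\bigr)=\exp\bigl(\pi(-(a+b)^{p}+a^{p}+b^{p})\bigr)$, which has radius of convergence strictly greater than $1$; see \cite[1.3]{BerDw}. This is the $p$-adic analogue of one of the standard properties of $\ell$-adic Fourier transform recorded in \cite[1.2, 1.3]{Lau}, which the authors have already observed carry over.

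Granting this, I would argue as follows. Using $p\circ g=\gamma_{s}\circ p$ and the fact that $g$ is an isomorphism (so $g^{!}=g^{*}$ commutes with tensor products), one gets $\ms{L}_{\pi,\mu}\otimes p^{!}\gamma_{s}^{!}\ms{M}\cong\ms{L}_{\pi,\mu}\otimes g^{*}p^{!}\ms{M}\cong g^{*}\bigl(\ms{L}_{\pi,\mu}\otimes p^{!}\ms{M}\bigr)\otimes p'^{*}\ms{L}(-s\cdot x')^{\vee}$ by the additivity isomorphism; applying $p'_{+}$, using $p'\circ g=p'$ (hence $p'_{+}g^{*}=p'_{+}$) and the projection formula together with \ref{smoothpoin} for the smooth $p'$, and tracking the Tate twist and shift built into the normalization \eqref{geomfour}, one obtains $\ms{F}_{\pi}(\gamma_{s}^{!}\ms{M})\cong\ms{F}_{\pi}(\ms{M})\otimes\ms{L}(-s\cdot x')$. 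For the precise sign it is cleanest to read this off from the naive transform: under $\iota$ of \S\ref{secgeforev} the operator $x$ corresponds to $-\pi^{-1}\partial'$, so $\gamma_{s}^{!}\ms{M}$, on which $x$ acts as $x-s$, corresponds to conjugating the $\DdagQ{\ms{P}'}(\infty')$-action on $\widehat{\ms{M}}$ by $\exp(\pi s\,x')$, that is to $\widehat{\ms{M}}\otimes\ms{L}(-s\cdot x')$, and one transports this along \eqref{compgeomnaive}. Finally, every functor used --- the constituents $p'_{+}$, $\otimes$, $p^{!}$ of the geometric Fourier transform, the functor $\gamma_{s}^{!}$, the base change isomorphism \ref{properbasech}, the projection formula, and the additivity isomorphism above --- is compatible with Frobenius pullback; hence the resulting isomorphism respects the Frobenius structures (alternatively, one invokes the universal operator $\Upsilon_{\pi}$ of Lemma \ref{globalfourier}, insensitive to $\ms{M}$, together with the functoriality square in its proof). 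The routine part is the twist/sign bookkeeping; the only genuine point is the Frobenius-compatible additivity of $\ms{L}_{\pi}$.
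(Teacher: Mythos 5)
Your strategy is the right one and is exactly what the paper intends by its one-line deferral to \cite[1.2.3.2]{Lau}: pull the translation through to an automorphism $g=\gamma_s\times\mathrm{id}$ of the couple $(\ms{P}'',Z)$, use the additive (Frobenius-compatible) behaviour of the Dwork kernel to peel off a rank-one twist pulled back from $\ms{P}'$, then push forward using the projection formula, base change, and $p'\circ g=p'$; the Frobenius compatibility is then automatic because every step is built from Frobenius-compatible cohomological operations (or, as you note, can be read off from the universal operator $\Upsilon_\pi$ of Lemma \ref{globalfourier}). Whether one routes the intermediate manipulation through the projection formula (as you do) or through the K\"unneth formula \ref{kunnethformual} (as the paper suggests) is a cosmetic choice.

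The one substantive problem is a sign inconsistency that makes your two sub-arguments contradict each other. In the geometric derivation you take $g^{*}(x\otimes x')=(x-s)\otimes x'$, obtain $g^{*}\ms{L}_{\pi,\mu}\cong\ms{L}_{\pi,\mu}\otimes p'^{*}\ms{L}(-s\cdot x')$, and then write $\ms{L}_{\pi,\mu}\otimes g^{*}p^{!}\ms{M}\cong g^{*}(\ms{L}_{\pi,\mu}\otimes p^{!}\ms{M})\otimes p'^{*}\ms{L}(-s\cdot x')^{\vee}$; applying $p'_{+}$ and the projection formula then yields $\ms{F}_\pi(\ms{M})\otimes\ms{L}(-s\cdot x')^{\vee}=\ms{F}_\pi(\ms{M})\otimes\ms{L}(s\cdot x')$, \emph{not} $\ms{L}(-s\cdot x')$ --- the dual got dropped in your final line. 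Meanwhile, your naive-transform sanity check asserts that $x$ acts as $x-s$ on $\gamma_s^{!}\ms{M}$; but with $\gamma_s^{\sharp}(x)=x-s$ the operator $x$ on $\gamma_s^{*}\ms{M}$ is $x+s$ (from $(x-s)\otimes m=1\otimes xm$), so this check tacitly uses the \emph{opposite} convention $\gamma_s^{\sharp}(x)=x+s$. The latter convention is in fact the one forced by the paper's subsequent lemma, whose proof applies $\ms{F}^{(0,\infty')}$ to $\gamma_s^{!}\ms{M}_s$ and hence needs that module to be singular at $0$, i.e.\ $\gamma_s(0)=s$, i.e.\ $\gamma_s^{\sharp}(x)=x+s$ (the statement ``sending $s$ to $0$'' is at odds with this and is presumably the source of the confusion). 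Under the consistent convention $\gamma_s^{\sharp}(x)=x+s$, your naive-transform computation is correct as written, the line $g^{*}(x\otimes x')=(x-s)\otimes x'$ should read $(x+s)\otimes x'$, one gets $g^{*}\ms{L}_{\pi,\mu}\cong\ms{L}_{\pi,\mu}\otimes p'^{*}\ms{L}(s\cdot x')$, and the geometric derivation cleanly gives $\ms{F}_\pi(\gamma_s^{!}\ms{M})\cong\ms{F}_\pi(\ms{M})\otimes\ms{L}(s\cdot x')^{\vee}\cong\ms{F}_\pi(\ms{M})\otimes\ms{L}(-s\cdot x')$ as required. So fix the convention at the outset and carry it through uniformly; the remainder of your argument stands.
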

\begin{proof}
 The proof is formally the same as that of \cite[1.2.3.2]{Lau} using the
 K\"{u}nneth formula \ref{kunnethformual}, so we leave the details to the
 reader.
\end{proof}

\begin{lem}
 Let ${M}$ be a holonomic $F$-$\Dan{\ms{S},\mb{Q}}$-module, and $s$
 be a $k$-rational point of $\widehat{\mb{A}}^1_k$. Then we have the
 following isomorphism compatible with Frobenius structures:
 \begin{equation*}
  \Phi^{(s,\infty')}({M})\cong\Phi^{(0,\infty')}({M})
   \otimes\DwL(s).
 \end{equation*}
\end{lem}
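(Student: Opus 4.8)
The statement asserts, for a holonomic $F$-$\Dan{\ms{S},\mb{Q}}$-module $\mc{M}$ and a $k$-rational point $s$ of $\widehat{\mb{A}}^1_k$, a Frobenius-compatible isomorphism $\Phi^{(s,\infty')}(\mc{M})\cong\Phi^{(0,\infty')}(\mc{M})\otimes\mc{L}(s)$. The underlying isomorphism of differential $\mc{R}$-modules, without Frobenius, is already available: it is exactly Lemma \ref{locFourrel} specialized to the rational point $\mf{s}=s$ (where $\widetilde{\tau}_{\mf{s}*}$ becomes an identification since $K_s=K$), combined with Lemma \ref{nongeometpt}; concretely $\ms{F}^{(s,\infty')}(\ms{M})\cong\Phi^{(0,\infty')}(\tau_{s*}\ms{M}|_{S_s})\otimes\mc{L}(s)$, and for $\mc{M}$ holonomic the canonical extension $\ms{M}$ of $\sigma_s^*\mc{M}$ satisfies $\ms{M}|_{S_s}\cong\sigma_s^*\mc{M}$, giving the stated identification as differential modules. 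So the only real content is the compatibility with Frobenius structures, i.e.\ that the isomorphism intertwines $\epsilon_{\mc{M},s}$ on the left with $\epsilon_{\mc{M},0}\otimes(\text{Frobenius of }\mc{L}(s))$ on the right.

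First I would unwind the definitions of the Frobenius structures on both sides via Definition \ref{dfnlocalfrobfou}. On $\Phi^{(s,\infty')}(\mc{M})$ the Frobenius is built from the canonical extension $\ms{M}_s$ of $\sigma_s^*\mc{M}$ at $\widetilde{s}$ together with the operator $\Upsilon_\pi$ from Lemma \ref{globalfourier} acting on the naive Fourier transform $\widehat{\ms{M}_s}$, while on $\Phi^{(0,\infty')}(\mc{M})$ it is built from the canonical extension $\ms{M}_0$ of $\mc{M}$ at $0$ and the same operator $\Upsilon_\pi$. The key geometric fact that links them is the translation lemma stated (without name) just above: for the translation isomorphism $\gamma_s$ of $\ms{P}$ sending $s$ to $0$ there is a Frobenius-compatible isomorphism $\ms{F}_\pi(\gamma_s^!\ms{M}')\cong\ms{F}_\pi(\ms{M}')\otimes\ms{L}(-s\cdot x')$. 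Applying this with $\ms{M}'=\ms{M}_0$ and observing $\gamma_s^!\ms{M}_0$ is canonically the canonical extension $\ms{M}_s$ of $\sigma_s^*\mc{M}$ at $\widetilde{s}$ (because the canonical extension commutes with translations, and $\gamma_s^*\mc{M}|_{S_s}\cong\sigma_s^*\mc{M}$), one gets $\ms{F}_{\mr{naive},\pi}(\ms{M}_s)\cong\ms{F}_{\mr{naive},\pi}(\ms{M}_0)\otimes\ms{L}(-s\cdot x')$ compatibly with Frobenius. Restricting to $\eta_{\infty'}$, passing through the stationary phase isomorphisms (which identify these restrictions with the local Fourier transforms), and using that $\tau'^*(\ms{L}(-s\cdot x')|_{\eta_{\infty'}})\cong\mc{L}(s)$ as $F$-modules (this is the computation recorded after Definition \ref{locASdiffmod}, $\tau'_*\ms{L}(s\cdot x')|_{\eta_{\infty'}}\cong\mc{L}'(s)$, applied with the sign convention matching $\mc{L}(s)$) yields the desired Frobenius-compatible isomorphism.

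The main obstacle I expect is bookkeeping of the several canonical identifications and their Frobenius-compatibility: namely (a) that the canonical extension functor $\mc{M}\mapsto\mc{M}^{\mr{can}}$ of \ref{intcanext} intertwines translation on $\ms{S}$ with the translation $\gamma_s$ on $\ms{P}$ as $F$-modules — this follows from the uniqueness properties of the canonical extension (full faithfulness, exactness, restriction to $S_0$) since $\gamma_s^!\ms{M}_0$ satisfies all three defining properties of the canonical extension of $\sigma_s^*\mc{M}$; (b) that the stationary phase isomorphism $\alpha_0$ of Theorem \ref{SPT}, which was shown in \S\ref{secfroblocfou} to be Frobenius-compatible, is natural enough to be transported along $\gamma_s$; and (c) matching the twist $\ms{L}(-s\cdot x')$ under $\tau'$ with $\mc{L}(s)$ on the nose, including the Frobenius structures — here one uses that both sides are rank-one $F$-$\mc{R}$-modules with the same connection (Dwork-type), and Frobenius structures on such a module differ by a unit-root constant which one pins down using $\theta_{\pi}$ as in \ref{setupFour}. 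I would organize the argument as: (1) identify $\gamma_s^!\ms{M}_0\cong\ms{M}_s$ with Frobenius via uniqueness of canonical extensions; (2) apply the translation lemma to transport this to naive Fourier transforms; (3) restrict to $\eta_{\infty'}$ and invoke the (Frobenius-compatible) stationary phase isomorphism on both sides; (4) identify $\tau'^*\ms{L}(-s\cdot x')|_{\eta_{\infty'}}$ with $\mc{L}(s)$ as $F$-modules. Steps (1) and (4) are the places requiring genuine (if short) verification; the rest is formal.
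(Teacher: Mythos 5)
Your plan reproduces the architecture of the paper's proof exactly: translate the canonical extension, feed the result through the translation lemma for the geometric Fourier transform, identify the restriction at $\eta_{\infty'}$ via the Frobenius-compatible stationary phase isomorphism of Theorem~\ref{stationaryfrobcom}, and convert the twist $\ms{L}(\pm s\cdot x')$ into a Dwork module via $\tau'$. The only structural difference is the direction of travel: the paper starts from $\ms{M}_s$ (the canonical extension of $\sigma_s^*\mc{M}$ at $s$) and applies $\gamma_s^!$ to land on the canonical extension of $\mc{M}$ at $0$, whereas you start from $\ms{M}_0$ and identify $\gamma_s^!\ms{M}_0$ with $\ms{M}_s$.

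That reversal is where the sign goes astray. With your step (1), the translation lemma yields $\ms{F}(\ms{M}_s)\cong\ms{F}(\ms{M}_0)\otimes\ms{L}(-s\cdot x')$; on the other hand, the paper's recorded computation after Definition~\ref{locASdiffmod} is $\tau'_*\bigl(\ms{L}(s\cdot x')|_{\eta_{\infty'}}\bigr)\cong\mc{L}'(s)$, so the twist $\ms{L}(-s\cdot x')$ restricted to $\eta_{\infty'}$ corresponds to $\mc{L}(-s)$, not to $\mc{L}(s)$ as claimed in your step (4). Chaining your steps (2)--(4) as stated therefore produces $\Phi^{(s,\infty')}(\mc{M})\cong\Phi^{(0,\infty')}(\mc{M})\otimes\mc{L}(-s)$, which is the wrong conclusion. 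The paper's route avoids this: applying the translation lemma to $\ms{M}_s$ gives $\ms{F}(\gamma_s^!\ms{M}_s)\otimes\ms{L}(s\cdot x')\cong\ms{F}(\ms{M}_s)$, so the twist that survives after restricting to $\eta_{\infty'}$ and invoking the stationary phase is $\ms{L}(+s\cdot x')$, which $\tau'_*$ turns into $\mc{L}(s)$ on the nose. You correctly anticipated in point~(c) that matching the twist to $\mc{L}(s)$ needs genuine verification, but the verification as sketched does not cohere with the cited fact; either redo step (1) so that the translation lemma is applied to $\ms{M}_s$ rather than $\ms{M}_0$ (as in the paper's proof), or re-examine the precise convention behind $\gamma_s^!$ to ensure the signs of the shift and of the Dwork twist are consistent throughout.
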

\begin{proof}
 Let $\ms{M}_s$ denotes the canonical extension of $\tau_s^*({M})$
 at $s$. By the previous lemma, we know that
 $\ms{F}(\gamma_s^!\ms{M}_s)\otimes\ms{L}(s\cdot x')\cong\ms{F}
 (\ms{M}_s)$. By definition, the stationary phase isomorphism
 \begin{equation*}
  \ms{F}(\gamma_s^!\ms{M}_s)|_{\eta_{\infty'}}\xrightarrow{\sim}
   \ms{F}^{(0,\infty')}(\gamma_s^!\ms{M}_s)\cong
   \tau'^*\Phi^{(0,\infty')}({M})
 \end{equation*}
 is compatible with the Frobenius structures. Tensoring both sides with
 $\DwL(s)$ , we get the lemma.
\end{proof}

\begin{prop}
 \label{frobstruccalcabsw}
 Let ${M}$ be a holonomic $F$-$\Dan{\ms{S},\mb{Q}}$-module, and
 $\mf{s}\in\mb{A}^1_k(\overline{k})$. Recall the isomorphism
 \begin{equation*}
  \epsilon_{{M},\mf{s}}\colon F^*(\Phi^{(\mf{s},\infty')}
   ({M}))\xrightarrow{\sim}\Phi^{(\mf{s},\infty')}
   (F^*{M})
 \end{equation*}
 in Definition {\normalfont\ref{dfnlocalfrobfou}}. Then for any $m\in {M}$, we have
 \begin{equation*}
  \epsilon(1\otimes\widehat{m})=\tau'_*(\Upsilon)\cdot
   \widehat{1\otimes m}.
 \end{equation*}
 Here, $\Upsilon$ is the operator of Lemma
 {\normalfont\ref{globalfourier}}.
\end{prop}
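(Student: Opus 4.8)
The plan is to trace through the definitions of $\epsilon_{\mc{M},\mf{s}}$ given in Definition \ref{dfnlocalfrobfou} and reduce the claimed formula to the formula of Lemma \ref{globalfourier} for the \emph{global} naive Fourier transform, using that the stationary phase isomorphism (\ref{stationaryhom}) sends $\widehat{m}$ (the image of $m$ in the global naive Fourier transform, restricted to $\eta_{\infty'}$) to $\widehat{1\otimes m}$ (its image in the local Fourier transform $\Phi^{(\mf{s},\infty')}(\mc{M})$) by construction of the map $\alpha_s$ — indeed the homomorphism $\alpha_s$ in \S\ref{secregstfo} was \emph{defined} as the one sending $\alpha\otimes\widehat{m}$ to $\alpha(1\otimes m)$, hence $\widehat{m}\mapsto 1\otimes m$, i.e. in the notation of Definition \ref{deflocFour}(ii), $\widehat{m}\mapsto\widehat{m}$ (the local one). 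So the statement is essentially a compatibility of the Frobenius structure on $\Phi^{(\mf{s},\infty')}$ with the one on the global naive Fourier transform, transported along an isomorphism under which generators correspond to generators.

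First I would reduce to the case $\mf{s}$ rational (and even $\mf{s}=0$): by Definition \ref{dfnlocalfrobfou}, $\Phi^{(\mf{s},\infty')}(\mc{M})$ is built from the canonical extension $\ms{M}$ of $\sigma_{\mf{s}}^*\mc{M}$ at $\widetilde{s}$, and by \ref{baseextcom} and Lemma \ref{nongeometpt} everything is compatible with the finite unramified base change $K\hookrightarrow K_s$ and with the translation lemmas just proven (the two \texttt{Lemma}s immediately preceding the Proposition), which intertwine $\Phi^{(s,\infty')}$ and $\Phi^{(0,\infty')}$ up to tensoring with the Dwork module $\mc{L}(\mf{s})$; crucially the operator $\Upsilon$ is compatible with base changes by Lemma \ref{globalfourier}, and $\tau'_*(\Upsilon)$ is unchanged under these manipulations since $\mc{L}(\mf{s})$ is rank one with the evident Frobenius and the translation isomorphisms are horizontal. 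Then with $\mf{s}=0$ and $\ms{M}$ the canonical extension, the stationary phase isomorphism of Theorem \ref{SPT} identifies $\tau'_*\widehat{\ms{M}}|_{\eta_{\infty'}}\xrightarrow{\sim}\Phi^{(0,\infty')}(\mc{M})$, and by Definition \ref{transportgeomnaiv} the isomorphism $\epsilon_{\ms{M}}\colon F^*_{\ms{P}'}\ms{F}_{\mr{naive},\pi}(\ms{M})\xrightarrow{\sim}\ms{F}_{\mr{naive},\pi}(F^*_{\ms{P}}\ms{M})$ on the global side is, by Lemma \ref{globalfourier}, given by $1\otimes\widehat{m}\mapsto\Upsilon_\pi\cdot\widehat{1\otimes m}$ for $m\in\Gamma(\ms{P}^{(1)},\ms{M})$.

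The core of the argument is then to check that the stationary phase isomorphism is compatible with the restriction to $\eta_{\infty'}$ and with $\epsilon$, i.e. that the diagram relating $\epsilon_{\ms{M}}$ (restricted along $\tau'$) to $\epsilon_{\mc{M},0}$ commutes; this is precisely the diagram used in Definition \ref{dfnlocalfrobfou} to \emph{define} $\epsilon_{\mc{M},0}$, so the content is really that $\alpha_s$ sends $1\otimes\widehat{m}$ to $1\otimes\widehat{1\otimes m}$ compatibly — which, as noted, is the definition of $\alpha_s$ — together with the fact that $\Upsilon$, being a global differential operator on $\ms{P}'$ away from $\infty'$, acts on $\widehat{\ms{M}}|_{\eta_{\infty'}}$ and on $\Phi^{(0,\infty')}(\mc{M})$ in a way compatible with $\tau'_*$, so that its image is $\tau'_*(\Upsilon)$. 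Since elements of the form $1\otimes\widehat m$ generate the relevant modules over $\mc{R}_{\ms{S}_{\infty'}}$ and both $\epsilon$ and multiplication by $\tau'_*(\Upsilon)$ are $\mc{R}$-linear (the former because the global $\epsilon_{\ms{M}}$ is $\DdagQ{\ms{P}'}(\infty')$-linear, the latter obviously), equality on such elements gives equality of maps.

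The main obstacle I expect is purely bookkeeping: carefully matching the various \emph{transport-of-structure} identifications (the isomorphism $\ms{F}_{\mr{naive},\pi}(\ms{M})\cong\ms{H}^1(\ms{F}_\pi(\ms{M}))$, the stationary phase identification, the base-change identifications, and the translation identifications) so that the generator $\widehat{1\otimes1}$ — with respect to which $\Upsilon$ was chosen in Lemma \ref{globalfourier} — is consistently tracked, and verifying that the choice of $\Upsilon$ (which is \emph{not} unique) does not matter because any two choices differ by an operator annihilating $\widehat{1\otimes m}$, hence act identically after restriction to $\eta_{\infty'}$ on the free differential module $\Phi^{(\mf{s},\infty')}(\mc{M})$. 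No genuinely new analytic input is needed beyond Theorem \ref{SPT}, Lemma \ref{globalfourier}, and the functoriality of the geometric Fourier transform already recorded in \S\ref{secgeforev}.
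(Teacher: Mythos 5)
The structure of your argument (reduce to $\mf{s}$ rational, identify $\epsilon_{\mc{M},\mf{s}}$ with the restriction of the global $\epsilon_{\ms{M}}$ along the stationary phase isomorphism, apply Lemma~\ref{globalfourier}) is exactly the paper's strategy, and up to the point where the formula is established for $m$ a global section of the canonical extension $\mc{M}^{\mr{can}}$ (or, at a finite level, $m\in\ms{M}^{(m)}$) your reasoning is sound. The gap is in the final extension step to arbitrary $m\in\mc{M}$, and it is not merely a matter of bookkeeping.

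You write that ``both $\epsilon$ and multiplication by $\tau'_*(\Upsilon)$ are $\mc{R}$-linear \ldots, [so] equality on such elements gives equality of maps.'' This is false for the second map: $\tau'_*(\Upsilon)$ is the transport of a differential operator, and left multiplication by it on the differential module $\Phi^{(\mf{s},\infty')}(F^*\mc{M})$ is only $K$-linear, not $\mc{R}$-linear (it does not commute with multiplication by functions). Concretely, if $1\otimes\widehat{m}=\sum f_i(1\otimes\widehat{m_i})$ with $f_i\in\mc{R}$ and the $m_i$ global sections, then $\mc{R}$-linearity of $\epsilon$ gives $\epsilon(1\otimes\widehat{m})=\sum f_i\,\tau'_*(\Upsilon)\widehat{1\otimes m_i}$, whereas the target of the claimed formula is $\tau'_*(\Upsilon)\sum f_i\,\widehat{1\otimes m_i}$; these differ unless $\tau'_*(\Upsilon)$ commutes with the $f_i$. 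Thus ``agreement on a generating set over $\mc{R}$'' does not propagate to all $m$. The functoriality route that works in Lemma~\ref{globalfourier} (factor through $\rho\colon\DdagQ{\ms{P}^{(1)}}(\infty)\to\ms{M}$, $1\mapsto m$) is also unavailable here, since a general $m\in\mc{M}$ is a section of a $\ms{D}^{\mr{an}}$-module and $\ms{D}^{\mr{an}}$ itself is not holonomic, so one cannot run the same naturality argument.

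The paper bridges this precisely by the topological step you declared unnecessary. After reducing to a fixed finite level $N=m$, it introduces the approximation of Remark~\ref{anyelem}(ii): a general $x\in\mc{M}$, viewed inside $\EcompQ{m,m'}{s}\otimes\ms{M}^{(m)}$, is the limit of a sequence $\{1\otimes x_k\}$ with $x_k\in\ms{M}^{(m)}$ global sections. The map $\epsilon^{(N)}$ is a homomorphism of finite $\CK{\ms{A}}\{\partial\}^{(m+\fr,\dag)}$-modules, hence continuous for the Fr\'echet (resp.\ LF) topologies introduced in \ref{topology} and \ref{anyelem}, and both $\epsilon^{(N)}$ and the operator $\tau'_*(\Upsilon)$ are continuous. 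Since the topology is separated, one may pass to the limit and deduce $\epsilon^{(N)}(1\otimes\widehat{x})=\tau'_*(\Upsilon)\cdot\widehat{1\otimes x}$ for all $x\in\mc{M}$. This continuity argument, not $\mc{R}$-linearity, is what closes the proof; you should replace your last reduction by it (or some equivalent density argument) and drop the claim that multiplication by $\tau'_*(\Upsilon)$ is $\mc{R}$-linear.
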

\begin{proof}
 Since Frobenius structures are compatible with base changes and
 $\Upsilon_\pi$ does not depend on a base as well, we may
 suppose that $\mf{s}$ is a $k$-rational point by Lemma
 \ref{nongeometpt}. From now on, we assume that $s:=\mf{s}$ is a
 rational point. Let ${M}^\mr{can}$ be the canonical extension of
 $\tau^*_s {M}$ at $s$. Let $\epsilon_{{M}^{\mr{can}}}$  be the unique
  homomorphism
  making the
 following diagram commutative.
 \begin{equation*}
  \xymatrix@C=70pt{
   F^*(\ms{F}^{(s,\infty')}({M}^{\mr{can}}))\ar[r]^{\sim}
   \ar@{.>}[d]_{\epsilon_{{M}^{\mr{can}}}}&
   F^*(\Phi^{(s,\infty')}({M}))\ar[d]^{\epsilon_{{M},s}}\\
  \ms{F}^{(s,\infty')}(F^*{M}^{\mr{can}})
   \ar[r]^{\sim}_{a}&
   \Phi^{(s,\infty')}(F^*{M})
   }
 \end{equation*}
 Here the horizontal isomorphisms are induced by the canonical
 isomorphism ${M}\cong {M}^{\mr{can}}|_{\eta_s}$.
 For any $x'\in\Gamma(\Pone^{(1)},{M}^{\mr{can}})$, we get
 \begin{equation}
  \label{comingfromglob}
  \epsilon_{{M}^{\mr{can}}}(1\otimes\widehat{x'})=\tau'_*(\Upsilon)
  \cdot\widehat{1\otimes x'}
 \end{equation}
 by Lemma \ref{globalfourier}.
 Let ${N}$ be a differential $\mc{A}(\left]r,1\right[)$-module with
 Frobenius structure such
 that $\mc{R}\otimes {N}\cong\nF{{M}^{\mr{can}}}|
 _{\eta_\infty}$, and 
 let $\ms{M}^{(m)}$ be a stable $\DcompQ{m}{\Aone^{(1)}}$-module such
 that
 $\DdagQ{\Aone^{(1)}}\otimes\ms{M}^{(m)}\cong {M}^{\mr{can}}
 |_{\Aone^{(1)}}$. We note that
 ${N}\hookrightarrow\ms{F}^{(s,\infty')}({M}^{\mr{can}})$.
 There exist an integer $\underline{n}$ and isomorphisms
 \begin{equation*}
  \mc{A}(\left[\omega_{\underline{n}},1\right[)\otimes {N}
 \xrightarrow{\sim}\Emod{\underline{n},\dag}{s,\mb{Q}}(\ms{M}^{(m)}),\qquad
 \mc{A}(\left[\omega_{\underline{n}+\fr},1\right[)\otimes F^*{N}
 \xrightarrow{\sim}\Emod{\underline{n}+\fr,\dag}{s,\mb{Q}}(F^*\ms{M}^{(m)})
 \end{equation*}
 inducing the stationary phase isomorphisms. We note that the outer
 square of the diagram
 \begin{equation*}
  \xymatrix{
   F^*\mc{A}(\left[\omega_{\underline{n}},1\right[)\otimes {N}\ar[r]^{\sim}
   \ar[d]_{\sim}&
   F^*\Emod{\underline{n},\dag}{s}(\ms{M}^{(m)})\ar@{.>}[d]_{\epsilon^{(\underline{n})}}
   \ar@{^{(}->}[r]&
   F^*(\ms{F}^{(s,\infty')}({M}^{\mr{can}}))\ar[d]^{\epsilon
   _{{M}^{\mr{can}}}}\\
  \mc{A}(\left[\omega_{\underline{n}+\fr},1\right[)\otimes F^*{N}\ar[r]
   _{\sim}&
   \Emod{\underline{n}+\fr,\dag}{s}(F^*\ms{M}^{(m)})\ar@{^{(}->}[r]_{b}&
   \ms{F}^{(s,\infty')}(F^*{M}^{\mr{can}})
   }
 \end{equation*}
 is commutative. Let  $\epsilon^{(\underline{n})}$  be the unique isomorphism making the above diagram  commutative. 
We have $\epsilon^{(\underline{n})}(1\otimes\widehat{x})=\tau'_*(\Upsilon)\cdot
 \widehat{1\otimes x}$ for any $x\in\ms{M}^{(m)}$ by the injectivity and
 (\ref{comingfromglob}). We put $c=a\circ b$. By changing $m$, we may
 take $\underline{n}=m$.

 Let $x\in {M}$. We may assume that
 $x\in(\DcompQ{m}{s})^{\mr{an}}\otimes\ms{M}^{(m)}$ by increasing $m$ if
 necessary. This element can be
 seen as an element of $\EcompQ{m,m'}{s}\otimes\ms{M}^{(m)}$ with some
 integer $m'\geq m$. By Remark
 \ref{anyelem} (ii), there exists a sequence $\{1\otimes x_k\}$ in
 $\mr{Im}(\ms{M}^{(m)}\rightarrow\EcompQ{m,m'}{s}\otimes\ms{M}^{(m)})$
 with $x_k\in\ms{M}^{(m)}$ which converges to $x$ in
 $\EcompQ{m,m'}{s}\otimes\ms{M}^{(m)}$ using the topology induced by the
 $\EcompQ{m,m'}{s}$-module structure.
 Consider the topology induced by the finite
 $\CK{\Aone^{(1)}}\{\partial\}^{(m,m')}$-module structure. By Lemma
 \ref{topology}, these topologies are equivalent, and we get that the
 same sequence converges to $x$ also in this topology. Since
 \begin{equation*}
  F^*\EcompQ{m,m'}{s}(\ms{M}^{(m)})\cong\CK{\Aone}\{\partial\}
   ^{(m+\fr,m'+\fr)}\otimes_{\CK{\Aone^{(1)}}\{\partial\}^{(m,m')}}
   \EcompQ{m,m'}{s}(\ms{M}^{(m)})
 \end{equation*}
 by definition, the sequence $\{1\otimes(1\otimes x_k)\}$ in
 $F^*\EcompQ{m,m'}{s}(\ms{M}^{(m)})$ converges to the element
 $1\otimes x$ using the
 $\CK{\Aone}\{\partial\}^{(m+\fr,m'+\fr)}$-module topology.
 Since $\epsilon^{(m)}$ is a homomorphism of finite
 $\CK{\Aone}\{\partial\}^{(m+\fr,\dag)}$-modules, it is in particular a
 continuous homomorphism of topological modules over the noetherian
 Banach algebra $\CK{\Aone}\{\partial\}^{(m+\fr,\dag)}$. Since the
 topology is separated, we get
 \begin{align*}
  \epsilon^{(m)}(1\otimes\widehat{x})=\lim
  _{i\rightarrow\infty}\epsilon^{(m)}&(1\otimes\widehat{x_i})
  =\lim_{i\rightarrow\infty}\tau'_*(\Upsilon)\cdot(1\otimes
  x_i)^{\wedge}\\&=\tau'_*(\Upsilon)\cdot\lim_{i\rightarrow\infty}
  (1\otimes x_i)^{\wedge}=\tau'_*(\Upsilon)\cdot(1\otimes x)^{\wedge}.
 \end{align*}
 Now, we get
 \begin{equation*}
  \epsilon_{{M},s}(1\otimes\widehat{x})=
   c(\epsilon^{(m)}(1\otimes\widehat{x}))=c(\tau'_*(\Upsilon)(1\otimes
   x)^{\wedge})=\tau'_*(\Upsilon)((1\otimes x)^{\wedge})
 \end{equation*}
 and the proposition follows.
\end{proof}

\begin{dfn}
 \label{dfnlocFourFrob}
 Let $\ms{M}$ be a holonomic $F$-$\DdagQ{\Pone}(\infty)$-module. Let
 $s$ be a singularity of $\ms{M}$ in $\Aone$. Take a geometric point
 $\mf{s}\in\mb{A}^1_k(\overline{k})$ sitting over $s$. We define the
 Frobenius structure on
 $\ms{F}^{(s,\infty')}(\ms{M})$ by using the canonical isomorphism of
 Lemma \ref{locFourrel}
 \begin{equation*}
  \ms{F}^{(s,\infty')}(\ms{M})\cong\mr{Res}^{K_s}_K\bigl(\Phi
   ^{(\mf{s},\infty')}({\tau}_{s*}\ms{M}|_{S_{s}})\bigr).
 \end{equation*}
 The Frobenius structure is well-defined since it does not depend on the
 choice of $\mf{s}$ by Proposition \ref{frobstruccalcabsw}.
\end{dfn}

\begin{thm}
 \label{stationaryfrobcom}
 The regular stationary phase isomorphism
 {\normalfont(\ref{stationaryhom})} is compatible with Frobenius
 structures.
\end{thm}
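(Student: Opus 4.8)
The plan is to prove that each component $\alpha_s$ of the stationary phase map \eqref{stationaryhom} is compatible with Frobenius structures; since $(\alpha_s)_{s\in S}$ is already an isomorphism by Theorem \ref{SPT}, and a direct sum of Frobenius-compatible maps is again Frobenius-compatible, this suffices. First I would reduce to the case in which $S$ consists of $k$-rational points, exactly as in the opening paragraph of the proof of Theorem \ref{SPT}: after an unramified base change to a suitable $L/K$, the source, the target and the map $(\alpha_s)$ all carry an action of $G=\mr{Gal}(L/K)$ that commutes with the $K$-rational Frobenius structures, and passing to $G$-invariants recovers the original datum; here one also uses that the operator $\Upsilon_\pi$ of Lemma \ref{globalfourier} is insensitive to base change.

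Fix now a $k$-rational $s\in S$ and unwind the claim. The source is $\ms{H}^{1}(\ms{F}_\pi(\ms{M}))|_{\eta_{\infty'}}\cong\ms{F}_{\mr{naive},\pi}(\ms{M})|_{\eta_{\infty'}}$, whose Frobenius incorporates the ``commutation with $F^{*}$'' isomorphism $\epsilon_{\ms{M}}$ of Definition \ref{transportgeomnaiv}; the target is $\tau'^*\ms{F}_\pi^{(s,\infty')}(\ms{M})\cong\tau'^*\mr{Res}^{K_s}_K\Phi_\pi^{(\mf{s},\infty')}(\tau_{s*}\ms{M}|_{S_s})$, whose Frobenius incorporates the commutation isomorphism $\epsilon_{\mc{M},\mf{s}}$ of Definition \ref{dfnlocalfrobfou} applied to $\mc{M}=\tau_{s*}\ms{M}|_{S_s}$ (transported along Lemma \ref{locFourrel} and Definition \ref{dfnlocFourFrob}). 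Since $\alpha_s$ is a natural transformation it commutes automatically with the parts of the two Frobenius structures induced by the Frobenius of $\ms{M}$, so Frobenius-compatibility of $\alpha_s$ reduces to the Frobenius-free assertion that $\alpha_s$ identifies $\epsilon_{\ms{M}}$ with $\epsilon_{\mc{M},\mf{s}}$. The point is that both commutation isomorphisms are given by one and the same operator: by Lemma \ref{globalfourier}, $\epsilon_{\ms{M}}(1\otimes\widehat{m})=\Upsilon_\pi\cdot\widehat{1\otimes m}$ for every $m\in\Gamma(\ms{P},\ms{M})$, and by Proposition \ref{frobstruccalcabsw}, $\epsilon_{\mc{M},\mf{s}}(1\otimes\widehat{m})=\tau'_*(\Upsilon_\pi)\cdot\widehat{1\otimes m}$ for every $m$.

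It remains to run the comparison. On global sections $\alpha_s$ is, by its construction preceding Theorem \ref{SPT} together with \eqref{compgeomnaive} and Lemma \ref{globalsectcalc}, the $\mc{R}$-linear morphism of differential modules sending $\beta\otimes\widehat{m}$ to $\beta\cdot(1\otimes m)$; in particular it carries $\widehat{m}$ (for $m\in\Gamma(\ms{P},\ms{M})$) to the image of (the restriction to $S_s$ of) $m$ in $\EdagQ{s}\otimes\ms{M}\cong\Gamma(\ms{F}_\pi^{(s,\infty')}(\ms{M}))$, which is precisely the element denoted $\widehat{m}$ in Proposition \ref{frobstruccalcabsw}. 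Hence, for $m$ a global section,
\begin{equation*}
 \alpha_s\bigl(\epsilon_{\ms{M}}(1\otimes\widehat{m})\bigr)
  =\alpha_s\bigl(\Upsilon_\pi\cdot\widehat{1\otimes m}\bigr)
  =\tau'_*(\Upsilon_\pi)\cdot\alpha_s\bigl(\widehat{1\otimes m}\bigr)
  =\tau'_*(\Upsilon_\pi)\cdot\widehat{1\otimes m}
  =\epsilon_{\mc{M},\mf{s}}\bigl(1\otimes\alpha_s(\widehat{m})\bigr),
\end{equation*}
using $\mc{R}$-linearity of $\alpha_s$ in the second equality and Proposition \ref{frobstruccalcabsw} in the last. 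Thus the square expressing Frobenius-compatibility of $\alpha_s$ commutes on the set of elements $1\otimes\widehat{m}$ with $m\in\Gamma(\ms{P},\ms{M})$. To conclude I would extend this to all of $F^*\bigl(\ms{F}_{\mr{naive},\pi}(\ms{M})|_{\eta_{\infty'}}\bigr)$ by the density-and-continuity argument at the end of the proof of Proposition \ref{frobstruccalcabsw}: the elements $1\otimes\widehat{m}$ span a dense subspace (Remark \ref{anyelem}(ii), \ref{deftopoean}), all four maps in the square are continuous for the natural LF/Fr\'echet topologies of Lemma \ref{topology}, Lemma \ref{finiteness} and \ref{deftopoean}, and the target is separated.

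The main obstacle is bookkeeping rather than a new idea: the Frobenius on the target $\ms{F}_\pi^{(s,\infty')}(\ms{M})$ is manufactured out of the canonical extension $\mc{M}^{\mr{can}}$ of the purely local object $\tau_{s*}\ms{M}|_{S_s}$, which is a global module with no morphism to or from $\ms{M}$, so the two stationary phase maps (for $\ms{M}$ and for $\mc{M}^{\mr{can}}$) cannot be compared by a functoriality diagram. One must instead route the comparison through the universality of $\Upsilon_\pi$ in Lemma \ref{globalfourier}---the same operator serves every coherent module, in particular both $\ms{M}$ and $\mc{M}^{\mr{can}}$---and through the careful matching, under the identification $\ms{M}|_{S_s}\cong\mc{M}^{\mr{can}}|_{S_s}$ and the Frobenius-compatible translation and twist lemmas preceding Definition \ref{dfnlocFourFrob}, of the ``$\widehat{\,\cdot\,}$'' notation on the two sides; this is also where one must be attentive to the Tate twists and to the passage $\ms{F}^{(s,\infty')}\leftrightarrow\Phi^{(\mf{s},\infty')}\otimes\mc{L}(s)$. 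Once these identifications are pinned down and the continuity is in place, the displayed computation closes the argument.
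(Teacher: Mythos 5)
Your argument is correct and coincides with the paper's: the paper likewise checks commutativity of the Frobenius‐compatibility square on the elements $1\otimes\widehat{m}$, $m\in\Gamma(\ms{P},\ms{M})$, and concludes by observing that both vertical isomorphisms are multiplication by the same operator $\Upsilon$ via Lemma \ref{globalfourier} and Proposition \ref{frobstruccalcabsw}. You additionally spell out the density-and-continuity step (which the paper leaves implicit) and open with a Galois descent to $k$-rational $S$, the latter being harmless but redundant since Proposition \ref{frobstruccalcabsw} already reduces to rational points internally.
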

\begin{proof}
 To show this, it suffices to show that the following diagram is
 commutative.
\begin{equation*}
 \xymatrix@C=80pt{
  F^*\nF{\ms{M}}|_{\eta_\infty}\ar[r]^<>(.5){\sim}
  \ar[d]_{\sim}&
  \bigoplus_{s\in S}\tau'^*F^*\ms{F}^{(s,\infty)}(\ms{M})
  \ar[d]^{\sim}\\
 \nF{F^*\ms{M}}|_{\eta_\infty}\ar[r]_<>(.5){\sim}&
  \bigoplus_{s\in S}\tau'^*\ms{F}^{(s,\infty)}(F^*\ms{M})
  }
\end{equation*}
 The left vertical arrow is defined by the Frobenius structure of
 geometric Fourier transform, and the right vertical arrow by Definition
 \ref{dfnlocFourFrob}. To show that it is commutative, it suffices to
 show the commutativity for $1\otimes\widehat{m}\in F^*\nF{\ms{M}}$
 for any $m\in\ms{M}$. This follows from the description of the vertical
 isomorphisms in terms of the operator $\Upsilon$ given in Lemma
 \ref{globalfourier} and Proposition \ref{frobstruccalcabsw}.
\end{proof}

\subsection{Explicit calculations of the Frobenius structures on Fourier
 transforms}
\label{explicitcalcfrob}
To calculate the Frobenius structure of Fourier transforms
concretely, the results of the last subsection imply that all we need
to do is to determine the differential operator $\Upsilon$. To calculate
this, it suffices to calculate the isomorphism
\begin{equation*}
 \Phi:=\epsilon_{\DdagQ{\Pone}(\infty)}\colon
  F^*(\nF{\DdagQ{\Pone^{(1)}}(\infty)})\xrightarrow{\sim}
  \nF{F^*\DdagQ{\Pone^{(1)}}(\infty)}
\end{equation*} 
concretely, which is the goal of this subsection.

\subsubsection{}\label{par_def_Dwork_op} 
Recall the notation of \ref{setupFour}, and consider the following
diagram.
\begin{equation*}
 \xy
 \xymatrix@C=40pt@R=10pt{
  &{\Poneoned}\ar[dr]\ar[dl]\ar[dd]|{F_{\Poneoned}}&\\
 \Pone\ar[dd]_{F_{\Pone}}&&\Poned\ar[dd]^{F_{\Poned}}\\
 &\Poneoned^{(1)}\ar[dr]\ar[dl]&\\
 \Pone^{(1)}&&\Poned^{(1)}}
  \def\objectstyle{\scriptstyle}
  \def\labelstyle{\scriptstyle}
  \xymatrix@C=40pt@R=10pt{
  &&\\
 *!(-1,4){x}&&*!(-7,4){x'}\\
 &&\\
 *!(-1,-1){y}&&*!(-7,-1){y'}
  }
  \endxy
\end{equation*}
Here, $^{(1)}$ means $\otimes_{R,\sigma}R$, and small letters $x$, $y$,
$x'$, $y'$ denote the coordinates. The middle vertical morphism
$F_{\Poneoned}$ is defined by sending $y$ and $y'$ to $x^q$ and
$x'^q$ respectively. By the definition of the morphisms, we note that the diagram is
commutative.

To proceed, we will review the construction of the fundamental
isomorphism
\begin{equation*}
 \ms{H}^{1}\ms{F}_{\pi}(\DdagQ{\Pone}(\infty))\xrightarrow{\sim}
  \nF{\DdagQ{\Pone}(\infty)}
\end{equation*}
of Noot-Huyghe (\ref{compgeomnaive}). First, 
we recall that $\DdagQ{\Poneoned\rightarrow\Pone}(Z)$, cf.\ \cite[\S1.4]{NH}, allows to compute direct images and we consider consider the Spencer
resolution \cite[4.2.1]{NH}:
\begin{equation*}
 \DdagQ{\Poneoned}(Z)\otimes\bigwedge^{\bullet}\ms{T}_{\Poneoned/\Pone}
  \rightarrow\DdagQ{\Poneoned\rightarrow\Pone}(Z).
\end{equation*}
Here $\ms{T}_{\Poneoned/\Pone}$ denotes the relative tangent bundle of
$\Poneoned$ over $\Pone$. She showed that we can calculate
$\ms{F}(\DdagQ{\Pone}(\infty))$ using this resolution, namely, there
exists an isomorphism
\begin{equation*}
 \ms{F}(\DdagQ{\Pone}(\infty))\cong p'_{*}
  (\DdagQ{\Poned\leftarrow\Poneoned}(Z)
  \otimes_{\DdagQ{\Poneoned}(Z)}(\ms{L}_{\pi,\mu}\otimes_{\mc{O}
  _{\Poneoned,\mb{Q}}(Z)}\DdagQ{\Poneoned}(Z)
  \otimes\bigwedge^{\bullet}\ms{T}_{\Poneoned/\Pone})).
\end{equation*}
Then, she defined a homomorphism
\begin{equation}
 \label{fourierlast}
  p'_*(\DdagQ{\Poned\leftarrow\Poneoned}(Z)
  \otimes_{\DdagQ{\Poneoned}(Z)}(\ms{L}_{\pi,\mu}
  \otimes_{\mc{O}_{\Poneoned,\mb{Q}}(Z)}
  \DdagQ{\Poneoned}(Z)))\rightarrow\nF{\DdagQ{\Pone}(\infty)}
\end{equation}
and showed that this factors through the geometric Fourier
transform. Let us recall how (\ref{fourierlast}) is defined.
We identify
\begin{equation}
 \label{identify}
 \DdagQ{\Poned\leftarrow\Poneoned}(Z)=\omega^{-1}_{\Poned}
  \otimes(\mc{O}_{\Poneoned}\otimes\DdagQ{\Poneoned}(Z)\otimes
  \omega_{\Poneoned}).
\end{equation}
The homomorphism sends $\bigl((dx')^\vee\otimes1\otimes1\otimes(dx\wedge
dx')\bigr)\otimes(e\otimes P)$ to $\widehat{P}$ where $e$ is
the canonical section of $\ms{L}_{\pi,\mu}$ (cf.\ \ref{defoflofdw}). To
verify that this defines a homomorphism, see \cite[5.2.1 {\it
etc.}]{NH}.

Before starting the calculation, we introduce the Dwork operator. Let
$\ms{X}$ be a smooth formal scheme possessing a system of local
coordinates $\{x_1,\dots,x_d\}$. Let $\{x'_1,\dots,x'_d\}$ be the system
of local coordinates of $\ms{X}^{(1)}$ induced by pulling-back
$\{x_1,\dots,x_d\}$.
We uniquely have a lifting of the relative Frobenius morphism
$\ms{X}\rightarrow\ms{X}^{(1)}$ sending $x'_i$ to $x_i^q$. Then we put
\begin{equation*}
 H_{\ms{X}}:=\frac{1}{q^d}\prod_{1\leq i\leq d}\sum_{\zeta^q=1}\sum_{k\geq 0}
  (\zeta-1)^kx_i^k\partial_i^{[k]} 
\end{equation*}
in $\Gamma(\ms{X},\DcompQ{h}{\ms{X}})$.
If there is nothing to be confused, we denote $H_{\ms{X}}$ by $H$.
We note that even if $\zeta\not\in K$, the operator is defined over $K$,
and do not need to extend $K$ to define this operator. For
the details, we refer to \cite{Gar3}.

By applying $F^*_{\Poned}$ to (\ref{fourierlast}), we get the
homomorphism
\begin{equation*}
 F^*_{\Poned}p'_*\bigl(\DdagQ{\Poned^{(1)}\leftarrow\Poneoned^{(1)}}(Z)
  \otimes_{\DdagQ{\Poneoned^{(1)}}(Z)}(\ms{L}^{(1)}_{\pi,\mu}\otimes
  _{\mc{O}_{\Poneoned^{(1)},\mb{Q}}(Z)}\DdagQ{\Poneoned^{(1)}})\bigr)
  \rightarrow F^*_{\Poned}\nF{\DdagQ{\Pone^{(1)}}},
\end{equation*}
where $\ms{L}^{(1)}_{\pi,\mu}$ on $\Poneoned^{(1)}$ denotes the base
change of $\ms{L}_{\pi,\mu}$.
From the next paragraph, we will calculate the Frobenius commutation
homomorphism on the source of the homomorphism. For this, we
always use the identification (\ref{identify}).

\subsubsection{}
In this paragraph, we calculate the canonical homomorphism of
sheaves on $\Poned$
\begin{align*}
 \phi\colon&F^*_{\Poned}p'_*\bigl(\DdagQ{\Poned^{(1)}\leftarrow
 \Poneoned^{(1)}}(Z)\otimes_{\DdagQ{\Poneoned^{(1)}}(Z)}(\ms{L}^{(1)}
 _{\pi,\mu}\otimes_{\mc{O}_{\Poneoned^{(1)},\mb{Q}}(Z)}
 \DdagQ{\Poneoned^{(1)}}(Z))\bigr)\\
  &\qquad\rightarrow p'_*\DdagQ{\Poned\leftarrow\Poneoned}(Z)
 \otimes_{\DdagQ{\Poneoned}(Z)}\bigl(\ms{L}_{\pi,\mu}\otimes
  _{\mc{O}_{\Poneoned,\mb{Q}}(Z)}\DdagQ{\Poneoned\rightarrow\Pone}(Z)
 \otimes F^*_{\Pone}\DdagQ{\Pone}(\infty)\bigr).
\end{align*}
In this paragraph, for simplicity, we denote
$\DdagQ{\Pone}(Z)$, $\DdagQ{\Pone\leftarrow\Poneoned}(Z)$ {\it etc.\
}by $\ms{D}_{\Pone}$, $\ms{D}_{\Pone\leftarrow\Poneoned}$ {\it etc.},
and we identify sheaves and its global sections.

In the following, we will compute $\phi(\alpha_0)$ where
$\alpha_0:=1\otimes\bigl((dy')^{\vee}\otimes1\otimes1
\otimes(dy\wedge dy')\bigr)\otimes(e\otimes1)$ is a global section of
the source of $\phi$. First, we get an isomorphism
\begin{equation*}
 F^*_{\Poned}\bigl(\ms{D}_{\Poned^{(1)}\leftarrow\Poneoned^{(1)}}
  \otimes_{\ms{D}_{\Poneoned^{(1)}}}(\ms{L}^{(1)}_{\pi,\mu}\otimes
  _{\mc{O}_{\Poneoned^{(1)}}}\ms{D}_{\Poneoned^{(1)}})\bigr)\xrightarrow{\sim}
  \ms{D}_{\Poned\leftarrow\Poneoned}\otimes_{
  \ms{D}_{\Poneoned}}F^*_{\Poneoned}\bigl(\ms{L}^{(1)}_{\pi,\mu}\otimes
  _{\mc{O}_{\Poneoned^{(1)}}}\ms{D}_{\Poneoned^{(1)}}\bigr).
\end{equation*}
By \cite[Proposition 2.5]{Abe3}, the element $\alpha_0$ is sent to
\begin{equation*}
 \alpha_1:=\bigl((dx')^{\vee}\otimes1\otimes
  Hx'^{-(q-1)}\otimes(dx\wedge dx')\bigr)\otimes
  x^{q-1}x'^{q-1}(e\otimes1).
\end{equation*}
Now, we get an isomorphism
\begin{equation*}
 \ms{D}_{\Poned\leftarrow\Poneoned}\otimes_{
  \ms{D}_{\Poneoned}}F^*_{\Poneoned}\bigl(\ms{L}^{(1)}_{\pi,\mu}\otimes
  _{\mc{O}_{\Poneoned^{(1)}}}\ms{D}_{\Poneoned^{(1)}}\bigr)\xrightarrow{\sim}
  \ms{D}_{\Poned\leftarrow\Poneoned}\otimes_{
  \ms{D}_{\Poneoned}}\bigl(\ms{L}_{\pi,\mu}\otimes
  _{\mc{O}_{\Poneoned}}F^*_{\Poneoned}\ms{D}_{\Poneoned^{(1)}}\bigr)
\end{equation*}
using the Frobenius structure of $\ms{L}_{\pi,\mu}$. This Frobenius
structure $F^*\ms{L}_{\pi,\mu}\xrightarrow{\sim}\ms{L}_{\pi,\mu}$ sends
$1\otimes e$ to $\exp(\pi((xx')-(xx')^q))\cdot e$ as written in
\cite[(2.12.1)]{BB}\footnote{Note that in {\it loc.\ cit.}, our $\pi$ is
equal to $-\pi$ in their notation, and this is why we get
$(xx')-(xx')^q$ instead of $(xx')^q-(xx')$.}. Using this, $\alpha_1$ is
sent by this isomorphism to
\begin{equation*}
 \alpha_2:=\bigl(\underbrace{(dx'_{\bullet})^{\vee}\otimes1\otimes
  Hx'^{-(q-1)}\otimes(dx_{\bullet}\wedge dx'_\bullet)}_{=:A}\bigr)
  \otimes x^{q-1}x'^{q-1}\bigl(
  \underbrace{\exp(\pi((xx')-(xx')^q))\cdot e}_{=:E}
  \otimes(1\otimes1)\bigr).
\end{equation*}
Then we get a homomorphism
\begin{equation*}
 \ms{D}_{\Poned\leftarrow\Poneoned}\otimes_{
  \ms{D}_{\Poneoned}}\bigl(\ms{L}_{\pi,\mu}\otimes
  _{\mc{O}_{\Poneoned}}F^*_{\Poneoned}\ms{D}_{\Poneoned^{(1)}}\bigr)
  \rightarrow\ms{D}_{\Poned\leftarrow\Poneoned}\otimes_{
  \ms{D}_{\Poneoned}}\bigl(\ms{L}_{\pi,\mu}\otimes
  _{\mc{O}_{\Poneoned}}F^*_{\Poneoned}\ms{D}_{\Poneoned^{(1)}\rightarrow
  \Pone^{(1)}}\bigr),
\end{equation*}
which sends $\alpha_2$ to $\alpha_3:=A\otimes x^{q-1}x'^{q-1}(E\otimes
(1\otimes1\otimes1))$. Then we have an isomorphism
\begin{equation*}
 \ms{D}_{\Poned\leftarrow\Poneoned}\otimes_{
  \ms{D}_{\Poneoned}}\bigl(\ms{L}_{\pi,\mu}\otimes
  _{\mc{O}_{\Poneoned}}F^*_{\Poneoned}\ms{D}_{\Poneoned^{(1)}\rightarrow
  \Pone^{(1)}}\bigr)
  \xrightarrow{\sim}\ms{D}_{\Poned\leftarrow\Poneoned}\otimes_{
  \ms{D}_{\Poneoned}}\bigl(\ms{L}_{\pi,\mu}\otimes
  _{\mc{O}_{\Poneoned}}\ms{D}_{\Poneoned\rightarrow\Pone}\otimes
  F^*_{\Pone}\ms{D}_{\Poned}\bigr),
\end{equation*}
which sends $\alpha_3$ to $\alpha_4:=A\otimes x^{q-1}x'^{q-1}
(E\otimes(1\otimes1)\otimes(1\otimes1))$. Summing up, we have that the
homomorphism $\phi$ sends
\begin{align*}
 1\otimes\bigl((dx'_{\bullet})^{\vee}\otimes1\otimes1\otimes
 (dx_{\bullet}\wedge dx'_\bullet)\bigr)\otimes(e\otimes1)\mapsto
 A\otimes x^{q-1}x'^{q-1}(E\otimes(1\otimes1)\otimes(1\otimes1)).
\end{align*}

\subsubsection{}
Let us finish the calculation of $\Phi$. Consider the following
commutative diagram
\begin{equation*}
 \xymatrix{
  p'_*\Ddag{\Poned\leftarrow\Poneoned}\otimes
  _{\Ddag{\Poneoned}}\bigl(\ms{L}_{\pi,\mu}\otimes
  _{\mc{O}_{\Poneoned}}\Ddag{\Poneoned\rightarrow\Pone}\bigr)
  \ar[r]^<>(.5){\beta}\ar[d]_{\alpha}
  &\nF{\Ddag{\Pone}(\infty)}\ar[d]^{\delta}\\
 p'_*\Ddag{\Poned\leftarrow\Poneoned}
  \otimes_{\Ddag{\Poneoned}}\bigl(\ms{L}_{\pi,\mu}\otimes
  _{\mc{O}_{\Poneoned}}\Ddag{\Poneoned\rightarrow\Pone}\otimes
  F^*_{\Pone}\Ddag{\Pone^{(1)}}(\infty)\bigr)\ar[r]_<>(.5){\gamma}&
  \nF{F^*_{\Pone}\Ddag{\Pone^{(1)}}(\infty)},
  }
\end{equation*}
where we have omitted $\mb{Q}$ in subscripts and $(Z)$ to save
space. The homomorphisms $\alpha$ and $\delta$ are the canonical
homomorphisms induced by the homomorphism
$\DdagQ{\Pone}(\infty)\rightarrow
F^*_{\Pone}\DdagQ{\Pone^{(1)}}(\infty)$, $\beta$ is nothing but
(\ref{fourierlast}), and $\gamma$ is the homomorphism of Huyghe
(\ref{compgeomnaive}). By the computation of the last paragraph, we have
\begin{equation*}
 \Phi(1\otimes\widehat{1})=\gamma(\alpha_4).
\end{equation*}
Since these sheaves are $\DdagQ{\Poned}(\infty')$-modules, we identify
the sheaves with their global sections.
Since $\alpha\bigl(A\otimes x^{q-1}x'^{q-1}(E\otimes(1\otimes1))
\bigr)=\alpha_4$, all we have to calculate is
$(\delta\circ\beta)\bigl(A\otimes x^{q-1}x'^{q-1}(E\otimes
(1\otimes1))\bigr)$. Let
\begin{equation*}
 \exp(\pi(t-t^q))=\sum_{n\geq 0}\alpha_nt^n.
\end{equation*}
Then we get
\begin{align*}
 &\beta\Bigl(\bigl((dx')^{\vee}\otimes1\otimes
 Hx'^{-(q-1)}\otimes(dx\wedge dx')\bigr)\otimes
 x^{q-1}x'^{q-1}\bigl(\exp(\pi((xx')-(xx')^q))\cdot e\otimes
 (1\otimes1)\bigr)\Bigr)\\
 &\qquad=\beta\Bigl(\bigl((dx')^{\vee}\otimes1\otimes
 Hx'^{-(q-1)}\otimes(dx\wedge dx')\bigr)\otimes
 \bigl(\sum_{n\geq 0}\alpha_n(xx')^nx^{q-1}x'^{q-1}\bigr)\bigl(e\otimes
 (1\otimes1)\bigr)\Bigr)\\
 &\qquad=\beta\Bigl(\sum_{n\geq 0}\bigl((dx')^{\vee}
 \otimes1\otimes(x'^{q-1}x'^n\alpha_nHx'^{-(q-1)})\otimes(dx\wedge
 dx')\bigr)\otimes\bigl(e\otimes(1\otimes
 x^nx^{q-1})\bigr)\Bigr)\\
 &\qquad=\Bigl(\bigl(x'^{q-1}Hx'^{-(q-1)}\bigr)^t\cdot\sum_{n\geq 0}
 \alpha_nx'^n\Bigr)\cdot\beta\Bigl(\bigl((dx')^{\vee}
 \otimes1\otimes1\otimes(dx\wedge
 dx')\bigr)\otimes\bigl(e\otimes(1\otimes
 x^nx^{q-1})\bigr)\Bigr)\\
 &\qquad=\Bigl(\bigl(x'^{q-1}Hx'^{-(q-1)}\bigr)^t\cdot\sum_{n\geq 0}
 \alpha_nx'^n\Bigr)\cdot(x^nx^{q-1})^{\wedge}\\
 &\qquad=\bigl(x'^{q-1}Hx'^{-(q-1)}\bigr)^t\cdot\sum_{n\geq 0}
 \alpha_nx'^n\Bigl(\frac{-\partial'}{\pi}\Bigr)^n\Bigl(\frac{-\partial'}
 {\pi}\Bigr)^{q-1}\cdot\widehat{1}.
\end{align*}

Summing up, we get the following theorem.
\begin{thm}
 Let $\ms{M}$ be a coherent $\DdagQ{\Pone^{(1)}}(\infty)$-module. We
 write
 \begin{equation*}
  \exp(\pi(t-t^q))=\sum_{n\geq 0}\alpha_nt^n
 \end{equation*}
 with $\alpha_n\in K$. The canonical isomorphism
 $\Phi\colon F^*_{\Poned}(\nF{\ms{M}})\xrightarrow{\sim}
 \nF{F^*_{\Pone}\ms{M}}$ can be described as follows: for any
 $m\in\Gamma(\Pone^{(1)},\ms{M})$, we have 
 \begin{equation*}
  \Phi(1\otimes\widehat{m})=\bigl(x'^{q-1}\cdot H_{\Poned}\cdot
   x'^{-(q-1)}\bigr)^t\cdot\sum_{n\geq 0}\alpha_nx'^n\Bigl(
   \frac{-\partial'}{\pi}\Bigr)^n\Bigl(\frac{-\partial'}
   {\pi}\Bigr)^{q-1}\cdot(1\otimes m)^\wedge.
 \end{equation*}
\end{thm}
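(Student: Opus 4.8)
The statement is essentially a re-packaging of the sheaf-level computation carried out in the three paragraphs immediately preceding it, so the plan is to assemble that computation into a clean functorial statement. First I would reduce to the case $\ms{M}=\DdagQ{\ms{P}^{(1)}}(\infty)$ and $m=1$: indeed, by Lemma \ref{globalfourier} there is an operator $\Upsilon=\Upsilon_\pi\in\DdagQ{\ms{P}'}(\infty')$ such that $\epsilon_{\ms{M}}(1\otimes\widehat{m})=\Upsilon\cdot\widehat{1\otimes m}$ for every coherent $\DdagQ{\ms{P}^{(1)}}(\infty)$-module $\ms{M}$ and every $m\in\Gamma(\ms{P}^{(1)},\ms{M})$; and $\epsilon_{\ms{M}}$ is, up to the identification $\ms{F}_{\mr{naive},\pi}\cong\ms{H}^1\ms{F}_\pi$ of (\ref{compgeomnaive}), exactly the isomorphism $\Phi$ in the statement. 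Hence it suffices to identify $\Upsilon$, equivalently to compute $\Phi(1\otimes\widehat{1})\in(F^*_{\ms{P}}\DdagQ{\ms{P}^{(1)}}(\infty))^\wedge$ explicitly, and this is independent of a base by the last assertion of Lemma \ref{globalfourier}.

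\emph{Second}, I would run through the chain of canonical isomorphisms on $\ms{P}'$ recorded in the text, tracking the image of the distinguished section. Using Noot-Huyghe's presentation of the naive Fourier transform via the Spencer resolution together with the identification (\ref{identify}) $\DdagQ{\ms{P}'\leftarrow\ms{P}''}(Z)=\omega^{-1}_{\ms{P}'}\otimes(\mc{O}_{\ms{P}''}\otimes\DdagQ{\ms{P}''}(Z)\otimes\omega_{\ms{P}''})$ and the homomorphism (\ref{fourierlast}), the element $1\otimes\bigl((dy')^\vee\otimes1\otimes1\otimes(dy\wedge dy')\bigr)\otimes(e\otimes1)$ is transported successively to $\alpha_1,\alpha_2,\alpha_3,\alpha_4$. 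Two inputs enter here: (i) \cite[Proposition 2.5]{Abe3}, which produces the Dwork operator $H=H_{\ms{P}''}$ and the factor $x'^{-(q-1)}$ together with the prefactor $x^{q-1}x'^{q-1}$ when one passes $F^*_{\ms{P}'}$ across $\DdagQ{\ms{P}'^{(1)}\leftarrow\ms{P}''^{(1)}}$; and (ii) the explicit Frobenius structure $\Phi(1\otimes e)=\exp(\pi((xx')-(xx')^q))\cdot e$ of the kernel $\ms{L}_{\pi,\mu}=\mu^*\ms{L}_\pi$, as in \cite[(2.12.1)]{BB} (with the sign normalization of $\pi$ noted in the footnote). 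Expanding $\exp(\pi(t-t^q))=\sum_{n\ge0}\alpha_n t^n$ and substituting $t=xx'$, each monomial $\alpha_n(xx')^n x^{q-1}x'^{q-1}$ is split, the $x'$-part absorbed into the operator side via the transpose in (\ref{identify}) and the $x$-part sent through $\beta$ (Huyghe's homomorphism (\ref{compgeomnaive})) to $(x^n x^{q-1})^\wedge=(-\partial'/\pi)^n(-\partial'/\pi)^{q-1}\cdot\widehat1$, using that the transport-of-structure isomorphism $\iota$ sends $x$ to $\pi^{-1}\partial$ hence $x\mapsto -\partial'/\pi$ on the dual side. Collecting the $x'$-contributions into $\bigl(x'^{q-1}H_{\ms{P}'}x'^{-(q-1)}\bigr)^t\cdot\sum_{n\ge0}\alpha_n x'^n$ yields the formula in the statement with $\ms{M}=\DdagQ{\ms{P}^{(1)}}(\infty)$, $m=1$.

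\emph{Third}, I would promote this to arbitrary $(\ms{M},m)$: given $m\in\Gamma(\ms{P}^{(1)},\ms{M})$, let $\rho\colon\DdagQ{\ms{P}^{(1)}}(\infty)\to\ms{M}$ send $1$ to $m$; by the functoriality of the geometric (hence naive) Fourier transform the square relating $\epsilon_{\Ddag{}}$ and $\epsilon_{\ms{M}}$ (displayed in the proof of Lemma \ref{globalfourier}) commutes, so $\Phi(1\otimes\widehat m)=\ms{F}_{\mr{naive}}(F^*\rho)\bigl(\Phi(1\otimes\widehat1)\bigr)$, and the right-hand side is obtained from the case-$1$ formula by applying $\ms{F}_{\mr{naive}}(F^*\rho)$, i.e.\ by replacing $\widehat1$ by $(1\otimes m)^\wedge$ and keeping the operator $\bigl(x'^{q-1}H_{\ms{P}'}x'^{-(q-1)}\bigr)^t\cdot\sum_n\alpha_n x'^n(-\partial'/\pi)^n(-\partial'/\pi)^{q-1}$ in front. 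This gives the claimed identity.

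\emph{Main obstacle.} None of the steps is conceptually deep once one grants \cite{Abe3}, \cite{BB}, \cite{NH}; the real work is bookkeeping. The delicate point is the careful tracking of the several $\otimes$-factors and of the left/right (operator vs.\ module, transpose via $\omega^{\pm1}$) conventions through the isomorphisms $F^*_{\ms{P}'}(\DdagQ{\ms{P}'^{(1)}\leftarrow\ms{P}''^{(1)}}\otimes\cdots)\cong\DdagQ{\ms{P}'\leftarrow\ms{P}''}\otimes F^*_{\ms{P}''}(\cdots)$ and its successors, in particular making sure the Dwork operator $H$ emerges on the correct ($\ms{P}'$) side and conjugated by $x'^{q-1}$, and that the sign in $\exp(\pi((xx')-(xx')^q))$ and the substitution $x\mapsto -\partial'/\pi$ are consistent with the normalization of $\pi$ and of $\iota$ fixed in \S\ref{setupFour}. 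I would double-check these normalizations against \cite[(2.12.1)]{BB} and \cite[Remark 3.15 (ii)]{Abe3}, since a sign error there would propagate into every subsequent application of the formula. The rest is the routine power-series manipulation $\exp(\pi(t-t^q))=\sum\alpha_n t^n$, $t=xx'$, already spelled out in the excerpt.
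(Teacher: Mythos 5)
Your plan is correct and is essentially the paper's own argument: reduce to $\ms{M}=\DdagQ{\ms{P}^{(1)}}(\infty)$, $m=1$ via Lemma \ref{globalfourier}, track the distinguished section through the chain $\alpha_0\mapsto\alpha_1\mapsto\cdots\mapsto\alpha_4$ using (\ref{identify}), (\ref{fourierlast}), \cite[Prop.\ 2.5]{Abe3} and \cite[(2.12.1)]{BB}, expand $\exp(\pi(t-t^q))$ with $t=xx'$, and pass the $x$-powers through $\iota$ to get $(-\partial'/\pi)^n(-\partial'/\pi)^{q-1}\cdot\widehat1$, with the Dwork operator contributing $(x'^{q-1}H_{\ms{P}'}x'^{-(q-1)})^t$. (One small slip: the isomorphism $\iota$ sends $x'\mapsto\pi^{-1}\partial$ and $\partial'\mapsto-\pi x$, from which $x\mapsto-\partial'/\pi$ as you conclude; you mis-stated the first assignment but drew the right consequence.)
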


\section{A key exact sequence}
\label{section6}
To show Laumon's formulas, one of the key point was to use the exact
sequence appearing in the proof of \cite[3.4.2]{Lau}. This exact
sequence was deduced from an exact sequence connecting nearby cycles and
vanishing cycles. Since our definition of local Fourier transforms
does not use vanishing cycles, we need some arguments to acquire an
analogous exact sequence, which is the main goal of this section.

\subsection{Commutation of Frobenius}
\label{secommfrob}
In this subsection, we show a commutativity result of Frobenius
pull-back and microlocalization. This result is used to define a
Frobenius structure on microlocalizations defined in the next
subsection.

\subsubsection{}\label{F_cent}
Let $\ms{X}$ be an affine formal curve over $R$. We consider the
situation in paragraph \ref{setupFrob}. We moreover suppose that
$\ms{X}$ and $\ms{X}'$ possess a global 
coordinate denoted by
$x$ and $x'$ respectively, and that the relative Frobenius homomorphism
$\ms{X}\rightarrow\ms{X}'$ sends $x'$ to $x^q$. For any smooth formal
scheme, we can choose such $x$ and $x'$ locally, with the same property. We denote by $\partial$
and $\partial'$ the differential operators corresponding to $x$ and $x'$
respectively. We denote by $X_i$ and $X'_i$ the reduction of $\ms{X}$
and $\ms{X}'$ over $R_i$ as usual.

\begin{lem*}
 \label{frobcenter}
 Let $i$ and $m$ be  non-negative integers. Let
 $\Phi\colon\Dmod{m+\fr}{X_i}\rightarrow F^*\Dmod{m}{X'_i}$
 be the canonical homomorphism of Berthelot
 {\normalfont\cite[2.5.2.3]{Ber2}}. Then there exists a positive integer
 $N_{i}$ such that $(\partial'^{\angles{m}{p^m}})^{N_{i}}$ is in the
 center of $\Dmod{m}{X'_i}$ and, for any positive integer $l$,
 \begin{equation}
  \label{genber2.2.4}
   \Phi((\partial^{\angles{m+\fr}{p^{m+\fr}}})^{lN_{i}})=1\otimes
   (\partial'^{\angles{m}{p^m}})^{lN_{i}}.
 \end{equation}
\end{lem*}
\begin{proof}   
 To have lighter notation we will put, for every $N$ and $m$,
 $\partial^{\angles{m}{p^m}^{N}}:=(\partial^{\angles{m}{p^m}})^{N}$.
 The case $i=0$ of the first statement is proven in \cite[2.2.6]{Ber1}
 (we have $N_0=p$), and the general case goes similarly ($N_i=p^{i+1}$
 is enough); see also \cite[2.6 Remark]{Abe}.

 To finish the proof we proceed by induction on $i$. Suppose that the
 statement is true for $i$ and let us prove it for $i+1$. By induction
 hypothesis, there exists $N_i$, such that for every $l$, we may write
 \begin{equation}\label{eq_ipotesi_induttiva}
  \Phi(\partial^{\angles{m+\fr}{p^{m+\fr}}^{lN_i}})=1\otimes
   \partial'^{\angles{m}{p^m}^{lN_i}}+\varpi^{i}\sum_rf_r^{(l)}\otimes
   \partial'^{\angles{m}{r}}
 \end{equation}
 in $\Dmod{m}{X'_{i+1}}$, for some $f_r^{(l)}\in\mc{O}_{X'_{i+1}}$. 
 By changing $N_i$ by a multiple, we can assume it
 divisible by $p^{i+1}$, so that
 $\partial^{\angles{m+\fr}{p^{m+\fr}}^{N_i}}$ is contained in the center of
 $\Dmod{m+\fr}{X_{i+1}}$. 
 For any integer $j>0$, we get
 \begin{align}
  \label{calberfrob}
  &\partial^{\angles{m+\fr}{p^{m+\fr}}^{lN_i}}\cdot\bigl(1\otimes 
  \partial'^{\angles{m}{p^m}^{lN_ij}}+j\varpi^{i}\sum_rf_r^{(l)}\otimes
  \partial'^{\angles{m}{r}}\partial'^{\angles{m}{p^m}^{lN_i(j-1)}}\bigr)\\
  \notag
  &\qquad=\bigl(1\otimes
   \partial'^{\angles{m}{p^m}^{lN_i}}+\varpi^{i}\sum_rf_r^{(l)}\otimes
   \partial'^{\angles{m}{r}}\bigr) 
	\bigl(1\otimes 
  \partial'^{\angles{m}{p^m}^{lN_ij}}+j\varpi^{i}\sum_rf_r^{(l)}\otimes
  \partial'^{\angles{m}{r}}\partial'^{\angles{m}{p^m}^{lN_i(j-1)}}\bigr)\\
  \notag
  &\qquad=1\otimes \partial'^{\angles{m}{p^m}^{lN_i(j+1)}}
	+(j+1)
  \varpi^{i}\sum_rf_r^{(l)}\otimes\partial'^{\angles{m}{r}}
  \partial'^{\angles{m}{p^m}^{lN_ij}},
 \end{align}
 where $\cdot$ in the first line stand for the left action of
 $\Dmod{m+\fr}{X_{i+1}}$ on $F^*\Dmod{m}{X'_{i+1}}$ and in the second
 equality we are using that $\partial^{\angles{m+\fr}{p^{m+\fr}}^{N_i}}$
 is central. Thus, we get
 \begin{align*}
  \Phi(\partial^{\angles{m+\fr}{p^{m+\fr}}^{plN_i}}) &=
  \partial^{\angles{m+\fr}{p^{m+\fr}}^{(p-1)lN_i}}
  \cdot(1\otimes\partial'^{\angles{m}{p^m}^{lN_i}} +\varpi^{i}
  \sum_rf_r^{(l)}\otimes\partial'^{\angles{m}{r}})\\
  &=\bigl(1\otimes\partial'^{\angles{m}{p^m}^{plN_i}}
  +p\varpi^{i}\sum_rf_r^{(l)}\otimes\partial'^{\angles{m}{r}}
  \partial'^{\angles{m}{p^m}^{(p-1)lN_i}}\bigr)\\
  &=1\otimes\partial'^{\angles{m}{p^m}^{plN_i}},
 \end{align*}
 where for the second
 equality, we used (\ref{calberfrob}) $(p-1)$-times. By taking $N_{i+1}$
 to be $pN_i$, the equality (\ref{genber2.2.4}) holds for $i+1$, and
 the lemma follows.
\end{proof}

\subsubsection{}\label{FrobE}
We keep previous notation. Let $i$ and $m$ be  non-negative integers.
First, let us construct a canonical homomorphism
\begin{equation}
 \label{frobforE}
 \Emod{m+\fr}{X_i}\rightarrow F^*\Emod{m}{X'_i}
\end{equation}
compatible with $\Phi$, where $F^*$ denotes
$\pi^{-1}\mc{O}_{X_i}\otimes_{\pi^{-1}\mc{O}_{X'_i}}$. Let $N$ be the
integer $N_i$ in Lemma \ref{frobcenter}. Let $S_{m+\fr}$ be the
multiplicative system of $\Dmod{m+\fr}{X_i}$ generated by
$(\partial^{\angles{m+\fr}{p^{m+\fr}}})^N$, and $S_m$ be
that of $\Dmod{m}{X'_i}$ generated by
$(\partial'^{\angles{m}{p^{m}}})^N$, which is also contained in the center
of $\Dmod{m}{X'_i}$.
For a (non-commutative) ring $A$, we denote by $A[\zeta]$ the ring of
polynomials in the variable $\zeta$ such that $\zeta$ is in the center.
We know that $S_{m+\fr}^{-1}\Dmod{m+\fr}{X_i}\cong
\Dmod{m+\fr}{X_i}[\zeta]/((\partial^{\angles{m+\fr}{p^{m+\fr}}})^N\zeta-1)$.
Define a $\Dmod{m+\fr}{X_i}[\zeta]$-module structure
on $F^*S_m^{-1}\Dmod{m}{X'_i}$ by putting
\begin{equation*}
 \zeta(f\otimes\partial'^{\angles{m}{i}}):=f\otimes
  \partial'^{\angles{m}{i}}(\partial'^{\angles{m}{p^m}})^{-N}.
\end{equation*}
This structure defines a homomorphism $\Dmod{m+\fr}{X_i}[\zeta]
\rightarrow F^*S_m^{-1}\Dmod{m}{X_i}$ of left
$\Dmod{m+\fr}{X_i}[\zeta]$-modules by sending $1$ to $1\otimes1$.
Since $(\partial^{\angles{m+\fr}{p^{m+\fr}}})^N\cdot(1\otimes
(\partial'^{\angles{m}{p^m}})^{-N})=1\otimes1$, the homomorphism
factors through $\Dmod{m+\fr}{X_i}[\zeta]\rightarrow S_{m+\fr}^{-1}
\Dmod{m+\fr}{X_i}$, and defines a well-defined homomorphism
\begin{equation*}
 \alpha\colon S^{-1}_{m+\fr}\Dmod{m+\fr}{X_i}\rightarrow
  F^*S_m^{-1}\Dmod{m}{X'_i}
\end{equation*}
compatible with $\Phi$ and sending
$(\partial^{\angles{m+\fr}{p^{m+\fr}}})^{-N}$ to
$(\partial'^{\angles{m}{p^m}})^{-N}$. For an integer $k$, we denote by
$(F^*S_m^{-1}\Dmod{m}{X'_i})_k:=\pi^{-1}\mc{O}_{X_i}\otimes_{\pi^{-1}
\mc{O}_{X'_i}}(S_m^{-1}\Dmod{m}{X'_i})_k$. By the choice of local
coordinates (cf.\ \ref{frobcenter}),
$\alpha(\partial^{\angles{m+\fr}{1}})=qx^{q-1}\otimes
\partial'^{\angles{m}{1}}$. This implies that, for any integer $k$, we
get
\begin{equation*}
  \alpha\bigl((S^{-1}_{m+\fr}\Dmod{m+\fr}{X_i})_{k}\bigr)\subset
   (F^*S_m^{-1}\Dmod{m}{X'_i})_{[kp^{-\fr}]+Np^{m+\fr}}\,.
\end{equation*}
Thus by taking the completion with respect to the filtration by order,
$\alpha$ induces (\ref{frobforE}).

Let $m'\geq m$ be an integer.
The homomorphism (\ref{frobforE}) induces a canonical homomorphism
\begin{equation*}
 \Emod{m'+\fr}{X_i}\otimes_{\Dmod{m+\fr}{X_i}}
  F^*\Dmod{m}{X'_i}\rightarrow F^*\Emod{m'}{X'_i}.
\end{equation*}
This is an isomorphism. Indeed, since $F^*\Dmod{m}{X'_i}$ is
flat over $\Dmod{m+\fr}{X_i}$, cf.\ \cite[Cor.\ 2.5.7-(i)]{Ber2}, we get that the canonical homomorphism
induced by the injective homomorphism $\varpi\Emod{m'+\fr}{X_i}
\rightarrow\Emod{m'+\fr}{X_i}$,
\begin{equation*}
 \varpi\Emod{m'+\fr}{X_i}\otimes_{\Dmod{m+\fr}{X_i}}
  F^*\Dmod{m}{X'_i}\rightarrow\Emod{m'+\fr}{X_i}
  \otimes_{\Dmod{m+\fr}{X_i}}F^*\Dmod{m}{X'_i}
\end{equation*}
is injective. Since
$\varpi\Emod{m'+\fr}{X_i}\otimes_{\Dmod{m+\fr}{X_{i}}}F^*\Dmod{m}{X'_{i}}
\cong\Emod{m'+\fr}{X_{i-1}}\otimes_{\Dmod{m+\fr}{X_{i-1}}}
F^*\Dmod{m}{X'_{i-1}}$ is an isomorphism, by a finite induction on $i$, it remains to prove the
claim for $i=0$. In this case the verification is
straightforward, and left to the reader.

Since $\mc{O}_{X_i}$ is free of rank $q$ over $\mc{O}_{X'_i}$, $F^*$
commutes with taking inverse limit over $i$. Thus by taking the inverse
limit, we get a homomorphism
$\Psi_m\colon\Ecomp{m+\fr}{\ms{X}}\rightarrow
F^*\Ecomp{m}{\ms{X}'}$. For an integer $m'\geq m$, this induces a
canonical homomorphism
\begin{equation*}
 \beta_{m'}\colon\Ecomp{m'+\fr}{\ms{X}}\otimes_{\Dcomp{m+\fr}
  {\ms{X}}}F^*\Dcomp{m}{\ms{X}}\rightarrow F^*\Ecomp{m'}{\ms{X}}.
\end{equation*}
This is an isomorphism since both sides are complete with respect to
the $p$-adic filtrations and its reduction over $R_i$ is an
isomorphism for any $i$.

\subsubsection{}
We keep using notation of \ref{F_cent} and \ref{FrobE}.
Let $m'\geq m$ be integers. We will first define a homomorphism
$\Ecomp{m+\fr,m'+\fr}{\ms{X}}\rightarrow F^*\Ecomp{m,m'}{\ms{X}'}$
compatible with $\Psi_{m}$ and $\Psi_{m'}$. Let $\ms{E}$ be either
$\Dcomp{m+\fr}{\ms{X}}$ or $\Ecomp{m+\fr,m'+\fr}{\ms{X},0}$ considered
as a subgroup of $\Ecomp{m+\fr,m'+\fr}{\ms{X}}$. For non-negative
integers $b\geq a$, we consider $\Ecomp{a,b}{\ms{X}}$ as a subring of
$\Ecomp{a}{\ms{X}}$ or $\Ecomp{b}{\ms{X}}$ using the canonical
inclusions. Then we claim that
$\Psi_\bullet(\ms{E})\subset F^*\Ecomp{m,m'}{\ms{X}'}$ where $\bullet$
is $m$ or $m'$, and $\Psi_m|_{\ms{E}}=\Psi_{m'}|_{\ms{E}}$. For
$\ms{E}=\Dcomp{m+\fr}{\ms{X}}$, the claim is nothing but the
compatibility of $\Psi_{\bullet}$ and $\Phi$. Let us see the
claim for $\ms{E}=\Ecomp{m+\fr,m'+\fr}{\ms{X},0}$. By definition, we get
$\Psi_m(\partial^{\angles{m'+\fr}{-kp^{m'}}})=\Psi_{m'}
(\partial^{\angles{m'+\fr}{-kp^{m'}}})$ for large enough integer $k$,
and thus $\Psi_m(\partial^{\angles{m'+\fr}{-n}})=\Psi_{m'}(\partial
^{\angles{m'+\fr}{-n}})$ holds for any positive integer $n$. By a
standard continuity argument, we get the claim.

Since $\Ecomp{m+\fr,m'+\fr}{\ms{X}}$ is equal to
$\Dcomp{m'+\fr}{\ms{X}}+\Ecomp{m+\fr,m'+\fr}{\ms{X},0}$ in
$\Ecomp{m+\fr}{\ms{X}}$, we get the desired homomorphism.
Now we get the following.

\begin{lem*}
 \label{canhomm,m'}
 The canonical homomorphism
 \begin{equation}
  \label{intfrobisod}
   \Ecomp{m'+\fr,m''+\fr}{\ms{X}}\otimes_{\Dcomp{m+\fr}
   {\ms{X}}}F^*\Dcomp{m}{\ms{X}'}\rightarrow F^*\Ecomp{m',m''}
   {\ms{X}'}
 \end{equation}
 is an isomorphism.
\end{lem*}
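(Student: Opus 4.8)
The statement to prove is that the canonical homomorphism
\begin{equation*}
 \Ecomp{m'+\fr,m''+\fr}{\ms{X}}\otimes_{\Dcomp{m+\fr}{\ms{X}}}
  F^*\Dcomp{m}{\ms{X}'}\rightarrow F^*\Ecomp{m',m''}{\ms{X}'}
\end{equation*}
is an isomorphism. The guiding principle is the same as in the proofs of the two preceding isomorphisms $\beta_{m'}$ and the isomorphism for the intermediate rings $\Ecomp{m,m'}{}$: reduce to a statement at finite level $R_i$, then reduce modulo $\varpi$, where everything becomes a concrete (essentially commutative, after passing to $\mr{gr}$) computation. First I would recall that $F^*\Dcomp{m}{\ms{X}'} = \pi^{-1}\mc{O}_{\ms{X}}\otimes_{\pi^{-1}\mc{O}_{\ms{X}'}}\Dcomp{m}{\ms{X}'}$ is flat over $\Dcomp{m+\fr}{\ms{X}}$ (this is exactly the flatness used to establish that $\beta_{m'}$ is an isomorphism, coming from \cite[2.5.2]{Ber2}), so that tensoring with it is exact; this lets me run the $\varpi$-adic induction. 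Concretely, both sides of \eqref{intfrobisod} are $p$-adically complete (the left side because $\Ecomp{m'+\fr,m''+\fr}{\ms{X}}$ is a noetherian complete $p$-adic ring by \cite[4.12]{Abe} and $F^*\Dcomp{m}{\ms{X}'}$ is finite over $\Dcomp{m+\fr}{\ms{X}}$; the right side by construction as a $p$-adic completion), so it suffices to check that the reduction modulo $\varpi^{i+1}$,
\begin{equation*}
 \Emod{m'+\fr,m''+\fr}{X_i}\otimes_{\Dmod{m+\fr}{X_i}}F^*\Dmod{m}{X'_i}
  \rightarrow F^*\Emod{m',m''}{X'_i},
\end{equation*}
is an isomorphism for every $i\geq 0$. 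By the flatness of $F^*\Dmod{m}{X'_i}$ over $\Dmod{m+\fr}{X_i}$ and the same inductive device as in the proof that $\beta_{m'}$ is an isomorphism (multiplication by $\varpi$ identifies the level-$i$ statement with the level-$(i-1)$ statement on the kernel/cokernel), this reduces to the case $i=0$, i.e.\ to the statement over $k$.

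For the case $i=0$, I would argue via the order filtration. Both $\Emod{m'+\fr,m''+\fr}{X_0}$ and $F^*\Emod{m',m''}{X'_0}$ are complete with respect to the filtration by order of microdifferential operators, and so is the tensor product on the left (here one uses that the tensor filtration is good, cf.\ \cite[Ch.I, Lemma 6.15]{HO}, and that $\Emod{m'+\fr,m''+\fr}{X_0}$ is a noetherian complete filtered ring by \cite[4.8]{Abe}, exactly as in the proof of Proposition \ref{rankcompat}(ii)). Hence it is enough to prove that the associated graded map is an isomorphism. Passing to $\mr{gr}$, the rings $\mr{gr}(\Emod{m'+\fr,m''+\fr}{X_0})$, $\mr{gr}(F^*\Emod{m',m''}{X'_0})$, $\mr{gr}(\Dmod{m+\fr}{X_0})$, $\mr{gr}(\Dmod{m}{X'_0})$ become explicit commutative (up to nilpotents) algebras: polynomial/localized-polynomial algebras in the symbols $\xi^{\angles{\bullet}{\pm}}$ over $\mc{O}_{X_0}$, resp.\ $\mc{O}_{X'_0}$, and the Frobenius homomorphism $\Phi$ at the graded level is governed by Lemma \ref{frobcenter} together with the computation $\alpha(\partial^{\angles{m+\fr}{1}}) = qx^{q-1}\otimes\partial'^{\angles{m}{1}}$ made just above; modulo $p$ this kills the $q x^{q-1}$ factor, so $\Phi$ on $\mr{gr}$ sends $\partial^{\angles{m+\fr}{p^{m+\fr}j}}$ to a unit times $\partial'^{\angles{m}{p^m j}}$ and identifies $\mc{O}_{X_0}$ as a free rank-$q$ module over $\mc{O}_{X'_0}$ via $x'\mapsto x^q$. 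With these identifications the graded version of \eqref{intfrobisod} becomes the obvious base-change isomorphism for a localization of a polynomial ring tensored up along the finite free extension $\mc{O}_{X'_0}\hookrightarrow\mc{O}_{X_0}$, which is routine to verify directly.

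The main obstacle I anticipate is \emph{bookkeeping with the two filtrations and the index shifts}, rather than any deep difficulty: one must be careful that the homomorphism $\Ecomp{m+\fr,m'+\fr}{\ms{X}}\to F^*\Ecomp{m,m'}{\ms{X}'}$ constructed just before the lemma really is filtered for the order filtration with the right shift (the analysis of $\Psi_m|_{\ms{E}}=\Psi_{m'}|_{\ms{E}}$ on the two pieces $\Dcomp{m+\fr}{\ms{X}}$ and $\Ecomp{m+\fr,m'+\fr}{\ms{X},0}$ shows compatibility, and then one checks the continuity/filtration estimates exactly as for $\alpha$ above, where the bound $\alpha\bigl((S^{-1}_{m+\fr}\Dmod{m+\fr}{X_i})_k\bigr)\subset(F^*S_m^{-1}\Dmod{m}{X'_i})_{[kp^{-\fr}]+Np^{m+\fr}}$ was obtained), and that the map of the lemma is indeed the one induced by these compatible maps so that the graded computation applies. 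Once the finite-level, mod-$\varpi$, graded statement is in hand, lifting back up is a standard three-step dévissage (graded $\Rightarrow$ filtered-complete at level $0$, then $\varpi$-adic induction in $i$ using flatness, then $p$-adic completeness), each step of which has already appeared verbatim in the proofs of Lemma \ref{tesorcalc} and Proposition \ref{rankcompat}, so I would simply cite those patterns and leave the routine verification to the reader, as the paper does elsewhere.
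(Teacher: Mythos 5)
Your proposal takes a genuinely different route from the paper. The paper's proof is short: injectivity is obtained by tensoring the inclusion $\Ecomp{m'+\fr,m''+\fr}{\ms{X}}\hookrightarrow\Ecomp{m'+\fr}{\ms{X}}$ with the locally projective $\Dcomp{m+\fr}{\ms{X}}$-module $F^*\Dcomp{m}{\ms{X}'}$ and then invoking the already-established isomorphism $\beta_{m'}\colon\Ecomp{m'+\fr}{\ms{X}}\otimes_{\Dcomp{m+\fr}{\ms{X}}}F^*\Dcomp{m}{\ms{X}'}\xrightarrow{\sim}F^*\Ecomp{m'}{\ms{X}'}$, so that \eqref{intfrobisod} is the restriction of an injection; surjectivity follows from the isomorphism $\Dcomp{m'}{\ms{X}'}\xrightarrow{\sim}\Ecomp{m',m''}{\ms{X}'}/(\Ecomp{m',m''}{\ms{X}'})_{-1}$ together with the identity $(F^*\Ecomp{m',m''}{\ms{X}'})_0=(F^*\Ecomp{m''}{\ms{X}'})_0$ and the surjectivity of $\beta_{m''}$. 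No reduction modulo $\varpi$, no passage to $\mr{gr}$. The whole point is to piggyback on $\beta_{m'}$ and $\beta_{m''}$ rather than re-run the construction.

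Your plan, by contrast, tries to redo the $\varpi$-adic and order-filtration d\'evissage from scratch. This runs into a genuine obstacle at the step ``pass to $\mr{gr}$ and do a routine base-change check.'' You yourself quote the estimate $\alpha\bigl((S^{-1}_{m+\fr}\Dmod{m+\fr}{X_i})_k\bigr)\subset(F^*S_m^{-1}\Dmod{m}{X'_i})_{[kp^{-\fr}]+Np^{m+\fr}}$: the Frobenius morphism \emph{compresses} order degrees by a factor of $p^{\fr}$. With the standard order filtration on both sides (which is what you need for $\mr{gr}(\Emod{m'+\fr,m''+\fr}{X_0})$ and $\mr{gr}(F^*\Emod{m',m''}{X'_0})$ to be the ``explicit localized polynomial algebras'' you describe), a map sending $F_k$ into $F_{[kp^{-\fr}]+N}$ is not degree-preserving, so there is no well-defined associated graded morphism, and a fortiori nothing for your ``obvious base-change isomorphism'' to be an isomorphism of. Re-indexing the target filtration to make the map filtered (say $F'_n:=F_{[np^{-\fr}]+N}$) destroys the explicit description of $\mr{gr}$: the new graded pieces lump $p^{\fr}$ consecutive symbols together and the computation is no longer routine. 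Putting the image filtration on the target makes surjectivity tautological but leaves injectivity exactly as hard as before. So the step you flag as mere bookkeeping (``the graded computation applies'') is where the argument actually breaks; you would need to either pin down a filtration for which the graded map is both defined and tractable, or, more simply, follow the paper and deduce everything from $\beta_{m'}$ and $\beta_{m''}$.
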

\begin{proof}
 Since $F^*\Dcomp{m}{\ms{X}'}$ is locally projective over
 $\Dcomp{m+\fr}{\ms{X}}$, the canonical homomorphism
 \begin{equation*}
  \Ecomp{m'+\fr,m''+\fr}{\ms{X}}\otimes_{\Dcomp{m+\fr}{\ms{X}}}
   F^*\Dcomp{m}{\ms{X}'}\rightarrow\Ecomp{m'+\fr}{\ms{X}}
   \otimes_{\Dcomp{m+\fr}{\ms{X}}}F^*\Dcomp{m}{\ms{X}'}
   \xrightarrow{\sim}F^*\EcompQ{m'}{\ms{X}'}
 \end{equation*}
 is injective. Thus we get the injectivity of (\ref{intfrobisod}). Let
 us see the surjectivity. Since $\Dcomp{m'}{\ms{X}'}\xrightarrow{\sim}
 \Ecomp{m',m''}{\ms{X}'}/(\Ecomp{m',m''}{\ms{X}'})_{-1}$, it suffices to
 show that the image of (\ref{intfrobisod}) contains
 $(F^*\EcompQ{m',m''}{\ms{X}'})_{0}$. This follows from the
 fact that $(F^*\EcompQ{m',m''}{\ms{X}'})_{0}=(F^*\EcompQ{m''}
 {\ms{X}'})_{0}$, and the surjectivity of the homomorphism
 $\beta_{m''}$.
\end{proof}

\begin{lem}
 \label{EcommFrob}
 Let $\ms{M}$ be a coherent $\DcompQ{m}{\ms{X}'}$-module. Then there
 exists a canonical isomorphism
 \begin{equation*}
  \EdagQ{\ms{X}}\otimes F^*\ms{M}\rightarrow F^*(\EdagQ{\ms{X}'}
   \otimes\ms{M}).
 \end{equation*}
\end{lem}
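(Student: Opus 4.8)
The goal is to produce, for a coherent $\DcompQ{m}{\ms{X}'}$-module $\ms{M}$, a canonical isomorphism $\EdagQ{\ms{X}}\otimes F^*\ms{M}\xrightarrow{\sim} F^*(\EdagQ{\ms{X}'}\otimes\ms{M})$. The natural strategy is to build the isomorphism level by level from the isomorphisms $\beta_{m'}$ and the ones provided by Lemma \ref{canhomm,m'} (equation (\ref{intfrobisod})), then pass to the appropriate limits. Concretely, I would first observe that $\EdagQ{\ms{X}}=\indlim_m\Emodb{m,\dag}{\ms{X},\mb{Q}}$ and that $\Emod{m,\dag}{\ms{X},\mb{Q}}=\invlim_{k}\EcompQ{m,m+k}{\ms{X}}$; similarly for $\ms{X}'$. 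So the claim reduces, after tensoring by $\mb{Q}$, to establishing compatible isomorphisms
\begin{equation*}
 \EcompQ{m'+\fr,m''+\fr}{\ms{X}}\otimes_{\DcompQ{m+\fr}{\ms{X}}}F^*\ms{M}
 \xrightarrow{\sim}F^*\bigl(\EcompQ{m',m''}{\ms{X}'}\otimes_{\DcompQ{m}{\ms{X}'}}\ms{M}\bigr)
\end{equation*}
for all $m''\geq m'\geq m$, natural in $\ms{M}$ and compatible with the transition maps in $m'$, $m''$. This in turn follows from the ring isomorphism (\ref{intfrobisod}) by the associativity of tensor product: writing $F^*\ms{M}=F^*\DcompQ{m}{\ms{X}'}\otimes_{\DcompQ{m+\fr}{\ms{X}}}(\DcompQ{m+\fr}{\ms{X}}\otimes\cdots)$ — more precisely, $\EcompQ{m'+\fr,m''+\fr}{\ms{X}}\otimes_{\DcompQ{m+\fr}{\ms{X}}}F^*\ms{M}\cong \bigl(\EcompQ{m'+\fr,m''+\fr}{\ms{X}}\otimes_{\DcompQ{m+\fr}{\ms{X}}}F^*\DcompQ{m}{\ms{X}'}\bigr)\otimes_{\DcompQ{m}{\ms{X}'}}\ms{M}$, and the term in parentheses is $F^*\EcompQ{m',m''}{\ms{X}'}$ by Lemma \ref{canhomm,m'}.

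The main work is then in the passage to limits. First, for fixed $m'$, I take $\invlim_{m''}$: since $\ms{M}$ is coherent and $\EcompQ{m',m''}{\ms{X}'}$ forms a Fréchet–Stein-type projective system (as used in \cite[5.9]{Abe} and invoked already in Corollary \ref{ananddag}), the functor $\ms{N}\mapsto\invlim_{m''}(\EcompQ{m',m''}{\ms{X}'}\otimes\ms{N})$ is well behaved; one needs the $R^1\invlim$ vanishing, which follows exactly as in the last part of the proof of Proposition \ref{analytifcisom} (or as in Corollary \ref{ananddag}) by the argument using \cite[$0_{\mr{III}}$, 13.2.4 (i)]{EGA}. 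This gives $\Emod{m'+\fr,\dag}{\ms{X},\mb{Q}}\otimes F^*\ms{M}\cong F^*(\Emod{m',\dag}{\ms{X}',\mb{Q}}\otimes\ms{M})$ — using also that $F^*=\pi^{-1}\mc{O}_{\ms{X}}\otimes_{\pi^{-1}\mc{O}_{\ms{X}'}}(-)$ is exact and commutes with the relevant limits because $\mc{O}_{\ms{X}}$ is finite free over $\mc{O}_{\ms{X}'}$. Then taking $\indlim_{m'}$ is harmless since filtered colimits commute with tensor products and with $F^*$, yielding the desired isomorphism. Throughout, canonicity follows because every map in sight — the ring maps $\Psi_m$, $\beta_{m'}$, the map (\ref{intfrobisod}), and the structure maps of the limits — is canonical, so the composite is independent of all auxiliary choices (in particular of the local coordinates $x$, $x'$, by the same kind of gluing/independence argument already used for $(\EcompQ{m,m'}{x})^{\mr{an}}$ in \ref{defanalye} and Remark \ref{Crewconstan}).

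The step I expect to be the main obstacle is the interchange of $\invlim_{m''}$ with the tensor product $\otimes_{\DcompQ{m}{\ms{X}'}}\ms{M}$ for a general coherent, not necessarily projective, $\ms{M}$: tensoring is only right exact, and the projective systems involved need not be Mittag-Leffler on the nose. The remedy is the same as in Proposition \ref{analytifcisom}: since $\DcompQ{m}{\ms{X}'}$ has finite homological dimension (\cite[4.4.6]{Ber2}), one resolves $\ms{M}$ by a finite complex of finite projective modules, checks flatness of $\EcompQ{m',m''}{\ms{X}'}$ over $\DcompQ{m}{\ms{X}'}$ (Lemma \ref{flatness}, or rather the analogous statement over the intermediate rings, which is already proved there), applies the isomorphism term by term on the projective resolution where it is obvious, and then uses the $R^1\invlim$ vanishing to descend to $\ms{M}$. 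Once this standard but slightly technical homological bookkeeping is in place, the remaining verifications — compatibility with transition maps and with Frobenius, and coordinate-independence — are routine and can be left to the reader.
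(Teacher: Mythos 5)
Your proposal is correct and rests on the same building blocks as the paper's proof: the isomorphism (\ref{intfrobisod}) from Lemma \ref{canhomm,m'}, associativity of tensor product, and the passage to $\invlim_{m''}$ and then $\indlim_{m'}$. The difference is in how the inverse-limit step is handled, and here the paper takes a considerably lighter route that you missed.

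The paper first observes that, since $\otimes$ is right exact and $F^*$ is exact, both $\ms{M}\mapsto\EdagQ{\ms{X}}\otimes F^*\ms{M}$ and $\ms{M}\mapsto F^*(\EdagQ{\ms{X}'}\otimes\ms{M})$ are right exact in $\ms{M}$; by coherence of $\DcompQ{m}{\ms{X}'}$ one may therefore present $\ms{M}$ as a cokernel of a map of free modules and reduce at once to the case $\ms{M}=\DcompQ{m}{\ms{X}'}$. Once $\ms{M}$ is the structure ring, the troublesome commutation of $\invlim_{m''}$ with $-\otimes F^*\DcompQ{m}{\ms{X}'}$ is immediate from local projectivity of $F^*\DcompQ{m}{\ms{X}'}$ over $\DcompQ{m+\fr}{\ms{X}}$ (a direct summand of a free module), with no need for $R^1\invlim$ vanishing or for the flatness of $\EcompQ{m',m''}{\ms{X}'}$ over $\DcompQ{m}{\ms{X}'}$. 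Your version tackles a general coherent $\ms{M}$ head-on, by choosing a finite projective resolution and invoking flatness together with the $R^1\invlim$-vanishing argument of Proposition \ref{analytifcisom}; this works, and the needed inputs (Lemma \ref{flatness}, finiteness of homological dimension from \cite[4.4.6]{Ber2}) are all available since $\ms{X}$ is a curve, but it is substantially more homological bookkeeping than necessary. In short: same mathematics, but the simple initial reduction to $\ms{M}=\DcompQ{m}{\ms{X}'}$ would have saved you most of the technical work in your last paragraph.
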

\begin{proof}
 Since tensor product is right exact and $F^*$ is exact, it suffices to
 prove the lemma for $\ms{M}=\DcompQ{m}{\ms{X}'}$ by the coherence of
 $\DcompQ{m}{\ms{X}'}$. Since inductive limit
 commutes with tensor product, it suffices to see that the homomorphism
 $\Emod{m'+\fr,\dag}{\ms{X},\mb{Q}}\otimes F^*\DcompQ{m}{\ms{X}'}
 \rightarrow F^*\Emod{m',\dag}{\ms{X}',\mb{Q}}$ is
 an isomorphism. Since $F^*\DcompQ{m}{\ms{X}'}$ is locally projective
 over $\DcompQ{m+\fr}{\ms{X}}$, this claim follows from Lemma
 \ref{canhomm,m'} above by taking the inverse limit over $m''$, and we
 conclude the proof.
\end{proof}

\begin{rem}
 The construction does not depend on the choice of $x$ and $x'$, and
 Lemma \ref{EcommFrob} holds for any smooth formal curve
 $\ms{X}$. Since $x'$ is determined uniquely when $x$ is determined, it
 suffices to see that the construction only depends on $x$. This
 verification is left to the reader.
\end{rem}

\subsection{An exact sequence}
\label{secanexse}
In this subsection, we construct a key exact sequence. We
consider the situation in paragraph \ref{setupFour}. First, we use
a result of the previous subsection in the following definition.

\begin{dfn}
 \label{dfnfrobmu}
 Let $\ms{M}$ be a holonomic $F$-$\DdagQ{\Pone}(\infty)$-module,
 and consider
 $\Edag{0,\mb{Q}}\otimes\ms{M}:=(\EdagQ{\ms{X}}\otimes\ms{M})
 _{\xi_0}$ (cf.\ Notation \ref{not3} in \ref{setup}). This module is naturally a
 $\Dan{\ms{S}_0,\mb{Q}}$-module by
 Corollary \ref{ananddag}. Now, we get the following isomorphism
 \begin{equation*}
  \Edag{0,\mb{Q}}\otimes\ms{M}\xrightarrow[1\otimes\Phi]{\sim}
   \Edag{0,\mb{Q}}\otimes F^*\ms{M}\xrightarrow{\sim}
   F^*(\Edag{0,\mb{Q}}\otimes\ms{M}),
 \end{equation*}
 where the first isomorphism is induced by the Frobenius structure
 $\Phi\colon\ms{M}\xrightarrow{\sim}F^*\ms{M}$, and the second by Lemma
 \ref{EcommFrob}. This defines a Frobenius structure on
 $\Edag{0,\mb{Q}}\otimes\ms{M}$. We denote this
 $F$-$\Dan{\ms{S}_0,\mb{Q}}$-module by $\mu(\ms{M})$. \index{functors!.mu@$\mu(-)$}
\end{dfn}

\subsubsection{}\label{LFTinfty0'}
Let us recall that in \ref{setup} we put
$E^{\dag}_{\Aone,\mb{Q}}:=\Gamma(\mathring{T}^*\Aone,\ms{E}^{\dag}
_{\Aone,\mb{Q}})$, $\Emodb{m,\dag}{\Aone,\mb{Q}}:=\Gamma(
\mathring{T}^*\Aone,\Emod{m,\dag}{\Aone,\mb{Q}})$,
$\EcompQb{m,m'}{\Aone}:=\Gamma(\mathring{T}^*\Aone,\EcompQ{m,m'}{\Aone})$,
{\itshape etc.}. We claim that the ring isomorphism of the naive Fourier
transform
$\iota\colon\Gamma(\Pone,\DdagQ{\Pone}(\infty))\xrightarrow{\sim}
\Gamma(\Poned,\DdagQ{\Poned}(\infty'))$, $x\mapsto
{\pi}^{-1}\partial',\partial\mapsto -\pi x'$ (cf.\ \ref{NFT}), extends
to a ring homomorphism
$\iota'\colon\ms{D}^{\mr{an}}_{\ms{S}_\infty,\mb{Q}}(0)\rightarrow
E^{\dag}_{\Aoned,\mb{Q}}$, which fits into the following commutative
diagram
\begin{equation*}
 \xymatrix@C=50pt{
  \Gamma(\Pone,\DdagQ{\Pone}(\infty))\ar@{^{(}->}[d]\ar[r]^-{\iota}_-{\sim}&
  \Gamma(\Poned,\DdagQ{\Poned}(\infty'))\ar@{^{(}->}[d] \\
 \ms{D}^{\mr{an}}_{\ms{S}_\infty,\mb{Q}}(0)\ar@{-->}[r]^{\iota'} &
  E^{\dag}_{\Aoned,\mb{Q}} }
\end{equation*}
where  the right vertical injection is
\begin{equation*}
 \Gamma(\Poned,\DdagQ{\Poned}(\infty'))\xrightarrow{\mathrm{restriction}}
  \Gamma(\Aoned,\DdagQ{\Aoned})
  \xrightarrow{\sim}\Gamma(\mathring{T}^*\Aoned,\pi^{-1}\DdagQ{\Aoned})
  \hookrightarrow \Gamma(\mathring{T}^*\Aoned,\ms{E}^{\dag}
  _{\Aoned,\mb{Q}})= E^{\dag}_{\Aoned,\mb{Q}}\,;
\end{equation*}
and the left one is the natural injection, sending $x$ to $u^{-1}$ and
$\partial$ to $-u^2\partial_u$, with $u:=x^{-1}$ local parameter of
$\ms{S}_{\infty}$. Such a morphism $\iota'$ should send $u$ to
$\pi{\partial'}^{-1}$ and $\partial_u$ to ${\pi}^{-1}{\partial'}^2 x'$.
 
Let us construct $\iota'$. We recall that, for every $n,m\geq 0$, the
integers $q^{(m)}_n\geq 0$ and $0\leq r_n^{(m)} < p^m$  are defined by
$n=q^{(m)}_n p^m + r_n^{(m)}$.
We begin by defining a homomorphism, for every $m'\geq m \geq 0$, 
\begin{equation*}
  \iota'_{m'}\colon R\cc{u} \rightarrow
   \EcompQb{m,m'}{\Aoned} 
\end{equation*}
sending $u$ to $\pi{\partial'}^{-1}$.
To see this is well defined, it is enough to note that the sequence
$\pi^{p^m\cdot n}/(p^m!)^n$ goes to $0$ when $n$ goes to infinity.
By construction, $\iota'_{m'}$ is independent of $m$, compatible with
natural morphisms changing the level $m'$,
and continuous for the $(\varpi,u)$-adic topology of $R\cc{u}$ and
$\ms{T}_n$-topology (cf.\ \ref{topologyonring}) for $n\gg0$ on
$\EcompQb{m,m'}{\Aoned}$. Now we are going to extend
this morphism to three different kind of rings. First, for any integer
$r=cp^{m'+1}$, with $c\in\mb{N}\backslash\{0\}$, the morphism
$\iota'_{m'}$ extends by continuity to a morphism
\begin{equation}\label{morph1}
 \mc{O}_r:= R\cc{u}\{T\}/(pT-u^r) \rightarrow \EcompQb{m,m'}{\Aoned}
\end{equation} 
sending $T$ to ${\pi^r}{p^{-1}{\partial'}^{-r}}=
(\pi^r/r!)\,(pc)!\,p^{-1}{\partial'}^{\angles{m'}{-r}}$.
Second, for $s:=p^{m+1}$, the morphism $\iota'_{m'}$ extends by
continuity to a morphism
\begin{equation}\label{morph2}
\mc{B}_s:= R\cc{u}\{Z\}/(u^sZ-p) \rightarrow \EcompQb{m,m'}{\Aoned}
\end{equation}
sending $Z$ to $p\pi^{-s}{\partial'}^s=s!\,\pi^{-s}\,
(p/p!){\partial'}^{\angles{m}{s}}$.
Third, for $m''\leq m-2$ and $m\geq 2$, the morphism $\iota'_{m'}$
extends to a continuous morphisms
\begin{equation}\label{morph3}
 \Dcomp{m''}{\ms{S}_{\infty}} \rightarrow \EcompQb{m,m'}{\Aoned}
\end{equation}
sending $\partial_u$ to ${\pi}^{-1}{{\partial'}^2x'}$. To verify this, it is enough to prove
that the sequence $\{\partial_u^{\angles{m''}{n}}\}_{n\geq 0}$ is
sent to a sequence converging to zero.  
We have 
\begin{equation*}
 \partial_u^{\angles{m''}{n}} = \frac{q_n^{(m'')}!}{n!} \partial_u^n \longmapsto Q_n^{(m'')}:=\frac{q_n^{(m'')}!}{n!} 
  \frac{({\partial'}^2x')^n}{\pi^n} \in\DcompQ{m}{\Aoned} \subset \EcompQb{m,m'}{\Aoned}.
\end{equation*}
We consider the spectral norm $\left\| - \right\|^{'(m)}$
of $\DcompQ{m}{\Aoned}$, cf.\ \cite[Rem. 2.1.4-(ii)]{Gar}, we want to compute 
$\| Q_n^{(m'')} \|^{'(m)}$.
 Recall 
that $\left\| x'\right\|^{'(m)}=1$, 
$\left\|\partial'\right\|^{'(m)}=\omega_0/\omega_m$, where $\omega_m=p^{-1/p^m(p-1)}$; and $|\pi|=\omega_0$.  
For every integer $n\geq 0$, $({\partial'}^2 x')^n$ is divisible by $(n!)^2$ in
$\Ddag{\Aoned}$, more precisely we can prove by induction the relation 
\begin{equation}\label{eq_claim} ({\partial'}^2x')^n=(n!)^2\sum_{j=n}^{j=2n} \tbinom{j}{n}\tbinom{n+1}{j-n+1}(x')^{j-n} {\partial'}^{[j]}, 
 \end{equation}
where ${\partial'}^{[j]}= {\partial'}^{j}/j!$.
Using \eqref{eq_claim}, and denoting by $\sigma(-)$ the sum of $p$-adic digits, we get  
$$\| ({\partial'}^2 x')^n \|^{'(m)}\leq |n!|^2 \sup_{n\leq j \leq 2n} \Big\{\tfrac{1}{|j!|} \bigl(\tfrac{\omega_0}{\omega_m}\bigr)^j\Bigr\}  
= \omega_0^{ 2n-2\sigma(n)+\inf_{n\leq j \leq 2n}\bigl\{\sigma(j) -\tfrac{j}{p^m} \bigr\}}\leq 
\omega_0^{ 2n-2\sigma(n)-\tfrac{2n}{p^m} },$$
and finally we have
\begin{equation*}
\begin{split}
\left\| Q_n^{(m'')} \right\|^{'(m)}=\left\|\frac{q_n^{(m'')}!}{n!} 
  \frac{({\partial'}^2 x')^n}{\pi^n}\right\|^{'(m)} & \leq 
	\omega_0^{ q_n^{(m'')}-\sigma(q_n^{(m'')}) -\sigma(n)-\tfrac{2n}{p^m}  }. 
\end{split}
\end{equation*}
For $m\geq 2$ and $m''\leq m-2$, this is converging to zero when $n$
goes to $+\infty$. 

Now, since $\EcompQb{m,m'}{\Aoned}$ is complete, combining morphisms
\eqref{morph1}, \eqref{morph2}, and \eqref{morph3}, we get, for any
$m'\geq m\geq 2$,
\begin{equation*}
  \mc{O}_{p^{m'+1}}\widehat{\otimes}_{R\cc{u}}
   \mc{B}_{p^{m+1}}\widehat{\otimes}_{R\cc{u}}
   \Dcomp{m-2}{\ms{S}_{\infty}} \rightarrow \EcompQb{m,m'}{\Aoned}.
\end{equation*}
Taking the inverse limit on $m'$ and tensoring with $\mb{Q}$, we have
\begin{equation*}
 (\DcompQ{m-2}{\ms{S}_{\infty}}(0))^{\mr{an}}:=\Bigl(\invlim_{m'}
  \bigl(\mc{O}_{p^{m'+1}}\widehat{\otimes}_{R\cc{u}}
  \mc{B}_{p^{m+1}}\widehat{\otimes}_{R\cc{u}}
  \Dcomp{m-2}{\ms{S}_{\infty}}\bigr)\Bigr)_{\mb{Q}} \rightarrow
  \Emodb{m,\dag}{\Aoned,\mb{Q}},
\end{equation*}
and finally by a direct limit on $m$ we get the homomorphism $\iota'$ we
wanted.


\begin{lem*}
 Let $\ms{M}$ be a holonomic $F$-$\DdagQ{\Pone}(\infty)$-module. Then
 the $F$-$\ms{D}^{\mr{an}}$-module $\mu(\nF{\ms{M}})$ only depends
 on $\ms{M}|_{\eta_\infty}$.
\end{lem*}
\begin{proof}
 First, we note here that as modules, we have
  \begin{equation*}
   \ms{M}|_{\eta_\infty}\cong\ms{D}^{\mr{an}}_{\ms{S}_\infty,\mb{Q}}(0)
    \otimes_{\DdagQ{\Pone}(\infty)}\ms{M}.
  \end{equation*}
 Since $\ms{M}$ possesses a Frobenius structure, we know that $\ms{M}$
 is a coherent $\DdagQ{\Pone}$-module. Thus, we get
 \begin{equation*}
  \ms{D}_{\ms{S}_\infty,\mb{Q}}^{\mr{an}}\otimes_{\DdagQ{\Pone}}
   \ms{M}\xrightarrow{\sim}\ms{D}_{\ms{S}_\infty,\mb{Q}}^{\mr{an}}(0)
   \otimes_{\DdagQ{\Pone}(\infty)}\ms{M}
 \end{equation*}
 by \cite[Theorem 4.1.2]{Cr}. 
 We denote still by $\iota'$ 
the composition 
 \begin{equation*}
  \ms{D}^{\mr{an}}_{\ms{S}_\infty,\mb{Q}}(0) \xrightarrow{\iota'} E^{\dag}_{\Aone,\mb{Q}}
   \hookrightarrow\ms{E}^{\mr{an}}_{\ms{S}_{0'},\mb{Q}}.
 \end{equation*}
of the homomorphism $\iota'$ constructed above and the natural injection.
We have
 \begin{align*}
  &\mu(\nF{\ms{M}})\cong\ms{E}^{\mr{an}}_{\ms{S}_{0'},\mb{Q}}
  \otimes_{\DdagQ{\Poned}(\infty)}\nF{\ms{M}}\cong
  \ms{E}_{\ms{S}_{0'},\mb{Q}}^{\mr{an}}\otimes_{\DdagQ{\Poned}(\infty)}
  (\DdagQ{\Poned}(\infty)\otimes_{\iota,\DdagQ{\Pone}(\infty)}\ms{M})\\
  &\qquad\cong\ms{E}^{\mr{an}}_{\ms{S}_{0'},\mb{Q}}\otimes_{\iota',
  \ms{D}_{\ms{S}_\infty,\mb{Q}}^{\mr{an}}(0)}\ms{M}|
  _{\eta_{\infty}}.
 \end{align*}
 To see that it is compatible with Frobenius isomorphisms, it suffices
 to apply Proposition \ref{frobstruccalcabsw}.
\end{proof}

\subsubsection{}
\label{LFspacetoppartcase}
We use the notation of paragraph \ref{setupFour}.
Let ${M}$ be a holonomic $F$-$\Dan{\ms{S},\mb{Q}}$-module. Let us
define an LF-topology on $\ms{E}^{\mr{an}}\otimes {M}$. Let $\ms{M}$
be the canonical extension of ${M}$ at $0$. By Corollary
\ref{ananddag}, we know that
\begin{equation*}
 \ms{E}^{\mr{an}}\otimes {M}\cong E^\dag_{\Aone,\mb{Q}}
  \otimes\ms{M}
\end{equation*}
Let $\ms{M}^{(m)}$ be a stable coherent $\DcompQ{m}{\Aone}$-module such
that $\DdagQ{\Aone}\otimes\ms{M}^{(m)}\cong\ms{M}$. For $m'\geq m$,
since $\Emodb{m',\dag}{\Aone,\mb{Q}}$ is a Fr\'{e}chet-Stein algebra,
any finitely presented module becomes a Fr\'{e}chet space.
By taking $m'$ to be sufficiently large, we may suppose that
$\Emodb{m',\dag}{\Aone,\mb{Q}}\otimes\ms{M}^{(m)}$ is finite free over
$\CK{\Aone}\{\partial\}^{(m',\dag)}$ by Corollary \ref{coinccrew}.
There exist two topologies on $\Emodb{m',\dag}{\Aone,\mb{Q}}\otimes
\ms{M}^{(m)}$; the Fr\'{e}chet topology induced from
$\Emodb{m',\dag}{\Aone,\mb{Q}}$-module structure denoted by $\ms{T}$,
and topology induced from the finite free
$\CK{\Aone}\{\partial\}^{(m',\dag)}$-module structure denoted by
$\ms{T}'$. Since $(\ms{M}^{(m)},\ms{T})$ becomes a topological
$\CK{\Aone}\{\partial\}^{(m',\dag)}$-module as well, we get that
$\ms{T}$ and $\ms{T}'$ are equivalent by the open mapping theorem. Now,
we put on ${E}^\dag_{\Aone,\mb{Q}}\otimes\ms{M}$ the inductive limit
topology. By the observation above, the inductive limit topology
coincides with the $\mc{R}\cong\indlim_{m'}\CK{\Aone}\{\partial\}
^{(m',\dag)}$-module topology (cf.\ Lemma \ref{fieldconstmicdif}). Since
$\mc{R}$ is an LF-space and the module is finite free over $\mc{R}$, we
get that the inductive limit-topology is separated and
${E}^\dag_{\Aone,\mb{Q}}\otimes\ms{M}$ becomes an LF-space. Thus this
defines an LF-space structure on $\ms{E}^{\mr{an}}\otimes {M}$.
By using Remark \ref{anyelem} (ii), we note here that this is also a
topological $\ms{D}^{\mr{an}}$-module. Thus, if
$\ms{E}^{\mr{an}}\otimes {M}$ is finite over
$\ms{D}^{\mr{an}}$, this LF-space topology coincides with the
$\ms{D}^{\mr{an}}$-module topology by the open mapping theorem.

\begin{lem}
 \label{reductiontomu}
 Recall the notation of {\normalfont\ref{defvanishcycle}}.
 Assume that $\mu(\ms{M})$ is a {\em holonomic}
 $F$-$\Dan{\ms{S},\mb{Q}}$-module. Then there exists a canonical
 isomorphism
 \begin{equation*}
  \Phi(\ms{M}|_{S_0})\xrightarrow{\sim}\Phi(\mu(\ms{M})).
 \end{equation*}
\end{lem}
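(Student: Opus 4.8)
The plan is to realise both $\Phi(\ms{M}|_{S_0})$ and $\Phi(\mu(\ms{M}))$ as one and the same $\mr{Hom}$-group, by exploiting that the microdifferential operators act on the coefficient module $\mc{C}$ of Crew. First I would record that, as an $F$-$\ms{D}^{\mr{an}}$-module, $\mu(\ms{M})$ is nothing but $\ms{E}^{\mr{an}}\otimes_{\ms{D}^{\mr{an}}}(\ms{M}|_{S_0})$: the identification of the underlying modules follows from Corollary \ref{ananddag} together with \ref{anyelem} and \ref{LFspacetoppartcase} (all of which depend only on $\ms{M}|_{S_0}$ in view of Corollary \ref{analyticstructure}\,(ii)), while the compatibility of the Frobenius structures is immediate from Definition \ref{dfnfrobmu} and Lemma \ref{EcommFrob}. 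Next I would observe that $\partial$ acts bijectively on $\mc{C}=\mc{B}/\mc{O}^{\mr{an}}_{K^{\mr{ur}}}$ (cf.\ \ref{notationofCrint}): it is injective because an element of $\mc{B}$ with derivative in $\mc{O}^{\mr{an}}_{K^{\mr{ur}}}$ already lies in $\mc{O}^{\mr{an}}_{K^{\mr{ur}}}$, and surjective because every element of $\mc{B}=\mc{B}_0[\log u]$ has an antiderivative in $\mc{B}$, this being precisely the purpose of adjoining $\log u$. Moreover the inverse powers of $\partial$ tend to $0$ on every element of $\mc{C}$, so infinite-order microdifferential operators act and $\mc{C}$ becomes, canonically, an $\ms{E}^{\mr{an}}$-module whose restriction along $\ms{D}^{\mr{an}}\hookrightarrow\ms{E}^{\mr{an}}$ is again $\mc{C}$. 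Consequently, for any $\ms{D}^{\mr{an}}$-module $Q$ the tensor--hom adjunction yields $\mr{Hom}_{\ms{D}^{\mr{an}}}(\ms{E}^{\mr{an}}\otimes_{\ms{D}^{\mr{an}}}Q,\mc{C})\cong\mr{Hom}_{\ms{D}^{\mr{an}}}(Q,\mc{C})$; here one uses that any $\ms{D}^{\mr{an}}$-linear map out of $\ms{E}^{\mr{an}}\otimes Q$ is automatically $\ms{E}^{\mr{an}}$-linear, since on the target $\partial^{-1}$ acts as the inverse of the action of $\partial\in\ms{D}^{\mr{an}}$, the passage to infinite sums being controlled by the LF-topology of \ref{LFspacetoppartcase} and the convergence just noted.

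The remaining, and I expect the hardest, step is to show that microlocalization commutes with the duality functor on the formal disk, $\mb{D}(\mu(\ms{M}))\cong\ms{E}^{\mr{an}}\otimes_{\ms{D}^{\mr{an}}}\mb{D}(\ms{M}|_{S_0})$, compatibly with Frobenius. One would begin at finite level, where $\mb{D}$ commutes with the flat base change $\EcompQ{m,m'}{\ms{X}}\otimes_{\pi^{-1}\DcompQ{m}{\ms{X}}}(-)$ (flatness is Lemma \ref{flatness}, and duality commutes with flat base change for coherent modules over noetherian rings), and then pass to the limits defining $\Emod{m,\dag}{}$, $\ms{E}^\dag$ and the analytification, via the Fréchet--Stein formalism and Corollary \ref{ananddag}; the hypothesis that $\mu(\ms{M})$ is holonomic is what guarantees that all modules in sight are finitely presented, so these limits behave. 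That $\mb{D}$ commutes with analytification and with the flat restriction $\DdagQ{\ms{S}_0}\otimes(-)$ (cf.\ \ref{preisouadsf}) is already at hand. Granting this, the proof concludes with the chain of canonical, $\sigma$-equivariant isomorphisms
\begin{align*}
 \Phi\bigl(\mu(\ms{M})\bigr)=\mb{W}\bigl(\mb{D}\mu(\ms{M})\bigr)
  &=\mr{Hom}_{\ms{D}^{\mr{an}}}\bigl(\mb{D}\mu(\ms{M}),\mc{C}\bigr)\\
  &\cong\mr{Hom}_{\ms{D}^{\mr{an}}}\bigl(\ms{E}^{\mr{an}}\otimes
   \mb{D}(\ms{M}|_{S_0}),\mc{C}\bigr)\\
  &\cong\mr{Hom}_{\ms{D}^{\mr{an}}}\bigl(\mb{D}(\ms{M}|_{S_0}),\mc{C}\bigr)
   =\Phi\bigl(\ms{M}|_{S_0}\bigr),
\end{align*}
which gives the desired isomorphism of Deligne modules, and one checks that it is the map induced by the canonical homomorphism $\ms{M}|_{S_0}\to\mu(\ms{M})$ of \ref{anyelem}. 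As an alternative to this third step, since $\Phi$ and $\mu$ are both exact on holonomic $F$-$\ms{D}^{\mr{an}}$-modules and holonomic modules form a Serre subcategory, one may instead reduce by dévissage along the localization triangle (\ref{anotherloctriag}) to the two cases in which $\ms{M}|_{S_0}$ is a differential $\mc{R}$-module with Frobenius structure or a punctual module, and treat those separately.
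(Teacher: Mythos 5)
Your plan is genuinely different from the paper's, and it contains a gap in the step you correctly flag as the hardest. Let me first describe what the paper does: it records (essentially your first observation) that $\ms{E}^{\mr{an}}\otimes_{\ms{D}^{\mr{an}}}(-)$ is idempotent on the image of $\mu$, forms the four-term exact sequence $0\to\mc{K}\to\ms{M}|_{S_0}\to\mu(\ms{M})\to\mc{C}\to0$, applies $\ms{E}^{\mr{an}}\otimes_{\ms{D}^{\mr{an}}}(-)$ (exact on holonomic $F$-modules) to conclude that $\ms{E}^{\mr{an}}\otimes\mc{K}=\ms{E}^{\mr{an}}\otimes\mc{C}=0$, deduces via the canonical extension that such modules are trivial convergent isocrystals $(\mc{O}^{\mr{an}})^{\oplus n}$, and finishes by noting that $\Phi(\mc{O}^{\mr{an}})=0$ and $\Phi$ is exact. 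No duality commutation, no microdifferential structure on the coefficient module, no adjunction. This is a purely d\'evissage argument.

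Your route hinges on the claim $\mb{D}_{\ms{S}}(\mu(\ms{M}))\cong\ms{E}^{\mr{an}}\otimes_{\ms{D}^{\mr{an}}}\mb{D}_{\ms{S}}(\ms{M}|_{S_0})$, and the justification you sketch does not close. First, the paper explicitly says in Remark \ref{locFordfnrem}\,(ii) that it is \emph{not known} whether $\ms{E}^{\mr{an}}$ is flat over $\ms{D}^{\mr{an}}$, only that $\ms{E}^{\mr{an}}\otimes(-)$ is exact on holonomic $F$-$\ms{D}^{\mr{an}}$-modules; so ``pass to limits using flatness at finite level'' does not establish what you need at the $\ms{E}^{\mr{an}}$ level. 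Second, and more structurally, even granting flatness you would only get the standard compatibility
$R\mathrm{Hom}_{\ms{E}^{\mr{an}}}(\ms{E}^{\mr{an}}\otimes\mc{M},\ms{E}^{\mr{an}})\cong\ms{E}^{\mr{an}}\otimes R\mathrm{Hom}_{\ms{D}^{\mr{an}}}(\mc{M},\ms{D}^{\mr{an}})$; but $\mb{D}_{\ms{S}}$ is computed with $\ms{D}^{\mr{an}}$ on \emph{both} sides in your chain, and the identification $R\mathrm{Hom}_{\ms{D}^{\mr{an}}}(\ms{E}^{\mr{an}}\otimes\mc{M},\ms{D}^{\mr{an}})\cong\ms{E}^{\mr{an}}\otimes R\mathrm{Hom}_{\ms{D}^{\mr{an}}}(\mc{M},\ms{D}^{\mr{an}})$ is not a flat-base-change statement: it is a microlocal duality theorem, which requires a separate and substantial argument even in the classical setting. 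Third, your step that $\mc{C}=\mc{B}/\mc{O}^{\mr{an}}_{K^{\mr{ur}}}$ is an $\ms{E}^{\mr{an}}$-module needs convergence estimates (why do the infinite series in $\ms{E}^{\mr{an}}$ converge on every element of $\mc{C}$?) that the bijectivity of $\partial$ alone does not furnish. Your final d\'evissage remark is closer in spirit to what the paper actually does, but as stated it begs the question for the two base cases: the punctual case is easy, but for $j_!$ of a differential $\mc{R}$-module you would still need to compare $\Phi$ on the two sides, which is the original problem. The paper's solution --- recognize the kernel and cokernel as trivial isocrystals and use $\Phi(\mc{O}^{\mr{an}})=0$ --- sidesteps all of this.
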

\begin{proof}
 Let ${N}$ be a holonomic $F$-$\ms{D}^{\mr{an}}$-module such that
 ${M}:=\ms{E}^{\mr{an}}\otimes {N}$ is also a holonomic
 $F$-$\ms{D}^{\mr{an}}$-module. First, we will show that
 $\ms{E}^{\mr{an}}\otimes_{\ms{D}^{\mr{an}}}{M}\cong {M}$. Let
 \begin{equation*}
  \alpha\colon {M}\rightarrow\ms{E}^{\mr{an}}\otimes
   _{\ms{D}^{\mr{an}}} {M},\qquad
   \beta\colon\ms{E}^{\mr{an}}\otimes_{\ms{D}^{\mr{an}}}
   {M}\rightarrow {M},
 \end{equation*}
 be the homomorphisms mapping $m\mapsto 1\otimes m$ and $P\otimes m\mapsto
 Pm$ respectively. Obviously, we get $\beta\circ\alpha=\mr{id}$. It
 suffices to show
 that $\alpha$ is surjective. Let $\ms{N}_0$ and $\ms{M}_0$ be the
 canonical extensions of ${N}$ and ${M}$ respectively. By Corollary
\ref{ananddag}, we get
 \begin{equation*}
  {M}\cong{E}^\dag_{\Aone,\mb{Q}}\otimes\ms{N}_0,\qquad
   \ms{E}^{\mr{an}}\otimes {M}\cong{E}^\dag_{\Aone,\mb{Q}}
   \otimes\ms{M}_0.
 \end{equation*}
 Using paragraph \ref{LFspacetoppartcase}, we endow these with the
 LF-space topologies (or equivalently the $\ms{D}^{\mr{an}}$-module
 topologies).
 Since these are equipped with $\ms{D}^{\mr{an}}$-module topologies and
 $\alpha$ is $\ms{D}^{\mr{an}}$-linear, $\alpha$ is continuous with
 these topologies (cf.\ \cite[3.7.3/2]{BGR}). To see the
 surjectivity of $\alpha$, it suffices to show that it is a homomorphism
 of ${E}^\dag_{\Aone,\mb{Q}}$-modules. Since 
 $\partial$ is invertible in ${E}^\dag_{\Aone,\mb{Q}}$, we
 get that $\alpha(\partial^{-1}m)=\partial^{-1}\otimes m$. This shows
 that $\alpha$ is linear with respect to the subring $E$ generated by
 $D^\dag_{\Aone,\mb{Q}}$ and $\partial^{-1}$. Note that $E$ is dense in
 ${E}^\dag_{\Aone,\mb{Q}}$. Since the target of $\alpha$ is an
 LF-space, it is separated, and we get the claim.

 Now, since the category $F$-$\mr{Hol}(\ms{S})$ is abelian by
 \cite[Th.~7.1.1]{Cr}, we define the $F$-$\ms{D}^{\mr{an}}$-modules $\mc{K}$,
 $\mc{C}$ by the following exact sequence of
 $F$-$\ms{D}^{\mr{an}}$-modules:
 \begin{equation*}
  0\rightarrow\mc{K}\rightarrow\ms{M}|_{S_0}\rightarrow\mu(\ms{M})
   \rightarrow\mc{C}\rightarrow 0.
 \end{equation*}
 Here the middle homomorphism is induced by the scalar extension of the
 identity map on $\ms{M}$. By the definition of Frobenius structures
 (cf.\ \ref{dfnfrobmu}), this homomorphism is a homomorphism of modules
 with Frobenius structures.
 Take $\ms{E}^{\mr{an}}\otimes_{\ms{D}^{\mr{an}}}$,
 and we get an exact sequence by Remark \ref{locFordfnrem} and the claim
 above:
 \begin{equation*}
  0\rightarrow\ms{E}^{\mr{an}}\otimes\mc{K}\rightarrow\mu(\ms{M})
   \rightarrow\mu(\ms{M})\rightarrow\ms{E}^{\mr{an}}\otimes\mc{C}
   \rightarrow 0.
 \end{equation*}
 This shows that
 $\ms{E}^{\mr{an}}\otimes\mc{K}=\ms{E}^{\mr{an}}\otimes\mc{C}=0$. Let
 ${N}$ be a holonomic $F$-$\ms{D}^{\mr{an}}$-module such that
 $\ms{E}^{\mr{an}}\otimes {N}=0$. Let ${N}^{\mr{can}}$ be the
 canonical extension of $N$, we have
 \begin{equation*}
  \EdagQ{0}\otimes {N}^{\mr{can}}\cong\ms{E}^{\mr{an}}
   \otimes {N}=0.
 \end{equation*}
 This shows that there are no singularities for ${N}$ at $0$, so that
 ${N}$ is a convergent isocrystal around $0$. In particular,
 we get an isomorphism ${N}\cong(\mc{O}^{\mr{an}})^{\oplus n}$ of
 differential modules for some integer $n$
 (note that in the isomorphism, we are forgetting the Frobenius
 structures).  Applying this observation to $\mc{K}$ and $\mc{C}$, we
 get that these are direct sums of trivial modules. Since
 $\Phi(\mc{O}^{\mr{an}})=0$ and the functor $\Phi$ is exact, we finish
 the proof.
\end{proof}

\subsubsection{}
\label{conccalclocfouunr}
Let $\ms{M}$ be a holonomic ($F$-)$\DdagQ{\Pone}(\infty)$-module. We
define $\ms{M}|_{s_\infty}:=\mr{Hom}_{\partial}(\mc{R}_{\ms{S}_\infty},
\ms{M}|_{\eta_\infty})$. When $\ms{M}$ possesses a Frobenius structure,
this is a $K$-vector space with Frobenius structure. There exists a
canonical homomorphism
\begin{equation}
 \label{solisom}
 \mc{R}_{\ms{S}_\infty}\otimes_K\ms{M}|_{s_\infty}\rightarrow
 \ms{M}|_{\eta_\infty}.
\end{equation}
This homomorphism is injective, and compatible with Frobenius structures
if they exist. When this injection is an isomorphism,
$\ms{M}$ is said to be {\em unramified at infinity}. 
This is equivalent to saying that $\ms{M}|_{\eta_\infty}$ is a trivial
differential module.

\begin{lem*}
 Let $\ms{M}$ be a holonomic $F$-$\DdagQ{\Pone}(\infty)$-module
 unramified at infinity. 
Then 
 the\linebreak  $F$-$\ms{D}^{\mr{an}}$-module $\mu(\nF{\ms{M}})$ is  holonomic.
\end{lem*}
\begin{proof}
 Since $\mu(\nF{\ms{M}})$ depends only on $\ms{M}|_{\eta_\infty}$
 and the claim does not depend on the Frobenius structure by Lemma
 \ref{characterisation_of_Dan_hol_mod}, we may suppose that $\ms{M}$ is isomorphic to
 $\mc{O}_{\Pone,\mb{Q}}(\infty)^{\oplus n}$ where
 $n=\mr{rk}(\ms{M})$. In this case, we know that $\nF{\ms{M}}$ is
 equal to $i_{0'+}(K^{\oplus n})$ by Proposition
 \ref{calcfouriereasy}. Now,
 $i_{0'+}K|_{\Aoned}\cong\DdagQ{\Aoned}/(x')$, and we may
 check directly that $\ms{E}^{\mr{an}}_{0',\mb{Q}}/
 \ms{E}^{\mr{an}}_{0',\mb{Q}}\cdot x'$ is generated
 over $\ms{D}_{0',\mb{Q}}^{\mr{an}}$ by $\partial'^{-1}$. Thus the claim
 follows using Lemma \ref{characterisation_of_Dan_hol_mod} once again.
\end{proof}

\begin{lem}
 \label{kernelofexact}
 Let $\ms{M}$ be a holonomic $F$-$\DdagQ{\Pone}(\infty)$-module which
 is unramified at infinity. Then we have
 \begin{equation*}
  \Phi(\nF{\ms{M}}|_{S_{0'}})=(K^{\mr{ur}}\otimes_K
   \ms{M}|_{s_\infty})(1).
 \end{equation*}
\end{lem}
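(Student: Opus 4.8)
The key is to combine the previous lemma (\ref{conccalclocfouunr}), which tells us that $\mu(\widehat{\ms{M}}\,)$ is a holonomic $F$-$\ms{D}^{\mr{an}}$-module, with Lemma \ref{reductiontomu}, which then gives a canonical isomorphism $\Phi(\widehat{\ms{M}}\,|_{S_{0'}})\xrightarrow{\sim}\Phi(\mu(\widehat{\ms{M}}\,))$. So the problem reduces to computing $\Phi(\mu(\widehat{\ms{M}}\,))$, and the strategy is to use the description of $\mu(\widehat{\ms{M}}\,)$ established in the second lemma of \S\ref{secanexse}, namely that $\mu(\widehat{\ms{M}}\,)$ depends only on $\ms{M}|_{\eta_\infty}$, together with the unramifiedness hypothesis. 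First I would reduce, exactly as in the proof of Lemma \ref{conccalclocfouunr}, to the case where $\ms{M}\cong\mc{O}_{\ms{P},\mb{Q}}(\infty)^{\oplus n}$ with $n=\mr{rk}(\ms{M})$: this is legitimate because both sides of the desired equality depend only on $\ms{M}|_{\eta_\infty}$ (the left side through $\mu(\widehat{\ms{M}}\,)$ and Lemma \ref{reductiontomu}, and because $\Phi$ is a functor on $F$-$\mr{Hol}(\ms{S})$ sending the relevant data), and because $\ms{M}|_{s_\infty}$ is by definition $\mr{Hom}_\partial(\mc{R}_{\ms{S}_\infty},\ms{M}|_{\eta_\infty})$, which is intrinsic to $\ms{M}|_{\eta_\infty}$.

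Once reduced to the trivial module $\mc{O}_{\ms{P},\mb{Q}}(\infty)^{\oplus n}$, I would use Proposition \ref{calcfouriereasy} (Laumon's elementary Fourier calculation): $\widehat{\ms{M}}\cong i_{0',+}(V)(1)$ where $V$ is the $n$-dimensional $\sigma$-$K$-vector space $\ms{M}|_{s_\infty}$ with its Frobenius structure. Indeed $\mc{O}_{\ms{P},\mb{Q}}(\infty)^{\oplus n}\cong q^!(V)$ as $F$-modules (up to the appropriate shift/twist bookkeeping), so $\ms{F}(q^!V)\cong i_{0',+}(V)(1)$. Then $\mu(\widehat{\ms{M}}\,)$, being $\ms{E}^{\mr{an}}_{0',\mb{Q}}\otimes$ of this, is $(\ms{E}^{\mr{an}}_{0',\mb{Q}}/\ms{E}^{\mr{an}}_{0',\mb{Q}}\cdot x')\otimes_K V\,(1)$, and as computed at the end of \S\ref{conccalclocfouunr} this quotient is a punctual $F$-$\ms{D}^{\mr{an}}$-module of the form $\delta\otimes_K(\text{rank-one } \sigma\text{-}K\text{-space})$; tracking the Frobenius twist carefully it is $\delta\otimes_K K(1)$, so $\mu(\widehat{\ms{M}}\,)\cong\delta\otimes_K V(1)$. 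Applying $\Phi$ and using the computation recalled after Definition \ref{defvanishcycle} that $\Phi(V\otimes_K\delta)\cong V_{K^{\mr{ur}}}$ for a punctual module $V\otimes_K\delta$, we obtain $\Phi(\mu(\widehat{\ms{M}}\,))\cong (K^{\mr{ur}}\otimes_K V)(1)=(K^{\mr{ur}}\otimes_K\ms{M}|_{s_\infty})(1)$, which is the desired formula.

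The main obstacle I expect is the careful bookkeeping of the Tate twists and of the Frobenius structures, rather than anything conceptually hard. Specifically: (a) verifying that under the reduction to the trivial case the isomorphism $\mc{O}_{\ms{P},\mb{Q}}(\infty)^{\oplus n}\cong q^!(\ms{M}|_{s_\infty})$ is compatible with Frobenius (this uses the definition of $\ms{M}|_{s_\infty}$ via (\ref{solisom}) and the unramifiedness hypothesis, which is exactly what makes the canonical map $\mc{R}_{\ms{S}_\infty}\otimes_K\ms{M}|_{s_\infty}\to\ms{M}|_{\eta_\infty}$ an isomorphism of $F$-modules); (b) pinning down the twist in $\ms{E}^{\mr{an}}_{0',\mb{Q}}/\ms{E}^{\mr{an}}_{0',\mb{Q}}\cdot x'\cong\delta\otimes_K K(?)$, for which one uses the explicit generator $\partial'^{-1}$ and the Frobenius description from Proposition \ref{frobstruccalcabsw}/\S\ref{explicitcalcfrob} (or just matches ranks and the known single twist from Proposition \ref{calcfouriereasy}); and (c) checking that the chain of canonical isomorphisms from Lemma \ref{reductiontomu} through the trivial-case computation respects Frobenius throughout — this is guaranteed step by step because $\mu$, the functor $\Phi$, and all the isomorphisms invoked (Lemma \ref{EcommFrob}, Lemma \ref{reductiontomu}, Proposition \ref{calcfouriereasy}) were established to be Frobenius-compatible. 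Modulo this bookkeeping, the proof is a short assembly of results already in the paper.
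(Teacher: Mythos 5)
Your proposal is correct and follows essentially the same path as the paper: the same reduction via Lemma \ref{conccalclocfouunr} and Lemma \ref{reductiontomu}, the same replacement of $\ms{M}$ by the trivial module $\ms{M}|_{s_\infty}\otimes_K\mc{O}_{\ms{P},\mb{Q}}(\infty)$ (justified by (\ref{solisom}) and the unramifiedness at infinity), and the same invocation of Proposition \ref{calcfouriereasy}. The one deviation is at the finish: you compute $\mu(\widehat{\ms{M}}\,)$ explicitly as $\delta\otimes_K V(1)$, which requires identifying the quotient $\ms{E}^{\mr{an}}_{0',\mb{Q}}/\ms{E}^{\mr{an}}_{0',\mb{Q}}\cdot x'$ as $\delta$ — a fact not fully established in the paper (the proof of Lemma \ref{conccalclocfouunr} only shows it is holonomic, generated by $\partial'^{-1}$). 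The paper instead applies Lemma \ref{reductiontomu} a second time, now to the trivial $\ms{M}$, reducing to $\Phi(\widehat{\ms{M}}\,|_{S_{0'}})$ which is computed from the punctual formula $\Phi(V\otimes_K\delta)\cong V_{K^{\mr{ur}}}$ without ever identifying $\mu(\widehat{\ms{M}}\,)$ itself. This cleaner variant is the "or just matches ranks and the known single twist" fallback you mention in point (b), and it is preferable precisely because it sidesteps the $\delta$-identification you flagged as a source of bookkeeping risk.
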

\begin{proof}
 By Lemma \ref{conccalclocfouunr}, we get that $\mu(\nF{\ms{M}})$
 is finite
 over $\ms{D}^{\mr{an}}_{0',\mb{Q}}$. Thus by Lemma \ref{reductiontomu},
 it suffices to calculate $\Phi(\mu(\nF{\ms{M}}))$. Since
 $\mu(\nF{\ms{M}})$ depends only on
 $\ms{M}|_{\eta_\infty}$, we may assume that
 $\ms{M}=\ms{M}|_{s_\infty}\otimes_K\mc{O}_{\Pone,\mb{Q}}(\infty)$.
 Indeed, its $\mc{R}$-module around $\infty$ is isomorphic to
 $\ms{M}|_{\eta_\infty}$ as differential $\mc{R}$-modules with Frobenius
 structures using the isomorphism (\ref{solisom}). In
 this case, we may use Proposition \ref{calcfouriereasy} to get that
 \begin{equation*}
  \nF{\ms{M}}\cong i_{0'+}(\ms{M}|_{s_\infty})(1).
 \end{equation*}
 Now, using Proposition \ref{reductiontomu} again, it suffices to
 calculate $\Phi(\nF{\ms{M}}|_{S_{0'}})$, which is nothing but
 what we stated, and concludes the proof.
\end{proof}

\begin{rem}
 We believe that $\mu(\nF{\ms{M}})$ is a holonomic
 $F$-$\ms{D}^{\mr{an}}$-module even when $\ms{M}$ is not unramified at
 infinity, and $\mu(\nF{\ms{M}})$ is closely related to the
 $(\infty,0')$ local Fourier transform of Laumon. However, we do not
 need this generality in this paper, and we do not go into this problem
 further.
\end{rem}

\begin{prop}
 \label{exactseq}
 Let $\ms{M}$ be a holonomic $F$-$\DdagQ{\Pone}(\infty)$-module which is
 an overconvergent isocrystal on a dense open subscheme $U$ of $\mb{A}^1_k$,
 and assume that it is unramified at infinity. We denote
 by $j\colon(\Pone,\mb{P}_k^1\setminus U)\rightarrow(\Pone,\{\infty\})$
 the canonical morphism of couples.
 Then there exists the following exact sequence of Deligne modules:
 \begin{equation}
  0\rightarrow H^1_{\mr{rig},c}(U_{K^{\mr{ur}}},\ms{M})
  \rightarrow\Psi(\nFnp{\ms{M}'}|_{S_{0'}})(-2)\rightarrow
  (K^{\mr{ur}}\otimes_K\ms{M}|_{s_\infty})(-1)\rightarrow
  H^2_{\mr{rig},c}(U_{K^{\mr{ur}}},\ms{M})
  \rightarrow 0.
 \end{equation}
 Here, by abuse of language, we denoted
 $H^i_{\mr{rig},c}(U,\mr{sp}^*(\ms{M}))\otimes K^{\mr{ur}}$ by
 $H^i_{\mr{rig},c}(U_{K^{\mr{ur}}},\ms{M})$, and $\ms{M}':=j_!j^+\ms{M}$
 using the cohomological functors of {\normalfont\ref{defpullbackpush}}.
\end{prop}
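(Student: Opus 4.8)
The strategy is to derive the exact sequence by comparing two descriptions of the same objects: on the one hand, the cohomology of the naive Fourier transform $\widehat{\ms{M}}$ computed via the $L$-function / Grothendieck-Ogg-Shafarevich machinery (which reads off $H^i_{\mr{rig},c}$ of $U$), and on the other hand, the behaviour of $\widehat{\ms{M}}$ at the point $0'$, which by Lemma~\ref{globalsectcalc} and the stationary phase isomorphism \ref{SPT} is controlled by the local Fourier transforms at the singularities of $\ms{M}$, together with the contribution at infinity. First I would set up the localization triangle \eqref{anotherloctriag} applied to $\widehat{\ms{M}}$ (or rather to its restriction to $\ms{S}_{0'}$) to split the problem into the ``nearby'' part $j_!j^+$ and the ``vanishing'' part $i_{0'+}i_{0'}^+$, and apply the functors $\Psi$ and $\Phi$ of \ref{defvanishcycle}. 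The plan is then to identify each term: the $\Psi$ of the nearby-cycles part gives $H^1_{\mr{rig},c}$ up to a Tate twist (via \eqref{rigDmodcompact} and the relation $H^if_+ \widehat{\ms M} \cong H^if_! \widehat{\ms M}$ for the auxiliary module $\ms M'=j_!j^+\ms M$), while Lemma~\ref{kernelofexact} computes $\Phi(\widehat{\ms{M}}|_{S_{0'}})$ to be $(K^{\mr{ur}}\otimes_K\ms{M}|_{s_\infty})(1)$, accounting for the third term after the appropriate twist.

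Concretely, the key steps in order would be: (1)~Observe that $\ms{M}':=j_!j^+\ms{M}$ agrees with $\ms{M}$ away from the boundary and differs only in punctual data at the points of $\mb P^1_k\setminus U$; hence $\widehat{\ms{M}'}$ and $\widehat{\ms{M}}$ have the same restriction to a neighbourhood of $0'$ up to punctual (hence $\Psi$- and $\Phi$-invisible in the relevant range) contributions, which lets me pass freely between $\ms{M}$ and $\ms{M}'$ in the local computations. (2)~Apply the exact sequence \eqref{vanishexact} (equivalently Lemma~\ref{calcofdiff}) to the holonomic $F$-$\ms{D}^{\mr{an}}$-module $\widehat{\ms{M}'}|_{S_{0'}}$: this produces a four-term exact sequence relating $\mr{Hom}(\cdot,\mc{O}^{\mr{an}}_{K^{\mr{ur}}})$, $\mb{V}=\Psi$, $\mb{W}=\Phi$, and $\mr{Ext}^1(\cdot,\mc{O}^{\mr{an}}_{K^{\mr{ur}}})$. (3)~Identify the outer Hom/Ext terms with $H^1_{\mr{rig},c}$ and $H^2_{\mr{rig},c}$ of $U$ respectively: here I would use the relation \eqref{rigDmodcompact} together with the proper base change / Poincaré duality formalism of \ref{Dmodandrigcohcomp} and the comparison \eqref{carodualfrob}, noting that $i_{0'}^!\widehat{\ms{M}'}$ (or rather $R\shom_{\ms{D}^{\mr{an}}}(\widehat{\ms M'}|_{S_{0'}},\mc O^{\mr{an}})$) computes, via Fourier inversion \ref{involForge} and the Katz–Laumon isomorphism \eqref{KatzLaumonisom}, exactly the compactly-supported rigid cohomology of $\ms{M}$ on $U$. (4)~Use Lemma~\ref{kernelofexact} to replace $\Phi(\widehat{\ms{M}'}|_{S_{0'}})$ by $(K^{\mr{ur}}\otimes_K\ms{M}|_{s_\infty})(1)$, then twist the whole sequence by $(-2)$ to match the stated normalization (so that $\Psi$ becomes $\Psi(\cdot)(-2)$ and the slope-$1$ term becomes $(K^{\mr{ur}}\otimes_K\ms M|_{s_\infty})(-1)$). (5)~Finally check that all identifications are compatible with the Deligne-module structure, i.e.\ with Frobenius, monodromy, and the $G_{\mc{K}}$-action — the Frobenius compatibility being guaranteed by Theorem~\ref{stationaryfrobcom} and Proposition~\ref{frobstruccalcabsw}, and the monodromy/$N$ compatibility following from the construction of $\mb V,\mb W$.

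\textbf{Main obstacle.} I expect the delicate point to be step~(3): matching the exact functors entering \eqref{vanishexact} with \emph{compactly supported} rigid cohomology with the correct Tate twists and in the correct cohomological degrees. The Hom/Ext terms $\mr{Hom}_{\ms{D}^{\mr{an}}}(\widehat{\ms{M}'}|_{S_{0'}},\mc{O}^{\mr{an}}_{K^{\mr{ur}}})$ and $\mr{Ext}^1$ must be shown to be $H^1_{\mr{rig},c}(U_{K^{\mr{ur}}},\ms{M})$ and $H^2_{\mr{rig},c}(U_{K^{\mr{ur}}},\ms{M})$ respectively; this requires going through the global Fourier transform on $\ms{P}'$, computing $i_{0'}^+\widehat{\ms{M}'}$ and $i_{0'}^!\widehat{\ms{M}'}$ by proper base change \ref{properbasech}, invoking Fourier inversion to bring the computation back to $U\subset\mb A^1$, and carefully tracking the shifts coming from \eqref{rigDmodpush}, \eqref{rigDmodcompact}, and from passing between $j_!j^+$ and $j_+j^+$. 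The bookkeeping of twists — reconciling the $(1)$ in Lemma~\ref{kernelofexact}, the $(d)$ in \eqref{rigDmodcompact} with $d=1$, and the overall $(-2)$ in the statement — is where mistakes are easiest, and it is essentially the $p$-adic transcription of the degree/twist bookkeeping in \cite[proof of 3.4.2]{Lau}, so I would follow that argument closely while substituting $f^!$ for $f^*$ and our Tate-twist conventions of \ref{sb:dec_lambda}.
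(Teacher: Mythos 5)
Your plan follows essentially the same route as the paper's proof: apply the four‑term exact sequence \eqref{vanishexact} to the dual of $\widehat{\ms{M}'}|_{S_{0'}}$ (so that the middle terms become $\Psi$ and $\Phi$), identify the outer $\mr{Hom}/\mr{Ext}^1$ terms with compactly supported rigid cohomology by computing $i_{0'}^+\widehat{\ms{M}'}$, then replace $\Phi(\widehat{\ms{M}'}|_{S_{0'}})$ via Lemma~\ref{kernelofexact} and twist. This matches the paper's argument in outline, and you have correctly located the delicate point in the bookkeeping of shifts and Tate twists.

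Two points of imprecision are worth flagging, as they could bite if carried out literally. First, your step~(1) asserts that the punctual difference between $\ms{M}$ and $\ms{M}'=j_!j^+\ms{M}$ produces $\Psi$‑ and $\Phi$‑invisible contributions at $0'$; this is false. The Fourier transform of a delta module $i_{s+}V$ is $\ms{L}(s\cdot x')\otimes V(1)$, whose restriction to $S_{0'}$ is a nonzero free differential $\mc{R}$‑module, so $\Psi(\widehat{\ms{M}}|_{S_{0'}})\not\cong\Psi(\widehat{\ms{M}'}|_{S_{0'}})$ in general. The paper avoids this by working uniformly with $\ms{M}'$ throughout the local computation; the only passage from $\ms{M}'$ back to $\ms{M}$ that is needed is the identification $\ms{M}'|_{s_\infty}\cong\ms{M}|_{s_\infty}$ at the very end, which is legitimate because $j_!j^+$ does not modify the generic fibre at infinity. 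Second, Fourier inversion \ref{involForge} is not actually used and is not the right tool here: the identification of $H^i i_{0'}^+(\widehat{\ms{M}'})$ with $H^{i+2}_{\mr{rig},c}(U,\ms{M})(2)$ is obtained purely by a single proper base change \ref{properbasech} along the cartesian square over $0'$ (with vertical arrows $p'$ and $q_0\colon\ms{P}_0\to\Spf R$, horizontal arrows $\iota_0$ and $i_{0'}$), followed by the observation that the Fourier kernel $\mu^!\ms{L}_\pi$ trivializes on $\{0'\}\times\ms{P}$, and then \eqref{rigDmodcompact}. Along the way one must pass between $\otimes$ and $\widetilde{\otimes}$ (via \ref{prop6func}.\ref{twisttensor}), between $!$ and $+$ pullbacks for the smooth $\mu$ (via \ref{prop6func}.\ref{smoothpoin} and Lemma~\ref{Tsuzukicalc}), and use \ref{prop6func}.\ref{raisondetretwist} — this is exactly where the shifts and the final $(2)$ come from, and replacing this chain by inversion would not reproduce the correct normalization.
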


\begin{proof}
 By (\ref{vanishexact}), there is the following exact sequence.
\begin{equation*} 
\begin{split}    
0 & \rightarrow\mr{Hom}_{\ms{D}^\dag}(\mb{D}\nFnp{\ms{M}'}|_{S_{0'}},
   \mc{O}_{K^\mr{ur}}^{\mr{an}})\rightarrow\mb{V}(\mb{D}
   \nFnp{\ms{M}'}|_{S_{0'}}) 
   \rightarrow\mb{W}(\mb{D}\nFnp{\ms{M}'}|_{S_{0'}})\rightarrow \\ 
   & \rightarrow\mr{Ext}^1_{\ms{D}^\dag}(\mb{D}   \nFnp{\ms{M}'}|_{S_{0'}},\mc{O}_{K^\mr{ur}}^{\mr{an}})\rightarrow 0.
 \end{split} \end{equation*}
 Consider the following cartesian diagram.
 \begin{equation*}
  \xymatrix{(\Pone,\{\infty\})\ar[d]_{q_0}\ar[r]^<>(.5){\iota_0}
   \ar@{}[rd]|{\square}&(\Poneoned,Z)\ar[d]^{p'}\\
  \{0'\}\ar[r]_<>(.5){i_{0'}}&(\Poned,\{\infty'\})}
 \end{equation*}
 By \cite[Theorem 2.2]{Cr3}, we get
 \begin{equation*}
  \mr{Ext}^i_{\ms{D}^\dag}(\mb{D}\nFnp{\ms{M}'}|_{S_{0'}},
   \mc{O}_{K^\mr{ur}}^{\mr{an}})\cong \ms{H}^{i-1}(i_{0'}^+
   (\mb{D}\mb{D}\nFnp{\ms{M}'}))\otimes
   K^\mr{ur}\cong \ms{H}^{i-1}(i_{0'}^+(\nFnp{\ms{M}'}))\otimes K^{\mr{ur}}
 \end{equation*}
 for any $i$. We get the following calculation:
 \begin{align*}
  &\ms{H}^i(i_{0'}^+(\nFnp{\ms{M}'}))\xrightarrow[\sim]{\ccirc{1}}
  \ms{H}^i(i_{0'}^+(p'_!(\mu^!\ms{L}_{\pi}\otimes p^!\ms{M}')))[1][-3]
  \xrightarrow[\sim]{\ccirc{2}} \ms{H}^i(q_{0!}~\iota_0^+(\mu^!\ms{L}_{\pi}
  \otimes p^!\ms{M}'))[-2]\\
  &\qquad\xrightarrow[\sim]{\ccirc{3}}
  \ms{H}^i(q_{0!}\iota_0^+(\mu^!\ms{L}_{\pi}\widetilde{\otimes}
  p^!\ms{M}'))(-2)[-2]\xrightarrow[\sim]{\ccirc{4}}
  \ms{H}^i(q_{0!}\iota_0^+(\mu^+\ms{L}_{\pi}\widetilde{\otimes}p^+\ms{M}'))[2]\\
  &\qquad\xrightarrow[\sim]{\ccirc{5}}
  \ms{H}^i(q_{0!}(\iota^+_0\mu^+\ms{L}_{\pi}\widetilde{\otimes}
  \iota_0^+p^+\ms{M}'))[-1][2]\xrightarrow[\sim]{\ccirc{6}}
  \ms{H}^i(q_{0!}(\mc{O}_{\Pone,\mb{Q}}(\infty)\widetilde{\otimes}
  \ms{M}'))[1]\\
  &\qquad\xrightarrow[\sim]{\ccirc{7}}\ms{H}^i(q_{0!}(\ms{M}'))(1)[1]
  \xrightarrow[\sim]{\ccirc{8}} H^{i+2}_{\mr{rig},c}(U,\ms{M})(2).
 \end{align*}
 Here $\ccirc{1}$ follows from Huyghe's theorems (\ref{KatzLaumonisom})
 and (\ref{compgeomnaive}), $\ccirc{2}$ from the
 proper base change theorem \ref{properbasech}, $\ccirc{3}$ from
 \ref{prop6func}.\ref{twisttensor}, $\ccirc{4}$ from
 \ref{prop6func}.\ref{smoothpoin} and Lemma
 \ref{Tsuzukicalc}, $\ccirc{5}$ from
 \ref{prop6func}.\ref{raisondetretwist}, $\ccirc{6}$
 from $p\circ\iota_0=\mathrm{id}_{(\Pone,\{\infty\})}$ and $\iota^+_0\mu^+\ms{L}_{\pi}\cong\mc{O}_{\Pone,\mb{Q}}(\infty)$ (by construction of $\mu^+\ms{L}_{\pi}$), $\ccirc{7}$ from
 \ref{prop6func}.\ref{twisttensor} again,
 and $\ccirc{8}$ from (\ref{rigDmodcompact}).
 For the calculation of $\Phi(\nF{\ms{M}'}|_{S_{0'}})$, it suffices
 to apply Lemma \ref{kernelofexact}, and the fact that
 $\ms{M}'|_{s_\infty}\cong\ms{M}|_{s_\infty}$.
\end{proof}

\section{The $p$-adic epsilon factors and product formula}
\label{section7}
In this section, we prove the  product formula for
$p$-adic epsilon factors  
and the determinant formula relating the local
epsilon factor to the local Fourier transform.
We start in \S\ref{sb:Loc_con} by defining the epsilon factors 
for holonomic modules over a formal disk, then we state the main theorem
(the product formula) in \S\ref{sb:main}. Its proof takes the rest of
this paper.
We begin in \S\ref{proof:geo_mod} by proving it for  $F$-isocrystals
with geometrically (globally) finite mo\-no\-dromy: this proof is
slightly more technical than the $\ell$-adic one and we need some
generalities on scalars extension in Tannakian categories, which we
collect in \S\ref{sb:scal_ext}. We finish the proof of the
main theorem in \S\ref{sb:proof_gen_case} where we give also the
determinant formula.

\subsection{Local constants for holonomic $\protect\ms{D}$-modules}
\label{sb:Loc_con}

\subsubsection{}
\label{susubsection:k_finite}
Let us fix some assumptions for this section.
Let $\FF$ \index{.@miscellaneous!F@$\FF$, $\bCt$, $\bFt$|(} be a finite subfield of $k$, $p^h$ \index{.@miscellaneous!hq@$h$, $q$|(} be the number of
elements of $\mb{F}$, so that $\FF$ is the subfield of $k$ fixed by the
$h$-th absolute Frobenius automorphism $\sigma$. Let $\Ctf$ \index{.@miscellaneous!Lambda@$\Ctf$}
be a finite extension of $\Qp$ with residue field $\FF$ and absolute
ramification index $e$. \index{.@miscellaneous!efa@$e,f,a$|(} We put $\Ct:=\Ctf\otimes_{\Wt{\FF}}\Wt{k}$. \index{.@miscellaneous!K@$K$}
It is a complete discrete valuation field with residue field $k$ and
ramification index $e$. We denote by $R$ its ring of integers. We endow
$\Ct$ with the endomorphism $\sigma_{\Ct}=\id[{\Ctf}]\otimes
{\sigma}$. \index{.@miscellaneous!sigma@$\sigma_{\Ct}$}
The subfield of $\Ct$ fixed by $\sigma_{\Ct}$ is $\Ctf$. We
say that $\sigma_{\Ct}$ is  a Frobenius of  $\Ct$ of {\em order}
$h$. Let $\val[K]$ 
 denote the valuation of $\Ct$ normalized by
$\val[K](\Ct^*)= \ZZ$.

We assume (except for \S\ref{sb:scal_ext}) that $k$ is finite
with $q=p^f$ elements, \index{.@miscellaneous!hq@$h$, $q$|)} and that the order $h$ of the Frobenius
$\sigma_{\Ct}$ divides $f$, so that $f=ah$ \index{.@miscellaneous!efa@$e,f,a$|)} for an integer $a$. We will
see later (cf.\ Remark \ref{remark:semplification})
that, in the proofs, it is not restrictive to assume $h=f$ and so
$\FF=k$ and $\Ct=\Ctf$.

We choose an algebraic closure $\bCt$ of $\Ct$, 
and we denote by $\bFt$ its residue field. \index{.@miscellaneous!F@$\FF$, $\bCt$, $\bFt$|)}
We choose also a root $\pi$ of the polynomial $X^{p-1}+p$
and we assume $\pi\in\Ctf$. We recall (cf.\ \ref{setupFour}) that
the choice of $\pi$ determines a non-trivial additive character
$\psi_{\pi}\colon \Fp \rightarrow \Ct^*$.
By composing $\psi_{\pi}$ with the trace map $\tr_{k/\Fp}\colon
k\rightarrow \Fp$, we get a non-trivial additive character $k
\rightarrow \Ct^*$ that we denote also by $\psi_{\pi}$. \index{.@miscellaneous!psi@$\psi_{\pi}$}

\subsubsection{}
\label{WD}
To fix notation, we review some results collected in
\cite{Marmora:Facteurs_epsilon}. See {\it loc.\ cit.\ }for more
details. We follow the notation and the assumptions of
\ref{existconcdesc} and \ref{susubsection:k_finite}.
In particular $\ms{S}:=\Spf(A)$ denotes a formal disk, $\eta_{\ms{S}}$
its geometric point, $S=\Spec (A/\varpi A)$\index{.@miscellaneous!S@$S$} its special fiber,
$s=\Spec(k_A)$ the closed point, and $\eta=\Spec(\mc{K})$ the generic
point of $S$. 
We recall (cf. Lemma \ref{existconcdesc}) that 
$\mc{K}\cong k_A\pp{u}$, for any $u$ in $A$ lifting an uniformizer of $A/\varpi A$. 
We put $\bar{s}:=\Spec(\bFt)$, and we choose a geometric
algebraic generic point $\bar{\eta}$. \index{.@miscellaneous!eta@$\bar{\eta}$, $\bar{s}$}
We identify $\pi_1(s,\bar{s})$ with $\widehat{\ZZ}$, by sending any
$n\in\widehat{\ZZ}$ to $\F^n$, where $\F\colon x\mapsto x^{-q}$   is the
geometric Frobenius in $\pi_1(s,\bar{s})$.
Let us denote by $\nu\colon \pi_1(\eta,\bar{\eta})\rightarrow
\widehat{\ZZ}$ the specialization homomorphism and by
$W(\eta,\bar{\eta}):=\nu^{-1}(\ZZ)$ (resp.\ $I_{\eta}:=\ker\nu$) the \index{.@miscellaneous!W@$W(\eta,\bar{\eta})$, $I_{\eta}$}
Weil (resp.\ inertia) subgroup.

Let $\Rep{\Ct\nr}{W\!D(\eta,\bar{\eta})}$ \index{categories!Rep@$\Rep{\Ct\nr}{WD(\eta,\bar{\eta})}$}
 denote the category of
Weil-Deligne representations: {\itshape i.e.\ }the category of finite
dimensional $K\nr$-vector spaces $V$ endowed with an action $\rho\colon
W(\eta,\bar{\eta})\rightarrow \Aut{\Ct\nr}{V}$ and  a nilpotent
endomorphism $N\colon V\rightarrow V$, satisfying $\rho(g)N
\rho^{-1}(g)=q^{\nu(g)}N$, for any $g\in W(\eta,\bar{\eta})$.

On the other hand, $p$-adic mo\-no\-dromy theorem gives an equivalence
of Tannakian categories (nearby cycles)\footnote{In [{\it loc.\ cit.},
(3.2.18)] the functor $\Psi(-1)$ was denoted by $S$,  the field of
constants $\Ct$ by $C$, the category of free differential modules over
$\Rob_{\ms{S}}$ by $\Fisosur[an](\eta|C)$ or $\Phi\!M(\Rob_{\ms{S}})$,
and the field $\mc{K}=k(\eta)$ by $K$.}
\begin{equation*}
 \Psi(-1)\colon F\text{-}\mr{Hol}(\eta_{\ms{S}})\rightarrow
 \Del{\Ct\nr}{\pi_1(\eta,\overline{\eta})}.
\end{equation*}
For the twist $(-1)$, see \ref{defvanishcycle}.
Let $(V,\Fro,N)\in\Del{\Ct\nr}{\pi_1(\eta,\overline{\eta})}$ be a
Deligne module. We can endow $V$ with a linear action $\rho\colon
W(\eta,\bar{\eta}) \rightarrow \Aut{\Ct\nr}{V}$ by putting
$\rho(g)(m):= g(\Fro^{a\nu(g)}(m))$, for all $m\in V$ and $g\in
W(\eta,\bar{\eta})$. In this way we obtain a Weil-Deligne representation
$(V,\rho,N)$ and we denote by
\begin{equation}
 \label{linearizationfunc}
 {L}_k\colon\Del{\Ct\nr}{\pi_1(\eta,\overline{\eta})}\rightarrow
  \Rep{\Ct\nr}{W\!D(\eta,\bar{\eta})}  
\end{equation}
this functor of ``Frobenius linearization''.
Since Weil-Deligne representations are linear, we will often implicitly
extend the scalars from $\Ct\nr$ to  $\bCt$.
Composing the functors $\Psi(-1)$ and ${L}_k$ and extending the scalars
to $\bCt$, we obtain a faithful exact $\otimes$-functor
$\WD\colon F\text{-}\mr{Hol}(\eta_{\ms{S}}) \rightarrow
\Rep{\bCt}{W\!D(\eta,\bar{\eta})}$ (cf.\
\cite[3.4.4]{Marmora:Facteurs_epsilon}). \index{functors!.WD@$\WD$}

\subsubsection{}
\label{defepsfac}
Now, let us introduce local epsilon factors.
Langlands \cite{La} defined local epsilon factors extending Tate's
definition for
rank one case, and in \cite{Del:const_loc}, Deligne simplified the
construction of the epsilon factors for Weil-Deligne representations.
Deligne's definition translates well to free
differential $\Rob_{\ms{S}}$-modules with Frobenius structure
$F\text{-}\mr{Hol}(\eta_{\ms{S}})$, via the functor $\WD$ recalled
above.
Here, we extend it from  $F\text{-}\mr{Hol}(\eta_{\ms{S}})$ to
$F\text{-}\mr{Hol}(\ms{S})$ by {\itshape d\'evissage}.

\bigskip

We follow the notation and the assumptions of \ref{existconcdesc}, \ref{defsFhol},
\ref{defvanishcycle}, \ref{susubsection:k_finite} and \ref{WD}. Let
${M}$ be a holonomic
$F$-$\Dan{\ms{S},\mb{Q}}$-module,  $\omega\in\Omega^{1}_{\mc{K}/k}$ a
non-zero meromorphic $1$-form, and $\mu$ a Haar measure on the additive
group of $\mc{K}$ with values in $\Ct$. We denote by
$\psi_\pi(\omega)\colon \mathcal{K}\rightarrow{\Ct\nr}^*$ the
additive character given by
$\alpha\mapsto\psi_{\pi}\bigl(\Tr_{k/\Fp}(\Res(\alpha\omega))\bigr)$
(cf.\ \cite[Remarque 3.1.3.6]{Lau}). The next proposition allows us to
define the (local) epsilon factor of the triple $({M},\omega,\mu)$.

\begin{prop*}
 There exists a unique map
 \begin{equation*}
  \varepsilon_{\pi}
   \colon({M},\omega,\mu)\mapsto\varepsilon_\pi
   ({M},\omega,\mu)\in\bCt^*, \index{epsilon factors!local!$\varepsilon_\pi
   ({M},\omega,\mu)$, $\varepsilon({M},\omega)$|(}
 \end{equation*}
 satisfying the following properties.
 \begin{enumerate}
  \item For every exact sequence $0\rightarrow {M}'\rightarrow
	{M}\rightarrow {M}''\rightarrow 0$ in
	$F$-$\mr{Hol}(\ms{S})$, we have 
	\begin{equation*}
	 \varepsilon_\pi({M},\omega,\mu)=
	  \varepsilon_\pi({M}',\omega,\mu)\cdot
	  \varepsilon_\pi({M}'',\omega,\mu).  
	\end{equation*}

  \item If ${M}$ is punctual, {\itshape i.e.\
	}${M}=i_+V$ for some $\Fro$-$\Ct$-module $V$,
	then
	\begin{equation*}
	 \varepsilon_\pi({M},\omega,\mu)=\det_{\Ct}(-F,V)^{-1},
	\end{equation*}
	where $F=\Fro^a$ is the smallest linear power of the Frobenius
	$\Fro$ of $V$ (cf.\ see {\normalfont\ref{susubsection:k_finite}}
	for the definition of the integer $a$).

  \item If the canonical homomorphism $j_!\,j^+{M}\rightarrow {M}$
	is an isomorphism, then
	\begin{equation*}
	 \varepsilon_\pi({M},\omega,\mu)=\varepsilon_0
	  (j^+{M}(1),\psi_\pi(\omega),\mu)^{-1},
	\end{equation*}
	where $\varepsilon_0$ is the local epsilon factor defined
	in {\normalfont\cite[3.4.4]{Marmora:Facteurs_epsilon}}.
 \end{enumerate}
\end{prop*}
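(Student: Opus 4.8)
The statement asserts existence and uniqueness of a map $\varepsilon_\pi$ on triples $(\mc{M},\omega,\mu)$ with $\mc{M}\in F\text{-}\mr{Hol}(\ms{S})$, characterized by additivity in short exact sequences, a prescribed value on punctual modules, and agreement with the epsilon factor $\varepsilon_0$ of \cite[3.4.4]{Marmora:Facteurs_epsilon} on those $\mc{M}$ for which $j_!\,j^+\mc{M}\xrightarrow{\sim}\mc{M}$ (equivalently, the modules coming from free differential $\mc{R}_{\ms{S}}$-modules with Frobenius). The natural approach is \emph{d\'evissage}: every object of $F\text{-}\mr{Hol}(\ms{S})$ sits in the localization triangle (\ref{anotherloctriag}), namely
\begin{equation*}
 j_!\,j^+\mc{M}\rightarrow\mc{M}\rightarrow i_+\,i^+\mc{M}\xrightarrow{+1},
\end{equation*}
which in the abelian category (recall $F\text{-}\mr{Hol}(\ms{S})$ is abelian by \cite[7.4]{Cr}) gives a short exact sequence $0\to j_!\,j^+\mc{M}\to\mc{M}\to i_+\,i^+\mc{M}\to 0$, since the connecting map vanishes for a $\mc{D}$-module in degree $0$. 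The submodule $j_!\,j^+\mc{M}$ satisfies $j_!\,j^+(j_!\,j^+\mc{M})\xrightarrow{\sim} j_!\,j^+\mc{M}$, so property (3) applies to it; the quotient $i_+\,i^+\mc{M}$ is punctual, so property (2) applies. Hence \emph{uniqueness} is immediate: forced by (1) we must set
\begin{equation*}
 \varepsilon_\pi(\mc{M},\omega,\mu):=\varepsilon_0(j^+\mc{M},\psi_\pi(\omega),\mu)^{-1}\cdot\det\nolimits_{\Ct}(-F,\ (i^+\mc{M})(-1))^{-1}.
\end{equation*}
(Here one needs that $i^+\mc{M}$ is indeed a $\Fro$-$\Ct$-module, which follows from \ref{defvanishcycle} since $i^+\mc{N}=(i^!\,\mb{D}_{\ms{S}}\mc{N})^\vee$ lands in finite $\sigma$-$K$-complexes, concentrated in one degree for holonomic $\mc{N}$.)

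For \emph{existence}, I would take the displayed formula as the \emph{definition} of $\varepsilon_\pi$ and verify the three properties. Properties (2) and (3) are essentially built in: if $\mc{M}$ is punctual then $j^+\mc{M}=0$ (so $\varepsilon_0$ of the zero object is $1$) and $i^+\mc{M}=V$, giving (2); if $j_!\,j^+\mc{M}\xrightarrow{\sim}\mc{M}$ then $i_+\,i^+\mc{M}=0$ by the triangle, so the determinant factor is $1$ and we recover (3). The real content is \textbf{additivity (1)}, and this is the step I expect to be the main obstacle. Given $0\to\mc{M}'\to\mc{M}\to\mc{M}''\to 0$ in $F\text{-}\mr{Hol}(\ms{S})$, one applies the exact functors $j^+$ (hence $j_!\,j^+$) and $i^+$ to obtain two short exact sequences; then one must combine (a) multiplicativity of $\varepsilon_0$ in short exact sequences of free differential $\mc{R}_{\ms{S}}$-modules with Frobenius, which is part of its defining properties in \cite[3.4.4]{Marmora:Facteurs_epsilon} (via the functor $\WD$ and Deligne's axioms in \cite{Del:const_loc}), with (b) multiplicativity of $\det_{\Ct}(-F,-)$ in short exact sequences of $\Fro$-$\Ct$-modules, which is elementary. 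The subtlety is that $j^+$ applied to a short exact sequence in $F\text{-}\mr{Hol}(\ms{S})$ need not send objects to \emph{free} differential modules on the nose — $j^+\mc{M}$ is a holonomic $F$-$\Dan{\ms{S},\mb{Q}}(0)$-module, which corresponds to a free differential $\mc{R}_{\ms{S}}$-module only after checking $\mb{V}\to\mb{W}$ is an isomorphism (\ref{mainresultscrewrev}). Here one uses that $j^+\mc{M}$, for $\mc{M}$ holonomic over $\ms{S}$, \emph{is} automatically of this form (the $(0)$-localization kills the punctual part and $\Dan{}(0)\otimes\mc{M}$ is a free differential module), so $\varepsilon_0$ is legitimately defined on $j^+\mc{M}'$, $j^+\mc{M}$, $j^+\mc{M}''$, and its multiplicativity applies.

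Finally I would record the two compatibility checks needed to make the formula well-posed: independence of $\varepsilon_0(j^+\mc{M},\psi_\pi(\omega),\mu)$ from the auxiliary choices is already in \cite[3.4.4]{Marmora:Facteurs_epsilon}, and the determinant factor $\det_{\Ct}(-F,(i^+\mc{M})(-1))^{-1}$ manifestly depends only on the isomorphism class of $i^+\mc{M}$ as a $\Fro$-$\Ct$-module, hence not on $\omega$ or $\mu$ (consistent with the classical fact that epsilon factors of punctual/unramified-type objects are independent of the additive character up to the well-understood rank-and-determinant twist, which here is absorbed since $\mc{M}$ punctual has ``Swan conductor zero''). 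Assembling: define $\varepsilon_\pi$ by the boxed formula, check (2) and (3) by direct substitution, and check (1) by splicing the $j^+$- and $i^+$-sequences and invoking multiplicativity of $\varepsilon_0$ and of $\det_{\Ct}(-F,-)$ separately. The one place demanding care is ensuring the functorial behaviour of $j^+$, $j_!$, $i^+$ on short exact sequences in this abelian category matches what the two multiplicativity statements require — i.e.\ that the localization triangle is compatible with morphisms — which follows from the functoriality of (\ref{anotherloctriag}) established via \cite{Cr}.
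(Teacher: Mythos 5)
Your strategy — d\'evissage via the localization triangle (\ref{anotherloctriag}), defining $\varepsilon_\pi$ by a formula that splits $\mc{M}$ into a punctual part and a $j_!j^+$-part and then verifying the three axioms — is exactly what the paper's one-line proof intends. However, there is a concrete error in the way you split the triangle.

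You assert that the triangle $j_!\,j^+\mc{M}\rightarrow\mc{M}\rightarrow i_+\,i^+\mc{M}\xrightarrow{+1}$ becomes a \emph{short} exact sequence in $F\text{-}\mr{Hol}(\ms{S})$, "since the connecting map vanishes," and you later justify this by claiming $i^+\mc{M}$ is concentrated in a single cohomological degree. Neither claim is correct in general. Since $j_!$ and $j^+$ are exact, $j_!\,j^+\mc{M}$ sits in degree $0$, so the cone $i_+\,i^+\mc{M}$ computes both the kernel (in degree $-1$) and the cokernel (in degree $0$) of $j_!\,j^+\mc{M}\to\mc{M}$. Both can be nonzero: taking $\mc{M}=j_+\mc{N}$ and applying Lemma \ref{calcofdiff}, the kernel is $\mc{N}^{\partial=0}(1)\otimes_K\delta$ and the cokernel is $\mc{N}/\partial\mc{N}(1)\otimes_K\delta$, and the former is nonzero whenever $\mc{N}$ has horizontal sections (e.g.\ $\mc{N}=\mc{R}$). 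Equivalently, $i^!$ of a holonomic module on a curve is a two-term complex (this is even used explicitly in the proof of Proposition \ref{exactseq}, where the authors write $\mr{Ext}^i_{\ms{D}^\dag}(\ldots)\cong H^{i-1}i_{0'}^+(\ldots)$ and compute two cohomology groups), hence so is $i^+\mc{M}=(i^!\mb{D}_{\ms{S}}\mc{M})^\vee$.

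The fix does not change the architecture of your argument but it does change the formula. In the Grothendieck group you have $[\mc{M}]=[j_!\,j^+\mc{M}]-[i_+H^{-1}(i^+\mc{M})]+[i_+H^{0}(i^+\mc{M})]$, so the unique candidate forced by (1), (2), (3) is
\begin{equation*}
 \varepsilon_\pi(\mc{M},\omega,\mu)
  =\varepsilon_0\bigl(j^+\mc{M},\psi_\pi(\omega),\mu\bigr)^{-1}
  \cdot\prod_{k\in\{-1,0\}}\det\nolimits_{\Ct}\bigl(-F,\,(H^k i^+\mc{M})(-1)\bigr)^{(-1)^{k+1}},
\end{equation*}
i.e.\ an \emph{alternating} product of determinants rather than a single determinant. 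With this corrected formula your verification of (2) and (3) still goes through by direct substitution (for punctual $\mc{M}=i_+V$ one has $j^+\mc{M}=0$, $H^0i^+\mc{M}\cong V$, $H^{-1}i^+\mc{M}=0$; for $j_!\,j^+\mc{M}\xrightarrow{\sim}\mc{M}$ both $H^k i^+\mc{M}$ vanish), and your argument for (1) via exactness of $j^+$, $i^+$ and multiplicativity of $\varepsilon_0$ and $\det(-F,-)$ is the right one.

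\end{document}
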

\begin{proof}
 Immediate by applying the distinguished triangle
 (\ref{anotherloctriag}) and \cite[2.19 (2)]{Marmora:Facteurs_epsilon}.
\end{proof}

In the following, we will always assume $\mu(A/\varpi A)=1$; 
to lighten the notation, we put $\varepsilon({M},\omega):=
\varepsilon_\pi({M},\omega,\mu)$. \index{epsilon factors!local!$\varepsilon_\pi
   ({M},\omega,\mu)$, $\varepsilon({M},\omega)$|)}
 Moreover, for a free differential
$\mc{R}$-module ${M}$ with Frobenius structure, we define
$\emar({M},\omega):=\varepsilon_0({M},\psi_\pi(\omega),\mu)$ and
$\emars({M},\omega):=\varepsilon({M},\psi_\pi(\omega),\mu)$ using
the notation of \cite[3.4.4]{Marmora:Facteurs_epsilon}. \index{epsilon factors!local!$\emars({M},\omega)$,
$\emar({M},\omega)$}
For a complex $\mc{C}$ of $F$-$\Dan{\ms{S},\QQ}$-modules with bounded
holonomic cohomology, we put
\begin{equation*}
 \varepsilon(\mc{C},\omega):=\prod_{i\in\ZZ}\varepsilon
  (\ms{H}^i\mc{C},\omega)^{(-1)^i}.
\end{equation*}

\begin{rem}
 Let ${M}$ be an object of $F$-$\mr{Hol}(\eta)$. We define
 $j_{!+}({M}):=\mr{Im}(j_!{M}\rightarrow j_+{M})$. \index{functors!six operations!j@$j_{"!+}$} Then we have
 \begin{equation*}
  \emars({M},\omega)=\varepsilon_\pi
   (j_{!+}({M}(1)),\omega)^{-1}.
 \end{equation*}
 This follows from Lemma \ref{calcofdiff} and
 \cite[(3.4.5.4)]{Marmora:Facteurs_epsilon}.
For intermediate extensions in a wider context see \cite[\S\S 1.4 and 4.3.12]{AbeCaro}.
\end{rem}

\subsection{Statement of the main result}
\label{sb:main}

\subsubsection{}
Let us begin by fixing notation and definitions of global
objects. We follow the notation and assumptions of
\S\ref{susubsection:k_finite}.
Let $X$ be a (smooth) curve over $k$. We denote by $C$ 
the number of
connected components of $X\otimes_k \bFt$
and by $g$ 
the genus of any of them. Let $\eta_X$ be the generic point
of $X$, and we choose a geometric point $\bar{\eta}_X$ over $\eta_X$. \index{.@miscellaneous!eta@$\bar{\eta}_X$, $\eta_X$, $\eta_x$, $\overline{\eta}_x$}
We denote by $|X|$ 
 the set of closed points of $X$.
For any $x\in|X|$, let $\mf{m}_x$ \index{.@miscellaneous!Ox3@$\mc{O}_{X,x}$, $\mf{m}_x$, $k_x$, $i_x$, $\Ct_x$,
$\widehat{\mc{O}}_{X,x}$, $\mathcal{K}_x$} be the maximal ideal of
$\mc{O}_{X,x}$, $k_x$ its residue field,
$i_x\colon\Spec k_x\rightarrow X$ the canonical morphism and
$\Ct_x:=\Ct\otimes_{\Wt{k}} \Wt{k_x}$.
Let $\widehat{\mc{O}}_{X,x}$ be the completion of $\mc{O}_{X,x}$ for
the $\mf{m}_x$-adic topology, $\mathcal{K}_x$ the field of fractions of
$\widehat{\mc{O}}_{X,x}$,
$\eta_x=\Spec(\mathcal{K}_x)$ the generic point of
$S_x:=\Spec\widehat{\mc{O}}_{X,x}$, $\overline{\eta}_x$
(resp.\ $\bar{x}$) a geometric point over $\eta_x$ (resp.\ $x$). \index{.@miscellaneous!S@$S_x$} 
 Let us denote by  $k(X)$ the field of functions of $X$ and 
by $\Omega^1_{k(X)/k}$ the module of meromorphic differential $1$-forms 
on $X$. For every non-zero $\omega \in \Omega^1_{k(X)/k}$
and $x\in|X|$, we denote by $\omega_x\in \Omega^{1}_{\mc{K}_x/k_x}$
the germ of $\omega$ at $x$, $\val[x](\omega)$ the order of $\omega$ at
$x$.

\subsubsection{}
Let $\overline{X}$ 
 be the smooth compactification of $X$ over $k$, and
$Z:=\overline{X}\setminus X$. There exists a smooth
formal scheme $\ms{X}$ over $\mr{Spf}(R)$ such that
$\ms{X}\otimes_Rk\cong\overline{X}$ by [SGA I, Exp.\ III, 7.4]. The
category $F^{(h)}$-$D^b_{\mr{hol}}(\DdagQ{\ms{X}}(Z))$ (cf.\
\ref{setupFrob}) does not depend on the
choice of $\ms{X}$ up to canonical equivalence of categories using
\cite[2.2.1]{Ber:rigD}. We denote this category by
$F^{(h)}$-$D^b_{\mr{hol}}(X)$ \index{categories!FhDbh@$F^{(h)}$-$D^b_{\mr{hol}}(X)$} and call it the category of {\em bounded
holonomic $F^{(h)}$-$\DdagQ{X}$-complexes}.

Now let $f\colon X\rightarrow\mr{Spec}(k)$ and $\widetilde{f}\colon
(\ms{X},Z)\rightarrow(\mr{Spf}(R),\emptyset)$ be the
structural morphisms. The functors $\widetilde{f}_+$ and 
$\mb{D}_{\ms{X},Z}$ (cf.\ \ref{defpullbackpush}) do not depend on the
choice of $\ms{X}$ up to equivalences. By
abuse of language, these are denoted by $f_+$ and \index{functors!six operations!D@$\mb{D}_X$}\index{functors!six operations!f@$f_+$, $f_"!$} 
$\mb{D}_X$ respectively. We note that $f_!$ can also be used, and in the
same way, we can consider the functor $j_!\colon F^{(h)}\mbox{-}
D^b_{\mr{hol}}(U)\rightarrow F^{(h)}\mbox{-}D^b_{\mr{hol}}(X)$ for an
open immersion $j\colon U\hookrightarrow X$. \index{functors!six operations!j@$j_"!$}

Let $R'$ be a discrete valuation ring finite \'{e}tale over $R$, and let
$\ms{C}$ be an object of $F^{(h)}$-$D^b_{\mr{hol}}(\mr{Spf}(R'))$. The
associated $\sigma_K$-semi-linear automorphism
$\ms{C}\xrightarrow{\sim}\ms{C}$ (cf. \ref{frobstrcor}) is denoted by
$\varphi_{\ms{C}}$. From now on we denote the {\em $K$-linear}
automorphism $\varphi_{\ms{C}}^a$ by $F$ (cf.\
\ref{susubsection:k_finite} for the definition of $a$).
\index{.@miscellaneous!F@$F$, $\varphi$}

\subsubsection{}
In \cite{Caro:courbes}, Caro defines the $L$-function of a complex
$\ms{C}$ in $F^{(h)}$-$D^b_{\mr{hol}}(X)$. Let us recall the
definition. We set\footnote{
The definition of the $L$-function is slightly different from that of
Caro. We have chosen a different  convention in order that $L(X,f^+K,t)$ coincides
with the $L$-function of $X$.}
\begin{align*}
 L(X,\ms{C},t)&:=\!\prod_{x\in |X|}\det_{\Ct_x}(1-t^{\deg(x)}
  F^{\deg(x)};~i_x^+\ms{C})^{-1}\\
 &:=\!\prod_{x\in |X|}\prod_{r\in\ZZ}
  \det_{\Ct_x}(1-t^{\deg(x)}F^{\deg(x)};~\ms{H}^r(i_x^+\ms{C}))^{(-1)^{r+1}}
 \!\!. \index{L@$L$-function $L(X,\ms{C},t)$}
\end{align*} 
Recall that $F^{\deg(x)}:=\Fro^{\deg(x)\,a}$ is the smallest linear power
of the Frobenius.  
 Using a result of Etesse-LeStum, Caro gave the
following cohomological interpretation of his $L$-function (cf.\
\cite[3.4.1]{Caro:courbes}):
\begin{equation*}
 L(X,\ms{C},t) = \prod_{r\in\ZZ} \det_{\Ct}(1-tF;\ms{H}^r(f_!\ms{C}))
  ^{(-1)^{r+1}}.
\end{equation*}
For careful readers, we remind that in {\it loc.\ cit.}, the definition
of Frobenius structure of push-forward is re-defined so that it is
compatible with adjoints (cf.\ [{\it loc.\ cit.}, 1.2.11]). However,
this coincides with the usual definition (see \cite[Remark
3.12]{Abe3}).

Now, assume that $f$ is proper. Then $f_!$ can be replaced by $f_+$.
By Poincar\'{e} duality (\ref{poincaredual}),
we get the following functional equation:
\begin{equation*}
 L(X,\ms{C},t)=\varepsilon(\ms{C})\cdot t^{-\chi(f_!\ms{C})} 
  \cdot L(X,\mathbb{D}_X(\ms{C}),t^{-1}),
\end{equation*}
where
\begin{equation*}
 \varepsilon(\ms{C}):= \det(-F;f_+\ms{C})^{-1}=
  \prod_{r\in\mb{Z}}\det(-F;\ms{H}^{r}(f_+\ms{C}))^{(-1)^{r+1}} \index{epsilon factors!global!$\varepsilon(\ms{C})$}
\end{equation*}
and $F$ is the smallest linear power of the Frobenius. This invariant is
called the {\em (global) epsilon factor of $\ms{C}$}.
Finally, for $\ms{C}$ in $F$-$D^b_{\mr{hol}}(X)$, we put
$r(\ms{C}):=\sum_{i\in\mb{Z}}(-1)^i r(\ms{H}^{i}\ms{C})$ using the
notation of \ref{geomnotlau}. We call it the {\it generic rank of
$\ms{C}$}. \index{local constants!$r(\ms{C})$}

\begin{rem}
 \label{ocisoccaseinv}
 Let $U$ be a non-empty open subscheme of a proper curve $X$, and $M$ be
 an overconvergent $F$-isocrystal on $U$ over $\Ct$. Etesse-LeStum
 \cite{LE} defined the $L$-function for $M$ by
 \begin{equation*}
  L_{\mr{EL}}(U,M,t):=\prod_{x\in |U|}
   \det_{\Ct_x}(1-t^{\deg(x)}F^{\deg(x)};i_x^*M )^{-1}.
 \end{equation*}
 We are able to interpret this global invariant in terms of the global
 invariant we have just defined in the following way. Let $j\colon
 U\hookrightarrow X$ be the open immersion and $f_U\colon U\rightarrow
 \mr{Spec}(k)$ be the structural morphism. Put
 $\ms{C}:=j_!(\mr{sp}_*M)[-1](-1)$. Then the $L$-function coincides
 with that given by Etesse-LeStum; namely, we get
 $L(X,\ms{C},t)=L_{\mr{EL}}(U,M,t)$. The easiest way to
 see this might be to use the cohomological interpretation of the two
 $L$-functions, and the fact that
 \begin{equation*}
  \ms{H}^i(f_+\ms{C})\cong \ms{H}^i(f_{U!}(\mr{sp}_*M[-1](-1)))\cong
   H^i_{\mr{rig},c}(U,M),
 \end{equation*}
 where the second isomorphism holds by (\ref{rigDmodcompact}). See also
 \cite[Remark 3.12]{Abe3} for some account.
\end{rem}

\begin{thm}[Product formula]
 \label{Product formula}
 Let $X$ be a proper (smooth) curve over $k$, $\ms{C}$ a complex in
 $F^{(h)}$-$D^b_{\mr{hol}}(X)$ and $\omega\in\Omega^1_{k(X)/k}$ a
 non-zero meromorphic form on $X$. We have the following relation
 between the global and local factors:
 \begin{equation}
 \label{PF}\tag{PF}
  \varepsilon(\ms{C})=q^{C(1-g)r(\ms{C})}\prod_{x\in|X|}
   \varepsilon(\ms{C}|_{S_x},\omega_{x}),
 \end{equation}
 where $q$ denotes the number of elements of $k$,  $r(\ms{C})$ is the
 generic rank of $\ms{C}$, $C$ denotes the number of geometrically
 connected components of $X$, and $g$ is the genus of any of them. We
 recall that $\ms{C}|_{S_x}$ denotes the complex of  modules
 defined by restriction (cf.\ {\normalfont\ref{preisouadsf}}) from $X$
 to its complete {\it trait} $S_x$ at $x$.
\end{thm}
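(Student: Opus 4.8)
The plan is to follow Laumon's strategy (cf.\ \cite[3.2.1.1]{Lau}) and reduce the product formula to the fundamental case via a sequence of standard reductions, then use the stationary phase theorem \ref{SPT} together with the key exact sequence of \S\ref{section6}. First I would reduce to the case $h=f$ (so $\Ct=\Ctf$, $\FF=k$): this is a formal descent argument using the behaviour of both sides of \eqref{PF} under replacing $\ms{C}$ by its ``Frobenius restriction of scalars'', which changes $F=\varphi^a$ only by a permutation of eigenvalues and is harmless for determinants. Next, by the additivity of $\varepsilon$ (from the exactness properties and the functional equation) and of $r(\ms{C})$, I would reduce from complexes to a single holonomic module, and then via the localization triangle \eqref{anotherloctriag} (using $\ms{C}|_{S_x}$ as defined in \ref{preisouadsf}) to the case of an overconvergent $F$-isocrystal on a dense open $U\subset X$ extended by $j_!$, since punctual modules are handled directly by the explicit formula for $\varepsilon$ of punctual type in Proposition \ref{defepsfac}. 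The cohomological interpretation of the $L$-function and Remark \ref{ocisoccaseinv} make this compatible with the Etesse--LeStum normalization.

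Then comes the geometric reduction to $X=\mathbb{P}^1_k$: following \cite[proof of 3.3.2]{Lau} (and the refinement in \cite[p.121]{Katz:TravauxLau}), one uses a suitable finite morphism $X\to\mathbb{P}^1_k$ and the behaviour of epsilon factors under push-forward. I would verify that $f_+$ commutes with the relevant operations in our setting using the six-functor formalism recalled in \S\ref{cohoprev}, especially proper base change \ref{properbasech}, the Künneth formula \ref{kunnethformual}, and the compatibilities \ref{prop6func}; the local-to-global matching of the correction factor $q^{C(1-g)r}$ reduces to Grothendieck--Ogg--Shafarevich type computations via Lemma \ref{lemcyclecalc} and formula \eqref{GOSformula}. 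This leaves the fundamental case: $X=\mathbb{P}^1_k$, $\ms{M}$ an $F$-isocrystal overconvergent along $S=\{0,\infty\}$ (after a further reduction using the canonical extension functor of \ref{intcanext} and the second component (2) mentioned in the introduction). Here the product formula reads, roughly, $\varepsilon(\ms{M})=q^{\,r(\ms{M})}\prod_{s}\varepsilon(\ms{M}|_{S_s},\omega)$ for a fixed form $\omega$ (e.g.\ $dx$).

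The heart of the proof is then to compute $\varepsilon(\ms{M})$ via the Fourier transform. Using the explicit Frobenius on the naive Fourier transform (Theorem of \S\ref{explicitcalcfrob}, the operator $\Upsilon$ of Lemma \ref{globalfourier} and $H_{\ms{P}'}$), the functional equation, and Noot-Huyghe's inversion formula \ref{involForge}, one relates $\varepsilon(\ms{M})$ to the Frobenius-determinant of $H^1_{\mathrm{rig},c}$ of the Fourier transform. The exact sequence of Proposition \ref{exactseq} (the analog of Laumon's vanishing-cycles exact sequence) together with the stationary phase isomorphism \ref{SPT}, now compatible with Frobenius by Theorem \ref{stationaryfrobcom}, expresses this cohomology in terms of the local Fourier transforms $\ms{F}^{(s,\infty')}(\ms{M})$ at the singular points $s$, and by the determinant computation for $\varepsilon_0$ of free differential modules from \cite{Marmora:Facteurs_epsilon} (combined with the twist by the Dwork module $\mc{L}(\mf{s})$ via Lemma \ref{locFourrel}), each such determinant is identified with the local epsilon factor $\varepsilon(\ms{M}|_{S_s},\omega)$ up to an explicit Gauss-sum/power-of-$q$ factor. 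Assembling these, one obtains the determinant formula for local epsilon factors along the way, and then the product formula. I expect the main obstacle to be precisely this last bookkeeping of Frobenius determinants: tracking Tate twists, the power of $q$, and the Gauss-sum contributions of $\exp(\pi(t-t^q))$ through the chain of isomorphisms $\ccirc{1}$--$\ccirc{8}$ of Proposition \ref{exactseq}, and matching them exactly with the normalization of $\varepsilon_0$ and of $\psi_\pi(\omega)$ in \cite{Marmora:Facteurs_epsilon}; this is where the $p$-adic case genuinely diverges from the $\ell$-adic one and requires the careful analysis flagged in \S\ref{sb:proof_gen_case}.
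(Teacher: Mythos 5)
Your outline captures the broad architecture — reduction to $h=f$, d\'evissage to single holonomic modules, the localization triangle to split off the punctual part, Laumon's geometric reduction to $\mathbb{P}^1$ via push-forward, and the Fourier transform/stationary phase/exact-sequence machinery for the fundamental case — and these are indeed the pieces the paper uses. But there is a structural gap in your logic, and an essential base case is missing.

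You invoke ``the second component (2) mentioned in the introduction'' (the product formula for modules in the image of the canonical extension functor, i.e.\ Corollary \ref{cor:cong_Fiso_spec}) as a reduction step, but you never say how it is proved, and you place it upstream of the Fourier-theoretic computation you call ``the heart of the proof.'' In fact the two are interlocked in the opposite direction: the determinant formula identifying $\emar(\mc{M},du)$ with $(-1)^{\gamma}\det(\Phi^{(0,\infty')}(j_!\mc{M}))(-\gamma-1)(u')$ — which is what lets you convert the local Fourier transforms produced by stationary phase into the $\varepsilon$-factors appearing in \eqref{PF} — is itself proved by applying (PF*) to a canonical extension on $\mathbb{P}^1$ with a second, regular, auxiliary singularity at $1$, and then comparing with Proposition \ref{Laumondetformula}. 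Without the canonical-extension case as input, the Fourier-analytic argument only relates one determinant to another and gives no purchase on the normalization of $\varepsilon_0$; it cannot, on its own, close the argument. And that base case is not merely ``technical but not difficult'' in the sense your proposal treats it: it requires Tsuzuki's full-faithfulness theorem to turn unit-root overconvergent $F$-isocrystals with geometrically finite monodromy into representations of $\pi_1(U)$, Marmora's earlier $p$-adic result for \emph{globally} finite monodromy (which rests on the Deligne--Langlands theory of $\varepsilon$-factors for Galois representations), a Schur-lemma untwisting by an unramified character to pass from geometrically finite to genuinely finite monodromy, the scalar-extension formalism to make that untwisting legitimate when the relevant character is not defined over $\Ct$, and finally Kedlaya's slope filtration to drop the unit-root assumption. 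Naming these steps is the one place your proposal genuinely falls short.

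Two smaller remarks. The explicit Frobenius calculation of \S\ref{explicitcalcfrob} (the Dwork-exponential coefficients $\alpha_n$ and the operator $H_{\ms{P}'}$) is not used in the proof of \eqref{PF}; only the abstract existence statement of Lemma \ref{globalfourier} is needed, via Proposition \ref{frobstruccalcabsw} and Theorem \ref{stationaryfrobcom}. And the exact sequence of Proposition \ref{exactseq} computes $H^{*}_{\mathrm{rig},c}(U,\ms{M})$ of the original module $\ms{M}$, not of its Fourier transform; the Fourier transform enters as the mechanism by which stationary phase expresses that global cohomology as a tensor product of local contributions at the singular points, exactly as in Laumon. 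Finally, note that the independence of the right-hand side of \eqref{PF} on the choice of $\omega$ is not automatic here and needs a separate d\'evissage; it is worth flagging it explicitly.
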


The proof of the product formula will be given in
\S\ref{sb:proof_gen_case}.

\begin{cor}[{\cite[Conjecture 4.3.5]{Marmora:Facteurs_epsilon}}]
 Let $U$ be an non-empty open subscheme of a proper curve $X$, $M$ an
 overconvergent $F$-isocrystal on $U$ over $\Ct$, and
 $\omega\in\Omega^1_{k(X)/k}$  a non-zero meromorphic form on $X$. For any $x\in |X|$, let
 us denote by $M|_{\eta_x}$ the free differential
 $\mc{R}_{K_x}$-module associated to $M$ at $\eta_x$. We have
 \begin{equation}
  \label{PF-iso}\tag{PF*}
  \prod_{i=0}^{2}\det_{\Ct}(-F;H^i_{\rig,c}(U,M))^{{(-1)}^{i+1}} \!\!= 
   q^{C(1-g)\mathrm{rk}(M)} \!\!
   \prod_{x\in |U|} {q}_x^{\val[x](\omega)\mathrm{rk}(M)}
   {{\det_{M}}(x)}^{\val[x](\omega)}\!\!\prod_{x\in X\backslash U}
   \emar({M|}_{\eta_x},\omega_x),
 \end{equation}
 where $F:=\Fro^a$, $q_x:=q^{\deg(x)}$, 
 ${\det_{M}}(x):=\det_{\Ct_x}(\Fro[\eta_x]^{a\deg(x)}
 ;i_x^*{M})$ and $\val[x](\omega)$ denotes the
 order of $\omega$ at $x$.
\end{cor}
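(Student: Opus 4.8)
The plan is to derive the isocrystal product formula \eqref{PF-iso} directly from the $\ms{D}$-module product formula \eqref{PF} by choosing the right complex $\ms{C}$ and unwinding the definitions at each closed point. Concretely, given the overconvergent $F$-isocrystal $M$ on $U$, let $j\colon U\hookrightarrow X$ be the open immersion and set $\ms{C}:=j_!\,j^+(\mr{sp}_*M)[-1]$, exactly as in Remark \ref{ocisoccaseinv}. First I would record that $\ms{C}\in F^{(h)}\mbox{-}D^b_{\mr{hol}}(X)$ and that its generic rank is $r(\ms{C})=-\mr{rk}(M)$ (the shift by $[-1]$ flips the sign, matching the conventions of \ref{geomnotlau}). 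Next, by Remark \ref{ocisoccaseinv} and the cohomological interpretation of Caro's $L$-function together with the Poincar\'e duality functional equation recalled in \S\ref{sb:main}, the global epsilon factor $\varepsilon(\ms{C})$ equals $\prod_{i=0}^{2}\det_{\Ct}(-F;H^i_{\rig,c}(U,M))^{(-1)^{i+1}}$ up to a sign bookkeeping coming from the shift $[-1]$ — this is the left-hand side of \eqref{PF-iso}.

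The heart of the argument is the local computation of $\varepsilon(\ms{C}|_{S_x},\omega_x)$ for each $x\in|X|$, splitting into two cases. For $x\in X\setminus U$, the stalk $\ms{C}|_{S_x}$ is (a shift of) $j_!\,j^+$ applied to the local differential module $M|_{S_x}$, so by property (3) of the Proposition in \S\ref{defepsfac} and the Remark following it, $\varepsilon(\ms{C}|_{S_x},\omega_x)$ is, up to the shift, $\emar(M|_{\eta_x},\omega_x)^{\pm1}$; here one uses Lemma \ref{calcofdiff} to identify $j_{!+}(M|_{\eta_x})$ with $j^+(\ms{C}|_{S_x})$ and \cite[3.4.4]{Marmora:Facteurs_epsilon}. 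For $x\in U$, $M$ is a convergent isocrystal near $x$, so $\ms{C}|_{S_x}$ is (a shift of) the unramified module $i_{x+}(i_x^*M)\oplus(\text{something of slope }0)$; more precisely $j_!j^+\mr{sp}_*M$ has the localization triangle $j_!j^+\to\mr{sp}_*M\to i_{x+}i_x^+\xrightarrow{+1}$, and applying the additivity (property (1)) and the punctual case (property (2)) of the local epsilon factor reduces $\varepsilon(\ms{C}|_{S_x},\omega_x)$ to a product of a determinant of Frobenius on $i_x^*M$ and an epsilon factor of a trivial differential module. The latter, by the explicit rank-one computation in \cite[3.4.4]{Marmora:Facteurs_epsilon} (Tate's local constant for an unramified character twisted by $\psi_\pi(\omega_x)$), contributes exactly $q_x^{\val[x](\omega)\mathrm{rk}(M)}\,\det_M(x)^{\val[x](\omega)}$ — this is where the order $\val[x](\omega)$ of the form enters, via $\Res$ and the conductor-discriminant formalism.

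Assembling these local contributions into \eqref{PF} and comparing with the shifted version of \eqref{PF-iso}, the factor $q^{C(1-g)r(\ms{C})}=q^{-C(1-g)\mathrm{rk}(M)}$ has to be reconciled with the $q^{C(1-g)\mathrm{rk}(M)}$ in \eqref{PF-iso}; the sign discrepancy is absorbed by the shift $[-1]$ (which contributes an overall inversion to $\varepsilon$) and by the fact that $\det_{\Ct}(-F;-)^{(-1)^{i+1}}$ on the compactly supported cohomology equals the inverse of $\varepsilon(\ms{C})$. I would keep careful track of all these $\pm1$ exponents using the identities $\varepsilon(\mc{C},\omega)=\prod_i\varepsilon(\ms{H}^i\mc{C},\omega)^{(-1)^i}$ and $\chi$-compatibility. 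The main obstacle I anticipate is precisely this sign/shift bookkeeping at the almost-everywhere places $x\in U$: one must verify that the contribution of the slope-zero ``trivial'' summand of $\ms{C}|_{S_x}$ is genuinely $q_x^{\val[x](\omega)\mathrm{rk}(M)}\det_M(x)^{\val[x](\omega)}$ and not its inverse, which requires pinning down the normalization of $\varepsilon_0$ for unramified modules in \cite[3.4.4]{Marmora:Facteurs_epsilon} and its interaction with the twist $(-1)$ in \ref{defvanishcycle}. Once that single local case is nailed down, the rest is a formal consequence of Theorem \ref{Product formula} and the definitions.
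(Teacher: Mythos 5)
Your approach is the same as the paper's: take $\ms{C}:=j_!\,j^+(\mr{sp}_*M)[-1]$, plug into \eqref{PF}, and unwind the local factors at $x\in U$ and $x\in X\setminus U$. However, there are two real problems with the way you carry it out.

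First, your claim $r(\ms{C})=-\mr{rk}(M)$ is wrong, and as a consequence the ``sign discrepancy'' you set out to ``absorb'' does not exist. Since $\ms{C}$ is concentrated in cohomological degree $1$, the definition $r(\ms{C})=\sum_i(-1)^i r(\ms{H}^i\ms{C})$ together with $r(\ms{N})=-\mr{rk}(\ms{N})$ for a single holonomic module gives $r(\ms{C})=(-1)^1\cdot\bigl(-\mr{rk}(M)\bigr)=+\mr{rk}(M)$; so $q^{C(1-g)r(\ms{C})}$ is \emph{already} $q^{C(1-g)\mr{rk}(M)}$. The same cancellation happens in every term: the shift $[-1]$ contributes a $(-1)^1$-exponent in each of $r(\cdot)$, $\varepsilon(\cdot)$, and $\varepsilon(\cdot|_{S_x},\omega_x)$, and in each case it cancels against a built-in inversion (e.g.\ property (3) of the definition of the local $\varepsilon$ inverts $\varepsilon_0$, and the global $\varepsilon(\ms{C})$ is $\det(-F;f_+\ms{C}(-1))^{-1}$). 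The conventions were chosen so that after the shift everything matches on the nose; there is no residual $\pm1$ bookkeeping left to do, and if you actually do find one, you have made an error somewhere.

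Second, the heart of the reduction is exactly the local identity at $x\in U$ that you yourself flag as the ``main obstacle'' but do not resolve, namely that
\begin{equation*}
\varepsilon(\ms{C}|_{S_x},\omega_x)=\emar(M|_{\eta_x},\omega_x)\cdot\det_{\Ct_x}(-\Fro^{a\deg(x)};i_x^*M)^{-1}=\emars(M|_{\eta_x},\omega_x)=q_x^{\val[x](\omega)\mr{rk}(M)}\det_M(x)^{\val[x](\omega)}.
\end{equation*}
The first equality comes from the localization triangle \eqref{anotherloctriag} together with \ref{prop6func}.\ref{smoothpoin} applied to the closed immersion at $x$, and the second and third are \cite[(3.4.5.4)]{Marmora:Facteurs_epsilon} and \cite[(2.19-2)]{Marmora:Facteurs_epsilon}. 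Without invoking those normalization results the argument stops short of a proof, so this is a genuine gap, not just a loose end. Also, your description of $\ms{C}|_{S_x}$ at $x\in U$ as a direct sum ``$i_{x+}(i_x^*M)\oplus(\text{slope }0)$'' misstates the structure: one has a four-term exact sequence coming from the localization triangle, not a splitting, and the correct way to proceed is through the multiplicativity of $\varepsilon$ on that sequence.
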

\begin{proof}
 Let us prove that (\ref{PF}) implies (\ref{PF-iso}):
 we need only to specialize all the factors. Let $j\colon
 U\hookrightarrow X$ be the open immersion. We replace
 $\ms{C}:=j_!(\mathrm{sp}_*M)[-1](-1)$ in (\ref{PF}). Then
 for $x\in |U|$, we get
 \begin{equation*}
 \begin{split}
  \varepsilon(\ms{C}|_{S_x},\omega_x)=
  \emar({M|}_{\eta_x},\omega_x)\cdot\det_{\Ct_x}(-\Fro^
  {a\deg(x)};i^*_xM)^{-1}
  & = \emars({M|}_{\eta_x},\omega_x) \\
  & ={q}_x^{\val[x](\omega)\mathrm{rk}(M)}{{\det_{M}}(x)}^{\val[x](\omega)},
 \end{split}
 \end{equation*}
 where the first equality follows from the localization triangle
 (\ref{anotherloctriag}) and \ref{prop6func}.\ref{smoothpoin},
 and the second (resp.\ third) from
 \cite[(3.4.5.4)]{Marmora:Facteurs_epsilon} (resp.\
 \cite[(2.19-2)]{Marmora:Facteurs_epsilon}).
 Finally, for every $x\in X\backslash U$, by definition,
 we have $\varepsilon(\ms{C}|_{S_x},\omega_x)=
 \emar({M|}_{\eta_x},\omega_x)$. Considering Remark
 \ref{ocisoccaseinv}, we get the corollary.
\end{proof}

\begin{rem} 
 (i) Note that in {\it loc.\ cit.}, the curve $X$ was  assumed to be
 geometrically connected for simplicity, so that $C=1$. Since the
 product formula (\ref{PF}) is immediate for punctual  arithmetic
 $\ms{D}$-modules, the two statements (\ref{PF-iso}) and (\ref{PF}) are
 equivalent by {\itshape d\'evissage}.

 (ii) \label{remark:semplification}
 By definition, the global factor $\varepsilon(\ms{C})$
 appearing in (\ref{PF}) and (\ref{PF-iso}) does not change if we
 replace the Frobenius $\Fro[\ms{C}]$ of $\ms{C}$ by its smallest linear
 power $\Fro[\ms{C}]^a$. The same is true for the Weil-Deligne
 representation $\WD(\ms{C}|_{\eta_x})$ and {\it a fortiori} for the
 local factors. Replacing $\Fro[\ms{C}]$ by $\Fro[\ms{C}]^a$  is
 equivalent to assuming $h=f$, thus $\FF=k$
 and $\Ct=\Ctf$ in \ref{susubsection:k_finite}.
\end{rem}

\subsection{Interlude on scalar extensions}
\label{sb:scal_ext}
In this subsection, we recall a formal  way to extend scalars (cf.\
\cite[p.155]{DeligneMilne}) in a general Tannakian category, with
the intention of use in \S\ref{proof:geo_mod}.
Most of the proofs are formal and they are only sketched.
To shorten the exposition, we may implicitly assume that the objects of
our categories have elements, so that they are $\Lambda$-vector spaces
endowed with some extra structures.
For a general treatment, confer to {\it loc.\ cit}. In this subsection
the field $k$ is only assumed to be perfect, whereas in the rest of
\S\ref{section7} it is finite.

Let $\mathcal{A}$ be a Tannakian category over $\Ctf$. Assume that the
objects of $\mathcal{A}$ have finite length.

\begin{dfn}
 Let $\Ctf'$ be a field extension of $\Ctf$ and $M$ an object of
 $\mc{A}$. A \emph{$\Ctf'$-structure} on $M$ is a homomorphism \index{Lambda@$\Ctf'$-structure}
 of $\Ctf$-algebras  $\lambda_M\colon\Ctf'\rightarrow \End{\mc{A}}{M}$.

 Let $\lambda_M$ (resp.\ $\lambda_N$) be a $\Ctf'$-structure on $M$
 (resp.\ $N$). A morphism $f\colon M\rightarrow N$ in $\mc{A}$ is said to
 be \emph{compatible} with the $\Ctf'$-structures if for every $\alpha$
 in $\Ctf'$ we have $\lambda_N(\alpha)f = f \lambda_M(\alpha)$.
 The couples $(M,\lambda_M)$, where $\lambda_M$ is a $\Ctf'$-structure
 on an object $M$ in $\mc{A}$, form a category, whose morphisms are the
 morphisms in $\mc{A}$ compatible with the $\Ctf'$-structures. We denote
 this category by $\mc{A}_{\Ctf'}$. Sometimes, we denote simply by $M$
 an object $(M,\lambda_M)$ in $\mc{A}_{\Ctf'}$.
\end{dfn}

\subsubsection{}
\label{sub:tens_prod} 
We define an internal tensor product in $\mc{A}_{\Ctf'}$ as follows.
Let $(M_1,\lambda_1)$ and $(M_2, \lambda_2)$ be two objects in
$\mc{A}_{\Ctf'}$. Since $M_1\otimes M_2$ has finite length,
there exists a smallest sub-object $\iota\colon I \hookrightarrow
M_1\otimes M_2$ such that, for all $a$ in $\Ctf'$, the image of
$\lambda_1(a)\otimes \id[M_2]- \id[M_1]\otimes \lambda_2(a)$
factors through $\iota$.
We put $M_1\otimes' M_2=\Coker(\iota)$. There are two natural
$\Ctf'$-structures on $M_1\otimes M_2$, given respectively by the
endomorphisms $\lambda_1(a)\otimes \id[M_2]$ and $\id[M_1]\otimes
\lambda_2(a)$. By construction they induce the same  $\Ctf'$-structure
$\lambda_{M_1\otimes' M_2}$ on $M_1\otimes' M_2$. 
The couple $(M_1\otimes' M_2, \lambda_{M_1\otimes' M_2})$ defines
an object of $\mc{A}_{\Ctf'}$ denoted by $(M_1,\lambda_1)\otimes(M_2,
\lambda_2)$. It satisfies the usual universal property of the tensor
product in the category $\mc{A}_{\Ctf'}$, which makes $\mc{A}_{\Ctf'}$
a Tannakian category.

\begin{ex}
\label{example:rep'}
 Let $U$ be a non-empty open subscheme of a proper curve $X$,
 $\overline{\eta}$ a geometric point of $X$. The category
 $\mathrm{Rep}_{\Ctf}^{\mathrm{fg}}(\pi_1(U,\overline{\eta}))_{\Ctf'}$
 of representations with local finite geometric mo\-no\-dromy is
 equivalent, and even isomorphic, to
 $\mathrm{Rep}_{\Ctf'}^{\mathrm{fg}}(\pi_1(U,\overline{\eta}))$ as
 Tannakian category.

 Let $\Ct$ be a field as in \ref{susubsection:k_finite}, and
 $\Ctf'/\Ct$ be a finite Galois extension. By construction, the category
 $\mathrm{Isoc^{\dagger}}(U,X/\Ct)_{\Ctf'}$ of overconvergent
 isocrystals with ${\Ctf'}$-structure is equivalent, as Tannakian
 category, to $\mathrm{Isoc^{\dagger}}(U,X/ \Ctf'\otimes_{\Ctf} \Ct)$.

 Now assume $k$ to be a finite field with $q=p^f$ elements and that the
 order of the Frobenius is $f$, so that $\sigma_{\Ct}= \id[\Ct]$.
 If $\Ctf'/\Ct$ is totally ramified, then $\Fisosur(U,X/\Ct)_{\Ctf'}$ is
 equivalent to $\Fisosur(U,X/\Ctf')$ as Tannakian category, where
 $\sigma_{\Ctf'}:= \id[\Ctf']$. If $\Ctf'$ is not totally ramified, an
 overconvergent $F$-isocrystal with $\Ctf'$-structure $M'$ on $U$ over
 $\Ct$ can be identified with an overconvergent isocrystal on $U/\Ctf'$,
 endowed with a ``Frobenius'' $\Fro[M']$ which is $\Ctf'$-linear although only of order $f$.
\end{ex}

\subsubsection{}
\label{scalar_ext}
Let $V$ be a finite dimensional $\Ctf$-vector space and $M$ an object of
$\mc{A}$. The tensor product $V\otimes_{\Ctf} M$ is defined canonically
in \cite[p.156 and p.131]{DeligneMilne} as an essentially constant
ind-object. In particular, if $\Ctf'/\Ctf$ is a finite field extension,
the product $\Ctf'\otimes_{\Ctf} M$ can be endowed with the
$\Ctf'$-structure induced by the multiplication of $\Ctf'$, so it
belongs to $\mc{A}_{\Ctf'}$: we have a functor of {\em extension of
scalars} $\Ctf'\otimes_{\Ctf}-\colon\mc{A}\rightarrow \mc{A}_{\Ctf'}$.
If $a_1,\ldots, a_n$ is a base of $\Ctf'$ over $\Ctf$, then
$\Ctf'\otimes_{\Ctf} M$ is non-canonically isomorphic to
$\oplus_{i=1}^{n} a_i\Ctf \otimes_{\Ctf} M$, with an obvious meaning of
the latter.

\subsubsection{}
\label{ss:Ctf'_ext}
Let $S\colon\mc{A}\rightarrow \mc{B}$ be an additive functor between
two categories $\mc{A}$ and $\mc{B}$  as in
\S\ref{sb:scal_ext}. It extends  to a functor
$S_{\Ctf'}\colon\mc{A}_{\Ctf'}\rightarrow \mc{B}_{\Ctf'}$ defined by
functoriality as
\begin{equation*}
 S_{\Ctf'}(M,\lambda) := (S(M), S(\lambda)\colon a\mapsto S(\lambda(a)) ). 
\end{equation*}
By the additivity of $S$, it is clear that $S_{\Ctf'}$ commutes with the
extensions of scalars ${\Ctf'}\otimes_{\Ctf} -$. Moreover, if $S$ is
compatible with $\otimes$ and  right exact, then  $S_{\Ctf'}$ is
compatible with the inner  product $\otimes'$ defined in
\ref{sub:tens_prod}: this is a consequence of the construction of
$\otimes'$, considered that $\Ctf'/\Ctf$ is a finite extension and $S$
commutes with finite direct limits and $\otimes$. Finally, if $S$ is an
equivalence of Tannakian categories so is $S_{\Ctf'}$.

\subsection{Proof for finite geometric monodromy}
\label{proof:geo_mod}
The goal of this subsection is to prove the product formula in the case
of overconvergent $F$-isocrystals with geometrically finite
mo\-no\-dromy, in particular for $F$-isocrystals which are canonical
extensions: see Proposition \ref{PF:fgm} and Corollary
\ref{cor:cong_Fiso_spec}.
Although the proof for  overconvergent $F$-isocrystals with finite
mo\-no\-dromy is analogous to that of \cite[3.2.1.7]{Lau} and it is
given in \cite[4.3.15]{Marmora:Facteurs_epsilon}, there are some
technical difficulties to show that for {\em geometrically} finite case,
and we treat this case by using the formal scalar extension reviewed in
the previous subsection.\\

In \ref{subsubsec:loc_const_Ctf'} and \ref{ss:glob_const_Ctf'}, we
define local  and global ``constants'' for generalized isocrystals
in $\Fisosur(U/\Ct)_{\Ctf'}$. They will be seen as elements of the
$\bCt$-algebra of functions $\Spec{(\Ctf'\otimes_{\Ctf} \bCt)}
\rightarrow \bCt$. To avoid any confusion with constant functions, we
employ the term \emph{factors} instead of ``constants''. For simplicity,
we will often assume that the order of Frobenius is $f$, so that
$\Ctf=\Ct$ and $\sigma_{\Ct}=\id[\Ct]$, cf.\
\ref{susubsection:k_finite}; we might state the definitions and prove
the lemmas in the general case, but this is not needed for proving
Proposition \ref{PF:fgm}.

\subsubsection{}
\label{subsubsec:loc_const_Ctf'}
Assume $h=f$, and so  $\Ctf=\Ct$ (cf.\ \ref{susubsection:k_finite}). Let
$\Ctf'/\Ct$ be a finite Galois
extension, $U$ be a non-empty open subscheme of a proper curve $X$, and
$(M,\lambda)\in\Fisosur(U,X/\Ct)_{\Ctf'}$. The Weil-Deligne
representation $\WD(M|_{\eta_x})_{\Ctf'}$ is a $(\Ctf'\otimes_{\Ct}
\bCt)$-module with a linear action of  $\rho_{\eta_x}$ and
$N_{\eta_x}$. For any $\mathfrak{p}\in \Spec{(\Ctf'\otimes_{\Ct}\bCt)}$,
we denote by ${(\WD(M|_{\eta_x})_{\Ctf'})}_{\mathfrak{p}}$ the
localization of $\WD(M|_{\eta_x})_{\Ctf'}$ at ${\mathfrak{p}}$; it is
stable under $\rho_{\eta_x}$ and $N_{\eta_x}$. Let us define the
\emph{local factors} as functions
$\Spec{(\Ctf'\otimes_{\Ct}\bCt)}\rightarrow \bCt$.
For any $x\in |X|$, we set:
\begin{enumerate}
 \item $\mathrm{rk}(M,\lambda)\colon\mathfrak{p}\mapsto
       \dim_{\bCt}({(\WD(M|_{\eta_x})_{\Ctf'})}_{\mathfrak{p}})$, which
       does not depend on $x$.

 \item $\det_{(M,\lambda)}(x)  \colon\mathfrak{p}\mapsto \det_{\bCt }(
       \rho_{\eta_x}(\F_x);({(\Ker N_{\eta_x})}^{I_{\eta_x}})
       _{\mathfrak{p}})=\det_{\bCt}( \Fro[\eta_x]^{a\deg (x)};
       (\bCt\otimes_{\Ct_x} M^{\nabla_x}_{\eta_x})_{\mathfrak{p}})$\\
       (cf.\ \cite[(3.4.5.3)]{Marmora:Facteurs_epsilon} for the
       equality). 

 \item Let $\omega\not = 0$ be in $\Omega^1_{k(X)/k}$, and $\mu_x$ be
       the Haar measure on $\mathcal{K}_x$ with values in
       $\bCt$ normalized by $\mu_x(\widehat{\mathcal{O}}_{X,x})=1$ as
       usual, and $\psi(\omega_x)\colon \mathcal{K}_x\rightarrow
       \bCt^{*}$ is also the additive character associated to $\omega_x$
       (cf.\ \ref{defepsfac}). As already
       appeared in \cite[6.4]{Del:const_loc}, the epsilon factors
       $\emar({(M,\lambda)|}_{\eta_x},\psi(\omega_x),\mu_x)$ are
       defined by
       \begin{equation*}
        \mathfrak{p}\mapsto\emar(
	 {(\WD(M|_{\eta_x})_{\Ctf'})}_{\mathfrak{p}},\psi(\omega_x),\mu_x).
       \end{equation*}
       Let us write simply $\emar({(M,\lambda)|}_{\eta_x},\omega_x)$,
       instead of
       $\emar({(M,\lambda)|}_{\eta_x},\psi(\omega_x),\mu_x)$.
\end{enumerate}

\subsubsection{}
\label{ss:glob_const_Ctf'}
As in \ref{subsubsec:loc_const_Ctf'}, assume $h=f$, and so
$\Ctf=\Ct$. Let $(M,\lambda)$ be in $\Fisosur(U,X/\Ct)_{\Ctf'}$.
The rigid cohomology groups (with and without supports) of $M$ inherit a
$\Ctf'$-structure,  so that they are $F$-isocrystals with
$\Ctf'$-structure on $\Spec(k)$ over $\Ct$; we denote them respectively
by $H_{\mathrm{rig}}^i(U,M)_{\Ctf'}$ and
$H_{\mathrm{rig},c}^i(U,M)_{\Ctf'}$ (cf.\ \ref{ss:Ctf'_ext}).  \index{functors!.Hrig@$H_{\mathrm{rig}}^i(U,-)_{\Ctf'}$,
$H_{\mathrm{rig},c}^i(U,-)_{\Ctf'}$}
They are
$\Ctf'$-vector spaces endowed with linear Frobenius isomorphisms
$\Fro$. To shorten the notation, let $H$ denote
$H_{\mathrm{rig},c}^i(U,M)_{\Ctf'}$ or $H_{\mathrm{rig}}^i(U,M)_{\Ctf'}$.

We define $\det(-F; H)$ as the constant function
$\Spec{(\Ctf'\otimes_{\Ct}\bCt)}\rightarrow \bCt$, $\mathfrak{p}\mapsto
\det_{\Ctf'}(-F;H)$. We denote by $\det(-F^*;
H_{\mathrm{rig},c}^*(U,M)_{\Ctf'})$ the product $\prod_{i=0}^{2}\det(-F;
H_{\mathrm{rig},c}^i(U,M)_{\Ctf'})$. 
From the long exact sequence of
rigid cohomology, it follows that $\det(-F^*;
H_{\mathrm{rig},c}^*(U,M)_{\Ctf'})$ is multiplicative for short exact
sequences and so it is defined on the Grothendieck group of
$\Fisosur(U/\Ct)_{\Ctf'}$.

\begin{rem*}
 In the general case, where $h|f$, the group $H$ is a module over the
 semi-local ring $\Ctf'\otimes_{\Ctf} \Ct$ and we endow it with the
 $(\Ctf'\otimes_{\Ctf} \Ct)$-linear endomorphism $F=\Fro^a$, where
 $a=hf^{-1}$. The module $H$ decompose as $\oplus_{\mathfrak{p}\in\Spec
 (\Ctf'\otimes_{\Ctf} K)} H_{\mathfrak{p}}$. For every
 $\mathfrak{p}\in\Spec (\Ctf'\otimes_{\Ctf} K)$, the localization
 $H_{\mathfrak{p}}$ is a vector space over the field
 $\Ct':=\Ctf'\otimes_{\Ctf''} K$ (where $\Ctf'':=\Ctf'\cap\Ct\subset
 \bCt$ is a finite unramified extension of $\Ctf$), and $F$ induces a
 $\Ct'$-linear  endomorphism of $H_{\mathfrak{p}}$. We define $\det(-F;
 H)$ as the composition of the canonical map
 $\Spec{(\Ctf'\otimes_{\Ctf} \bCt)}\rightarrow
 \Spec{(\Ctf'\otimes_{\Ctf} K)}$, induced by the inclusion $K\subset
 \bCt$, and the function $\Spec{(\Ctf'\otimes_{\Ctf} K)}\rightarrow\bCt$
 sending $\mathfrak{p}$ to $\det_{\Ct'}(-F; H_{\mathfrak{p}})$.
 We are not using this remark in the following.
\end{rem*}

\subsubsection{}
Let us follow the notation of \ref{subsubsec:loc_const_Ctf'} and
\ref{ss:glob_const_Ctf'}. Let us state a variant of the  product
formula (\ref{PF-iso}) for overconvergent $F$-isocrystals in
$\Fisosur(U,X/\Ct)_{\Ctf'}$.
\label{conj:gen_PF_scalar_ext} 
Let $U$ be a non-empty open subscheme of $X$, $M'=(M,\lambda)$ be in
$\Fisosur(U,X/\Ct)_{\Ctf'}$ and $\omega$ a non-zero element of
$\Omega^1_{k(X)/k}$. The product formula for $M'$ is the following
relation
\begin{equation}
 \label{eq:gen_PF_scalar_ext}
  \det\bigl(-F^*;H^*_{\rig,c}(U,M')_{\Ctf'}\bigr)^{-1}=
  q^{C(1-g)\mathrm{rk}(M')} 
  \prod_{x\in |U|} {q}_x^{\val[x](\omega)\mathrm{rk}(M')}
  {{\det_{M'}}(x)}^{\val[x](\omega)}\prod_{x\in X\backslash U}
  \emar({M}'_{\eta_x},\omega_x)
\end{equation}
between global and local factors associated to $M'$.

\subsubsection{}
\label{lemma:PF_scalar_invariance}
Assume $h=f$ and so $\Ctf=\Ct$. Let $U$ be a non-empty open subscheme of
$X$, and $M$ an overconvergent $F$-isocrystal on $U$ over $\Ct$. For any
finite Galois extension $\Ctf'/\Ct$, we can define an overconvergent
$F$-isocrystal $\Ctf'\otimes_{\Ct} M$ with $\Ctf'$-structure
(cf.\ \ref{scalar_ext}).

\begin{lem*}
 The $F$-isocrystal $M$ satisfies the product formula {\normalfont
 (\ref{PF-iso})} if and only if $\Ctf'\otimes_{\Ct} M$ satisfies the
 product formula with $\Ctf'$-structure {\normalfont
 (\ref{eq:gen_PF_scalar_ext})}.
\end{lem*}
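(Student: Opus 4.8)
The statement is an ``invariance of the product formula under finite scalar extension'' claim, so the strategy is to compare the two sides of (\ref{PF-iso}) for $M$ with the two sides of (\ref{eq:gen_PF_scalar_ext}) for $\Ctf'\otimes_{\Ct} M$, term by term, using that $\Ctf'/\Ct$ is Galois together with the decomposition $\Ctf'\otimes_{\Ct}\bCt\cong\prod_{g\in\mr{Gal}(\Ctf'/\Ct)}\bCt$. First I would unwind the definitions of \S\ref{subsubsec:loc_const_Ctf'} and \S\ref{ss:glob_const_Ctf'}: the point is that extension of scalars $\Ctf'\otimes_{\Ct}-$ is additive and compatible with $\otimes$, hence (by \ref{ss:Ctf'_ext}) it commutes with all the Tannakian constructions entering both sides, in particular with $\WD$ and with rigid cohomology $H^i_{\rig,c}$. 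Concretely, $\WD((\Ctf'\otimes_{\Ct}M)|_{\eta_x})_{\Ctf'}\cong\Ctf'\otimes_{\Ct}\WD(M|_{\eta_x})$ as $(\Ctf'\otimes_{\Ct}\bCt)$-modules with $\rho_{\eta_x}$- and $N_{\eta_x}$-action, and similarly $H^i_{\rig,c}(U,\Ctf'\otimes_{\Ct}M)_{\Ctf'}\cong\Ctf'\otimes_{\Ct}H^i_{\rig,c}(U,M)$.

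The core computation is then a linear-algebra fact: if $W$ is a finite dimensional $\bCt$-vector space with an endomorphism $\phi$, and we form $\Ctf'\otimes_{\Ct}(W)$ viewed as a module over $\Ctf'\otimes_{\Ct}\bCt\cong\prod_{g}\bCt$, then the localization at the prime $\mathfrak{p}_g$ corresponding to $g\in\mr{Gal}(\Ctf'/\Ct)$ is canonically identified with $W$ (with endomorphism $\phi$). Hence each local or global factor of $\Ctf'\otimes_{\Ct}M$ with $\Ctf'$-structure, evaluated at any point $\mathfrak{p}\in\Spec(\Ctf'\otimes_{\Ct}\bCt)$, equals the corresponding factor of $M$: $\mr{rk}(\Ctf'\otimes_{\Ct}M)(\mathfrak{p})=\mr{rk}(M)$, $\det_{\Ctf'\otimes_{\Ct}M}(x)(\mathfrak{p})=\det_M(x)$, $\emar((\Ctf'\otimes_{\Ct}M)|_{\eta_x},\omega_x)(\mathfrak{p})=\emar(M|_{\eta_x},\omega_x)$, and $\det(-F;H^i_{\rig,c}(U,\Ctf'\otimes_{\Ct}M)_{\Ctf'})=\det_{\Ct}(-F;H^i_{\rig,c}(U,M))$. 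For the epsilon-factor identity one invokes that Deligne's $\varepsilon_0$ for a Weil--Deligne representation over $\Ctf'\otimes_{\Ct}\bCt$ is defined componentwise (as recalled in \S\ref{subsubsec:loc_const_Ctf'}, following \cite[6.4]{Del:const_loc}), so the localized factor is literally $\emar$ of the localized representation $\cong\WD(M|_{\eta_x})$. Once every term of (\ref{eq:gen_PF_scalar_ext}) for $\Ctf'\otimes_{\Ct}M$ is shown to be the constant function whose value is the corresponding term of (\ref{PF-iso}) for $M$, the two equations are literally the same equation (an equality in $\bCt^*$, constant over $\Spec(\Ctf'\otimes_{\Ct}\bCt)$), proving the ``if and only if''. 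By \ref{remark:semplification}(ii) the reduction to $h=f$ costs nothing, so assuming $\Ctf=\Ct$ throughout is harmless.

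The main obstacle, and the one place requiring genuine care rather than formalities, is checking that all the structural isomorphisms above (between $\WD$ of the extended isocrystal and the extension of $\WD$, and between rigid cohomology of the extended isocrystal and the extension of rigid cohomology) are compatible with the $\Ctf'$-structures \emph{and} with the Frobenius $F=\Fro^a$ and the monodromy $N$, not merely as abstract isomorphisms. This is where one genuinely uses that $\WD$ is a faithful exact $\otimes$-functor (\S\ref{WD}), that $p$-adic monodromy is an equivalence of Tannakian categories, and the general compatibility \ref{ss:Ctf'_ext} of such functors with scalar extension; one also uses that rigid cohomology is built from $\otimes$ and pushforward, hence commutes with $\Ctf'\otimes_{\Ct}-$ by \ref{ss:Ctf'_ext} applied to the cohomological functors of \S\ref{defpullbackpush}. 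I expect this verification to be routine given the machinery already set up, so the proof can be kept short, essentially: ``extension of scalars commutes with every construction in sight and with localization at the primes of $\Ctf'\otimes_{\Ct}\bCt$; therefore both product formulas are the same identity in $\bCt^*$.''
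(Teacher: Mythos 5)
Your proposal is correct and matches the paper's argument: both proofs hinge on the observation that each factor in \eqref{PF-iso} and \eqref{eq:gen_PF_scalar_ext} is a homomorphism from the relevant Grothendieck group, that these homomorphisms are intertwined by $\Ctf'\otimes_{\Ct}-$ and the (injective) constant-function embedding $\bCt^*\hookrightarrow\bigl({\bCt}^{\,\Spec(\Ctf'\otimes_{\Ct}\bCt)}\bigr)^*$, and that injectivity of the latter gives the equivalence. Your extra unwinding via $\Ctf'\otimes_{\Ct}\bCt\cong\prod_{g}\bCt$ and localization at each $\mathfrak{p}_g$ is the concrete content of the paper's appeal to \S\ref{ss:Ctf'_ext} and the commutativity of its diagram, so the two proofs are the same in substance.
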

\begin{proof}
 For an abelian category $\mc{A}$, we denote by $\Gr{\mc{A}}$ its
 Grothendieck group. In this proof we put
 $\mc{A}=\Fisosur(U,X/\Ct)$. The formula (\ref{PF-iso}) (resp.\
 (\ref{eq:gen_PF_scalar_ext})) is a relation on the Grothendieck group
 of $\mc{A}$ (resp.\ $\mc{A}_{\Ctf'}$) with values in $\bCt^*$ (resp.\
 in the group of units of the $\bCt$-algebra
 ${\bCt}^{\,\Spec(\Ctf'\otimes_{\Ct} \bCt)}$). Each factors $\nu$
 appearing in the equality (\ref{PF-iso}) (resp.\
 (\ref{eq:gen_PF_scalar_ext})) are homomorphisms
 $\nu\colon\Gr{\mc{A}}\rightarrow \bCt^*$ (resp.\
 $\nu_{\Ctf'}\colon\Gr{\mc{A}_{\Ctf'}}\rightarrow
 \bigl({\bCt}^{\,\Spec(\Ctf'\otimes_{\Ct} \bCt)}\bigr)^* $). By the
 definitions of these factors, cf.\
 (\ref{subsubsec:loc_const_Ctf'}--\ref{ss:glob_const_Ctf'}), it follows
 the commutativity of the diagram 
\begin{equation*}
 \xymatrix{
  \Gr{\mc{A}}\ar[r]^-{\nu}\ar[d]^-{\Ctf'\otimes_{\Ct} -} & \bCt^*\ar[d] \\  
 \Gr{\mc{A}_{\Ctf'}}\ar[r]^-{\nu_{\Ctf'}} &
  \bigl({\bCt}^{\,\Spec(\Ctf'\otimes_{\Ct} \bCt)}\bigr)^*,}
\end{equation*}
 where the right vertical homomorphism  maps each element $c$ of
 $\bCt^*$ to the constant function $\Spec(\Ctf'\otimes_{\Ct} \bCt)
 \rightarrow \bCt$ of value $c$. Since this homomorphism is injective we
 conclude.
\end{proof}

\subsubsection{}
\label{Tsuzuki-Katz}
We recall that the theorem of Tsuzuki \cite[(7.2.2), Theorem
7.2.3]{Tsuzuki:FLM} gives an equivalence
$G^{\dagger}\colon\Fisosur(U,X/\Ct)\ur\rightarrow
\mathrm{Rep}_{\Ctf}^{\mathrm{fg}}(\pi_1(U,\overline{\eta}_X))$, \index{functors!.G@$G^{\dagger}$}
\index{categories!Rep@$\mathrm{Rep}_{\Ctf}^{\mathrm{fg}}(\pi_1(U,\overline{\eta}_X))$, $\Fisosur(U,X/\Ct)\ur$} 
between
the categories of unit-root overconvergent $F$-isocrystals on $U$ over
$\Ct$ and continuous $\Ctf$-representations of
$\pi_1(U,\overline{\eta}_X)$ with local geometrically finite
mo\-no\-dromy. We say that a unit-root
overconvergent $F$-isocrystal $M\in\Fisosur(U,X/\Ct)\ur$ has
\emph{global finite mo\-no\-dromy} if the associated representation
$G^{\dagger}(M)$ factors through a finite quotient of
$\pi_1(U,\overline{\eta}_X)$; we say that $M$ has \emph{global
geometrically finite mo\-no\-dromy} if the restriction of
$G^{\dagger}(M)$ to $\pi_1(U \otimes_k \bFt,\overline{\eta}_X)$ factors
through a finite quotient. \index{global (geometrically) finite monodromy}

The following lemma extends \cite[Theorem
4.3.15]{Marmora:Facteurs_epsilon} to isocrystals with
$\Ctf'$-structure.

\begin{lem}
 \label{lemma:extention_to_lambda_str}
 Assume $h=f$, so that $\Ctf=\Ct$ and $\sigma_{\Ct}=\id[\Ct]$. Let
 $\Ctf'/\Ct$ be a finite Galois extension and $U$ be a non-empty open
 subscheme of $X$. Let $M'=(M,\lambda)$ be an overconvergent
 $F$-isocrystal with $\Ctf'$-structure on $U$ over $\Ct$. Assume $M$ is
 unit-root with global finite mo\-no\-dromy, then the formula
 {\normalfont(\ref{eq:gen_PF_scalar_ext})} is satisfied.
\end{lem}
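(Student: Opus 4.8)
The strategy is to reduce the statement with $\Ctf'$-structure back to the known case without extra structure, namely \cite[Theorem 4.3.15]{Marmora:Facteurs_epsilon}, via the formal scalar-extension machinery of \S\ref{sb:scal_ext}. The key observation is that an overconvergent $F$-isocrystal with $\Ctf'$-structure $M'=(M,\lambda)$, where $M$ is unit-root with global finite mo\-no\-dromy, lies in $\Fisosur(U,X/\Ct)_{\Ctf'}$, and by Example \ref{example:rep'} this category is equivalent as a Tannakian category to $\Fisosur(U,X/\Ctf')$ (using $\sigma_{\Ctf'}=\id[\Ctf']$, legitimate since $h=f$ forces $\sigma_{\Ct}=\id$). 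Under this equivalence, via Tsuzuki's theorem \ref{Tsuzuki-Katz}, an $F$-isocrystal that is unit-root with global finite mo\-no\-dromy over $\Ct$ corresponds to a continuous $\Ctf$-representation of $\pi_1(U,\overline{\eta}_X)$ factoring through a finite quotient; scalar extension to $\Ctf'$ preserves this property. Since over the field $\Ctf'$ the hypothesis ``unit-root with global finite mo\-no\-dromy'' is exactly the hypothesis under which \cite[Theorem 4.3.15]{Marmora:Facteurs_epsilon} is proved, the product formula (\ref{PF-iso}) holds for the $\Ctf'$-isocrystal underlying $M'$.

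The remaining work is to transfer (\ref{PF-iso}) over $\Ctf'$ into (\ref{eq:gen_PF_scalar_ext}) over $\Ct$ with $\Ctf'$-structure. First I would note that all the local and global factors in \ref{subsubsec:loc_const_Ctf'} and \ref{ss:glob_const_Ctf'} have been \emph{defined} precisely so that they are compatible with the equivalence $\Fisosur(U,X/\Ct)_{\Ctf'}\simeq\Fisosur(U,X/\Ctf')$: the rank function, the determinant $\det_{M'}(x)$, the $\varepsilon_0$-factors and $\det(-F^*;H^*_{\rig,c})$ all reduce, after choosing $\mathfrak{p}\in\Spec(\Ctf'\otimes_{\Ct}\bCt)$, to the corresponding invariants of the $\Ctf'$-isocrystal. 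Concretely, because $\Ctf'/\Ct$ is Galois and $\Ctf'\otimes_{\Ct}\bCt\cong\prod_{\mathfrak{p}}\bCt$ indexed by the embeddings $\Ctf'\hookrightarrow\bCt$ over $\Ct$, a function on $\Spec(\Ctf'\otimes_{\Ct}\bCt)$ is the same datum as a tuple of elements of $\bCt$, and on each factor the identity (\ref{eq:gen_PF_scalar_ext}) becomes literally (\ref{PF-iso}) applied to $M\otimes_{\Ctf',\mathfrak{p}}\bCt$. I would invoke \ref{ss:Ctf'_ext} to see that rigid cohomology, the functor $\WD$, and the formation of the relevant $\varepsilon_0$'s all commute with the scalar extension; the constants $q$, $q_x$, $\val[x](\omega)$, $C$, $g$ are purely geometric and unchanged.

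To organize this cleanly I would argue on Grothendieck groups exactly as in Lemma \ref{lemma:PF_scalar_invariance}: each side of (\ref{eq:gen_PF_scalar_ext}) is a homomorphism from $\Gr{\Fisosur(U,X/\Ct)_{\Ctf'}}$ to the units of ${\bCt}^{\,\Spec(\Ctf'\otimes_{\Ct}\bCt)}$, both sides factor through the equivalence with $\Fisosur(U,X/\Ctf')$ and there reduce to the two sides of (\ref{PF-iso}), whose equality is known. Since every factor is multiplicative in short exact sequences (this is where the \emph{d\'evissage} bookkeeping lives — the $\varepsilon_0$-factors by \cite[3.4.4]{Marmora:Facteurs_epsilon}, the cohomology determinants by the long exact sequence of rigid cohomology), it suffices to check the identity on a class, and that is supplied by the $\Ctf'$-version of \cite[Theorem 4.3.15]{Marmora:Facteurs_epsilon}. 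The main obstacle I anticipate is purely notational rather than mathematical: one must verify carefully that ``unit-root with global finite mo\-no\-dromy'' is preserved under $\Ctf'\otimes_{\Ct}-$ (straightforward from Tsuzuki's equivalence and the fact that a finite quotient of $\pi_1$ stays finite after extending coefficients) and, more delicately, that the $\varepsilon_0$-factor of a localized Weil-Deligne representation ${(\WD(M|_{\eta_x})_{\Ctf'})}_{\mathfrak p}$ indeed agrees with the $\varepsilon_0$-factor computed in $\Fisosur(U,X/\Ctf')$ at the embedding $\mathfrak p$ — this is essentially the compatibility of Deligne's local constants with extension of the coefficient field, which is classical (\cite[\S6]{Del:const_loc}) but needs to be spelled out in the present $p$-adic differential-module setting.
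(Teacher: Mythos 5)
There is a genuine gap. Your whole reduction rests on the claim that $\Fisosur(U,X/\Ct)_{\Ctf'}$ is equivalent, as a Tannakian category, to $\Fisosur(U,X/\Ctf')$ with $\sigma_{\Ctf'}=\id[{\Ctf'}]$, and you justify this by saying it is ``legitimate since $h=f$ forces $\sigma_{\Ct}=\id$.'' But Example~\ref{example:rep'} only asserts this equivalence when $\Ctf'/\Ct$ is \emph{totally ramified}. When $\Ctf'/\Ct$ has nontrivial residue extension $k'/k$, an object of $\Fisosur(U,X/\Ct)_{\Ctf'}$ carries a Frobenius of order $f=[k:\mb{F}_p]$ which commutes with the $\Ctf'$-action (hence is $\Ctf'$-linear), but this is \emph{not} a Frobenius structure of the type required for an $F$-isocrystal over $\Ctf'$ on the $k$-scheme $U$: the relevant semi-linearity there would be relative to a lift of the $f$-th power Frobenius of $k'$, and $\id[{\Ctf'}]$ is a lift of $\sigma_{k'}^{[k':\mb{F}_p]}$, which equals $\sigma_{k'}^{f}$ only when $k'=k$. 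So your identification fails precisely in the non-totally-ramified case, and that case does occur in the application (the $\Ctf'$ produced by the Schur-lemma argument in Proposition~\ref{PF:fgm} is an arbitrary finite Galois extension).

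The paper's proof is aware of this: it treats the totally ramified case exactly as you do, via the equivalence and \cite[Theorem~4.3.15]{Marmora:Facteurs_epsilon}, but then handles the general case by a separate argument. It first proves a structural claim about $\WD_{\Ctf'}(M'_{\eta_x})$ as a $(\bCt\otimes_{\Ct}\Ctf')$-module (using the global finite monodromy hypothesis to compute $\Psi(-1)(M_{\eta_x})$ explicitly with trivial $N$), reduces the product formula with $\Ctf'$-structure to an explicit determinant identity, and then re-runs the arguments of \cite[Theorems~4.3.11 and 4.3.15]{Marmora:Facteurs_epsilon} with $\Ctf$ replaced by $\Ctf'$ and $\mathrm{rk}(M)$ replaced by $\mathrm{rk}(M)/[\Ctf':\Ct]$. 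Your Grothendieck-group bookkeeping and the appeal to compatibility of local $\varepsilon_0$-factors with coefficient extension are in the right spirit (and close to Lemma~\ref{lemma:PF_scalar_invariance}), but they cannot substitute for the missing equivalence: the two sides of (\ref{eq:gen_PF_scalar_ext}) do not ``factor through $\Fisosur(U,X/\Ctf')$'' in general. A minor additional omission: the paper also begins by reducing to $X$ geometrically connected ($C=1$) via base change to the algebraic closure of $k$ in $k(X)$, which you do not mention.
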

\begin{proof}
 By a base change to the algebraic closure $k'$ of $k$ in $k(X)$, 
 we may assume that $X$ is geometrically connected, {\itshape i.e.\
 }$C=1$. If $\Ctf'/\Ct$ is totally ramified, we may put
 $\sigma_{\Ctf'}:=\id[\Ctf']$; then the category
 $\Fisosur(U,X/\Ct)_{\Ctf'}$ identifies to $\Fisosur(U,X/\Ctf')$, cf.\
 Example \ref{example:rep'}, and so we finish by \cite[Theorem
 4.3.15]{Marmora:Facteurs_epsilon}.

 Let us treat the general case. For any representation
 $\rho\colon\pi_1(U,\bar{\eta}_X)\rightarrow \Aut{\Ct}{W}$ and  any
 closed point $x\in |X|$, we denote by $W_{\eta_x}$ the representation
 ${\pi_1(\eta_x,\bar{\eta}_x)}\rightarrow\pi_1(U,\bar{\eta}_X)
 \xrightarrow{\rho}\Aut{\Ct}{W}$. We put
 $V_{\eta_x}:=G^{\dag}(M)_{\eta_x}$, and
 $V'_{\eta_x}:=G^{\dag}_{\Ctf'}(M')_{\eta_x}$ (cf.\ \ref{scalar_ext})
 which will be treated as a $\Ctf'$-vector space with a linear action of
 $W(\eta_x,\bar{\eta}_x)$. Let us start by proving the following
 statement.

 \begin{cl}
  The $(\bCt \otimes_{\Ct} \Ctf')$-module $\WD_{\Ctf'}(M'_{\eta_x})$ is
  free and  for any $\mf{p}\in\Spec(\bCt \otimes_{\Ct} \Ctf')$, we have
  $\WD_{\Ctf'}(M'_{\eta_x})_{\mf{p}} = \bCt \otimes _{\Ctf'}
  V'_{\eta_x}$.
 \end{cl}
 \renewcommand{\qedsymbol}{$\square$}
 \begin{proof}[Proof of the claim]
  Let us compute $\Psi(-1)_{\Ctf'}(M'_{\eta_x})$ and
  $\WD_{\Ctf'}(M'_{\eta_x})$. Their mo\-no\-dromy operators $N$ are zero,
  because $M$ is unit-root. Let us denote by
  $\underline{\text{D\'eco}}(\widehat{\Ct}\nr \otimes_{\Ct} V_{\eta_x})$
  the sub-$\Ct\nr$-vector space of $\widehat{\Ct}\nr
  \otimes_{\Ct}V_{\eta_x}$, spanned by the finite orbits under the action
  of $\pi_1(\eta_x,\bar{\eta}_x)$. By
  \cite[3.3.6]{Marmora:Facteurs_epsilon}, we have $\Psi(-1)(M_{\eta_x}) =
  \underline{\text{D\'eco}}(\widehat{\Ct}\nr \otimes_{\Ct}
  V_{\eta_x})$. Since $M$ has finite mo\-no\-dromy, we get
  $\Psi(-1)(M_{\eta_x})=\Ct\nr \otimes_{\Ct} V_{\eta_x}$ endowed with the
  diagonal action of $\pi_1(\eta_x,\bar{\eta}_x)$ (it acts on $\Ct\nr$
  via the residual action). Hence $\WD(M_{\eta_x})= \bCt \otimes_{\Ct}
  V_{\eta_x}$, where the action of $W(\eta_x,\bar{\eta}_x)$ is nothing
  else than the extension by linearity of the action of
  $W(\eta_x,\bar{\eta}_x)$. We finish by the equality
  $\WD_{\Ctf'}(M'_{\eta_x})= \bCt \otimes_{\Ct} V'_{\eta_x}   = \bCt
  \otimes_{\Ct} \Ctf' \otimes _{\Ctf'} V'_{\eta_x}.$
 \end{proof}
 \renewcommand{\qedsymbol}{$\blacksquare$}
 To establish the product formula for $M'$, it remains to prove the following
 relation: 
 \begin{multline*}
  \det_{\Ctf'}\bigl(-F^*;H^*_{\rig,c}(U,M')_{\Ctf'}\bigr)^{-1} = \\
  = q^{\frac{(1-g)\mathrm{rk}(M)}{|\Ctf':\Ct|}}\prod\limits_{x\in |U|}
  {q}_x^{\frac{\val[x](\omega)\mathrm{rk}(M)}{|\Ctf':\Ct|}}
  {{\det_{\Ctf'}}(\rho_{\eta_x}(\F_x);
  {(V_{\eta_x}')}^{I_{\eta_x}})}^{\val[x](\omega)}\prod\limits_{x\in
  X\backslash U} \emar(V'_{\eta_x},\omega_x).
 \end{multline*}
 The proof of this equation works in the same way as the proofs of
 \cite[Theorem 4.3.11 and 4.3.15]{Marmora:Facteurs_epsilon}, by
 replacing $\Ctf$ (resp.\ $\mathrm{rk}(M)$) with $\Ctf'$
 (resp.\ $\frac{\mathrm{rk}(M)}{|\Ctf':\Ct|}$).
\end{proof}

\begin{prop}
\label{PF:fgm}
 Let $X$ be a proper curve over $k$, $U$ be a non-empty open
 subscheme of $X$, and $M$ be an overconvergent $F$-isocrystal on
 $U$ over $\Ct$. Assume that $M$ is unit-root with global geometrically
 finite mo\-no\-dromy. Then $M$ satisfies the product formula
 {\normalfont(\ref{PF-iso})}.
\end{prop}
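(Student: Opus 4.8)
The plan is to reduce the general ``geometrically finite monodromy'' case to the ``finite monodromy with $\Ctf'$-structure'' case already settled in Lemma \ref{lemma:extention_to_lambda_str}. First I would invoke Remark \ref{remark:semplification}(ii) to assume $h=f$, so $\Ctf=\Ct$ and $\sigma_{\Ct}=\id[\Ct]$. By base change along the algebraic closure $k'$ of $k$ in $k(X)$ (as in the beginning of the proof of Lemma \ref{lemma:extention_to_lambda_str}) we may assume $X$ is geometrically connected, so $C=1$; one has to check that both sides of \eqref{PF-iso} are compatible with this base change, which follows since rigid cohomology, the local Weil--Deligne representations, and the local epsilon factors are all computed after the base change (and the geometrically finite monodromy hypothesis is precisely a hypothesis after passing to $\bFt$).

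Next, the key point: since $M$ is unit-root with global geometrically finite monodromy, the representation $G^{\dagger}(M)\colon\pi_1(U,\overline{\eta}_X)\rightarrow\Aut{\Ctf}{V}$ has the property that its restriction to the geometric fundamental group factors through a finite quotient. I would choose a finite Galois extension $\Ctf'/\Ct$ large enough so that, after extending scalars, the representation $\Ctf'\otimes_{\Ct}G^{\dagger}(M)$ decomposes (or can be untwisted) into pieces of the following shape: a representation with \emph{global} finite monodromy tensored with a rank-one unramified-everywhere object coming from the arithmetic Frobenius on the field of constants. Concretely, the geometric monodromy being finite means $G^{\dagger}(M)$ is, up to the finite extension $\Ctf'$, an induced/extension-built object from representations that become trivial on an open subgroup of $\pi_1(U\otimes_k\bFt,\overline{\eta}_X)$; the ``arithmetic part'' is captured by a character of $\mathrm{Gal}(\bFt/k)=\widehat{\ZZ}$, i.e.\ a geometrically constant unit-root $F$-isocrystal, which satisfies \eqref{PF-iso} by a direct computation (it is a twist of the trivial isocrystal, treated in \cite[4.3.15]{Marmora:Facteurs_epsilon}). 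Using the multiplicativity of both sides of \eqref{PF-iso} on the Grothendieck group (all the local and global factors are homomorphisms $\Gr{\mc{A}}\rightarrow\bCt^*$, as recorded in \ref{lemma:PF_scalar_invariance}) and the behaviour of the factors under tensoring with a rank-one object, the formula for $M$ reduces to the formula for objects with global finite monodromy, possibly with a $\Ctf'$-structure.

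Then I would apply Lemma \ref{lemma:PF_scalar_invariance}: $M$ satisfies \eqref{PF-iso} if and only if $\Ctf'\otimes_{\Ct}M$ satisfies the $\Ctf'$-structure version \eqref{eq:gen_PF_scalar_ext}. By the decomposition above, $\Ctf'\otimes_{\Ct}M$ is, in the Grothendieck group of $\Fisosur(U,X/\Ct)_{\Ctf'}$, a combination of objects each of which is (a twist by a geometrically constant rank-one object of) an overconvergent $F$-isocrystal with $\Ctf'$-structure having \emph{global} finite monodromy. For the latter, Lemma \ref{lemma:extention_to_lambda_str} gives \eqref{eq:gen_PF_scalar_ext} directly; for the geometrically constant rank-one twists, one computes both sides explicitly (the global cohomology of a constant unit-root isocrystal on a curve is given by the usual zeta-function computation, and the local factors at the points of $X\setminus U$ are the elementary epsilon factors of tamely/unramified characters). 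Combining these via multiplicativity yields \eqref{eq:gen_PF_scalar_ext} for $\Ctf'\otimes_{\Ct}M$, hence \eqref{PF-iso} for $M$.

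The main obstacle I expect is the second step: making precise the structure-theoretic claim that, after a suitable finite scalar extension $\Ctf'/\Ct$, a unit-root $F$-isocrystal with geometrically finite monodromy becomes, in the Grothendieck group, built out of objects with global finite monodromy twisted by geometrically constant rank-one objects. This requires carefully analyzing the Frobenius action on the (finite) geometric monodromy representation $G^{\dagger}(M)|_{\pi_1(U\otimes_k\bFt)}$ and the resulting extension $1\rightarrow\pi_1^{\mathrm{geom}}\rightarrow\pi_1\rightarrow\widehat{\ZZ}\rightarrow 1$; one needs that the Frobenius permutes the isotypic components, so that an appropriate induction from an open subgroup (which becomes available after extending the coefficient field to split the relevant characters) exhibits $M$ in the required form. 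The rest — the explicit computation for geometrically constant objects, and the bookkeeping with $\Ctf'$-structures via \ref{ss:Ctf'_ext} and \ref{lemma:PF_scalar_invariance} — is routine given the machinery already in place.
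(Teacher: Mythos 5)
Your proposal is essentially the paper's proof, and you have correctly located the one nontrivial step — but where you leave it vague (``Frobenius permutes isotypic components'', ``induction from an open subgroup''), the paper uses the simpler classical Schur-lemma argument of Deligne and Laumon (cf.\ \cite[Variant 4.10.3]{Del:const_loc}, \cite[proof of 3.2.1.7]{Lau}). After reducing in the Grothendieck group to an absolutely irreducible representation $\rho'$, a power of any Frobenius lift centralizes the finite geometric image and is therefore a scalar; adjoining a root of that scalar to $\Ctf'$ yields an untwist $\rho'\cong\tilde\rho\otimes\chi$ with $\tilde\rho$ of \emph{globally} finite image and $\chi$ an unramified character. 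That is exactly the decomposition $M'\cong M_1\otimes'M_2$ with $M_1=D^{\dagger}_{\Ctf'}(\tilde V,\tilde\rho)$ and $M_2=D^{\dagger}_{\Ctf'}(\Ctf',\chi)=\iota^*N$ geometrically constant; $M_1$ is covered by Lemma \ref{lemma:extention_to_lambda_str}, and the constant twist $M_2$ is handled by a direct calculation as in \cite[4.3.6]{Marmora:Facteurs_epsilon}. No induction from a subgroup is needed, and the surrounding outline you give (Remark \ref{remark:semplification}, Lemma \ref{lemma:PF_scalar_invariance}, multiplicativity on the Grothendieck group) coincides with the paper's.
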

\begin{proof}
 In this proof we put $\overline{\eta}:=\overline{\eta}_X$ for
 brevity. Let $\rho\colon \pi_1(U,\overline{\eta}) \rightarrow
 \Aut{\Ctf}{V}$ be the representation associated to $M$ by
 $G^{\dagger}$, cf.\ \ref{Tsuzuki-Katz}. By assumptions, $M$ has global
 geometrically finite mo\-no\-dromy: {\it i.e.\ }the restriction of
 $\rho$ to $\pi_1(U\otimes_k \bFt,\overline{\eta})$ factors through a
 finite quotient $I$. In particular, the representation $\rho$ factors
 through a quotient $Q$ of $\pi_1(U,\overline{\eta})$ which is an
 extension of $\widehat{\ZZ}$ by the finite group
 $I$. By Remark \ref{remark:semplification}, we may assume that the
 order of the Frobenius $\Fro[M]$ of $M$ is $f$, so that $\Ct=\Ctf$ and
 $\sigma_{\Ct}=\id[\Ct]$.

 Let us show how we can reduce to the case of global finite
 mo\-no\-dromy which is treated in Lemma
 \ref{lemma:extention_to_lambda_str}. The equation (\ref{PF-iso}) that
 we have to prove is a relation in the  Grothendieck group of
 $\Fisosur(U,X/\Ct)$. By Lemma \ref{lemma:PF_scalar_invariance}, we
 can extend scalars to any finite Galois extension $\Ctf'/\Ct$. The
 equivalence $G^{\dagger}$ above extends to an equivalence of Tannakian
 categories
 $G^{\dagger}_{\Ctf'}\colon\Fisosur(U,X/\Ct)_{\Ctf'}\ur\rightarrow
 \mathrm{Rep}_{\Ct}^{\mathrm{fg}}(\pi_1(U,\overline{\eta}))_{\Ctf'}$
 (cf.\ \ref{ss:Ctf'_ext}). We identify
 $\mathrm{Rep}_{\Ct}^{\mathrm{fg}}(\pi_1(U,\overline{\eta}))_{\Ctf'}$
 with $\mathrm{Rep}_{\Ctf'}^{\mathrm{fg}}(\pi_1(U,\overline{\eta}))$
 (cf.\ \ref{example:rep'}). The product formula is a relation in the
 Grothendieck group of
 $\mathrm{Rep}_{\Ctf'}^{\mathrm{fg}}(\pi_1(U,\overline{\eta}))$; we may
 assume that the representation is absolutely irreducible.
 By a classical argument using Schur's lemma (cf.\ for example
 \cite[Proof of 3.2.1.7]{Lau} or \cite[Variant 4.10.3]{Del:const_loc}),
 the representation $(V',\rho')$ is isomorphic to
 $(\tilde{V},\tilde{\rho})\otimes_{\Ctf'}(\Ctf',\chi)$,
 where $\tilde{\rho}\colon \pi_1(U,\overline{\eta}) \rightarrow
 \Aut{\Ctf'}{\tilde{V}}$ factors through a finite quotient and
 $\chi\colon\pi_1(U,\overline{\eta}) \rightarrow {(\Ctf')}^*$ is an
 unramified character. Let $D^{\dagger}_{\Ctf'}\colon
 \mathrm{Rep}_{\Ct}^{\mathrm{fg}}(\pi_1(U,\overline{\eta}))_{\Ctf'}\rightarrow
 \Fisosur(U,X/\Ct)_{\Ctf'}\ur$ be a quasi-inverse of
 $G^{\dagger}_{\Ctf'}$. Let us put
 $M':=D^{\dagger}_{\Ctf'}((V',\rho'))$,
 $M_1:=D^{\dagger}_{\Ctf'}((\tilde{V},\tilde{\rho}))$ and
 $M_2:=D^{\dagger}_{\Ctf'}((\Ctf',\chi))$. We have $M'\cong M_1 \otimes'
 M_2$ in $\Fisosur(U,X/\Ct)_{\Ctf'}\ur$ (cf.\ \ref{sub:tens_prod}). By
 construction, $M_1$ has global finite mo\-no\-dromy and $M_2$ is
 constant as isocrystal, {\it i.e.\ }$M_2 = \iota^*N$, for
 $N\in\Fiso(\Spec(k)/\Ct)_{\Ctf'}\ur$ and $\iota\colon U \rightarrow
 \Spec(k)$. Since $\iota^*N$ is a constant isocrystal, we have
 $H^i_{\rig,c}(U,M_1 \otimes' \iota^*N )_{\Ctf'} \cong
 H^i_{\rig,c}(U,M_1)_{\Ctf'} \otimes' N$. By a direct calculation
 analogous to that of the proof of
 \cite[4.3.6]{Marmora:Facteurs_epsilon},  we reduce to the case of
 global finite mo\-no\-dromy, which is proven in
 \ref{lemma:extention_to_lambda_str}. 
\end{proof}

\begin{cor}
 \label{cor:cong_Fiso_spec}
 Let ${M}$ be in $F$-$\mr{Hol}(\eta_{\ms{S}_0})$. Then the canonical
 extension ${M}^{\mathrm{can}}$ satisfies the product formula
 {\normalfont(\ref{PF-iso})}.
\end{cor}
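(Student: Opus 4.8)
The goal is to deduce the product formula (\ref{PF-iso}) for the canonical extension $\mc{M}^{\mr{can}}$ of an arbitrary object $\mc{M}$ of $F$-$\mr{Hol}(\eta_{\ms{S}_0})$ from Proposition \ref{PF:fgm}, which handles unit-root overconvergent $F$-isocrystals with global geometrically finite mo\-no\-dromy. Since both sides of (\ref{PF-iso}) are multiplicative in short exact sequences (the global factor $\det(-F^*;H^*_{\rig,c})$ is multiplicative by the long exact sequence of rigid cohomology, the local epsilon factors $\emar$ are multiplicative by Proposition \ref{defepsfac}, and the rank, $\det_M(x)$, and $\val[x](\omega)$-factors are obviously so), and since the canonical extension functor is exact by \ref{intcanext}, it suffices to establish the formula on a set of generators of the Grothendieck group of $F$-$\mr{Hol}(\eta_{\ms{S}_0})$. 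The natural strategy is the same \emph{d\'evissage} as in \cite[3.2.1.7]{Lau} and \cite[4.3.15]{Marmora:Facteurs_epsilon}: first reduce, using $p$-adic mo\-no\-dromy and the structure theory of Deligne modules (cf.\ \ref{mainresultscrewrev}, \ref{WD}), to the case where $\mc{M}$ comes from a free differential $\mc{R}$-module with Frobenius structure whose associated Weil-Deligne representation is irreducible; then twist by an appropriate rank-one object to reduce to the unit-root case, and finally invoke Proposition \ref{PF:fgm}.

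\textbf{Key steps.} First I would reduce to the case $h=f$ (so $\Ct=\Ctf$, $\sigma_{\Ct}=\id[\Ct]$) by Remark \ref{remark:semplification}, and to the geometrically connected case by a base change as in the proof of Lemma \ref{lemma:extention_to_lambda_str}. Next, using the distinguished triangle (\ref{anotherloctriag}) and the fact that the product formula is immediate for punctual modules (both sides reduce to a determinant of Frobenius on a finite $\sigma$-$K$-space, matched by the definition of $\varepsilon$ for punctual objects in Proposition \ref{defepsfac}(2) together with (\ref{rigDmodcompact})), one reduces to the case where $\mc{M}$ is a free differential $\mc{R}$-module with Frobenius structure, i.e.\ $\mc{M}$ is an object of $\mr{Hol}'(\eta_{\ms{S}_0})$ coming from a generic overconvergent $F$-isocrystal near $0$; equivalently $\mc{M}^{\mr{can}}$ restricted to a punctured neighborhood of $0$ is an overconvergent $F$-isocrystal. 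At this point I would pass to the Weil-Deligne side via the functor $\WD$ of \ref{WD}, working in the Grothendieck group, and reduce to $\WD(\mc{M})$ absolutely irreducible. Then, by the standard Schur-lemma argument (cf.\ \cite[Variant 4.10.3]{Del:const_loc}, already used in the proof of Proposition \ref{PF:fgm}), after a finite totally ramified scalar extension — harmless by an analogue of Lemma \ref{lemma:PF_scalar_invariance} — the corresponding isocrystal is a twist $M_1\otimes M_2$ of an isocrystal $M_1$ with finite geometric mo\-no\-dromy by a rank-one object $M_2$; normalizing the Frobenius slope via the twist $(-)^{(\lambda)}$ of \ref{sb:dec_lambda}, one arranges $M_1$ to be unit-root, so Proposition \ref{PF:fgm} applies. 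The contribution of the rank-one twist $M_2$ to both sides is then computed directly, exactly as in \cite[4.3.6]{Marmora:Facteurs_epsilon}; one must also track how the canonical extension interacts with these operations, using that $\mc{M}\mapsto\mc{M}^{\mr{can}}$ commutes with tensor products and duals and sends unit-root objects to unit-root objects (\ref{intcanext}).

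\textbf{Main obstacle.} The delicate point is not the Tannakian bookkeeping but the passage between \emph{local} data at $0$ and the \emph{global} normalization used in Proposition \ref{PF:fgm}: the canonical extension $\mc{M}^{\mr{can}}$ is an $F$-isocrystal on $\mb{P}^1_k\setminus\{0,\infty\}$ overconvergent along $\{0,\infty\}$, regular at $\infty$, and the product formula applied to it involves the local factor at $\infty$ as well; one must check that this factor at $\infty$ is trivial (it is, since $\mc{M}^{\mr{can}}$ is a ``special'' convergent isocrystal, regular at $\infty$, so its Weil-Deligne representation there is trivial and $\emar$ at $\infty$ reduces to the expected power of $q_\infty$ and determinant, absorbed into the $q^{C(1-g)\mathrm{rk}}$ and the $\mr{sp}_0$-factors), and that the remaining local contribution at $0$ matches $\varepsilon(\mc{M}^{\mr{can}}|_{S_0},\omega_0)=\varepsilon_\pi(\mc{M},\omega)$. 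This compatibility is essentially the content of the definitions in \ref{sb:Loc_con}, but verifying it carefully — and checking that the rank-one twist argument is compatible with the canonical extension on the nose, not merely up to an isocrystal on $\mb{P}^1$ — is where the real work lies. I expect the rest to follow by the template of \cite[4.3.15]{Marmora:Facteurs_epsilon} with only notational changes.
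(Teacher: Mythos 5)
Your proof does not match the paper's and has a genuine gap. The paper's argument is short and rests on two inputs you do not mention: (1) Kedlaya's slope filtration theorem, which gives a filtration of $\mc{M}$ with isoclinic graded pieces, so that after a harmless totally ramified extension of $\Lambda$ and a twist by $K^{(-\lambda)}$ one reduces to $\mc{M}=\mc{S}^{(\lambda)}$ with $\mc{S}$ unit-root; and (2) Crew's theorem (\cite[2.6, 2.7]{Crew:irr_swan}) that the canonical extension of a \emph{unit-root} object has, by its very construction, \emph{global} geometrically finite mo\-no\-dromy. Once one has these two facts, Proposition~\ref{PF:fgm} applies directly after a twist-compatibility check \cite[4.3.6]{Marmora:Facteurs_epsilon}.

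Your proposal instead tries to re-run the Schur's-lemma decomposition from the proof of Proposition~\ref{PF:fgm} itself. But that argument lives in $\mathrm{Rep}^{\mathrm{fg}}_{\Ctf'}(\pi_1(U,\bar\eta))$: it presupposes that $\mc{M}^{\mr{can}}$ already corresponds, via Tsuzuki's equivalence $G^\dagger$, to a $\pi_1$-representation with finite geometric monodromy — i.e.\ that $\mc{M}^{\mr{can}}$ is unit-root with global geometrically finite monodromy. For an arbitrary $\mc{M}$ this is precisely what needs establishing, and it is not a consequence of finite \emph{local} geometric monodromy at $0$; it is special to the canonical extension and rests on Crew's result. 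So you cannot "pass to the Weil-Deligne side and reduce to $\WD(\mc{M})$ absolutely irreducible, then twist by a rank-one unramified character": the Weil-Deligne picture is local at $0$, while the rank-one twist decomposition you invoke must happen globally on $\mathbb{P}^1$, and there is no group representation globally until you have reduced to the unit-root case. A further minor confusion: you spend a step using the localization triangle (\ref{anotherloctriag}) to reduce from $F$-$\mr{Hol}(\ms{S}_0)$ to free differential $\mc{R}$-modules, but $\mc{M}$ is already an object of $F$-$\mr{Hol}(\eta_{\ms{S}_0})$, i.e.\ already a free differential module, so that reduction is vacuous here. The "main obstacle" you identify (the factor at $\infty$) is not where the difficulty lies; the actual content you are missing is the slope filtration and Crew's finiteness theorem.
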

\begin{proof} 
 By Kedlaya's filtration theorem \cite[7.1.6]{Kedlaya:Slope_Rev}, there
 exists a filtration 
 \begin{equation*}
  {M}={M}_0\supset {M}_1\supset\ldots\supset {M}_s=0 
 \end{equation*}
 such that the quotient ${M}_i/{M}_{i+1}$ is isoclinic for every
 $i$. By applying the canonical extension functor \ref{intcanext}, we
 get an analogous filtration on ${M}^{\mathrm{can}}$. Considering
 that the equation (\ref{PF-iso})
 we have to prove is a relation in the  Grothendieck group of
 $\Fisosur(\mathbb{G}_m,\mathbb{P}^1_k/\Ct)$, we may assume ${M}$ to
 be isoclinic of Dieudonn\'{e}-Manin slope $\lambda\in\QQ$. By the definition of Dieudonn\'{e}-Manin slopes,
 $\lambda$ belongs to the discrete subgroup
 $(\rg({M})eh)^{-1}\ZZ$. Taking a finite {\em totally ramified}
 extension of $\Ctf$ does not affect the local factors; therefore, by
 extending $\Ctf$ to such an extension of degree $\rg({M})$, we may
 assume that $\lambda$ belongs to $(eh)^{-1}\ZZ$. So the isocrystal
 $K^{(-\lambda)}$ is of rank $1$ by construction. 
We put $\widetilde{M}:= {M} \otimes K^{(-\lambda)}$. We have 
${M}=\widetilde{M}^{(\lambda)}$, with $\widetilde{M}$  unit-root.
 By applying
 \cite[Lemme 4.3.6]{Marmora:Facteurs_epsilon} to ${M}^{\mathrm{can}}=
 (\widetilde{M}^{\mathrm{can}})^{(\lambda)}$, we may assume $\lambda=0$. Since
 $\widetilde{M}$ is unit-root, $\widetilde{M}^{\mathrm{can}}$ has global geometrically
 finite  mo\-no\-dromy by the very construction of the canonical
 extension (cf.\ \cite[2.6 and 2.7]{Crew:irr_swan}), and we finish by
 Proposition \ref{PF:fgm}.
\end{proof}

\subsection{Proof of the main result}
\label{sb:proof_gen_case}
We use the notation of \ref{setupFour}.
\begin{lem}
 \label{Hurwitz}
 Let $E$ be an overconvergent $F$-isocrystal on $\mb{A}^1_k$. Suppose
 that it is regular at infinity. Then $E$ is a constant overconvergent
 $F$-isocrystal.
\end{lem}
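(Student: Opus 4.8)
The statement asserts that an overconvergent $F$-isocrystal $E$ on $\mb{A}^1_k$ which is regular at $\infty$ must be constant (i.e.\ isomorphic to a direct sum of copies of the trivial isocrystal, twisted by a $\sigma$-$\Ct$-module on $\Spec(k)$). The natural approach is to compute the rigid cohomology of $E$ via the local Fourier transform and the stationary phase formula established in Section~\ref{section4}, and deduce that $E$ has no singularities at all and no irregularity, whence it is trivial. First I would package $E$ as a holonomic $F$-$\DdagQ{\ms{P}}(\infty)$-module $\ms{M}:=\mr{sp}_*E$ (via the convention of \ref{conventionocisoc}), so that the only possible singularities of $\ms{M}$ lie in $\mb{A}^1_k$ and, by hypothesis, $\mr{irr}(\ms{M}|_{\eta_\infty})=0$, i.e.\ the slope at infinity is $0\leq 1$. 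Thus Theorem~\ref{SPT} (regular stationary phase) applies, giving $\ms{H}^1(\ms{F}_\pi(\ms{M}))|_{\eta_{\infty'}}\cong\bigoplus_{s\in S}\tau'^*\ms{F}_\pi^{(s,\infty')}(\ms{M})$, where $S$ is the set of singularities.

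The key computation is to show $S=\emptyset$. For this I would invoke Corollary~\ref{calcofirr}: if $\ms{M}$ has $0$ as its only singularity and is regular at $\infty$, then its geometric Fourier transform $\ms{E}'$ is singular only at $0'$, with $s_{0'}(\ms{E}')=0$ and slope $<1$ at $\infty'$. More directly, one should argue as in \cite[3.4.2]{Lau}/Proposition~\ref{exactseq}: the hypotheses imply $\ms{M}$ is actually an overconvergent $F$-isocrystal on a dense open $U$, and unramified at infinity (since regular with slope $0$ forces $\ms{M}|_{\eta_\infty}$ trivial). Actually the cleanest route is a counting argument with the Grothendieck–Ogg–Shafarevich formula (\ref{GOSformula}): we have $-\chi(\mb{A}^1,\ms{M})=r(\ms{M})-\sum_x\deg(x)a_x(\ms{M})+\mr{irr}(\ms{M}|_{\eta_\infty})$, and since $E$ is an isocrystal on all of $\mb{A}^1_k$, each $a_x(\ms{M})=r(\ms{M})$ by Lemma~\ref{smalllemforGOSL} (equivalently $m_x=0$ for all $x$), and $\mr{irr}(\ms{M}|_{\eta_\infty})=0$; combined with $H^0_{\mr{rig}}(\mb{A}^1,E)$ being the horizontal sections and $H^2_{\mr{rig}}(\mb{A}^1,E)=0$, one reads off that $E$ has full horizontal sections, hence is constant.

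Alternatively — and this is probably the intended proof, paralleling \cite{Lau} — one shows that $\ms{M}$ regular at $\infty$ with no singularity in $\mb{A}^1$ has Fourier transform supported at $0'$; then applying the inversion formula (Theorem~\ref{involForge}) and Proposition~\ref{calcfouriereasy} identifies $\ms{M}$ with $q^!(V)$ for a $\sigma$-$\Ct$-module $V$, i.e.\ $\ms{M}$ is constant. To make this rigorous: by Lemma~\ref{lemcyclecalc}, regularity at $\infty$ and the absence of singularities in $\mb{A}^1$ means $\mr{Cycl}(\ms{M}|_{\mb{A}^1})=r\cdot[\mb{A}^1]$, so $\ms{M}$ is a convergent isocrystal; combined with overconvergence along $\infty$ and regularity there, the slope filtration has no jumps, and one can run the argument of Corollary~\ref{calcofirr} backwards to conclude $\ms{F}_\pi(\ms{M})$ is punctual at $0'$. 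Then $\ms{F}'_\pi\ms{F}_\pi(\ms{M})[2]\cong\ms{M}(1)$ and $\ms{F}'_\pi(i_{0'+}V)$ is a constant isocrystal by a variant of Proposition~\ref{calcfouriereasy}.

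\textbf{Main obstacle.} The delicate point is not the cohomological bookkeeping but verifying that ``regular at $\infty$'' genuinely forces $\ms{M}|_{\eta_\infty}$ to be \emph{trivial} as a differential $\mc{R}$-module with Frobenius structure — regularity alone only gives unipotent (``log'') monodromy a priori, not triviality. Here one must use the Frobenius structure crucially: a regular $F$-differential module over the Robba ring is automatically trivialized over $K^{\mr{ur}}$ after a tame base change, but to get genuine triviality over $\mc{R}$ one needs the global rigidity coming from $E$ being an isocrystal on all of $\mb{A}^1_k$ (no other singularities to ``absorb'' the monodromy), i.e.\ one invokes that a convergent $F$-isocrystal on $\mathbb{A}^1$ overconvergent and regular at $\infty$ has trivial $\pi_1$ — this is where Corollary~\ref{cor:cong_Fiso_spec} together with the canonical extension machinery (\ref{intcanext}) and the structure of regular $p$-adic differential equations \cite[12.3]{CM} enter. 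Once triviality at $\infty$ is in hand, the GOS/Fourier argument closes immediately.
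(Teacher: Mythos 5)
Your second paragraph has the right idea — GOS plus the vanishing of $H^2_{\rig}$ — but it does not actually close, and the Fourier-theoretic routes in the other paragraphs are far more machinery than the statement needs. The paper's proof is short and entirely cohomological: since $E$ has no singularities on $\mb{A}^1_k$ and is regular at $\infty$, the GOS formula gives $\chi(\mb{A}^1_k,E)=\rg(E)$; combined with $H^2_{\rig}(\mb{A}^1_k,E)=0$ this yields $h_0:=\dim_{\Ct}H^0_{\rig}(\mb{A}^1_k,E)\geq\rg(E)$. The decisive ingredient — which you do not supply — is the result of Chiarellotto and Le Stum \cite[2.1.2]{Chiar-LeStum:F-iso_unipot} that the canonical map $\iota^*H^0_{\rig}(\mb{A}^1_k,E)\rightarrow E$ is an \emph{injection} of $F$-isocrystals, where $\iota$ is the structural morphism. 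This forces $h_0\leq\rg(E)$, so $h_0=\rg(E)$ and the injection is an isomorphism, i.e.\ $E$ is constant. Your phrase ``one reads off that $E$ has full horizontal sections, hence is constant'' is precisely the unproved step: $h_0\geq\rg(E)$ alone does not bound $h_0$ nor give the isomorphism with a constant object — you need the injectivity of $\iota^*H^0\hookrightarrow E$ to turn the inequality around and to identify $E$ with the pullback of its $H^0$.

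Your ``main obstacle'' paragraph is also misdirected. You worry that regularity at $\infty$ only gives unipotent monodromy, not triviality of $\ms{M}|_{\eta_\infty}$, and that one must separately force triviality. In fact the paper's proof never establishes (or needs) triviality of the local module at $\infty$ as an intermediate step; regularity at $\infty$ enters only through GOS, to get $\chi=\rg(E)$, and the conclusion that $E$ (hence its restriction to $\eta_\infty$) is constant comes out at the end from the Chiarellotto--Le Stum injection plus rank-counting. So the difficulty you flag is an artifact of trying to localize the argument at $\infty$; the global injection from $H^0$ makes that unnecessary.
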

\begin{proof}
 Let $\iota$ denote the structural morphism of $\mb{A}^1_k$.
 By construction of the rigid cohomology \cite[(8.1.1)]{Crew:ens}, we
 have $H^2_{\rig}(\mb{A}^1_k, E) =0$ and, by the GOS formula for rigid
 cohomology, we get $\dim_{\Ct} H^0_{\rig}(\mb{A}^1_k,E)-\dim_{\Ct}
 H^1_{\rig}(\mb{A}^1_k, E) = \chi(\mb{A}^1_k,E)=\rg(E)$.
 In particular $h_0:= \dim_{\Ct} H^0_{\rig}(\mb{A}^1_k, E)\geq\rg(E)$.
 By \cite[2.1.2]{Chiar-LeStum:F-iso_unipot}, there is an injection of
 $F$-isocrystals $\iota^*H^0_{\rig}(\mb{A}^1_k, E)\hookrightarrow E$, so
 that $h_0=\rg(E)$ and  $E$ is isomorphic to the constant isocrystal
 $\iota^* H^0_{\rig}(\mb{A}^1_k, E)$.
\end{proof}

\subsubsection{}
\label{intnotrecipmap}
Let $\ms{S}:=\mr{Spf}(k\cc{u})$, $\ms{S}':=\mr{Spf}(k\cc{u'})$. We put
$\mc{K}:=k\pp{u}$, $\mc{K}':=k\pp{u'}$. \index{.@miscellaneous!S@$\ms{S}$} \index{.@miscellaneous!S@$\ms{S}'$}
\index{.@miscellaneous!K@$\mc{K}$, $\mc{K}'$}
Let
$\pi_{\infty'}:\ms{S}_{\infty'}\xrightarrow{\sim}\ms{S}'$ sending $u'$
to $1/x'$. For a free differential module ${M}$ with Frobenius
structure on $\eta'$, using the canonical extension, there exists a
canonical overconvergent $F$-isocrystal on $\mb{G}_m$ denoted by
$\ms{M}$ such that it is tamely ramified at $0$ and
$\ms{M}|_{\eta_{\infty'}}\cong\pi^*_{\infty'}({M})$. We denote
$\Psi(\ms{M}|_{S_1})$ by $\Psi_1({M})$. \index{functors!.Psi1@$\Psi_1$}

On the other hand, suppose moreover that ${M}$ is of rank $1$. Using
the linearization functor (\ref{linearizationfunc}), we get a character
\begin{equation*}
 \chi:=(L_k\circ\Psi)({M})\colon G_{\mc{K}'}\rightarrow
  K^{\mr{ur}*}.
\end{equation*}
For $f'\in\mc{K}'^*$, we put
\begin{equation*}
 {M}(f'):=(\chi\circ\mr{rec})(f') \in K^{\mr{ur}*},
\end{equation*}
where $\mr{rec}\colon\mc{K}'^*\rightarrow G_{\mc{K}'}^{\mr{ab}}$ \index{.@miscellaneous!rec@$\mr{rec}$, 
$M(f')$} is the
reciprocity map normalized {\it \`{a} la} Deligne ({\it i.e.}\ it sends
uniformizers of $\mc{K}'$ to elements of $G^{\mr{ab}}_{\mc{K}'}$ whose
image in $G^{\mr{ab}}_k$ is the geometric Frobenius $F$).

When ${M}$ is regular of rank $1$, we get
\begin{equation}
 \label{calcrecipsp}
 \mr{tr}(F^*,\Psi_1({M}))={M}(-u).
\end{equation}
This can be seen in exactly the same way as \cite[3.5.2.1]{Lau}, and
we leave the details to the reader.

\begin{rem*}
 We need to be careful for the multiplicativity. Namely, given
 rank $1$ free differential modules ${M}$, ${M}'$, ${M}''$ such that
 ${M}={M}'\otimes{M}''$, we have
 \begin{equation*}
  {M}(-1)(f')={M}'(-1)(f')\cdot{M}''(-1)(f').
 \end{equation*}
 See \ref{WD} for an explanation.
\end{rem*}

\begin{prop}[{\cite[Th\'{e}or\`{e}me 3.4.2]{Lau}}]
 \label{Laumondetformula}
 Let $U\subset\mb{A}^1$ be an open subscheme, and we put
 $S:=\mb{A}^1\setminus U$. Let $\ms{M}$ be an overconvergent
 $F$-isocrystal of rank $r$ on $U$ which is unramified at infinity. Then
 we get
 \begin{equation*}
  \det(R\Gamma_c(U_{K^{\mr{ur}}},\ms{M})[1])\otimes\det(K^{\mr{ur}}
   \otimes_K\ms{M}|_{s_\infty})(-r)\cong
   \bigotimes_{s\in S}\Psi_1\bigl(\det\bigl(\Phi^{(0,\infty')}(
   j_!\tau_{s*}\ms{M}|_{\eta_s})\bigr)(-\gamma_s-1)\bigr)
 \end{equation*}
 as Deligne modules, where
 $\gamma_s=\mr{rk}(\ms{M}|_{\eta_s})+\mr{irr}(\ms{M}|_{\eta_s})$, and we used
 the notation of {\normalfont\ref{defvanishcycle}}.
\end{prop}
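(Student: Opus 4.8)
\textbf{Proof plan for Proposition \ref{Laumondetformula}.}
The plan is to follow the track of Laumon's proof of \cite[Th\'{e}or\`{e}me 3.4.2]{Lau}, transporting it to the world of arithmetic $\ms{D}$-modules by using the exact sequence of Proposition \ref{exactseq} in place of the $\ell$-adic exact sequence linking nearby and vanishing cycles. First I would take determinants of the exact sequence of Deligne modules
\begin{equation*}
 0\rightarrow H^1_{\mr{rig},c}(U_{K^{\mr{ur}}},\ms{M})
 \rightarrow\Psi(\widehat{\ms{M}'}|_{S_{0'}})(-2)\rightarrow
 (K^{\mr{ur}}\otimes_K\ms{M}|_{s_\infty})(-1)\rightarrow
 H^2_{\mr{rig},c}(U_{K^{\mr{ur}}},\ms{M})
 \rightarrow 0
\end{equation*}
of Proposition \ref{exactseq} (noting $H^0_{\mr{rig},c}=0$ here since $U$ is affine of positive dimension and $\ms{M}$ has no constant sub-isocrystal after the reductions below, or simply by treating it as a contribution absorbed in the Euler characteristic as in \cite{Lau}), to obtain a relation between $\det(R\Gamma_c(U_{K^{\mr{ur}}},\ms{M})[1])$, the one-dimensional twist coming from $\ms{M}|_{s_\infty}$, and $\det\bigl(\Psi(\widehat{\ms{M}'}|_{S_{0'}})\bigr)$ with its Tate twist $(-2)\otimes r$. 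Here $\ms{M}':=j_!j^+\ms{M}$ is the extension by zero along $S$ of the underlying $\DdagQ{\ms{P}}(\infty)$-module of $\ms{M}$.

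Next I would decompose $\widehat{\ms{M}'}|_{S_{0'}}$ according to the stationary phase theorem \ref{SPT} (applied to $\ms{M}'$, whose slope at infinity is at most $1$ by Corollary \ref{locfourstricless} combined with the fact that $\ms{M}$ is unramified at infinity, so $\widehat{\ms{M}'}|_{\eta_{\infty'}}=\bigoplus_{s\in S}\tau'^*\ms{F}^{(s,\infty')}(\ms{M}')$) and via Lemma \ref{locFourrel} rewrite each local Fourier transform as $\mr{Res}^{K_s}_K\bigl(\Phi^{(0,\infty')}(\tau_{s*}\ms{M}'|_{S_s})\otimes\mc{L}(s)\bigr)$, with $\ms{M}'|_{S_s}=j_!\tau_{s}^*(\ms{M}|_{\eta_s})$. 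Since $\Phi^{(0,\infty')}(j_!\tau_{s*}\ms{M}|_{\eta_s})$ has rank $\gamma_s=\mr{rk}(\ms{M}|_{\eta_s})+\mr{irr}(\ms{M}|_{\eta_s})$ by Corollary \ref{locfourstricless}, taking determinants turns the $\otimes\mc{L}(s)$ into a $\gamma_s$-th power of the rank-one Dwork module, which after applying $\Psi$ (i.e.\ $\Psi_1$ in the notation of \ref{intnotrecipmap}) and using the multiplicativity remark after \eqref{calcrecipsp} produces exactly the twist $(-\gamma_s)$; one extra twist $(-1)$ comes from the twist $(-2)$ in Proposition \ref{exactseq} together with the shift in $\Psi$ and the normalization of the canonical extension $\ms{M}$ on $\mb{G}_m$ used in \ref{intnotrecipmap}. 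Collecting the contributions over $s\in S$ and matching the global twists (the $(-r)$ on the left, and the $(-2)$-twist accounting) yields the claimed isomorphism. Throughout, compatibility with Frobenius structures is guaranteed by Theorem \ref{stationaryfrobcom} and the fact that all the isomorphisms used (proper base change, K\"unneth, Huyghe's comparison, Lemma \ref{locFourrel}) are compatible with Frobenius.

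The main obstacle I anticipate is bookkeeping of the Tate twists: matching $(-2)$ from Proposition \ref{exactseq}, the $(-1)$ shift built into $\Psi(-1)$ and into the definition of $\Psi_1$ via the canonical extension, the twist $(-r)$ attached to $K^{\mr{ur}}\otimes_K\ms{M}|_{s_\infty}$, and the $\gamma_s$ powers of $\mc{L}(s)$ absorbed via \eqref{calcrecipsp} and the reciprocity normalization. A careful tracking of these, exactly as in \cite[\S3.4]{Lau} but adjusted for the Tate twists that appear in $p$-adic cohomology (cf.\ \eqref{rigDmodpush}, \eqref{rigDmodcompact}), is the only delicate point; the rest is a formal combination of the results established in \S\S\ref{section4}--\ref{section6}. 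I would also need to verify that the reductions made in \cite{Lau} (to $\ms{M}$ with no punctual part, to $\ms{M}'=j_!j^+\ms{M}$, and handling the degenerate terms $H^0_{\mr{rig},c}$ and $H^2_{\mr{rig},c}$) go through verbatim, which they do by {\itshape d\'evissage} using the additivity of $\mr{Cycl}$, of the local Fourier transforms (Proposition \ref{exactness}), and of the determinant construction on Grothendieck groups.
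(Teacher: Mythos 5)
Your proposal gets off to the right start — take determinants in the exact sequence of Proposition~\ref{exactseq} and then bring in the stationary phase theorem — but it has a genuine gap at the crucial bridge between the two pieces. The exact sequence involves $\Psi(\widehat{\ms{M}'}|_{S_{0'}})$, i.e.\ the behaviour of $\widehat{\ms{M}'}$ \emph{at $0'$}, while the stationary phase Theorem~\ref{SPT} describes $\widehat{\ms{M}'}|_{\eta_{\infty'}}$, i.e.\ the behaviour \emph{at $\infty'$}. You write that you would ``decompose $\widehat{\ms{M}'}|_{S_{0'}}$ according to the stationary phase theorem'' — but SPT says nothing directly about the fiber at $0'$, and there is no formal way to transport the decomposition from $\infty'$ to $0'$. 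This is precisely the heart of Laumon's argument, and the paper handles it by a non-local step that your plan omits: since the right-hand side of the determinant of the exact sequence is unramified, so is $\det(\widehat{\ms{M}'}|_{S_{0'}})$; combined with Corollary~\ref{calcofirr} (the only singularity of $\widehat{\ms{M}'}$ in $\ms{A}'$ is at $0'$), this produces a rank-one overconvergent $F$-isocrystal $\ms{N}$ on all of $\ms{A}'$ extending $\det(\widehat{\ms{M}'})$. One then computes $\ms{N}|_{\eta_{\infty'}}$ by stationary phase, uses the Hasse--Arf theorem (\cite[14.12]{CM}) to see that each $\det(\Phi^{(0,\infty')}(\cdots))$ has slope $0$, so that $\ms{N}' := \ms{N}\otimes\ms{L}(-\delta\cdot x')$ is a \emph{convergent} $F$-isocrystal regular at infinity, and invokes Lemma~\ref{Hurwitz} to conclude that $\ms{N}'$ is constant. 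Only this constancy lets one read off the value at $0'$ from the description at $\infty'$. Without it, your argument cannot close.

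There is also a smaller bookkeeping error in your account of the twist $(-\gamma_s)$: you attribute it to the $\mc{L}(s)$ factor, but $\mc{L}(s)$ contributes the rank-one Dwork object $\mc{L}(\delta)$ (which is exactly what is cancelled by twisting $\ms{N}$ to obtain $\ms{N}'$). The $(-\gamma_s)$ actually arises from $\det\bigl(\ms{F}^{(s,\infty')}(\ms{M}')(-1)\bigr)=\det\bigl(\ms{F}^{(s,\infty')}(\ms{M}')\bigr)(-\gamma_s)$, i.e.\ from pulling a single global Tate twist $(-1)$ through the determinant of a rank-$\gamma_s$ module. The extra $(-1)$ in $(-\gamma_s-1)$ then comes from the compatibilities $\det\circ\mb{V}\cong\mb{V}\circ\det$ and $\mb{D}_{\eta}(\det(\mc{M})(-1))\cong\det(\mb{D}_{\eta}(\mc{M}(-1)))$ turning $\det(\Psi(\cdot)(-2))$ into $\Psi(-1)(\det(\cdot)(-1))$, which is the identity \eqref{detpsicom} of the paper. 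You flagged the twist bookkeeping as the only delicate point; in fact the construction of the global $\ms{N}$ and Lemma~\ref{Hurwitz} are the essential new inputs you need to add.
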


\begin{proof}
 Using the notation of Proposition \ref{exactseq}, we get
 \begin{equation*}
  \det(\Psi(\nFnp{\ms{M}'}|_{S_0})(-2))\cong
   \det(R\Gamma_c(U_{K^{\mr{ur}}},\ms{M})[1])\otimes
   \det(K^{\mr{ur}}\otimes_K\ms{M}|_{s_\infty}(-1))
 \end{equation*}
 as Deligne modules by the same proposition. Since the
 $G_{\mc{K}_0}$-action on the right hand side is unramified ({\it
 i.e. }the action of the inertia subgroup is trivial), the left hand
 side is also unramified. Since $\mb{V}$ is an exact functor and
 commutes with tensor product, we get an equivalence of functors
 $\det\circ\mb{V}\cong\mb{V}\circ\det$. On the other hand, for a
 free differential module ${M}$, we get
 $\mb{D}_{\eta}(\det({M})(-1))\cong\det(\mb{D}_{\eta}({M}(-1)))$. Thus,
 we get
 \begin{equation}
  \label{detpsicom}
  \det(\Psi(\nFnp{\ms{M}'}|_{S_0})(-2))\cong
   \Psi(-1)(\det(\nFnp{\ms{M}'}|_{S_0}(-1))).
 \end{equation}
 We note that the singularity of $\nFnp{\ms{M}'}$ in $\Aone$ is
 only at $0'$ by Corollary \ref{calcofirr}, and we showed that
 $\det(\nFnp{\ms{M}'})$ is unramified at $0'$. Thus there exists an
 overconvergent $F$-isocrystal $\ms{N}$ on $\Aone$ such that
 $\ms{N}|_{\Aoned-\{0'\}}\cong\det(\nFnp{\ms{M}'})|
 _{\Aoned-\{0'\}}$.

 Now, we get
 \begin{align*}
  \ms{N}|_{\eta_\infty}=\det\bigl(\nFnp{\ms{M}'}|_{\eta_\infty}
  (-1)\bigr)&\cong\bigotimes_{s\in S}\tau'^*\det\bigl
  (\ms{F}^{(s,\infty')}(\ms{M}')(-1)\bigr)\\&
  \cong\bigotimes_{s\in S}\det\bigl(\Phi^{(0,\infty')}
   (j_!\tau_{s*}\ms{M}|_{\eta_s})\bigr)(-\gamma_s)
  \otimes\DwL(\delta),
 \end{align*}
 where $\delta:=\sum_{s\in S}\gamma_s\cdot\mr{tr}(s)$,
 by the stationary phase formula \ref{stationaryfrobcom}, Lemma
 \ref{locFourrel}, and Corollary \ref{locfourstricless}. We note
 that the differential slope of $\Phi^{(0,\infty')}(j_!\tau_{s*}\ms{M}|_{\eta_s})$ is
 strictly less than $1$ by Corollary \ref{locfourstricless} for any
 $s\in S$. Thus, the differential slope of
 $\det\bigl(\Phi^{(0,\infty')}(j_!\tau_{s*}\ms{M}|_{\eta_s})\bigr)$ is
 also strictly less than $1$. Since the rank is $1$, the differential slope of
 $\det\bigl(\Phi^{(0,\infty')}(j_!\tau_{s*}\ms{M}|_{\eta_s})\bigr)$ is
 $0$ for any $s\in S$ by the Hasse-Arf
 theorem \cite[14.12]{CM}. Thus
 \begin{equation*}
  \ms{N}':=
   \ms{N}\otimes\ms{L}(-\delta\cdot x')
 \end{equation*}
 is a convergent $F$-isocrystal on $\Aoned$, and regular at
 infinity. By Lemma \ref{Hurwitz}, $\ms{N}'$ is in fact a constant
 overconvergent $F$-isocrystal. By (\ref{detpsicom}) and the fact that
 $\Psi(-1)$ commutes with $\otimes$, the proposition follows.
\end{proof}

\begin{thm}[$p$-adic determinant formula]
 For any free differential $\mc{R}_{\ms{S}}$-module with Frobenius
 structure ${M}$, we get
 \begin{equation*}
  \emar({M},du)=(-1)^{\gamma}\det(\Phi^{(0,\infty')}
   (j_!{M}))(-\gamma-1)(u'),
 \end{equation*}
 where $\gamma:=\mr{rk}({M})+\mr{irr}({M})$.
\end{thm}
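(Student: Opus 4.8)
The statement is the local, differential counterpart of Laumon's determinant formula, so the strategy is to derive it from the global determinant formula (Proposition \ref{Laumondetformula}) by a suitable localization argument, exactly as Laumon deduces his \cite[Th\'eor\`eme 3.5.4]{Lau} from \cite[Th\'eor\`eme 3.4.2]{Lau}. First I would fix a free differential $\mc{R}_{\ms{S}}$-module $\mc{M}$ with Frobenius structure and realize it globally: let $j\colon\mb{G}_m\hookrightarrow\mb{A}^1$ be the open immersion, and choose an overconvergent $F$-isocrystal $\ms{M}$ on $\mb{G}_m$, tamely ramified at $0$, such that $\ms{M}|_{\eta_\infty}$ is trivial (so that $\ms{M}$ is unramified at infinity) and $\ms{M}|_{\eta_0}\cong\mc{M}$ after the identification of $\eta_0$ with $\ms{S}$ (sending $u\mapsto x$). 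Such an $\ms{M}$ exists because any differential $\mc{R}$-module with Frobenius structure is the restriction of a canonical extension, and one may further arrange it to be tame at $0$ and trivial at $\infty$ after tensoring with an appropriate constant isocrystal and a Kummer isocrystal; the localization triangle (\ref{anotherloctriag}) and the multiplicativity of $\varepsilon_0$ reduce the general case to such $\ms{M}$. Then $\ms{M}|_{\eta_s}$ for $s=0$ is exactly $\mc{M}$, $\gamma_0=\mr{rk}(\mc{M})+\mr{irr}(\mc{M})=\gamma$, and $U=\mb{A}^1\setminus\{0\}$, $S=\{0\}$.

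The second step is to apply Proposition \ref{Laumondetformula} with this $U$, $S=\{0\}$, and extract the factor at $0$. The proposition gives
\begin{equation*}
 \det(R\Gamma_c(U_{K^{\mr{ur}}},\ms{M})[1])\otimes\det(K^{\mr{ur}}
  \otimes_K\ms{M}|_{s_\infty})(-r)\cong
  \Psi_1\bigl(\det\bigl(\Phi^{(0,\infty')}(j_!\mc{M})\bigr)
  (-\gamma-1)\bigr).
\end{equation*}
Next I would identify the left-hand side with a local epsilon factor: since $\ms{M}$ is a tame-at-zero, trivial-at-infinity isocrystal on $\mb{G}_m$, the product formula for $\ms{M}$ in the form (\ref{PF-iso})---which holds for $\ms{M}$ by Proposition \ref{PF:fgm} when $\ms{M}$ has geometrically finite monodromy, and in general by the already-proven special cases together with the devissage of \S\ref{sb:proof_gen_case}---expresses $\det(-F;H^*_{\rig,c}(U,\ms{M}))$ as a product of local contributions: the constant $q^{\cdots}$, the tame contribution at $0$ (which, by \cite[(3.4.5.4)]{Marmora:Facteurs_epsilon}, is $\emar(\mc{M},\omega_0)$ up to the determinant twist), and trivial contributions elsewhere since $\ms{M}$ is unramified outside $0$ and at $\infty$. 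Taking $\omega=du$ so that $\omega_0=du$ and $\val[x](\omega)=0$ for all $x\in U$ kills the $q$-powers and the $\det$-twists, leaving precisely $\emar(\mc{M},du)$ on one side. Then taking traces of Frobenius (i.e.\ applying the rank-one formula (\ref{calcrecipsp}), $\mr{tr}(F^*,\Psi_1(\mc{N}))=\mc{N}(-u)$, valid since $\det\Phi^{(0,\infty')}(j_!\mc{M})(-\gamma-1)$ is rank one and, by the Hasse--Arf argument in the proof of Proposition \ref{Laumondetformula}, of slope $0$ hence regular) converts the right-hand side of the displayed isomorphism into $\det(\Phi^{(0,\infty')}(j_!\mc{M}))(-\gamma-1)(u')$, up to a sign bookkeeping factor $(-1)^{\gamma}$ coming from the shift $[1]$ and the relation between $\det$ of a complex and $\varepsilon$ of its terms.

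\textbf{Main obstacle and finishing.} I expect the hard part to be the careful bookkeeping of twists and signs: matching the $(-1)$-Tate twists in $\Psi(-1)$ versus $\Psi_1$, the shift $[1]$ in $R\Gamma_c$, the factor $\det(K^{\mr{ur}}\otimes_K\ms{M}|_{s_\infty})(-r)$ on the left (which is trivial because $\ms{M}$ is unramified at $\infty$ and Frobenius acts as a scalar one computes explicitly), and the sign $(-1)^\gamma$. Concretely, $\det(R\Gamma_c(U,\ms{M})[1])=\det(H^1_{\rig,c})\otimes\det(H^2_{\rig,c})^{-1}\otimes\det(H^0_{\rig,c})^{-1}$, and $H^0_{\rig,c}(U,\ms{M})=0$ since $\ms{M}$ is a nonconstant isocrystal on the affine open $U$; the term $\det(-F;H^2_{\rig,c})$ absorbs into the trivial factors. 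One then uses $\emar(\mc{M},du)=\varepsilon_0(\mc{M},\psi_\pi(du),\mu)$ and the definition $\varepsilon(\ms{C})=\prod\det(-F;H^if_+\ms{C}(-1))^{(-1)^{i+1}}$ together with (\ref{rigDmodcompact}) to pin the sign down: the parity of $\gamma$ is exactly $\dim H^1-\dim H^2$ modulo $2$ by the Euler--Poincar\'e formula (\ref{GOSformula}), which is the source of $(-1)^\gamma$. Once $\ms{M}$ with the right ramification behavior is in hand and the normalizations of the reciprocity map and of $\psi_\pi$ are matched with those in \cite{Lau} and \cite{Marmora:Facteurs_epsilon}, the identity follows by comparing the two sides of Proposition \ref{Laumondetformula} after applying $L_k\circ\Psi$ and taking traces, and by invoking multiplicativity to pass from the tame-at-zero, trivial-at-infinity case back to an arbitrary $\mc{M}$ via (\ref{anotherloctriag}) and the remark on multiplicativity in \ref{intnotrecipmap}.
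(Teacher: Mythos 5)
Your strategy—deriving the local formula from Proposition \ref{Laumondetformula} by a global realization—is the right main tool and is indeed what the paper does in its general case, but as written the argument has two genuine gaps, one logical and one substantive, and it cannot close without an additional base case that your plan omits.

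Logically, the appeal to the product formula ``in general by the already-proven special cases together with the d\'evissage of \S\ref{sb:proof_gen_case}'' is circular: Theorem \ref{Product formula} is proved in \S\ref{sb:proof_gen_case} \emph{after}, and \emph{using}, the $p$-adic determinant formula. At the point where the determinant formula is proved, the only product formula available is Corollary \ref{cor:cong_Fiso_spec}, which is restricted to canonical extensions; that restriction has to be respected throughout the argument.

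Substantively, the setup ``choose $\ms{M}$ on $\mb{G}_m$, tamely ramified at $0$, with $\ms{M}|_{\eta_0}\cong\mc{M}$'' is inconsistent for general $\mc{M}$: requiring $\ms{M}|_{\eta_0}\cong\mc{M}$ forces the irregularity of $\ms{M}$ at $0$ to be $\mr{irr}(\mc{M})$, and tensoring with Kummer or constant isocrystals and using the localization triangle \eqref{anotherloctriag} do not change the irregularity, so you cannot arrange tameness at $0$ unless $\mc{M}$ was already regular. Your argument therefore silently covers only the regular case. Relatedly, taking $\omega=du$ does not ``kill the $q$-powers and $\det$-twists'': $du$ has a double pole at $\infty$, $q^{C(1-g)r}=q^r$ on $\mb{P}^1$, and the residual contribution at $\infty$ is precisely the nontrivial factor $q^r\det(-F_\infty;\ms{M}|_{s_\infty})$ appearing in Corollary \ref{lastcor}.

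The missing idea is the bootstrap. The paper first proves the theorem for \emph{regular} $\mc{M}$ by an independent explicit computation: reduce via Kedlaya's slope filtration and Tsuzuki's equivalence to unit-root rank one, then (by induction from a finite unramified extension) to rank-one Kummer modules $\ms{K}_\alpha|_{\eta_0}$, and evaluate both sides directly using Gauss sums and the local class field theory formula \cite[XIV \S3, Prop.~8]{Se}. Only with that base case in hand does Proposition \ref{Laumondetformula} close the general case: since the canonical extension of $\mc{M}$ is only \emph{regular}, not unramified, at $\infty$, one must move $\infty$ to an auxiliary point $1$, realizing $\mc{M}$ by an isocrystal $\ms{M}'$ on $\widehat{\mb{P}}^1\setminus\{0,1\}$ that is regular at $1$ and genuinely unramified at $\infty$; applying Corollary \ref{cor:cong_Fiso_spec} and Proposition \ref{Laumondetformula} then yields an identity involving the contributions at \emph{both} $0$ and $1$, and the already-proven regular case is used to eliminate the term at $1$. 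Without the explicit regular case, that identity has two unknowns and the contribution at $0$ cannot be isolated.
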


\begin{rem*}
 (i) Before proving the theorem, we remark that the right hand side of
 the equality is multiplicative with respect to short exact sequences by
 Remark \ref{intnotrecipmap}.

 (ii) Although the idea of the proof is exactly the same as {\it loc.\
 cit.}, we include the complete proof of the proposition since there are
 a lot of minor differences in the quantities appearing (especially the
 Tate twist $-\gamma-1$ in the statement of the theorem), and we think that
 it might help the reader to understand the differences with the
 $\ell$-adic case.
\end{rem*}

\begin{proof}
 Let $V$ be a $\sigma_K$-$K$-vector space of dimension $1$. Then the
 theorem holds for a free differential $\mc{R}$-module ${M}$ if and
 only if it holds for ${M}\otimes_KV$. Indeed, by \cite[2.19
 (2)]{Marmora:Facteurs_epsilon}, we have
 \begin{equation*}
  \emar({M}\otimes_KV,du)=\emar({M},du)\cdot
   \det(F;V)^\gamma.
 \end{equation*}
 On the other hand, we see that
 \begin{equation*}
  \Phi^{(0,\infty')}(j_!{M}\otimes_KV)\cong
   \Phi^{(0,\infty')}(j_!{M})\otimes_KV
 \end{equation*}
 by using Proposition \ref{frobstruccalcabsw}. Thus, the claim follows.

 First we will treat the case where ${M}$ is regular. Since both
 sides are multiplicative, we may assume that
 ${M}$ is irreducible. By Kedlaya's slope filtration theorem, we get
 that ${M}$ is isoclinic (for Dieudonn\'{e}-Manin slopes). Since both sides of the equality are stable
 under base change by a totally ramified extension of $\Lambda$,
 we may assume that the Dieudonn\'{e}-Manin slope $\lambda$ of ${M}$ is in
 $(eh)^{-1}\mb{Z}$. Since the equality is stable under twisting and
 $({M}^{(-\lambda)})^{(\lambda)}\cong {M}$, it suffices to show the
 proposition for ${M}^{(-\lambda)}$, and we may suppose that ${M}$
 is unit-root. Thus, it corresponds to a geometrically finite
 representation of $G_{\mc{K}}$ denoted by $\rho$. We know that there
 exists a finite extension $k'/k$ such that $\rho$
 is the induced representation of $G_{\mc{L}}$ of rank $1$ where
 $\mc{L}:=k'\otimes_k\mc{K}$. This shows
 that there exists a finite unramified extension $L$ of $K$, and a
 free differential $\mc{R}_L$-module with
 Frobenius structure ${M}_L$ such that ${M}\cong f_*{M}_L$
 where $f\colon\mc{R}\hookrightarrow\mc{R}_L$ is
 the canonical finite \'{e}tale homomorphism. By \ref{baseextcom}, we
 get
 \begin{equation*}
  \Phi^{(0,\infty')}(j_!{M})\cong f_{*}
   (\Phi^{(0,\infty')}(j_!{M}_L)).
 \end{equation*}
 For the calculation of $\emar$, use \cite[2.14
 (2)]{Marmora:Facteurs_epsilon}. It remains to show the theorem
 for ${M}_L$, and thus, we may assume that ${M}$ is of rank $1$.

 In this case we can write
 ${M}\cong\ms{K}_\alpha|_{\eta_0}\otimes\mc{W}$ such that $\mc{W}$ is
 a trivial differential module if we forget the Frobenius
 structure, and $\ms{K}_\alpha$ is the Kummer isocrystal (cf.\
 \ref{calcKummergeom}). Then we may suppose that $\mc{W}$ is trivial by
 the observation at the beginning of this proof. It remains
 to show the theorem in the
 ${M}=\ms{K}_\alpha|_{\eta_0}=:\ku{\alpha}$ case.

 Now, first, assume that $\alpha\not\in\mb{Z}$. Then
 $j_!j^+\ku{\alpha}\cong\ku{\alpha}$ by Proposition
 \ref{calcKummergeom}. By the stationary phase formula and the same
 proposition, we get
 \begin{equation*}
  \Phi^{(0,\infty')}(\ku{\alpha})\cong\tau'^*\bigl(\ms{K}_{1-\alpha}
   \otimes G(\alpha,\pi)(1)\bigr)|_{\eta_\infty'}\cong\ku{\alpha}
   \otimes G(\alpha,\pi)(1).
 \end{equation*}
 Furthermore, we have
 \begin{equation*}
  \Psi(\ku{\alpha})\cong\mb{V}((\ku{\alpha})^\vee(-1))\cong
   (\mc{B}\otimes\ku{\alpha})^{\partial=0}(1)
 \end{equation*}
 by definition (cf.\ \ref{defvanishcycle}).
 Combining these, we get $\Psi\bigl(\Phi^{(0,\infty')}(\ku{\alpha})
 \bigr)(-2)\cong(\mc{B}\otimes\ku{\alpha})^{\partial=0}
 \otimes G(\alpha,\pi)$.
 The space $(\mc{B}\otimes\ku{\alpha})^{\partial=0}$ is the
 sub-$K^{\mr{ur}}$-vector space spanned by $x^{-\alpha}e$ where $e$ is
 the canonical base of $\ku{\alpha}$. Let
 $\alpha=i\cdot(q-1)^{-1}$, and
 $\chi_\alpha$ be the $i$-th power of Teichm\"{u}ller character
 $k^*\rightarrow K^{\mr{ur}}$. Then we get
 \begin{equation*}
  \mr{tr}(F^*,G(\alpha,\pi))=-\sum_{x\in k^*}\chi_\alpha(x)\cdot
   \psi_k(x)
 \end{equation*}
 by \cite[6.5]{LE} using the fact that
 $H^i_{\mr{rig}}(\mb{G}_m,\ms{K}_\alpha\otimes\ms{L}_\pi)=0$ for
 $i\neq1$ (which can be proved by GOS-formula for example). Now, let us
 treat the case where $\alpha=0$, and thus ${M}=\mc{R}$. In this
 case, we get an exact sequence
 \begin{equation*}
  0\rightarrow\delta(1)\rightarrow j_!\mc{R}\rightarrow j_+\mc{R}
   \rightarrow\delta\rightarrow0.
 \end{equation*}
 We get $\Phi^{(0,\infty')}(j_+\mc{R})\cong j_+\mc{R}$ by using
 Proposition \ref{calcfouriereasy} and Theorem
 \ref{involForge}. Thus, we obtain $\Phi^{(0,\infty')}(j_!\mc{R})\cong
 j_+\mc{R}(1)$. Combining all of these, we get
 \begin{equation*}
  \det(\Phi^{(0,\infty')}(j_!\ku{\alpha}))(-2)(u')=
   \begin{cases}
    1&\quad\mbox{if $\alpha=0$}\\
    -\chi(-1)\cdot\sum_{x\in k^*}\chi_\alpha(x)\cdot\psi_k(x)
    &\quad\mbox{if $\alpha\neq0$}.
   \end{cases}
 \end{equation*}

 On the other hand, let
 $\widetilde{\chi}:=(\mr{rec}\circ\WD)({M})$. By using \cite[XIV \S
 3, Prop 8]{Se},
 we get
 \begin{equation*}
  \widetilde{\chi}(x)=
   \begin{cases}
    \chi_\alpha^{-1}(x)&\quad x\in k^*\subset\mc{K}^*\\
    \chi_\alpha(-1)&\quad x=u\\
    1&\quad x\in 1+\mf{m}
   \end{cases}
 \end{equation*}
 (note that the image of the geometric Frobenius is different, so we
 need to calculate $(x^{-i},a^{-1})$ with $n=q-1$ in the notation of
 {\it loc.\ cit}.).
 This shows that
 \begin{equation*}
  \emar(\ku{\alpha},du)=
   \begin{cases}
    -1&\quad\mbox{if $\alpha=0$}\\
    \chi(-1)\cdot\sum_{x\in k^*}\chi_\alpha(x)\cdot\psi_k(x)
    &\quad\mbox{if $\alpha\neq0$}.
   \end{cases}
 \end{equation*}
 Thus the theorem follows in this case.

 We denote by $r$ the rank of ${M}$.
 By taking the canonical extension, there exists an overconvergent
 $F$-isocrystal $\ms{M}'$ on $\widehat{\mb{P}}^1\setminus\{0,1\}$ {\it
 regular at $1$}, and $\ms{M}'|_{\eta_0}\cong\pi_{0}^*({M})$. We put
 $\ms{M}:=\DdagQ{\Pone}(\infty)\otimes_{\DdagQ{\Pone}}\ms{M}'$. By
 Corollary \ref{cor:cong_Fiso_spec}, we get
 \begin{equation*}
  \det(-F;H^*_c(\mb{A}^1\setminus\{0,1\},\ms{M}))^{-1}\cdot q^{r}\cdot
   \det(-F_\infty;\ms{M}|_{s_\infty})=\emar({M},-du)\cdot
   \emar(\ms{M}|_{\eta_1},-dx|_{\eta_1}).
 \end{equation*}
 On the other hand, by Proposition \ref{Laumondetformula}, we get
 \begin{align*}
  &\det(-F;H^*_c(\mb{A}^1\setminus\{0,1\},\ms{M}))^{-1}\cdot q^{r}
  \cdot\det(-F_\infty;\ms{M}|_{s_\infty})=\\
  &\qquad(-1)^{\gamma}\det(\Phi^{(0,\infty')}(j_!{M}))
  (-\gamma-1)(-u')\cdot(-1)^{r}\det(\Phi^{(0,\infty')}
  (j_!\ms{M}|_{\eta_1}))(-r-1)(-u')
 \end{align*}
 taking (\ref{calcrecipsp}) into account
 (which is applicable  because the differential modules
 $\det(\Phi^{(0,\infty')}(\dots))$ are regular of rank $1$). By
 considering the regular case proven above, we get the theorem.
\end{proof}

\begin{cor}
 \label{lastcor}
 Let $U\subset\mb{A}^1_k$ be an open subscheme, and $M$ be an
 overconvergent $F$-isocrystal of rank $r$ on $U$ which is unramified at
 $\infty$. Let $S:=\mb{A}^1\setminus U$. Then we get
 \begin{equation*}
  \det(-F;H^*_{\mr{rig},c}(U,M))^{-1}\cdot q^r\cdot
   \det(-F_{\infty};M_{\infty})=\prod_{s\in S}\emar
   (M|_{\eta_s},-dx|_{\eta_s}).
 \end{equation*}
\end{cor}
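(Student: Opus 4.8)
The plan is to deduce this from Proposition \ref{Laumondetformula} and the $p$-adic determinant formula, by taking the trace of (geometric) Frobenius on both sides of the determinant isomorphism. By Remark \ref{remark:semplification} we may assume $h=f$, so that $\Ct=\Ctf$ and $\sigma_{\Ct}=\mr{id}$; this keeps the Tate twists transparent. Write $\gamma_s:=\mr{rk}(M|_{\eta_s})+\mr{irr}(M|_{\eta_s})=r+\mr{irr}(M|_{\eta_s})$. Since $M$ is unramified at $\infty$, Proposition \ref{Laumondetformula} applies and gives an isomorphism of Deligne modules
\begin{equation*}
 \det\bigl(R\Gamma_c(U_{K^{\mr{ur}}},M)[1]\bigr)\otimes\det\bigl(K^{\mr{ur}}\otimes_K M|_{s_\infty}\bigr)(-r)\cong\bigotimes_{s\in S}\Psi_1\Bigl(\det\bigl(\Phi^{(0,\infty')}(j_!\tau_{s*}M|_{\eta_s})\bigr)(-\gamma_s-1)\Bigr).
\end{equation*}
All factors occurring here are one-dimensional over $K^{\mr{ur}}$, so ``$\mr{tr}(F,-)$'' just reads off the Frobenius eigenvalue and is multiplicative for the tensor products; it remains to compute it on each side.

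On the left-hand side, $\det\bigl(R\Gamma_c(U_{K^{\mr{ur}}},M)[1]\bigr)$ has Frobenius eigenvalue $\prod_i\det(F;H^i_{\mr{rig},c}(U,M))^{(-1)^{i+1}}$, which equals $\det(-F;H^*_{\mr{rig},c}(U,M))^{-1}$ (in the alternating-product convention of the statement) up to the sign $(-1)^{\chi_c(U,M)}$. The second factor $\det(K^{\mr{ur}}\otimes_K M|_{s_\infty})(-r)$ has eigenvalue $q^{r}\det(F_\infty;M|_{s_\infty})$: indeed it is one-dimensional with eigenvalue $\det(F_\infty;M|_{s_\infty})$ and the Tate twist $(-r)$ multiplies it by $q^{r}$ (using $h=f$). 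Identifying $M_\infty=M|_{s_\infty}$ and writing $\det(F_\infty;M_\infty)=(-1)^{r}\det(-F_\infty;M_\infty)$, the trace of the left-hand side is $\det(-F;H^*_{\mr{rig},c}(U,M))^{-1}\cdot q^{r}\cdot\det(-F_\infty;M_\infty)$, times the sign $(-1)^{\chi_c(U,M)+r}$.

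On the right-hand side, for each $s\in S$ the differential $\mc{R}$-module $\det\bigl(\Phi^{(0,\infty')}(j_!\tau_{s*}M|_{\eta_s})\bigr)$ has rank $1$ and, by Corollary \ref{locfourstricless}, slope $<1$; by the Hasse--Arf theorem \cite[14.12]{CM} its slope is therefore $0$, so it (and its twist by $(-\gamma_s-1)$) is regular of rank $1$. Hence formula (\ref{calcrecipsp}) computes $\mr{tr}\bigl(F^*,\Psi_1(\det(\Phi^{(0,\infty')}(j_!\tau_{s*}M|_{\eta_s}))(-\gamma_s-1))\bigr)$ through the reciprocity recipe of \ref{intnotrecipmap}. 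Transporting along $\tau_s$ (which turns the local parameter at $s$ into $u$ and $dx$ into $du$) and applying the $p$-adic determinant formula to the differential module $M|_{\eta_s}$ — together with the multiplicativity of $\mc{N}\mapsto\mc{N}(f')$ recalled after (\ref{calcrecipsp}) to pass from $du$ to $-dx$ — each such factor equals $(-1)^{\gamma_s}\emar(M|_{\eta_s},-dx|_{\eta_s})$. Multiplying over $s\in S$ gives $\prod_{s\in S}\emar(M|_{\eta_s},-dx|_{\eta_s})$, times the sign $(-1)^{\sum_{s\in S}\gamma_s}$. Comparing the two trace computations yields the asserted identity, provided the accumulated signs cancel.

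The only genuine work is this sign bookkeeping, i.e.\ the congruence $\chi_c(U,M)+r\equiv\sum_{s\in S}\gamma_s\pmod 2$, which I would verify from the Grothendieck--Ogg--Shafarevich formula (\ref{GOSformula}) on $\mb{P}^1$: since $M$ has no irregularity at $\infty$, one gets $\chi_c(U,M)=r(1-\deg S)-\sum_{s\in S}\deg(s)\,\mr{irr}(M|_{\eta_s})$, whence $\chi_c(U,M)+r\equiv\sum_{s\in S}(r+\mr{irr}(M|_{\eta_s}))\pmod 2$. The remaining normalization points — the identification of the variable $u$ on $\ms{S}$ with $u'$ on $\ms{S}'$ built into $\Psi_1$ and into the reciprocity normalization of \ref{intnotrecipmap}, and the comparison between replacing $\omega$ by $-\omega$ and the value of the relevant rank-one character at $-1$ — are handled exactly as in the $\ell$-adic case \cite[3.4.2]{Lau} and as in the last paragraph of the proof of the $p$-adic determinant formula above, and present no new difficulty.
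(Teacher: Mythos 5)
Your overall strategy — apply Proposition \ref{Laumondetformula}, take the trace of the geometric Frobenius on each one-dimensional factor, evaluate the right-hand side via the reciprocity recipe (\ref{calcrecipsp}) and the $p$-adic determinant formula, and then check the accumulated signs — is exactly the route the paper has in mind (the paper says only ``the proof is exactly the same as \cite[3.5.2]{Lau} using Proposition \ref{Laumondetformula}''), and your treatment of the left-hand side (Tate twist, $\det(F)$ versus $\det(-F)$) is fine.

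The place where your write-up actually slips is the sign verification, which you yourself flag as ``the only genuine work''. The congruence you propose, $\chi_c(U,M)+r\equiv\sum_{s\in S}\gamma_s\pmod 2$ with $\gamma_s=r+\mr{irr}(M|_{\eta_s})$, is \emph{false} once $S$ contains a point of even degree: from $\chi_c(U,M)=r(1-\deg S)-\sum_s\deg(s)\mr{irr}_s$ one gets $\chi_c+r-\sum_s\gamma_s\equiv\sum_s(\deg(s)+1)\gamma_s\pmod 2$, which need not vanish (take $S=\{s\}$ with $\deg s=2$, $r=1$, $\mr{irr}_s=0$). The reason is that the $p$-adic determinant formula is applied to the pushed-forward module $\tau_{s*}M|_{\eta_s}$, whose $\gamma$ is $\deg(s)\gamma_s$ and not $\gamma_s$, so the sign it contributes is $(-1)^{\deg(s)\gamma_s}$, not $(-1)^{\gamma_s}$. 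With the correct weight the bookkeeping does close: $\chi_c(U,M)+r+\sum_s\deg(s)\gamma_s=2r$, which is even. Equivalently (and perhaps more in the spirit of what the paper does elsewhere, e.g.\ in the proof of \ref{SPT}) one first reduces to the case where all of $S$ consists of $k$-rational points by an unramified base change, after which your computation as written is correct. So the approach is right, but the parity verification needs this correction; as stated it would fail for non-rational singularities.
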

\begin{proof}
 The proof is exactly the same as \cite[3.5.2]{Lau} using Proposition
 \ref{Laumondetformula}, and we leave the details to the reader.
\end{proof}

\subsubsection{Proof of Theorem $\ref{Product formula}$}\mbox{}
\\
 The proof is essentially the same as \cite[3.3.2]{Lau}: it is a
 reduction to Corollary \ref{lastcor}. We point out a difference from
 {\it loc.\ cit.}: to prove that the right hand side of the product
 formula does not depend on the choice of the differential form
 $\omega$, we proceed by \emph{d\'evissage} and we use \cite[Proposition
 4.3.9]{Marmora:Facteurs_epsilon}. We leave the details to the reader.
 \hspace{\fill}$\blacksquare$

\clearpage
\addcontentsline{toc}{section}{Index}\printindex

\addcontentsline{toc}{section}{Bibliography}

Tomoyuki Abe:\\
Institute for the Physics and Mathematics of the Universe (WPI)\\
The University of Tokyo\\
5-1-5 Kashiwanoha,  
Kashiwa, Chiba, 277-8583, Japan\\
e-mail: {\tt tomoyuki.abe@ipmu.jp}

\bigskip\noindent
Adriano Marmora:\\
Institut de Recherche Math\'ematique Avanc\'ee\\
UMR 7501, Universit\'e de Strasbourg et CNRS\\
7 rue Ren\'e Descartes, 67000 Strasbourg, France\\
e-mail: {\tt marmora@math.unistra.fr}

\end{document}